\definecolor{pink}{RGB}{219, 48, 122}
\definecolor{purple}{RGB}{128, 0, 128}
\DeclareSymbolFontAlphabet{\mathbb}{AMSb}
\DeclareSymbolFontAlphabet{\mathbbl}{bbold}
\theoremstyle{plain}
\newtheorem{theorem}{Theorem}[section]
\newtheorem{lemma}[theorem]{Lemma}
\newtheorem{corollary}[theorem]{Corollary}
\theoremstyle{definition}
\newtheorem{definition}[theorem]{Definition}
\newtheorem{example}[theorem]{Example}
\newtheorem{question}[theorem]{Question}
\newtheorem{conjecture}[theorem]{Conjecture}
\theoremstyle{remark}
\newtheorem{remark}[theorem]{Remark}
\newcommand*{\trimabs}[1]{%
  \mathpalette{\@trimleftright{|}{|}}{#1}%
}
\newcommand*{\@trimleftright}[4]{%
  \sbox0{$#3#4\m@th$}%
  \sbox2{$#3\vcenter{}$}
  \dimen0=\ht0 %
  \ifdim\dimen0<\ht2 %
    \dimen0=\ht2 %
  \fi
  \dimen2=\dp0 %
  \ifdim\dimen2<\z@
    \dimen2=\z@
  \fi
  \dimen0=\dimexpr(\dimen0-\dimen2)/2 -\ht2\relax
  \raisebox{\dimen0}{%
    $#3\mleft#1\raisebox{-\dimen0}{\box0}\mright#2\m@th$%
  }%
}
\newcommand{\Cc}{\mathbb{C}}
\newcommand{\Dd}{\mathbb{D}}
\newcommand{\Ee}{\mathbb{E}}
\newcommand{\Ff}{\mathbb{F}}
\newcommand{\Hh}{\mathbb{H}}
\newcommand{\Nn}{\mathbb{N}}
\newcommand{\Pp}{\mathbb{P}}
\newcommand{\Rr}{\mathbb{R}}
\newcommand{\Tt}{\mathbb{T}}
\newcommand{\Uu}{\mathbb{U}}
\newcommand{\Yy}{\mathbb{Y}}
\newcommand{\Zz}{\mathbb{Z}}
\newcommand{\Un}{\mathds{1}}
\newcommand{\Ae}{\mathcal{A}}
\newcommand{\Be}{\mathcal{B}}
\newcommand{\Ce}{\mathcal{C}}
\newcommand{\De}{\mathcal{D}}
\newcommand{\Eee}{\mathcal{E}}
\newcommand{\Fe}{\mathcal{F}}
\newcommand{\He}{\mathcal{H}}
\newcommand{\Ie}{\mathcal{I}}
\newcommand{\Je}{\mathcal{J}}
\newcommand{\Ke}{\mathcal{K}}
\newcommand{\Le}{\mathcal{L}}
\newcommand{\Me}{\mathcal{M}}
\newcommand{\Pe}{\mathcal{P}}
\newcommand{\Qe}{\mathcal{Q}}
\newcommand{\Ree}{\mathcal{R}}
\newcommand{\Se}{\mathcal{S}}
\newcommand{\Te}{\mathcal{T}}
\newcommand{\Ue}{\mathcal{U}}
\newcommand{\We}{\mathcal{W}}
\newcommand{\Xe}{\mathcal{X}}
\newcommand{\Ze}{\mathcal{Z}}
\newcommand{\Ns}{\mathscr{N}}
\newcommand{\Us}{\mathscr{U}}
\newcommand{\Mg}{\mathfrak{M}}
\newcommand{\Rg}{\mathfrak{R}}
\newcommand{\Sg}{\mathfrak{S}}
\newcommand{\eg}{\mathfrak{e}}
\newcommand{\alphab}{{\boldsymbol{\alpha}}}
\newcommand{\betab}{{\boldsymbol{\beta}}}
\newcommand{\gammab}{{\boldsymbol{\gamma}}}
\newcommand{\lambdab}{{\boldsymbol{\lambda}}}
\newcommand{\nub}{{\boldsymbol{\nu}}}
\newcommand{\varphib}{{\boldsymbol{\varphi}}}
\newcommand{\sigmab}{{\boldsymbol{\sigma}}}
\newcommand{\thetab}{{\boldsymbol{\theta}}}
\newcommand{\Gammab}{{\boldsymbol{\Gamma}}}
\newcommand{\Lambdab}{{\boldsymbol{\Lambda}}}
\newcommand{\Ab}{{\boldsymbol{A}}}
\newcommand{\Bd}{{\boldsymbol{B}}}
\newcommand{\Db}{{\boldsymbol{D}}}
\newcommand{\Pb}{{\boldsymbol{P}}} 
\newcommand{\Tb}{{\boldsymbol{T}}}
\newcommand{\Vb}{{\boldsymbol{V}}}
\newcommand{\Xb}{{\boldsymbol{X}}}
\newcommand{\Yb}{{\boldsymbol{Y}}}
\newcommand{\Zb}{{\boldsymbol{Z}}}
\newcommand{\ab}{{\boldsymbol{a}}}
\newcommand{\bb}{{\boldsymbol{b}}}
\newcommand{\hb}{{\boldsymbol{h}}}
\newcommand{\sbb}{{\boldsymbol{s}}}
\newcommand{\tb}{{\boldsymbol{t}}}
\newcommand{\ub}{{\boldsymbol{u}}}
\newcommand{\vb}{{\boldsymbol{v}}}
\newcommand{\wb}{{\boldsymbol{w}}}
\newcommand{\xb}{{\boldsymbol{x}}}
\newcommand{\yb}{{\boldsymbol{y}}}
\newcommand{\zb}{{\boldsymbol{z}}}
\newcommand{\ellb}{{\boldsymbol{\ell}}}
\def\ee{ { \mathbbm{e} } }
\newcommand{\ensemble}[1]{ \left\lbrace #1 \right\rbrace } 
\newcommand{\prth}[1]{\!\left( #1 \right) }
\newcommand{\crochet}[1]{\!\left[ #1 \right] }  
\newcommand{\intcrochet}[1]{\llbracket #1 \rrbracket} 
\newcommand{\abs}[1]{\left| #1 \right|}  
\newcommand{\norm}[1]{\left| \! \left| #1 \right| \! \right|}
\newcommand{\Esp}[1]{ \Ee  \prth{ #1 } }  
\newcommand{\Prob}[1]{ \Pp \prth{ #1 } }  
\newcommand{\Espr}[2]{ \Ee_{#1}\! \prth{ #2 } } 
\def\inv{^{-1}}
\def\longlongrightarrow{\hspace{+0.1ex} - \hspace{-1.1ex} - \hspace{-1.1ex} - \hspace{-1.1ex}\longrightarrow  } 
\newcommand{\tendvers}[2]{ \underset{#1 \rightarrow #2}{\longlongrightarrow} }  
\newcommand{\cvlaw}[2]{\stackrel{\Le}{\underset{#1 \, \rightarrow \, #2}{\longlongrightarrow}}}  
\newcommand{\equivalent}[1]{ {\underset{#1 }{\sim} } }
\newcommand{\bracket}[1]{\left\langle #1 \right\rangle}
\newcommand{\Unens}[1]{ \Un_{ \ensemble{#1} } }
\def\eqlaw{\stackrel{\Le}{=}}
\newcommand{\pe}[1]{\left[ #1 \right] }
\newcommand{\pf}[1]{\left\{ #1 \right\} }
\newcommand{\tr}[1]{\operatorname{tr}\prth{ #1 } }
\def\trace{ \operatorname{tr} }
\def\sinc{ \operatorname{sinc} }
\def\GUE{ \operatorname{GUE} }
\def\vol{ \operatorname{vol} }
\def\covol{ \operatorname{covol} }
\def\sc{\operatorname{sc}}
\def\Mom{\operatorname{MoM}}
\def\tensorsum{\oplus}
\def\plusInOne{}
\def\MoMf{\mathfrak{MoM}}
\def\GUE{ \operatorname{GUE} }
\def\Exp{ \operatorname{Exp} }
\def\Geom{ \operatorname{Geom} }
\def\Gumbel{ \operatorname{Gb} }
\def\NegBin{\operatorname{NegBin}}
\def\geq{\geqslant}
\def\leq{\leqslant}
\def\Re{\Rg \eg}
\let\oldforall\forall
\def\forall{\oldforall\,} 
\let\oldexists\exists
\def\exists{\oldexists\,}
\def\AOP{Ann. Probab. }
\def\CMP{Commun. Math. Phys. }
\def\CPAM{Comm. Pure Appl. Math. }
\def\Inventiones{Invent. Math. }
\def\JTP{J Theoret. Probab. }
\def\PTRF{Probab. Theory Related Fields }
\newcommand{\emailhref}[1]{ \email{\href{mailto:#1}{#1}} }
\definecolor{rougeclair}{rgb}{1,.65,.65}
\newcommand{\white}[1]{  \color{white} #1 \color{black}  }
\begin{document}

\title[A new approach to the CUE characteristic polynomial]{A new approach to the characteristic \\ polynomial of a random unitary matrix}
\author[Y. Barhoumi-Andr\'eani]{Yacine Barhoumi-Andr\'eani}
\address{Ruhr-Universit\"at Bochum, Fakult\"at f\"ur Mathematik, Universit\"atsstrasse 150, 44780 Bochum, Germany.}
\emailhref{yacine.barhoumi@rub.de}
\date{\today}
\subjclass[2010]{60B20, 11M50}


\begin{abstract}
Since the seminal work of Keating and Snaith, the characteristic polynomial of a random Haar-distributed unitary matrix has seen several of its functional studied or turned into a conjecture; for instance:
\begin{itemize}
\item its value in $1$ (Keating-Snaith theorem),
\item the truncation of its Fourier series up to any fraction of its degree,
\item the computation of the relative volume of the Birkhoff polytope,
\item its products and ratios taken in different points, 
\item the product of its iterated derivatives in different points, 
\item functionals 
in relation with sums of divisor functions in $ \Ff_q[X] $.
\item its mid-secular coefficients,  
\item the ``moments of moments'', etc.
\end{itemize}

\medskip
We revisit or compute for the first time the asymptotics of the integer moments of these last functionals and several others. The method we use is a very general one based on reproducing kernels, a symmetric function generalisation of some classical orthogonal polynomials interpreted as the Fourier transform of particular random variables and a local Central Limit Theorem for these random variables. We moreover provide an equivalent paradigm based on a randomisation of the mid-secular coefficients to rederive them all. These methodologies give a new and unified framework for all the considered limits and explain the apparition of Hankel determinants or Wronskians in the limiting functional.
\end{abstract}

\dedicatory{To the loving memory of Jeanne Andr\'eani (1923-2020)}

\maketitle

\vspace{-0.5cm}

\setcounter{tocdepth}{2}
\tableofcontents 

\newpage

\section{Introduction}

\subsection{Historical motivations}\label{Subsec:Motivations}

The connection between random unitary matrices and number theory is famously known to have emerged during an afternoon tea \cite{SabbaghOnMontgomeryDyson}, when Montgomery discussed with Dyson their respective work on the pair correlation of the Riemann $\zeta$ function zeroes and the eigenangles of a random Haar-distributed unitary matrix \cite{Montgomery, Dyson}. 

The years that followed this meeting saw several reinforcements of this connection (see e.g. \cite{KatzSarnak, MezzadriSnaith} and references cited), and culminated in the seminal work of Keating and Snaith \cite{KeatingSnaith} that opened the path to the study of the characteristic polynomial of a random Haar-distributed unitary matrix as a toy-model for the Riemann $\zeta$ function. 

The original motivation of \cite{KeatingSnaith} was the computation of moments of the $\zeta$ function on a random interval of the critical line. Let $ U $ be a random variable uniformly distributed on $ [0, 1] $. Keating and Snaith conjecture that for all $ k \in \Nn $
\begin{align}\label{Eq:MomentsConjecture}
\Esp{ \abs{\zeta\prth{\frac{1}{2} + i T U } }^{2k} } \equivalent{T\to+\infty} A_k M_k \, e^{ k^2 \log\log T  }
\end{align}

Here, $A_k$ is the \textit{arithmetic factor} defined by 
\begin{align}\label{Def:ArithmeticFactor}
A_k = e^{ \gamma_\Pe k^2 } \prod_{p \in \Pe} e^{ - \frac{k^2}{p } } \Esp{ \abs{ 1 - p^{-1/2} Z_p  }^{-2k} },  \qquad \gamma_\Pe := \sum_{p \in \Pe} p\inv + \log(1 - p\inv)   
\end{align}
with $ \Pe $ the set of ordered prime numbers and $ (Z_p)_{p \in \Pe} $ a sequence of i.i.d. uniform random variables on the unit circle $ \Uu $, and $ M_k $ is the \textit{random matrix factor}, given by
\begin{align}\label{Def:MatrixFactor}
M_k = \prod_{\ell = 0}^{k - 1} \frac{\ell !}{ (\ell + k )!} = \frac{G(1 + k)^2}{G(1 + 2k)}
\end{align}
with $G$ the Barnes $G$-function or double Gamma function (see e.g. \cite{KeatingSnaith}). 

The appearance of $ M_k $ in \eqref{Eq:MomentsConjecture} finds its origin in the study of the characteristic polynomial $ Z_{U_N} $ of a random Haar-distributed unitary matrix $ U_N $, more precisely in the following theorem due to Keating and Snaith \cite{KeatingSnaith}:
\begin{align}\label{Eq:KSmomentsCharpol}
\Esp{ \abs{ Z_{U_N}(1) }^{2k} } \equivalent{N\to+\infty} M_k \, e^{ k^2 \log N  }
\end{align}

Here, $ Z_{U_N}(x) := \det\prth{I_N - x U_N} $ is the characteristic polynomial of $ U_N $, a Haar-distributed random unitary matrix, and $ \Pp $ denotes the normalised Haar measure of $ \Ue_N $ ; the underlying probability space $ (\Ue_N, \Pp) $ is called \textit{Circular Unitary Ensemble} (in short, $ CUE_N $).

Conjecture \eqref{Eq:MomentsConjecture} has been numerically verified up to $ k = 12 $ (Hiary and Odlyzko \cite{HiaryOdlyzko}), the only proven cases being $ k = 1 $ (Hardy and Littlewood \cite{HardyLittlewoodMoments}) and $ k = 2 $ (Ingham \cite{Ingham}). 

Since the publication of \cite{KeatingSnaith}, the paradigm that consists in computing functionals of $ Z_{U_N} $ to add a universal correction to the equivalent problem on the $ \zeta $ function (or more generally $L$-functions) became a successful heuristic to produce conjectures often verified numerically: the \textit{Keating-Snaith philosophy}.

The study of the characteristic polynomial became then an independent subject of its own, with specific methods at the junction of several fields, such as: analysis of Toeplitz determinants \cite{JohanssonSzego, JohanssonGroupe} and orthogonal polynomials on the unit circle \cite{KillipNenciu, KillipRyckman, BourgadeNikeghbaliRouault, Hisakado}, mathematical physics \cite{KostovCFTtoRMT} and supersymmetry \cite{ConreyFarmerZirnbauer, Efetov, FyodorovSupersym}, algebra and integrable systems \cite{AdlerVanMoerbeke, AdlerShiotaVanMoerbeke, VanMoerbeke}, algebraic combinatorics \cite{BianeCUE, PODfpsac, FerayJM}, representation theory and symmetric functions \cite{BumpGamburd, POD, DiaconisShahshahani}, probability theory \cite{BarhoumiHughesNajnudelNikeghbali, BourgadeCondHaar, BHNY, ChhaibiNajnudelNikeghbali}, Weingarten calculus \cite{CollinsSniady, MatsumotoNovak, Weingarten}, (free) It\^o calculus \cite{BianeFreeBM, BruDiff, DalqvistBMU, KatoriTanemura, MeliotCutOff, UlrichUnitary}, etc. The conjunction of such a diversity of methods shows the richness of the topic and the variety of strategies one can use to perform computations on $ Z_{U_N}(x) $.

\subsection{Problems investigated in this article}\label{Subsec:AllProblems}

The following problems had an important impact on the field of random unitary matrices in the last two decades, and this is the goal of this article to rederive them with a novel method:

\subsubsection{Autocorrelations}\label{Subsubsec:Autocorrelations}
The first random variable studied in \cite{KeatingSnaith} was $ Z_{U_N}(1) $ through its moments, to produce conjecture \eqref{Eq:MomentsConjecture}, but several refinements were proposed, in particular, \cite{BrezinHikamiCharpol, BumpGamburd, ConreyFarmerKeatingRubinsteinSnaith, ConreyForresterSnaith} compute the \textit{autocorrelations} of the characteristic polynomial, i.e. joint moments of the form
\begin{align}\label{FuncZ:AutocorrelationsCharPol}
\Ae'_N(X, Y) := \Esp{ \prod_{j = 1}^k Z_{U_N}\prth{ x_j } \prod_{\ell = 1}^m \overline{ Z_{U_N}\prth{ y_\ell } }  } 
\end{align}

Several results and expressions were given for $ \Ae'_N(e^{X/N}, e^{Y/N}) $ for several types of matrix models that sometimes include the $ CUE $ (see \cite{AkemannVernizzi, BorodinOlshanskiStrahov, BorodinStrahov, BrezinHikamiCharpol, BrezinHikamiAutocorrs, DesrosiersLiu2011, FyodorovStrahovCorrChiralGUE, FyodorovStrahovCorrGUE, FyodorovStrahovRHP, GronqvistGuhrKohler} and references cited) with $ e^{X/N} := (e^{x_1/N},  \dots, e^{x_k/N}) $ and similarly for $ e^{Y/N} $. In particular, for $ k = m = 1 $, one gets $ \Ae'_N(e^{ix/N}, e^{iy/N}) \sim N \sinc(x-y) $ where $ \sinc(x) := \frac{\sin(x)}{x} $ if $ x \neq 0 $ and $ 1 $ if $ x = 0 $.

We will perform an asymptotic study of $ \Ae'_N(X, Y) $ when $ N\to+\infty $ in section~\ref{Subsec:Autocorrelations}.

\subsubsection{Ratios}\label{Subsubsec:Ratios}
A natural generalisation of the autocorrelations is the computation of ratios 
\begin{align}\label{FuncZ:Ratios}
\Ree'_N(X, X', Y, Y') := \Esp{ \frac{ \prod_{j = 1}^{\ell_1} Z_{U_N}\prth{ x_j  }  }{ \prod_{r = 1}^{m_1} Z_{U_N}\prth{ y_r } } \overline{ \frac{ \prod_{j = 1}^{\ell_2} Z_{U_N}\prth{ x'_j }  }{ \prod_{r = 1}^{m_2} Z_{U_N}\prth{ y'_r } } } }  
\end{align}

Such a study was performed by \cite{BasorForrester, BumpGamburd, ConreyFarmerKeatingRubinsteinSnaith, ConreyFarmerZirnbauer, ChhaibiNajnudelNikeghbali} in the unitary case following several other studies in other matrix ensembles (see \cite{AkemannPottier, BaikDeiftStrahov, Bergere, BorodinOlshanskiStrahov, BorodinStrahov, BrezinHikamiCharpol, BrezinHikamiCharpolEdge, DesrosiersLiu2011, FyodorovStrahovCorrChiralGUE} and references cited ; see also section~\ref{Subsec:Ratios} for an enlarged historical perspective that includes related works before the connection with the $ CUE $ was revealed). The importance of ratios stems from the possibility of accessing the \textit{$k$-point correlation functions} by performing an appropriate limit (see e.g. the introduction of \cite{BergereEynard, FyodorovStrahovCorrChiralGUE}). Since the knowledge of the correlation functions amounts to the knowledge of the whole eigenvalues point process, this gives an additional motivation to study the characteristic polynomial for its own sake.

An asymptotic study of $ \Ree'_N(X, X', Y, Y') $ when $ N\to+\infty $ will be done in section~\ref{Subsec:Ratios}.

\subsubsection{Mid-secular coefficient}\label{Subsubsec:Midcoeff}
One key feature to study $ Z_{U_N}(x) $ is to write it as a product over its eigenvalues and to use their determinantal structure. Nevertheless, this is not the only approach worth an investigation. Following a study by Haake et al. \cite{HaakeEtAl}, Diaconis and Gamburd \cite{DiaconisGamburd} note that the data of the \textit{secular coefficients} or Fourier coefficients of $ Z_{U_N} $ defined in \eqref{FuncZ:Truncation} are equivalent to the data of the eigenvalues:
\begin{quote}
\textit{(...) we remark that a unitary matrix $M$ is conjugate on the one hand to the
diagonal matrix with eigenvalues on the diagonal, and on the other hand to the
Frobenius, or companion matrix, with first row consisting of the secular coefficients,
ones below the main diagonal, and remaining entries zero. This strongly suggests
that secular coefficients are (as Gian-Carlo Rota might have put it) ``nearly equiprimordial'' with the eigenvalues.}
\end{quote}

Contrary to the eigenangles that all have the same order, the secular coefficients $ \sc_m(U_N) $ have a different behaviour depending on their order $m$. For instance, $ \sc_1(U_N) = \tr{U_N} $ converges in law to a Gaussian random variable, the next remaining secular coefficients of fixed order converge in law to a polynomial evaluated in independent complex Gaussian random variables \cite[Prop. 4]{DiaconisGamburd}, and $ \sc_N(U_N) = \det(U_N) $ is uniform on the unit circle. A natural question raised in \cite[\S 2.4]{DiaconisGamburd} (see also \cite{DiaconisRMTSurvey}) is then the following: find the limiting behaviour of the mid-secular coefficient
\begin{align}\label{FuncZ:MidSecularCoeff}
\sc_{ \pe{N/2} }(U_N) := \oint_\Uu z^{ -\pe{N/2} - 1 } Z_{U_N}(z) \frac{dz}{2 i \pi }
\end{align}

As Diaconis and Gamburd point out, the formuli $ \Esp{ \abs{\sc_m(U_N)}^2 } = 1 $ and $ \Esp{  \sc_m(U_N)  } = 0 $ hold for all $ m \geq 0 $, ``\textit{making normality questionable}''.

We will derive an asymptotic expression for $ \Esp{ \abs{ \sc_{ \pe{\rho N } }(U_N) }^{2k} } $ when $ N\to+\infty $ for all $ \rho \in (0, 1) $ in section~\ref{Subsec:Midcoeff}. The results proven there will indeed show that the mid-coefficient is neither Gaussian, nor even log-normal, but lies in the universality class of a critical \textit{random multiplicative function} that we introduce now.

\subsubsection{Truncated characteristic polynomial}\label{Subsubsec:TruncatedCharpol}
An interesting case where number-theoretic computations are possible concerns the following truncation of the Riemann $\zeta$ function: $ \zeta_X(s) := \sum_{n  = 1}^X n^{-s} $. Conrey and Gamburd \cite{ConreyGamburd} studied the analogous problem on $ Z_{U_N}(x) $ when the truncation is taken over the Fourier expansion of $ Z_{U_N} $, i.e. the truncation of the polynomial $ Z_{U_N}(X) $ in the canonical basis $ (1, X, X^2, \dots) $. First passing to the limit (in law) in $  \zeta_X(\sigma + i T U) $  when $T\to+\infty$ (for $ \sigma\in \Rr $), then using the Bohr-Jessen theorem \cite{BohrJessen} or the Bohr correspondence (see e.g. \cite[ch. 3.2]{KowalskiRandonnee} or the introduction of \cite{HeapLindqvist}), one gets at the limit the \textit{random multiplicative function}
\begin{align}\label{Def:RandomMultiplicativeFunction}
\Ze_{X, \sigma} := \sum_{k = 1}^X k^{-\sigma} \prod_{p \in \Pe} Z_p^{v_p(k) } 
\end{align}
where $ v_p(k) $ is the $p$-adic valuation of $k \in \Nn^* $, i.e. the exponent of $k$ in its prime decomposition $ k = \prod_{p \in \Pe} p^{v_p(k) } $ and $ (Z_p)_{p \in \Pe} $ is a sequence of i.i.d. uniform random variables on the unit circle $ \Uu $.

Conrey and Gamburd are interested in the behaviour of $ \Esp{ \vert \Ze_{X, 1/2 } \vert^{2k} } $ when $ X \to +\infty $. Their theorem \cite[thm. 1]{ConreyGamburd} analogous to conjecture \eqref{Eq:MomentsConjecture} writes
\begin{align}\label{Eq:ConreyGamburd}
\Esp{ \abs{ \Ze_{X, 1/2 } }^{2k} } \equivalent{X\to+\infty} A_k \gamma_k \, e^{ k^2 \log\log X }
\end{align}
with a factor $ \gamma_k  $ different from $ M_k $, but coming instead from the truncated characteristic polynomial\footnote{The definition in \cite{ConreyGamburd} uses $ \det(U_N - z I) $ ; since the value in \eqref{Eq:ConreyGamburdCV} is taken in $1$, it changes nothing to our exposition for obvious reasons of invariance by rotation. }
\begin{align}\label{FuncZ:Truncation}
Z_{U_N, \ell}(z) := \sum_{j = 0}^\ell \sc_j(U_N) (-z)^j, \qquad \sc_j(U_N) := \oint_\Uu z^{-j} Z_{U_N}(z) \frac{dz}{2 i \pi z}
\end{align}
namely for all $ N \geq k\ell $ (one can take $ N = k\ell $)
\begin{align}\label{Eq:ConreyGamburdCV}
\frac{\Esp{ \abs{ Z_{U_N, \ell }(1) }^{2k} } }{ \ell^{k^2 } } \tendvers{\ell}{+\infty} \gamma_k   
\end{align}

This convergence was moreover generalised to $ Z_{U_N, \ell }(\lambda) $ for $ \lambda > 1 $ by Heap and Lindqvist \cite{HeapLindqvist} who also study the behaviour of $ \Ze_{X, \sigma} $ for $ \sigma < 1/2 $.

We will rederive an expression of $ \gamma_k  $ by computing another asymptotic expression of $ \Esp{ \abs{ Z_{U_N, \ell }(1) }^{2k} } $ when $ \ell\to+\infty $ and $ N = k\ell $ in section~\ref{Subsec:TruncatedCharpol}. We will moreover consider the case of $ Z_{U_N, \pe{\rho N} }(z) $ when $ N \to +\infty $ for $ \rho \in (0, 1) $ and $ z \in \Cc^* $ in addition to $ Z_{U_N, \pe{\rho N} }(e^{x/N}) $ for $ x \in \Cc $. The comparison of the different behaviours of these random variables will show the existence of a phase transition in the truncated characteristic polynomial, and that one of these truncated characteristic polynomials belongs to the universality class of the mid-secular coefficients \eqref{FuncZ:MidSecularCoeff} and the random multiplicative function \eqref{Def:RandomMultiplicativeFunction}. Moreover, the analysis of the ``microscopic landscape'' (in points of the form $ e^{ x/N} $) will lead to the question of a phase transition of $ \Ze_{X, 1/2 - \lambda_X } $ that will be treated in Annex~\ref{Sec:NumberTheory}.

\subsubsection{The volume of the Birkhoff polytope}\label{Subsubsec:Birkhoff}
Often, computations with random unitary matrices can be expressed as volumes of particular polytopes, see e.g. \cite{AssiotisKeating, ConreyGamburd, DiaconisGangolli, KeatingR3G}. In \cite[Thm. 1.4]{KeatingR3G} for instance, it comes from an expression of the autocorrelations \eqref{FuncZ:AutocorrelationsCharPol} as a sum over plane partitions (or Young tableaux). In several cases, a dictionary of equivalent expressions can be provided between volumes of polytopes and functionals of the characteristic polynomial \cite{AssiotisKeating, ConreyGamburd, KeatingR3G}. 

Arguably, the most famous polytope is the \textit{Birkhoff polytope} \cite{BirkhoffPolytope} defined by the equations of bi-stochastic matrices constraints given in definition~\ref{Def:BirkhoffPolytope}. Its volume has been the subject of several studies\footnote{As of February 2020, an internet search reveals more than 2200 papers on the topic.} \cite{CanfieldMcKay, DeLoeraLiuYoshida, PakBirkhoff} and several explicit expressions are known, albeit of several computational difficulty. 

In their study of the truncated characteristic polynomial \cite{ConreyGamburd}, Conrey and Gamburd study the volume of a related polytope. We will derive another expression of these volumes and some generalisations in section~\ref{Subsec:Birkhoff}.

\subsubsection{Sums of divisor function in $ \Ff_q[X] $}\label{Subsubsec:KR3G}
In \cite{KeatingR3G}, Keating, Rodgers, Roditty-Gershon and Rudnick study the following functional of the characteristic polynomial in link with sums of divisor functions in $ \Ff_q[X] $ when $ q \to +\infty $:
\begin{align}\label{FuncZ:KR3Gfunc}
\begin{aligned}
\raisebox{-9pt}{$I_k(m , N):= \Ee\!\Bigg( $ } \! \trimabs{ \rule{0cm}{0.5cm}\sum_{ \substack{ 1 \leq j_1, \dots, j_k \leq N \\ j_1 + \cdots + j_k = m } } \raisebox{-5pt}{ $\sc_{j_1}(U_N) \dots \sc_{j_k}(U_N) $ } }^2 \raisebox{-9pt}{$ \Bigg) $ }
\end{aligned}
\end{align}

The link between random polynomials of $ \Ff_q\crochet{X} $ when $ q \to +\infty $ and random unitary matrices is given by Katz's equidistribution theorem \cite{KatzCUE}. Here, $ m = \pe{cN} $ for a certain $ c \in \crochet{0, k} $. The asymptotic expression $ I_k(\pe{cN} , N) \sim_{N\to+\infty} N^{k^2 - 1} \Ie_c(k) $ is derived in \cite{KeatingR3G}  with a limiting constant $ \Ie_c(k) $ expressed as a combinatorial sum over partitions or with a $k$-fold integral. 

We will give another proof of this result and derive a new expression for $ \Ie_c(k) $ in section~\ref{Subsec:KR3G}.

\subsubsection{``Moments of moments''}\label{Subsubsec:MoMo}
The following functional of $ Z_{U_N} $ for $ \beta \in \Nn^* $
\begin{align}\label{FuncZ:LpNorm}
\Eee_\beta(Z_{U_N}) := \int_0^1 \abs{Z_{U_N}(e^{2 i \pi \theta}) }^{2\beta} d\theta  
\end{align}
is the $2\beta$-th power of the $ L^{2\beta}(\Uu) $ norm of $ Z_{U_N} $, or equivalently, the $ 2\beta $-th moment of $ Z_{U_N} $ for the uniform measure on $ \Uu $. The motivations for its study comes from the fact that $ \Eee_\beta(Z_{U_N})^{1/\beta} $ is a \textit{detropicalisation} of $ \max_{\, \Uu}\! \abs{ Z_{U_N} }^2  $. Indeed, one has $ \Pp $-almost surely
\begin{align*}
\frac{1}{\beta}\log\Eee_\beta(Z_{U_N}) = \frac{1}{\beta} \log\prth{ \int_0^1 e^{\beta \log\abs{Z_{U_N}(e^{2 i \pi \theta}) }^2} d\theta }  \tendvers{\beta}{+\infty} \max_{z \in \Uu} \log  \abs{Z_{U_N}(z) }^2
\end{align*}

The $ k $-th integer moments of $ \Eee_\beta( Z_{U_N} ) $ or ``moments of moments''
\begin{align}\label{FuncZ:MoMo}
\Mom(N\vert k, \beta) := \Esp{ \Eee_\beta(Z_{U_N})^k } 
\end{align}
where investigated for $ k \beta^2 < 1 $ in \cite{FyodorovHiaryKeating, FyodorovKeating} in link with $ \max_{z \in \Uu} \abs{Z_{U_N}(z) }^2 $ ; in accordance to the Keating-Snaith philosophy, one can infer from this result a conjecture on $ \max_{s \in [0, T] } \abs{\zeta\prth{\frac{1}{2} + i s U} }^2 $ in the equivalent regime (see \cite[(21), (34)]{FyodorovKeating} and \cite{FyodorovHiaryKeating}). 

In the regime $ k\beta^2 < 1 $, the general result was of the form
\begin{align*}
\Mom(N\vert k, \beta) \equivalent{N\to +\infty} c_-(k, \beta) N^{k \beta^2}
\end{align*}

Due to the method of proof using Toeplitz determinant analysis with Fisher-Hartwig singularities, the regime $ k\beta^2 > 1 $ of ``coalescing singularities'' was left to a further study in \cite{FyodorovKeating} with the following conjecture: 
\begin{align}\label{Conj:FyodorovKeatingMoMo}
\Mom(N\vert k, \beta) \equivalent{N\to +\infty} c_+(k, \beta) N^{k^2 \beta^2 + 1 - k}
\end{align}

This conjecture was recently proven in \cite{BaileyKeating} for $ k, \beta \in \Nn^* $ using combinatorics of Young tableaux and an analysis of \eqref{Eq:SchurCFKRS}, in \cite{AssiotisKeating} with combinatorics of Gelfand-Tsetlin patterns and in \cite{Fahs} for $ k, \beta \in \Rr^+ $ (including the transition $ k\beta^2 = 1 $, but without an expression of $ c_+(k, \beta) $) using Toeplitz determinant analysis.

We will rederive \eqref{Conj:FyodorovKeatingMoMo} for $ k, \beta \in \Nn^* $ with a new expression of $ c_+(k, \beta) $ in section~\ref{Subsec:MoMo}.

\subsubsection{Derivatives}\label{Subsubsec:Derivatives}

A classical question pertaining to the Riemann Zeta function is the probabilistic behaviour of its derivative on the critical axis. For $ \Re(s) > 1 $, the derivative is defined by $ \zeta'(s) = \sum_{n \geq 1} \log(n) n^{-s} $. In the strip $ \ensemble{ 0 < \Re(s) < 1 } $, it can be analytically extended using other formulas. The Riemann hypothesis is equivalent to the non existence of zeros of $ \zeta' $ on $ \ensemble{\Re > 1/2} $ \cite{Speiser}.

A first probabilistic study of $ \zeta'\prth{\frac{1}{2} + i TU} $ for $ U \sim \Us([0, 1]) $ was performed by Hadamard, Landau and Schnee in the years 1905-1909 (see the introduction of \cite{Ingham} and the references given) with the gradual computation of the following equivalent, for all $ \alpha, \beta > -\frac{1}{2} $:
\begin{align*}
\Esp{ \zeta'\prth{ \alpha + i T U } \overline{\zeta'\prth{ \beta + i T U } }  } \equivalent{T\to+\infty} \zeta''(\alpha + \beta) (\log(T))^3
\end{align*}

The years that followed saw several reinforcements of this study in particular with the replacement of $ TU \sim \Us([0, T]) $ by $ V_T \sim \Us\prth{ \ensemble{ \gamma_k, k \leq T } } $ where $ (\gamma_k)_{k \geq 1} $ denotes the sequence of zeroes of $ \zeta $ in the strip $ \ensemble{ 0 < \Re(s) < 1 } $ (see e.g. \cite{GonekDerivatives1, GonekDerivatives2, HejhalLogZetaPrime, HughesKeatingOConnell2} and \cite[\S 14]{Titchmarsh}), the case of $ U $ being treated in \cite{ConreyFourth, MezzadriDerivatives}, but this is only in \cite{ConreyRubinsteinSnaith} (following a conjecture in \cite{HughesThesis}) that the equivalent of \eqref{Eq:MomentsConjecture} was given for $ \zeta' $. It writes
\begin{align}\label{Eq:DerivativeMomentsConjecture}
\Esp{ \abs{\zeta'\prth{\frac{1}{2} + i T U } }^{2k} } \equivalent{T\to+\infty} A_k D_k e^{k^2 \log\log(T)}
\end{align}
where $ A_k $ is the arithmetic factor defined in \eqref{Def:ArithmeticFactor} and $ D_k $ comes from the following computation involving $ Z_{U_N}'(x) := \frac{d}{dx} Z_{U_N}(x) $
\begin{align*}
\Esp{ \abs{ Z_{U_N}'(1) }^{2k} } \equivalent{T\to+\infty} D_k e^{k^2 \log(N)}
\end{align*}

Several expressions of $ D_k $ are given in \cite{ConreyRubinsteinSnaith, ForresterWittePainleve3p5} following studies in \cite{BourgadeCondHaar, SnaithDerivatives}. In this article, we will focus on the following functional that generalises it: for $ \hb := (h_1, \dots, h_m) \in \Nn^m $ and $ k \geq \sum_r   h_r $, set
\begin{align}\label{FuncZ:Derivatives}
\De_N(k ; \hb ; X) := \Esp{ \abs{Z_{U_N}(e^{2 i \pi x_0/N})}^{2(k - \sum_r   h_r )} \prod_{r = 1}^m \abs{\partial^r Z_{U_N}(e^{2 i \pi x_r / N })}^{2h_r} }
\end{align}
where $ \partial^r Z_{U_N} (x)  := \prth{\frac{d}{dt}}^r Z_{U_N}(t)\vert_{t = x } $. Note that $ \Esp{ \abs{Z_{U_N}'(1) }^{2k} } = \De_N(k ; k ; (0, 0)) $.

We will derive an asymptotic expression for this quantity in section~\ref{Subsec:Derivatives}.

\begin{remark}
Riedtmann \cite{RiedtmannMixedRatios} recently investigated the asymptotic behaviour of the following functional that combines ratios and derivatives of $ Z_{U_N} $ (with restrictions ; see \cite{RiedtmannMixedRatios})
\begin{align}\label{FuncZ:MixedRatios}
\hspace{-0.27cm} \Me_N(X, X', Y, Y', Z, Z') := \Esp{ \frac{ \prod_{j = 1}^{\ell_1} Z_{U_N}\prth{ x_j  }  }{ \prod_{r = 1}^{m_1} Z_{U_N}\prth{ y_r } } \prod_{q = 1}^{d_1} Z_{U_N}'(z_q) \overline{ \frac{ \prod_{j = 1}^{\ell_2} Z_{U_N}\prth{ x'_j }  }{ \prod_{r = 1}^{m_2} Z_{U_N}\prth{ y'_r } } \prod_{q = 1}^{d_2} Z_{U_N}'(z'_q) }  } 
\end{align}

Its study by means of the combinatorial method of Dehaye \cite{POD} leads to several theorems and conjectures on the equivalent functional on the Riemann $ \zeta $ function. The previous method of computation applies to derive an asymptotic expression of $ \Me_N $ evaluated in points of the form $ e^{i x/N} $ but we leave it for a further study.
\end{remark}

\subsection{A prototype of result}

The Keating-Snaith theorem \cite[(15), (16)]{KeatingSnaith} concerns the asymptotic behaviour of $ \Esp{ \abs{Z_{U_N}(1) }^{2k} } $ and is given in \eqref{Eq:KSmomentsCharpol}. It is a particular case of e.g. the autocorrelations \eqref{FuncZ:AutocorrelationsCharPol}, the truncated characteristic polynomial \eqref{FuncZ:Truncation} or the ``moments of moments'' \eqref{FuncZ:MoMo} (see remark~\ref{Rk:KSandMoMo}).

Note that it was proven for $ k \in \Cc $ using the Selberg integral whose proof consisted in an analytic extension of the result for $ k \in \Nn $. Here, we only propose a proof in this last case, with the advantage of being general enough to treat all the remaining cases previously exposed. The form that we give is the following:

\begin{theorem}[Value of the characteristic polynomial in 1]\label{Thm:KSwithDuality} 
For all $ k \geq 2 $, one has 
\begin{align}\label{Eq:KSCUEwithDuality}
\frac{ \Esp{ \abs{Z_{U_N}(1) }^{2k} } }{ N^{k^2 } } \tendvers{N}{+\infty} \widetilde{L}_1(k)
\end{align}
where
\begin{align}\label{EqPhi:KS}
\begin{aligned}
\widetilde{L}_1(k) & := \frac{(2\pi)^{k(k - 1) } }{k!} \int_{\Rr^{k - 1} } \Phi_{\widetilde{L}_1}(0, x_2, \dots, x_k) \Delta(0, x_2, \dots, x_k)^2 dx_2 \dots dx_k \\
\Phi_{\widetilde{L}_1}(0, x_2, \dots, x_k) & :=  e^{ 2 i \pi (k^2 - 1) \sum_{ j = 2 }^k x_j } \, h_{k, \infty}^{(2k) }(0, x_2, \dots, x_k)
\end{aligned} 
\end{align} 
\end{theorem}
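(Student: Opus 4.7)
The plan is to reduce the moment to a sum over partitions using Weyl's integration formula and Cauchy-type identities, then re-encode that sum as the expectation of a symmetric observable whose $N\to+\infty$ asymptotic is governed by a local central limit theorem for a lattice-valued random variable whose characteristic function is $h_{k,N}^{(2k)}$.

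\textbf{Step 1: reduction to a partition sum.} Starting from Weyl's integration formula applied to $\abs{Z_{U_N}(1)}^{2k}=\prod_{j}\abs{1-e^{i\theta_j}}^{2k}$, expand $\prod_j(1-e^{i\theta_j})^k$ through the dual Cauchy identity as $\sum_\lambda(-1)^{\abs{\lambda}}s_{\lambda'}(1^k)\,s_\lambda(e^{i\theta})$ and do the same for its conjugate. Schur orthogonality under Haar measure forces the two partition indices to coincide and yields
\begin{equation*}
\Esp{\abs{Z_{U_N}(1)}^{2k}}=\sum_{\lambda\subset(k^N)}s_{\lambda'}(1^k)^2.
\end{equation*}
Using the Weyl dimension formula $s_{\lambda'}(1^k)=\Delta(\mu)/G(k+1)$ with $\mu_i:=\lambda_i+k-i$, this rewrites as a sum over strict $k$-tuples $0\le\mu_k<\cdots<\mu_1\le N+k-1$ weighted by the squared Vandermonde $\Delta(\mu)^2$, up to an overall normalisation.

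\textbf{Step 2: Fourier / symmetric-function interpretation.} Recast the summand as the Fourier coefficient of an explicit lattice distribution whose characteristic function is the symmetric function $h_{k,N}^{(2k)}$ introduced earlier. The phase $e^{2i\pi(k^2-1)\sum_j x_j}$ in \eqref{EqPhi:KS} emerges from the combined index shift $\mu_i=\lambda_i+k-i$ applied to an already oscillating summand, so that after rescaling one recovers the correct Fourier conjugate variable. This step identifies the ``symmetric-function generalisation of orthogonal polynomials'' mentioned in the introduction as the relevant characteristic function.

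\textbf{Step 3: rescaling and local CLT.} Set $x_i:=\mu_i/N$ and fix one coordinate (say $x_1=0$) by translation invariance, which absorbs one factor of $N$ coming from the sum in the removed direction. The squared Vandermonde contributes $N^{k(k-1)}$, while the Riemann sum over the remaining $k-1$ coordinates contributes $N^{k-1}$ together with the volume factor $(2\pi)^{k(k-1)}/k!$ after symmetrising. Combining these gives the prefactor $N^{k^2}$. The local CLT established earlier in the paper ensures $h_{k,N}^{(2k)}(\mu/N)\to h_{k,\infty}^{(2k)}(x)$ pointwise, and the Riemann sum converges to the integral
\begin{equation*}
\frac{(2\pi)^{k(k-1)}}{k!}\int_{\Rr^{k-1}}\Phi_{\widetilde{L}_1}(0,x_2,\ldots,x_k)\,\Delta(0,x_2,\ldots,x_k)^2\,dx_2\cdots dx_k.
\end{equation*}

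The main obstacle is the tail/uniform control required to pass the local CLT limit through the partition sum: pointwise convergence is insufficient because the Vandermonde $\Delta(\mu)^2$ contributes polynomial growth in $\mu$, so a moderate-deviation bound or an integrable majorant for $\abs{h_{k,N}^{(2k)}}\cdot\Delta^2$ is needed to apply dominated convergence. A secondary technical task is the exact bookkeeping of the phase exponent $k^2-1$, which arises from the interaction between the index shift $k-i$ and the Fourier conjugate variable in the local CLT.
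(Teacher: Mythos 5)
Your proposal takes a genuinely different route from the paper's, and the route you sketch does not actually land on the formula \eqref{EqPhi:KS}. The paper's proof never passes through a partition sum: it uses \eqref{Eq:SchurJointMoments} to write $\Esp{\abs{Z_{U_N}(1)}^{2k}}=s_{N^k}\!\crochet{1^{\plusInOne 2k}}$, then applies the reproducing-kernel Fourier representation \eqref{FourierRepr:SchurRectangle}/\eqref{FourierRepr:SchurRectangleWithHn} to write this as a $k$-fold contour integral $\frac{1}{k!}\oint_{\Uu^k}U^{-N}h_{Nk}\!\crochet{1^{\plusInOne 2k}U}\abs{\Delta(U)}^2\,\frac{d^*U}{U}$, uses the \emph{homogeneity} of $h_{Nk}$ and $\abs{\Delta}^2$ to integrate out $u_1$ and reduce to a $(k-1)$-fold integral, and finally rescales the integrand via the local CLT of Lemma~\ref{Lemma:RescaledGegenbauer} and the domination of Lemma~\ref{Lemma:IntegrabilityDomination}. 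The Gegenbauer-type function $h^{(2k)}_{k,\infty}$ appears because $h_{Nk}\!\crochet{1^{\plusInOne 2k}U}$ is literally a multivariate Gegenbauer polynomial evaluated on $\Uu^k$.

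Your Step~1 (dual Cauchy plus Schur orthogonality) is correct and gives $\Esp{\abs{Z_{U_N}(1)}^{2k}}=\sum_{\nu\subset N^k}s_\nu(1^k)^2$; that is a legitimate starting point, equivalent to the paper's via the rectangular branching identity. But the subsequent steps do not follow. In Step~2 you assert the summand can be "recast as the Fourier coefficient of a lattice distribution whose characteristic function is $h^{(2k)}_{k,N}$" and that the phase $e^{2i\pi(k^2-1)\sum_j x_j}$ comes from "an already oscillating summand." This is not the case: after the Weyl dimension formula the summand is $\Delta(\mu)^2/G(k+1)^2$, a nonnegative polynomial in the strict tuple $\mu$, with no oscillation whatsoever. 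There is no manipulation of the form you describe that produces the oscillatory kernel $\Phi_{\widetilde L_1}$ from a nonnegative Riemann sum. If you carry your Step~3 through honestly, the natural limit of $\frac{1}{G(k+1)^2}\sum_{0\le\mu_k<\cdots<\mu_1\le N+k-1}\Delta(\mu)^2$ is the simplex integral $\frac{N^{k^2}}{k!\,G(k+1)^2}\int_{[0,1]^k}\Delta(t)^2\,dt\,(1+o(1))$; this is an equivalent formula for the matrix factor $M_k$, but it is not \eqref{EqPhi:KS}. You would still owe a separate argument (e.g.\ via a Selberg/Mehta integral) to identify that number with $\widetilde L_1(k)$, which is exactly what Theorem~\ref{Thm:KSwithDuality} is supposed to \emph{produce}, not presuppose.

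Your Step~3 also contains a wrong justification: you fix $x_1=0$ "by translation invariance," but the summation range $0\le\mu_k<\cdots<\mu_1\le N+k-1$ is not translation invariant (only the Vandermonde is), so one coordinate cannot be removed this way from the partition sum. In the paper, the reduction from $k$ integration variables to $k-1$ comes from the multiplicative homogeneity $h_{Nk}\!\crochet{\lambda\Ae}=\lambda^{Nk}h_{Nk}\!\crochet{\Ae}$ applied inside the contour integral, together with remark~\ref{Rk:MultivariateFourierWithHomogeneity}; that is a structurally different mechanism which you cannot invoke at the level of the partition sum. Finally, you are right that dominated convergence is the technical crux, and the paper settles it with \eqref{Ineq:SpeedOfConvergenceHinftyBis} and criterion \eqref{Ineq:IntegrabilityDomination}; but that machinery only applies once you are in the contour-integral representation where the integrand is controlled by a product of $\sinc$ functions.
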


The function $ h_{c, \infty}^{(\kappa) }(x_1, x_2, \dots, x_k) $ is defined in lemma \ref{Lemma:RescaledGegenbauer} and is a limiting version of the homogeneous complete symmetric function $ h_N $ (see section~\ref{Sec:Notations}) seen as a multivariate extension of the Gegenbauer polynomials. It has several expressions coming from several representations (integral in lemma~\ref{Lemma:RescaledGegenbauer}, probabilistic with beta-gamma random variables in lemma~\ref{Lemma:BetaProbRepr}, exponential-polynomial in lemma~\ref{Lemma:RescaledSupersym}, etc.). 

Note that the integral is on $ \Rr^{k - 1} $, hence the restriction on $ k \geq 2 $. The case $ k = 1 $ can be computed by hand and gives $ \widetilde{L}_1(1) = 1 $. 

This theorem is proven in section~\ref{Subsec:KS}.

\subsection{Comparison with the literature}\label{Subsec:Intro:ComparisonLiterature}

A plethora of ad-hoc methods can be used to treat some (but not all) of the previous problems~:
\begin{itemize}

\item The \textit{Toeplitz method} is based on the fact that $ \int_{\Ue_N} \det(f(U)) dU = \det(\Tt_N(f)) $ (Andr\'eieff-Heine-Szeg\"o identity or continuous Cauchy-Binet identity, see \cite[\textit{fact five}]{DiaconisRMTSurvey}) where $ \Tt_N(f) $ is the $ N \times N $ Toeplitz matrix of general term $ A_{k, j} := \crochet{z^{k - j } } f(z) $ (see section~\ref{Sec:Notations} for notations) and $ dU $ is the normalised Haar measure of $ \Ue_N $. It is often coupled with a \textit{Riemann-Hilbert} analysis \cite{DeiftIntegrable, DeiftItsKrasovsky} to analyse the orthogonal polynonomials on the unit circle (OPUC) that arise in relation with $f$. It is useful to study \eqref{FuncZ:AutocorrelationsCharPol}, \eqref{FuncZ:Ratios} and \eqref{FuncZ:MoMo} ; for instance, in the case of \eqref{FuncZ:Ratios}, a formula due to Day \cite{Day} was first derived in the framework of Toeplitz determinants, but it was rederived in \cite{ConreyForresterSnaith} with an ad-hoc method from \cite{BasorForrester} and in \cite[rk. 6]{BumpGamburd} using symmetric function theory. Moreover, as reveals \cite{Fahs} in the case of \eqref{FuncZ:MoMo}, the singularity analysis can become extremely involved and the limiting constant become out of reach.

\medskip
\item The \textit{polytope method} consists in expressing the quantity of interest using a polytope, usually with its Ehrhart polynomial defined in \eqref{Def:EhrhartPol} whose higher degree coefficient is the volume of the polytope. It was used in the study of \eqref{FuncZ:Truncation}, \eqref{FuncZ:KR3Gfunc} and \eqref{FuncZ:MoMo} and could give possibly a result in the study of \eqref{FuncZ:MidSecularCoeff}.

\medskip
\item It is related with Dehaye's \textit{combinatorial method} that expresses expectations of the previous functionals as sums over Young diagrams (see e.g. \cite{POD, RiedtmannMixedRatios} and references cited for an overview), hence, not necessarily of polytopial type. Such a method was used to analyse \eqref{FuncZ:Derivatives} and \eqref{FuncZ:MixedRatios} and can be used for \eqref{FuncZ:AutocorrelationsCharPol}, \eqref{FuncZ:Ratios}, \eqref{FuncZ:MixedRatios} and perhaps \eqref{FuncZ:Truncation} but is unlikely to give any result for \eqref{FuncZ:MidSecularCoeff} for instance. Moreover, some restrictions apply in the study of \eqref{FuncZ:MixedRatios} (see \cite{RiedtmannMixedRatios}).

\medskip
\item The \textit{binomial method} was imported by Dehaye \cite[\S~3]{PODfpsac} from asymptotic representation theory to study \eqref{FuncZ:Derivatives}~; it is a variation of the (generalised) binomial theorem of Okounkov-Olshanski \cite[thm. 5.1]{OkounkovOlshanski1} (see also \cite{OkounkovOlshanski2}) itself a consequence of the Cauchy-Binet formula (see \S~\ref{Subsec:Ultimate:Expansions}). It shares similarities with correlation function expansions in integrable and determinantal probability (see \S~\ref{Subsec:Intro:PhilosophyMethod}). It was so far only used to study \eqref{FuncZ:Derivatives} but could be extended to study all previous functionals possibly with the exception of \eqref{FuncZ:Ratios} and \eqref{FuncZ:MixedRatios}.

\medskip
\item \textit{Determinantal techniques} are based on formuli of Fredholm determinants and the specification of a determinantal point process by means of a particular kernel that contains the whole information on the process. Here, the key point lies in the ability to translate the problem into a pure determinantal problem. So far, it was used by Gorodetsky and Rodgers \cite{GorodetskyRodgers} to tackle a generalisation of \eqref{FuncZ:KR3Gfunc} (see remark~\ref{Rk:GorodetskyRodgers}) and by Assiotis, Keating and Warren \cite{AssiotisKeatingWarren} (after a Cayley transform that maps unitary matrices to hermitian matrices) to tackle \eqref{FuncZ:Derivatives} (see \S~\ref{Subsubsec:Applications:Derivatives:Motivations}). In each of these cases, the relevant kernels arose from harmonic analysis on an infinite group \cite{Kerov, KerovOlshanskiVershik, OlshanskiSymInf2, OlshanskiVershik}~: the case treated in \cite{GorodetskyRodgers} uses $ z $-measures on the infinite symmetric group $ \Sg_\infty $ \cite{BorodinInfSym, BorodinOlshanskiZmeas1, BorodinOlshanskiZmeas2, OkounkovZmeas, OlshanskiSymInf1}, and the case treated in \cite{AssiotisKeatingWarren} uses Hua-Pickrell measures on the infinite Hermitian group $ \He_\infty $ \cite{AssiotisHuaPickrell, AssiotisNajnudel, BorodinOlshanskiErg, OlshanskiVershik, Pickrell}.

Note also that all these problems can be transformed in full generality into problems of hermitian matrices using the Cayley transform given in \S~\ref{Subsec:Theory:DualityHermitian}, in which case, hermitian techniques are available (see e.g. \cite{DesrosiersDuality, Neretin} and references cited).

\medskip
\item Techniques of \textit{integrable systems} (and more precisely $ \sigma $-forms of Painlev\'e equations) have recently proven useful to study \eqref{FuncZ:Derivatives} and \eqref{FuncZ:MixedRatios} \cite{BaileyBettinBlowerConreyProkhorovRubinsteinSnaith, BasorBleherBuckinghamGravaIts2Keating, ForresterWittePainleve3p5}. For instance, \cite{BasorBleherBuckinghamGravaIts2Keating} exhibits a Painlev\'e V equation in \eqref{FuncZ:Derivatives} starting with Winn's approach \cite{Winn} (that amounts to a Cayley transform and a $ LUE $ formula that writes \eqref{FuncZ:AutocorrelationsCharPol} with a Toeplitz/size-dependent Hankel determinant) by means of a Riemann-Hilbert technique~; the moments of (logarithmic) derivatives akin to \eqref{FuncZ:MixedRatios} and studied in \cite{ConreyRubinsteinSnaith} were shown in \cite{ForresterWittePainleve3p5} to be related with a Painlev\'e III' equation again with a Toeplitz/Hankel connection. 

 
\medskip
\item \&c. 

\end{itemize}

Apart from these methods that have their restricted domain of applicability, a last general method emerges to derive the asymptotic behaviour of the previous quantities. It was used on \eqref{FuncZ:AutocorrelationsCharPol}, \eqref{FuncZ:Ratios}, \eqref{FuncZ:Derivatives}, \eqref{FuncZ:KR3Gfunc} and can be used in a straightforward way for \eqref{FuncZ:Truncation}, \eqref{FuncZ:MixedRatios}, \eqref{FuncZ:LpNorm} and maybe \eqref{FuncZ:MidSecularCoeff} (with some additional tricks and difficulties, though). It is based on the following contour integral formula that has first appeared in \cite[(2.17)]{ConreyFarmerKeatingRubinsteinSnaith} (see also \cite{BrezinHikamiCharpol} for a related formula in the limit $ N \to +\infty $):
\begin{align}\label{Eq:SchurCFKRS}
\begin{aligned}
& s_{N^m}(e^{-\alpha_1}, \dots, e^{-\alpha_m} , e^{-\alpha_{m + 1} } , \dots, e^{-\alpha_k}) \\ 
& \hspace{+1cm} = \frac{ (-1)^{\frac{k(k - 1)}{2} } }{k!} {k \choose m} \!\oint_{\Ce}  e^{-N (z_{m + 1} + \cdots + z_k ) }\! \!\!\!\! \prod_{ \substack{ 1 \leq \ell \leq m, \\ m + 1 \leq r \leq k   }  }\!\! \!\!\Ze(z_r - z_\ell ) \, \frac{\Delta(Z)^2}{ \prod_{ 1 \leq i, j \leq k } (z_i - \alpha_j) } \frac{dZ}{(2 i \pi)^k}  
\end{aligned}
\end{align}
%
%
%
\begin{align*}
\mbox{with }   \Ze(x) := \frac{1}{1 - e^{-x} } \, . \hspace{+12.5cm}
\end{align*}
%
%
%

Here, we have used the fact, noticed in \cite{BumpGamburd} and \cite{ConreyFarmerZirnbauer} that the autocorrelations \eqref{FuncZ:AutocorrelationsCharPol} can be written as a Schur function indexed by a rectangular diagram up to a factor $ e^{-N (\alpha_{m + 1} + \cdots + \alpha_k ) } $ (see section~\ref{Sec:Notations} for notations and definitions and section~\ref{Subsec:SchurAndCUE} for the derivation of this result). Up to a dilation of $ s_{N^m} $, the $ \alpha_j \in \Cc $ can be taken to have a modulus smaller than $1$ ; in this case, the contour $ \Ce $ which is supposed to enclose the variables $ \alpha_j $ can be taken as the product of unit circles $ \Uu^k $.

The formula \eqref{Eq:SchurCFKRS} is a contour integral reformulation of a sum over particular permutations on disjoint subsets. Note that the integrand in \eqref{Eq:SchurCFKRS} is \textit{not} a symmetric function\footnote{See nevertheless \S~\ref{Subsec:Theory:CFKRSwithRKHS} for an analysis of this fact.} in $ Z $ due to the term $ e^{-N (z_{m + 1} + \cdots + z_k ) } $. It was originally designed to stress similarities with the case of the Riemann $ \zeta $ function and other $L$-functions (see \cite[conj. 2.2]{ConreyFarmerKeatingRubinsteinSnaith} and \cite{CFKRSZeta}: the function $ \zeta(1 + \cdot) $  conjecturally plays a r\^ole analogous to $ \Ze $). Nevertheless, its use in practical computations such as e.g. \eqref{FuncZ:KR3Gfunc} leads to several complications, see e.g. \cite[\S 4.4]{KeatingR3G} or \cite{BaileyKeating}.

Let us analyse \cite[\S 4.4]{KeatingR3G} in details: the quantity of interest is $ \crochet{x^{\pe{cN} } } x^{kN} s_{N^k}(x 1^k, 1^k) $ where $ 1^k = (1, \dots, 1) $ ($k$ times), $ x 1^k = (x, \dots, x) $ and $ c \in (0, k) $ (see the proof of Theorem~\ref{Thm:KR3G:withFormula} for a derivation). The formula \eqref{Eq:SchurCFKRS} is thus first written for $x$ fixed, and the Fourier coefficient is taken at the end. A first step shrinks the contour $ \Ce $ into $ 2^{2k} $ small circles around $ 0 $ or $ \log(x) $ and leads to a sum of the form $ \sum_{0 \leq \ell \leq k} {k \choose \ell}^2 P_{k, \ell}(x) + R_N(x)$ \cite[(4.26)]{KeatingR3G}. A second step consists in showing that the remainder $ R_N(x) $ (namely a sum of integrals where different contours surround $\log(x)$ and $ 0 $) does not contribute \cite[lem. 4.11]{KeatingR3G}. The main term is then analysed with a change of variables of the form $ z_i = \log(x) + v_i/N $ or $ z_i = v_i/N $ and several tricks involving products defined by exclusive conditions on indices \cite[(4.28)]{KeatingR3G}. Finally, picking out the $ \crochet{x^{\pe{cN} } } $-Fourier coefficient leads to an expansion into the critical quantities \cite[(4.30) or (4.31)]{KeatingR3G}. The analysis of these last quantities uses the limiting behaviour of a particular rescaling of the homogeneous symmetric functions $ h_N $ \cite[lem. 4.12]{KeatingR3G}, in the same way we will do in this article~; but these computations somehow hide the fact that this rescaling is the critical step, lost as it is in the middle of other manipulations. All these steps are aguably quite involved and ad-hoc, but an analysis in the same flavour is also performed in \cite[(3.6)]{ConreyRubinsteinSnaith}, and in \cite[thm. 1.2]{BaileyKeating}. In each cases, the final form given by this procedure has a certain complexity (see e.g. \cite[thm. 1.6]{KeatingR3G}). 

We analyse in \S~\ref{Subsec:Theory:CFKRSwithRKHS} the origin of \eqref{Eq:SchurCFKRS} with the general theory presented in this paper. 

\begin{remark}\label{Rk:OtherApproachesSchur}
Since the previous problems are equivalent to a particular rescaling of a Schur function, one can also look at the literature on this topic. This is done in \S~\ref{Subsec:Ultimate:OtherApproachesSchur}.
\end{remark}

\subsection{Presentation of the method}

We propose a general framework based on symmetric functions and multivariate generalisations of classical orthogonal polynomials written in a probabilistic fashion to compute the limiting behaviour of all the previous functionals of $ Z_{U_N} $. One of the main formuli that replaces \eqref{Eq:SchurCFKRS} is given in \eqref{FourierRepr:SchurRectangle}/\eqref{FourierRepr:SchurRectangleWithHn} and writes \textit{for any alphabet $ \Ae $} (see section~\ref{Sec:Notations} for notations and concepts)
\begin{align}\label{Eq:SchurWithHn}
\boxed{ s_{N^k}\crochet{\Ae} = \frac{1}{k!} \oint_{ \Uu^k }  U^{-N}  h_{Nk}\crochet{U \Ae} \abs{\Delta(U)}^2 \frac{d^*U}{ U} }
\end{align}

Based on this methodology, we prove that all the previous functionals of the characteristic polynomial can be written and rescaled by simply rescaling in a particular regime $ h_{Nk } \crochet{U \Ae} $ which is a symmetric function generalisation of some classical orthogonal polynomials. This article will only be concerned with the case of the Gegenbauer (or ultraspherical) polynomials, and their particular Tchebychev specialisation, but other problems could lead to other natural generalisations. Last, the method of rescaling will be of probabilistic nature: we will find a probabilistic representation of these polynomials and show that the regime of limit taken so far in all previous examples is, in probabilistic terms, a local limit theorem or local Central Limit Theorem (local CLT, in short).

We stress the fact that our framework is very general and allows all previous functionals $ \Ie_N(k) $ to satisfy an asymptotic equality of the form
\begin{align*}
\Ie_N(k) \sim N^{ s_{\Ie}(k) } \Ie(k) 
\end{align*}
with $ s_{\Ie}(k) \in \Nn $ and
\begin{align}\label{Def:FormPhi}
\Ie(k) := \frac{(2\pi)^{k(k - 1)}}{k!} \int_{ \Rr^{k - 1} } \Phi_\Ie(x_2, \dots, x_k) \Delta(0, x_2, \dots, x_k)^2 dx_2 \dots dx_k
\end{align}
for a suitable function $ \Phi_\Ie : \Rr^{k - 1} \to \Cc $ (sometimes, we can also have $ \Rr^k $ and $ \Delta(x_1, \dots, x_k) $).

\subsection{Philosophy of the method}\label{Subsec:Intro:PhilosophyMethod}

The method can be described as Hilbertian: formula \eqref{Eq:SchurWithHn} is better understood in the setting of \textit{reproducing kernel Hilbert spaces} (RKHS, see e.g. \cite{Aronszajn1, Aronszajn2, SchwartzRKHS}) of symmetric functions if one writes (see remark~\ref{Rk:RestrictedKernel})
\begin{align*}
s_{N^k}\crochet{\Ae} = \bracket{s_{N^k}, K_N\prth{\Ae, \cdot} }_{ L^2_{sym}(\Uu^k) }
\end{align*}
with a scalar product on $ \Ue_k $ or equivalently on symmetric functions in $ L^2(\Uu^k) $ (namely with $ k $ effective variables) since we manipulate class functions (see \S~\ref{Subsec:ProdScal}). There exists a general reproducing kernel on the space of all symmetric functions, the \textit{Cauchy kernel} (see \S~\ref{Subsec:ProdScal}) but we will use its restriction on the space of symmetric functions of degree $ kN $ given by $ K_N\prth{\Ae, X} := h_{kN}\crochet{\Ae X} $ (see remark~\ref{Rk:RestrictedKernel}) as it is more suitable for asymptotic analysis.

\medskip

The method also shares conceptual similarities with determinantal and integrable probabilities (see e.g. \cite{JohanssonHouches, BorodinPetrov}). In these last fields, one often wants to compute fluctuations of a random variable $ \lambdab_N $ starting with a formula of the form $ \Prob{ \lambdab_N \leq x} = \int_{\Xe^N } f_x(Y) d\mu(Y) $ that gets transformed into an absolutely convergent expansion of the form $ \Prob{ \lambdab_N \leq x} = \sum_{k \geq 0} \int_{\Xe^k} \! K_{N, x ; k}(y) d\nu_k(y) $ for certain functions $ K_{N, x ; k} : \Xe^k \to \Rr $. For instance, the case of determinantal point processes gives $ K_{N, x ; k}(y_1, \dots, y_k) = \frac{(-1)^k}{k!} \det(\Ke_{N, x}(y_i, y_j))_{1 \leq i, j \leq k} $ for a certain function $ \Ke_{N, x} $ (also called ``kernel'' in the determinantal case). Once the main problem of probability theory (the growing number of integrals) is resolved (since each term in the expansion has a fixed number of integrals), one is reduced to rescale the functions $ K_{N, x ; k } $ (i.e. the kernel $ \Ke_{N, x} $ in the determinantal case) in a regime involving the rescaling of $x$ and the $ y_i $'s. 

This is exaclty what we do here\footnote{Nevertheless, the analogy in terms of expansion is given by the binomial formula used by Dehaye \cite{POD}.}: the initial integral over $ \Ue_N $ (hence with a growing number $N$ of integrals) is reduced to an integral over $ \Uu^k $ that writes $ \int_{\Uu^k } K_N(U ,\Ae) d\mu(U) $ and one then focuses on the sole rescaling of the ``kernel'' $ K_N(U, \Ae) $. This last terminology is in addition the correct one in the theory of RKHS~: $ K_N(U ,\Ae) $ is called a \textit{reproducing kernel}.

\subsection{Advantages and drawbacks}

One general drawback that this method shares with its original counterpart but not with some ad-hoc manipulations is the impossibility to extend analytically in $k$ the functional. 

There are nevertheless several advantages in using \eqref{Eq:SchurWithHn} in place of \eqref{Eq:SchurCFKRS} to perform computations in the previous problems:
\begin{itemize}

\item it is more ergonomic in the sense that the obtained expressions remain exact until the very moment one passes to the limit, using dominated convergence. It avoids manipulations of contours, intermediate approximations, etc. Moreover, the domination we employ comes from the probabilistic representation of $ h_N\crochet{\Ae} $ as a Fourier transform and amounts to do almost sure manipulations under the form of the simple inequalities \eqref{Ineq:BoundXi}.

\medskip
\item It is more likely to be generalised into two different directions: on the one hand, the ``alphabet'' $ \Ae $ in \eqref{Eq:SchurWithHn} can be specialised in an abstract sense (see \S~\ref{Subsec:FuncSym}) that comprises several cases ; this is not the case of \eqref{Eq:SchurCFKRS} which, even if it writes under a ``similar'' form in the supersymmetric version of \cite{ConreyFarmerZirnbauer}%
\footnote{The residue formula equivalent to \eqref{Eq:SchurCFKRS} is given in the supersymmetric case in e.g. \cite[Thm. 1.3]{BaileyBettinBlowerConreyProkhorovRubinsteinSnaith}. 
Note that our definition of $ Z_{U_N} $ is $ \Lambda_{U_N} $ in \cite{ConreyFarmerZirnbauer} and \cite{BaileyBettinBlowerConreyProkhorovRubinsteinSnaith}.} 
is not a straighforward replacement of quantities in the vein of an abstract specialisation due to the dichotomy between classical and exponential variables and other obstructions (see \S~\ref{Subsec:Theory:CFKRSwithRKHS}). On the other hand, formula~\eqref{Eq:SchurWithHn} and the underlying philosophy of proof can be extended in a straightforward way to the orthogonal and symplectic circular ensembles with a zonal polynomial or more generally to circular $ \beta $ ensembles, replacing the Schur function by a Jack polynomial expansion (see \S~\ref{Subsec:DualityWithRKHSandPlethysm}). In fact, the philosophy could also be applied to any ensemble, if one defines the correct equivalent of rectangular Schur function as in \cite{JonnadulaKeatingMezzadri}, and it could even be generalised to other random variables associated to more general symmetric functions such as the Hall-Littlewood ones (see remark~\ref{Rk:HLtoSchur}).

\medskip
\item The phenomenon of ``hyperplane concentration'' is explained in a clear way, using the scaling of the reproducing kernel $ h_{kN}\crochet{X\Ae} $~: one sees that the integral in \eqref{Def:FormPhi} is on $ \Rr^{k - 1} $ when one starts from a formula on $ \Uu^k $. Here again, the initial formula can be transformed into an integral over $ \Uu^{k - 1} $ with an equality, \textit{before} passing to the limit. More generally, the scaling property of the Schur function, namely $ s_\lambda(t X) = t^{\abs{\lambda} } s_\lambda(X) $ is clear in \eqref{Eq:SchurWithHn} as a consequence of the scaling of $ h_{Nk} $ and hidden in \eqref{Eq:SchurCFKRS}.

\medskip
\item One can continue the expansion of the limit in a straightforward way (although with tedious computations) using the natural expansion of all involved quantities (see \S~\ref{Subsec:Ultimate:Expansions}).

\medskip
\item It gives a general explanation for the appearance of Hankel determinants in the limit (see \S~\ref{Subsec:Ultimate:Hankel}).

\medskip
\item All the limiting functionals have the same form \eqref{Def:FormPhi}, coming from a unifying framework for the asymptotic analysis of all the previous problems.

\medskip
\item There are three possible alternative ways to conclude the computations in each case, one of which being more probabilistic than the others (the \textit{randomisation paradigm}, see \S~\ref{Subsec:RandomisationParadigm}).

\end{itemize}

On these two last points, let us precise that although the Keating-Snaith moments computation given in theorem \ref{Thm:KSwithDuality} is e.g. a particular case of theorem \ref{Theorem:TruncatedCharpolIn1} (and others), we will see that the limiting expressions will be different in the two theorems. By unicity of the limit, these expressions will thus be equal. Only the function $ \Phi_{\widetilde{L}_1} $ will be changed, keeping thus the general form intact. These differences can be explained for fixed $N$ by equivalent formulas of symmetric functions and the different ways to rescale them.

We hope this plethora of new expressions of the same nature will bring new bridges between the existing results and shed new lights on their unified nature. In each of the treated cases, the expressions under the form \eqref{Def:FormPhi} can be compared with the existing ones, bringing more instances to the zoo of known expressions and the zoo of different methods to analyse the characteristic polynomial such as described in section \ref{Subsec:Motivations}. We recapitulate all these results in section \ref{Subsec:FinalTable}.

\subsection{Philosophy of the rescaling}

The formula \eqref{Eq:SchurWithHn} will be rescaled by finding a probabilistic representation of $ h_{Nk}\crochet{U \Ae } $ under the form $ \Ee( [ \sum_{ j = 1 }^k u_j Z_j(\Ae) ]^N ) $ for independent random variables $ Z_j(\Ae) $ (see annex~\ref{Subsec:ProbaHn}). Changing variables in \eqref{Eq:SchurWithHn} by setting $ u_j = e^{2 i \pi \theta_j /N } $ will amount to a local CLT. We recall that if $ S_n $ is an integer-valued random variable with expectation $ \mu_n $ and variance $ \sigma_n^2 $ that satisfies a Gaussian CLT, one can estimate $ \Prob{S_n = \pe{nx} } $ using Fourier inversion:
\begin{align*}
\Prob{ S_n = \pe{nx} } & = \int_{-1/2}^{1/2} \Esp{ e^{2 i \pi \theta (S_n - \pe{nx}) } } d\theta = \int_{-\sigma_n/2}^{\sigma_n/2} \Esp{ e^{2 i \pi \alpha (S_n - \mu_n)/\sigma_n } } e^{2 i \pi \alpha (\mu_n- \pe{nx})/\sigma_n } \frac{d\alpha}{\sigma_n} \\
              & \approx \frac{1}{\sigma_n} \int_\Rr e^{-(2\pi\alpha)^2/2} e^{2 i \pi \alpha (\mu_n - \pe{nx})/\sigma_n } d\alpha = \frac{1}{\sigma_n \sqrt{2\pi} } e^{ -(\mu_n - \pe{nx})^2/2\sigma_n^2 }
\end{align*}

The only step to justify is the approximation $ \approx $ that comes from the Central Limit Theorem. This is a consequence of the Dominated Convergence Theorem if one can bound $ \abs{ \Esp{ e^{ i \theta (S_n - \mu_n)/\sigma_n } } - e^{-\theta^2/2} } \Unens{ \abs{\theta } \leq \pi \sigma_n } $. This will also be our strategy, leading to the dominations of Annex~\ref{Sec:ProbReprGegenbauer}. In fact, we will be concerned with the simpler case where $ \mu_n = 0 $ and $ X_n/\sigma_n \to X_\infty $ in law, with an absolutely continuous Lebesgue density $ f_{X_\infty} $:
\begin{align}\label{Eq:PhilosophyCLT}
\begin{aligned}
\Prob{ S_n = \pe{ x \sigma_n } } & = \int_{-1/2}^{1/2} \Esp{ e^{2i\pi \theta (S_n - \pe{ x\sigma_n }) } } d\theta = \int_{- \sigma_n/2 }^{ \sigma_n /2 } \Esp{ e^{2i\pi \alpha (S_n - \pe{ x\sigma_n })/\sigma_n } } \frac{d\alpha}{\sigma_n} \\
              & \approx \frac{1}{\sigma_n} \int_\Rr \Esp{ e^{2 i \pi \alpha(X_\infty - x) } } d\alpha = \frac{1}{\sigma_n  } f_{X_\infty}(x)
\end{aligned}
\end{align}

We will not be in the Gaussian setting, the limit using instead beta-gamma random variables (lemmas~\ref{Lemma:GammaProbRepr}, \ref{Lemma:BetaProbRepr}).

The approach that consists in translating the asymptotic estimation of a Fourier coefficient into a local CLT can be traced back to Hayman \cite{Hayman} who used it to prove the Stirling approximation formula as an alternative to the Laplace method or the steepest descent analysis, but it was only later remarked by Rosenbloom \cite{Rosenbloom} that it amounts to prove a local limit theorem. This philosophy culminates in the work of B\'aez-Duarte \cite{BaezDuartePn} who uses it to give a probabilistic proof of the Hardy-Ramanujan estimate of the number of partitions of a large integer (\textit{partitio numerorum}). It was morever rediscovered independently by Romik \cite{RomikPn} who used it to reprove a result of Szekeres \cite{SzekeresPn} itself simplified by Canfield \cite{CanfieldPn} about the asymptotic behaviour of partitions restricted to a rectangle. 

This philosophy is very well described in \cite{BaezDuartePn}. Since it is primarily based on Fourier inversion, we will start by expressing the quantities of interest as Fourier coefficients\footnote{One can already see that \eqref{Eq:SchurWithHn} is a Fourier coefficient with the convention $ U^{-N} := \prod_{j = 1}^k u_j^{-N} $.}. 

\medskip
We stress the fact that this last philosophy consists in injecting probability in a domain a priori unrelated with any probabilistic feature ; this is not the case of random matrix theory, but in the particular case of random unitary matrices, the set of methods described in section~\ref{Subsec:Motivations} are mainly not probabilistic (with the notable exception of \cite{BarhoumiHughesNajnudelNikeghbali, BourgadeCondHaar, BHNY, ChhaibiNajnudelNikeghbali}). Another goal of this article is thus to show that this whole field can be reshaped \textit{in a more probabilistic fashion}.

\medskip
\subsection{The randomisation paradigm}\label{Subsec:Intro:RandomisationParadigm}

A last point of importance this article will highlight is the prominence of one particular functional amongst all the functionals previously described~: the mid-secular coefficients. Indeed, all previous functionals (with the exception of the ratios) write as products of linear functionals of $ Z_{U_N} $, and as a result can be written as linear statistics of the secular coefficients under the form
\begin{align*}
\Le_A(Z_{U_N}) = \sum_{k = 0}^N a_k \sc_k(U_N) = \prth{ \sum_{k = 0}^N \abs{a_k} } \Esp{ \sc_{A_N}(U_N) e^{i \Theta_{A_N}} \vert U_N }, \qquad a_k := \abs{a_k} e^{i \Theta_k}
\end{align*}
where $ A_N \! \in \intcrochet{0, N} $ is the random variable given by $ \Prob{A_N = k} := \abs{a_k} ( \sum_{\ell = 0}^N \abs{a_\ell} )\inv \Unens{ 0 \leq k \leq N } $. 

Suppose to simplify that $ a_k \in \Rr_+ $. Choosing a sequence of i.i.d. $ (A_N^{(j)})_{j \geq 1} $ gives 
\begin{align*}
\Esp{ \abs{ \Le_A(Z_{U_N}) }^{2k} } = \prth{ \sum_{\ell = 0}^N  a_\ell  }^{\!\! 2k} \Esp{ \prod_{j = 1}^k \sc_{A_N^{(j)} }(U_N) \overline{ \sc_{A_N^{(j + k)} }(U_N) } }
\end{align*}
and one is reduced to study the behaviour of the randomised sequence $ \sc_{A_N}(U_N) $ (see \S~\ref{Subsec:RandomisationParadigm} for a more complete description). In the particular case where $ A_N/N \to A_\infty $ in law or in $ L^1 $, one has $ A_N \approx \pe{N A_\infty} $ which is exactly a ``quenched'' version of the mid-secular coefficients $ \sc_{\pe{\rho N} }(U_N) $ if one conditions on $ A_\infty $ or if $ A_\infty $ is almost surely a constant, and an ``annealed'' one when integrating on all present random variables (which is ultimately what is done). Playing on these two levels of randomness will show that this is precisely the behaviour of this randomisation that is of importance to understand the global behaviour of the moments. This will particularly be well illustrated with the discrepancy between the Conrey-Gamburd theorem~\ref{Theorem:TruncatedCharpolIn1} and the Heap-Lindqvist theorem~\ref{Theorem:TruncatedCharpolHeapLindqvist}, but all the previous functionals (with the exception of the ratios) enter into this framework. As Gian-Carlo Rota might have put it, the secular coefficients are ``completely equiprimordial'' with the eigenvalues.

\subsection{Organisation of the paper}

The paper is organised as follows: 

\begin{itemize}

\medskip
\item \textbf{Section \ref{Sec:Notations} :} we gather some notations used throughout the text and introduce the important \textit{plethystic/$ \lambda $-ring formalism} we will make a constant use of. Section~\ref{Subsec:FuncSym} in particular gives some of the basics and several references about this convenient language that aims at manipulating symmetric functions (and that constitutes the modern vision of the theory). We emphasize its importance to understand any phenomenon concerning symmetric functions and hope this article will contribute to popularise it to the probabilistic and mathematical physics community.

\medskip
\item \textbf{Section \ref{Sec:MainFormulas} :} We describe the general theories behind the machinery we employ (duality, RKHS, etc.) and replace them in a general probabilistic, algebraic and physical context. A comparison between the CFKRS formula \eqref{Eq:SchurCFKRS} and the main formula of this article \eqref{Eq:SchurWithHn} is also provided.

\medskip
\item \textbf{Section \ref{Sec:Applications} :} we revisit the problems described in \S~\ref{Subsec:AllProblems} with the tools introduced in section~\ref{Sec:MainFormulas}~:
\begin{itemize}

\item \textbf{\S~\ref{Subsec:KS} :} we give a prototype of proof describing \textit{precisely} the structure of all future proofs in the case of the Keating-Snaith theorem \ref{Thm:KSwithDuality},

\item \textbf{\S~\ref{Subsec:Autocorrelations} :} we treat the autocorrelations described in \S~\ref{Subsubsec:Autocorrelations}, 

\item \textbf{\S~\ref{Subsec:Ratios} :} we treat the ratios described in \S~\ref{Subsubsec:Ratios}, 

\item \textbf{\S~\ref{Subsec:Midcoeff} :} we treat the mid-secular coefficients described in \S~\ref{Subsubsec:Midcoeff}. 

\end{itemize}

Two proofs will be presented every time it is relevant, with a possible alternative end coming from a variation of the involved formuli. The study of the mid-coefficients will then motivate the introduction of the randomisation paradigm described in \S~\ref{Subsec:Intro:RandomisationParadigm} and we will introduce it plainly in \textbf{\S~\ref{Subsec:RandomisationParadigm}} on the example of the autocorrelations. Starting from this example, a third proof by randomisation will be added. We will then revisit~:
\begin{itemize}

\item \textbf{\S~\ref{Subsec:TruncatedCharpol} :} the behaviour of the truncated characteristic polynomial in $1$ (Conrey-Gamburd theorem, \S~\ref{Subsubsec:ConreyGamburd}), in $ \lambda > 1 $ (Heap-Lindqvist theorem, \S~\ref{Subsubsec:HeapLindqvist}) and in the microscopic scaling (\S~\ref{Subsubsec:TruncatedMicro}),

\item \textbf{\S~\ref{Subsec:Birkhoff} :} the Beck-Pixton approach \cite{BeckPixton} to the Birkoff polytope ; we obtain in theorem \ref{Theorem:VolumeBirkoff} a new expression for its relative volume, a known difficult open problem. One representation that we use involves the function $ h_{1, \infty} $ in a $ k $-fold integral. This changes drastically from the representations given in \cite{HarperNikeghbaliRadziwill, HeapLindqvist} and is possibly new. We also treat the case of the polytope of sub-stochastic matrices and the transportation polytopes, two natural generalisations of the Birkhoff polytope. Note also that remark~\ref{Rk:SubBirkhoffWithUniforms} gives a probabilistic explanation to the phenomenon of piecewise polynomiality that appears in \cite{AssiotisKeating, BasorGeRubinstein, BettinConrey, KeatingR3G}.

\item \textbf{\S~\ref{Subsec:Derivatives} :} the joint moments of the iterated derivatives of $ Z_{U_N} $, 

\item \textbf{\S~\ref{Subsec:KR3G} :} the result of Keating, Rodgers, Roditty-Gershon and Rudnick \cite{KeatingR3G} described in \S~\ref{Subsubsec:KR3G}, 

\item \textbf{\S~\ref{Subsec:MoMo} :} the ``moments of moments'' described in \S~\ref{Subsubsec:MoMo}. 
\end{itemize}

\medskip
\item \textbf{Section \ref{Sec:Ultimate} :} We link the theory described in section \ref{Sec:MainFormulas} with Hankel determinants and Wronskians such as developed in \cite{BaileyBettinBlowerConreyProkhorovRubinsteinSnaith, BasorBleherBuckinghamGravaIts2Keating, BasorGeRubinstein} and show that it is the unifying framework that explains the apparition of such determinants in \S~\ref{Subsec:Ultimate:Hankel}. We explain how to push the method presented in this paper to obtain general expansions in the vein of \cite{PODfpsac, BasorBleherBuckinghamGravaIts2Keating} in \S~\ref{Subsec:Ultimate:Expansions}, analyse the literature on the rescaling of the Schur function in \S~\ref{Subsec:Ultimate:OtherApproachesSchur} and give a summary of the encountered functionals $ \Phi $ defined in \eqref{Def:FormPhi} in \S~\ref{Subsec:FinalTable}. We conclude in \S~\ref{Subsec:Ultimate:Conclusion} with general remarks, questions of interest and forthcoming work.

\medskip
\item \textbf{Annex \ref{Sec:ProbReprGegenbauer} :} we introduce $ h_N\crochet{X\Ae} $ as multivariate extensions of  Tchebychev/Gegen-bauer polynomials, compute their limit rescaled in the microscopic regime that will be used to prove the thorems in section~\ref{Sec:Applications} ; we give in particular several probabilistic representations of these functions as Fourier transforms of vectors of independent random variables conditionned on their sum equal to a constant, a classical representation for many probabilistic functionals. 

\medskip
\item \textbf{Annex \ref{Sec:NumberTheory} :} we answer question \ref{Question:NTphase} and show the existence of a phase transition between the two universality classes previously mentionned in \S~\ref{Subsubsec:Midcoeff} and \S~\ref{Subsubsec:TruncatedCharpol}.
\end{itemize}

$ $

\medskip
\section{Notations and prerequisites}\label{Sec:Notations}

\subsection{General notations and conventions}

We denote by $ \Uu := \ensemble{z \in \Cc : \abs{z} = 1}$ the unit circle. Define
\begin{align*}
d^*z := \frac{dz}{2i \pi}, \qquad \frac{d^*Z}{Z} := \prod_{k = 1}^N \frac{d^*z_k}{  z_k } \quad \text{ if } \quad Z := (z_1, \dots, z_N) 
\end{align*}

We will use the notation 
\begin{align*}
\crochet{z^n} f(z) = a_n \quad \mbox{if} \quad f(z) = \sum_n a_n z^n
\end{align*}
for the $ n $-th Fourier coefficient of a Laurent series $ f $. Note that $ \crochet{z^n}f(z) = \crochet{z^0}z^{-n} f(z) $. If $ \operatorname{rad}(f) $ denotes the radius of convergence of $f$, then
\begin{align}\label{Eq:FourierCoeff}
\crochet{z^n} f(z) = \oint_{r \Uu} f(z) z^{-n} \frac{d^*z }{z} = \frac{1}{r^n} \int_0^1 f(r e^{2 i \pi \theta}) e^{-2 i \pi n \theta} d\theta, \qquad r < \operatorname{rad}(f)
\end{align}

We will also use the multi-index notation $ X^\alpha := \prod_{k \geq 1} x_k^{\alpha_k} $ and subsequently the notation $ \crochet{X^\alpha} f(X) $. If $ N \in \Zz $ and $ X = (x_1, \dots, x_k) $, we will write $ X^N $ for $ X^{N \Un_k} $ with $ \Un_k = (1, \dots, 1) $ ($k$ times). In particular, we will write $ X\inv $ for $  x_1\inv x_2\inv \dots  x_k\inv $.

Last, we denote by $ \Pe $ the set of prime numbers, by $ v_p(n) $ the $p$-adic valuation of an integer $ n \in \Nn^* $, and use the convention
\begin{align}\label{Convention:Bar}
\overline{\rho} := 1 - \rho
\end{align}


\begin{remark}\label{Rk:MultivariateFourierWithHomogeneity}

Note the following property of multivaluate Fourier coefficients if $ f(z) = \sum_n a_n z^n $
\begin{align*}
\crochet{x^n y^n} f(xy) = \crochet{x^n} \crochet{y^n} f(xy) = \crochet{x^n} a_n x^n = a_n
\end{align*}
and the following corollary if $ f $ is homogeneous of degree $ k\alpha $, namely $ f(\lambda X) = \lambda^{k\alpha} f(X) $ with $ X = (x_1, \dots, x_k) $
\begin{align*}
\crochet{X^{\alpha }} f(X) & := \crochet{X^{\alpha\Un_k}} f(X) = \crochet{X^0}X^{-\alpha \Un_k} x_1^{\alpha k} f(1, x_2/x_1, \dots, x_k/x_1) \\
               & = \crochet{ X^0 } (x_2/x_1)^{-\alpha} \cdots (x_k/x_1)^{-\alpha} f(1, x_2/x_1, \dots, x_k/x_1) \\
               & = \crochet{x_1^0} \crochet{T^0} T^{-\alpha \Un_{k - 1}} f(1, t_2, \dots, t_k) \\
               & = \crochet{T^{ \alpha  }} f(1, t_2, \dots, t_k)
\end{align*}

Here, the ``change of variable'' $ t_i = x_i/x_1 $ for $ i \in \intcrochet{2, k} $ amounts to a repeated application of the previous property. 

Equivalently, one can use the representation \eqref{Eq:FourierCoeff} with $ \theta_1, \dots, \theta_k \in [-\pi, \pi] $, change variable $ \varphi_j := \theta_j - \theta_1 $ for $ j \in \intcrochet{2, k} $ and then integrate on $ \theta_1 $.
\end{remark}

\medskip
\subsection{Reminders on symmetric functions}\label{Subsec:FuncSym}

The partitions of an integer $n$ are weakly decreasing sequences $ \lambda := (\lambda_1 \geq \lambda_2 \geq \cdots \geq \lambda_m) $ such that $ \abs{\lambda} := \sum_{i = 1}^m \lambda_i = n $ (see \cite[ch. I.1]{MacDo}). We will write $ \lambda \vdash n $ for $ \abs{\lambda} = n $. The length $m$ of $ \lambda $ will be denoted by $ \ell(\lambda) $. The transpose of a partition will be denoted by $ \lambda' $.


For notations an definitions concerning symmetric functions, the reference is \cite{MacDo}. The Schur functions will be denoted by $  s_\lambda  $, the power functions by $ p_k $, the homogeneous complete symmetric functions by $ h_n $ and the elementary symmetric functions by $ e_n $. We will give a relevant definition of these functions when necessary throughout the chapters.


The use of $ \lambda $-ring/plethystic notations and alphabets will be done in the same vein as \cite{Ram1991, LascouxSym, HaimanMacDo}, using in particular the convention of \cite{HaimanMacDo} that writes plethysm with a bracket $ \crochet{\cdot}$: the plethysm of a symmetric function $ f $ in an abstract alphabet $ \Ae $ will be denoted $ f[\Ae] $. This convenient language will be necessary to manipulate the ``abstract'' alphabets that we define now. For a quick introduction that shows brilliantly the advantage of such a formalism, we refer to \cite[\S~2]{HaimanMacDo} ; for examples of this formalism in effective computations, we refer to \cite[\S~4]{Ram1991} and for a more advanced version of it, we refer to \cite{LascouxSym} (see also the thesis \cite[ch. 2]{ZabrockiThesis} and \cite{PODmomentsCombin} for an accessible introduction although with different conventions). 

\begin{remark}
We emphasize the reading of \cite[\S~2]{HaimanMacDo} as a critical introduction to plethysm in case the reader is not familiar with it.
\end{remark}


Given the sets of variables or ``alphabets'' $ X := \ensemble{ x_k }_{k \geq 1} $ and $ Y := \ensemble{ y_k }_{k \geq 1} $, one defines 
\begin{align*}
p_k\crochet{X} & =  \sum_{\ell \geq 1} x_\ell^k   \\
p_k\crochet{X + Y}  & =  p_k \crochet{X}  + p_k\crochet{Y}  \\
p_k\crochet{ X \cdot  Y } & =  p_k\crochet{X}  p_k\crochet{Y}  
\end{align*}

Note the difference of convention between $ p_k( X + Y ) = \sum_\ell (x_\ell + y_\ell)^k $ and $ p_k\crochet{ X + Y } $ as defined above.

We can easily check that with these definitions, $ X + Y $ is the alphabet obtained by concatenating $ X $ and $ Y $ and $ X \cdot Y = \ensemble{ x_k y_\ell }_{k, \ell \geq 1} $ is a ``tensor'' alphabet. One can also see that $ + $ and $ \cdot $ are associative operations on the alphabets. From now on, we will forget the dot when considering a tensor alphabet and we will write $ X Y $ for it. In particular, we will write $ t X $ for $ \ensemble{t} \cdot X $. We also define the tensor-sum alphabet by $ X \tensorsum Y := \ensemble{ x_k + y_\ell }_{k, \ell \geq 1} $ and the exponentiated alphabet by $ e^{a X} := \ensemble{ e^{a x}, x \in X } $ for all $ a \in \Cc $. Note that $ e^{a (X \tensorsum Y)} = e^{aX} e^{a Y} $ and that $ \tensorsum $ is associative with $ + $, i.e. $ X\tensorsum(Y + Z) = X\tensorsum Y + X\tensorsum Z $. 


We define the generating series of the complete homogeneous symmetric functions by
\begin{align}\label{Def:Hfunctor}
\boxed{H\crochet{X} := \sum_{n \geq 0} h_n(X) = \prod_{k \geq 1} \frac{1}{ 1 -  x_k }}
\end{align}

This series is an important object that will be constantly referred to throughout this article. It is denoted by $ \Omega $ in \cite{Ram1991, HaimanMacDo} and by $ \sigma $ in \cite{LascouxSym} and \cite[I-2, rk. 2.15]{MacDo}. Here, we prefer to stick to the classical notation of \cite[I-2 (2.5)]{MacDo} to avoid disturbing the reader.

Note in particular that replacing $ X $ by $ tX $ and taking the $ n $-th Fourier coefficient yields
\begin{align}\label{FourierRepr:hn}
h_n\crochet{X} = \crochet{t^n} H\crochet{tX }
\end{align}


With the usual convention $ p_\lambda := \prod_{k \geq 1} p_{\lambda_k} $, we have the relation \cite[I (2.14)]{MacDo}
\begin{align*}
\sum_{\lambda } \frac{t^{ \abs{\lambda} }}{ z_\lambda } p_\lambda\crochet{X} = \prod_{k \geq 1} \frac{1}{ 1 - t x_k } = H\crochet{t X}
\end{align*}

Replacing $ X $ by $ XY $, we get
\begin{align*}
\sum_{\lambda } \frac{ t^{ \abs{\lambda} } }{ z_\lambda } p_\lambda\crochet{X} p_\lambda\crochet{Y} = \prod_{k, \ell \geq 1} \frac{1}{ 1 - t x_k y_\ell } = H\crochet{t X Y}
\end{align*}


We now define the difference of two alphabets $ X - Y $ as the ``abstract'' alphabet such that \cite[1-3, ex. 23]{MacDo}
\begin{align*}
H\crochet{ X - Y} = \frac{ H\crochet{ X } }{ H\crochet{ Y } }
\end{align*}

Using \cite[I (2.10)]{MacDo}, this last definition is equivalent to
\begin{align}\label{Eq:SupersymPn}
p_k\crochet{ X - Y } = p_k \crochet{ X } - p_k \crochet{ Y }
\end{align}

Note that there is associativity of $ \tensorsum, \cdot $ with $ - $, i.e. $ X\tensorsum(Y - Z) = X\tensorsum Y -  X\tensorsum Z $ and $ X(Y - Z) = X  Y -  X  Z $. Note also that $ f(-X) := f(-x_1, -x_2, \dots) \neq f\crochet{-X} $. To differentiate these two operations and in accordance with the convention of \cite[after I.2]{GarsiaHaimanTesler}, we define the alphabet 
\begin{align}\label{Def:AlphabetEpsilon}
\varepsilon := \ensemble{-1}
\end{align}
so that $ f\crochet{\varepsilon X} = f(-X) $.


The fundamental involution $ \omega $ \cite[I-2, (2.7)]{MacDo} is defined by $ \omega(h_n) = e_n $ or $ \omega(e_n) = h_n $. Equivalently, it can be defined by $ \omega(s_\lambda) = s_{\lambda'} $ \cite[I-3, (3.8)]{MacDo}. Since $ s_\lambda\crochet{-X} = (-1)^{\abs{\lambda}} s_{\lambda'}\crochet{X} = s_{\lambda'}\crochet{\varepsilon X} $ (see \cite{HaimanMacDo} or \cite[I-3, (3.10)]{MacDo}), we thus have $ \omega(s_\lambda\crochet{X}) = s_\lambda\crochet{-\varepsilon X} $ and one can define an abstract alphabet $ \widehat{\omega} $ by
\begin{align}\label{Eq:InvolutionOmega}
\omega(s_\lambda\crochet{X}) = s_\lambda\crochet{\widehat{\omega} X}, \qquad \widehat{\omega} := -\varepsilon
\end{align}


The Cauchy identity is \cite[I (4.3)]{MacDo}
\begin{align}\label{Eq:CauchyIdentity}
H \crochet{t X  Y} = \sum_\lambda t^{ \abs{ \lambda } } s_\lambda \crochet{ X } s_\lambda \crochet{ Y }
\end{align}

According to \cite[p. 9]{LascouxSym}, this is ``the most important formula in the theory of symmetric functions''. Changing $ X $ into $ -X $ in \eqref{Eq:CauchyIdentity} and using $ s_\lambda\crochet{-X} = (-1)^{\abs{\lambda} } s_{\lambda'}\crochet{X} $ gives the \textit{dual} Cauchy identity
\begin{align}\label{Eq:DualCauchyIdentity}
H \crochet{- t X  Y} = \sum_\lambda (-t)^{ \abs{ \lambda } } s_{\lambda'} \crochet{ X } s_\lambda \crochet{ Y }
\end{align}

Using this last identity, one has for all $ n \geq 0 $
\begin{align*}
\sum_{\lambda \vdash n } s_\lambda\crochet{ X }  s_\lambda\crochet{ Y }  = \sum_{\lambda \vdash n } \frac{1}{z_\lambda } p_\lambda\crochet{ X }  p_\lambda\crochet{ Y } 
\end{align*}
from what one can deduce that 
\begin{align}\label{Eq:PlethisticHnSchur}
h_n\crochet{ XY } = \sum_{\lambda \vdash n } \frac{1}{z_\lambda } p_\lambda\crochet{ XY } = \sum_{\lambda \vdash n } s_\lambda\crochet{ X }  s_\lambda\crochet{ Y }
\end{align}


We define the following \textit{root} alphabet
\begin{align}\label{Eq:ARootAlphabet}
X^R := \ensemble{ x_i x_j\inv, 1 \leq i < j \leq n }
\end{align}

Let $ (e_i)_i $ be the canonical basis of $ \Cc^n $. We have used here the set of \textit{roots} $ R := \ensemble{ e_i - e_j, i < j } $ denoted by $ R_0^+ $ in \cite[I-5 ex. 16]{MacDo} ; this is in fact the set of \textit{positive roots of type $ A_{n - 1} $} (to simplify the notation, we abusively write $ R $ in place of $ R_0^+ $). With the multi-index notation, $ X^R = \ensemble{ X^r, r \in R } = \ensemble{ x_i x_j\inv , i < j } $. 

We will in fact be more concerned with the set of negative roots (with $ \varepsilon = \ensemble{-1} $)
\begin{align}\label{Eq:MinusARootAlphabet}
X^{\varepsilon R} := \ensemble{X^r, r \in \varepsilon R} = \ensemble{ x_j x_i\inv, 1 \leq i < j \leq n }
\end{align}


A last abstract alphabet that we will use is $ 1^{\plusInOne \kappa} $ ; it is defined as $ (1, 1, \dots, 1) $ ($ \kappa $ times) if $ \kappa \in \Nn^* $ and extended to the case $ \kappa \in \Rr_+^* $ by setting $ p_m\crochet{1^{\plusInOne \kappa} } = \kappa $ for all $ m \geq 1 $, or equivalently, $ H\crochet{t 1^{\plusInOne \kappa} } = H\crochet{t}^\kappa $. Note that $ 1^{\plusInOne \kappa}1^{\plusInOne \alpha} = 1^{\plusInOne \kappa\alpha} $ as one can see in the case $ \alpha, \kappa \in \Nn^* $. This alphabet is understood as the action of constants in the $\lambda$-ring terms of \cite[(2.1), (2.2)]{LascouxSym} and \cite[(18)]{PODmomentsCombin}, but since we do not use the convention of \cite{LascouxSym} or \cite{PODmomentsCombin} that writes alphabets with bold letters, we cannot distinguish between constants and alphabets and prefer to use this equivalent formulation. As a result, the \textit{plethystic/$ \lambda $-ring square expansion} writes $ (a + b)(a + b) = \ensemble{a^2} + \ensemble{b^2} + 1^{\plusInOne 2}\ensemble{ab} =: a^2 + 1^{\plusInOne 2} ab + b^2 $, but the advantage is to keep the natural scaling property of the symmetric functions intact, contrary e.g. to \cite[(20)]{PODmomentsCombin}. We will only be concerned in this article with the case $ \kappa \in \Nn^* $, but natural applications use $ \kappa \in \Rr_+^* $.


\begin{example}\label{Ex:HfunctorSpecialise}
Here are some examples of classical functions that write as a $ H\crochet{\Ae} $ for a certain alphabet $ \Ae $:
\begin{enumerate}

\item We have
\begin{align*}
H\crochet{X^R} = \prod_{1 \leq i < j} \frac{1}{ 1 - x_i x_j\inv}, \qquad H\crochet{-X^R} = \prod_{1 \leq i < j} (1 - x_i x_j\inv)
\end{align*}

\item The Vandermonde determinant in the variables $ X := \ensemble{x_1, \dots, x_n} $ is defined by 
\begin{align*}
\Delta(X) := \prod_{1 \leq i < j \leq n}(x_i - x_j)
\end{align*}

A factorisation that uses $ \delta_n := (n - 1, n - 2, \dots, 1, 0) $ gives
\begin{align*}
\Delta(X) := \prod_{1 \leq i < j \leq n}x_i(1 - x_j/x_i) = X^{\delta_n} H\crochet{ - X^{ \varepsilon R } }
\end{align*}

\item The Cauchy determinant $ C(X, Y) := \det\prth{ \frac{1}{1 - x_i y_j} }_{1 \leq i, j \leq N} $ writes 
\begin{align*}
C(X, Y) = H\crochet{XY - X^{ \varepsilon R } - Y^{ \varepsilon R } }
\end{align*}

\medskip
\item We recall that $ H\crochet{ t 1^{\plusInOne \kappa} } = H\crochet{t}^\kappa $. This implies $ H\crochet{t X 1^{\plusInOne \kappa} } = H\crochet{ t X}^\kappa = \prod_{i}(1 - tx_i)^{-\kappa} $.

\medskip
\item The usual convention $ h_\lambda := \prod_{k \geq 1} h_{\lambda_k} $ and \eqref{FourierRepr:hn} gives the \textit{Fourier representation}
\begin{align}\label{FourierRepr:hLambda}
h_\lambda\crochet{X} = \crochet{U^\lambda} H\crochet{X U}
\end{align}

Note that $ U = \ensemble{u_1, \dots, u_\ell } $ for all $ \ell \geq \ell(\lambda) $.

\medskip
\item If $ A = \ensemble{a_1, \dots, a_n} $, one has
\begin{align}\label{Eq:PolynomialWithHn}
h_n\crochet{x - A} = \prod_{k = 1}^n (x - a_k)
\end{align}

Indeed, 
\begin{align*}
h_n\crochet{x - A} = \crochet{t^n}H\crochet{t(x-A)} = \crochet{t^n}\frac{1}{1 - tx}H\crochet{-tA} = x^n H\crochet{-x\inv A}
\end{align*}
using the Cauchy (residue) formula for a polynomial.

\medskip
\item Let $ \Eee $ be the alphabet defined by $ p_k\crochet{\Eee} = \Unens{k = 1} $. Then, using the fundamental formula $ H\crochet{ \Ae} = \exp\prth{ \sum_{k \geq 1} \frac{1}{k} p_k\crochet{\Ae} } $ \cite[I-2, (2.10) p. 23]{MacDo}, one gets $ H\crochet{z \Eee} = e^z $ ; this motivates the name \textit{exponential alphabet} for $ \Eee $ and implies $ h_n\crochet{\Eee} = \frac{1}{n!} $. 

\medskip
\item Let $ q \in \Cc $. The alphabet $ \frac{1}{1 - q} $ is defined by $ \sum_{k \geq 0} \ensemble{q^k} := \ensemble{ q^k, k \geq 0 } $. As a result, $ p_k\crochet{\frac{1}{1 - q} } = \frac{1}{1 - q^k} $ if $ \abs{q} < 1 $. For a possible use of such an alphabet, see \cite{HaimanMacDo}.

\medskip
\item See also \cite[I-5, ex. 10]{MacDo} for other specialisations of $ H $ with other alphabets.
\end{enumerate}
\end{example}


\begin{remark}\label{Rk:Hopf}
Some of the aspects of symmetric function theory presented in this \S~can also be recast in the language of \textit{(combinatorial) Hopf algebras}, see e.g. \cite[ch. 2.3]{MeliotReprSym} for an introduction with probabilistic applications, \cite[ch. 2]{GrinbergReiner} for combinatorial aspects or \cite[I-5, ex. 25 p. 91]{MacDo} for a summary. The relevant Hopf algebra here is the algebra $ \Lambdab $ of symmetric functions obtained by a projective limit in the category of \textit{graded} rings as in \cite[ch. I.2]{MacDo}. The map $ S : f\crochet{X} \mapsto f\crochet{-X} $ is the antipode of $ \Lambdab $, $ \Delta : f\crochet{T} \mapsto f\crochet{X + Y} $ is a possible coproduct/comultiplication%
\footnote{There are in fact two natural Hopf algebra structures on $ \Lambdab $ as one can also consider the comultiplication $ \delta : f\crochet{T} \mapsto f\crochet{XY} $ that corresponds to the multiplication $ * : (p_\lambda, p_\mu) \mapsto \delta_{\lambda, \mu} z_\lambda p_\lambda $ ; see e.g. \cite[\S~2]{SchraffThibon} or \cite[I-7, ex. 20 p. 128]{MacDo} for an exposition.},
 the classical multiplication of functions being the multiplication of $ \Lambdab $, and the co-unit is given by $ 1^* : h_k \mapsto 1 - \Unens{k \geq 1} $ (i.e. it gives the constant term of a symmetric function, namely $ 1^*(f[X]) = \crochet{X^0}f\crochet{X} $). 
\end{remark}

\subsection{Scalar products and unitary integrals}\label{Subsec:ProdScal}

The scalar product on the space of symmetric functions in a finite number of variables can be realised as the $ L^2 $ scalar product of the normalised Haar measure of the unitary group (see \cite[ch. VI-9, rk 2 pp. 369-370]{MacDo} or \cite{DiaconisShahshahani}). We will denote it by
\begin{align*}
\bracket{f, g}_N := \int_{\Ue_N} f(U) \overline{g(U)} dU
\end{align*}
where $ dU $ designates the Haar measure on the space of unitary matrices $ \Ue_N $. 


If the functions $ f, g : \Ue_N \to \Cc $ are invariant by conjugacy, their values on $ \Ue_N $ only depend on the eigenvalues of the matrices, hence define symmetric functions $ \widetilde{f},\widetilde{g} : \Uu^N \to \Cc $ that we will denote, with an abuse of notation, by $ f, g : \Uu^N \to \Cc $. For such symmetric functions in $ N $ variables $ Z := (z_1, \dots, z_N) $, the Weyl integration formula (see \cite[ch. VI]{MacDo} or \cite{BourgadeCondHaar} for a probabilistic proof) writes 
\begin{align}\label{Eq:WeylHaarRealisation}
\bracket{f, g}_N =  \frac{1}{N!} \crochet{ Z^0 } f(Z)  g(Z\inv)   \Delta(Z)  \Delta(Z\inv)  
\end{align}

Using example~\ref{Ex:HfunctorSpecialise}, one has
\begin{align*}
\Delta(Z)\Delta(Z\inv) & = Z^{\delta_N} H\crochet{-Z^{\varepsilon R} } Z^{-\delta_N} H\crochet{-Z^R } \\
               & = \prod_{i \neq j} (1 - z_i z_j\inv) \\
               & = H\crochet{-Z^R - Z^{\varepsilon R} } = H\crochet{-Z^{(1 + \varepsilon )R}}
\end{align*}
with $ (1 + \varepsilon ) R = R + \varepsilon R := \ensemble{s r, r \in R, s\in\ensemble{\pm 1} } $ in the plethystic/$ \lambda $-ring sense. One thus has
\begin{align}\label{Eq:WeylHaarRealisationBis}
\bracket{f, g}_N =  \frac{1}{N!} \crochet{ Z^0 } f(Z)  g(Z\inv)  H\crochet{-Z^{(1 + \varepsilon) R} } 
\end{align}


The Cauchy kernel $ H\crochet{XY} $ for $ X = \ensemble{x_1, \dots, x_N} $ and $ Y = \ensemble{y_1, \dots, y_N} $ is the reproducing kernel of the Hilbert space of square integrable symmetric functions in $N$ variables (see \cite[I-4, ex. 9]{MacDo}, \cite[lem. 5 p. 10]{LascouxSym} or \cite[(2.6)]{HaimanMacDo}), namely, for all such symmetric functions $ f $ 
\begin{align}\label{Eq:ReproducingKernelProperty}
f\crochet{X} = \bracket{H\crochet{XY}, f\crochet{Y} }_N = \frac{1}{N!} \crochet{Y^0} f\crochet{Y\inv} H\crochet{XY-Y^{(1 + \varepsilon) R}}
\end{align}

Using the Cauchy identity \eqref{Eq:CauchyIdentity}, this amounts to the following expansion
\begin{align*}
f\crochet{X} = \sum_\lambda \bracket{f , s_\lambda }_N s_\lambda\crochet{X}
\end{align*}
from which we deduce the orthogonality of the Schur functions \cite[ch. I-4]{MacDo}
\begin{align}\label{Eq:OrthogonalitySchur}
\bracket{s_\lambda, s_\mu}_N = \delta_{\lambda, \mu} \Unens{ \ell(\lambda)\leq N }
\end{align}

The reproducing kernel property is an equivalent formulation of this last orthogonality, as remarked in \cite[I-4, ex. 9]{MacDo} or in \cite[lem. 5 p. 10]{LascouxSym}, and the Cauchy identity \eqref{Eq:CauchyIdentity} amounts to the tensorial decomposition of the kernel in the theory of self-adjoint operator of Hilbert spaces~:
\begin{align*}
H\crochet{XY} = \sum_\lambda s_\lambda \otimes s_\lambda (X, Y)
\end{align*}


The last (critical) property that we will use is the following: the application of plethystic composition $ \Ce_A : f\crochet{X} \mapsto f\crochet{X A} $ is self-adjoint for the scalar product $ \bracket{\cdot, \cdot}_N $ \cite[(2.7)]{HaimanMacDo}:
\begin{align}\label{Eq:PlethysticComposition}
\bracket{\Ce_A f\crochet{X}, g\crochet{X}}_N = \bracket{ f\crochet{X}, \Ce_A g\crochet{X}}_N \quad \Longleftrightarrow \quad \bracket{ f\crochet{A X}, g\crochet{X}}_N = \bracket{ f\crochet{X}, g\crochet{AX} }_N
\end{align}

In particular, using the alphabet $ \widehat{\omega} $ defined in \eqref{Eq:InvolutionOmega}, one sees that $ \omega = \Ce_{\widehat{\omega}} $ is self-adjoint. Since it is clearly an involution, it is thus an isometry.


\medskip

\begin{remark}
Without specifying the number of variables, one still has the previous theory with an ``abstract'' scalar product $ \bracket{\cdot, \cdot}_\infty $ on symmetric functions (see \cite[I-4 (4.5)]{MacDo} or \cite[\S~1, p. 10]{LascouxSym}). In particular, all the previous properties work in this abstract setting (and were designed originally for it). As noted in \cite{LascouxSym}, there are more subtleties with scalar products with finite alphabets. Note that the self-adjunction of $ \Ce_A $ is proven in \cite[(2.7)]{HaimanMacDo} for $ \bracket{\cdot, \cdot}_\infty $, but the proof using the Cauchy product easily extends to $ \bracket{\cdot, \cdot}_N $ (alternatively, one can use an expansion of $ f, g $ in terms of power functions).
\end{remark}


\subsection{Classical tricks}

The following tricks are often useful in the study of scalar products with symmetric functions :
\begin{enumerate}

\item \textbf{\underline{Symmetrisation:}} Define 
\begin{align*}
\sigma \cdot f(x_1, \dots, x_n) := f(x_{\sigma(1)}, \dots, x_{\sigma(n)}) 
\end{align*}

Since $ \sigma \cdot \Delta(X) = \varepsilon(\sigma) \Delta(X) $ where $ \varepsilon(\sigma) $ is the signature of $ \sigma\in \Sg_n $ (not to be mistaken with \eqref{Def:AlphabetEpsilon}), for all $F$ symmetric,
\begin{align*}
\sum_{\sigma \in \Sg_N} \sigma \cdot \prth{ F(X) \Delta(X) } = \Delta(X) \sum_{\sigma \in \Sg_N} \varepsilon(\sigma)\sigma \cdot  F(X)  
\end{align*}


\item \textbf{\underline{Scalar product symmetrisation:}} For all $ F $ symmetric in $N$ variables, one has
\begin{align}\label{Trick:ProdScalSym}
\crochet{X^0}F(X)H\crochet{-X^R} = \frac{1}{N!} \crochet{X^0} F(X)\Delta(X)\Delta(X\inv) = \frac{1}{N!} \crochet{X^0} F(X) H\crochet{-X^{(1 + \varepsilon)R} }
\end{align}

The last equality follows from the equivalence between \eqref{Eq:WeylHaarRealisation} and \eqref{Eq:WeylHaarRealisationBis}. The first one comes from
\begin{align*}
\crochet{X^0}F(X)H\crochet{-X^R} & =  \crochet{X^0} F(X) X^{-\delta_N} \Delta(X) \\
             & = \crochet{X^0} \frac{1}{N!} \sum_{\sigma \in \Sg_n} \sigma \cdot \prth{ F(X) X^{-\delta_N} \Delta(X)} \\
             & = \crochet{X^0} \frac{1}{N!} F(X) \Delta(X) \sum_{\sigma \in \Sg_n} \varepsilon(\sigma) \sigma \cdot \prth{  X^{-\delta_N} } \\
             & = \frac{1}{N!}\crochet{X^0}  F(X) \Delta(X) \Delta(X\inv)
\end{align*}
where we have used $ \Delta(X) = \det\prth{ x_i^{\delta_N(j)} }_{1 \leq i, j \leq N} = \sum_{\sigma\in \Sg_n}\varepsilon(\sigma)\sigma\cdot X^{\delta_N} $. Note that one can also use $\crochet{X^0}F(X)H\crochet{-X^{\varepsilon R} } $ in place of $ \crochet{X^0}F(X)H\crochet{-X^R} $.

$ $

\item \textbf{\underline{$H$-reproducivity:}} The reproducing kernel applied to itself gives $ \bracket{ H\crochet{XU}, H\crochet{UY}}_N = H\crochet{XY} $, namely, using the previous trick
\begin{align}\label{Trick:Hreproducivity}
H\crochet{XY} = \crochet{U^0}H\crochet{X U + Y U\inv - U^{\varepsilon R} } 
\end{align}

\end{enumerate}

$ $

\medskip
\section{Theory}\label{Sec:MainFormulas}

\subsection{The ubiquitous Schur function}

The following ratio of alternants \cite[I-3 (3-1)]{MacDo} can be taken as a definition of the Schur functions:
\begin{align}\label{AlternantRepr:Schur}
s_\lambda(X) := \frac{\det\prth{ x_i^{\lambda_j + n - j} }_{1 \leq i, j \leq n} }{ \Delta(X) }, \qquad X := \ensemble{x_1, \dots, x_n}
\end{align}

Their \textit{Bethe ansatz form} is given by
\begin{align}\label{BetheRepr:Schur}
s_\lambda(X) = \sum_{\sigma \in \Sg_n} \sigma \cdot \prth{ X^\lambda H\crochet{ X^{\varepsilon R} } }
\end{align}

Indeed, one has
\begin{align*}
s_\lambda(X) & :=  \frac{\det\prth{ x_i^{\lambda_j + n - j} }}{ \Delta(X) } = \frac{1}{\Delta(X)} \sum_{\sigma \in \Sg_n } \varepsilon(\sigma) \sigma \cdot \prth{ X^{ \lambda + \delta_n }  }, \qquad \delta_n := (n-1, n-2, \dots, 1) \\
               & = \sum_{\sigma \in \Sg_n }\sigma\cdot \prth{ X^{\lambda + \delta_n} \frac{1}{\Delta(X)} } = \sum_{\sigma \in \Sg_n }\sigma\cdot \prth{ X^\lambda   H\crochet{X^{\varepsilon R}} } 
\end{align*}
since $ \Delta(X) = X^{\delta_n} H\crochet{ -X^{\varepsilon R} } $ by example~\ref{Ex:HfunctorSpecialise}.

From the \textit{Bethe representation} \eqref{BetheRepr:Schur}, one can deduce the following \textit{Fourier representation} analogous to \eqref{FourierRepr:hLambda} for $ h_\lambda $
\begin{align}\label{FourierRepr:Schur}
\boxed{s_\lambda(X) =  \crochet{ U^\lambda } H\crochet{ XU - U^{\varepsilon R} }}, \qquad U := \ensemble{u_1, \dots, u_\ell}, \ \ell \geq \ell(\lambda)
\end{align}

Indeed, using the reproducing kernel property \eqref{Eq:ReproducingKernelProperty}, one has
\begin{align*}
s_\lambda(X) & = \sum_{\sigma \in \Sg_n} \sigma \cdot \prth{ X^\lambda H\crochet{ X^{\varepsilon R} } } \\
             & = \frac{1}{n!} \crochet{U^0} \sum_{\sigma \in \Sg_n} \sigma \cdot \prth{ U^{-\lambda} H\crochet{ U^R } } H\crochet{ XU - U^{(1 + \varepsilon)R} } \\
             & = \frac{1}{n!}  \sum_{\sigma \in \Sg_n} \crochet{U^0} U_\sigma^{-\lambda} H\crochet{ U_\sigma^R }  H\crochet{ XU - U^{(1 + \varepsilon)R} }, \qquad U_\sigma := \sigma\cdot U \\
             & = \frac{1}{n!}  \sum_{\sigma \in \Sg_n} \crochet{U^0} U_\sigma^{-\lambda} H\crochet{ U_\sigma^R }  H\crochet{ XU_\sigma - U_\sigma^{(1 + \varepsilon)R} }, \qquad \mbox{ by symmetry}\\
             & = \crochet{U^0} U^{-\lambda} H\crochet{ U^R + XU - U^{(1 + \varepsilon)R} } \\
             & = \crochet{U^\lambda} H\crochet{ XU - U^{\varepsilon R} }
\end{align*}

This form can easily be seen equivalent to the Jacobi-Trudi identity \cite[I-3 (3.4)]{MacDo}
\begin{align}\label{Eq:JacobiTrudiSchur}
s_\lambda(X) =  \det\prth{ h_{\lambda_i + \ell - j}\crochet{ X } }_{1 \leq i, j \leq \ell}, \qquad \forall \ell \geq \ell(\lambda)
\end{align}
as one can write
\begin{align*}
s_\lambda(X) & = \crochet{U^\lambda} H\crochet{ XU - U^{\varepsilon R} } \\
             & = \crochet{U^\lambda} H\crochet{ XU } H\crochet{ - U^{\varepsilon R} } = \crochet{U^\lambda} \prod_{i = 1}^\ell H\crochet{u_i X} U^{-\delta_\ell } \Delta(U) \\
             & = \crochet{U^{\lambda + \delta_\ell} } \prod_{i = 1}^\ell H\crochet{u_i X} \sum_{\sigma \in \Sg_\ell} \varepsilon(\sigma) \prod_{r = 1}^\ell u_r^{\ell - \sigma(r) } \\
             & = \sum_{\sigma \in \Sg_\ell} \varepsilon(\sigma) \prod_{ r = 1 }^\ell \crochet{u^0} u^{-\lambda_r -\ell + r} u^{ \ell - \sigma(r)} H\crochet{u X} \\
             & =  \sum_{\sigma \in \Sg_\ell} \varepsilon(\sigma) \prod_{r = 1}^\ell h_{ \lambda_r + \sigma(r) - r }\crochet{X} \\
             & =:  \det\prth{ h_{\lambda_i + \ell - j}\crochet{ X } }_{1 \leq i, j \leq \ell}
\end{align*}

Note that \eqref{FourierRepr:Schur} can be specialised to any abstract alphabet previously defined using the specialisations of $ H\crochet{X U} $. For instance, 
\begin{align*}
s_\lambda\crochet{(X - Y) 1^{\plusInOne \kappa} + \theta \Eee} = \crochet{U^\lambda} H\crochet{ ( (X - Y)1^{\plusInOne \kappa} + \theta \Eee) U - U^{\varepsilon R} } =  \crochet{U^\lambda} \frac{H\crochet{XU}^\kappa}{H\crochet{YU}^\kappa} e^{\theta p_1(U)} H\crochet{ - U^{\varepsilon R} }
\end{align*}

This is also the case of \eqref{Eq:JacobiTrudiSchur} written as a ``Toeplitz minor'' $ s_\lambda\crochet{\Ae} = \det\prth{ \crochet{ t^{\lambda_i + \ell - j} } \!\! H\crochet{ t \Ae } }_{\!1 \leq i, j \leq \ell}$.


\begin{remark}\label{Rk:HLtoSchur} 
The Schur function is the Hall-Littlewood polynomial $ Q_\lambda(X\vert t) $ at $ t = 0 $. The equality \eqref{FourierRepr:Schur} is one such form. The Fourier form of the Hall-Littlewood polynomial is given by \cite[III-2 (2.15)]{MacDo} (see also \cite[def. 2.5.1]{ZabrockiThesis})
\begin{align}\label{FourierRepr:HLbis}
Q_\lambda(X \vert t) = \crochet{ U^\lambda } H\crochet{ (1 - t)( XU -  U^{\varepsilon R} )  }
\end{align}
and their Bethe form is given by \cite[III-2 (2.11)]{MacDo}
\begin{align}\label{BetheRepr:HL}
Q_\lambda\prth{X \vert t} := \sum_{\sigma \in \Sg_n} \sigma \cdot \prth{ X^\lambda H\crochet{ (1-t) X^{\varepsilon R} }}
\end{align}

Note also the following generalisation of $ Q_\lambda(X \vert t) $ due to Jing \cite[proof prop. 1.4]{JingQhypergeomMacDo}~:
\begin{align}\label{FourierRepr:Jing}
J_\lambda(X \vert q, t) = \crochet{ U^\lambda } H\crochet{ \frac{1 - t}{1 - q}( XU -  U^{\varepsilon R} )  }
\end{align}
\end{remark}


\begin{remark}\label{Rk:RestrictedKernel}
Using the scalar product symmetrisation \eqref{Trick:ProdScalSym}, one can write \eqref{FourierRepr:Schur} as
\begin{align}\label{Eq:SchurWithRKHS}
s_\lambda\crochet{X} & = \crochet{U^\lambda} H\crochet{ X U - U^{\varepsilon R} } \notag \\
              & = \crochet{U^0} U^{-\lambda -\delta } H\crochet{XU} \Delta(U) \quad\mbox{by example \ref{Ex:HfunctorSpecialise}} \notag \\
              & = \frac{1}{\ell !} \crochet{U^0} \prth{ \sum_{\sigma \in \Sg_\ell} \varepsilon(\sigma) \sigma \cdot U^{-\lambda - \delta } } H\crochet{XU} \Delta(U) \notag \\
              & = \frac{1}{\ell !} \crochet{U^0} s_\lambda(U\inv) H\crochet{XU} \Delta(U) \Delta(U\inv) \notag \\
              & = \bracket{ H\crochet{X U }, s_\lambda\crochet{U} }_\ell, \qquad \forall \ell \geq \ell(\lambda)
\end{align}
i.e. \eqref{FourierRepr:Schur} is just a particular instance of the Reproducing Kernel property \eqref{Eq:ReproducingKernelProperty} (it was used at the beginning of its proof). In the case of a rectangular partition, using the scalar product symmetrisation trick \eqref{Trick:ProdScalSym} or the last RKHS formula, one then gets $ s_{N^k}(X) = \frac{1}{\ell !} \crochet{U^0} s_{N^k}(U\inv) H\crochet{ XU } \Delta(U) \Delta(U\inv) $ for all $ \ell \geq k $. In particular, for $ \ell = k $, the Fourier representation becomes
\begin{align}\label{FourierRepr:SchurRectangle}
\boxed{s_{N^k}\crochet{X} = \frac{1}{k !} \crochet{ U^N } H\crochet{ XU } \Delta(U) \Delta(U\inv) }, \qquad U := \ensemble{u_1, \dots, u_k} 
\end{align}

Using the Cauchy identity \eqref{Eq:CauchyIdentity} and the fact that the only valid partitions in the expansion that will not give a zero scalar product are the ones satisfying $ \lambda \vdash Nk $ by the orthogonality \eqref{Eq:OrthogonalitySchur}, one has 
\begin{align*}
s_{N^k}\crochet{X} = \frac{1}{k !} \crochet{ U^N } \sum_{\lambda \vdash Nk} s_\lambda(U) s_\lambda\crochet{X} \Delta(U) \Delta(U\inv)
\end{align*}
namely, using \eqref{Eq:PlethisticHnSchur}
\begin{align}\label{FourierRepr:SchurRectangleWithHn}
\boxed{s_{N^k}\crochet{X} = \frac{1}{k !} \crochet{ U^N } h_{Nk}\crochet{ XU } \Delta(U) \Delta(U\inv) }, \qquad U := \ensemble{u_1, \dots, u_k} 
\end{align}
as announced in \eqref{Eq:SchurWithHn}. These formulas are of course valid for any specialisation of $ H $, i.e. for any abstract alphabet $ X $.
\end{remark}

\medskip
\subsection{The Schur-\textit{CUE} connection}\label{Subsec:SchurAndCUE}

The representation theory of $ \Ue_N $ was famously pioneered in random matrix theory by Diaconis and Shashahani in their study of $ ( \trace(U_N^k) )_{k \geq 1} $ \cite{DiaconisShahshahani}. In a similar vein of \eqref{FuncZ:MidSecularCoeff}/\eqref{Def:SecularCoeff}, the traces of powers of $ U_N $ write as the power functions in the eigenvalues of $ U_N $, and their moments are thus given by scalar products of these functions, a study that can be performed by a suitable change of basis with the Schur functions \cite[proof of thm. 2]{DiaconisShahshahani}. This first use of the Schur functions in this field opened the path for the use of representation-theoretic methods in the study of $ U_N $, and we refer to \cite{DiaconisRMTSurvey} for an overview of some of its developments.

\medskip

Since $ Z_{U_N}(X) = \prod_{j = 1}^N (1 - X \lambda_{j, N}) = H\crochet{-X \lambdab} $ where $ \lambdab := (\lambda_{j, N})_{ 1 \leq j \leq N} $ are the eigenvalues of $ U_N $, it seems natural to express quantities involving $ Z_{U_N} $ in terms of Schur functions with \eqref{Eq:DualCauchyIdentity} for instance. The joint moments (or \textit{autocorrelations}) of $ Z_{U_N}(X) $ have been computed in numerous articles \cite{Day, BasorForrester, BumpGamburd, ConreyFarmerKeatingRubinsteinSnaith, ConreyFarmerZirnbauer, ConreyForresterSnaith} after several similar studies for other matrix ensembles. In the case of the $ CUE_N $, up to a multiplicative factor, they take the form of a Schur function of a rectangular diagram, a fact first remarked in \cite[(2.11)]{ConreyFarmerKeatingRubinsteinSnaith} (and by Zirnbauer and Nonnenmacher in a private communication to the authors of \cite{ConreyFarmerKeatingRubinsteinSnaith})
\begin{align}\label{Eq:SchurJointMoments}
\Esp{ \prod_{\ell = 1}^m  Z_{U_N}\prth{ x_\ell } \prod_{j = 1}^k \overline{Z_{U_N}\prth{ \overline{y_j\inv} } } } = \prod_{ \ell = 1 }^k y_\ell^{-N} \, s_{ N^k }\prth{ x_1, \dots, x_m, y_1, \dots, y_k }
\end{align}
with $ N^k = (N, \dots, N) $ ($k$ times) a rectangular partition.

We recall here the proof of Bump and Gamburd \cite{BumpGamburd} with a plethystic/$ \lambda $-ring perspective:

\begin{proof}[Proof of \eqref{Eq:SchurJointMoments}]
Let $ X := \ensemble{ x_1, \dots, x_k} $, $Y := \ensemble{ y_1, \dots, y_m } $ and $ U := \ensemble{u_1, \dots, u_N }  $. We have
\begin{align*}
\Esp{ \prod_{\ell = 1}^m  Z_{U_N}\prth{ x_\ell } \!\prod_{j = 1}^k \! \overline{Z_{U_N}\prth{ \overline{y_j\inv} } } \! }    & =     \int_{ \Ue_N } \prod_{ \ell = 1 }^m \det\prth{ I - x_\ell U } \prod_{j = 1}^k \det\prth{ I -  y_j\inv U\inv } dU \\
		 & = \int_{\Ue_N}  \prod_{ j = 1 }^k \det\prth{ -y_\ell\inv U\inv} \prod_{ \ell = 1 }^m   \det\prth{ I - z_\ell U } \prod_{j = 1}^k \det\prth{ I -  y_j U } dU \\
		 & = \prod_{j = 1}^k (- y_\ell)^{-N} \int_{\Ue_N} \det(U)^{-k} \prod_{z \in X + Y} \det\prth{ I - z U } dU \\
		 & = \prod_{j = 1}^k (- y_\ell)^{-N} \frac{1}{N!}  \crochet{U^0} U^{-k} H\crochet{-U(X + Y)}  \Delta(U) \Delta(U\inv)  \mbox{ by \eqref{Eq:WeylHaarRealisationBis} } \\ 
		 & = \prod_{j = 1}^k (- y_\ell)^{-N} \frac{1}{N!}  \crochet{U^k}  H\crochet{-U(X + Y) - U^{(1 + \varepsilon)R} }    \\
		 & = \prod_{j = 1}^k (- y_\ell)^{-N} s_{k^N}\crochet{- (X + Y) }
\end{align*}
using \eqref{Trick:ProdScalSym} followed by \eqref{FourierRepr:Schur} for $ \lambda = k^N $. 

We then conclude with \eqref{Eq:InvolutionOmega}, i.e. $ s_\lambda\crochet{-\Ae} = (-1)^{\abs{\lambda}} s_{\lambda'}\crochet{\Ae} $ and $ (k^N)' = N^k $.
\end{proof}

\medskip

The plethystic perspective on this theorem allows to generalise it to abstract specialisations of $ H\crochet{ -U \Ae } $ such as the ones given in example~\ref{Ex:HfunctorSpecialise}, since it appears in the core of the proof. Consider for instance the alphabet $ \Eee $ defined in example~\ref{Ex:HfunctorSpecialise} and let $ \theta \in \Rr $. One has
\begin{align*}
s_{N^k}\crochet{ X + \theta \Eee} & = (-1)^{kN} s_{k^N}\crochet{ -(X + \theta \Eee) } \quad\mbox{by \eqref{Eq:InvolutionOmega}} \\
                & = \frac{1}{N!}  \crochet{U^k}  H\crochet{-U(X + \theta \Eee) - U^{(1 + \varepsilon)R} } \\
                & = \frac{1}{N!}  \crochet{U^0} U^{-k}  H\crochet{ -U \theta \Eee} H\crochet{-UX }  H\crochet{ - U^{(1 + \varepsilon)R} } \\
                & = \int_{\Ue_N} \det(U)^{-k} e^{ - \theta \trace(U) } \prod_{x \in X } \det\prth{ I - x U }  dU
\end{align*}
since $ H\crochet{-U \theta \Eee} = \prod_{u \in U} H\crochet{ - \theta u \Eee} = \prod_{u \in U} e^{ \sum_{\ell \geq 1} \ell\inv p_\ell\,\crochet{- u \theta \Eee} } = e^{ -\theta \trace(U) } $.

The exponential alphabet will not be of any use in this article (see e.g. \cite[(2.11), \S~2.4.2, \&c.]{BorodinPetrov} for applications in integrable probabilities) but we will see the advantage of such a general framework in \S~\ref{Subsec:Ratios} with supersymmetric specialisations.

\medskip
\subsection{A plethystic-RKHS perspective on duality}\label{Subsec:DualityWithRKHSandPlethysm}

\textit{Duality} (or \textit{involution}) is a general mathematical concept that expresses a transformation exchanging two quantities which, once again applied gives back the original configuration. Originally, the \textit{projective duality} was deduced by Poncelet from the observation that exchanging the words ``points'' and ``lines'' in classical theorems in projective geometry implied new results \cite[\S~6.1.2]{AndreLMCI}. The projective point of view was then outclassed by the linear one, a projective space $ P $ of dimension $n$ being the space of lines $ P(V) $ of a vector space $V$ of dimension $ n + 1 $ and its projective dual being $ P^* \simeq P(V^*) $ where $ V^* $ is the space of linear forms on $V$. A last layer of generalisation started then from the category $ Vec $ of vector spaces to any category having a contravariant involution (the modern point of view). For instance, the inversion $ g \mapsto g\inv $ can be seen as an involution in any group (the group being seen as a category with one object, see \cite[\S~6.2.2]{AndreLMCI}). Since such a map is also the antipode of the underlying group algebra, it is natural to investigate the duality induced by the fundamental involution $ \omega $ which is related to the antipode of $ \Lambdab $ as noted in remark~\ref{Rk:Hopf}. 

In the Bump-Gamburd proof of \eqref{Eq:SchurJointMoments}, one sees that for $ \Ae := X + Y $
\begin{align*}
s_{N^k}\crochet{ \Ae }  = \int_{\Ue_N} \det(U)^{-k}   \prod_{a \in \Ae } \det\prth{ I - a U }  dU = \Esp{ \det(U_N)^{-k} \prod_{a \in \Ae} Z_{U_N}(a) }
\end{align*}  
whereas the RKHS formula \eqref{FourierRepr:Schur}/\eqref{Eq:SchurWithRKHS} writes for $ \ell = k $ (in which case $ s_{N^k}(U) = \det(U)^N $)
\begin{align*}
s_{N^k}\crochet{ \Ae }  & = \bracket{H\crochet{X\Ae}, s_{N^k}\crochet{X} }_k = \int_{\Ue_k} \det(U)^{-N}   \prod_{a \in \Ae } \det\prth{ I + a U }\inv  dU \\
                &  = \Esp{ \det(U_k)^{-N} \prod_{a \in \Ae} Z_{U_N}(-a)\inv }
\end{align*}

We thus have the formula
\begin{align}\label{Eq:DualityCUECharpol}
\Esp{ \det(U_N)^{-k} \prod_{j = 1}^\ell Z_{U_N}(-x_j) } = \Esp{  \det(U_k)^{-N} \prod_{j = 1}^\ell Z_{U_k}(x_j)\inv }  
\end{align}
that exchanges $ N $ and $k$ and that can be written in a more algebraic way as
\begin{align}\label{Eq:InvolutionOmegaOnSchur}
s_{k^N}\crochet{\widehat{\omega} X} = s_{N^k}\crochet{X},
\end{align}
a property that was already remarked in \S~\ref{Subsec:FuncSym} with \cite[I-3, (3.8)]{MacDo}.

The formula \eqref{FourierRepr:Schur} equivalent to \eqref{Eq:InvolutionOmegaOnSchur} is thus a duality formula in the categorical sense (more precisely in the category of graded algebras by \eqref{Eq:InvolutionOmegaOnSchur} since $ \Lambdab $ is such an algebra). Nevertheless, to pass from \eqref{Eq:InvolutionOmegaOnSchur} to \eqref{Eq:DualityCUECharpol}, one has applied the Riesz representation theorem for the evaluation%
\footnote{The canonical evaluation $ r_a : f \mapsto f(a) $ is continuous in an RKHS $ (H, \bracket{\cdot, \cdot}_H) $ and by the Riesz representation theorem, $ \exists !\, R_a \in H $ s.t. $ r_a = \bracket{R_a, \cdot}_H $. The Riesz representant $ R_a $ is said to be a reproducing kernel by the property $ R_a(b) = r_b(R_a) = \bracket{R_a, R_b}_H =: R(a, b) $ a particular case of which is given in \eqref{Trick:Hreproducivity}.}
in the $ L^2 $ space of the relevant group ($ \Ue_N $ or $ \Ue_k $) with the reproducing kernel structure. This is thus the conjunction of the $ \lambda $-ring structure and the RKHS structure that produces the duality.

\medskip

Such dualities \textit{\`a la Kontsevitch} \cite{Kontsevich1992} exchanging $ N $ and $k$ integrals are of obvious use in tackling asymptotics issues as it bypasses the usual pathology of probability theory (the growing number of integrals)~; as a result, it has been extensively studied in probability theory and mathematical physics, and in particular in random matrix models \cite[\&c.]{BrezinHikamiDuality, ForresterRainsFuchsian, ImamuraSasamotoGOE, Kontsevich1992, KillipRyckman, MorozovRMTsystint, TribeZaboronskiPfaffianCoalescing}. For instance, for $ GUE $ random matrices, one has \cite[(2.57)]{MorozovRMTsystint}
\begin{align}\label{Eq:DualityGUEKontsevich}
\int_{ \He_N } \det\prth{ \Lambda_k \otimes I_N - I_k \otimes H }  d\!\GUE_N(H)  = \int_{ \He_k } \crochet{ \det\prth{ H + \Lambda_k} }^N d\!\GUE_k(H)
\end{align}
where $ \He_k $ is the space of Hermitian matrices, $ \Lambda_k $ is a $ k \times k $ diagonal matrix, $ d\GUE_k(H) = e^{- \frac{1}{2} \trace(H H^*) } \frac{dH }{ (2\pi)^{k(k + 1)/4} }$ and $ dH $ is the Lebesgue measure on this space (note also the deformed $ GUE $ with external source in e.g. \cite[(5), (6)]{BrezinHikamiDuality}). 

The general concept of duality is nowadays coined to designate a general exchange between a set of parameters in an equality of expectations. In the case of Hermitian matrix models and their chiral generalisations (analogous to $ \beta $ circular matrix models), the breakthrough by Desrosiers \cite{DesrosiersDuality} sumarises all previous attempts to derive duality results. The general exchange of parameters is given in  \cite[(1.6)]{DesrosiersDuality} and shows that the $ GUE $ case is self-dual (since $ \beta $ is replaced by $ 4/\beta $ in general $ \beta $-ensembles). This is also the case of the $ CUE $, as shown by Matsumoto \cite{MatsumotoCbetaE}\footnote{Even if the paper was ``withdrawn by the author due to a crucial error'' for general $ \beta $, the case $ \beta = 2 $ remains valid and recovers the duality result of the present paper.}.


Apart from exact formulas resulting from ad-hoc manipulations, methods to prove duality have been so far of two types that one could call, in a probabilistic language, Gaussian and Markovian. Gaussian processes are the probabilistic counterpart of the theory of RKHS~: every (centered) Gaussian process $ (X_z)_z $ is uniquely defined (up to a modification) by its covariance $ C_Z : (u, v) \mapsto \Esp{X_u \overline{X_v}} $ which is a positive definite function, or equivalently by the associated kernel operator, and a famous theorem of Moore-Aronszajn associates in a unique way an RKHS to its kernel~; a Gaussian process is thus uniquely equivalent to an RKHS. Similarly to a Gaussian process, a Markovian process $ (X_t)_t $ is defined in a unique way by an operator $ \Le $ called the \textit{infinitesimal generator} (and by the domain on which it acts). Methods to prove duality by means of an operator are philosophically of the same type and only differ in practice from the nature of the considered operators~: reproducing kernel/covariance operators are integral operators whereas infinitesimal generators are often differential or integro-differential operators. In fact, since the semi-group $ (e^{t\Le})_{t \geq 0} $ of a differential operator $ \Le $ is an integral operator, RKHS methods can become equivalent to Markovian methods such as in \cite[(4.1)]{DesrosiersDuality} (it is nevertheless considered simpler to apply a differential operator than an integral one).

This operator method to find duality is well-described by Desrosiers in \cite[\S~4.1]{DesrosiersDuality}. It consists in finding a good function of two variables on which the semi-group/the covariance operator acts identically on each different variable. Finding such a duality function is the key point in the method, and although some general abstract construction are available in terms of the spectral decomposition of the operator, a ``practical'' function is often to be found with an ansatz, assuming a preferential form (see e.g. \cite[prop. 7]{DesrosiersDuality}). In the setting of Markov processes, it is implied by the intertwinning of two generators \cite{RogersPitman}.

Duality methods were successfully applied in numerous domains of probability theory and mathematical physics, which turns the task of reviewing their use into an impossibility~; we nevertheless refer the reader to \cite{AssiotisIntertwining, BorodinCorwinSasamoto, CarmonaPetitYor, DesrosiersLiu2011, DiaconisFill, GiardinaKurchanRedigVafayi, KarlinMcGregor, KuanUqAn, KuanAsep, KuanMarkovSchurWeyl, Liggett, RogersPitman, SchutzDuality, Spitzer} and references cited for a (very restricted) selection in domains ranging from coalescence-fragmentation to integrable probabilities.

\medskip
\subsection{CFKRS with RKHS}\label{Subsec:Theory:CFKRSwithRKHS}

The CFKRS formula \eqref{Eq:SchurCFKRS} also starts from \eqref{Eq:SchurJointMoments} with $ s_{N^m}\crochet{X + Y} $ given in \cite[(2.11)]{ConreyFarmerKeatingRubinsteinSnaith}~; here $ X := \ensemble{x_1, \dots, x_m} $ and $ Y := \ensemble{y_1, \dots, y_k} $ (hence $ k = n - m $ in \cite[(2.11)]{ConreyFarmerKeatingRubinsteinSnaith}). It then proceeds with manipulations that are ``non standard'' in the theory of symmetric functions, ending up with a sum over the set $ \Xi_m $ of $ {n \choose m} $ permutations that differentiates between $ X $ and $ Y $, by ordering the $ m $ first variables and the $ k $ last variables. With the notations of \S~\ref{Sec:Notations}, \cite[(2.16)]{ConreyFarmerKeatingRubinsteinSnaith} writes
\begin{align*}
s_{N^m } \crochet{X + Y} = \! \sum_{\sigma \in \Xi_m} \! \sigma \cdot \prth{ Y^N H\crochet{X Y\inv} } \quad \Longleftrightarrow \quad s_{N^m}\!\crochet{e^X + e^Y} = \!\sum_{\sigma \in \Xi_m} \! \sigma \cdot \prth{ e^{N p_1(Y)} H\crochet{e^X e^{\varepsilon Y} } }
\end{align*}

Now, applying the RKHS formula for symmetric functions with the Cauchy kernel in $ n = m + k $ variables given in \eqref{Eq:ReproducingKernelProperty}, one gets with $ U := \ensemble{u_1, \dots, u_n} = U_- + U_+ $ and $ U_- := \ensemble{u_1, \dots, u_n} $, 
\begin{align*}
s_{N^m}\!\crochet{e^X + e^Y} & = \bracket{  \sum_{\sigma \in \Xi_m} \! \sigma \cdot \prth{ e^{N p_1(U_+)} H\crochet{e^{U_-} e^{\varepsilon U_+} } } , H\crochet{ (X + Y) U }  }_{\!\!\! U \in \Ue_n} \\
                 & = \abs{\Xi_m} \bracket{   e^{N p_1(U_+)} H\crochet{e^{U_-} e^{\varepsilon U_+} }  , H\crochet{ (X + Y) U }  }_{\! U \in \Ue_n} \\
                 & = {n \choose m} \frac{1}{n!} \oint_{\Uu^n}  e^{N p_1(U_+)} H\crochet{e^{U_-} e^{\varepsilon U_+} } H\crochet{ (X + Y) U\inv } \abs{\Delta(U)}^2 \frac{d^*U}{U}
\end{align*}
which is exactly \cite[(2.17)]{ConreyFarmerKeatingRubinsteinSnaith} given in \eqref{Eq:SchurCFKRS} after easy manipulations on the Vandermonde determinant and the Cauchy product.

\medskip

The CFKRS formula \eqref{Eq:SchurCFKRS} is thus a particular evaluation formula in an RKHS. It perfectly fits into the framework of the theory presented here and only differs from \eqref{Eq:SchurWithRKHS} by the ``non standard'' manipulations that aim at separating $ X $ and $ Y $. Such a manipulation, although relevant from the point of view of the analogy with the $ \zeta $ function, has several drawbacks~: the number of integrals is $ n = k + m $ in place of $ m $ (the minimal required amount of possible integrals to apply the RKHS property), the separation of variables seems artificial with regards to the original symmetry, and one does not easily see the scaling property of the Schur function. More importantly, since the ``non standard manipulations'' are only valid for ``classical'' alphabets, one cannot extend the formalism for abstract alphabets (see e.g. the discrepancy with the supersymmetric case of \cite{ConreyFarmerZirnbauer} given in \cite[Thm 1.3]{BaileyBettinBlowerConreyProkhorovRubinsteinSnaith}), and due to the exponential term $ e^{N p_1(U_+)} $ that is not polynomial, one cannot replace $ H\crochet{ (X + Y) U\inv } $ by $ h_{Nn}\crochet{ (X + Y) U\inv } $. This last drawback is at the core of the additional manipulations described in \S~\ref{Subsec:Intro:ComparisonLiterature} when extracting any asymptotic behaviour.

\medskip

\subsection{Duality in $ \He_N $}\label{Subsec:Theory:DualityHermitian}

There are three ways to exchange integrals over $ \Ue_N $ and $ \He_N $~:
\begin{itemize}

\item the \textit{exponential map} $ H \in \He_N \mapsto e^{i H} \in \Ue_N $ gives a measure studied in \cite{FyodorovKhoruzhenkoSimm} in link with the fractional Brownian motion with Hurst index equal to $0$~; the Jacobian of the ``diagonalisation map'' was computed in \cite[(4)]{MorozovCUE} and gives an interesting measure proportional to $ \prod_{1 \leq i < j \leq N} \sinc(\tfrac{h_i - h_j}{2})^2 d\hb $ for the law of the eigenvalues,

\item the \textit{stereographic projection} or \textit{Cayley map} $ \Ce : U \in \Ue_N \mapsto (1 - U)(1 + U)\inv = -I_N + 2 (I_N - U)\inv \in \He_N $ (see e.g. \cite[ch. 3-1]{HuaHarmonic}) maps the $ CUE_N $ to the Cauchy Ensemble on $ \He_N $ whose measure is given by $ d\Pp_{\operatorname{Cauchy}, N}(H) := \det(I_N + H^2)^{-N} \frac{dH}{Z_N} $ where $ Z_N $ is an explicit constant, 

\item the \textit{Askey trick} (see e.g. \cite[(1-15)]{ForresterOleWarnaar}) is the following identity
\begin{align*}
\oint_{\Uu^N} Z^\zeta f(-Z) \frac{d^*Z}{Z} = \prth{ \frac{\sin(\pi\zeta)}{\pi} }^N \int_{\crochet{0, 1}^N } X^\zeta f(X) \frac{dX}{X}
\end{align*}
where $ Z := \ensemble{z_1, \dots, z_N} $, $ X := \ensemble{x_1, \dots, x_N} $, $f$ is a Laurent series and $ \zeta \notin \Zz $ is such that the RHS (hence the LHS) exists. The limitation $ \zeta \notin \Zz $ is critical to define the LHS, and one cannot express Fourier coefficients in such a way. It was for instance used in \cite[(1.29)]{ForresterWitteTau2} to relate the $ CUE $ with the $ JUE $.
\end{itemize}

These links were thoroughly exploited in numerous articles, and in particular the Cayley transform (see e.g. \cite{BourgadeNajnudelNikeghbali, Neretin} for a similar construction in both spaces, with similarities and differences). It was notably used by Winn in his study of the derivative of $ Z_{U_N} $ \cite[proof of prop. 3.4]{Winn} using the change of variables $ x_j = \cot(\theta_j/2) $ in the Weyl integration formula \eqref{Eq:WeylHaarRealisationBis}. Note indeed that\footnote{These manipulations are equivalent to \cite[between (4.7) and (4.9)]{Winn} in $ X $-coordinates.} (see e.g. \cite[\S~3.1]{SantilliTierz} or \cite[(1)]{MorozovCUE})
\begin{align*}
\Delta(e^{i\thetab}) & = \prod_{1 \leq k < \ell \leq N} (e^{i\theta_k} - e^{i\theta_\ell}) = \prod_{1 \leq k < \ell \leq N}  e^{i(\theta_k +\theta_\ell)/2} (e^{i(\theta_k - \theta_\ell)/2} - e^{i(\theta_\ell - \theta_k)/2} ) \\
              & = e^{i\frac{N - 1}{2} \sum_j \theta_j } \prod_{1 \leq k < \ell \leq N}  2i \sin(\theta_k/2 - \theta_\ell/2) \\
              & = (2i)^{N(N - 1)/2} e^{i\frac{N - 1}{2} \sum_j \theta_j } \prod_{1 \leq k < \ell \leq N} \prth{ \sin(\theta_k/2)\cos(\theta_\ell/2) - \sin(\theta_\ell/2)\cos(\theta_k/2) } \\
              & = (2i)^{N(N - 1)/2} e^{i\frac{N - 1}{2} \sum_j \theta_j } \prod_{1 \leq k < \ell \leq N} \sin(\theta_k/2) \sin(\theta_\ell/2) \prth{ \cot(\theta_k/2) - \cot(\theta_\ell/2)  } \\
              & = (2i)^{N(N - 1)/2} e^{i\frac{N - 1}{2} \sum_j \theta_j } \prod_k \sin(\theta_k/2)^{(N - 1)/2} \Delta(\cot(\thetab/2))
\end{align*}
and $ e^{i \mathrm{Arc}\cot(x)} = \frac{i + x}{\sqrt{1 + x^2}} = \frac{i + x}{\abs{i + x}} $. Since $ d \mathrm{Arc}\cot(x) = - \frac{dx}{1 + x^2}  $, we get for an integrable class function $f$,
\begin{align*}
\int_{\Ue_N} f(U) dU & = \frac{1}{N!} \int_{[-\pi, \pi]^N} f(e^{i\theta_1}, \dots, e^{i \theta_N}) \abs{\Delta( e^{i \thetab})}^2 \frac{d\thetab}{(2\pi)^N} \mbox{ by \eqref{Eq:WeylHaarRealisationBis}} \\
                   & = \frac{2^{N(N - 1)}}{(2\pi)^N \, N!} \int_{\Rr^N} f\prth{ \frac{(i + x_1)^2}{ 1 + x_1^2 }, \cdots, \frac{(i + x_N)^2}{ 1 + x_N^2 } } \Delta(\xb)^2 \prod_{j = 1}^N  \frac{1}{(1 + x_j^2)^{(N - 1)} }  \frac{dx_j}{1 + x_j^2 }   
\end{align*}
which gives the Cauchy measure proportional to $ \det(I + H^2)^{-N} $ after the usual Jacobian computation in $ \He_N $. Note that the characteristic polynomial in $ 1 $ corresponds to $ f(X) = \prod_j (1 - x_j) $, hence is particularly adapted to the transform. So is $ Z_{U_N}'/Z_{U_N} $ that writes in terms of $ \cot(\theta_i/2) $. It seems nevertheless more complicated to look at the multipoint setting with this method, or in any other point than $ 1 $, hence, the duality method that we propose in \S~\ref{Subsec:Derivatives} seems more general from this perspective. 

A duality for the Cauchy measure in the same vein as \eqref{Eq:DualityGUEKontsevich} for the $ GUE $ was given in \cite[(4.23)]{Winn} using the $ LUE $ instead and a result of Br\'ezin-Hikami \cite[(15)]{BrezinHikamiCharpol}. Such a result constitutes a duality formula that is proven without any symmetric function theory but which is equivalent to \eqref{Eq:DualityCUECharpol} and more precisely to \eqref{Eq:AutocorrelationsAsAQuotientOfDets} (the method is classical and used e.g. in \cite{AkemannVernizzi, ConreyForresterSnaith} ; it is equivalent to the computation of the correlation functions of a determinantal measure by successive integrations). This last formula was re-written as an integral in \cite[(21), (34)]{BrezinHikamiCharpol}, and one can guess with \eqref{AlternantRepr:Schur} that it hides an instance of the Schur function with (the numerator of) the alternant present in \cite[(21)]{BrezinHikamiCharpol}. This explains why this formula ``looks like'' \eqref{Eq:SchurCFKRS} as stated in \S~\ref{Subsec:Intro:ComparisonLiterature}.

$ $

\section{Applications}\label{Sec:Applications}

We now give new proofs of the results described in section~\ref{Subsec:AllProblems}. The notations and functions $ h_{c, \infty}^{(\kappa)} $ and $ \widetilde{h}_{c, \infty}^{(\kappa)}$ that we use are given in annex~\ref{Sec:ProbReprGegenbauer}.

\medskip
\subsection{The Keating-Snaith theorem}\label{Subsec:KS} 

The celebrated Keating-Snaith theorem \cite[(15), (16)]{KeatingSnaith} is enunciated in theorem~\ref{Thm:KSwithDuality} and in \eqref{Eq:KSmomentsCharpol}.
%
%
Although a particular case of several results defined in section~\ref{Subsec:AllProblems}, the importance of this theorem and the conjecture about the moments of $ \zeta(1/2 + i T U) $ for $ U \sim \Us([0, 1]) $ that followed justifies a particular treatment. The machinery to prove it based on the previous considerations will moreover be the prototype of all following proofs. For the reader's convenience, we recall here its statement~:

\begin{theorem}[Value of the characteristic polynomial in 1]\label{Thm:KSwithDualityBis} 
For all $ k \geq 1 $, one has 
\begin{align}\label{Eq:KSCUEwithDuality2}
\frac{ \Esp{ \abs{Z_{U_N}(1) }^{2k} } }{ N^{k^2 } } \tendvers{N}{+\infty} \widetilde{L}_1(k)
\end{align}
where
\begin{align}\label{EqPhi:KS2}
\begin{aligned}
\widetilde{L}_1(k) & := \frac{(2\pi)^{k(k - 1) } }{k!} \int_{\Rr^{k - 1} } \Phi_{\widetilde{L}_1}(0, x_2, \dots, x_k) \Delta(0, x_2, \dots, x_k)^2 dx_2 \dots dx_k \\
\Phi_{\widetilde{L}_1}(0, x_2, \dots, x_k) & :=  e^{ 2 i \pi (k^2 - 1) \sum_{ j = 2 }^k x_j } \, h_{k, \infty}^{(2k) }(0, x_2, \dots, x_k)
\end{aligned} 
\end{align} 
\end{theorem}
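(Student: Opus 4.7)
The plan is to reduce $\Esp{|Z_{U_N}(1)|^{2k}}$ to the central formula~\eqref{FourierRepr:SchurRectangleWithHn} and extract asymptotics via microscopic rescaling. By the Bump-Gamburd identity~\eqref{Eq:SchurJointMoments} applied with $m = k$ and all arguments equal to $1$, one has $\Esp{|Z_{U_N}(1)|^{2k}} = s_{N^k}[1^{\plusInOne 2k}]$, and formula~\eqref{FourierRepr:SchurRectangleWithHn} converts this into the $k$-fold contour integral
\[
s_{N^k}[1^{\plusInOne 2k}] \;=\; \frac{1}{k!} \oint_{\Uu^k} U^{-N}\, h_{Nk}[U\cdot 1^{\plusInOne 2k}]\, |\Delta(U)|^2\, \frac{d^*U}{U}.
\]
This is the starting point for all subsequent analysis; the cost of the reduction is that the number of effective integrations drops from $N$ (over $\Ue_N$) to $k$ and becomes independent of $N$.

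The integrand is invariant under the $U(1)$ rotation $u_j \mapsto z u_j$: the factor $z^{Nk}$ produced by the homogeneity of $h_{Nk}$ is exactly cancelled by $U^{-N}$, and the Vandermonde modulus is manifestly invariant. One may therefore freeze $u_1 = 1$ and integrate only on $\Uu^{k-1}$. I then perform the microscopic rescaling $u_j = e^{2i\pi x_j/N}$ for $j \geq 2$: the Jacobian produces $N^{-(k-1)}$, the Vandermonde scales as $|\Delta(V_{\xb/N})|^2 \sim (2\pi)^{k(k-1)} N^{-k(k-1)}\, |\Delta(0,x_2,\dots,x_k)|^2$, and the Fourier phase $V_{\xb/N}^{-N}$ produces $e^{-2i\pi\sum_{j\geq 2} x_j}$.

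The core of the proof is the asymptotic behavior of $h_{Nk}[V_{\xb/N}\cdot 1^{\plusInOne 2k}]$ in this microscopic regime, which by Lemma~\ref{Lemma:RescaledGegenbauer} converges, after extraction of an explicit prefactor $N^{2k^2 - 1}$ and a Fourier phase, to $h_{k,\infty}^{(2k)}(0,x_2,\dots,x_k)$. Conceptually this is a local central limit theorem in the spirit of~\eqref{Eq:PhilosophyCLT}: via~\eqref{FourierRepr:hn}, $h_{Nk}[V_{\xb/N}\cdot 1^{\plusInOne 2k}]$ is the $(Nk)$-th Fourier coefficient of $H[tV_{\xb/N}\cdot 1^{\plusInOne 2k}] = \prod_{j}(1-t e^{2i\pi x_j/N})^{-2k}$, whose probabilistic interpretation (Lemmas~\ref{Lemma:BetaProbRepr}, \ref{Lemma:GammaProbRepr}) is the density at $Nk$ of a sum of independent beta/gamma random variables, and the scale $u_j = e^{2i\pi x_j/N}$ is precisely the scale of the standard deviation at which the local CLT yields a nontrivial limit.

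The last step is the passage to the limit under the integral, which I would justify by dominated convergence using the sharp bound~\eqref{Ineq:BoundXi} on the characteristic function of the underlying beta-gamma sum; this provides a uniform-in-$N$ envelope integrable against the polynomial Vandermonde weight, allowing the domain $[-N/2,N/2]^{k-1}$ to extend to $\Rr^{k-1}$. Assembling the powers yields $N^{2k^2-1} \cdot N^{-(k-1)} \cdot N^{-k(k-1)} = N^{k^2}$, matching the predicted exponent in~\eqref{Eq:KSCUEwithDuality2}, while the residual Fourier phase combines to $e^{2i\pi (k^2-1)\sum_{j\geq 2} x_j}$, reproducing $\Phi_{\widetilde{L}_1}$ of~\eqref{EqPhi:KS2}. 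The main technical obstacle lies in the domination underlying Lemma~\ref{Lemma:RescaledGegenbauer}: one needs control of the characteristic function uniformly on $[-N/2,N/2]^{k-1}$ rather than merely pointwise convergence, which is precisely what the estimates~\eqref{Ineq:BoundXi} (and the accompanying tail decay of the beta-gamma densities) provide.
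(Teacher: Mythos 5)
Your proposal follows the paper's own proof step by step --- Bump--Gamburd reduction to $s_{N^k}[1^{\plusInOne 2k}]$, the Fourier/RKHS representation \eqref{FourierRepr:SchurRectangleWithHn}, reduction to $\Uu^{k-1}$ by homogeneity (Remark~\ref{Rk:MultivariateFourierWithHomogeneity}), microscopic rescaling, local CLT via Lemma~\ref{Lemma:RescaledGegenbauer} --- and you assemble the powers of $N$ and the Fourier phases correctly, so this is essentially the paper's argument. One small imprecision: the dominated-convergence envelope is supplied by the speed-of-convergence estimate \eqref{Ineq:SpeedOfConvergenceHinftyBis} together with the integrability criterion \eqref{Ineq:IntegrabilityDomination} with $(K,M,\kappa)=(k-1,1,2k)$, rather than by \eqref{Ineq:BoundXi} directly (the latter is a pointwise almost-sure bound used internally in the proof of Lemma~\ref{Lemma:BetaProbRepr} and does not by itself furnish an integrable envelope in $\xb$).
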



\begin{proof}
Using \eqref{Eq:SchurJointMoments}, we have
\begin{align*}
\Esp{ \abs{Z_{U_N}(1) }^{2k} } & := \int_{ \Ue_N } \abs{   \det(I - U)^k }^2 dU \\
            & = \int_{ \Ue_N }  \det(I - U)^k \det(I - U\inv)^k  dU \\
            & = (-1)^{kN} \int_{ \Ue_N } \det(U)^{-k} \det(I - U)^{2k} dU = (-1)^{kN} s_{k^N}\crochet{-1^{\plusInOne 2k} } \\
            &  = s_{N^k}\crochet{ 1^{\plusInOne 2k} }
\end{align*}

One then has with \eqref{FourierRepr:SchurRectangle} 
\begin{align*}
s_{N^k}\crochet{ 1^{\plusInOne 2k} } & = \frac{1}{k!} \oint_{ \Uu^k } U^{-N} H\crochet{ U  1^{\plusInOne 2k} }  \abs{\Delta(U) }^2 \frac{d^*U}{U}, \qquad U := \ensemble{ u_1, \dots, u_k } \\ 
             & = \frac{1}{k!} \oint_{ \Uu^k } U^{-N} h_{Nk}\crochet{ 1^{\plusInOne 2k} U}   \abs{\Delta(U) }^2 \frac{d^*U}{U} \qquad \mbox{ with \eqref{FourierRepr:SchurRectangleWithHn}. } \\
              & =: \frac{1}{k!} \oint_{ \Uu^k } U^{-N} h_{Nk}^{(2k)}( U )  \abs{\Delta(U) }^2 \frac{d^*U}{U} \\
              & =  \frac{1}{k!} \oint_{ \Uu^k } u_1^{ -Nk } \prod_{j = 2}^k \prth{ \frac{u_j}{u_1} }^{-N} \times u_1^{Nk} \, h_{Nk}^{(2k)} \prth{1, \frac{u_2}{u_1}, \dots, \frac{u_k}{u_1 } }  \abs{ \Delta  \prth{1, \frac{u_2}{u_1}, \dots, \frac{u_k}{u_1 } } }^2 \frac{d^*U}{U} \\
              & = \frac{1}{k!} \oint_{ \Uu^{k - 1} }  \prod_{j = 2}^k v_j^{-N} \times h_{Nk}^{(2k)} \prth{1, v_2 , \dots, v_k }  \abs{ \Delta  \prth{1, v_2, \dots, v_k } }^2 \frac{d^*V}{V}, \quad V := \ensemble{v_2, \dots, v_k}
\end{align*}

Here, we have used the trick of remark~\ref{Rk:MultivariateFourierWithHomogeneity}, i.e. we have set $ u_1 = e^{2 i \pi \phi} $, $ v_j := u_j/u_1 = e^{2 i \pi \theta_j} $ with $ \theta_j \in \crochet{ - \frac{1}{2}, \frac{1}{2} } $ for all $ j \in \intcrochet{2, k} $ and then integrated in $ u_1 $, making this variable disappear. Note that this all comes from the homogeneity of $ h_m $, i.e. $ h_m\crochet{\lambda \Ae} = \lambda^m h_m\crochet{\Ae} $ (and the homogeneity of $ \abs{\Delta(U)}^2 $) ; this homogeneity is of course valid for any abstract alphabet $ \Ae $ ; there is no such property for $ H\crochet{\lambda \Ae} $, even if the two integrals are equal, hence, the ``hyperplane concentration'' is more difficult to highlight with $ \oint_{ \Uu^k } U^{-N} H\crochet{ U  1^{\plusInOne 2k} }  \abs{\Delta(U) }^2 \frac{d^*U}{U} $ (see also remark~\ref{Rk:DegreeFreedomReprHn}).

Setting $ x_j := \theta_j N $, one now gets
\begin{align*}
\Esp{ \abs{Z_{U_N}(1) }^{2k} } & =  \frac{1}{k!} \int_{ \crochet{ - \frac{N}{2}, \frac{N}{2} }^{k - 1} } e^{ - 2 i \pi \sum_{j = 2}^k x_j } \, h^{(2k)}_{ k N }\prth{1, e^{2 i \pi x_2/N} , \dots, e^{2 i \pi x_k/N} }   \\
                & \hspace{+4.5cm} \times  \abs{\Delta\prth{ 1, e^{2 i \pi x_2/N} , \dots, e^{2 i \pi x_k/N} } }^2 \frac{dx_2 \dots dx_k }{N^{k - 1} }  \\
                & \equivalent{N\to +\infty } \frac{ N^{ (2k^2 - 1) - k(k-1) - (k - 1)}   }{k!} \int_{ \Rr^{k - 1} } \!\! e^{ - 2 i \pi \sum_{j = 2}^k x_j } \, h^{(2k)}_{ k , \infty }\prth{0, x_2, \dots, x_k } e^{  i \pi 2k\cdot k \sum_{j = 2}^k x_j }   \\
              & \hspace{+6cm} \times  (2\pi)^{ k(k - 1) }   \abs{\Delta\prth{ 0, x_2, \dots, x_k } }^2  dx_2 \dots dx_k \\
              & = N^{k^2} \frac{ (2\pi)^{ k(k - 1) }}{k!} \int_{ \Rr^{k - 1} } \!\! e^{ 2 i \pi (k^2 - 1) \sum_{ j = 2 }^k x_j } \, h_{k, \infty}^{(2k) }(0, x_2, \dots, x_k) \Delta\prth{ 0, x_2, \dots, x_k }^2  \\
              & \hspace{+11.5cm} dx_2 \dots dx_k
\end{align*}

The only step to justify is the limit that gives the equivalent. As stated in the introduction, this is a local limit theorem, hence, we use the dominated convergence given by inequality \eqref{Ineq:SpeedOfConvergenceHinftyBis}. The limiting function is seen integrable using the first criteria \eqref{Ineq:IntegrabilityDomination} with $ (K, M, \kappa) = (k - 1, 1, 2k ) $ which is clearly fullfilled for all $ p \in \ensemble{0, 1, 2} $ and $ k \geq 2 $.
\end{proof}

\medskip
\subsection{Autocorrelations of the characteristic polynomial in the microscopic setting}\label{Subsec:Autocorrelations} 

In the same vein as the moments of the characteristic polynomial in $1$, one can ask for the joint moments in different points. In the particular case where the points are separated by a distance of order $ 1/N $ for a $ CUE(N) $ matrix, one gets a computation achieved by Killip and Ryckman in the more general case of a $ \beta $-ensemble \cite{KillipRyckman}. Their result uses an OPUC machinery and writes \cite[thm. 4.1]{KillipRyckman} for $ \beta = 2 $ and for all $ x_1, \dots, x_k, y_1, \dots, y_m \in \Rr^{m + k} $
\begin{align*}
\frac{1}{N^{km} } \Esp{ \prod_{j = 1}^k Z_{U_N}\prth{e^{ 2 i \pi x_j/N} } \prod_{\ell = 1}^m \overline{ Z_{U_N}\prth{e^{ 2 i \pi y_\ell/N} }}  } \tendvers{N}{+\infty} \Psi(x_1, \dots, x_k, y_1, \dots, y_m)
\end{align*}
where $ \Psi $ is given as the solution of a particular PDE. 

In the particular case of $ \beta = 2 $, we have in fact a more explicit result \cite{AkemannVernizzi, FyodorovStrahovCorrChiralGUE} with $ Z := X + Y \subset \Cc $ if $ X := \ensemble{x_j, 1 \leq j \leq k} $ and $ Y := \ensemble{y_\ell, 1\leq \ell \leq m} $, for $ k = m $ and with $ K_N(z_1, z_2) := \sum_{j = 1}^N (z_1 \overline{z_2})^j $:
\begin{align}\label{Eq:AutocorrelationsAsAQuotientOfDets}
\Esp{ \prod_{j = 1}^k Z_{U_N}\prth{e^{ 2 i \pi x_j/N} } \overline{ Z_{U_N}\prth{e^{ 2 i \pi y_j/N} }}  } & = \frac{\det\prth{ K_{N + k}\prth{ e^{2 i \pi \frac{x_j}{N}}, e^{2 i \pi \frac{\overline{y_\ell}}{N}} } }_{1 \leq j, \ell \leq k} }{ \Delta\prth{ e^{2 i \pi \frac{x_1}{N} } , \dots, e^{2 i \pi \frac{x_k}{N} } } \overline{ \Delta\prth{ e^{  2 i \pi \frac{y_1}{N} } , \dots, e^{2 i \pi \frac{y_k}{N} } } } } \\
              & \equivalent{N\to +\infty} \frac{N^{k^2}}{(2\pi)^{k(k - 1)} } \frac{\det\prth{ \sinc\prth{ 2\pi(x_j - \overline{y_\ell}) } }_{1 \leq j, \ell \leq k} }{ \Delta\prth{ x_1, \dots, x_k } \overline{ \Delta\prth{ y_1, \dots, y_k } } } \notag
\end{align}


We now show that this last convergence can be proven with the previous machinery:

\begin{theorem}[Microscopic autocorrelations of the characteristic polynomial]\label{Theorem:Autocorrelations}
We have, locally uniformly in $ X + Y \in \Cc^{k + m} $ for all $ k, m \geq 2 $
\begin{align*}
\frac{1}{N^{km} } \Esp{ \prod_{j = 1}^k Z_{U_N}\prth{e^{ 2 i \pi x_j/N} } \prod_{\ell = 1}^m \overline{ Z_{U_N}\prth{e^{- 2 i \pi y_\ell/N} }}  } \tendvers{N}{+\infty} \Ae(X, Y)
\end{align*}
with 
\begin{align}\label{EqPhi:Autocorrels}
\begin{aligned}
\Ae(X, Y) & := \frac{(2\pi)^{m(m - 1) } }{m!} \int_{ \Rr^{ m - 1 } } \Phi_{\Ae(X, Y)}(0, t_2, \dots, t_m) \Delta(0, t_2, \dots t_m)^2 dt_2 \dots dt_m \\
\Phi_{\Ae(X, Y)}(0, t_2, \dots, t_m) & = e^{-2 i \pi \sum_\ell \overline{ y_\ell} + i \pi \,  \prth{ 1 + \sum_{j = 2}^m t_j }\prth{ m (\sum_{r = 1}^k x_r - \sum_{\ell = 1}^m  \overline{y_\ell} ) -2 } } h_{m, \infty}\crochet{ (0 + T)\tensorsum(X + \overline{Y} ) }  
\end{aligned}
\end{align}

Here, we have used the $ \lambda $-ring/plethystic formalism to write $(0 + T)\tensorsum(X + \overline{Y} ) = X + \overline{Y} + T \tensorsum X + T \tensorsum Y$ which is an alphabet in $ m(k + m) $ variables and 
\begin{align*}
f\crochet{ (0 + T)\tensorsum(X + \overline{Y} ) } & = f(x_1, \dots, x_k, y_1, \dots, y_m, t_1 + x_1, \dots, t_1 + x_k, t_2 + x_1, \dots,  \\
                 & \hspace{+7cm} t_m + x_k, t_1 + y_1, \dots, t_m + y_m)
\end{align*}
\end{theorem}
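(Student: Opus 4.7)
\emph{Proof plan.} The strategy mirrors that of Theorem \ref{Thm:KSwithDuality}. First, I apply the Bump--Gamburd formula \eqref{Eq:SchurJointMoments} with the roles of $(k,m)$ interchanged, since the Schur partition is indexed by the number of conjugate factors. Matching $\overline{y_j^{-1}} = e^{-2i\pi y_\ell/N}$ gives $y_j = e^{-2i\pi\overline{y_\ell}/N}$ and $y_j^{-N} = e^{2i\pi\overline{y_\ell}}$, so that the expectation rewrites as
\[
\prth{\prod_{\ell=1}^m e^{2i\pi\overline{y_\ell}}}\, s_{N^m}\crochet{\Ae_N}, \qquad \Ae_N := \ensemble{e^{2i\pi x_j/N},\,1\leq j\leq k} + \ensemble{e^{-2i\pi\overline{y_\ell}/N},\,1\leq\ell\leq m}.
\]
Then \eqref{FourierRepr:SchurRectangleWithHn} turns this into
\[
s_{N^m}\crochet{\Ae_N} = \frac{1}{m!}\oint_{\Uu^m} U^{-N}\, h_{Nm}\crochet{\Ae_N U}\,\abs{\Delta(U)}^2\,\frac{d^*U}{U},
\]
reducing the asymptotic problem to the microscopic behaviour of $h_{Nm}$ on the $m(k+m)$ close-to-$1$ arguments of $\Ae_N U$.

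Second, I exploit homogeneity as in Remark \ref{Rk:MultivariateFourierWithHomogeneity}. Writing $u_j = u_1 v_j$ with $v_1 = 1$, the homogeneity of $h_{Nm}$ of degree $Nm$ cancels the $u_1$-factor against $U^{-N}$, and the trivial integration in $u_1$ produces an $(m-1)$-fold integral over $\Uu^{m-1}$ of $\prod_{j=2}^m v_j^{-N}\, h_{Nm}\crochet{\Ae_N\cdot(1,v_2,\dots,v_m)}\,\abs{\Delta(1,v_2,\dots,v_m)}^2$ against $d^*V/V$. I then rescale microscopically $v_j = e^{2i\pi t_j/N}$ with $t_j\in\crochet{-N/2,N/2}$. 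The Vandermonde contracts as $\abs{\Delta(1,e^{2i\pi t/N})}^2 \sim (2\pi/N)^{m(m-1)}\Delta(0,t_2,\dots,t_m)^2$, the change of measure contributes $N^{-(m-1)}$, and Lemma \ref{Lemma:RescaledGegenbauer} furnishes
\[
h_{Nm}\crochet{\Ae_N\cdot(1,e^{2i\pi t/N})} \sim N^{m(k+m)-1}\, e^{i\Theta_N(t,X,Y)}\, h_{m,\infty}\crochet{(0+T)\tensorsum(X+\overline{Y})}
\]
for an explicit affine phase $\Theta_N$. Collecting the exponents gives $N^{m(k+m)-1-m(m-1)-(m-1)} = N^{km}$, matching the claimed normalisation.

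Third, passing to the limit by dominated convergence yields \eqref{EqPhi:Autocorrels}: the prefactor $\prod_\ell e^{2i\pi\overline{y_\ell}}$ of Step 1, the oscillation $\prod_{j\geq 2} v_j^{-N} = e^{-2i\pi\sum_{j\geq 2} t_j}$ and the phase $\Theta_N$ combine into the exponential factor of $\Phi_{\Ae(X,Y)}$, while the remaining integrand is $h_{m,\infty}\crochet{(0+T)\tensorsum(X+\overline{Y})}\,\Delta(0,t_2,\dots,t_m)^2$, thereby producing the claimed formula. Local uniformity in $(X,Y)\in\Cc^{k+m}$ is inherited from the uniformity of the Gegenbauer rescaling of Lemma \ref{Lemma:RescaledGegenbauer} on compact sets.

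The main obstacle is producing an integrable majorant uniform in $N$ and in compact subsets of $(X,Y)$, so as to legitimise the dominated convergence step. Since $\Delta(0,t_2,\dots,t_m)^2$ grows polynomially in the $t_j$, the decay of the rescaled $h_{Nm}$ in $t$ must dominate this growth; this relies on the probabilistic Fourier representations of $h_{m,\infty}$ developed in Annex \ref{Sec:ProbReprGegenbauer} and on the uniform estimate \eqref{Ineq:SpeedOfConvergenceHinftyBis}, combined with the integrability criterion \eqref{Ineq:IntegrabilityDomination} applied with $(K,M,\kappa)=(m-1,1,k+m)$. This is precisely why the hypothesis $k,m\geq 2$ is imposed: for $m=1$ there is no integration variable and the claim reduces to the classical sine-kernel asymptotic \eqref{Eq:AutocorrelationsAsAQuotientOfDets} treated by a direct by-hand computation, while for $m\geq 2$ the criterion ensures integrability of the limiting kernel uniformly on compacts.
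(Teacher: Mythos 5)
Your plan is structurally identical to the paper's proof: reduce via \eqref{Eq:SchurJointMoments} to the rectangular Schur function $s_{N^m}$, express it by the reproducing-kernel formula \eqref{FourierRepr:SchurRectangleWithHn} as an $m$-fold integral over $\Uu^m$, exploit homogeneity through Remark~\ref{Rk:MultivariateFourierWithHomogeneity} to drop to $\Uu^{m-1}$, rescale microscopically, and conclude by dominated convergence. There is, however, one concrete gap in the domination step. The lemmas you invoke --- Lemma~\ref{Lemma:RescaledGegenbauer}, the estimate \eqref{Ineq:SpeedOfConvergenceHinftyBis}, and the integrability criterion \eqref{Ineq:IntegrabilityDomination} with $(K,M,\kappa)=(m-1,1,k+m)$ --- are all stated for $h^{(\kappa)}_{c,\infty}$, i.e.\ for the tensor alphabet $1^{\plusInOne\kappa}(x_1,\dots,x_K)$ in which each integration variable recurs with multiplicity $\kappa$. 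In the present theorem the kernel's argument is $(0+T)\tensorsum(X+\overline Y)$, a \emph{sum} alphabet of $m(k+m)$ pairwise distinct entries $t_j+z_\ell$, which is not of the form $1^{\plusInOne\kappa}T$ for any $\kappa$; the hypotheses of \eqref{Ineq:IntegrabilityDomination} therefore do not apply with the parameters you propose. The paper handles this with the supersymmetric versions instead: inequality \eqref{Ineq:DominationSupersymWithoutSpeed} with negative part $Y=\emptyset$ supplies the majorant, and the criterion of Lemma~\ref{Lemma:IntegrabilitySupersym}, adapted for $\Delta(0,T)^2=\prod_j t_j^2\,\Delta(T)^2$ with $(K,L,M)=(m-1,0,1)$ and $R=m(k+m)$, gives the required $L^1$ bound (yielding the condition $k(m-1)>0$). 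You should replace your cited lemmas by these.

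A second, smaller point concerns the phases. Tracking $\overline{Z_{U_N}(e^{-2i\pi y_\ell/N})}=\det(I-e^{2i\pi\overline{y_\ell}/N}U_N^{-1})$ back to $Z_{U_N}$ produces, as you correctly compute, the alphabet component $e^{-2i\pi\overline{y_\ell}/N}$ and the prefactor $\prod_\ell e^{+2i\pi\overline{y_\ell}}$, whereas the paper's proof records $e^{+2i\pi\overline{y_\ell}/N}$ and $e^{-2i\pi\sum_\ell\overline{y_\ell}}$ --- a sign flip on $\overline{y_\ell}$ that propagates to the limiting phase in $\Phi_{\Ae(X,Y)}$. This discrepancy should be resolved explicitly before the final formula is stated, but it does not affect the structure of the argument.
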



\begin{proof}
We have with $ e^{2 i \pi \frac{Z}{N}} := \ensemble{e^{2 i \pi \frac{x_1}{N}}, \dots, e^{2 i \pi \frac{x_k}{N}}, e^{2 i \pi \frac{\overline{y_1}}{N}}, \dots, e^{2 i \pi \frac{\overline{y_m}}{N}}} $ 
\begin{align*}
\Ae_N(X, Y) & := \Esp{ \prod_{j = 1}^k Z_{U_N}\prth{e^{ 2 i \pi x_j/N} } \prod_{\ell = 1}^m \overline{ Z_{U_N}\prth{e^{ -2 i \pi y_\ell/N} }}  } \\
                 & = e^{-2 i \pi \sum_\ell \overline{ y_\ell} } \, \Esp{ \det(U_N)^{-m} \prod_{j = 1}^{k + m} Z_{U_N}\prth{e^{ 2 i \pi z_j/N} }  } \\
                 & =  e^{-2 i \pi \sum_\ell \overline{ y_\ell} } \, s_{N^m }\crochet{ e^{2 i \pi \frac{Z}{N}} } \qquad \mbox{with \eqref{Eq:SchurJointMoments}}\\ 
                 & = e^{-2 i \pi \sum_\ell \overline{ y_\ell} } \, \crochet{U^N} H\crochet{ U e^{2 i \pi \frac{Z}{N}} - U^{\varepsilon R} }, \qquad U := \ensemble{u_1, \dots, u_m}, \mbox{ using \eqref{FourierRepr:Schur} } \\
                 & = \frac{e^{-2 i \pi \sum_\ell \overline{ y_\ell} }}{m!} r^{-mN} \oint_{(r\Uu)^m} U^{-N} H\crochet{ U e^{2 i \pi \frac{Z}{N}} } \abs{\Delta(U)}^2 \frac{d^* U}{U}, \quad r < \min_{t \in e^{2 i \pi Z/N} }\!\!\ensemble{\abs{t}\inv } \\
                 & = \frac{e^{-2 i \pi \sum_\ell \overline{ y_\ell} }}{m!} \oint_{\Uu^m} U^{-N} h_{Nm}\crochet{ U e^{2 i \pi \frac{Z}{N}} } \abs{\Delta(U)}^2 \frac{d^* U}{U} \qquad \mbox{ with \eqref{FourierRepr:SchurRectangleWithHn}. }
\end{align*}

The homogeneity of $ h_m $ gives for all alphabet $\Be$
\begin{align*}
h_{Nm}\crochet{ U \Be } = h_{Nm}\crochet{ u_1 \prth{1 + \frac{u_2}{u_1} + \dots + \frac{u_k}{u_1} }\Be} = u_1^{Nm}  h_{Nm}\crochet{  \prth{1 + \frac{u_2}{u_1} + \dots + \frac{u_k}{u_1} } \Be }
\end{align*}
thus, one can write
\begin{align*}
\Ae_N(X, Y) & = \frac{e^{-2 i \pi \sum_\ell \overline{ y_\ell} }}{m!} \oint_{\Uu^m} \prod_{j = 2}^m \prth{\frac{u_j}{u_1}}^{-N} h_{Nm}\crochet{    \prth{1 + \frac{u_2}{u_1} + \dots + \frac{u_k}{u_1} } e^{2 i \pi \frac{Z}{N}} } \\
                & \hspace{+5cm} \abs{\Delta\prth{1 , \frac{u_2}{u_1} , \dots , \frac{u_k}{u_1} }}^2 \frac{d^* U}{U}
\end{align*}

Set $ u_j/ u_1 = e^{2 i \pi t_j /N } $ for $ j \in \intcrochet{2, N} $ and $ t_j \in \crochet{-\frac{N}{2}, \frac{N}{2} } $ and $ u_1 = e^{2 i \pi \theta} $ with $ \theta\in \crochet{-\frac{1}{2}, \frac{1}{2}} $. Using remark~\ref{Rk:MultivariateFourierWithHomogeneity}, one has with $ T := \ensemble{t_j, j \in \intcrochet{2, m} } $ and $ e^{2 i \pi T/N } := \ensemble{e^{2 i \pi t_j/N} , j \in \intcrochet{2, m} } $  
\begin{align*}
\Ae_N(X, Y) & = \frac{e^{-2 i \pi \sum_\ell \overline{ y_\ell} }}{m!} \int_{ \crochet{ - \frac{N}{2}, \frac{N}{2} }^{m - 1} } e^{ -2 i \pi\sum_{j = 2}^m t_j } h_{Nm}\crochet{     e^{2 i \pi \frac{ (0 + T)\tensorsum Z}{N}}  }    \abs{\Delta\crochet{1 + e^{2 i \pi \frac{T}{N} } }}^2 \frac{d T}{N^{m - 1}}  \\
             & \equivalent{N \to +\infty} N^{m(m + k) - 1  - m(m - 1) - (m - 1) } e^{-2 i \pi \sum_\ell \overline{ y_\ell} } \,  \frac{ (2\pi)^{m(m - 1)} }{m!}  \\
             & \hspace{+2cm} \times \int_{\Rr^{m - 1} } e^{-2 i \pi \sum_{j = 2}^m t_j }  e^{ i \pi m \, \prth{ 1 + \sum_{j = 2}^m t_j }\prth{   \sum_{r = 1}^k   x_r - \sum_{\ell = 1}^m  \overline{y_\ell} } }  \\
             & \hspace{+3cm} \times h_{m, \infty}\crochet{ Z + T \tensorsum Z } \Delta(0, t_2, \dots, t_m)^2 dT \\
             & = N^{km } e^{-2 i \pi \sum_\ell \overline{ y_\ell} } \,  \frac{ (2\pi)^{m(m - 1)} }{m!} \int_{\Rr^{m - 1} } e^{ i \pi \,  \prth{ 1 + \sum_{j = 2}^m t_j }\prth{ -2 +   m \sum_{r = 1}^k   x_r - m \sum_{\ell = 1}^m  \overline{ y_\ell} } } \\
             & \hspace{+6.5cm} \times h_{m, \infty}\crochet{ Z + T \tensorsum Z} \Delta(0, t_2, \dots, t_m)^2 dT
\end{align*}

Here, we have used dominated convergence given by inequality \eqref{Ineq:DominationSupersymWithoutSpeed} with $ Y = \emptyset $. To show that the limiting function is integrable, we adapt the first criteria \eqref{Ineq:IntegrabilitySupersymDomination} with $ (K, L, M) = (m - 1, 0, 1) $. We have to adapt as the integration space is $ \Rr^{m - 1} $ but we have $ \Delta(0, T) = \prod_i t_i \times \Delta(T) $ and the number of variables of $ h_{m, \infty} $ is $ R = m(k + m)  $. This function will be integrable against $ \Delta(0, T)^2 $ if and only if $ R - m(m + 1) > m $, which amounts to $ k(m - 1) > 0 $. This is satisfied for all $ k, m \geq 1 $. Note nevertheless that $ m \geq 2 $ otherwise the integral over $ \Rr^{m - 1} $ is not defined. Note also that there is a symmetry between $k$ and $m$ as one has also $ \Ae_N(X, Y) = e^{-2 i \pi \sum_\ell x_\ell } \, \Esp{ \det(U_N)^k \overline{\prod_{j = 1}^{k + m} Z_{U_N}\prth{e^{ - 2 i \pi \overline{z_j}/N} } } } $. Hence, one can still have $ m = 1$ if $ k \geq 2 $ and in the same way, one can get another expression of $ \Ae(X, Y) $ as an integral over $ \Rr^{k - 1} $. Details are left to the reader.
\end{proof}

\medskip
\subsection{Ratios of the characteristic polynomial in the microscopic setting}\label{Subsec:Ratios} 

A crucial generalisation of the autocorrelation functional is given by the joint moments of ratios of the characteristic polynomial. This generalisation allows to perform much more computations and is equivalent to the knowledge of the whole eigenvalues point process, since one can get the resolvant $ \Esp{\trace\prth{ (I_N - x U_N)\inv }} = \Esp{ Z_{U_N}(x)\inv \frac{\partial Z_{U_N}(x) }{\partial x} } $ by logarithmic differentiation (which amounts to differentiate the numerator of the ratio and equate the variables) and ultimately the Dirac function by a limit on the imaginary axis (see e.g. the introduction of \cite{FyodorovStrahovCorrChiralGUE, BergereEynard} for the details). Generalising to joint resolvants, one can thus get correlation functions by means of ratios computations, which makes them, as Gian-Carlo Rota might have put it, ``nearly equi-primordial'' with the correlation functions.

Due to this fact, the literature on averages of ratios of characteristic polynomial for any type of ensembles grew dramatically during the last two decades (see e.g. the introduction of \cite{BorodinStrahov, FyodorovStrahovCorrGUE} for an account of numerous developments). In the restricted scope of this article, we are only interested in the case of the $ CUE $ and the relevant articles, in the chronological order, are given by \cite{Day, BasorForrester, MoensVanDerJeugt, BumpGamburd, ConreyFarmerKeatingRubinsteinSnaith, ConreyFarmerZirnbauer, ConreyForresterSnaith, RiedtmannMixedRatios, BaileyBettinBlowerConreyProkhorovRubinsteinSnaith, BasorBleherBuckinghamGravaIts2Keating}.

Here, we have included results that are directly linked to the computation by means of known connections with $ CUE $ averages (that historically appeared later), such as the ``Toeplitz connection'' \cite[\textit{fact five}]{DiaconisRMTSurvey} or the supersymmetric Schur function connection noticed in \cite{BumpGamburd, ConreyFarmerZirnbauer}. Thus the article by Moens and Van Der Jeugt \cite{MoensVanDerJeugt} that gives a determinantal formula for the supersymmetric specialisation of the Schur function gives in fact a formula for the expectation of ratios, and so is the Day formula \cite{Day} that gives the evaluation of a Toeplitz determinant with a rational symbol.

$ $

A general ratio is given by 
\begin{align*}
\Ree'_N(X, X', Y, Y') := \Esp{ \frac{ \prod_{j = 1}^{\ell_1} Z_{U_N}\prth{e^{ 2 i \pi x_j/N} }  }{ \prod_{r = 1}^{m_1} Z_{U_N}\prth{ e^{ 2 i \pi y_r/N} } } \overline{ \frac{ \prod_{j = 1}^{\ell_2} Z_{U_N}\prth{e^{ 2 i \pi x'_j/N} }  }{ \prod_{r = 1}^{m_2} Z_{U_N}\prth{ e^{ 2 i \pi y'_r/N} } }} }  
\end{align*}

Using the functional equation
\begin{align}\label{Eq:EqFuncCharpol}
Z_{U_N}(X) = \det(U) (-X)^N \overline{ Z_{U_N }(1/X)}
\end{align}
one can always transform such a product, up to a multiplicative factor, into the following one:
\begin{align*}
\Ree_N(X, Y) := (-1)^{kN} \Esp{ \det(U_N)^{-k} \prod_{j = 1}^\ell Z_{U_N}\prth{e^{ 2 i \pi x_j/N} } \prod_{r = 1}^m \frac{1}{ Z_{U_N}\prth{ e^{ 2 i \pi y_r/N} } }  }  
\end{align*}
with $ \ell = \ell_1 + \ell_2 $, $ m = m_1 + m_2 $ and $ k = \ell_2 - m_2 $. Note that for integrability reason (by expanding $ \abs{\Delta(U)}^2 $ for instance), one has to suppose $ \ell - m \geq 0 $.

We suppose that $ m > 0 $ (otherwise, we enter into the framework of the autocorrelations). We will also suppose $ k > 0 $ and $ N \geq m $ as our goal is to take $ N\to+\infty $ with fixed $ k, \ell, m $ (this is slightly different from the \textit{stable range} of \cite{ConreyFarmerZirnbauer} defined by $ N \geq k + m $).

\begin{lemma}[Ratios expectation as a supersymmetric Schur function]\label{Lemma:SuperSymRatios}
Define $ e^{i\widehat{X}/N} := \ensemble{ e^{2i \pi x_j/N}, j \in \intcrochet{1, \ell} } $, $ e^{i\widehat{Y}/N} := \ensemble{ e^{2i \pi y_j/N}, j \in \intcrochet{1, m} } $. Then, for all $ k \geq 1 $
\begin{align}\label{Eq:ExpectationRatiosAsSupersymSchur}
\Ree_N(X, Y) = s_{N^k}\crochet{e^{i\widehat{X}/N} - e^{i\widehat{Y}/N}} 
\end{align}
\end{lemma}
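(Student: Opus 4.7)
\textbf{Plan for Lemma \ref{Lemma:SuperSymRatios}.} The strategy is a direct generalisation of the Bump–Gamburd proof of \eqref{Eq:SchurJointMoments}, with the $\lambda$-ring formalism doing the bookkeeping so that ``$1/Z_{U_N}(y)$'' is treated on the same footing as ``$Z_{U_N}(x)$''.

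\textbf{Step 1 (plethystic rewriting of the integrand).} Write $X':=e^{2i\pi\widehat{X}/N}$ and $Y':=e^{2i\pi\widehat{Y}/N}$. For $U_N$ with eigenvalue multiset $\mu$, the identities $Z_{U_N}(x_j')=\prod_\ell(1-x_j'\mu_\ell)=H\crochet{-x_j'\mu}$ and $1/Z_{U_N}(y_r')=H\crochet{y_r'\mu}$ (both in the \emph{plethystic} sense) combine, by multiplicativity of $H$, into
\begin{align*}
\prod_{j=1}^{\ell} Z_{U_N}(x_j')\,\prod_{r=1}^{m}\frac{1}{Z_{U_N}(y_r')}=H\crochet{(-X'+Y')\mu}.
\end{align*}

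\textbf{Step 2 (Weyl integration and symmetrisation).} Plugging this into $\Ree_N(X,Y)$ and applying the Weyl formula \eqref{Eq:WeylHaarRealisationBis} to the resulting class function $\det(U)^{-k}\,H\crochet{(-X'+Y')\mu}$ gives
\begin{align*}
\Ree_N(X,Y)=\frac{(-1)^{kN}}{N!}\crochet{Z^0}Z^{-k\Un_N}\,H\crochet{(-X'+Y')Z}\,H\crochet{-Z^{(1+\varepsilon)R}}.
\end{align*}
The integrand is symmetric in $Z$, so the scalar-product symmetrisation trick \eqref{Trick:ProdScalSym} replaces $\frac{1}{N!}H\crochet{-Z^{(1+\varepsilon)R}}$ by $H\crochet{-Z^{\varepsilon R}}$:
\begin{align*}
\Ree_N(X,Y)=(-1)^{kN}\crochet{Z^0}Z^{-k\Un_N}\,H\crochet{(-X'+Y')Z-Z^{\varepsilon R}}.
\end{align*}

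\textbf{Step 3 (recognition as a Schur function).} With $\lambda=k^N$, so $Z^{\lambda}=Z^{k\Un_N}$, the Fourier representation \eqref{FourierRepr:Schur} applied in $N$ variables yields
\begin{align*}
s_{k^N}\crochet{-X'+Y'}=\crochet{Z^{k^N}}H\crochet{(-X'+Y')Z-Z^{\varepsilon R}}=\crochet{Z^0}Z^{-k\Un_N}H\crochet{(-X'+Y')Z-Z^{\varepsilon R}}.
\end{align*}
Hence $\Ree_N(X,Y)=(-1)^{kN}\,s_{k^N}\crochet{Y'-X'}$ (plethystic $\pm$).

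\textbf{Step 4 (transposition $k^N\leftrightarrow N^k$).} Since $(k^N)'=N^k$, the fundamental involution \eqref{Eq:InvolutionOmega}, together with $s_\lambda\crochet{-X}=(-1)^{|\lambda|}s_{\lambda'}\crochet{X}=s_{\lambda'}\crochet{\varepsilon X}$, gives for any abstract alphabet $\Ae$
\begin{align*}
s_{k^N}\crochet{\Ae}=s_{N^k}\crochet{-\varepsilon\Ae}, \qquad s_{N^k}\crochet{\varepsilon\Be}=(-1)^{kN}s_{N^k}\crochet{\Be}.
\end{align*}
Applied to $\Ae=Y'-X'$ this yields $(-1)^{kN}s_{k^N}\crochet{Y'-X'}=(-1)^{kN}s_{N^k}\crochet{-\varepsilon(Y'-X')}=s_{N^k}\crochet{X'-Y'}$, which is the desired formula \eqref{Eq:ExpectationRatiosAsSupersymSchur}.

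\textbf{Main obstacle.} The only delicate point is Step 4: one must distinguish the plethystic minus (the supersymmetric duality satisfying $H\crochet{\Ae-\Be}=H\crochet{\Ae}/H\crochet{\Be}$, used to encode denominators) from the classical negation encoded by the alphabet $\varepsilon=\{-1\}$ (with $f\crochet{\varepsilon X}=f(-X)$), and combine them through $\widehat{\omega}=-\varepsilon$ so that the signs $(-1)^{kN}$ cancel correctly. A small sanity check, e.g.\ $k=1$, $\ell=1$, $m=0$, where a direct computation gives $\Ree_N(x)=(e^{2i\pi x/N})^N=s_{(N)}\crochet{e^{2i\pi x/N}}$, confirms the sign conventions.
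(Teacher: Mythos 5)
Your proof is correct and follows essentially the same route as the paper's: rewrite the integrand via the plethystic $H$-functor with the supersymmetric alphabet $e^{i\widehat{X}/N}-e^{i\widehat{Y}/N}$, recognize the $N$-fold unitary integral as the scalar product/Fourier coefficient yielding $s_{k^N}$ (you via \eqref{FourierRepr:Schur} and \eqref{Trick:ProdScalSym}, the paper directly via the reproducing kernel property \eqref{Eq:ReproducingKernelProperty}, which is the same thing by Remark~\ref{Rk:RestrictedKernel}), and conclude by transposition $\widehat{\omega}:k^N\leftrightarrow N^k$. Your Step 4 simply spells out the sign bookkeeping for $-\varepsilon$ that the paper leaves implicit in citing \eqref{Eq:InvolutionOmega}.
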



\begin{proof}
We adapt the proof given by Bump and Gamburd \cite[thm. 3]{BumpGamburd} with the $ \lambda$-ring/plethystic formalism and make it identical to the proof of \eqref{Eq:SchurJointMoments}. Write
\begin{align*}
\Ree_N(X, Y) & = (-1)^{kN} \int_{\Ue_N} \det(U)^{-k} \prod_{j = 1}^\ell \det\prth{I - e^{ 2 i \pi x_j/N} U } \prod_{r = 1}^m \frac{1}{ \det\prth{I - e^{ 2 i \pi y_r/N} U } } dU \\
             & = (-1)^{kN} \frac{1}{N!} \oint_{(r \Uu)^N } \prod_{j = 1}^N z_j^{-k} H\crochet{ -Z \prth{ e^{i\widehat{X}/N} -  e^{i\widehat{Y}/N} } } \abs{\Delta(Z)}^2 \frac{d^*Z}{Z}  \\
             & = (-1)^{kN} \frac{1}{N!} \oint_{(r \Uu)^N } \overline{s_{k^N}(Z) } H\crochet{ -Z \prth{ e^{i\widehat{X}/N} -  e^{i\widehat{Y}/N} } } \abs{\Delta(Z)}^2 \frac{d^*Z}{Z} \\
             & = (-1)^{kN} s_{k^N}\crochet{- \prth{ e^{i\widehat{X}/N} -  e^{i\widehat{Y}/N} } }\\
             & = s_{N^k}\crochet{ e^{i\widehat{X}/N} - e^{i\widehat{Y}/N} }
\end{align*}

Here, we have used the reproducing kernel property \eqref{Eq:ReproducingKernelProperty} valid for any specialisation, in particular, a supersymmetric one, and the involution property \eqref{Eq:InvolutionOmega}, with $ (k^N)' = N^k $. 
\end{proof}


\begin{remark}
The fact that $ \Ree_N(X, Y) $ is the supersymmetric specialisation of a Schur function (namely, a supersymmetric Schur function) was first remarked in \cite[prop. 1.3]{ConreyFarmerZirnbauer} where the formula \cite[I-3 ex. 24 (1) p. 60]{MacDo} was rederived (see also \cite[thm. 1.1]{ConreyFarmerZirnbauer}) using the formalism of Howe duality. It was then derived in the stable range $ N \geq k + m $ in \cite[thm. 3]{BumpGamburd} using Frobenius-Schur duality and symmetric functions (denoting by ``Littlewood-Schur symmetric functions'' the supersymmetric Schur functions). The formula \cite[(1.6)]{ConreyFarmerZirnbauer} was moreover rederived in the stable range in \cite[(2.3)]{ConreyForresterSnaith} using the Day formula \cite{Day} and with a method of Basor and Forrester \cite{BasorForrester} but without a reference to supersymmetric Schur functions.
\end{remark}


We are now ready to extend theorem \ref{Theorem:Autocorrelations} to ratios. Note that we now suppose $ k \geq 2 $ due to the particular form of the limit as an integral over $ \Rr^{k - 1} $.

\begin{theorem}[Ratios of characteristic polynomials in the microscopic setting]\label{Theorem:Ratios}
We have, locally uniformly in $ X + Y \in \Rr^{\ell + m} $ for all $ \ell, m \geq 1 $ and $ k \geq 2 $
\begin{align*}
\frac{1}{ N^{k(\ell - m - k)} } \Ree_N(X, Y) \tendvers{N}{+\infty} \Ree(X, Y)
\end{align*}
with 
\begin{align}\label{EqPhi:Ratios}
\begin{aligned}
\Ree(X, Y) & := \frac{(2\pi)^{k(k - 1) } }{k!} \int_{ \Rr^{ k - 1 } } \Phi_{\Ree(X, Y)}(0, t_2, \dots, t_k) \Delta(0, t_2, \dots t_k)^2 dt_2 \dots dt_k \\
\Phi_{\Ree(X, Y)}(0, T) & = e^{ - 2 i \pi \sum_{j = 2}^k t_j  +i \pi \sum_{a \in (0 + T) \tensorsum(Y - X) } a } \, h_{k, \infty}\crochet{ (0 + T) \tensorsum Y - (0 + T) \tensorsum X }  
\end{aligned}
\end{align}
where $ (X - Y)\oplus (0 + T) = X \oplus (0 + T) - Y \oplus (0 + T) $ and $ h_{k, \infty}\crochet{ A - B } $ is defined in \eqref{Def:SupersymHinfty}.
\end{theorem}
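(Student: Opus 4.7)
The plan is to run the same three-step pipeline that was used for Theorem~\ref{Theorem:Autocorrelations}, substituting a supersymmetric specialisation for the classical one. Thanks to Lemma~\ref{Lemma:SuperSymRatios}, the starting identity is already in place:
\[
\Ree_N(X, Y) = s_{N^k}\!\crochet{\,e^{i\widehat{X}/N} - e^{i\widehat{Y}/N}\,}.
\]
Since the rectangular Fourier representation \eqref{FourierRepr:SchurRectangleWithHn} is valid for \emph{any} abstract alphabet (this was the whole point of the plethystic viewpoint in \S\ref{Subsec:FuncSym}), I would apply it with $U:=\{u_1,\dots,u_k\}$ to get
\[
\Ree_N(X,Y)=\frac{1}{k!}\oint_{\Uu^k} U^{-N}\,h_{Nk}\!\crochet{\,U\,(e^{i\widehat{X}/N}-e^{i\widehat{Y}/N})\,}\,|\Delta(U)|^2\,\frac{d^*U}{U}.
\]

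Next, I would repeat the homogeneity/rescaling trick used in the autocorrelation proof. Using $h_{Nk}[\lambda\Ae]=\lambda^{Nk}h_{Nk}[\Ae]$ to extract a factor $u_1^{Nk}$ and invoking remark~\ref{Rk:MultivariateFourierWithHomogeneity} to integrate out $u_1$, the change of variables $u_j/u_1=e^{2i\pi t_j/N}$ ($j\in\intcrochet{2,k}$) turns the $k$-fold integral on $\Uu^k$ into a $(k-1)$-fold integral on $[-N/2,N/2]^{k-1}$:
\[
\Ree_N(X,Y)=\frac{1}{k!}\int_{[-N/2,N/2]^{k-1}}\!\!e^{-2i\pi\sum_{j=2}^k t_j}\,h_{Nk}\!\crochet{\,e^{2i\pi((0+T)\tensorsum(\widehat{X}-\widehat{Y}))/N}\,}\,|\Delta[1+e^{2i\pi T/N}]|^2\,\frac{dT}{N^{k-1}}.
\]
Here $(0+T)\tensorsum(\widehat{X}-\widehat{Y})=(0+T)\tensorsum\widehat{X}-(0+T)\tensorsum\widehat{Y}$ is a supersymmetric alphabet with positive part of size $k\ell$ and negative part of size $km$.

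The third step is to pass to the microscopic limit using the supersymmetric rescaling lemma for $h_{Nk}$ (the supersymmetric counterpart of Lemma~\ref{Lemma:RescaledGegenbauer}; see \eqref{Def:SupersymHinfty}). Two things happen simultaneously: the rescaled $h_{Nk}$ produces the limit $h_{k,\infty}\!\crochet{(0+T)\tensorsum Y-(0+T)\tensorsum X}$ with a leading power of $N$, and the Taylor expansion of $e^{2i\pi(\cdot)/N}$ around the barycentre of the argument gives, exactly as in the classical case (cf.\ the $e^{i\pi m(\sum x_r-\sum\overline{y_\ell})}$ factor in the autocorrelation proof), the phase $\exp\bigl(i\pi\sum_{a\in(0+T)\tensorsum(Y-X)} a\bigr)$. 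Collecting powers of $N$ from $h_{Nk}$, from $|\Delta|^2\sim(2\pi/N)^{k(k-1)}$ and from $dT/N^{k-1}$ produces the announced normalisation $N^{k(\ell-m-k)}$ and the prefactor $(2\pi)^{k(k-1)}/k!$, giving the integrand $\Phi_{\Ree(X,Y)}$ displayed in \eqref{EqPhi:Ratios}.

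The main obstacle is justifying the exchange of limit and integral: one needs a uniform domination on $[-N/2,N/2]^{k-1}$ by an integrable function on $\Rr^{k-1}$. For this I would use the supersymmetric bound \eqref{Ineq:DominationSupersymWithoutSpeed} together with the integrability criterion \eqref{Ineq:IntegrabilitySupersymDomination}, adapted (as in the autocorrelation proof) to the integration variable $T\in\Rr^{k-1}$ with the factor $\Delta(0,T)^2=\prod_i t_i^2\,\Delta(T)^2$. The algebraic count becomes: the supersymmetric $h_{k,\infty}$ has $k\ell+km$ arguments, the Vandermonde contributes $k(k+1)$ worth of polynomial growth, and the extra factor from the zero coordinate in $\Delta(0,T)^2$ contributes $k-1$. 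The resulting integrability condition reads $k(\ell-m-1)>0$ which is fulfilled as soon as $\ell>m$ and $k\geq 2$; the remaining ranges $\ell\leq m$ are obtained symmetrically by first applying the functional equation \eqref{Eq:EqFuncCharpol} to swap the roles of $\widehat{X}$ and $\widehat{Y}$, yielding an equivalent representation as an integral on $\Rr^{\ell-1}$ or $\Rr^{m-1}$. Once integrability holds, dominated convergence concludes the proof.
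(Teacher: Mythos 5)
Your proposal reproduces the paper's proof essentially step for step: Lemma~\ref{Lemma:SuperSymRatios} for the supersymmetric Schur identity, the Fourier representation \eqref{FourierRepr:SchurRectangleWithHn}, the homogeneity trick of remark~\ref{Rk:MultivariateFourierWithHomogeneity} to peel off $u_1$, the microscopic change of variables, the supersymmetric rescaling Lemma~\ref{Lemma:RescaledSupersym}, and finally the domination \eqref{Ineq:DominationSupersymWithoutSpeed} plus the integrability criterion to justify dominated convergence. The only cosmetic difference is that the paper first writes the contour integral with $H$ via \eqref{FourierRepr:SchurRectangle} before switching to $h_{kN}$, whereas you go directly to \eqref{FourierRepr:SchurRectangleWithHn}; these are interchangeable. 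One caveat: your closing remark that the range $\ell\leq m$ is ``obtained symmetrically by applying the functional equation'' is shaky---swapping the roles of $\widehat{X}$ and $\widehat{Y}$ changes $k=\ell_2-m_2$ to $-k$ and does not simply exchange $\ell$ and $m$; the paper itself imposes $\ell-m\geq 0$ at the outset and does not claim to cover $\ell<m$. Your exact integrability exponent count also differs slightly from the paper's (the paper quotes $(K,R,L,M)=(k,\ell,m,1)$ in \eqref{Ineq:IntegrabilitySupersymDominationBis}), but both boil down to the same qualitative requirement $\ell>m$ together with $k\geq 2$, so this does not affect the argument.
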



\begin{proof}
One then has with \eqref{FourierRepr:SchurRectangle} with $ U := \ensemble{ u_1, \dots, u_k } $
\begin{align*}
\Ree_N(X, Y)  & = s_{N^k}\crochet{ e^{i\widehat{X}/N} - e^{i\widehat{Y}/N} }  \\
              & = \frac{ 1 }{k!} \oint_{ (r \Uu)^k } U^{-N} H\crochet{ U \prth{ e^{i\widehat{X}/N} - e^{i\widehat{Y}/N} } } \abs{\Delta(U)}^2 \frac{d^* U}{U}, \quad r < \min_{t \in e^{i\widehat{Y}/N} + e^{i\widehat{X}/N}}\ensemble{ \abs{t}\inv } \\
              & = \frac{1}{k!} \oint_{ \Uu^k } U^{-N} h_{kN}\crochet{ U \prth{ e^{i\widehat{Y}/N} - e^{i\widehat{X}/N} } } \abs{\Delta(U)}^2 \frac{d^* U}{U} \quad\mbox{with \eqref{FourierRepr:SchurRectangleWithHn}}
\end{align*}

Using the homogeneity of $ h_{kN} $ and $ \abs{\Delta}^2 $, one has 
\begin{align*}
\Ree_N(X, Y)  & = \frac{1}{k!} \oint_{ \Uu^k } \prod_{j = 2}^k \prth{\frac{u_j}{u_1} }^{-N} h_{kN}\crochet{ \prth{1 + \frac{u_2}{u_1} + \cdots + \frac{u_k }{u_1}} \prth{ e^{i\widehat{Y}/N} - e^{i\widehat{X}/N} } } \\
              & \hspace{+8cm} \times \abs{ \Delta\prth{1 , u_2/u_1 , \cdots , u_k/u_1 } }^2  \frac{d^* U}{U}
\end{align*}

Set $ u_1 = e^{2 i \pi \theta} $ with $ \theta \in \crochet{-\frac{1}{2}, \frac{1}{2}} $ and $ u_j/u_1 = e^{2 i \pi t_j/N} $ for $ t_j \in \crochet{ - N/2, N/2} $ for all $j \in \intcrochet{2, k} $. Using remark~\ref{Rk:MultivariateFourierWithHomogeneity} and lemma \ref{Lemma:RescaledSupersym}, one gets
\begin{align*}
\Ree_N(X, Y)  & = \frac{1}{k!} \int_{ \crochet{ - \frac{N}{2}, \frac{N}{2} }^{k - 1} } e^{ - 2 i \pi \sum_{j = 2}^k t_j  } \\
              & \hspace{+2cm} \times h_{kN} \crochet{ (1 + e^{i \widehat{T}/N} )e^{i \widehat{Y}/N}  -  (1 + e^{i \widehat{T}/N} )e^{i \widehat{X}/N} } \abs{\Delta\crochet{ 1 + e^{i \widehat{T}/N}  } }^2 \frac{dT}{ N^{k - 1} }\\
              & \equivalent{N\to+\infty}  \frac{ N^{ k(\ell - m) - 1 - k(k - 1) - (k - 1)} }{k!}   \\
              & \hspace{+1.5cm} \times \int_{ \Rr^{k - 1} } e^{ - 2 i \pi \sum_{j = 2}^k t_j  +i \pi \sum_{a \in (0 + T) \tensorsum(Y - X) } } h_{k, \infty}\crochet{ (0 + T) \tensorsum Y - (0 + T) \tensorsum X } \\
              & \hspace{+9.5cm} \times (2\pi)^{k(k - 1)} \abs{\Delta(0, T)}^2 dT \\
              & = N^{k(\ell - m - k)} \, \frac{(2\pi)^{k(k - 1)}}{k!} \int_{ \Rr^{k - 1} } e^{ - 2 i \pi \sum_{j = 2}^k t_j  +i \pi \sum_{a \in (0 + T) \tensorsum(Y - X) } a } \\
              & \hspace{+6cm} \times h_{k, \infty}\crochet{ (0 + T) \tensorsum (Y - X) }  \Delta(0, T)^2 dT
\end{align*}

Note that $ \sum_{a \in \Ae} a = p_1\crochet{\Ae} $ for an abstract alphabet $ \Ae $ and that the supersymmetric power functions are defined in \eqref{Eq:SupersymPn}.

To pass to the limit in the integral, we have used the domination \eqref{Ineq:DominationSupersymWithoutSpeed}. The limiting function is integrable in application of lemma \ref{Lemma:IntegrabilitySupersym} and the first criteria of \eqref{Ineq:IntegrabilitySupersymDomination}, with the modification of remark \ref{Rk:IntegrabilityDominationSupersym}, namely, one uses the criteria \eqref{Ineq:IntegrabilitySupersymDominationBis} with $ (K, R, L, M) = (k, \ell, m, 1) $ which is satisfied for $ k, \ell, m \geq 1 $ with $ k(\ell - m) \geq 2 $. This is satisfied if $ \ell - m \geq 2 $ or $ \ell - m = 1 $ and $ k \geq 2 $. Note that one needs $ k \geq 2 $ for the integral over $ \Rr^{k - 1} $ to be defined.
\end{proof}

\medskip
\subsection{The mid-secular coefficients}\label{Subsec:Midcoeff}
%
%
\subsubsection{Motivations}
The determinantal nature of the eigenvalues $ \prth{ e^{i \Theta_{k, N} } }_{1 \leq k \leq N} $ of $ U_N \sim CUE(N) $ lies at the core of several computations involving the characteristic polynomial $ Z_{U_N}(X) := \prod_{k = 1}^N (1 - X e^{i \Theta_{k, N} }) $. But before analysing its roots, the natural way to study a polynomial is to consider its Fourier coefficients, or \textit{secular coefficients}, i.e. its coefficients in the canonical basis $ (X^k)_{k \geq 0} $. In the case of $ U_N $, one defines the secular coefficients by
\begin{align}\label{Def:SecularCoeff}
\sc_k(U_N) := (-1)^k \crochet{z^k} Z_{U_N}(z)
\end{align}

This slight change of normalisation becomes natural if one writes
\begin{align*}
\sc_k(U_N) = \trace(\wedge^k U_N)
\end{align*}
or, in terms of symmetric functions of the eigenvalues, 
\begin{align*}
\sc_k(U_N) = e_k\prth{ e^{i\Theta_{1, N}}, \dots, e^{i\Theta_{N, N} } } := \sum_{1 \leq j_1 < j_2 < \cdots < j_k \leq N} e^{i\Theta_{j_1, N}}  \cdots  e^{i\Theta_{j_k, N} }
\end{align*}

A study of $ \prth{\sc_k(U_N)}_{0 \leq k \leq N} $ was first performed by Haake et al.~\cite{HaakeEtAl} in relation with classical and quantum chaotic dynamics. In substance, since secular coefficients are elementary symmetric functions in the eigenvalues, they contain the same equal amount of information as their spectral counterpart as pointed Diaconis and Gamburd \cite{DiaconisGamburd} who continued the study (see the quotation recalled in \S~\ref{Subsubsec:Midcoeff}). 

The asymptotic behaviour of $ (\sc_k(U_N))_{k \geq 1} $ when $ N\to +\infty $ and $k$ is independent of $N$ was investigated in \cite{DiaconisGamburd} using the change of basis between the elementary symmetric functions $ (e_k)_{k \geq 0} $ and the power functions $ (p_j)_{j \geq 0} $ \cite[I-2, (2.14')]{MacDo} and the celebrated Diaconis-Shashahani theorem on traces of powers $ (\tr{U_N^k})_{k \geq 0} $ \cite{DiaconisShahshahani}. The difference of behaviour between $ \sc_1(U_N) $ and $ \sc_N(U_N) = \det(U_N) $ led nevertheless Diaconis and Gamburd to the following interrogation (\cite{DiaconisGamburd}, see also \cite[§ 5]{DiaconisRMTSurvey}):

$ $
\vspace{-0.3cm}
\begin{quote}
It is natural to ask for a limiting distribution as $j$ grows with $N$. For example what is the limiting distribution of the $ \pe{N/2} $ secular coefficient~? On the one hand, the formula $ \sc_j(U_N) = \sum_{\lambda\vdash j} \frac{(-1)^{\ell(\lambda) }}{z_\lambda} \prod_{k = 1}^{\ell(\lambda)} \tr{ U_n^{ \lambda_k  } } $ suggests it is a complex sum of [weakly dependent] random variables, so perhaps normal. On the other hand, the formula $ \Esp{ \abs{ \sc_j(U_N) }^2 } = 1 $ holds for all $j$ making normality questionable.
\end{quote}
\vspace{-0.3cm}
$ $

Let $ \rho \in (0, 1) $ and recall that $ \overline{\rho} := 1 - \rho $. Before investigating the moments of $ \sc_{\pe{\rho N}}(U_N) $, we give a last bit of motivation for its study, the comparison with the i.i.d. case. Major \cite{Major} investigates the behaviour of $ e_{\pe{\rho N}}(Z_1, \dots, Z_N) $ when $ (Z_k)_{k \geq 1} $ is a sequence of i.i.d. random variables, in particular random variables uniformly distributed on the unit circle. Studying $ \sc_{\pe{\rho N}}(U_N) $ amounts thus to generalise from $ (Z_k)_{k \geq 1} $ to $ \prth{ e^{i \Theta_{k, N} } }_{1 \leq k \leq N} $, a determinantal point process of 
kernel $ K_N(\theta, \alpha) := \Ee\big(Z_{U_N}(e^{i\theta}) \overline{ Z_{U_N}(e^{i\alpha}) } \big) = \sum_{k = 1}^N e^{ik(\theta - \alpha) } $. Major's result writes
\begin{align*}
\prth{ \log\abs{ e_{\pe{\rho N}}(Z_1, \dots, Z_N) } - \mu_n(\rho) } / \sigma_N(\rho) \cvlaw{N}{+\infty} \Ns_\Rr(0, 1)
\end{align*}
for a suitable rescaling $ \mu_N(\rho) $ and $ \sigma_N(\rho) $. The moments of this last random variable thus satisfy
\begin{align}\label{Eq:MajorLogNormalMoments}
\Esp{ \abs{ e_{\pe{\rho N}}(Z_1, \dots, Z_N) }^{2k} } \equivalent{N\to +\infty}   e^{ 2 k^2 \sigma_n(\rho)^2 + 2k \mu_N(\rho) + O_k(1) }
\end{align}

\medskip

\subsubsection{The moments of $ \sc_{\pe{\rho N}}(U_N) $}

We now compare the moments of $ \sc_{\pe{\rho N}}(U_N) $ with \eqref{Eq:MajorLogNormalMoments}.

\begin{theorem}[Absence of log-normality for $ \sc_{\pe{\rho N}}(U_N) $]\label{Theorem:MidCoeffs}
We have for $ \rho \in (0, 1) $ and $ k \geq 2 $
\begin{align*}
\frac{ \Esp{ \abs{ \sc_{\pe{\rho N}}(U_N) }^{2k} } }{N^{(k - 1)^2} }  \tendvers{N}{+\infty } \Se\Ce_\rho^{(k)}
\end{align*}
with 
\begin{align}\label{EqPhi:MidCoeff}
\begin{aligned}
\Se\Ce_\rho^{(k)} & := \frac{(2\pi)^{k(k - 1) } }{k!} \! \int_{ \Rr^{ k - 1 } } \! \Phi_{\Se\Ce_\rho^{(k)} }(0, x_2, \dots, x_k) \Delta(0, x_2, \dots x_k)^2 dx_2 \dots dx_k \\
\!\!\Phi_{\Se\Ce_\rho^{(k)} }(0, x_2, \dots, x_k) & = e^{ i \pi (k - 2) \sum_{j = 2}^k x_j } \, h_{\rho, \infty}(0, x_2, \dots, x_k)^k \,  h_{\overline{\rho}, \infty}(0, x_2, \dots, x_k)^k  
\end{aligned}
\end{align}
\end{theorem}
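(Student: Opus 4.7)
The proof will follow the prototype of theorem~\ref{Thm:KSwithDuality} given in \S~\ref{Subsec:KS}, after an initial reduction step that exploits the Fourier representation of the secular coefficient to collapse a double contour integral. First, writing $\sc_j(U_N)^k$ and $\overline{\sc_j(U_N)}^k$ as products of $k$-fold Fourier integrals of $Z_{U_N}$ and $\overline{Z_{U_N}}$ over $\Uu^k$, taking expectations and applying~\eqref{Eq:SchurJointMoments} (using $|w_i|=1 \Rightarrow \overline{w_i}^{-1}=w_i$), one obtains
\[
\Esp{|\sc_j(U_N)|^{2k}} = \oint_{\Uu^{2k}} Z^{-j} W^{j-N} s_{N^k}(Z+W)\, \frac{d^*Z\, d^*W}{Z W}.
\]
Substituting the main formula~\eqref{FourierRepr:SchurRectangleWithHn} and expanding $h_{Nk}[(Z+W)U] = \sum_{a+b=Nk} h_a[ZU] h_b[WU]$, the bi-homogeneity of $h_a[ZU]$ in $(Z,U)$ forces $a = jk$ under the extraction $[Z^j]$, and a direct computation applying $[z_i^j]H[tz_iU]=t^j h_j(U)$ coordinate by coordinate gives $[Z^j] h_{jk}[ZU] = h_j(u_1,\dots,u_k)^k$, with the symmetric identity for $W$ yielding $h_{N-j}(U)^k$. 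The $Z$ and $W$ contour integrals therefore collapse to the clean identity
\[
\Esp{|\sc_j(U_N)|^{2k}} = \frac{1}{k!}\oint_{\Uu^k} U^{-N}\, h_j(U)^k\, h_{N-j}(U)^k\, |\Delta(U)|^2\, \frac{d^*U}{U}.
\]

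From here the argument is analogous to the proof of theorem~\ref{Thm:KSwithDuality}. Since the integrand has total degree zero in $U$, remark~\ref{Rk:MultivariateFourierWithHomogeneity} lets us set $u_1 = e^{2i\pi\phi}$ and $u_i = u_1 e^{2i\pi x_i/N}$ for $i \geq 2$, and integrate out $\phi$. Then invoke lemma~\ref{Lemma:RescaledGegenbauer} (once with $c = \rho$, once with $c = \overline{\rho}$) to replace $h_{\lfloor cN\rfloor}(1, e^{2i\pi X/N})$ by $N^{k-1}$ times $h_{c,\infty}(0,X)$ up to a phase factor linear in $c \sum x_j$, together with $|\Delta(1, e^{2i\pi X/N})|^2 \sim (2\pi)^{k(k-1)} N^{-k(k-1)} \Delta(0,X)^2$. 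Powers of $N$ combine to $N^{2k(k-1) - k(k-1) - (k-1)} = N^{(k-1)^2}$, matching the claimed exponent, and the phase factors combine as $e^{ik\pi(\rho + \overline{\rho}) \sum x_j} \cdot e^{-2i\pi \sum x_j} = e^{i\pi(k-2) \sum x_j}$, as in~\eqref{EqPhi:MidCoeff}.

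The limit exchange is justified by dominated convergence via the uniform-in-$N$ bound~\eqref{Ineq:SpeedOfConvergenceHinftyBis} of annex~\ref{Sec:ProbReprGegenbauer}, which is precisely where the probabilistic representations of $h_{c,\infty}$ as Fourier transforms of conditioned beta-gamma variables enter the argument. Integrability of the limiting integrand on $\Rr^{k-1}$ follows from the explicit partial-fraction decay $h_{c,\infty}(0,X) = O(\|X\|^{-(k-1)})$ combined with the polynomial growth $\Delta(0,X)^2 = O(\|X\|^{k(k-1)})$: the product decays like $\|X\|^{-k(k-1)}$, which is integrable on $\Rr^{k-1}$ iff $k(k-1) > k-1$, i.e.\ iff $k \geq 2$, which is exactly the hypothesis of the theorem (and also the minimal dimension for the integration domain to be non-trivial). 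The only non-routine technical point will be verifying the uniform dominations in the oscillatory regime; but this is precisely what the annex lemmas are designed to provide, so no further technical input is needed beyond the prototype template of \S~\ref{Subsec:KS}.
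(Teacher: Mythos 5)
Your setup follows the paper's template and is essentially correct: you reach the same intermediate identity $\Esp{|\sc_j(U_N)|^{2k}}=\frac{1}{k!}\oint_{\Uu^k}U^{-N}h_j(U)^k\,h_{N-j}(U)^k\,|\Delta(U)|^2\,\frac{d^*U}{U}$ as the paper, albeit via~\eqref{FourierRepr:SchurRectangleWithHn} and a convolution/homogeneity argument where the paper uses~\eqref{FourierRepr:SchurRectangle} with $H\crochet{U(X+Y)}=H\crochet{UX}H\crochet{UY}$ and direct extraction of the Fourier coefficient; these are equivalent since $h_n\crochet{A+B}=\sum_{a+b=n}h_a\crochet{A}h_b\crochet{B}$. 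The elimination of $u_1$, the rescaling of $h_{\lfloor cN\rfloor}$ and the Vandermonde, the phase bookkeeping ($e^{ik\pi(\rho+\overline\rho)\sum x_j}e^{-2i\pi\sum x_j}=e^{i\pi(k-2)\sum x_j}$) and the power-of-$N$ count ($2k(k-1)-k(k-1)-(k-1)=(k-1)^2$) are all correct.

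The gap is in your integrability justification. The pointwise estimate $h_{c,\infty}(0,X)=O(\|X\|^{-(k-1)})$ is false in clustering directions: for $x_2=\cdots=x_k=R$ with $R\to\infty$, the integral representation~\eqref{Def:HKappaCInfty} gives $h_{c,\infty}(0,R,\dots,R)=O(R^{-1})$, the dominant contribution coming from $y\approx -R$ where the $k-1$ factors $\sinc(\pi c(y+x_j))$ are $\Theta(1)$ and only $\sinc(\pi c y)=O(R^{-1})$ is small. Invoking the ``partial-fraction decay'' does not repair this, since the partial-fraction form~\eqref{Eq:SupersymHnWithResidues} requires distinct arguments and degenerates exactly on that locus. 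What saves the integrand is that $\Delta(0,X)^2$ simultaneously vanishes along the clustering directions, but your factored pointwise bound does not register this compensation. The correct argument is that of lemma~\ref{Lemma:IntegrabilityDomination}: first bound $|h_{c,\infty}(0,X)|$ by $\int_\Rr|\sinc(\pi c y)|\prod_{j\geq 2}|\sinc(\pi c(y+x_j))|\,dy$, then Fubini out the $2k$ auxiliary $y$-variables and count the joint decay of the resulting $2k^2$ sinc factors against the degree $k(k-1)$ of $\Delta(0,X)^2$ inside the $(3k-1)$-dimensional integral; this gives $2k^2-k(k-1)>3k-1$, i.e.\ $(k-1)^2>0$, i.e.\ $k\geq 2$. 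Your final threshold is therefore right, but the path to it must go through the joint sinc estimate rather than a pointwise decay bound on $h_{c,\infty}$ alone. Note also that for $k=2$ the stronger speed-of-convergence domination~\eqref{Ineq:SpeedOfConvergenceHinftyBis}, which carries the polynomial weight $\varphi_c(\xb)\gtrsim\|\xb\|_2^2$, is not integrable; you should instead invoke the plain domination $|N^{1-k}h_{\lfloor cN\rfloor}(e^{2i\pi X/N})|\leq C\,|h_{c,\infty}(X)|$ (which is~\eqref{Ineq:DominationSupersymWithoutSpeed} with $Y=\emptyset$) so that only the $p=0$ integrability is required.
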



\begin{remark}
We thus see that $ \Esp{ \abs{ \sc_{\pe{\rho N}}(U_N) }^{2k} } = e^{  \log(N) (k - 1)^2 + O_{\rho, k}(1) } $, hence, is not of the form given in \eqref{Eq:MajorLogNormalMoments}. It thus proves that $ \log\abs{  \sc_{\pe{\rho N}}(U_N) } $ cannot be Gaussian at the limit after renormalisation. We will see another behaviour of the type $ \Esp{ X_N^k } \sim a_k \, e^{ \sigma_N^2 (k - 1)^2} $ (i.e. another random variable in the universality class of $ \sc_{\pe{\rho N} }(U_N) $) in \S~\ref{Subsec:TruncatedCharpol}. 
\end{remark}

\begin{remark}
One also has the following combinatorial result from \cite[thm. 1.4.9]{BarhoumiThesis}~:
\begin{align*}
\Esp{ \abs{\sc_m (U_N)  }^{2k} } = \#\ensemble{ T \in SST(N^k)\,:\, \forall \ell \leq k, c_\ell(T) = m, c_{k + \ell}(T) = N - m }
\end{align*}
where $ SST(\lambda) $ is the set of semi-standard Young tableaux (see footnote~\ref{Footnote:SST}).
\end{remark}


\begin{proof}
Using \eqref{Eq:SchurJointMoments}, we have with $ X := \ensemble{x_1, \dots, x_k} $ and $ Y := \ensemble{y_1, \dots, y_k} $
\begin{align*}
\Esp{ \abs{ \sc_{\pe{\rho N}}(U_N) }^{2k} } & := \Esp{ \crochet{X^{\pe{\rho N}} Y^{-\pe{\rho N}}}\prod_{j = 1}^k Z_{U_N}(x_j)\overline{Z_{U_n}(y_j\inv)}  } \\
            & = \crochet{X^{\pe{\rho N}} Y^{-\pe{\rho N}}} \int_{ \Ue_N } \prod_{j = 1}^k \det(I - x_j U_N ) \overline{\det(I - y_j\inv U_N ) } dU \\
            & = \crochet{X^{\pe{\rho N}} Y^{-\pe{\rho N}}} (-1)^{kN} \prod_{j = 1}^k y_j^{-N} \int_{ \Ue_N }  \prod_{j = 1}^k \det(I - x_j U_N ) \det(I - y_j U_N ) \\
            & \hspace{+10cm} \times \det(U)^{-k}dU \\
            & = \crochet{X^{\pe{\rho N}} Y^{N - \pe{\rho N}}} (-1)^{kN}  s_{k^N}\crochet{- (X + Y) } \\
            & = \crochet{X^{\pe{\rho N}} Y^{ \pe{ \overline{\rho} N}}} s_{N^k}\crochet{ X + Y }
\end{align*}
since $ N - \pe{\rho N} = \pe{N - \rho N} = \pe{\overline{\rho} N} $. We remark that under this form, it is clear that $ \Esp{ \abs{ \sc_{\pe{\rho N}}(U_N) }^{2k} } = \Esp{ \abs{ \sc_{\pe{\overline{\rho} N}}(U_N) }^{2k} } $, a symmetry that comes in full generality from the the functional equation \eqref{Eq:EqFuncCharpol} that implies $ \sc_k(U_N) = (-1)^{N - k} \det(U_N) \sc_{N - k}(U_N) $.

One then has with \eqref{FourierRepr:SchurRectangle} 
\begin{align*}
\Esp{ \abs{ \sc_{\pe{\rho N}}(U_N) }^{2k} } & = \crochet{X^{\pe{\rho N}} Y^{ \pe{ \overline{\rho} N}}} \frac{1}{k!} \oint_{ \Uu^k } U^{-N} H\crochet{ U  (X + Y) }  \abs{\Delta(U) }^2 \frac{d^*U}{U}, \qquad U := \ensemble{ u_1, \dots, u_k } \\ 
             & = \frac{1}{k!} \oint_{ \Uu^k } U^{-N} \crochet{X^{\pe{\rho N}} Y^{ \pe{ \overline{\rho} N}}} H\crochet{XU} H\crochet{YU} \abs{\Delta(U) }^2 \frac{d^*U}{U} \quad\mbox{(Fubini)} \\
             & = \frac{1}{k!} \oint_{ \Uu^k } U^{-N} h_{ \pe{\rho N} }\crochet{ U}^k h_{ \pe{ \overline{\rho} N} }\crochet{ U}^k   \abs{\Delta(U) }^2 \frac{d^*U}{U} \qquad\qquad \mbox{ with \eqref{FourierRepr:hLambda}, } \\
              & =  \frac{1}{k!} \oint_{ \Uu^k } u_1^{ -Nk } \prod_{j = 2}^k \prth{ \frac{u_j}{u_1} }^{-N} \times u_1^{ \pe{\rho N} k} \, h_{ \pe{\rho N} } \prth{1, \frac{u_2}{u_1}, \dots, \frac{u_k}{u_1 } }^k \\
              & \hspace{ + 3cm} \times u_1^{ \pe{\overline{ \rho} N} k} \, h_{ \pe{ \overline{ \rho} N} } \prth{1, \frac{u_2}{u_1}, \dots, \frac{u_k}{u_1 } }^k  \abs{ \Delta  \prth{1, \frac{u_2}{u_1}, \dots, \frac{u_k}{u_1 } } }^2 \frac{d^*U}{U} \\
              & = \frac{1}{k!} \oint_{ \Uu^{k - 1} } \!\! V^{-N} \, h_{ \pe{\rho N} } \prth{1, v_2, \dots, v_k }^k \,  h_{ \pe{\overline{ \rho} N} } \prth{1, v_2, \dots, v_k }^k \\
              & \hspace{+5cm} \times \abs{ \Delta  \prth{1, v_2, \dots, v_k } }^2 \frac{d^*V}{V}, \quad V := \ensemble{v_2, \dots, v_k}
\end{align*}

Here, we have used the trick of remark~\ref{Rk:MultivariateFourierWithHomogeneity}, i.e. we have set $ u_1 = e^{2 i \pi \phi} $, $ v_j := u_j/u_1 = e^{2 i \pi \theta_j} $ with $ \theta_j \in \crochet{ - \frac{1}{2}, \frac{1}{2} } $ for all $ j \in \intcrochet{2, k} $ and then integrated in $ u_1 $, making this variable disappear (as the power of $ u_1 $ before integrating is $ -Nk + \pe{\rho N}k + \pe{\overline{\rho} N}k = 0 $).

Setting $ x_j := \theta_j N $ and using $ \widetilde{h}_{\rho, \infty} $ defined in \eqref{Def:hTildeInfty}, one now gets
\begin{align*}
\Esp{ \abs{Z_{U_N}(1) }^{2k} } & =  \frac{1}{k!} \int_{ \crochet{ - \frac{N}{2}, \frac{N}{2} }^{k - 1} } e^{ - 2 i \pi \sum_{j = 2}^k x_j } \, h_{ \pe{\rho N} } \prth{1, e^{2 i \pi x_2/N} , \dots, e^{2 i \pi x_k/N} }^k  \\
                & \hspace{+4.5cm} \times h_{ \pe{\overline{\rho} N} } \prth{1, e^{2 i \pi x_2/N} , \dots, e^{2 i \pi x_k/N} }^k \\
                & \hspace{+4.5cm} \times  \abs{\Delta\prth{ 1, e^{2 i \pi x_2/N} , \dots, e^{2 i \pi x_k/N} } }^2 \frac{dx_2 \dots dx_k }{N^{k - 1} }  \\
                & \equivalent{N\to +\infty } \frac{ N^{ 2k(k-1) - k(k-1) - (k - 1)}   }{k!} \int_{ \Rr^{k - 1} } \!\! e^{ - 2 i \pi \sum_{j = 2}^k x_j } \, \widetilde{h}_{ \rho , \infty }\prth{0, x_2, \dots, x_k }^k  \\
                & \hspace{+6cm} \times \widetilde{h}_{ \overline{\rho} , \infty }\prth{0, x_2, \dots, x_k }^k  \\
                & \hspace{+6cm} \times (2\pi)^{ k(k - 1) } \abs{\Delta\prth{ 0, x_2, \dots, x_k } }^2 dx_2 \dots dx_k \\
                & = N^{(k - 1)^2} \frac{ (2\pi)^{ k(k - 1) }}{k!} \!\! \int_{ \Rr^{k - 1} } \!\! e^{ i \pi (k - 2) \sum_{ j = 2 }^k x_j } \, h_{\rho, \infty} (0, x_2, \dots, x_k)^k \, h_{\overline{\rho}, \infty} (0, x_2, \dots, x_k)^k  \\
                & \hspace{+9cm} \Delta\prth{ 0, x_2, \dots, x_k }^2  dx_2 \dots dx_k
\end{align*}

Here, we have used dominated convergence given by inequality \eqref{Ineq:SpeedOfConvergenceHinftyBis}. The limiting function is seen integrable using the second criteria \eqref{Ineq:IntegrabilityDomination} with $ (K, M, M', \kappa, \kappa') = (k - 1, k, k, 1, 1 ) $ which is clearly fullfilled for all $ p \in \ensemble{0, 1, 2} $ and $ k \geq 2 $.
\end{proof}


\begin{remark}\label{Rk:Alternative:MidSecularCoefficient}
Since one has with \eqref{Eq:SchurWithHn}/\eqref{FourierRepr:SchurRectangleWithHn}
\begin{align*}
s_{N^k}\crochet{ X + Y } & =  \frac{1}{k!} \oint_{ \Uu^k } U^{-N} h_{Nk}\crochet{ U  (X + Y)}   \abs{\Delta(U) }^2 \frac{d^*U}{U}, \qquad  U := \ensemble{ u_1, \dots, u_k } \\ 
              & =  \frac{1}{k!} \oint_{ \Uu^k } u_1^{ -Nk } \prod_{j = 2}^k \prth{ \frac{u_j}{u_1} }^{-N} \times u_1^{Nk} \, h_{Nk} \crochet{ \prth{1 + \frac{u_2}{u_1} + \dots + \frac{u_k}{u_1 } } (X + Y) } \\
              & \hspace{+9cm} \abs{ \Delta  \prth{1, \frac{u_2}{u_1}, \dots, \frac{u_k}{u_1 } } }^2 \frac{d^*U}{U} \\
              & = \frac{1}{k!} \oint_{ \Uu^{k - 1} }  V^{-N}  h_{Nk} \crochet{  (1 + V) (X + Y) }  \abs{ \Delta  \crochet{1 + V } }^2 \frac{d^*V}{V}, \quad V := \ensemble{v_2, \dots, v_k},
\end{align*}
one can use Fubini to take the $ \crochet{X^{\pe{\rho N}} Y^{ \pe{ \overline{\rho} N} } } $-Fourier coefficient of this formula directly inside the integral, namely to write
\begin{align*}
\Esp{ \abs{ \sc_{\pe{\rho N}}(U_N) }^{2k} } & = \crochet{X^{\pe{\rho N}} Y^{ \pe{ \overline{\rho} N}}} s_{N^k}\crochet{ X + Y } \\
                 & = \frac{1}{k!} \oint_{ \Uu^{k - 1} \times \Uu^{2k}} \hspace{-0.3cm}  V^{-N}  X^{-\pe{\rho N}} Y^{- \pe{ \overline{\rho} N}} h_{Nk} \crochet{  (1 + V) (X + Y) }  \\
                 & \hspace{+7cm} \times \abs{ \Delta \crochet{1 + V } }^2 \frac{d^*V}{V} \frac{d^*X}{X}\frac{d^*Y}{Y} \\
                 & = \frac{1}{k!} \oint_{ \Uu^{k - 1} \times \Uu^{2k - 1}} \hspace{-0.3cm}   V^{-N}  T^{-\pe{\rho N}} W^{- \pe{ \overline{\rho} N}} h_{Nk} \crochet{  (1 + V) (1 + T + W) }  \\
                 & \hspace{+7cm} \times \abs{ \Delta \crochet{1 + V } }^2 \frac{d^*V}{V} \frac{d^*T}{T}\frac{d^*W}{W}
\end{align*}
using remark~\ref{Rk:MultivariateFourierWithHomogeneity} and the fact that $ \rho + \overline{\rho} = 1 $. A slight adaptation of the second criteria \eqref{Ineq:IntegrabilityDomination} with variables $ T + W \in \Uu^{2k - 1} $ that are not integrated against a squared Vandermonde determinant allows to use dominated convergence to get (with the conventions of theorem~\ref{Theorem:Autocorrelations})
\begin{align*}
\Esp{ \abs{ \sc_{\pe{\rho N}}(U_N) }^{2k} } & \equivalent{N \to +\infty} \,  \frac{1}{k!} \int_{ \Rr^{3k - 2} } \hspace{-0.3cm} e^{-2 i \pi ( \, \overline{\rho} \alpha_1 + \sum_{j = 2}^k ( \theta_j + \rho \varphi_j + \overline{\rho} \alpha_j ) \, ) } N^{ 2k^2 - 1} \widetilde{h}_{k, \infty} \crochet{  (0 + \thetab) \oplus (0 + \alphab + \varphib) }  \\
                 & \hspace{+6cm} \times N^{-k(k - 1)} \abs{ \Delta \crochet{ 0 + 2 i \pi \thetab } }^2 \frac{d\thetab}{N^{k - 1}}   \frac{d\alphab}{N^{k}}   \frac{d\varphib}{N^{k - 1}} \\
                 & = N^{2k^2 - 1 -k(k - 1) - (3k - 2) }  \frac{(2\pi)^{k(k - 1)} }{k!}  \\
                 & \hspace{+2cm} \times \int_{ \Rr^{3k - 2} } e^{-2 i \pi ( \, \overline{\rho} \alpha_1 + \sum_{j = 2}^k ( \theta_j + \rho \varphi_j + \overline{\rho} \alpha_j ) \, ) }  \widetilde{h}_{k, \infty} \crochet{  (0 + \thetab) \oplus (0 + \alphab + \varphib) }   \\
                 & \hspace{+9.5cm} \times \Delta \crochet{ 0 +  \thetab }^2 d\thetab\, d\alphab\, d\varphib \\
                 & =: N^{(k - 1)^2 } \frac{(2\pi)^{k(k - 1)} }{k!} \!\!\int_{ \Rr^{3k - 2} } e^{-2 i \pi ( \, \overline{\rho} \alpha_1 + \sum_{j = 2}^k ( \theta_j + \rho \varphi_j + \overline{\rho} \alpha_j ) \, ) } \\
                 & \hspace{+4cm} \times \widetilde{h}_{k, \infty} \crochet{  (0 + \thetab) \oplus (0 + \alphab + \varphib) } \Delta \crochet{ 0 +  \thetab }^2 d\thetab\, d\alphab\, d\varphib
\end{align*}

By Fubini, we can thus replace $ \Phi_{\Se\Ce_\rho^{(k)} } $ in \eqref{EqPhi:MidCoeff} by
\begin{align}\label{EqPhi:Alternative:MidSecularCoeff}
\begin{aligned}
\Phi^{(Alt)}_{\Se\Ce_\rho^{(k)} }(0, x_2, \dots, x_k) &  =  e^{ - i \pi k \sum_{j = 2}^k x_j }\int_{ \Rr^{2k - 1} } e^{-2 i \pi ( \, \overline{\rho} \alpha_1 + \sum_{j = 2}^k (  \rho \varphi_j + \overline{\rho} \alpha_j ) \, ) } \\
                 & \hspace{+4cm} \times \widetilde{h}_{k, \infty}\crochet{  (0 + \xb) \oplus (0 + \alphab + \varphib) }  d\alphab\, d\varphib
\end{aligned}
\end{align}

This last expression is in fact equal to the one given in \eqref{EqPhi:MidCoeff}, as one can see working directly on the functions instead of the integrals. The formula for fixed $N$ that allows to pass from one to another is a ``branching formula'' in the theory of symmetric functions. 
\end{remark}


\medskip

\medskip
\subsection{Back to the autocorrelations~: the randomisation paradigm}\label{Subsec:RandomisationParadigm}

As stated by Diaconis and Gamburd paraphrasing Rota, the mid-secular coefficients are indeed ``equiprimordial'' with the eigenvalues when studying the autocorrelations of $ Z_{U_N} $ as one can write the characteristic polynomial as the following randomisation of the sequence $ (\sc_k(U_N))_{k \geq 0} $
\begin{align}\label{RandomEq:Charpol}
Z_{U_N}(e^{s/N}) = \sum_{\ell = 0}^N \sc_\ell(U_N) e^{s\ell/N} = N \, \Esp{ \sc_{ V_N }(U_N) e^{s V_N /N} \big\vert U_N }
\end{align}
where $ V_N \sim \Us(\intcrochet{0, N}) $ is independent of $ U_N \sim CUE_N $ (note also that writing $ s = a + ib $, one could equivalently use a truncated geometric random variable of parameter $ e^{a/N} $ which corresponds to the exponential bias of the uniform random variable). Considering a sequence $ \Vb_{\! N} := ( V_N^{(j)} )_{j \geq 1} $ of i.i.d. such $ V_N $s, one thus gets with $ \Xb := \ensemble{x_1, \dots, x_{k + m}} $ 
\begin{align*}
\Ae_N(X, Y) & := \Esp{ \prod_{j = 1}^k Z_{U_N}\prth{e^{ 2 i \pi x_j/N} } \prod_{\ell = 1}^m \overline{ Z_{U_N}\prth{e^{ -2 i \pi y_\ell/N} }}  } \\
                 & = (-1)^{Nm} e^{-2 i \pi \sum_\ell \overline{ y_\ell} } \, \Esp{ \det(U_N)^{-m} \prod_{j = 1}^{k + m} Z_{U_N}\prth{e^{ 2 i \pi z_j/N} }  } \quad\mbox{by \eqref{Eq:EqFuncCharpol}} \\
                 & = (-1)^{Nm} N^{k + m} e^{-2 i \pi \sum_\ell \overline{ y_\ell} } \, \Esp{ \det(U_N)^{-m} \prod_{j = 1}^{k + m} \sc_{V_N^{(j)} }(U_N) e^{ 2 i \pi z_j V_N^{(j)}/N } } \\
                 & = N^{k + m} e^{-2 i \pi \sum_\ell \overline{ y_\ell} } \, \Esp{ e^{ 2 i \pi \sum_{j = 1}^{k + m} \! z_j V_N^{(j)}\!/N } \crochet{ \Xb^{ \Vb_{\! N} } } s_{N^m}\crochet{ \Xb } } \quad\mbox{by \eqref{Eq:SchurJointMoments}} \\
                 & = \frac{N^{k + m}}{ m!} e^{-2 i \pi \sum_\ell \overline{ y_\ell} } \, \Esp{ \! e^{ 2 i \pi \sum_{j = 1}^{k + m} \! z_j V_N^{(j)}\!/N  } \crochet{ \Xb^{ \Vb_{\! N} } } \!\! \oint_{\Uu^m}  U^{-N} H\crochet{U \Xb } \abs{\Delta(U)}^2 \frac{d^*U}{U} } \mbox{ by \eqref{FourierRepr:SchurRectangle}}  \\
                 & = \frac{N^{k + m}}{ m!} e^{-2 i \pi \sum_\ell \overline{ y_\ell} } \, \Esp{ e^{ 2 i \pi \sum_{j = 1}^{k + m} \! z_j V_N^{(j)}\!/N  }   \oint_{\Uu^m}  U^{-N} \prod_{j = 1}^{k + m} h_{V_N^{(j)} }(U) \abs{\Delta(U)}^2 \frac{d^*U}{U} }  \\
                 & = \frac{N^{k + m}}{ m!} e^{-2 i \pi \sum_\ell \overline{ y_\ell} } \, \Ee \Bigg( e^{ 2 i \pi \sum_{j = 1}^{k + m} \! z_j V_N^{(j)}\!/N  } \Unens{ \sum_{j = 1}^{k + m} V_N^{(j)} = (k + m) N }  \\
                 & \hspace{+5cm} \times \oint_{\Uu^{m - 1}}  W^{-N} \prod_{j = 1}^{k + m} h_{V_N^{(j)} }\crochet{1 + W} \abs{\Delta\crochet{1 + W}}^2 \frac{d^*W}{W} \Bigg) 
\end{align*}
using the trick of remark \ref{Rk:MultivariateFourierWithHomogeneity} and integrating out $ u_1 $. 

Using the coupling \eqref{Eq:CouplingUniform} for $ V_N \sim \Us(\intcrochet{0, N}) $ under the form $ V_N \eqlaw \pe{ (N + 1) V } $ with $ V \sim \Us([0, 1]) $, we get a product of $ h_{ [ (N + 1) V ^{(j)} ] }\crochet{1 + W} $ with an i.i.d. sequence $ (V^{(j)})_j $ of uniform random variables in $ [0, 1] $, and we can naturally use the rescaling of lemma~\ref{Lemma:RescaledGegenbauer} with lemma~\ref{Lemma:IntegrabilityDominationInC} for the integration in $ v_j \in [0, 1] $ corresponding to the uniform random variables. The only point of rescaling that needs a precision is the indicator of $ \sum_{j = 1}^{k + m} V_N^{(j)} = (k + m)N $. This is the idea of local CLT presented in \eqref{Eq:PhilosophyCLT}, as 
\begin{align*}
\sum_{j = 1}^{k + m} V_N^{(j)} \eqlaw \sum_{j = 1}^{k + m} \pe{N V^{(j)} } = \pe{ N  \sum_{j = 1}^{k + m} V^{(j) } } =: \pe{N S_{k + m} }
\end{align*}
and $ N\inv \pe{N S_{k + m} } \to S_{k + m} $ in law. One can adapt the computation of \eqref{Eq:PhilosophyCLT} in the case of $ \Esp{ X_N \Unens{ \pe{N S_{k + m} } = \pe{N x} } } $ if $ N^{-\alpha} X_N \to X_\infty $ in law with a good domination, in which case, one gets 
\begin{align*}
\Esp{ X_N \Unens{ \pe{N S_{k + m} } = \pe{N x} } } \approx \frac{N^\alpha}{N} \Esp{ X_\infty \delta_0\prth{ S_{k + m} - x } } = N^{\alpha - 1} \Esp{ X_\infty \big\vert S_{k + m} = x }  f_{S_{k + m}}(x)
\end{align*}
where $ f_{S_{k + m}} $ is the Lebesgue density of $ S_{k + m} $, i.e. $ f_{S_{k + m}}(x) = \Esp{ \delta_0(S_{k + m} - x) } $. Here again, the only step to justify is the $ \approx $ given in the proof of lemma~\ref{Lemma:RescaledGegenbauer}.

Note that one could write equivalently
\begin{align*}
\Esp{ X_N \Unens{ \pe{N S_{k + m} } = \pe{N x} } }  = \Prob{ \pe{N S_{k + m} } = \pe{N x} } \times \Esp{ X_N \Big\vert  \pe{N S_{k + m} } = \pe{N x} } 
\end{align*}
and only deal with the local CLT in the probability since $ \Esp{ N^{-\alpha} X_N  \vert  \pe{N S_{k + m} } = \pe{N x} } \to \Esp{ X_\infty \vert S_{k + m} = x } $, such a conditioning with a set of measure 0 being exactly an instance of the hyperplane concentration phenomenon. This is exactly how the Dirac mass should be understood~: in the sense of a conditioning, i.e. as the restriction of the integral over $ [0, 1]^{m + k} $ to a hyperplane given by $ \sum_{j = 1}^{m + k} v_j = x $ (by setting $ v_1 := x - \sum_{j = 2}^{m + k} v_j $ for instance) and not as a function to be dominated (which is impossible since $ \delta_0 $ is not a function). 

Setting $ w_j = e^{2 i\pi t_j/N} $ for $ t_j \in [-N/2, N/2] $, one thus gets
\begin{align*}
\Ae_N(X, Y) & = \frac{N^{k + m}}{ m!} e^{-2 i \pi \sum_\ell \overline{ y_\ell} } \, \Ee \Bigg( \! e^{ 2 i \pi \sum_{j = 1}^{k + m} \! z_j V_N^{(j)}\!/N  } \Unens{ \sum_{j = 1}^{k + m} V_N^{(j)} = (k + m)N }  \\
                 & \hspace{+2cm} \times \int_{\crochet{-\frac{N}{2}, \frac{N}{2} }^{m - 1}} \!\!  e^{- 2 i \pi \sum_{\ell = 2}^m t_\ell } \!\! \prod_{j = 1}^{k + m} \! h_{[N V^{(j)} ] }\!\crochet{1 + e^{2 i \pi \tb/N}} \abs{\Delta\crochet{1 + e^{2 i \pi \tb/N}}\! }^2 \!\frac{d\tb}{N^{m - 1}} \! \Bigg) \\
                 & \equivalent{N\to +\infty}  N^{k + m -1 + (k + m)(m - 1) -m(m - 1) - (m - 1)} \frac{(2\pi)^{m(m - 1)}}{ m!}     \\
                 & \hspace{+1cm} \times e^{-2 i \pi \sum_\ell \overline{ y_\ell} } \, \Ee \Bigg( \! e^{ 2 i \pi \sum_{j = 1}^{k + m} \! z_j V ^{(j)}  } \delta_0\prth{\sum_{j = 1}^{k + m} V^{(j)} - (k + m) \! } \\
                 & \hspace{+6cm} \int_{\Rr^{m - 1}} \!\!  e^{- 2 i \pi \sum_{\ell = 2}^m t_\ell } \!\! \prod_{j = 1}^{k + m} \! h_{ V^{(j)}\! , \infty } \prth{0, \tb} \Delta(0, \tb)^2 d\tb   \! \Bigg) \\
                 & = N^{km} \frac{(2\pi)^{m(m - 1)}}{ m!} f_{S_{m + k\!}} (k + m) \!\! \int_{\Rr^{m - 1}} \!\!  e^{- 2 i \pi \! \sum_{\ell = 2}^m t_\ell } \,  \Ee \Bigg( \! e^{ 2 i \pi \,\prth{ \sum_{j = 1}^{k} \! x_j V ^{(j)} + \sum_{\ell = 1}^{m} \! \overline{ y_\ell} (1 - V^{(\ell + k)} )  } } \\
                 & \hspace{+5.5cm} \times  \prod_{j = 1}^{k + m} \! \widetilde{h}_{ V^{(j)}\! , \infty } \prth{0, \tb} \Bigg\vert \sum_{j = 1}^{k + m} V^{(j)} = k + m \! \Bigg) \Delta(0, \tb)^2 d\tb    
\end{align*}
where we have applied dominated convergence as in theorem~\ref{Theorem:Autocorrelations} with the additional integrability in the uniform random variables coming from lemma~\ref{Lemma:IntegrabilityDominationInC} (recall that $ \widetilde{h}_{c, \infty} $ is defined in \eqref{Def:hTildeInfty} and differs from $ h_{c, \infty} $ by an exponential factor). 

One can perform a last bit of massaging to the previous formula by giving the exact value of the constant $ f_{S_{m + k\!}} (k + m) $. Indeed, the sum $ S_n $ of $n$ independent uniform random variables on $ [0, 1] $ has the Bates/Irwin-Hall distribution whose Lebesgue-density is easily given (by Fourier inversion) by 
\begin{align}\label{Eq:BatesIrwinHall}
f_{S_n}(x) = \frac{1}{2(n - 1)!} \sum_{k = 0}^n (-1)^k {n \choose k} (x - k)_+^{n - 1} = \frac{1}{2(n - 1)!} \sum_{k = 0}^{\pe{x}} (-1)^k {n \choose k} (x - k)^{n - 1} 
\end{align}
where $ x_+ := x\Unens{x \geq 0} $. As a result, $ f_{S_n}(0) = \frac{1}{2(n - 1)!} $. And since $ 1 - U \eqlaw U $ if $ U $ is uniform in $ [0, 1] $, one gets $ \{ \sum_{j = 1}^{k + m} V^{(j)} = k + m \} \eqlaw \{ \sum_{j = 1}^{k + m} V^{(j)} = 0 \} $. 

In the end, we have then proven the following~:


\begin{theorem}[Microscopic autocorrelations of the characteristic polynomial with randomisation]\label{Theorem:Autocorrelations:WithRandomisation}
We have, locally uniformly in $ X + Y \in \Cc^{k + m} $ for all $ k, m \geq 2 $
\begin{align*}
\frac{1}{N^{km} } \Esp{ \prod_{j = 1}^k Z_{U_N}\prth{e^{ 2 i \pi x_j/N} } \prod_{\ell = 1}^m \overline{ Z_{U_N}\prth{e^{- 2 i \pi y_\ell/N} }}  } \tendvers{N}{+\infty} \Ae(X, Y)
\end{align*}
with 
\begin{align}\label{EqPhi:Random:Autocorrels}
\begin{aligned}
\Ae(X, Y) & := \frac{(2\pi)^{m(m - 1) } }{m!} \int_{ \Rr^{ m - 1 } } \Phi^{(Rand)}_{\!\Ae(X, Y)}(0, \tb) \Delta(0, \tb)^2 d\tb \\
\Phi^{(Rand)}_{\!\Ae(X, Y)}(0, \tb) & = \frac{e^{- 2 i \pi \! \sum_{\ell = 2}^m t_\ell } }{2(k + m - 1)!}  \,  \Ee \Bigg( \! e^{ 2 i \pi \,\prth{ \sum_{j = 1}^{k} \! x_j V ^{(j)} + \sum_{\ell = 1}^{m} \! \overline{ y_\ell} (1 - V^{(\ell + k)} )  } } \\
                 & \hspace{+5.5cm} \times  \prod_{j = 1}^{k + m} \! \widetilde{h}_{ V^{(j)}\! , \infty } \prth{0, \tb} \Bigg\vert \sum_{j = 1}^{k + m} V^{(j)} = k + m \! \Bigg)
\end{aligned}
\end{align}
\end{theorem}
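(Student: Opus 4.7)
The plan is to follow the randomisation derivation outlined in the paragraphs preceding the theorem, which already sketches the whole argument, and then tighten the limiting step with the local CLT philosophy \eqref{Eq:PhilosophyCLT}. First, I would use the functional equation \eqref{Eq:EqFuncCharpol} to turn the conjugated factors into ordinary characteristic polynomials, absorbing the prefactor $e^{-2i\pi\sum_\ell \overline{y_\ell}}$ and a $\det(U_N)^{-m}$. Apply \eqref{RandomEq:Charpol} simultaneously at all $k+m$ points using an i.i.d.\ sequence $\Vb_{\!N} = (V_N^{(j)})_{j\geq 1}$ of uniform random variables on $\intcrochet{0,N}$ independent of $U_N$. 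This writes the autocorrelation, after pulling the Fourier-coefficient extraction out of $U_N$ by Fubini, as
\begin{align*}
\Ae_N(X,Y) = N^{k+m} e^{-2i\pi\sum_\ell\overline{y_\ell}} \, \Ee\!\left( e^{2i\pi\sum_j z_j V_N^{(j)}/N} \, \crochet{\Xb^{\Vb_{\!N}}} s_{N^m}\crochet{\Xb} \right),
\end{align*}
with $z_j = x_j$ for $j\leq k$ and $z_{j} = -\overline{y_{j-k}}$ for $j>k$.

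Next, I insert \eqref{FourierRepr:SchurRectangle} for $s_{N^m}\crochet{\Xb}$, and use Fubini to push the Fourier-coefficient operator $\crochet{\Xb^{\Vb_{\!N}}}$ under the $U$-integral. Since $\crochet{\Xb^{\Vb_{\!N}}} H\crochet{U\Xb} = \prod_{j=1}^{k+m} h_{V_N^{(j)}}(U)$, this yields an integral over $\Uu^m$ of $U^{-N} \prod_j h_{V_N^{(j)}}(U)|\Delta(U)|^2$. I then invoke the homogeneity trick of remark~\ref{Rk:MultivariateFourierWithHomogeneity}: setting $u_1 = e^{2i\pi\phi}$ and $w_\ell = u_\ell/u_1$, the integration over $\phi$ produces an indicator $\Unens{\sum_j V_N^{(j)} = (k+m)N}$ and reduces the $U$-integral to $\Uu^{m-1}$. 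After the substitution $w_\ell = e^{2i\pi t_\ell/N}$, every $h_{V_N^{(j)}}[1+W]$ can be rescaled using lemma~\ref{Lemma:RescaledGegenbauer} after coupling $V_N^{(j)} \eqlaw \lfloor(N+1)V^{(j)}\rfloor$ with $V^{(j)}\sim \Us([0,1])$.

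The heart of the proof is then to pass to the limit simultaneously in: (i) the $t_\ell$-integrals over $[-N/2,N/2]^{m-1}$, which converge to integrals over $\Rr^{m-1}$ through the rescaling of lemma~\ref{Lemma:RescaledGegenbauer}; and (ii) the constraint $\sum_j V_N^{(j)} = (k+m)N$, which is the local CLT \eqref{Eq:PhilosophyCLT} for the discrete sum associated to $S_{k+m} := \sum_{j=1}^{k+m} V^{(j)}$. The local CLT converts the indicator into a factor $N^{-1}$ times a conditional density evaluated at the fixed value $k+m$: since $S_{k+m}$ has the Bates/Irwin--Hall density \eqref{Eq:BatesIrwinHall}, we get
\begin{align*}
\Prob{\textstyle \sum_j V_N^{(j)} = (k+m)N} \approx \frac{1}{N}\, f_{S_{k+m}}(k+m) = \frac{1}{2N\,(k+m-1)!} ,
\end{align*}
by the symmetry $U \eqlaw 1-U$.

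The main obstacle is the joint justification of these two limits by dominated convergence: the integrand mixes a bounded random Fourier exponential, the rescaled kernels $\widetilde{h}_{V^{(j)},\infty}(0,\tb)$, a squared Vandermonde $\Delta(0,\tb)^2$, and a pointwise-vanishing indicator. I would argue this by freezing $\Vb$ first, applying the uniform bound of inequality \eqref{Ineq:SpeedOfConvergenceHinftyBis} of annex~\ref{Sec:ProbReprGegenbauer} on the $h_{[NV^{(j)}]}$'s (which gives a $V$-integrable majorant via lemma~\ref{Lemma:IntegrabilityDominationInC}), combining it with the polynomial decay of $\Delta(0,\tb)^2$ against the rescaled kernels (via the first criterion \eqref{Ineq:IntegrabilityDomination}, exactly as in theorem~\ref{Theorem:Autocorrelations}), and finally using Fubini to exchange the $V$- and $\tb$-integrations. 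Collecting the powers of $N$ yields $N^{k+m-1+(k+m)(m-1)-m(m-1)-(m-1)} \cdot N^{-1} = N^{km}$, and identifying the conditional expectation given $S_{k+m}=k+m$ delivers the stated expression \eqref{EqPhi:Random:Autocorrels}.
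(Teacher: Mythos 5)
Your proposal follows the paper's randomisation derivation in §4.4 step for step: functional equation, simultaneous randomisation at the $k+m$ points via \eqref{RandomEq:Charpol}, the RKHS formula \eqref{FourierRepr:SchurRectangle}, extraction of $\prod_j h_{V_N^{(j)}}(U)$, the $u_1$-homogeneity trick of remark~\ref{Rk:MultivariateFourierWithHomogeneity}, and the local CLT with the uniform coupling and Irwin--Hall density; the domination argument you sketch via \eqref{Ineq:SpeedOfConvergenceHinftyBis}, \eqref{Ineq:IntegrabilityDomination} and lemma~\ref{Lemma:IntegrabilityDominationInC} is also exactly what the paper invokes. So the route is the same.

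There is however one concrete step that you assert as your own (``the integration over $\phi$ produces an indicator $\Unens{\sum_j V_N^{(j)} = (k+m)N}$'') and which does not survive a degree count. The integrand $U^{-N}\prod_{j=1}^{k+m}h_{V_N^{(j)}}(U)\,\abs{\Delta(U)}^2$ is a Laurent polynomial in the $m$ variables $u_1,\dots,u_m$ that is homogeneous of total degree $\sum_j V_N^{(j)} - mN$ (each $h_{V_N^{(j)}}$ contributes $V_N^{(j)}$, $U^{-N}$ contributes $-mN$, and $\abs{\Delta(U)}^2$ is of degree $0$); extracting the $u_1^0$ Fourier coefficient therefore forces $\sum_j V_N^{(j)} = mN$, not $(k+m)N$. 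This is also what must hold for the coefficient $\crochet{\Xb^{\Vb_{\!N}}}s_{N^m}\crochet{\Xb}$ to be nonzero, since $s_{N^m}$ has degree $\abs{N^m}=Nm$ — compare the derivatives section, where the analogous indicator reads $\Unens{\sum_r N S_r(N) = kN}$ for $s_{N^k}$. Consequently the local CLT should be read off at the interior point $m$ of $[0,k+m]$, giving the factor $f_{S_{k+m}}(m)$; the constraint $\sum_j V^{(j)}=k+m$ forces all $V^{(j)}=1$, a boundary event at which the Irwin--Hall density vanishes, so the constant $\frac{1}{2(k+m-1)!}$ cannot be correct. Since this error is already present in the target formula \eqref{EqPhi:Random:Autocorrels} and the paper's sketch, you cannot be faulted for reproducing it, but a careful proof should flag that the degree count does not line up, and correct the conditioning point and the normalising density accordingly.
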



From now on, we will alternate the presentations between the two possible ways of rescaling a functional, namely with and without the randomisation paradigm, sometimes leaving to the interested reader the exercise of adapting one proof. We will see that this last paradigm is even simpler than the ``formula paradigm'' and leads to several interesting relations between the limiting functionals.


\begin{remark}
By \eqref{Def:hTildeInfty} and \eqref{Eq:Beta_ProbRepr}, one has
\begin{align*}
\widetilde{h}^{(\kappa)}_{ c , \infty }\prth{x_1, \dots, x_k} = \frac{c^{k\kappa - 1}}{\Gamma(k\kappa)}   \, \Esp{ e^{ 2 i \pi c \sum_{j = 1}^k x_j \betab_{\kappa, 1}^{(j)}   } \Bigg\vert \sum_{j = 1}^k \betab_{\kappa, 1}^{(j)}  = 1 }
\end{align*}
and as a result, for $ \kappa = 1 $, 
\begin{align*}
\widetilde{h}_{ c, \infty }\prth{x_1, \dots, x_k} = \frac{1}{(k - 1)!} \, \Esp{ e^{ 2 i \pi c \sum_{j = 1}^k x_j U^{(j)} } \Bigg\vert \sum_{j = 1}^k U^{(j)}  = 1 }
\end{align*}
where $ \prth{ U^{(j)} }_{j \geq 1} $ is a sequence of i.i.d. uniform random variables in $ [0, 1] $. Using the convention \eqref{Convention:Bar} for $ V^{(j)} $ (but not for $ y_j $), combining with \eqref{EqPhi:Random:Autocorrels} and using a double sequence of i.i.d. such random variables (all present sequences being independent), one gets
\begin{align}\label{EqPhi:Random:AutocorrelsAsFourier}
\begin{aligned}
\Phi^{(Rand)}_{\!\Ae(X, Y)}(0, \tb) = \, & \frac{e^{- 2 i \pi \! \sum_{\ell = 2}^m t_\ell } }{2(k + m - 1)! ( (m - 1)! )^{k + m}} \\
                 & \times \Ee \Bigg( \! e^{ 2 i \pi \,\prth{ \sum_{\ell' = 1}^{k} \! x_{\ell'} V ^{(\ell'	)} + \sum_{\ell = 1}^{m} \! \overline{ y_\ell}  \overline{V^{(\ell + k)}}      +  \sum_{r = 2}^m t_r \,\prth{ \sum_{j = 1}^{k + m} V^{(j)} U^{(j, r)}  }  } } \\
                 & \hspace{+3.6cm} \Bigg\vert \sum_{\ell = 1}^{k + m} V^{(\ell)} = k + m, \sum_{r = 2}^m  U^{(j, r)} = 1, \, \forall j \leq k + m \! \Bigg)
\end{aligned}
\end{align}

This shows that, up to a constant, $ \Phi^{(Rand)}_{\!\Ae(X, Y)}(0, \tb) $ is a joint Fourier transform in $ (X, Y, \tb) $ built out of uniform random variables conditioned on their sums (this conditioning being not only equivalent to the hyperplane concentration phenomenon but also the only condition to have a non-tensorial function). Note that using \eqref{Eq:NegBinRepresentation_h_n}, one can have this result for fixed $N$.
\end{remark}

\medskip
\subsection{The truncated characteristic polynomial}\label{Subsec:TruncatedCharpol} 

\subsubsection{Motivations}

Denote by $ \Pe $ the set of ordered prime numbers and $ v_p(n) $ the $p$-adic valuation of $ n \in \Nn^*$, i.e. the exponent of $p$ in its prime decomposition $ n = \prod_{p \in \Pe} p^{ v_p(n) } $. Let $ (Z_p)_{p \in \Pe} $ be a sequence of i.i.d random variables uniformly distributed on the unit circle $ \Uu $. 
%
%
%
%
%
We recall that for $ X \geq 1 $ and $ \sigma \in \Rr_+ $ the \textit{Random Multiplicative Function} defined in \ref{Def:RandomMultiplicativeFunction} is
\begin{align*}
\Ze_{X, \sigma} := \sum_{k = 1}^X \frac{1}{k^\sigma} \prod_{p \in \Pe } Z_p^{v_p(k)}
\end{align*}
and that by the Bohr-Jessen theorem (see \cite{BohrJessen}) or the Bohr correspondence (see e.g. \cite[ch. 3.2]{KowalskiRandonnee} or \cite[introduction]{HeapLindqvist}), this random variable occurs as the following limit in distribution 
\begin{align*}
\zeta_X\prth{\sigma + i T U} := \sum_{k = 1}^X \frac{1}{ k^{\sigma + i T U} } \cvlaw{T}{+\infty }  \Ze_{X, \sigma}
\end{align*}


Conrey and Gamburd \cite{ConreyGamburd} ask for an equivalent of $ \Esp{ \vert \Ze_{X, 1/2 } \vert^{2k} } $ when $ X \to +\infty $. Their theorem \cite[thm. 1]{ConreyGamburd} writes
\begin{align}\label{Eq:ConreyGamburdNT}
\Esp{ \abs{ \Ze_{X, 1/2 } }^{2k} } \equivalent{X\to+\infty} A_k \gamma_k \, e^{ k^2 \log\log X }
\end{align}
with a universality factor $ \gamma_k  $ different from $ M_k $ defined in \eqref{Def:MatrixFactor}, a fundamental difference with the moments conjecture \eqref{Eq:MomentsConjecture}. This comes from the truncation, too strong to allow the random matrix factor $ M_k $ to survive after the first limit. The right analogue of the characteristic polynomial in this setting is the \textit{truncated characteristic polynomial} $ Z_{U_N, \ell} $ defined by
\begin{align}\label{Def:TruncatedCharPol1}
Z_{U_N, \ell}(z) := \sum_{j = 0}^\ell \sc_j(U_N) (-z)^j
\end{align}

The factor $ \gamma_k  $ then occurs as the limit \eqref{Eq:ConreyGamburdCV}, i.e. $ \ell^{-k^2} \Ee \big( \abs{ Z_{U_N, \ell}(1) }^{2k} \big) \to \gamma_k $ when $ \ell \to +\infty $ for $ \ell \geq k N $ \cite[(34)]{ConreyGamburd}.
%
%
%

The random variables in the universality class of the mid-coefficient are the $ \Ze_{X, \sigma} $ for $ \sigma \in [0, 1/2) $. The behaviour is radically different from the case $ \sigma = \frac{1}{2} $ investigated by Conrey and Gamburd. The articles \cite{ConreyGamburd, HarperNikeghbaliRadziwill, HeapLindqvist} give the following results, for all $ k \in \Nn^* $
\begin{align}\label{Eq:BehaviourRandomMultiplicativeFunction}
\begin{aligned} 
\sigma = \frac{1}{2} :  &\qquad \Esp{ \abs{ \Ze_{X, \sigma} }^{2k} } \equivalent{X\to+\infty} A_k \gamma_k \, (\log X)^{k^2} \\
\sigma < \frac{1}{2} :  &\qquad \Esp{ \abs{ \Ze_{X, \sigma} }^{2k} } \equivalent{X\to+\infty} A_k \gamma_k \, \frac{\Gamma(2k - 1)}{ \Gamma(k)^2 } \frac{X^{k(1 - 2\sigma)} }{(1 - 2\sigma)^{2k - 1 } } (\log X)^{ (k - 1)^2 }
\end{aligned}
\end{align}

One sees that setting $ N = \log(X) $, a behaviour in $ N^{(k - 1)^2} $ holds for the moments of $ X^{ \sigma - \frac{1}{2} } \Ze_{X, \sigma} $ for $ \sigma < \frac{1}{2} $.

The results of \eqref{Eq:BehaviourRandomMultiplicativeFunction} for $ \sigma < 1/2 $ were moreover proven for their $ CUE $ analogue by Heap and Lindqvist \cite{HeapLindqvist}. Sumarising the results of \cite{ConreyGamburd, HeapLindqvist}, one gets for $ \lambda \in \Rr_+ $ and $ \ell = \pe{N/k} $
\begin{align}\label{Eq:BehaviourTruncatedCharPol}
\begin{aligned} 
\lambda = 1 :  &\qquad \Esp{ \abs{ Z_{U_N, \ell }(\lambda) }^{2k} } \equivalent{\ell \to+\infty} \gamma_k \, \ell^{k^2} \\
\lambda > 1 :  &\qquad \Esp{ \abs{ Z_{U_N, \ell }(\lambda) }^{2k} } \equivalent{\ell \to+\infty} \gamma_k \, \frac{\Gamma(2k - 1)}{ \Gamma(k)^2 } F_k(\lambda) \frac{\lambda^{2k \ell } }{(1 - \lambda^{-2})^{2k - 1 } } \ell^{ (k - 1)^2 }
\end{aligned}
\end{align}
where 
\begin{align*}
F_k(\lambda) := {}_2 F_1\prth{ \begin{matrix} 1 - k, 1 - k \\ 2 - 2k \end{matrix} \bigg\vert 1 - \lambda^{-2} }
\end{align*}

The proof given in \cite{ConreyGamburd, HeapLindqvist} uses the polytope method. The goal of this \S\, is to give a new proof with the previous machinery. 

Note that \eqref{Eq:BehaviourRandomMultiplicativeFunction} yields the natural question:

\begin{question}\label{Question:NTphase}
Is there a phase transition at $ \sigma = \frac{1}{2} - o(X) $ for $ \Ze_{X, \sigma} $ that links the Gaussian universality class $ \sigma = 1/2 $ and this new universality class $ \sigma < 1/2 $ ?
\end{question}

The same question can of course be asked for the $ CUE $ in view of \eqref{Eq:BehaviourTruncatedCharPol}.

\medskip
\subsubsection{The Conrey-Gamburd theorem}\label{Subsubsec:ConreyGamburd}

We first rederive the convergence \eqref{Eq:ConreyGamburdCV}. Since we can take $ N = k \ell $, i.e. $ \ell = \pe{N k\inv } $, we consider the case of a general truncation at $ T = \pe{\rho N} $ for any $ \rho \in (0, 1) $. We also set $ \overline{T} := N - T = \pe{\overline{\rho} N} $.  

\begin{theorem}[Truncated characteristic polynomial at $ \pe{\rho N} $ in $1$]\label{Theorem:TruncatedCharpolIn1}
For all $ k \geq 1 $, one has
\begin{align*}
\frac{ \Esp{ \abs{ Z_{U_N, \pe{\rho N} }(1) }^{2k} } }{ N^{k^2} }  \tendvers{N}{+\infty } \Ze\Te_\rho^{(k)}
\end{align*}
with 
\begin{align}\label{EqPhi:TruncatedCharpolIn1}
\begin{aligned}
\Ze\Te_\rho^{(k)} & := \frac{(2\pi)^{k(k - 1) } }{k!} \! \int_{ \Rr^k } \!\! \Phi_{\!\Ze\Te_\rho^{(k)} }(0, x_2, \dots, x_{k + 1}) \Delta(0, x_2, \dots x_{k + 1})^2 d\xb\\
\!\!\Phi_{\!\Ze\Te_\rho^{(k)} }(0, x_2, \dots, x_{k + 1}) & = e^{ i \pi (\rho k - 2) \sum_{j = 2}^{k + 1} x_j } \, h_{\rho, \infty}(0, x_2, \dots, x_{k + 1})^k \\
 & \hspace{+2.3cm} \times \prth{ \vphantom{a^{a^a}} \widetilde{h}_{ 1, \infty}(0, x_2, \dots, x_{k + 1}) - \widetilde{h}_{\overline{\rho}, \infty}(0, x_2, \dots, x_{k + 1}) }^{\! k}
\end{aligned}
\end{align}
where $ \widetilde{h}_{\rho, \infty} $ is defined in \eqref{Def:hTildeInfty}.
\end{theorem}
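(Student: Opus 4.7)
The strategy follows that of Theorem \ref{Theorem:MidCoeffs}, augmented by one extra integration variable tracking the truncation. The partial-sum identity
\[
Z_{U_N,T}(1) \;=\; \sum_{j=0}^T (-1)^j\, \sc_j(U_N) \;=\; \crochet{u^T}\frac{Z_{U_N}(u)}{1-u}
\]
combined with Fubini and the Bump--Gamburd evaluation underlying \eqref{Eq:SchurJointMoments} (applied after using $\bar Z_{U_N}(y) = \det(U_N)^{-1}(-y)^N Z_{U_N}(y^{-1})$) gives
\[
\Esp{|Z_{U_N,T}(1)|^{2k}} \;=\; \crochet{X^T Y^T} \prod_{j=1}^{k}\frac{y_j^N}{(1-x_j)(1-y_j)}\; s_{N^k}\crochet{X+Y^{-1}}
\]
with $X=\ensemble{x_1,\ldots,x_k}$, $Y=\ensemble{y_1,\ldots,y_k}$. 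Inserting \eqref{FourierRepr:SchurRectangleWithHn} for $s_{N^k}\crochet{X+Y^{-1}}$ and pulling the Fourier extraction inside by Fubini produces, via $h_{Nk}[\cdot] = \crochet{t^{Nk}}H[t\cdot]$ and the multiplicativity $H\crochet{t(X+Y^{-1})U} = \prod_j H\crochet{tx_jU}\prod_\ell H\crochet{ty_\ell^{-1}U}$, the two key computations
\[
\crochet{x_j^T}\frac{H\crochet{tx_j U}}{1-x_j} = \sum_{p=0}^{T} t^p\, h_p\crochet{U}, \qquad \crochet{y_\ell^T}\frac{y_\ell^N H\crochet{ty_\ell^{-1} U}}{1-y_\ell} = \sum_{n\geq T'} t^n\, h_n\crochet{U},
\]
where $T' := N - T = \pe{\overline{\rho}N}$.

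The resulting exact identity is
\[
\Esp{|Z_{U_N,T}(1)|^{2k}} \;=\; \frac{1}{k!}\oint_{\Uu^k} U^{-N}\crochet{t^{Nk}}\!\!\left(\Phi_T(t,U)^k\,\Psi_{T'}(t,U)^k\right) |\Delta(U)|^2\frac{d^*U}{U},
\]
where $\Phi_T(t,U) := \sum_{p=0}^T t^p h_p\crochet{U}$ is the degree-$T$ truncation of $H\crochet{tU}$ and $\Psi_{T'}(t,U) := H\crochet{tU} - \Phi_{T'-1}(t,U)$ is the corresponding tail. Representing $\crochet{t^{Nk}}$ as a contour integral over a small circle around $0$ (where $\Psi_{T'}$ is analytic) yields a total of $k+1$ integrations.

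For the microscopic rescaling, set $u_j = e^{2i\pi\theta_j/N}$ for $j\in\intcrochet{1,k}$ and $t = e^{2i\pi\theta_{k+1}/N}$, and apply the homogeneity trick of remark \ref{Rk:MultivariateFourierWithHomogeneity} to integrate out $\theta_1$, leaving a $k$-dimensional integral over $\xb = (x_2,\ldots,x_{k+1})\in\Rr^k$ against $\Delta(0,x_2,\ldots,x_{k+1})^2$. Lemma \ref{Lemma:RescaledGegenbauer} identifies the three rescaled generating functions as
\[
\Phi_T \longrightarrow h_{\rho,\infty}(0,\xb), \qquad H\crochet{tU}\longrightarrow \widetilde{h}_{1,\infty}(0,\xb), \qquad \Phi_{T'-1}\longrightarrow \widetilde{h}_{\bar\rho,\infty}(0,\xb),
\]
the tilde versions absorbing the exponential phase $e^{i\pi c\sum x_j}$ arising from the $y_\ell^N$ prefactor. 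Consequently $\Psi_{T'} \to \widetilde{h}_{1,\infty}-\widetilde{h}_{\bar\rho,\infty}$, and the limit of $\Phi_T^k\,\Psi_{T'}^k$ assembles into $h_{\rho,\infty}(0,\xb)^k\cdot(\widetilde{h}_{1,\infty}-\widetilde{h}_{\bar\rho,\infty})(0,\xb)^k$, with the residual phase $e^{i\pi(\rho k-2)\sum x_j}$ coming from $U^{-N}$ and the remaining normalisations.

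The passage to the limit is justified by dominated convergence using the bounds of Annex \ref{Sec:ProbReprGegenbauer} (inequality \eqref{Ineq:SpeedOfConvergenceHinftyBis}), and integrability of the limit against $\Delta(0,\xb)^2$ follows from the criterion of that Annex with $k+1$ variables. Counting powers of $N$ from the Jacobians, the Vandermondes and the $h$-asymptotics produces the announced normalisation $N^{k^2}$. The main obstacle is the careful microscopic tracking of the decomposition $\Psi_{T'} = H\crochet{tU}-\Phi_{T'-1}$ through the rescaling, so that the limits on both sides of the subtraction match (both becoming "tilde" versions with the same exponential normalisation) to yield the precise factorised form $h_{\rho,\infty}^k\cdot(\widetilde{h}_{1,\infty}-\widetilde{h}_{\bar\rho,\infty})^k$ of \eqref{EqPhi:TruncatedCharpolIn1}.
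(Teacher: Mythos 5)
Your exact identity
\[
\Esp{\abs{Z_{U_N,T}(1)}^{2k}} \;=\; \frac{1}{k!}\oint_{\Uu^k} U^{-N}\crochet{t^{Nk}}\!\big(\Phi_T(t,U)^k\,\Psi_{T'}(t,U)^k\big)\,\abs{\Delta(U)}^2\,\frac{d^*U}{U}
\]
is correct (it is a valid reformulation using \eqref{FourierRepr:SchurRectangleWithHn}), and your coefficient extractions $\crochet{x^T}\frac{H\crochet{txU}}{1-x}$ and $\crochet{y^T}\frac{y^N H\crochet{ty^{-1}U}}{1-y}$ are right. But your route diverges from the paper's at the very first step — the paper uses \eqref{FourierRepr:SchurRectangle} (with $H\crochet{U\Ae}$, no auxiliary $t$), and after extracting $\crochet{X^0}P_T\crochet{X\inv}$, $\crochet{Y^N}P_T\crochet{Y}$, lands on the three \emph{polynomials} $h_T\crochet{1+U}$, $h_N\crochet{1+U}$, $h_{\overline T-1}\crochet{1+U}$ in $U$ alone; the integrand is not homogeneous (the alphabet $1+U$ contains the fixed ``$1$''), so no variable is integrated out: one rescales all $k$ of $u_1,\dots,u_k$ directly, and Lemma~\ref{Lemma:RescaledGegenbauer} applies cleanly to each polynomial. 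Your version instead keeps the infinite series $H\crochet{tU}$ (inside $\Psi_{T'}$), adds a $t$-contour, and then integrates one circle out by homogeneity. That choice is where the gap sits.

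The problematic step is the claim ``$H\crochet{tU}\longrightarrow \widetilde{h}_{1,\infty}(0,\xb)$.'' Lemma~\ref{Lemma:RescaledGegenbauer} governs the polynomials $h_{\pe{cN}}^{(\kappa)}(\,\cdot\,)$; $H\crochet{tU}$ is not one of these. After rescaling $t=e^{2i\pi\theta/N}$, $u_j=e^{2i\pi\phi_j/N}$, $H\crochet{tU}=\prod_j(1-tu_j)^{-1}$ behaves like $\prod_j N/(-2i\pi(\theta+\phi_j))$, a product of poles, not the oscillatory, bounded limit $\widetilde{h}_{1,\infty}$. It is $h_N\crochet{1+U}=\sum_{p\leq N}h_p(U)$, \emph{not} $H\crochet{U}=\sum_{p\geq0}h_p(U)$, which rescales to $\widetilde{h}_{1,\infty}$. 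Moreover the contour for $t$ must stay inside the unit circle (where $\Psi_{T'}$ is analytic); your rescaling $t=e^{2i\pi\theta_{k+1}/N}$ implicitly pushes it to $\abs{t}=1$, exactly where the poles of $H\crochet{tU}$ sit, so the contour deformation is obstructed. To repair this one would have to first truncate $H\crochet{tU}$ to $h_N\crochet{1+tU}$ and show that the tail $\sum_{n>N}t^n h_n(U)$ contributes nothing after the joint $\crochet{t^{Nk}}$, $\oint U^{-N}$ extractions (which is true but not automatic, since the tail terms do appear in $\crochet{t^{Nk}}\Phi_T^k\Psi_{T'}^k$ for $k\geq2$), or devise a new domination — precisely the extra work that Remark~\ref{Rk:Alternative:Truncation} in the paper flags and ``leaves to the interested reader.'' Without this, dominated convergence via \eqref{Ineq:SpeedOfConvergenceHinftyBis} simply does not cover the $H\crochet{tU}$ factor, and the limit $\Psi_{T'}\to\widetilde{h}_{1,\infty}-\widetilde{h}_{\overline\rho,\infty}$ is unjustified.
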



\begin{remark}\label{Rk:ScZt}
One thus has $ \gamma_k = k^{k^2} \Ze\Te_{1/k}^{(k)} $. Note also that the integral is on $ \Rr^k $ and not on $ \Rr^{k - 1} $. 
\end{remark}


\begin{proof}
We have
\begin{align*}
Z_{U_N, T}(x) = \sum_{m = 0}^T x^m \crochet{s^m} Z_{U_N, T}(s) = \crochet{s^0} \prth{ \sum_{m = 0}^T (x s\inv)^m } Z_{U_N, T}(s)
\end{align*}

For $ X := \ensemble{x_1, \dots, x_k} $, define 
\begin{align}\label{Def:PT}
\begin{aligned}
P_T(x) & := \sum_{m = 0}^T x^m = \frac{1 - x^{T + 1} }{1 - x} = H\crochet{x - x^{T + 1}} = h_T\crochet{1 + X} \\
P_T\crochet{X } & := \prod_{j = 1}^k P_T(x_j)
\end{aligned}
\end{align}

Note that in one variable, $ P_\infty : (x, s) \mapsto \frac{1}{1 - x s\inv} $ is the Cauchy reproducing kernel on $ L^2(\Uu, \frac{d^*z}{z}) $, and in several variables, this is the reproducing kernel of the Hilbert space of square integrable \textit{tensorial functions} (i.e. functions of the form $ f(x_1, \dots, x_n) = f_1(x_1) \cdots $ $f_n(x_n) $). The operator $ \Pb_{\! T} $ of kernel $ (x, s) \mapsto P_T(x s\inv) $ has the same property on the space of tensorial polynomials of degree at most $T$. In case of a general Laurent polynomial $ f(z) = \sum_{k \in \Zz} f_k z^k $, it is the operator of projection on the vector space generated by $ 1, z, \dots z^T $, i.e. $ \Pb_{\! T} f(z) = \sum_{k = 0}^T f_k z^k $.

Using \eqref{Eq:SchurJointMoments}, we have for all $T \in \Nn^* $ with $ X := \ensemble{x_1, \dots, x_k} $ and $ Y := \ensemble{y_1, \dots, y_k} $
\begin{align*}
\Esp{ \abs{  Z_{U_N, T}(1)  }^{2k} } & := \Esp{ \abs{ \, \crochet{x^0} P_T(x\inv) Z_{U_N }(x)   }^{2k} } \\
            & = \crochet{X^0 Y^0 } P_T\crochet{X\inv + Y\inv } \int_{ \Ue_N } \prod_{j = 1}^k \det(I - x_j U_N ) \overline{\det(I - y_j U_N ) } dU \\
            & = \crochet{X^0 Y^0 } P_T\crochet{X\inv + Y\inv } (-1)^{kN} \prod_{j = 1}^k y_j^{ N} \int_{ \Ue_N }  \prod_{j = 1}^k \det(I - x_j U_N ) \\
            & \hspace{+8cm} \times \det(I - y_j\inv U_N ) \det(U)^{-k}dU \\
            & = \crochet{X^0 Y^{-N} } P_T\crochet{X\inv + Y\inv }  s_{N^k}\crochet{ X + Y\inv }  
\end{align*}

Using \eqref{FourierRepr:SchurRectangle} with $ U := \ensemble{ u_1, \dots, u_k } $ and changing $ Y\inv $ to $ Y $, one gets
\begin{align*}
\Esp{ \abs{ Z_{U_N, T}(1)  }^{2k} } & = \crochet{X^0 Y^N } P_T\crochet{X\inv + Y }  \frac{1}{k!} \oint_{ \Uu^k } U^{-N} H\crochet{ U  (X + Y ) }  \abs{\Delta(U) }^2 \frac{d^*U}{U} \\ 
             & = \frac{1}{k!} \oint_{ \Uu^k } U^{-N} \crochet{ X^0 } P_T\crochet{X\inv} H\crochet{XU} \crochet{ Y^N } P_T\crochet{Y } H\crochet{Y U } \abs{\Delta(U) }^2 \, \frac{d^*U}{U} \\
             & = \frac{1}{k!} \oint_{ \Uu^k } U^{-N} \prth{ \vphantom{a^{a^{a^a}}} \crochet{ x^0 } P_T\crochet{x\inv} H\crochet{xU} }^{\! k} \prth{  \vphantom{a^{a^{a^a}}} \crochet{ y^N } P_T\crochet{y } H\crochet{y U } }^{\! k} \abs{\Delta(U) }^2 \, \frac{d^*U}{U}
\end{align*}

By the projection property of $ \Pb_{\! T} $ and \eqref{FourierRepr:hLambda}, one has  
\begin{align*}
\crochet{ x^0 } P_T\crochet{x\inv} H\crochet{xU} & = \sum_{\ell = 0}^T h_\ell\crochet{U } \\
\crochet{ y^N } P_T\crochet{y } H\crochet{yU } & = \sum_{\ell = 0}^T  \crochet{ y^N } y^{\ell}   H\crochet{y U}  = \sum_{\ell = 0}^T  h_{N -\ell}\crochet{U} = \sum_{\ell = N - T}^N h_\ell\crochet{U}
\end{align*}

A classical formula is given by
\begin{align}\label{Eq:FiniteSumHomogeneous}
h_T\crochet{1 + U} = \sum_{m = 0}^T h_m(U)  
\end{align}

It is easily proven using lemma \ref{Lemma:ShallowCrucialLemma}~:
\begin{align*}
h_T\crochet{1 + U} & = \crochet{s^T} \frac{1}{1 - s} H\crochet{s U} = \crochet{s^T} \frac{1 - s^{T + 1} }{1 - s} H\crochet{s U } = \sum_{m = 0}^T \crochet{s^T} s^m  H\crochet{s U}   = \sum_{m = 0}^T h_{T - m}(U)
\end{align*}

As a result, one has
\begin{align*}
\crochet{ x^0 } P_T\crochet{x\inv} H\crochet{xU} & = h_T\crochet{1 + U } \\
\crochet{ y^N } P_T\crochet{y\inv } H\crochet{yU } & = h_{N }\crochet{1+ U} - h_{\overline{T} - 1 }\crochet{1 + U}
\end{align*}
and
\begin{align*}
\Esp{ \abs{ Z_{U_N, T}(1)  }^{2k} }  & = \frac{1}{k!} \oint_{ \Uu^k } U^{-N} h_T\crochet{1 + U }^k \prth{\vphantom{a^{a^{a}}} h_{N }\crochet{1+ U} - h_{\overline{T} - 1 }\crochet{1 + U} }^k  \abs{\Delta(U) }^2 \frac{d^*U}{U}  
\end{align*}

Setting $ u_j = e^{2 i \pi  x_j/N  } $ for all $ j \in \intcrochet{1, k} $ with $ x_j \in \crochet{ - \frac{N}{2}, \frac{N}{2} } $, one gets
\begin{align*}
\Esp{ \abs{ Z_{U_N, \pe{\rho N} } (1) }^{2k} } & = \frac{1}{k!} \int_{ \crochet{ - \frac{N}{2}, \frac{N}{2} }^k }   h_{\pe{\rho N} }(1 , e^{2 i \pi x_1/N}, \dots, e^{ 2i \pi x_k/N} )^k \\
               & \qquad \times \prth{ h_{N }(1 , e^{2 i \pi x_1/N}, \dots, e^{ 2i \pi x_k/N}) - h_{\pe{\overline{\rho} N} - 1 }(1 , e^{2 i \pi x_1/N}, \dots, e^{ 2i \pi x_k/N} )  }^k    \\
               & \qquad  \times  e^{ - 2 i \pi \sum_{j = 1}^k x_j } \abs{\Delta(1,  e^{2 i \pi (x_2 - x_1)/N}, \dots, e^{2 i \pi (x_k - x_1)/N} ) }^2  \prod_{j = 1}^k \frac{dx_j}{N} \\
               & \equivalent{N\to +\infty} N^{2k^2 - k(k - 1) - k} \frac{(2\pi)^{k(k - 1)}}{k!} \int_{\Rr^k} e^{i \pi \rho k \sum_{j = 1}^k x_j } h_{\rho, \infty}(0, x_1, \dots, x_k)^k \\
               & \hspace{+4cm} \times \Big( \widetilde{h}_{1 , \infty}(0, x_1, \dots, x_k)  - \widetilde{h}_{\overline{\rho}, \infty}(0, x_1, \dots, x_k) \Big)^k \\
               & \hspace{+4cm} \times  e^{ - 2 i \pi \sum_{j = 1}^k x_j }  \abs{\Delta(0, x_2 - x_1 , \dots, x_k - x_1 ) }^2  \prod_{j = 1}^k dx_j 
\end{align*}

Here, we have used dominated convergence under the form of inequality \eqref{Ineq:SpeedOfConvergenceHinftyBis} with an integrable limiting function due to the second criteria \eqref{Ineq:IntegrabilityDomination} with $ (K, M, M', \kappa, \kappa') = (k - 1, k, k, 1, 1) $ which is clearly fullfilled for all $ p \in \ensemble{0, 1, 2} $ and $ k \geq 2 $ (one can expand the difference of functions using Newton's binomial formula for a strict application of the criteria).

We conclude by remarking that $ \Delta(0, x_2 - x_1 , \dots, x_k - x_1 ) = \Delta(x_1, \dots, x_k) $.
\end{proof}


\begin{remark}\label{Rk:Alternative:KS}
Taking $ \rho = 1 $ gives $ T = N $ and the previous computations can still be done. It gives an alternative expression to $ \widetilde{L}_1(k) $ given in \eqref{EqPhi:KS} as an integral over $ \Rr^k $ in place of $ \Rr^{k - 1} $ with a function $ h_{1, \infty}(0, x_1, \dots, x_k)^{2k} $. The equivalence of expression comes from formuli of symmetric functions for fixed $N$ and their rescaling.
\end{remark}


\begin{remark}\label{Rk:Alternative:Truncation}
One can also use an alternative ending to the previous proof by using \eqref{FourierRepr:SchurRectangleWithHn} instead of \eqref{FourierRepr:SchurRectangle}~:
\begin{align*}
\Esp{ \abs{ Z_{U_N, T}(1)  }^{2k} } & = \crochet{X^0 Y^N } P_T\crochet{X\inv + Y }  \frac{1}{k!} \oint_{ \Uu^k } U^{-N} h_{Nk}\crochet{ U  (X + Y ) }  \abs{\Delta(U) }^2 \frac{d^*U}{U} \\ 
             & =  \frac{1}{k!} \oint_{ \Uu^k } U^{-N} \prth{ \vphantom{a^{a^a}} \crochet{X^0 Y^N } P_T\crochet{X\inv + Y }h_{Nk}\crochet{ U  (X + Y ) } }  \abs{\Delta(U) }^2 \frac{d^*U}{U} \\
             & = \frac{1}{k!} \oint_{ \Uu^{k - 1} } \!\! V^{-N} \prth{ \vphantom{a^{a^a}} \crochet{X^0 Y^N } \! P_T\crochet{X\inv + Y } \! h_{Nk}\crochet{ (1 + V) (X + Y ) } } \! \abs{\Delta\crochet{1 + V} }^2 \frac{d^*V}{V} 
\end{align*}
where we have used the trick of remark~\ref{Rk:MultivariateFourierWithHomogeneity}.

Using $ v_j = e^{2 i \pi \nu_j/N} $, $ x_j = e^{2 i \pi \alpha_j/N} $, $ y_j = e^{2 i \pi \beta_j / N } $ and $ P_T(x) = h_T\crochet{1 + x} $, one sees that the integrand will rescale with a product of functions $ h_{\rho, \infty}(0, s_j) $ with $ s_j \in \ensemble{\alpha_j, \beta_j} $ and $ h_{k, \infty}\crochet{(0 + \nub) \oplus (\alphab + \betab) } $. Nevertheless, the critical step is always to have a domination to pass to the limit. The approach that led to lemma~\ref{Lemma:IntegrabilityDomination} and \eqref{Ineq:SpeedOfConvergenceHinfty} can be adapted for such a purpose, but as there is already a proof with the current domination, we leave these details to the interested reader. In the end, we find the following alternative expression to \eqref{EqPhi:TruncatedCharpolIn1} on $ \Rr^{k - 1} $
\begin{align}\label{EqPhi:TruncatedCharpolIn1Bis}
\begin{aligned}
\Ze\Te_\rho^{(k)} & := \frac{(2\pi)^{k(k - 1) } }{k!} \! \int_{ \Rr^{k - 1} } \!\! \widetilde{\Phi}_{\!\Ze\Te_\rho^{(k)} }(0, \xb) \Delta(0, \xb)^2 d\xb\\
\!\!\widetilde{\Phi}_{\!\Ze\Te_\rho^{(k)} }(0, \xb) & = e^{ -2 i \pi  \sum_{j = 2}^k x_j } \!\!  \int_{\Rr^{2k} } e^{-2 i \pi \sum_{j = 1}^k \beta_j} \prod_{j = 1}^k \widetilde{h}_{\rho, \infty}(0, \alpha_j)  \widetilde{h}_{\rho, \infty}(0, \beta_j) \\
           & \hspace{+6cm} \times \widetilde{h}_{k, \infty}\crochet{(0 + \xb) \oplus (\alphab + \betab) } d\alphab d\betab 
\end{aligned}
\end{align}
\end{remark}

\medskip

\subsubsection{The Heap-Lindqvist theorem}\label{Subsubsec:HeapLindqvist}

We now consider the case of $ Z_{U_N, \pe{\rho N} } \prth{ \lambda } $ with $ \lambda > 1 $~; this is the analogue of $ \sigma < \frac{1}{2} $ in \eqref{Eq:BehaviourRandomMultiplicativeFunction} since $ \Ze_{X, \sigma} $ converges in $ L^2(\Pp) $ and in law for $ \sigma > \frac{1}{2} $ and $ Z_{U_N, \pe{\rho N} } \prth{ \lambda } $ converges in law to the exponential of a Gaussian analytic function for $ \lambda < 1 $, using the Diaconis-Shahshahani theorem on traces of powers of $ U_N $ \cite{DiaconisShahshahani}. The Heap-Lindqvist theorem \cite[thm. 3]{HeapLindqvist} writes for $ \rho = k\inv $
\begin{align}\label{Eq:HeapLindqvistTruncated}
\frac{\Esp{ \abs{ Z_{U_N ,\, \pe{\rho N} }(\lambda) }^{2k} }}{ \lambda^{2k \pe{\rho N} } (\rho N)^{ (k - 1)^2 } } \tendvers{N}{ + \infty }  \gamma_k \times \frac{(2k - 2) ! }{ (k - 1)!^2 } \frac{ F_k(\lambda)  }{(1 - \lambda^{-2} )^{2k - 1} } 
\end{align} 
with $ F_k(\lambda) = {}_2 F_1\prth{ \begin{matrix} 1 - k, 1 - k \\ 2 - 2k \end{matrix} \bigg\vert 1 - \lambda^{-2} } $.

We see that such a behaviour in $ N^{(k - 1)^2} $ puts $ \lambda^{ -\pe{\rho N} } Z_{U_N ,\, \pe{\rho N} }(\lambda) $ in the universality class of $ \sc_{\pe{\rho N}}(U_N) $. The limiting factor is not $ \Se\Ce_{\rho}^{(k)} $ defined in \eqref{EqPhi:MidCoeff}, though. This shows that the simple approximation $ Z_{U_N, T}(\lambda) \approx (-\lambda)^T \sc_T(U_N) $ is not valid~: the terms of order $ T - 1, T-2, \dots, T-o(n) $ still interact at the level of moments. We now give a new proof of \eqref{Eq:HeapLindqvistTruncated} with the previous machinery that formalises this idea, i.e. with the randomisation paradigm of \S~\ref{Subsec:RandomisationParadigm}. The randomisation will indeed be particularly suited to understand how a mixture of mid-secular coefficients ends up with an additional factor at the limit~: 

\begin{theorem}[Truncated characteristic polynomial at $ \pe{\rho N} $ in $ \lambda > 1 $]\label{Theorem:TruncatedCharpolHeapLindqvist}
For all $ k \geq 2 $, one has
\begin{align}\label{Eq:LimTruncatedCharpolHeapLindqvist}
\frac{ \Esp{ \abs{ Z_{U_N ,\, \pe{\rho N} }(\lambda) }^{2k} } }{ \lambda^{ 2k \rho N } N^{(k - 1)^2} }  \tendvers{N}{+\infty }  {}_2 F_1\prth{ \begin{matrix} k, k \\ 1 \end{matrix} \bigg\vert \lambda^{-2} }  \times \Se\Ce_{\rho}^{(k)} 
\end{align}
where $ \Se\Ce_{\rho}^{(k)} $ is defined in \eqref{EqPhi:MidCoeff}.
\end{theorem}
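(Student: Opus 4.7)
The plan is to adapt the proof of Theorem~\ref{Theorem:TruncatedCharpolIn1}, with one novel combinatorial ingredient producing the hypergeometric factor. Setting $T := \pe{\rho N}$ and $\bar T := N - T$, the same manipulations via \eqref{Eq:SchurJointMoments}, \eqref{Eq:EqFuncCharpol} and \eqref{FourierRepr:SchurRectangle} (replacing $P_T\crochet{X\inv}$ by $P_T\crochet{\lambda X\inv}$ everywhere) yield
\begin{equation*}
\Esp{\abs{Z_{U_N,T}(\lambda)}^{2k}} = \frac{1}{k!}\oint_{\Uu^k} U^{-N}\, Q_T^\lambda(U)^k\, R_T^{\lambda,N}(U)^k\, \abs{\Delta(U)}^2 \frac{d^*U}{U},
\end{equation*}
with $Q_T^\lambda(U) := \sum_{\ell=0}^T \lambda^\ell h_\ell(U) = \lambda^T h_T\crochet{U + 1/\lambda}$ and $R_T^{\lambda,N}(U) := \sum_{\ell=0}^T \lambda^\ell h_{N-\ell}(U) = \lambda^T \sum_{j'=0}^T \lambda^{-j'} h_{\bar T + j'}(U)$. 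Expanding the $k$-th powers produces the double sum
\begin{equation*}
\lambda^{-2kT}\Esp{\abs{Z_{U_N,T}(\lambda)}^{2k}} = \sum_{\jb,\jb' \in \intcrochet{0,T}^k} \lambda^{-\abs{\jb}-\abs{\jb'}}\, I_N(\jb, \jb'),
\end{equation*}
where $\abs{\jb} := j_1 + \cdots + j_k$ and $I_N(\jb, \jb') := \frac{1}{k!}\oint_{\Uu^k} U^{-N} \prod_{i=1}^k h_{T-j_i}(U) h_{\bar T + j'_i}(U) \abs{\Delta(U)}^2 \frac{d^*U}{U}$.

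The decisive novelty is a selection rule: $I_N(\jb,\jb') = 0$ unless $\abs{\jb} = \abs{\jb'}$. Indeed, the homogeneity trick of Remark~\ref{Rk:MultivariateFourierWithHomogeneity} ($u_1 = e^{2i\pi\phi}$, $v_j = u_j/u_1$) leaves the integrand with an overall factor $u_1^{\abs{\jb'}-\abs{\jb}}$, since the $u_1$-powers tally to $-kN + (kT - \abs{\jb}) + (k\bar T + \abs{\jb'}) = \abs{\jb'} - \abs{\jb}$, and integration in $\phi$ enforces this constraint. Grouping by the common value $n := \abs{\jb} = \abs{\jb'}$ and invoking the scaling analysis of Theorem~\ref{Theorem:MidCoeffs} with $v_j = e^{2i\pi x_j/N}$ shows that $N^{-(k-1)^2} I_N(\jb, \jb') \to \Se\Ce_\rho^{(k)}$ for each fixed bounded $(\jb,\jb')$, since the shifts $T - j_i$ and $\bar T + j'_i$ share the same leading densities $\rho$ and $\bar\rho$, the $O(j_i/N)$ corrections being negligible. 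As $\#\{\jb \in \Nn^k : \abs{\jb} = n\} = \binom{n+k-1}{n}$, the inner sum contains $\binom{n+k-1}{n}^2$ terms, and the formal limit becomes $\Se\Ce_\rho^{(k)} \sum_{n\geq 0} \binom{n+k-1}{n}^2 \lambda^{-2n} = \Se\Ce_\rho^{(k)} \cdot {}_2F_1(k,k;1;\lambda^{-2})$, matching \eqref{Eq:LimTruncatedCharpolHeapLindqvist} since $\lambda^{2kT} = \lambda^{2k\pe{\rho N}}$.

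The main obstacle is justifying the exchange of $\lim_N$ and $\sum_{n \geq 0}$: this requires a uniform (in $n$) bound $\abs{I_N(\jb, \jb')} \leq C N^{(k-1)^2} \varphi(n)$ such that $\sum_n \binom{n+k-1}{n}^2 \lambda^{-2n}\varphi(n) < \infty$, obtained by a quantitative refinement of the estimates of Annex~\ref{Sec:ProbReprGegenbauer} applied to each factor $h_{T-j_i}$ and $h_{\bar T + j'_i}$, combined with the elementary bound $\abs{h_m(e^{2i\pi \xb/N})} \leq \binom{m+k-1}{k-1}$ that controls the tails (where $j_i$ or $j'_i$ are of order $N$) against the geometric weight $\lambda^{-2n}$. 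An arguably cleaner alternative is the randomisation paradigm of \S\ref{Subsec:RandomisationParadigm}: writing $Z_{U_N,T}(\lambda) = S_T\, \Esp{(-1)^{A_T}\sc_{A_T}(U_N)\vert U_N}$ with $\Prob(A_T = m) = \lambda^m/S_T$, the random variable $T - A_T$ converges in law as $T \to \infty$ to $\Geom(1-\lambda^{-1})$, so using $2k$ independent copies $G^{(j)}, G'^{(j)}$ one obtains $\Esp{\abs{Z_{U_N,T}(\lambda)}^{2k}} \sim S_T^{2k}\, p^{2k}\, \Esp{\abs{\sc_T(U_N)}^{2k}}\, {}_2F_1(k,k;1;\lambda^{-2})$ with $p = 1 - \lambda^{-1}$; the hypergeometric emerges as $p^{-2k}\Prob\prth{\sum_j G^{(j)} = \sum_j G'^{(j)}}$ via the negative binomial distribution of sums of $k$ independent geometrics (the probabilistic incarnation of the $\abs{\jb} = \abs{\jb'}$ constraint), and the identity $S_T^{2k} p^{2k} \sim \lambda^{2kT}$ recovers \eqref{Eq:LimTruncatedCharpolHeapLindqvist}.
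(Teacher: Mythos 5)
Your second sketch — the randomisation paradigm — is precisely the paper's own proof. The paper writes the two factors $\crochet{x^0}P_T\crochet{\lambda x\inv}H\crochet{xU}$ and $\crochet{y^N}P_T\crochet{\lambda y}H\crochet{yU}$ as expectations over independent copies of a truncated Geometric $G_T(\lambda)\sim\Geom_\lambda(\intcrochet{0,T})$, applies the duality $G_T(\lambda)\eqlaw T-G_T(\lambda\inv)$, and obtains the conditioning event $\{\sum_m G_T^{(m)}(\lambda\inv)=\sum_m\widetilde G_T^{(m)}(\lambda\inv)\}$ by the $u_1$-integration of Remark~\ref{Rk:MultivariateFourierWithHomogeneity}; since $G_T(\lambda\inv)\to\Geom(1-\lambda\inv)$ in law (hence $O_{\Pp}(1)$), the shifts $T-G$ and $\bar T+\widetilde G$ rescale to the same $h_{\rho,\infty},h_{\bar\rho,\infty}$ limits as the mid-coefficient, yielding $\Se\Ce_\rho^{(k)}$; and the probability of the conditioning event, divided by $(1-\lambda\inv)^{2k}$, is evaluated as $\sum_\ell\lambda^{-2\ell}h_\ell\crochet{1^{\plusInOne k}}^2={}_2F_1(k,k;1;\lambda^{-2})$. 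Your identity $S_T\,p\sim\lambda^T$ then recovers the normalisation. The dominated convergence (via \eqref{Ineq:DominationSupersymWithoutSpeed} and criterion \eqref{Ineq:IntegrabilitySupersymDominationBis}) is what makes this route watertight, since the random shifts live inside a single fixed-size integral.

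Your first route is a genuine de-randomisation of the same structure, and it is conceptually valuable: the degree-counting selection rule $\abs{\jb}=\abs{\jb'}$ is the deterministic shadow of the paper's conditioning event, and the weak-composition count $\#\{\jb\in\Nn^k:\abs{\jb}=n\}=\binom{n+k-1}{n}=\frac{k^{\uparrow n}}{n!}$ is precisely the combinatorial content of $\Prob(\sum G^{(m)}=\sum G'^{(m)}=n)$; the identity $\sum_n\binom{n+k-1}{n}^2x^n={}_2F_1(k,k;1;x)$ then appears transparently. But the gap you flag is genuine and is exactly what the randomisation circumvents. The pointwise bound $\abs{h_m(e^{2i\pi\xb/N})}\leq\binom{m+k-1}{k-1}$ controls the extreme tails ($j_i$ near $T$), but in the intermediate range ($j_i$ of order $N^\alpha$, $0<\alpha<1$) it does not by itself give a domination summable in $n$ after weighting by $\binom{n+k-1}{n}^2\lambda^{-2n}$ and rescaling by $N^{-(k-1)^2}$: you would need to track, uniformly in $n$, how the asymptotic $N^{-(k-1)^2}I_N(\jb,\jb')\to\Se\Ce_\rho^{(k)}$ degrades as the $j_i$ grow. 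This is doable (and would follow from a quantitative version of Lemma~\ref{Lemma:BetaProbRepr} with explicit $c$-dependence, akin to \eqref{Ineq:SpeedOfConvergenceHinfty}) but it is not carried out; as you say yourself, it is cleaner to keep the sum packaged as an expectation and dominate once.
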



\begin{proof}
Set $ T := \pe{\rho N} $. The beginning of the proof is similar to the previous one, except that we are using $ P_T(\lambda x\inv) $ in place of $ P_T(x\inv) $ for the tensorial Cauchy reproducing kernel. Adapting the steps, we easily arrive at
\begin{align*}
\Esp{ \abs{ Z_{U_N, T}(\lambda)  }^{2k} }  & = \frac{1}{k!} \oint_{ \Uu^k } U^{-N} \prth{ \vphantom{a^{a^{a^a}}} \crochet{ x^0 } \! P_T\crochet{ \lambda x\inv} H\crochet{xU} }^{\! k} \prth{  \vphantom{a^{a^{a^a}}} \crochet{ y^N } \! P_T\crochet{\lambda y } H\crochet{y U } }^{\! k} \abs{\Delta(U) }^2  \frac{d^*U}{U}
\end{align*}

%
%
%

Note the following randomisation formula analogous to \eqref{RandomEq:Charpol}~:
\begin{align}\label{RandomEq:TruncatedCharpol}
Z_{U_N, T}(\lambda) = \prth{ \sum_{\ell = 0}^T \lambda^\ell } \Esp{ \sc_{ G_T(\lambda) }(U_N) \vert U_N },  \qquad G_T(\lambda) \sim \Geom_\lambda\prth{ \intcrochet{0, T} } 
\end{align}
where $ G_T(\lambda) \sim \Geom_\lambda\prth{ \intcrochet{0, T} } $ is a random variable with truncated geometric random variable, i.e. $ \Prob{ G_T(\lambda) = k } = \lambda^k H\crochet{ \lambda - \lambda^{T + 1} } \Unens{0 \leq k \leq T} $.

In the same way, the projection property of $ \Pb_{\! T} $, \eqref{FourierRepr:hLambda} and \eqref{Eq:FiniteSumHomogeneous} yield the following randomisation formula (after duality)
\begin{align*}
\crochet{ x^0 } \! P_T\crochet{\lambda x\inv} H\crochet{xU} & = \sum_{\ell = 0}^T \lambda^\ell h_\ell\crochet{U } =  \prth{ \sum_{\ell = 0}^T \lambda^\ell } \Esp{ h_{G_T(\lambda) }\crochet{U} }
\end{align*}

Note that replacing $ k $ by $ T - k $ in the definition of $ G_T(\lambda) $ implies the following equality in law~:
\begin{align}\label{EqLaw:GeomTrunc}
G_T(\lambda) \eqlaw T - G_T(\lambda\inv)
\end{align}

Note also that if $ q < 1 $, $ G_T(q) \to G_\infty(q) $ in law, where $ G_\infty(q) $ has the Geometric distribution given by $ \Prob{G_\infty(q) = k} = (1 - q) q^k $ for all $ k \geq 0 $.

\medskip

In the same vein, we get
\begin{align*}
\crochet{ y^N } \! P_T\crochet{ \lambda y } H\crochet{yU } & = \sum_{\ell = 0}^T \lambda^\ell \crochet{ y^0 } y^{-N + \ell} H\crochet{y U}  =  \sum_{\ell = 0}^T \lambda^\ell h_{N - \ell}\crochet{ U}  \\
                 & = H\crochet{ \lambda - \lambda^{T + 1} } \Esp{ h_{N - \widetilde{G}_T(\lambda)}\crochet{ U} }, \qquad \widetilde{G}_T(\lambda) \sim \Geom_\lambda\prth{ \intcrochet{0, T} }
\end{align*}

We moreover take $ (U_N, G_T(\lambda), G_T'(\lambda)) $ independent. Using \eqref{EqLaw:GeomTrunc}, we then arrive at
\begin{align*}
\Esp{ \abs{ Z_{U_N, T}(\lambda)  }^{2k} }  & = \frac{ 1 }{k!} H\!\crochet{ \lambda \!-\! \lambda^{T +1} }^{\! 2k}  \oint_{ \Uu^k } U^{-N} \prth{ \Esp{ h_{ \vphantom{\overline{T}} T - G_T(\lambda\inv) }\crochet{U} } }^k \\
                & \hspace{+6cm} \times \prth{ \Esp{ h_{\overline{T} + \widetilde{G}_T(\lambda\inv) }\crochet{U} } }^k  \abs{\Delta(U) }^2 \frac{d^*U}{U}  
\end{align*}

Introducing two independent sequences of i.i.d. random variables $ (G_T^{(m)}(\lambda\inv))_{m \geq 1} $ and $ (\widetilde{G}_T^{(m)}(\lambda\inv))_{m \geq 1} $ 
yields
\begin{align*}
\Esp{ \abs{ Z_{U_N, T}(\lambda)  }^{2k} }  & = \frac{ H\!\crochet{ \lambda \!-\! \lambda^{T +1} }^{\! 2k} }{k!}   \Ee\Bigg(\! \oint_{ \Uu^k }\!\! U^{-N}\!\! \prod_{m = 1}^k \!  h_{ \vphantom{\overline{T}} T - G_T^{(m)}(\lambda\inv) }\crochet{U} h_{\overline{T} + \widetilde{G}_T^{(m)}(\lambda\inv) }\crochet{U}  \! \abs{ \Delta(U)  }^2 \frac{d^*U}{U} \Bigg) \\
                & = \frac{ 1 }{k!} H\!\crochet{ \lambda \!-\! \lambda^{T +1} }^{\! 2k}  \Ee\Bigg( \oint_{ \Uu^k } V^{-N} u_1^{-kN} \prod_{m = 1}^k u_1^{T - G_T^{(m)}(\lambda\inv) } h_{ \vphantom{\overline{T}} T - G_T^{(m)}(\lambda\inv) }\crochet{1 + V}    \\
                & \hspace{+5cm} \times \prod_{m = 1}^k u_1^{\overline{T} + \widetilde{G}_T^{(m)}(\lambda\inv) } h_{ \overline{T} + \widetilde{G}_T^{(m)}(\lambda\inv) }\crochet{1 + V}  \\
                & \hspace{+5cm} \times \abs{\Delta\crochet{1 + V} }^2 \frac{d^*V}{V} \frac{d^*u_1}{u_1} \Bigg) \\
                & = \frac{ 1 }{k!} H\!\crochet{ \lambda \!-\! \lambda^{T +1} }^{\! 2k}  \Ee\Bigg( \Unens{ \sum_{m = 1}^k G_T^{(m)}(\lambda\inv) = \sum_{m = 1}^k \widetilde{G}_T^{(m)}(\lambda\inv) } \\
                & \hspace{+2cm} \times \oint_{ \Uu^{k - 1} } V^{-N}  \prod_{m = 1}^k   h_{ \vphantom{\overline{T}} T - G_T^{(m)}(\lambda\inv) }\crochet{1 + V} h_{ \overline{T} + \widetilde{G}_T^{(m)}(\lambda\inv) }\crochet{1 + V}  \\ 
                & \hspace{+5cm} \times \abs{\Delta\crochet{1 + V} }^2 \frac{d^*V}{V}   \Bigg)
\end{align*}
where we have used the trick of remark~\ref{Rk:MultivariateFourierWithHomogeneity} to separate $ u_1 $ from $ V $. We note that this last formula also holds for $ \lambda = 1 $ if one uses $ G_T(1) $ which is uniform on $ \intcrochet{0, T} $ (as a result, the factor $ H\!\crochet{ \lambda  \! - \! \lambda^{ T + 1} } $ becomes $T + 1$). Of course, it also holds for $ \lambda < 1 $ but in this case, one should use \eqref{EqLaw:GeomTrunc} and $ G_T(\lambda) $ instead.

An easy adaptation of lemma~\ref{Lemma:BetaProbRepr} and \eqref{Ineq:SpeedOfConvergenceHinfty} gives that $ h_{\pe{\rho N} + o(N) }\crochet{ e^{X/N} } \sim N^{k - 1} \widetilde{h}_{\rho, \infty}(X) $ if $ X = \ensemble{x_1, \dots, x_k} $, and $ G_T(\lambda\inv) \to G_\infty(\lambda\inv) $ in law, hence is $ O_\Pp(1) $. Setting $ u_j = e^{2 i \pi  x_j/N  } $ for all $ j \in \intcrochet{1, k - 1} $ with $ x_j \in \crochet{ - \frac{N}{2}, \frac{N}{2} } $, one then gets
\begin{align*}
\Esp{ \abs{ Z_{U_N, \pe{\rho N} } (\lambda) }^{2k} } & = \frac{ 1 }{k!} H\!\crochet{ \lambda \!-\! \lambda^{T +1} }^{\! 2k}  \Ee\Bigg( \Unens{ \sum_{m = 1}^k G_T^{(m)}(\lambda\inv) = \sum_{m = 1}^k \widetilde{G}_T^{(m)}(\lambda\inv) } \\
                & \hspace{+2cm} \times \int_{ \crochet{ - \frac{N}{2}, \frac{N}{2} }^{k - 1} } e^{-2 i \pi \sum_{j = 2}^k x_j}  \prod_{m = 1}^k   h_{ \vphantom{\overline{T}} T - G_T^{(m)}(\lambda\inv) }\crochet{1 + e^{2 i \pi X/N} }   \\ 
                & \hspace{+3cm} \times  \prod_{m = 1}^k h_{ \overline{T} + \widetilde{G}_T^{(m)}(\lambda\inv) }\crochet{1 + e^{2 i \pi X/N} }  \abs{\Delta\crochet{1 + e^{2 i \pi X/N}} }^2 \frac{dX}{N^{k - 1}}   \Bigg) \\
               & \equivalent{N\to +\infty} \frac{ \lambda^{2k T} }{k!} \frac{\lambda^{2k}}{(\lambda - 1)^{2k} } N^{2k^2 - k(k - 1) - k}  (2\pi)^{k(k - 1) }  \\
               & \qquad \times \Prob{  \sum_{m = 1}^k G_T^{(m)}(\lambda\inv) = \sum_{m = 1}^k \widetilde{G}_T^{(m)}(\lambda\inv) } \\
               & \qquad \times \int_{\Rr^{k - 1} } e^{i \pi (k - 2) \sum_{j = 1}^k x_j } h_{\rho, \infty}(0, \xb)^k h_{\overline{\rho}, \infty}(0, \xb)^k \Delta(0, \xb)^2 \, d\xb \\ 
               & = N^{ (k - 1)^2 } \lambda^{2k T} \times  \frac{ 1 }{(1 - \lambda\inv)^{2k} } \, \Prob{  \sum_{m = 1}^k G_T^{(m)}(\lambda\inv) = \sum_{m = 1}^k \widetilde{G}_T^{(m)}(\lambda\inv) } \! \times \Se\Ce_\rho^{(k)}  
\end{align*}

Here, we have used dominated convergence given by inequality \eqref{Ineq:DominationSupersymWithoutSpeed} with $ Y = \emptyset $. As in the proof of theorem~\ref{Theorem:Autocorrelations}, to show that the limiting function is integrable we adapt the first criteria \eqref{Ineq:IntegrabilitySupersymDomination} with $ (K, L, M) = (k - 1, 0, 2k) $. More precisely, since the integration space is $ \Rr^{k - 1} $, we use the criteria \eqref{Ineq:IntegrabilitySupersymDominationBis} which is satisfied if $ (k - 1)(2k(k - 2) - k + 1) > 2k $. This is equivalent to $ (k - 1)(2k^2 - 3k - 3) > 2 $, which is clearly satisfied for all $ k \geq 2 $.

\medskip

It remains to compute the given probability and show that it writes as a $ {}_2 F_1 $. We have
\begin{align*}
\Prob{  \sum_{m = 1}^k G_T^{(m)}(\lambda\inv) = \sum_{m = 1}^k \widetilde{G}_T^{(m)}(\lambda\inv) }  & = \crochet{ t^0 } \Esp{ t^{ \sum_{m = 1}^k G_T^{(m)}(\lambda\inv) - \sum_{m = 1}^k \widetilde{G}_T^{(m)}(\lambda\inv) } } \\
              & = \crochet{ t^0 } \Esp{ t^{ G_T (\lambda\inv) } }^k \Esp{ t^{ -G_T (\lambda\inv) } }^k \\
              & =  \crochet{ t^0 } H\crochet{ (t - 1) \lambda\inv 1^{\plusInOne k} } H\crochet{ (t\inv - 1) \lambda\inv 1^{\plusInOne k} } \\
              & = H\crochet{-\lambda\inv 1^{\plusInOne k} }^2 \crochet{ t^0 } \sum_{\ell_1, \ell_2 \geq 0} t^{\ell_1 - \ell_2} \lambda^{- (\ell_1 + \ell_2) } h_{\ell_1}\crochet{ 1^{\plusInOne k} } h_{\ell_2}\crochet{ 1^{\plusInOne k} } \\
              & = \prth{ 1 - \lambda\inv }^{ 2k} \sum_{\ell \geq 0} \lambda^{-2\ell} h_\ell\crochet{1^{\plusInOne k} }^2
\end{align*}

Now, $ h_\ell\crochet{1^{\plusInOne \theta} } = \crochet{t^\ell} H\crochet{t}^\theta = \crochet{t^\ell}\sum_{m \geq 0} t^m \frac{\theta^{\uparrow m}}{m! } = \frac{\theta^{\uparrow \ell}}{\ell !} $ with $ \theta^{\uparrow \ell} := \theta(\theta + 1) \cdots (\theta + \ell - 1) $. This implies
\begin{align*}
\prth{ 1 - \lambda\inv }^{ -2k} \Prob{  \sum_{m = 1}^k G_T^{(m)}(\lambda\inv) = \sum_{m = 1}^k \widetilde{G}_T^{(m)}(\lambda\inv) } & = \sum_{\ell \geq 0} \lambda^{-2\ell} \prth{ \frac{k^{\uparrow \ell}}{\ell !} }^2  = \sum_{\ell \geq 0} \frac{k^{\uparrow \ell} \times k^{\uparrow \ell}}{1^{\uparrow \ell}} \frac{\lambda^{-2\ell} }{\ell !}  \\
              & =: {}_2 F_1\prth{ \begin{matrix} k, k \\ 1 \end{matrix} \bigg\vert \lambda^{-2} }
\end{align*}
since by definition $ {}_2 F_1 \prth{ \begin{smallmatrix} a, b \\ c \end{smallmatrix} \vert z } := \sum_{\ell \geq 0} \frac{a^{\uparrow \ell}   b^{\uparrow \ell}}{c^{\uparrow \ell}} \frac{z^\ell }{\ell !} $.  
\end{proof}


\begin{remark}\label{Rk:TruncatedMomentsWithRandomisation}
The formula
\begin{align}\label{Eq:RandomisationForTruncatedMoments}
\begin{aligned}
\Esp{ \abs{ Z_{U_N, \pe{\rho N} } (\lambda) }^{2k} }  & = \frac{ H\!\crochet{ \lambda \!-\! \lambda^{T +1} }^{\! 2k}  }{k!}  \, \Ee\Bigg( \Unens{ \sum_{m = 1}^k G_T^{(m)}(\lambda ) = \sum_{m = 1}^k \widetilde{G}_T^{(m)}(\lambda ) } \oint_{ \Uu^{k - 1} } V^{-N} \\
                & \hspace{+1cm} \times \!\!   \prod_{m = 1}^k \!  h_{  G_T^{(m)}(\lambda ) }\crochet{1 + V} h_{ N - \widetilde{G}_T^{(m)}(\lambda ) }\crochet{1 + V}  \abs{\Delta\crochet{1 + V} }^2 \frac{d^*V}{V}   \Bigg)
\end{aligned}
\end{align}
is valid for all $ \lambda \in \Rr_+^* $. In the case $ \lambda = 1 $, one uses the fact that $ G_T(1) \sim \Us(\intcrochet{0, T}) $. This is the very discrepancy of behaviour of these truncated Geometric random variables that implies the discrepancy of behaviour of the whole functional, as $ G_T(1)/T \to U \sim \Us([0, 1]) $ (in law), $ G_T(q) \to G_\infty(q) $ if $ q < 1 $ and $ G_T(\lambda)/T \eqlaw 1 - G_T(1/\lambda)/T \to 1 $ if $ \lambda > 1 $. Adapting the previous proof for $ \lambda = 1 $ shows that 
\begin{align*}
\Esp{ \abs{ Z_{U_N, \pe{\rho N} } (1) }^{2k} } & = \frac{ (T + 1)^{ 2k} }{k!}   \Ee\Bigg( \Unens{ \sum_{m = 1}^k G_T^{(m)}(1) = \sum_{m = 1}^k \widetilde{G}_T^{(m)}(1) } \\
                & \hspace{+2cm} \times \int_{ \crochet{ - \frac{N}{2}, \frac{N}{2} }^{k - 1} } e^{-2 i \pi \sum_{j = 2}^k x_j}  \prod_{m = 1}^k   h_{ \vphantom{\overline{T}} T - G_T^{(m)}(1) }\crochet{1 + e^{2 i \pi X/N} }   \\ 
                & \hspace{+3.5cm} \times  \prod_{m = 1}^k h_{ \overline{T} + \widetilde{G}_T^{(m)}(1) }\crochet{1 + V}  \abs{\Delta\crochet{1 + e^{2 i \pi X/N}} }^2 \frac{dX}{N^{k - 1}}   \Bigg)   
\end{align*}

To pursue, we need the following local CLT : writing $ U_T := \pe{(T + 1) U} $ for $ U \sim \Us([0, 1]) $, noticing that $ T - U_T \eqlaw U_T $ (as in the case of the truncated Geometric random variable) and introducing a sequence $ (U_k)_k $ of i.i.d. uniform such random variables, one gets
\begin{align*}
\Prob{  \sum_{m = 1}^k G_T^{(m)}(1) = \sum_{m = 1}^k \widetilde{G}_T^{(m)}(1) } & =  \Prob{  \sum_{m = 1}^{2k} \pe{(T + 1) U_m} = k T } \\
                 & = \crochet{u^{kT} } \Esp{ u^{U_T} }^{2k} =  \crochet{ u^{kN} } \prth{ \frac{1}{T + 1} H\crochet{u - u^{T + 1} }  }^{2k}  \\
                 & = \frac{1}{(T + 1)^{2k} } \int_{-T}^T e^{-2 i \pi k \theta}  \prth{ \frac{1 - e^{2 i \pi \theta (T+1)/T } }{ 1 - e^{2 i \pi \theta T } } }^{2k} \frac{d\theta}{T} \\
                 & \equivalent{N \to + \infty} \, \frac{1}{\rho N} \, h_{k\rho, \infty}^{(2k)} (0) = \frac{1}{\rho N} \int_\Rr e^{ -2 i \pi k \theta } \, \Esp{ e^{2 i \pi \theta U} }^{2k} d\theta \\
                 &  = \frac{1}{\rho N} \int_\Rr e^{ -2 i \pi k \theta } \, \Esp{ e^{2 i \pi \theta \sum_{m = 1}^{2k} U_m} } d\theta \\
                 & = \frac{1}{\rho N} f_{V_k}(k), \qquad V_k := \sum_{m = 1}^{2k} U_m
\end{align*}

The only step to justify is the equivalent given in the proof of lemma~\ref{Lemma:RescaledGegenbauer}. Here, $ f_{V_k} $ is the Lebesgue density of $ V_k $, i.e. $ f_{V_k}(x) = \Esp{ \delta_0(V_k - x) } $. 

Nevertheless, since $ T\inv G_T(1) \to U $ (in law), this limiting random variable is not deterministic and one needs to keep the expectation. One thus gets
\begin{align*}
\Esp{ \abs{ Z_{U_N, \pe{\rho N} } (1) }^{2k} } &  \equivalent{N\to +\infty} \frac{ (\rho N)^{2k } }{k!} N^{2k^2 - k(k - 1) - k}  (2\pi)^{ k(k - 1) } \frac{1}{\rho N}  \\ 
               & \qquad \times \int_{\Rr^{k - 1} }  \Esp{ \delta_0\prth{ \sum_{\ell = 1}^{2k} U_\ell  - k } \prod_{\ell = 1}^k \widetilde{h}_{\rho - U_\ell, \infty}(0, \xb)  \widetilde{h}_{\overline{\rho} + U_{k + \ell }, \infty}(0, \xb)  } \\
               & \hspace{+3cm} e^{i \pi (k - 2) \sum_{j = 1}^k x_j } \Delta(0, \xb)^2 \, d\xb                 
\end{align*}

We see that we recover the scaling in $ N^{k^2} $ and that the discrepancy with the case $ \lambda > 1 $ comes from the difference of renormalisation between the truncated geometric law and the uniform one. This moreover gives an alternative integral on $ \Rr^{k - 1} $ for $ \Ze\Te_\rho^{(k)} $, i.e.
\begin{align}\label{EqPhi:Random:TruncatedCharpolIn1}
\begin{aligned}
\Ze\Te_\rho^{(k)} &\! := \frac{(2\pi)^{k(k - 1) } }{k!} \! \int_{ \Rr^{k - 1} } \!\! \Phi^{(Rand)}_{\!\Ze\Te_\rho^{(k)} }(0, \xb) \Delta(0,\xb)^2 d\xb\\
\!\!\!\!\!\!\Phi^{(Rand)}_{\!\Ze\Te_\rho^{(k)} }\!(0, \xb) &\! = e^{ i \pi ( k - 2) \! \sum_{j = 2}^k x_j } \rho^{2k -\! 1}\! f_{V_k}\!(k) \,  \Ee\Bigg( \! \prod_{\ell = 1}^k \! \widetilde{h}_{\rho - U_\ell, \infty} (0, \xb) \widetilde{h}_{\overline{\rho} + U_{k + \ell }, \infty}(0, \xb) \Bigg\vert \! \sum_{\ell = 1}^{2k} U_\ell = k \! \Bigg)  
\end{aligned}
\end{align}

The function $ \Phi^{(Rand)}_{\!\Ze\Te_\rho^{(k)} } $ uses nevertheless $ 2k - 1 $ random variables, hence $ 2k - 1 $ additional integrals on $ [0, 1] $, and showing directly the equivalence between \eqref{EqPhi:TruncatedCharpolIn1} and \eqref{EqPhi:Random:TruncatedCharpolIn1} does not seem an easy task. Last, note that the expression of $ f_{V_k}(k) $ given by \eqref{Eq:BatesIrwinHall} does not simplify and that one can express $ \Phi^{(Rand)}_{\!\Ze\Te_\rho^{(k)} } $ as a Fourier transform in the vein of \eqref{EqPhi:Random:AutocorrelsAsFourier}.
\end{remark}


\begin{remark}
In view of \eqref{Eq:HeapLindqvistTruncated}, \eqref{Eq:LimTruncatedCharpolHeapLindqvist} and remark~\ref{Rk:ScZt}, one has for $ \rho = k\inv $ 
\begin{align*}
{}_2 F_1\prth{ \begin{matrix} k, k \\ 1 \end{matrix} \bigg\vert \lambda^{-2} \! }     \, \Se\Ce_{\rho}^{(k)}  = \Ze\Te_{\rho}^{(k)}  \,   \frac{(2k - 2) ! }{ (k - 1)!^2 }  (1 - \lambda^{-2} )^{1 - 2k} \, {}_2 F_1\prth{ \begin{matrix} 1 - k, 1 - k \\ 2 - 2k \end{matrix} \bigg\vert 1 - \lambda^{-2} }      
\end{align*}

Now, the hypergeometric equation having one solution given by $ {}_2 F_1\prth{ \begin{smallmatrix} a, b \\ c \end{smallmatrix} \big\vert z  } $ is 
\begin{align*}
z(1 - z) F''(z) + [c - (a + b + 1)z] \, F'(z) - ab \, F(z) = 0     
\end{align*}
and setting\footnote{This is a simple case of Kummer's group of 24 transformations (isomorphic to $ \Sg_4 $) that one can treat by hand, see \cite[\S~8 p. 52, (7)]{KummerHypergeom}.} $ z = 1 - x $ shows that there are two solutions to the so transformed equation, namely $ {}_2 F_1\prth{ \begin{smallmatrix} a, b \\ 1 + a + b - c \end{smallmatrix} \big\vert 1 - x  } $ and $ (1 - x)^{c - a - b} {}_2 F_1\prth{ \begin{smallmatrix} c - a, c - b \\ 1 - a - b + c \end{smallmatrix} \big\vert 1 - x  } $. The value in 0 for the function implies in our case that only the second solution is relevant, hence that, for $ x = \lambda^{-2} $, 
\begin{align*}
{}_2 F_1\prth{ \begin{matrix} k, k \\ 1 \end{matrix} \bigg\vert x \! } =  (1 - x )^{1 - 2k} \, {}_2 F_1\prth{ \begin{matrix} 1 - k, 1 - k \\ 2 - 2k \end{matrix} \bigg\vert 1 - x }   
\end{align*}
which shows that (at least for $ \rho = 1/k $) there exists a proportionality relation between $ \Se\Ce_{\rho}^{(k)}  $ and $ \Ze\Te_{\rho}^{(k)} $~:
\begin{align}\label{Eq:LinkScZt}
\Se\Ce_{\rho}^{(k)}  = \Ze\Te_{\rho}^{(k)}  \,   \frac{(2k - 2) ! }{ (k - 1)!^2 }         
\end{align}

This would be interesting to know if such a link is persistent for all $ \rho \in (0, 1) $. The polytope method seems particularly adapted to do so (but we do not pursue here).
\end{remark}

\medskip

\subsubsection{The microscopic setting}\label{Subsubsec:TruncatedMicro}

This part is a variation on \S~\ref{Subsubsec:ConreyGamburd} ; details are thus omitted.

\begin{theorem}[Truncated characteristic polynomial at $ \pe{\rho N} $ in $ e^{s/N} $]\label{Theorem:TruncatedCharpolMicro}
For all $ k \geq 2 $ and $ s \in \Rr $, one has
\begin{align*}
\frac{ \Esp{ \abs{ Z_{U_N ,\, \pe{\rho N} }( e^{s/N} ) }^{2k} } }{ N^{\color{black} k^2 \color{black} } }  \tendvers{N}{+\infty } \Ze\Te_{\rho, s}^{(k)}
\end{align*}
where 
\begin{align}\label{EqPhi:TruncatedCharpolMicro}
\begin{aligned}
\Ze\Te_{\rho, s}^{(k)} := & \ \frac{(2\pi)^{k(k - 1) } }{k!} \! \int_{ \Rr^k } \!\! \Phi_{\!\Ze\Te_{\rho, s}^{(k)} }(0, x_2, \dots, x_{k + 1}) \Delta(0, x_2, \dots x_{k + 1})^2 d\xb\\
\!\!\Phi_{\!\Ze\Te_{\rho, s}^{(k)} }(0, x_2, \dots, x_{k + 1}) = & \  e^{2ks} e^{ i \pi (\rho k - 2) \sum_{j = 2}^{k + 1} x_j } \, h_{\rho, \infty}(0, x_2 \!+\! s, \dots, x_{k + 1} \!+\! s)^k \\
         &  \ \, \prth{ \vphantom{a^{a^a}} \widetilde{h}_{ 1, \infty}(0, x_2 \!-\! s, \dots, x_{k + 1} \!-\! s)\! - \! \widetilde{h}_{\overline{\rho}, \infty}(0, x_2 \!-\! s, \dots, x_{k + 1} \!-\! s) }^{\! k}
\end{aligned}
\end{align}
\end{theorem}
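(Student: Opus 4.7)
The plan is to adapt \emph{mutatis mutandis} the proof of Theorem~\ref{Theorem:TruncatedCharpolIn1}, replacing the evaluation point $1$ by $\lambda=e^{s/N}$ throughout, and then tracking how the extra factor $\lambda$ percolates through the plethystic manipulations and the final local-CLT rescaling. Because $s$ is fixed independently of $N$, the powers $\lambda^{\lfloor\rho N\rfloor},\lambda^{N}$ that appear only produce $O(1)$ shifts on the microscopic scale, so the scaling exponent and the overall dominated-convergence scheme are identical to those of Theorem~\ref{Theorem:TruncatedCharpolIn1}; only the integrand on $\mathbb{R}^k$ changes by fixed translations of the $x_j$.

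First, writing $Z_{U_N,T}(\lambda)=\crochet{s^0}P_T(\lambda s^{-1})Z_{U_N}(s)$ with $P_T(x)=h_T\crochet{1+x}$ as in \eqref{Def:PT}, expanding the $2k$-th moment and using \eqref{Eq:SchurJointMoments} followed by the rectangular-Schur formula \eqref{FourierRepr:SchurRectangle}, one reduces to a contour integral over $\Uu^k$ of the schematic form
\begin{align*}
\mathbb{E}\crochet{|Z_{U_N,T}(\lambda)|^{2k}}=\frac{1}{k!}\oint_{\Uu^k}U^{-N}A_T(\lambda,U)^kB_T(\lambda,U)^k|\Delta(U)|^2\frac{d^*U}{U},
\end{align*}
where, by Fubini and the projection property of $\Pb_{\!T}$, together with the homogeneity $h_\ell\crochet{\lambda U}=\lambda^\ell h_\ell\crochet{U}$ and the identity \eqref{Eq:FiniteSumHomogeneous}, the two factors collapse to
\begin{align*}
A_T(\lambda,U)&=\crochet{x^0}P_T\crochet{\lambda x^{-1}}H\crochet{xU}=h_T\crochet{1+\lambda U},\\
B_T(\lambda,U)&=\crochet{y^N}P_T\crochet{\lambda y}H\crochet{yU}=\lambda^{N}\bigl(h_N\crochet{1+\lambda^{-1}U}-h_{\overline{T}-1}\crochet{1+\lambda^{-1}U}\bigr).
\end{align*}
For $\lambda=e^{s/N}$ the prefactor yields $\lambda^{Nk}=e^{sk}$, which will combine with the $e^{sk}$ coming from the conjugate factor to give the $e^{2ks}$ appearing in \eqref{EqPhi:TruncatedCharpolMicro}.

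Second, I perform the homogeneity trick of remark~\ref{Rk:MultivariateFourierWithHomogeneity} and set $u_j=e^{2i\pi x_j/N}$ for $j\in\intcrochet{1,k}$, $x_j\in\crochet{-N/2,N/2}$. The crucial point is that
\begin{align*}
h_{\lfloor\rho N\rfloor}\crochet{1+e^{s/N}U}=h_{\lfloor\rho N\rfloor}\crochet{1+\bigl(e^{(2i\pi x_j+s)/N}\bigr)_{j}},
\end{align*}
i.e.\ the multiplication by $\lambda=e^{s/N}$ amounts to a deterministic translation of the microscopic arguments. Lemma~\ref{Lemma:RescaledGegenbauer} (applied at the translated points) then gives, up to a factor $N^{k-1}$ and a prescribed phase, the limit $h_{\rho,\infty}$ (resp.\ $\widetilde{h}_{\rho,\infty}$) evaluated at $(0,x_2+s,\dots,x_{k+1}+s)$, with an analogous statement for the $\lambda^{-1}U$ factor producing a $-s$ shift. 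This is exactly the mechanism producing the two families of shifted arguments in \eqref{EqPhi:TruncatedCharpolMicro}.

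Third, one passes to the limit inside the integral by dominated convergence, using the uniform estimate \eqref{Ineq:SpeedOfConvergenceHinftyBis} together with the integrability criterion \eqref{Ineq:IntegrabilityDomination} with the same parameters $(K,M,M',\kappa,\kappa')=(k-1,k,k,1,1)$ as in Theorem~\ref{Theorem:TruncatedCharpolIn1}; the fixed shift by $s$ is harmless since the bounds of Annex~\ref{Sec:ProbReprGegenbauer} are locally uniform in the arguments. Counting the $N$-powers ($2k^2-k(k-1)-k=k^2$ from the integrand, Vandermonde, and Jacobian) gives the announced normalization $N^{k^2}$, and collecting the explicit phases yields the expression \eqref{EqPhi:TruncatedCharpolMicro}.

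The only genuinely non-routine point is to verify the local uniformity of the local-CLT estimate under the translation by $s$; once this is granted (and it is, because the shift $s$ is fixed and the majorants of Annex~\ref{Sec:ProbReprGegenbauer} depend continuously on bounded shifts), the remainder of the argument is a line-by-line transcription of \S\ref{Subsubsec:ConreyGamburd}. In particular, setting $s=0$ one recovers Theorem~\ref{Theorem:TruncatedCharpolIn1}, which provides a consistency check on the phases.
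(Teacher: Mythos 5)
Your proposal follows exactly the paper's proof: you reduce to the contour-integral expression via the projection operator $\Pb_T$, compute $A_T=h_T\crochet{1+\lambda U}$ and $B_T=\lambda^N\bigl(h_N\crochet{1+\lambda^{-1}U}-h_{\overline T-1}\crochet{1+\lambda^{-1}U}\bigr)$ by the same homogeneity manipulations, and then hand the endgame to the local-CLT rescaling and dominated convergence of Theorem~\ref{Theorem:TruncatedCharpolIn1} and Annex~\ref{Sec:ProbReprGegenbauer}. This is precisely what the paper does (its own proof literally says ``an easy adaptation of the end of the proof of theorem~\ref{Theorem:TruncatedCharpolIn1} allows then to conclude''), so the route is the same and the proposal is sound.

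One bookkeeping remark worth flagging: your explanation of the factor $e^{2ks}$ as ``$\lambda^{Nk}=e^{sk}$ combining with the $e^{sk}$ from the conjugate factor'' is not right as stated --- the $\crochet{y^N}$ bracket already accounts for the whole conjugate block $\overline{Z}^k$, so $B_T^k=\lambda^{Nk}(\cdots)^k$ contributes only a \emph{single} $e^{sk}$; there is no separate ``conjugate'' prefactor. The remaining exponential factors and the $\pm s$ translations must instead be extracted from the phases $e^{i\pi c\sum_j\cdot}$ in Lemma~\ref{Lemma:RescaledGegenbauer} once one notes that $e^{s/N}e^{2i\pi x_j/N}=e^{2i\pi(x_j+\frac{s}{2i\pi})/N}$, so the microscopic translation is by $\frac{s}{2i\pi}$ rather than by $s$ directly, which brings real (not unimodular) factors through $\widetilde h_{c,\infty}$. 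This is exactly the part the paper also leaves implicit, so it does not constitute a gap in your argument relative to theirs, but be aware that the sentence as written misattributes where the exponential constant comes from.
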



\begin{proof}
The proof is a slight modification of the proof of theorem~\ref{Theorem:TruncatedCharpolIn1}. We start from the formula
\begin{align*}
\Esp{ \abs{ Z_{U_N, T}(\lambda)  }^{2k} }  & = \frac{1}{k!} \oint_{ \Uu^k } U^{-N} \prth{ \vphantom{a^{a^{a^a}}} \crochet{ x^0 } \! P_T\crochet{ \lambda x\inv} H\crochet{xU} }^{\! k} \prth{  \vphantom{a^{a^{a^a}}} \crochet{ y^N } \! P_T\crochet{\lambda y } H\crochet{y U } }^{\! k} \abs{\Delta(U) }^2  \frac{d^*U}{U}
\end{align*}

Now,
\begin{align*}
\crochet{ x^0 } \! P_T\crochet{\lambda x\inv} H\crochet{xU} & = \sum_{\ell = 0}^T \lambda^\ell h_\ell\crochet{U } = \sum_{\ell = 0}^T h_\ell\crochet{\lambda U} = h_T\crochet{1 + \lambda U}  
\end{align*}
and
\begin{align*}
\crochet{ y^N } \! P_T\crochet{ \lambda y } H\crochet{yU } & = \sum_{\ell = 0}^T \lambda^\ell \crochet{ y^0 } y^{-N + \ell} H\crochet{y U}  =  \sum_{\ell = 0}^T \lambda^\ell h_{N - \ell}\crochet{ U} = \lambda^N \sum_{\ell = 0}^T h_{N - \ell}\crochet{\lambda\inv U} \\
                 & = \lambda^N \sum_{j = \overline{T} }^N h_j\crochet{\lambda\inv U} = \lambda^N \prth{  h_{N }\crochet{ 1 + \lambda\inv U} - h_{\overline{T} - 1 }\crochet{1 + \lambda\inv U}  }
\end{align*}

Setting $ \lambda = e^{s/N} $, an easy adaptation of the end of the proof of theorem~\ref{Theorem:TruncatedCharpolIn1} allows then to conclude.
\end{proof}


\begin{remark}
The previous proof is an adaptation of the proof of theorem~\ref{Theorem:TruncatedCharpolIn1}. An alternative proof with the randomisation paradigm of \S~\ref{Subsec:RandomisationParadigm} can also be given by adapting remark~\ref{Rk:TruncatedMomentsWithRandomisation} and the randomisation formula \eqref{Eq:RandomisationForTruncatedMoments}. This last formula can be rescaled similarly to the uniform case with the limit in law $ T\inv G_T(e^{-s/T}) \to X_s $ if $ s > 0 $ (in the contrary case, one uses the duality formula \eqref{EqLaw:GeomTrunc} and the limit law is then $ 1 - X_s $). The law of $ X_s $ is given by an exponential bias of the uniform distribution, i.e. $ \Esp{ f(X_s) } = \Esp{ e^{-s U} f(U) }/ \Esp{ e^{-s U}} $ with $ U \sim \Us([0, 1]) $ (or equivalently an exponential random variable $ s\inv \ee $ conditionned to be less than $1$). It gives an alternative expression to \eqref{EqPhi:TruncatedCharpolMicro} in the same vein as \eqref{EqPhi:Random:Autocorrels}/\eqref{EqPhi:Random:TruncatedCharpolIn1} with $ U_i $s replaced by i.i.d. copies of $ X_s $. Details are left to the reader.
\end{remark}


\begin{remark}
One can easily adapt the previous proof to show the following ``multipoint'' version of the previous result~:
\begin{align*}
\frac{ \Esp{  \prod_{\ell = 1}^k Z_{U_N ,\, \pe{\rho_\ell N} }( e^{s_\ell/N} ) \overline{ Z_{U_N ,\, \pe{\rho_{\ell + k} N} }( e^{w_\ell/N} ) } } }{ N^{ k^2 } }  \tendvers{N}{+\infty } \Ze\Te_{\rho_1, \dots, \rho_{2k}}^{(k)}(\sbb, \wb)
\end{align*}

The limiting function is a slight modification of \eqref{EqPhi:TruncatedCharpolMicro} where the powers $k$ are replaced by a product ; it is to be compared with \eqref{EqPhi:Autocorrels}. One can also compute it with the randomisation paradigm of \S~\ref{Subsec:RandomisationParadigm}. Details are left to the reader.
\end{remark}

\newpage
\subsection{The Birkhoff polytope}\label{Subsec:Birkhoff}

\subsubsection{The Beck-Pixton approach to the Birkhoff polytope}\label{Subsubsec:BirkhoffBeckPixton}

Let $ D \subset \Rr^n $ be a lattice and $ P $ a polytope with the property that all its vertices are points of $L$. For all $ t \in \Nn $, the \textit{Ehrhart polynomial} of $P$ is defined by 
\begin{align}\label{Def:EhrhartPol}
L(t, P) := \#(D \cap tP)
\end{align}
where $ tP $ is the $t$-dilation of $P$. $ L(t, P) $ is thus the number of lattice points contained in $tP$. Ehrhart \cite{Ehrhart62} showed that this is a polynomial in $t$ with rational coefficients. Using the residue theorem, Beck \cite{BeckResidue} gave an expression of $ L(t, P) $ and re-proved this fact. Beck and Pixton finally applied the residue formula to the case of the Birkhoff polytope \cite{BeckPixton}. We now review this approach. 

A convex lattice polytope $P$ located in the nonnegative orthant can be described by an intersection of a finite number of half-spaces of the form
\begin{align*}
P := \ensemble{ \xb \in \Rr_+^n \ / \ \Ab \xb \leq \bb }
\end{align*}
where $ \Ab $ is an $ m \times n $ matrix with integer coefficients and $ \bb \in \Zz^n $. More rarely, this last intersection can also be described by an equality. Denote by $ (\ab_1, \dots, \ab_n) $ the colums of $ \Ab $. Then, the Beck formula writes \cite[(11) \& thm. 8]{BeckResidue} 
\begin{align}\label{Eq:BeckFormula}
L(t, P) = \oint_{ (r\Uu)^m } \prod_{ \ell = 1 }^n \frac{1}{1 - \zb^{ \ab_\ell } } \prod_{ j = 1 }^m \frac{ z_j^{-t b_j} }{1 - z_j } \frac{d^* \zb}{\zb}, \qquad r < 1
\end{align}
or, equivalently
\begin{align}\label{Eq:BeckFormulaBis}
L(t, P) = \crochet{ \zb^{t \bb} } H\crochet{ \zb } \prod_{ \ell = 1 }^n \frac{1}{1 - \zb^{ \ab_\ell } } 
\end{align}

\begin{definition}[Birkhoff and sub-Birkhoff polytopes]\label{Def:BirkhoffPolytope}
The \textit{Birkhoff polytope} $ \Be_k $ is the polytope of stochastic matrices, namely matrices $ (M_{i, j})_{1 \leq i, j \leq k} $ satisfying $ M_{i, j} \geq 0 $, for all $i, j$, $ \sum_{i = 1}^k M_{i, j} = 1 $ for all $i$ and $ \sum_{j = 1}^k M_{i, j} = 1 $ for all $j$. This is a polytope of $ \Rr^{ (k - 1)^2 } $ described by equalities. 

The \textit{sub-Birkhoff polytope} $ \Se_k $ is the polytope of sub-stochastic matrices $ (M_{i, j})_{1 \leq i, j \leq k} $ i.e. matrices satisfying $ M_{i, j} \geq 0 $, for all $i, j$, $ \sum_{i = 1}^k M_{i, j} \leq 1 $ for all $i$ and $ \sum_{j = 1}^k M_{i, j} \leq 1 $ for all $j$ (see e.g. \cite{DiaconisGamburd, ConreyGamburd}). This is a polytope of $ \Rr^{k^2 } $.

In each case, the corresponding matrix $ \Ab $ of size $ 2k \times k^2 $ is given by \cite{BeckPixton, ConreyGamburd}
\begin{align*}
\Ab = \prth{  \begin{array}{cccccccccccccc} 
               1 & \cdots & 1                                                  \\ 
                 &        &    & 1 & \cdots & 1                                \\  
                 &        &    &   &        &    &   \ddots &                  \\ 
                 &        &    &   &        &    &          &  1 & \cdots & 1  \\ 
               1 &        &    & 1 &        &    &          &  1               \\ 
	             & \ddots &    &   & \ddots &    &  \cdots  &    & \ddots      \\ 
		         &        &  1 &   &        & 1  &          &    &        & 1 
              \end{array}    }
\end{align*}

\end{definition}

As a consequence, using two sets of variables $ \zb = (x_1, \dots, x_k, y_1, \dots, y_k) =: (X, Y) $ the Beck formula \eqref{Eq:BeckFormulaBis} becomes (see also \cite[(35)]{ConreyGamburd}) 
\begin{align*}
L(t, \Se_k) = \oint_{ (r\Uu)^{2k} } \prod_{ j, \ell = 1 }^k \frac{1}{1 - x_j y_\ell } \prod_{ j = 1 }^k \frac{ x_j^{-t }  y_j^{-t } }{ (1 - x_j)(1 - y_j) } \frac{d^* X}{X} \frac{d^* Y}{Y}, \qquad r < 1
\end{align*}
namely
\begin{align}\label{Eq:BeckFormulaSubBirkhoff}
\boxed{L(t, \Se_k) = \crochet{ X^t Y^t } H\crochet{ X + Y + XY }  }
\end{align}

In the case of a polytope defined by an equality, the Beck formula has to be modified accordingly (see \cite{BeckPixton} ; see also \cite[thm. 2.13, ch. 6.2]{BeckRobins}) and amounts to forget the term $ H\crochet{\zb} $ in \eqref{Eq:BeckFormulaBis}. In the particular case of the Birkhoff polytope, this gives the following Beck-Pixton formula:
\begin{align}\label{Eq:BeckFormulaBirkhoff}
\boxed{L(t, \Be_k) = \crochet{ X^t Y^t } H\crochet{ XY }  }
\end{align}

\medskip

A fundamental result of Ehrhart concerns the dominant coefficient of the Ehrhart polynomial. It is the relative volume of the underlying polytope (see e.g. \cite{BeckResidue, BeckPixton, ConreyGamburd}). Writing this coefficient as a limit of the rescaled polynomial gives\footnote{One has to divide by the relative volume of the fundamental domain of the sublattice of $ \Zz^{k^2} $ in the affine space spanned by $ \Be_k $ which is $ k^{k - 1} $, see \cite{BeckPixton}.}
\begin{align*}
\frac{ L(t, \Be_k) }{ t^{(k - 1)^2} } \tendvers{t }{+\infty} \frac{\vol(\Be_k)}{ k^{k - 1} }, \qquad \frac{ L(t, \Se_k) }{ t^{k^2} } \tendvers{t }{+\infty} \frac{\vol(\Se_k)}{ k^{k - 1} }
\end{align*}

Conrey and Gamburd \cite[(32)]{ConreyGamburd} showed using the results of Diaconis-Gamburd \cite{DiaconisGamburd} that
\begin{align*}
\Esp{ \abs{Z_{U_N, \ell}(1) }^{2k} } = L(\ell, \Se_k) \quad \forall N \geq \ell k
\end{align*}
hence, that
\begin{align}\label{Eq:ConreyGamburdLimit}
\frac{ \Esp{ \abs{Z_{U_{\ell k}, \ell }(1) }^{2k} } }{ \ell^{k^2} } \tendvers{\ell }{+\infty} \frac{\vol(\Se_k)}{ k^{k - 1} }
\end{align}

\begin{remark}
Similarly, Heap and Lindqvist \cite[Prop. 3]{HeapLindqvist} showed using the same results of Diaconis-Gamburd \cite{DiaconisGamburd} that for all $ N \geq \ell k $ 
\begin{align*}
\Esp{ \abs{Z_{U_N, \ell}(\lambda) }^{2k} } =   \crochet{ X^t Y^t } H\crochet{ X + Y + \lambda^2 XY }
\end{align*}
although they did not link their result with any polytopial functional. In view of the Beck formulas \eqref{Eq:BeckFormula}/\eqref{Eq:BeckFormulaBis}, it does not enter into the framework of a classical polytope. It would be interesting to find a polytopial interpretation of such a quantity.
\end{remark}

We already gave an expression of the truncated characteristic polynomial in the proof of theorem \ref{Theorem:TruncatedCharpolIn1}. Nevertheless, we give here yet another expression starting directly from the Beck formula \eqref{Eq:BeckFormulaSubBirkhoff}.

\subsubsection{Another expression for $ L(\ell, \Be_k) $ and $ L(\ell, \Se_k) $}

\begin{lemma}[The Ehrhart polynomial of $ \Be_k $ and $ \Se_k $ with duality]\label{Lemma:EhrhartBirkhoff}
One has with $ U := (u_1,\dots, u_k) $ and for all $ t \in \Nn $
\begin{align}\label{Eq:MomentsMacroTruncatedWithDuality}
\begin{aligned}
L(t, \Se_k) & =  \frac{1}{k!} \oint_{ \Uu^k } \abs{ h_t\prth{1, u_1, \dots, u_k} }^{2k} \abs{ \Delta(u_1, \dots, u_k) }^2 \frac{d^*U}{U} \\
L(t, \Be_k) & =  \frac{1}{k!} \oint_{ \Uu^k } \abs{ h_t\prth{ u_1, \dots, u_k} }^{2k} \abs{ \Delta(u_1, \dots, u_k) }^2 \frac{d^*U}{U}
\end{aligned}
\end{align}
\end{lemma}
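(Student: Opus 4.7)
The plan is to prove both identities simultaneously by showing that each integral reproduces the Beck formulas \eqref{Eq:BeckFormulaSubBirkhoff} and \eqref{Eq:BeckFormulaBirkhoff}. The key idea is to introduce auxiliary variables so that the $k$-th power $h_t\crochet{1+U}^k$ and its complex conjugate $h_t\crochet{1+U\inv}^k$ become Fourier coefficients of Cauchy-type kernels, after which the Haar integration reduces by the reproducing kernel property \eqref{Eq:ReproducingKernelProperty} to the desired generating series.

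First, I will use the Fourier representation \eqref{FourierRepr:hn} together with the multiplicativity $H\crochet{s(1 + U)} = H\crochet{s} H\crochet{sU}$ to write, for $S := \ensemble{s_1, \dots, s_k}$,
\begin{align*}
h_t\crochet{1+U}^k = \prod_{j=1}^k \crochet{s_j^t} H\crochet{s_j} H\crochet{s_j U} = \crochet{S^t} H\crochet{S + SU},
\end{align*}
and analogously $h_t(U)^k = \crochet{S^t} H\crochet{SU}$. Since $\overline{U} = U\inv$ on $\Uu^k$, introducing a second auxiliary alphabet $T := \ensemble{t_1, \dots, t_k}$ gives
\begin{align*}
\abs{h_t\crochet{1+U}}^{2k} &= \crochet{S^t T^t} H\crochet{S + T + SU + TU\inv},\\
\abs{h_t(U)}^{2k} &= \crochet{S^t T^t} H\crochet{SU + TU\inv}.
\end{align*}

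The next step is to apply the Weyl integration formula \eqref{Eq:WeylHaarRealisation} in reverse, turning the right-hand side of \eqref{Eq:MomentsMacroTruncatedWithDuality} into a Haar integral on $\Ue_k$, and then swap the Fourier extraction $\crochet{S^t T^t}$ with the integral by Fubini (the series converges absolutely for $|s_j|, |t_j|$ small enough). This yields, in the sub-Birkhoff case,
\begin{align*}
\frac{1}{k!}\oint_{\Uu^k} \abs{h_t\crochet{1+U}}^{2k} \abs{\Delta(U)}^2 \frac{d^*U}{U} = \crochet{S^t T^t} H\crochet{S+T} \int_{\Ue_k} H\crochet{SU + TU\inv} dU,
\end{align*}
and an analogous expression without the $H\crochet{S+T}$ factor in the Birkhoff case.

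The heart of the argument is then to recognize the remaining Haar integral as an instance of the reproducing kernel property. Since $\int_{\Ue_k} H\crochet{SU} H\crochet{TU\inv} dU = \bracket{H\crochet{SU}, H\crochet{TU}}_k$ (viewing both factors as symmetric functions of $U$ with real coefficients), applying \eqref{Eq:ReproducingKernelProperty} with $f\crochet{Y} = H\crochet{TY}$ yields $\bracket{H\crochet{SU}, H\crochet{TU}}_k = H\crochet{ST}$ — note that the truncation $\ell(\lambda) \leq k$ inherent to the $\Ue_k$ scalar product is invisible because $S, T$ have only $k$ letters each, so $s_\lambda\crochet{S}$ vanishes anyway when $\ell(\lambda) > k$. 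Substituting back gives
\begin{align*}
\frac{1}{k!}\oint_{\Uu^k} \abs{h_t\crochet{1+U}}^{2k} \abs{\Delta(U)}^2 \frac{d^*U}{U} = \crochet{S^t T^t} H\crochet{S+T+ST} = L(t, \Se_k),
\end{align*}
and likewise the Birkhoff identity follows without the $H\crochet{S+T}$ factor. No genuine obstacle is expected: the only delicate bookkeeping is the interchange of Fourier extraction and Haar integration, which is harmless since after collapsing via the reproducing kernel only polynomial expressions survive.
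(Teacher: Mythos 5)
Your proof is correct and takes essentially the same approach as the paper, resting on the same two ingredients — the Fourier representation $h_t\crochet{1+U}^k = \crochet{S^t}H\crochet{S+SU}$ with auxiliary alphabets and the $H$-reproducivity identity $\int_{\Ue_k} H\crochet{SU}H\crochet{TU\inv}\,dU = H\crochet{ST}$ — combined via the Weyl integration formula; the only difference is that you run the manipulations in the reverse direction, starting from the integral and deducing the Beck formula, whereas the paper starts from \eqref{Eq:BeckFormulaSubBirkhoff} and derives the integral.
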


\begin{proof}
The reproducing kernel property \eqref{Eq:ReproducingKernelProperty} of the Cauchy kernel applied to itself gives $ \bracket{H\crochet{X U}, H\crochet{Y U} }_U = H\crochet{XY}  $, i.e. the trick~\eqref{Trick:Hreproducivity} $ H\crochet{XY} = \crochet{U^0}H\crochet{ XU + YU\inv - U^{\varepsilon R} } $. One can thus write \eqref{Eq:BeckFormulaSubBirkhoff} as
\begin{align*}
L(t, \Se_k) & = \crochet{ X^t Y^t } H\crochet{ X + Y + XY }   \\
            & = \crochet{ U^0 X^t Y^t } H\crochet{ X + Y + XU + YU\inv - U^{\varepsilon R} }  \\
            & = \crochet{ U^0 X^t Y^t } H\crochet{ X(U + 1) + (U\inv + 1)Y - U^{\varepsilon R} } \\
            & = \crochet{ U^0 } \crochet{ X^t } H\crochet{ (U + 1)X } \crochet{ Y^t } H\crochet{ (U\inv + 1)Y }  H\crochet{ - U^{\varepsilon R} }
\end{align*}

Now, $ h_t\crochet{U + 1}^k = \crochet{ X^t } H\crochet{ (U + 1)X } $ and using the fact that $ U\inv = \overline{U} $ if $ u_i \in \Uu $, one has
\begin{align*}
L(t, \Se_k) & = \crochet{ U^0 } h_t\crochet{U + 1}^k h_t\crochet{U\inv + 1}^k  H\crochet{ - U^{\varepsilon R} } \\
            & = \oint_{\Uu^k } h_t\crochet{U + 1}^k \overline{h_t\crochet{U + 1}^k } H\crochet{ - U^{\varepsilon R} } \frac{d^*U}{U} \\
            & = \frac{1}{k!} \oint_{ \Uu^k } \abs{ h_t\crochet{1 + U } }^{2k} \abs{ \Delta(U) }^2 \frac{d^*U}{ U}
\end{align*}
using the trick \eqref{Trick:ProdScalSym}. 

The case of $ \Be_k $ is treated in the same way. 
\end{proof}

\begin{remark}\label{Rk:BirkhoffWithH}
Let $ X := (x_1, \dots, x_k)  $. One can also write with \eqref{FourierRepr:hLambda}
\begin{align*}
L(t, \Be_k) = \crochet{ X^t Y^t } H\crochet{ XY }  = \crochet{ X^t y_1^t \dots y_k^t } \prod_{j = 1}^k H\crochet{y_j X}  = \crochet{ X^t } h_t\crochet{ X }^k
\end{align*}

The function $ h_t $ has several possible forms. Using the fact that $ h_n = s_{(n)} $ and formula \eqref{AlternantRepr:Schur}, one gets by expanding the determinant on the first column ($ j = 1 $) and using the formula for the Vandermonde determinant
\begin{align*}
h_t(x_1, \dots, x_k) & = \frac{ \det\prth{ x_i^{ t\delta_{1, j} + k -  j } }_{1 \leq i, j \leq k} }{ \det\prth{ x_i^{ k -  j } }_{1 \leq i, j \leq k} } = \sum_{i = 1}^k x_i^{t + k - 1} \frac{ \det\prth{ x_m^{  k -  j } }_{1 \leq j \leq k - 1, m \in \intcrochet{1, k}\setminus\ensemble{i} } }{ \det\prth{ x_i^{ k -  j } }_{1 \leq i, j \leq k} } \\
                & = \sum_{i = 1}^k x_i^{t + k - 1} \prod_{ m \in \intcrochet{1, k}\setminus\ensemble{i} } \frac{1}{ x_i - x_m }
\end{align*}

Alternatively, one can also write a partial fraction decomposition~:
\begin{align*}
h_t(x_1, \dots, x_k) & = \crochet{s^t} H\crochet{ s X} = \crochet{s^t} \prod_{j = 1}^t \frac{1}{1 - s x_i} = \crochet{s^t} \sum_{i = 1}^k \frac{1}{1 - s x_i } \prod_{ m \in \intcrochet{1, k}\setminus\ensemble{i} } \frac{1}{ 1 - x_m/x_i } \\
                & = \sum_{i = 1}^k x_i^{t + k - 1} \prod_{ m \in \intcrochet{1, k}\setminus\ensemble{i} } \frac{1}{ x_i - x_m }
\end{align*}

We thus see that the formula 
\begin{align}\label{Eq:BeckPixtonBirkhoffBis}
L(t, \Be_k) = \crochet{ X^t } h_t\crochet{ X }^k
\end{align}
is exactly \cite[thm. 2]{BeckPixton}. The other form of $h_t$ given by \cite[ch. I-2]{MacDo} is also a particular case of the expression of a Schur function in terms of monomials
\footnote{It writes $ s_\lambda(X) = \sum_{T \in SST(\lambda)} X^T $ where $ SST(\lambda) $ is the set of \textit{semi-standard (Young) tableaux} of shape $ \lambda $, see e.g. \cite[ch. I-5, (5.12)]{MacDo}.\label{Footnote:SST}} 
\begin{align*}
h_t(x_1, \dots, x_k) = \sum_{1 \leq i_1 \leq \cdots \leq i_t \leq k} x_{i_1}\cdots x_{i_t} = \sum_{ m_1 + \cdots + m_k = t } x_1^{m_1} \cdots x_k^{m_k}
\end{align*}
and is used in \cite{BeckPixton} to show that $ L(t, \Be_k) $ is a polynomial in $t$. 

In the same vein, We have 
\begin{align}\label{Eq:BeckPixtonSubBirkhoffBis}
L(t, \Se_k) = \crochet{ X^t } h_t\crochet{ 1 + X }^{k + 1}
\end{align}

Indeed, one has
\begin{align*}
L(t, \Se_k) & = \crochet{ X^t Y^t } H\crochet{X + Y + XY} =  \crochet{ X^t y_1^t \dots y_k^t } H\crochet{X} \prod_{i = 1}^k H\crochet{y_i (X + 1)} \\
             & = \crochet{ X^t } \sum_{m \geq 0} h_m\crochet{X} h_t\crochet{1 + X}^k \\
             & = \crochet{ X^t } \sum_{m = 0}^t h_m\crochet{X} h_t\crochet{1 + X}^k
\end{align*}
since $ \crochet{ X^t } h_m\crochet{X} = 0 $ if $ m > t $. We conclude with \eqref{Eq:FiniteSumHomogeneous}. 
\end{remark}

Using \eqref{Eq:MomentsMacroTruncatedWithDuality}, one can give an alternative proof of \eqref{Eq:ConreyGamburdLimit} and a new expression of $ \vol(\Be_k) $:

\begin{theorem}[Another expression of $ \vol(\Be_k) $ and $ \vol(\Se_k) $]\label{Theorem:VolumeBirkoff}
One has 
\begin{align}\label{EqPhi:VolumeSubBirkoffPolytopeRMT}
\begin{aligned}
\frac{\vol(\Se_k)}{k^{k - 1}}  & =   \frac{(2\pi)^{k(k - 1) } }{k!} \int_{ \Rr^k } \Phi_{\Se_k}(x_1, \dots, x_k) \Delta(x_1, \dots, x_k)^2 dx_1 \dots dx_k \\
\Phi_{\Se_k}(x_1, \dots, x_k)  & := \abs{ h_{1, \infty}(0, x_1, \dots, x_k) }^{2k}
\end{aligned}
\end{align}
and
\begin{align}\label{EqPhi:VolumeBirkoffPolytopeRMT}
\begin{aligned}
\frac{\vol(\Be_k)}{k^{k - 1}}  & =   \frac{(2\pi)^{k(k - 1) } }{k!} \int_{ \Rr^{k - 1} } \Phi_{\Be_k}(0, x_2, \dots, x_k) \Delta(0, x_2, \dots, x_k)^2 dx_2 \dots dx_k \\
\Phi_{\Be_k}(0, x_2, \dots, x_k)  & := \abs{ h_{1, \infty}(0, x_2, \dots, x_k) }^{2k}
\end{aligned}
\end{align}
\end{theorem}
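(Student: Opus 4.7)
The plan is to start from the two dual formulas of Lemma~\ref{Lemma:EhrhartBirkhoff} and pass to the limit $t = \ell \to +\infty$ after a microscopic change of variables $u_j = e^{2i\pi x_j/\ell}$, exactly as in the prototype proof of Theorem~\ref{Thm:KSwithDuality}. The discrepancy of dimensions between the two limits ($\Rr^k$ for $\Se_k$ versus $\Rr^{k-1}$ for $\Be_k$) will come from the homogeneity trick of Remark~\ref{Rk:MultivariateFourierWithHomogeneity}: the integrand of $L(\ell,\Be_k)$ is homogeneous of degree $0$ on the torus, allowing one further integration, whereas the ``$1$'' in the first slot of $h_\ell(1,u_1,\dots,u_k)$ breaks the homogeneity for $L(\ell,\Se_k)$.

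First I would treat the sub-Birkhoff case. Setting $u_j = e^{2i\pi x_j/\ell}$ with $x_j \in [-\ell/2,\ell/2]$ in the first equation of \eqref{Eq:MomentsMacroTruncatedWithDuality} and invoking Lemma~\ref{Lemma:RescaledGegenbauer} in the form
\[
\ell^{-k}\, h_\ell\!\left(1, e^{2i\pi x_1/\ell},\dots, e^{2i\pi x_k/\ell}\right) \tendvers{\ell}{+\infty} h_{1,\infty}(0,x_1,\dots,x_k),
\]
together with the standard Vandermonde rescaling $|\Delta(e^{2i\pi X/\ell})|^2 \sim (2\pi/\ell)^{k(k-1)}\Delta(0,x_1,\dots,x_k)^2$ and the Jacobian $\prod_j dx_j/\ell^k$, one finds
\[
L(\ell,\Se_k) \sim \ell^{\,2k^2 - k(k-1) - k}\cdot \frac{(2\pi)^{k(k-1)}}{k!}\int_{\Rr^k} |h_{1,\infty}(0,X)|^{2k}\,\Delta(0,X)^2\, dX = \ell^{k^2}\cdot \frac{\vol(\Se_k)}{k^{k-1}},
\]
which yields \eqref{EqPhi:VolumeSubBirkoffPolytopeRMT} by the Ehrhart characterisation of the relative volume.

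Next, for the Birkhoff case, the function $h_\ell(u_1,\dots,u_k)$ is homogeneous of degree $\ell$ and $|\Delta|^2$ is invariant under $u_j \mapsto u_1 v_j$ on the torus, so writing $u_1 = e^{2i\pi\phi}$ and $v_j = u_j/u_1$ for $j \in \intcrochet{2,k}$ the integrand of the second line of \eqref{Eq:MomentsMacroTruncatedWithDuality} no longer depends on $\phi$; integrating $\phi$ out gives
\[
L(\ell,\Be_k) = \frac{1}{k!}\oint_{\Uu^{k-1}} |h_\ell(1,v_2,\dots,v_k)|^{2k}\,|\Delta(1,v_2,\dots,v_k)|^2\,\frac{d^*V}{V}.
\]
Rescaling now with $v_j = e^{2i\pi x_j/\ell}$, the same Lemma~\ref{Lemma:RescaledGegenbauer} produces $h_{1,\infty}(0,x_2,\dots,x_k)$ with one fewer integration variable, and the exponent of $\ell$ becomes $2k(k-1) - k(k-1) - (k-1) = (k-1)^2$, delivering \eqref{EqPhi:VolumeBirkoffPolytopeRMT}.

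The step that requires care is the passage to the limit inside the integral, which is justified by dominated convergence using the speed estimate \eqref{Ineq:SpeedOfConvergenceHinftyBis}, together with integrability of the limiting function via criterion \eqref{Ineq:IntegrabilityDomination}: for $\Se_k$ the parameters $(K,M,\kappa)=(k,1,2k)$ fall in the valid range for all $k\geq 2$, and for $\Be_k$ one uses $(K,M,\kappa)=(k-1,1,2k)$, which is also fine. This is expected to be the only real obstacle; everything else is a mechanical unwinding of the RKHS/plethystic formalism applied to \eqref{Eq:BeckFormulaSubBirkhoff}--\eqref{Eq:BeckFormulaBirkhoff} through Lemma~\ref{Lemma:EhrhartBirkhoff}.
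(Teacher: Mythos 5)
Your overall strategy is exactly the one the paper uses: start from Lemma~\ref{Lemma:EhrhartBirkhoff}, set $u_j = e^{2i\pi x_j/\ell}$, invoke Lemma~\ref{Lemma:RescaledGegenbauer}, and conclude by dominated convergence, with the $u_1$-homogenisation of Remark~\ref{Rk:MultivariateFourierWithHomogeneity} available only in the Birkhoff case. However, your rescaling of the Vandermonde in the sub-Birkhoff case is wrong, and the error propagates to a final formula that contradicts the statement you are trying to prove. The factor $\abs{\Delta(e^{2i\pi x_1/\ell},\dots,e^{2i\pi x_k/\ell})}^2$ is a Vandermonde of $k$ variables; since $e^{2i\pi x_i/\ell} - e^{2i\pi x_j/\ell} \sim \frac{2i\pi}{\ell}(x_i - x_j)$, it rescales to $(2\pi/\ell)^{k(k-1)}\Delta(x_1,\dots,x_k)^2$, \emph{not} to $(2\pi/\ell)^{k(k-1)}\Delta(0,x_1,\dots,x_k)^2$. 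The latter is a Vandermonde of $k+1$ points and carries an extra factor of $\prod_{j=1}^k x_j^2$ in the integrand. Writing $\Delta(0,X)^2$ both in the rescaling claim and in the final display, you arrive at $\frac{\vol(\Se_k)}{k^{k-1}} = \frac{(2\pi)^{k(k-1)}}{k!}\int_{\Rr^k}\abs{h_{1,\infty}(0,X)}^{2k}\Delta(0,X)^2\,dX$, which is off from the target formula \eqref{EqPhi:VolumeSubBirkoffPolytopeRMT} by exactly that extra $\prod_j x_j^2$. The Birkhoff case does not suffer from this problem: there the homogenisation produces $\Delta(1,v_2,\dots,v_k)$, whose rescaling is indeed $\Delta(0,x_2,\dots,x_k)$, again a $k$-point Vandermonde.

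Two further small points. First, you state Lemma~\ref{Lemma:RescaledGegenbauer} without the phase factor $e^{i\pi\sum_j x_j}$; this is harmless here since you only use $\abs{\cdot}^{2k}$, but you should make that explicit rather than misquote the lemma. Second, in the integrability check you write $(K,M,\kappa)=(k,1,2k)$; the integrand is $\abs{h_{1,\infty}(0,X)}^{2k}$, i.e.\ the $h$-superscript is $\kappa = 1$ and the outer power is $M = 2k$, so the triple should be $(K,M,\kappa)=(k,2k,1)$ (similarly $(k-1,2k,1)$ for $\Be_k$). Your version happens to give the same value $K(\kappa M - K) = k^2$, but the role-reversal would lead you astray in any situation where the bound is not symmetric in $M$ and $\kappa$.
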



\begin{proof}
Recall that
\begin{align*}
\ell^{k(k - 1) }  \abs{ \Delta(e^{2 i \pi x_1/\ell}, \dots, e^{2 i \pi x_k/\ell}) }^2 & = \ell^{k(k - 1) }  \abs{ \Delta(1, e^{2 i \pi (x_2 - x_1)/\ell}, \dots, e^{2 i \pi (x_k - x_1)/\ell}) }^2 \\
              & \equivalent{\ell\to+\infty} (2 \pi)^{k(k - 1) } \prod_{j = 2}^k \abs{x_j - x_1}^2  \abs{ \Delta( x_2 - x_1, \dots, x_k - x_1) }^2 \\
              & = (2 \pi)^{k(k - 1) } \prod_{j = 2}^k \abs{x_j - x_1}^2  \abs{ \Delta( x_2 , \dots, x_k ) }^2 \\
              & = (2 \pi)^{k(k - 1) } \abs{ \Delta( x_1 , \dots, x_k ) }^2
\end{align*}

We first prove \eqref{EqPhi:VolumeSubBirkoffPolytopeRMT}. By lemma \ref{Lemma:RescaledGegenbauer}, we have
\begin{align*}
\frac{1}{\ell^k } h_\ell(1, e^{2 i \pi x_1/\ell}, \dots, e^{2 i \pi x_k/\ell})  \tendvers{\ell}{+\infty} e^{   i \pi \sum_{j = 1}^k x_j  } h_{1, \infty}(0, x_1, \dots, x_k)
\end{align*}
hence, using $ \thetab := (\theta_1, \dots, \theta_k) $ and $ \xb := (x_1, \dots, x_k) $, this implies that 
\begin{align*}
L(\ell, \Se_k) & = \frac{1}{k!} \oint_{ \Uu^k } \abs{ h_\ell\crochet{1 + U } }^{2k} \abs{ \Delta(U) }^2 \frac{d^*U}{ U} \\
               & = \frac{1}{k!}\int_{ \crochet{-\frac{1}{2}, \frac{1}{2} }^k } \abs{h_\ell(1,e^{2 i \pi \theta_1 }, \dots, e^{2 i \pi \theta_k }) }^{2k} \abs{\Delta(e^{2 i \pi \theta_1 }, \dots, e^{2 i \pi \theta_k })}^2 d\thetab \\
               & = \frac{1}{k!}\int_{ \crochet{-\frac{\ell}{2}, \frac{\ell}{2} }^k } \abs{h_\ell(1,e^{2 i \pi x_1/\ell}, \dots, e^{2 i \pi x_k/\ell}) }^{2k} \abs{\Delta(e^{2 i \pi x_1/\ell}, \dots, e^{2 i \pi x_k/\ell})}^2 \frac{d\xb}{\ell^k} \\
               & \hspace{-0.2cm} \equivalent{\ell \to +\infty } \hspace{+0.2cm} \ell^{ 2 k^2 - k(k - 1) - k  } \frac{(2\pi)^{k(k - 1) }}{k!} \int_{\Rr^k} \abs{ h_{1, \infty}(0, x_1, \dots, x_k )}^{2k} \abs{ \Delta( x_1 , \dots, x_k ) }^2 d\xb
\end{align*}

Here, we have used dominated convergence given by inequality \eqref{Ineq:SpeedOfConvergenceHinftyBis}. The limiting function is integrable using the first criteria \eqref{Ineq:IntegrabilityDomination} with $ (K, M, \kappa) = (k, 2k, 1) $ which is clearly fullfilled for all $ p \in \ensemble{0, 1, 2} $ and $ k \geq 2 $.

\medskip

We now prove \eqref{EqPhi:VolumeBirkoffPolytopeRMT}. We again use lemma \ref{Lemma:RescaledGegenbauer}, but with one less variable. Setting $ u_j/u_1 = e^{2 i \pi x_j/\ell} $ for all $ j \in \intcrochet{2, k} $ and $ u_1 = e^{2 i \pi \varphi} $ gives
\begin{align*}
L(\ell, \Be_k) & = \frac{1}{k!} \oint_{ \Uu^k } \abs{ h_\ell\crochet{ U } }^{2k} \abs{ \Delta(U) }^2 \frac{d^*U}{ U} \\
               & = \frac{1}{k!} \int_{ -\frac{1}{2} }^{ \frac{1}{2} } d\varphi \int_{ \crochet{-\frac{1}{2}, \frac{1}{2} }^{k - 1} } \abs{h_\ell(1,e^{2 i \pi x_2/\ell}, \dots, e^{2 i \pi x_k/\ell}) }^{2k} \abs{\Delta(1, e^{2 i \pi x_2/\ell}, \dots, e^{2 i \pi x_k/\ell})}^2 \frac{d\xb}{\ell^{k - 1}} \\ 
               & \hspace{-0.2cm} \equivalent{\ell \to +\infty } \hspace{+0.2cm} \ell^{ 2 k(k - 1) - k(k - 1) - (k - 1)  } \frac{(2\pi)^{k(k - 1) }}{k!} \int_{\Rr^{k -1} } \abs{ h_{1, \infty}(0, x_2, \dots, x_k )}^{2k} \abs{ \Delta( 0, x_2 , \dots, x_k ) }^2 d\xb \\
               & = \ell^{ (k - 1)^2 } \frac{(2\pi)^{k(k - 1) }}{k!} \int_{\Rr^{k - 1} } \!\!\Phi_{\Be_k}(0, \xb) \Delta( 0, \xb )^2 d\xb
\end{align*}
hence the result using dominated convergence and the first criteria \eqref{Ineq:IntegrabilityDomination} with $ (K, M, \kappa) = (k - 1, 2k, 1) $.
\end{proof}


\begin{remark}
This description of $ \vol(\Be_k) $ or $ \vol(\Se_k) $ are \textit{a priori} different from the ones given in \cite{HeapLindqvist}, \cite{ConreyGamburd} or \cite{HarperNikeghbaliRadziwill} since they involve $k$ and $(k - 1)$-fold real integrals, as opposed to $ (2k - 1)$-fold complex integrals. A direct proof of the equivalence between these representations and \eqref{EqPhi:VolumeSubBirkoffPolytopeRMT}/\eqref{EqPhi:VolumeBirkoffPolytopeRMT} is still to be found.
\end{remark}


\begin{remark}\label{Rk:SubBirkhoffWithUniforms}
Another expression can be given directly using lemma \ref{Lemma:ShallowCrucialLemma} starting from equation \eqref{Eq:BeckFormulaSubBirkhoff}:
\begin{align*}
L(\ell, \Se_k) & = \crochet{ X^\ell Y^\ell } H\crochet{ X + Y + XY }   \\
               & = \crochet{ X^\ell Y^\ell } H\crochet{ X - X^{\ell + 1} + Y - Y^{\ell + 1} + XY - X^{\ell + 1} Y^{\ell + 1} }  \\
               & = \int_{ \crochet{ - \frac{\ell}{2}, \frac{\ell}{2} }^{2k} } e^{ -2 i \pi \sum_{j = 1}^k (x_j + y_j) } \prod_{j, m = 1}^k e^{ i \pi (x_j + y_m)  }\frac{ \sin( \pi \frac{x_j + y_m}{\ell} (\ell + 1) )  }{\sin( \pi \frac{x_j + y_m}{\ell}  ) } \\
               & \hspace{+4.3cm} \times \prod_{j = 1}^k e^{ i \pi x_j    } \frac{ \sin( \pi \frac{x_j  }{\ell} (\ell + 1) )  }{\sin( \pi \frac{x_j  }{\ell}  ) } e^{ i \pi y_j    } \frac{ \sin( \pi \frac{y_j  }{\ell} (\ell + 1) )  }{\sin( \pi \frac{y_j  }{\ell}  ) } \frac{dx_j dy_j}{\ell^2 } \\
               & \equivalent{\ell\to +\infty} \ell^{ k^2 + 2k - 2k } \int_{ \Rr^{2k} } e^{  i \pi (k - 1) \sum_{j = 1}^k (x_j + y_j) } \prod_{j, m = 1}^k  \sinc( \pi (x_j + y_m) ) \\
               & \hspace{+6.5cm} \times \prod_{j = 1}^k  \sinc( \pi x_j ) \sinc( \pi y_j ) \, dx_j dy_j 
\end{align*}
which implies
\begin{align}\label{Eq:VolumeSubBirkoffPolytopeSinc}
\frac{\vol(\Se_k)}{k^{k - 1} }  =  \int_{ \Rr^{2k} } \!e^{  i \pi (k - 1) \sum_{j = 1}^k (x_j + y_j) } \! \prod_{j, m = 1}^k  \!  \sinc( \pi (x_j + y_m) )\prod_{j = 1}^k  \sinc( \pi x_j ) \sinc( \pi y_j ) \, dx_j dy_j 
\end{align}

Using the Fubini theorem, one can rewrite \eqref{Eq:VolumeSubBirkoffPolytopeSinc} as
\begin{align}\label{EqPhi:VolumeSubBirkoffPolytopeBeckPixton}
\frac{\vol(\Se_k)}{k^{k - 1}} =  \int_{ \Rr^k } e^{i \pi (k - 1) \sum_{j = 1}^k x_j }  h_{1, \infty}(0, x_1, \dots, x_k)^k dx_1 \dots dx_k  
\end{align}

This expression can also be obtained by a rescaling of \eqref{Eq:BeckPixtonBirkhoffBis}.

Using \eqref{Eq:Beta_ProbRepr} with $ \kappa = 1 $, i.e. with $ V $ uniform in $ \crochet{-1, 1} $, this representation is equivalent to
\begin{align*}
\frac{\vol(\Se_k)}{k^{k - 1} } = 2^{2k} \Esp{ \prod_{j = 1}^k \delta_0\prth{ k - 1 - V_{j, k + 1} - \sum_{i = 1}^k V_{j, i} } \delta_0\prth{ k - 1 - V_{ k + 1, j} - \sum_{i = 1}^k V_{ i, j} } }
\end{align*}
where $ (V_{i, j})_{1 \leq i, j \leq k + 1} $ is a matrix of i.i.d. random variables uniformly distributed in $ \crochet{-1, 1} $.

Let $ W $ be an absolutely continuous random variable. Denote by $ f_W(x) = \Esp{\delta_0(W - x) } $ its Lebesgue density. Then, setting $ W_j = V_{j, k + 1} + \sum_{i = 1}^k V_{j, i} $ if $ j \leq k $ and $ W_j = V_{ k + 1, j} + \sum_{i = 1}^k V_{ i, j} $ if $ j \geq k + 1 $, we thus have
\begin{align*}
\frac{\vol(\Se_k)}{k^{k - 1} } = 2^{2k} f_{ (W_1, \dots, W_{2k}) } (k - 1, \dots, k - 1)
\end{align*}

Note that such Lebesgue densities are multidimensional generalisations of the rectangle function $ \Esp{ \delta_0(x - V) } = \frac{1}{2} \Unens{-1 \leq x \leq 1} $, hence, $f_{ (W_1, \dots, W_{2k}) } (x_1, \dots, x_{2k}) $ is a piecewise multivariate polynomial density and taking it in $ \xb := (k - 1)\Un_{2k} $ gives thus a piecewise polynomial in $k$ in the same vein as \cite[cor. 1.7]{KeatingR3G} or \cite[(1.3), (1.4)]{BettinConrey}. Piecewise polynomiality is thus a general phenomenon due to the probabilistic representation of $ h_{c, \infty} $ and will also hold in the more general case of $ h_{c, \infty}^{(\kappa)} $ where uniform random variables are replaced by Beta($ \kappa, 1 $) ones if $ \kappa\in \Nn^* $ since one has $ \betab_{\kappa, 1} \eqlaw U^{1/\kappa} $ with $ U \sim \Us([0, 1]) $, hence $ \Esp{ \delta_0(x - U^{1/\kappa}) } = \kappa x^{\kappa - 1} \Unens{0 \leq x \leq 1} $.

The same can be said about the Birkhoff polytope $ \Be_k $ (details are left to the reader).
\end{remark}

\subsubsection{Extension to transportation polytopes}

Let $ \lambda, \mu \vdash N $ with $ \ell(\lambda) = m $ and $  \ell(\mu) = n $. A transportation polytope $ \Te_{\lambda, \mu} $ is a generalisation of the Birkhoff polytope where the sum of the lines is given by $ \lambda $ and the sum of the colums by $ \mu $, namely 
\begin{align*}
\Te_{\lambda, \mu} := \ensemble{ (M_{i, j})_{(i, j)\in\intcrochet{1, m} \times \intcrochet{1, n} } \in \Rr_+^{mn} \, \bigg/ \, \begin{array}{l} \sum_j M_{i, j} = \lambda_i \ \forall i \in\intcrochet{1, m}  \\ \sum_i M_{i, j} = \mu_j \ \forall j \in\intcrochet{1, n} \end{array}   }
\end{align*}

These polytopes are the set of solutions to a transportation problem in linear programming (see \cite{BeckPixton} and references cited). The Birkhoff polytope is $ \Be_k = \Te_{1^k, 1^k} $. Note that  it is always possible to assume $ \lambda_1 \geq \lambda_2 \geq \dots $, etc. by multiplying the matrix on the left and on the right with permutation matrices (which amounts to change the order of the coordinates). 

$ \Te_{\lambda, \mu} $ is a convex polytope defined by equalities of dimension $ (m - 1)(n - 1) $ in $ \Rr^{mn} $. We are interested in the behaviour of the Erhahrt polynomial $ L(\ell, \Te_{\lambda, \mu} ) :=  \# ( \ell \Te_{\lambda, \mu} \cap \Zz^{mn}) $ when $ \ell\to+\infty $ and in particular to $ \vol(\Te_{\lambda, \mu}) $. Beck and Pixton give the following formula \cite[\S 5]{BeckPixton}
\begin{align*}
L(\ell, \Te_{\lambda, \mu} ) = \crochet{X^{\ell \lambda} Y^{\ell \mu} } H\crochet{ XY }
\end{align*}

They transform it into \cite[thm. 5]{BeckPixton}
\begin{align*}
L(\ell, \Te_{\lambda, \mu} ) = \crochet{ X^{\ell \lambda} } h_{\ell \mu}\crochet{X}
\end{align*}

This formula can be proven with the same manipulations as in remark \ref{Rk:BirkhoffWithH}. In the same vein as lemma \ref{Lemma:EhrhartBirkhoff} one has
\begin{align}\label{Eq:EhrhartTransportationPolytope}
L(\ell, \Te_{\lambda, \mu} ) = \frac{1}{k!} \int_{\Uu^k} h_{\ell \lambda}(u_1, \dots, u_k) \overline{h_{\ell \mu}(u_1, \dots, u_k) } \abs{\Delta(U)}^2 \frac{d^*U}{U}, \quad k := \max\ensemble{m, n} 
\end{align}

This formula is in fact well-known. It is given by Diaconis and Gangolli in \cite[\S 4, thm.]{DiaconisGangolli}. The manipulations of lemma \ref{Lemma:EhrhartBirkhoff} amount thus to prove the Beck-Pixton formula directly from this symmetric function formula using the reproducing kernel property \eqref{Eq:ReproducingKernelProperty}/\eqref{Trick:Hreproducivity}. 

In \cite[\S 7]{DiaconisGangolli}, the question is asked about the behaviour of $ L(1, \Te_{\lambda, \mu} ) $ when $ \abs{\lambda} = N \to +\infty $ and $ \ell(\lambda), \ell(\mu) $ are fixed ; this is equivalent to the behaviour of $ L(\ell, \Te_{\lambda, \mu} ) $ when $ \ell \to +\infty $ and $ \lambda, \mu $ are fixed if one supposes that the partition is well-balanced in the sense that $ \lambda_i = \pe{N c_i} $ for all $i$ (and the same for $ \mu $). By the Ehrhart theorem, we know that 
\begin{align*}
\frac{L(\ell, \Te_{\lambda, \mu} )}{ \ell^{(m - 1)(n - 1)} } \tendvers{\ell}{+\infty} \covol( \Te_{\lambda, \mu}, \Rr^{mn})
\end{align*}

Diaconis and Efron (see \cite[(7.2)]{DiaconisGangolli} and reference cited) suggest without proof\footnote{We modify their statement which is an approximation with slightly different quantities equivalent to ours when $ \ell \to +\infty $. } that in such a case
\begin{align*}
\covol( \Te_{\lambda, \mu}, \Rr^{mn}) \approx \prth{ \prod_{j = 1}^m \lambda_j }^{ n - 1 } \prth{ \prod_{j = 1}^n \mu_j }^{ K - 1 } \frac{ \Gamma(mn) }{ \Gamma(n)^{m - 1} \Gamma(K + n) }, \qquad K := \frac{n + 1}{n \sum_j \lambda_j^2 } - \frac{1}{n}
\end{align*}
%
%
%

We now give the value of $ \covol( \Te_{\lambda, \mu}, \Rr^{mn}) $ as an integral over $ \Rr^{\max\ensemble{m, n} - 1} $:

\begin{lemma}[Relative volume of $ \Te_{\lambda, \mu} $ in $ \Rr^{mn} $]
We have, with $ U := (u_1, \dots, u_k) $
\begin{align}\label{EqPhi:CovolumeTransportationPolytope}
\begin{aligned}
\covol( \Te_{\lambda, \mu}, \Rr^{mn}) & = \frac{(2\pi)^{k(k - 1)} }{k!} \int_{\Rr^{k - 1} } \Phi_{\Te_{\lambda, \mu}}(0, \xb) \Delta(0, \xb)^2 d\xb \\
\Phi_{\Te_{\lambda, \mu}}(0, \xb) & = h_{\lambda, \infty}(0, \xb) \overline{h_{\mu, \infty}(0, \xb) }
\end{aligned}
\end{align}
where $ h_{\lambda, \infty} := \prod_{j \geq 1} h_{\lambda_j, \infty} $ and $ k := \max\ensemble{m, n} $.
\end{lemma}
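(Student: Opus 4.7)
The plan is to apply the rescaling machinery of Annex \ref{Sec:ProbReprGegenbauer} directly to the symmetric-function formula \eqref{Eq:EhrhartTransportationPolytope}, in exact parallel with the proof of theorem \ref{Theorem:VolumeBirkoff} (the Birkhoff case $\Te_{1^k,1^k}$), and then identify the dominant coefficient in $\ell$ with the relative covolume thanks to Ehrhart's theorem, which states that $L(\ell,\Te_{\lambda,\mu})/\ell^{(m-1)(n-1)} \to \covol(\Te_{\lambda,\mu},\Rr^{mn})$ since $\dim \Te_{\lambda,\mu} = (m-1)(n-1)$.

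First I would exploit homogeneity as in remark \ref{Rk:MultivariateFourierWithHomogeneity}: the functions $h_{\ell\lambda}=\prod_j h_{\ell\lambda_j}$ and $h_{\ell\mu}$ are both homogeneous of degree $\ell N$ (since $|\lambda|=|\mu|=N$), and $|\Delta(U)|^2$ is invariant under a common rotation of the $u_j$. Thus, setting $u_1=e^{2i\pi\varphi}$ and $u_j/u_1=e^{2i\pi x_j/\ell}$ for $j\in\intcrochet{2,k}$, the $\varphi$-integration trivializes and \eqref{Eq:EhrhartTransportationPolytope} becomes an integral over $[-\ell/2,\ell/2]^{k-1}$ of $\prod_{j=1}^m h_{\ell\lambda_j}(1,e^{2i\pi x_2/\ell},\dots,e^{2i\pi x_k/\ell})\,\prod_{j=1}^n \overline{h_{\ell\mu_j}(1,e^{2i\pi x_2/\ell},\dots)}\,|\Delta(1,e^{2i\pi x_2/\ell},\dots)|^2\,\ell^{-(k-1)}\,d\xb$.

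Next I would apply lemma \ref{Lemma:RescaledGegenbauer} termwise: each factor $h_{\ell\lambda_j}(1,e^{2i\pi x_2/\ell},\dots,e^{2i\pi x_k/\ell})$ is equivalent to $\ell^{k-1}e^{i\pi\lambda_j\sum_{r\geq 2}x_r}h_{\lambda_j,\infty}(0,\xb)$, and likewise for $h_{\ell\mu_j}$. Multiplying everything, the exponential phases collect as $e^{i\pi(|\lambda|-|\mu|)\sum x_r}=1$ because $|\lambda|=|\mu|=N$; this is precisely the mechanism that kills all oscillatory factors and leaves only $h_{\lambda,\infty}(0,\xb)\overline{h_{\mu,\infty}(0,\xb)}$ in the limit. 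Combined with the Vandermonde rescaling $|\Delta(1,e^{2i\pi x_2/\ell},\dots)|^2\sim(2\pi/\ell)^{k(k-1)}\Delta(0,\xb)^2$, the total power of $\ell$ in front of the integral becomes $(m+n)(k-1)-k(k-1)-(k-1)=(m-1)(n-1)$ (using $k=\max\{m,n\}$), which correctly matches the Ehrhart exponent and immediately yields \eqref{EqPhi:CovolumeTransportationPolytope} after dividing by $\ell^{(m-1)(n-1)}$ and passing to the limit.

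The main obstacle, as in all preceding theorems, is the justification of dominated convergence: one has to control the tails of the $\xb$-integral uniformly in $\ell$. For this I would invoke inequality \eqref{Ineq:SpeedOfConvergenceHinftyBis} applied to each of the $m+n$ factors $h_{\ell\lambda_j}$, $h_{\ell\mu_j}$, and then verify integrability of the product against $\Delta(0,\xb)^2$ on $\Rr^{k-1}$ using the second criterion \eqref{Ineq:IntegrabilityDomination} with parameters $(K,M,M',\kappa,\kappa')=(k-1,m,n,1,1)$, choosing the $\lambda_j,\mu_j$ so that sufficiently many copies produce decay faster than the polynomial growth $\Delta(0,\xb)^2\lesssim(1+|\xb|)^{2(k-1)}$. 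As long as $\min(m,n)\geq 1$ and $\max(m,n)=k\geq 2$, the domination holds; the degenerate case $k=1$ is trivial since $\Te_{(N),(N)}$ is a point.
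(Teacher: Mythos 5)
Your proposal follows essentially the same route as the paper's proof: remove $u_1$ by homogeneity (remark~\ref{Rk:MultivariateFourierWithHomogeneity}), rescale each factor $h_{\ell\lambda_j}$ and $h_{\ell\mu_j}$ via lemma~\ref{Lemma:RescaledGegenbauer}, observe that the oscillatory phases collect to $e^{i\pi(|\lambda|-|\mu|)\sum x_r}=1$ since $|\lambda|=|\mu|$, identify the exponent $(k-1)(m+n-k-1)=(m-1)(n-1)$ with Ehrhart's theorem, and close by dominated convergence via~\eqref{Ineq:SpeedOfConvergenceHinftyBis} and (a modified form of) the second criterion of~\eqref{Ineq:IntegrabilityDomination}. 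One small caveat: the integrability of the limiting integrand on $\Rr^{k-1}$ genuinely requires $\min(m,n)\geq 2$, not merely $\geq 1$ as you write — a back-of-the-envelope count shows the condition is $(m-1)(n-1)>p$, which fails when $\min(m,n)=1$ (a degenerate case where $\Te_{\lambda,\mu}$ collapses to a point and the displayed $(k-1)$-fold integral is not absolutely convergent); the paper's hypothesis $m,n\geq 2$ is the correct one.
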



\begin{proof} 
Since $ \abs{\lambda} = \abs{\mu} $ and $ h_\lambda(s X) = s^{\abs{\lambda}} h_\lambda(X) $, we have
\begin{align*}
L(\ell, \Te_{\lambda, \mu} )  & = \frac{1}{k!} \int_{\Uu^k} h_\lambda\prth{1, \frac{u_2}{u_1}, \dots, \frac{u_k}{u_1} } \overline{ h_\mu\prth{ 1, \frac{u_2}{u_1}, \dots, \frac{u_k}{u_1} } } \abs{\Delta \prth{1, \frac{u_2}{u_1}, \dots, \frac{u_k}{u_1} } }^2 \frac{d^*U}{U} \\
                & = \frac{1}{k!} \int_{\Uu^{k - 1} } h_\lambda\crochet{1 + V} \overline{ h_\mu\crochet{1 + V} } \abs{\Delta \crochet{1 + V} }^2 \frac{d^*V}{V}
\end{align*}
using the trick of remark \ref{Rk:MultivariateFourierWithHomogeneity} and integrating out $ u_1 $

Setting $ v_j  = e^{2 i \pi x_j /\ell } $ for $ j \in \intcrochet{2, \ell} $ with $ x_j \in \crochet{-\frac{\ell}{2}, \frac{\ell}{2} } $, we get
\begin{align*}
L(\ell, \Te_{\lambda, \mu} )  & = \frac{1}{k!}  \int_{ \crochet{-\frac{\ell }{2}, \frac{\ell }{2} }^{k -1} } h_\lambda\prth{1, e^{2 i \pi \frac{x_2}{\ell} } , \dots, e^{2 i \pi \frac{x_k}{\ell} } } \overline{ h_\mu\prth{ 1, e^{2 i \pi \frac{x_2}{\ell} }, \dots, e^{2 i \pi \frac{x_k}{\ell} } } } \\
                      & \hspace{+4cm} \abs{\Delta \prth{1, e^{2 i \pi \frac{x_2}{\ell} }, \dots, e^{2 i \pi \frac{x_k}{\ell} } } }^2 \frac{d\xb}{\ell^{k - 1} } \\
                     & \equivalent{\ell\to+\infty} \ell^{ (k - 1) n + (k - 1) m - k(k - 1) - (k - 1)  } \frac{(2\pi)^{k(k - 1) } }{k!} \int_{ \Rr^{k - 1} } e^{ i \pi \abs{\lambda} \sum_{j = 2}^k x_j } h_{\lambda, \infty}(0, \xb) \\
                     & \hspace{+8cm} e^{ -i \pi \abs{\mu} \sum_{j = 2}^k x_j } \overline{h_{\mu, \infty}(0, \xb) } \Delta(0, \xb)^2 d\xb
\end{align*}

We have moreover 
\begin{align*}
(k - 1) n + (k - 1) m - k(k - 1) - (k - 1) & = (k - 1)(m + n - k - 1)  \\
             & = (\max\ensemble{m, n} - 1)(\min\ensemble{m, n} - 1) \\
             & = (m - 1)(n - 1)
\end{align*}
which gives the right scaling in view of Ehrhart's theorem.

We conclude using dominated convergence. The domination is given by \eqref{Ineq:SpeedOfConvergenceHinftyBis} and the limiting function is integrable using a slight modification of the second criteria \eqref{Ineq:IntegrabilityDomination} with $ (K, M, M', \kappa, \kappa') = (k - 1, m, n, 1, 1) $ which is clearly fullfilled for all $ p \in \ensemble{0, 1, 2} $ and $ m, n \geq 2 $. Here, using a product of $ h_{\lambda_i, \infty} $ or a power $ h_{c, \infty}^M $ gives the same result.
\end{proof}

\medskip
\subsection{Iterated derivatives of the characteristic polynomial}\label{Subsec:Derivatives} 

\subsubsection{Motivations}\label{Subsubsec:Applications:Derivatives:Motivations}

The problem of the derivatives of $ Z_{U_N} $ in $1$ has a long history. The fourth moment of the random variable $ \zeta'\prth{\frac{1}{2} + i T U} $ with $ U $ uniform in $ \crochet{0, 1} $ was already investigated by Conrey in \cite{ConreyFourth}. After the seminal work \cite{KeatingSnaith} and the Keating-Snaith paradigm that used $ Z_{U_N}(1) $ as a toy-model for $ \zeta\prth{\frac{1}{2} + i T U} $, it became natural to investigate the moments of $ Z'_{U_N}(1) $ where 
\begin{align*}
Z'_{U_N}(z) = \frac{d}{dz} Z_{U_N}(z) = \sum_{k = 1}^N k \sc_k(U_N) z^{k - 1}
\end{align*}

The asymptotics of the joint moments of $ (Z_{U_N}(1), Z'_{U_N}(1) ) $ were performed with several methods, see \cite{ConreyRubinsteinSnaith, POD, PODfpsac, HughesKeatingOConnell2, HughesThesis, MezzadriDerivatives, SnaithDerivatives, Winn, RiedtmannMixedRatios} (see also \cite{ChhaibiNajnudelNikeghbali} where a related quantity is computed, starting from points at the microscopic scale). For instance, \cite[thm. 1]{ConreyRubinsteinSnaith} gives
\begin{align*}
\frac{\Esp{ \abs{ Z_{U_N}'(1) }^{ 2k } }}{ N^{k^2 + 2k} } \tendvers{N}{+\infty} (-1)^{ \frac{k(k + 1) }{2 } }  \prth{ 1 + \frac{d}{dx} }^k \prth{ \frac{d}{dx} }^k \prth{ \frac{e^{- \frac{x}{2} } }{x^{ \frac{k^2}{2} } } \det\prth{ I_{i + j - 1}(2\sqrt{x}  }_{1 \leq i, j \leq k} }\bigg\vert_{x = 0}
\end{align*}
where $ I_k(x) := \crochet{t^k} e^{ x \frac{t + t\inv}{2} } $ is the Bessel function of order $k$. This last expression was moreover proven to be equivalent to an expression involving Painlev\'e III' and V equations in \cite{ForresterWittePainleve3p5}.

Another step in this study was performed by Winn \cite{Winn} and Dehaye \cite{POD} who give the limiting value of 
\begin{align}\label{Def:JointMomentsDerivative_0_1}
\De_N(k, h) := \Esp{ \abs{Z_{U_N}(1)}^{2(k - h)} \abs{Z'_{U_N}(1)}^{2h} }
\end{align}
for half integers $ h $ ; more precisely, we have with $ \De_\infty(k, h) $ given in \cite[(6.14)]{Winn}/\cite[(2)]{POD}
\begin{align}\label{Eq:CvJointMomentsDerivative_0_1}
\frac{\De_N(k, h)}{N^{k^2 + 2h} } \tendvers{N}{+\infty} \De_\infty(k, h)
\end{align}

These studies culminate in the recent work of Assiotis-Keating-Warren \cite{AssiotisKeatingWarren} who change probability in \eqref{Def:JointMomentsDerivative_0_1}. Consider indeed the probability measure
\begin{align*}
\Pp_{k, N}(d\thetab) := \frac{\abs{Z_{U_N}(1)}^{2k}  }{ \Esp{\abs{Z_{U_N}(1)}^{2k}  } } \bullet \Pp_{CUE_N}(d\thetab) = \frac{1}{ \Esp{\abs{Z_{U_N}(1)}^{2k}  } } \prod_{j = 1}^N \abs{1 - e^{2 i \pi \theta_j} }^{2k} \abs{\Delta(e^{2 i \pi \thetab}) }^2 \frac{d\thetab}{N!}
\end{align*}
for $ \thetab \in [0, 1]^N $. Since the weight $ \abs{Z_{U_N}(1)}^{2k} $ is tensorial in the eigenvalues, the measure such defined is still determinantal (i.e. its eigenvalues form a determinantal point process). For $ k = 1 $, one can write the weight as $ \abs{\Delta(1, e^{2 i \pi \thetab}) }^2 $ which shows that one has conditioned $ U_{N + 1} \sim CUE_{N + 1} $ on having an eigenvalue in $ 1 $. For $ k \geq 1 $, one has done the same with imposing $ k $ eigenvalues equal to $1$ to $ U_{N + k} \sim CUE_{N + k} $, see e.g. \cite{BourgadeCondHaar} ; for $ k \notin \Nn $, this is still possible to define this ensemble without the interpretation as a conditioning, and one obtains the \textit{circular Jacobi Ensemble} \cite{BourgadeNikeghbaliRouault, Neretin}. 

Under this new measure, and setting $ U_{N, k } \sim \Pp_{N, k} $, one has\footnote{One needs to suppose $ k \notin \Nn $ otherwise $ Z_{U_{N, k}} $ has $ \Pp_{N, k}$-almost surely a zeroe of order $k$ in $1$, hence $ Z_{U_{N, k}}'(1) = \cdots = (\frac{d}{dx})^{k - 1} Z_{U_{N, k}}(x)\vert_{x = 1} = 0 $ a.s.}  
\begin{align*}
\De_N(k, h) = \Esp{\abs{Z_{U_N}(1)}^{2k} }   \Espr{k, N}{ \abs{ \frac{Z'_{U_N}(1)}{Z_{U_N}(1)} }^{2h} } = \Esp{\abs{Z_{U_N}(1)}^{2k} }   \Esp{ \abs{ \trace\prth{ (I - U_{N, k})\inv } }^{2h} }
\end{align*}

Since \eqref{Eq:KSCUEwithDuality} gives $ N^{-k^2} \Esp{\abs{Z_{U_N}(1)}^{2k} }  \to \widetilde{L}_1(k) $, proving \eqref{Eq:CvJointMomentsDerivative_0_1} amounts to show that
\begin{align*}
\Espr{k, N}{ \abs{ \frac{1}{N} \trace\prth{ (I - U_N)\inv } }^{2h} } \tendvers{N}{+\infty } \Ree(k, h) := \frac{\De_\infty(k, h)}{\De_\infty(k, 0)}
\end{align*}

The statistics $ \trace(f(U_N)) := \sum_{k = 1}^N f(\lambda_k) $ (where $ (\lambda_k)_k $ designates the spectrum of $ U_N $) are called \textit{linear statistics} of the eigenvalues of $ U_N $ and there is a whole literature designed to their study. In particular, one can directly study their Fourier-Laplace transform in lieu of their moments\footnote{Convergence in law is not equivalent to convergence in moments, and one needs some additional uniform integrability to conclude.}, as it writes as a Fredholm determinant. In this particular case, the change of variables $ h_i := (1 - \lambda_i)\inv $ amounts to the Cayley transform of \S~\ref{Subsec:Theory:DualityHermitian} and maps the circular Jacobi Ensemble to the Hua-Pickrell measure on $ \He_N $ ; one can then directly study the trace of such a random hermitian matrix. Note nevertheless that $ \frac{1}{N} \trace\prth{ (I - U_N)\inv } $ is the logarithmic derivative of $ Z_{U_N}(e^{x/N})/ Z_{U_N}(1) $ in $ x = 0 $ and this last ratio is known to converge in law in the topology of locally uniform convergence to a limiting analytic function in the case $ k = 0 $ by \cite{ChhaibiNajnudelNikeghbali}. A generalisation of this last work to the case $ k \neq 0 $ would thus give an interesting comparison to \cite{AssiotisKeatingWarren}.

\medskip
\subsubsection{The framework considered in this article}

We will only work in the limited framework of $ k, h \in \Nn $, with the additional gain to consider iterated derivatives. Obtaining such a generalisation was posed in the conclusion of Dehaye's work \cite[\S 10]{POD}. The method employed by Dehaye uses representation theory and symmetric functions in the same vein as this article, but it is nevertheless very different in nature since it proceeds by expansion of the integrand in the basis of Schur functions, ending up with combinatorial expressions (see \S~\ref{Subsec:Ultimate:Expansions} for a description~; see also the recent work \cite{RiedtmannMixedRatios} that treats in a similar way moments of a functional mixing ratios and first order derivatives).

\medskip
\subsubsection{Expressing $ \De_\infty(k ; h_1, \dots, h_m)  $}

Define $ k^{\downarrow m} := k(k - 1) \dots (k - m + 1) $ and $ \hb := (h_1, \dots, h_m) \in \Nn^m $. Then, setting 
\begin{align*}
\partial^m Z_{U_N}(z) & := \prth{\frac{d}{dz} }^m Z_{U_N}(z) = \sum_{k = m }^N k^{\downarrow m} \sc_k(U_N) z^{k - m } \\
C(\hb) & := \sum_{r \geq 1} r h_r \\
\abs{\hb} & := \sum_{r \geq 1} h_r
\end{align*}
we consider for $ k \geq \abs{\hb} $
\begin{align}\label{Def:DerivativeFunctional}
\De_N(k ; \hb) := \Esp{ \abs{Z_{U_N}(1)}^{2(k - \abs{\hb} )} \prod_{r = 1}^m \abs{\partial^r Z_{U_N}(1)}^{2h_r} }
\end{align}

\begin{theorem}[Iterated derivatives of the characteristic polynomials in $1$]\label{Theorem:Derivatives}
Suppose $ k \geq \abs{\hb} $ and $ k \geq 2 $. Then, the following convergence holds 
\begin{align}\label{Eq:ConvergenceDerivatives}
\frac{ \De_N(k ; \hb) }{ N^{k^2 + 2 C(\hb) } } \tendvers{N}{+\infty } \De_\infty(k ; \hb) 
\end{align}
with
\begin{align}\label{EqPhi:Derivatives}
\begin{aligned}
\De_\infty(k ; \hb)  & = \frac{(2\pi)^{ k(k - 1) }  }{k! } \int_{ \Rr^{k - 1} } \Phi_{ \De_\infty(k ; \hb) }(0, X) \Delta\prth{ 0, X}^2  d X \\
\Phi_{ \De_\infty(k ; \hb) }(0, x_2, \dots, x_k) & :=  \prth{\frac{-1}{4\pi^2} }^{ C(\hb) }  \prod_{r = 1}^m (r!)^{2 h_r} \ e^{ - 2 i \pi \sum_{j = 2}^k x_j }   \\
               & \hspace{+1cm} \times \crochet{Z_1^1 \cdots Z_r^r  (Z'_1)^1 \cdots (Z_r')^r } e^{ 2 i \pi \sum_{z' \in Z'}   z'  +  i \pi k \sum_{a \in X \tensorsum ( Z + Z') } a } \\
               & \hspace{+5cm} h_{k, \infty}\crochet{ (0 + X) \tensorsum \prth{ Z + Z' + 0^{ \plusInOne 2 h_0 } } }
\end{aligned}
\end{align}
where $ Z_r := (z_{\ell, r})_{ 1 \leq \ell \leq h_r  } $ for all $ r \in \intcrochet{1, m} $ and $ Z := (Z_1, \dots, Z_m) $.
\end{theorem}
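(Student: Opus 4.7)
The plan is to reduce the joint moments of iterated derivatives to a microscopic autocorrelation problem by means of a Taylor coefficient extraction, then apply the master formula \eqref{FourierRepr:SchurRectangleWithHn} in the same spirit as in \S\ref{Subsec:KS} and \S\ref{Subsec:Autocorrelations}.

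First I would rewrite each derivative as a Taylor coefficient using $\partial^r Z_{U_N}(1) = r!\,[\zeta^r]\,Z_{U_N}(1+\zeta)$. Introducing independent formal variables $\zeta_{\ell, r}$ and $\zeta'_{\ell, r}$ for $\ell \in \intcrochet{1,h_r}$, $r \in \intcrochet{1,m}$, one obtains the exact identity
\begin{align*}
\De_N(k;\hb) = \prod_{r=1}^m (r!)^{2h_r} [Z^\alpha (Z')^\alpha] \, \Esp{ |Z_{U_N}(1)|^{2(k-|\hb|)} \prod_{r, \ell} Z_{U_N}(1+\zeta_{\ell,r}) \overline{Z_{U_N}(1+\zeta'_{\ell,r})} }
\end{align*}
where $Z^\alpha := \prod_{r,\ell} \zeta_{\ell,r}^r$. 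The inner expectation is a $2k$-fold autocorrelation, hence by \eqref{Eq:SchurJointMoments} it writes, up to an explicit $\prod y^{-N}$ factor, as $s_{N^k}\crochet{\Ae(\zeta,\zeta')}$ for the alphabet $\Ae(\zeta,\zeta')$ consisting of $k-|\hb|$ copies of $1$, the $(1+\zeta_{\ell,r})$ and the conjugate-reciprocal partners of the $(1+\zeta'_{\ell,r})$.

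Next, I would apply \eqref{FourierRepr:SchurRectangleWithHn} to get $s_{N^k}\crochet{\Ae} = \frac{1}{k!} \oint_{\Uu^k} U^{-N} h_{Nk}\crochet{U\Ae} |\Delta(U)|^2 \frac{d^*U}{U}$, and since $h_{Nk}\crochet{U\Ae}$ is polynomial in the $\zeta,\zeta'$, Fubini legitimates bringing the coefficient extraction $[Z^\alpha(Z')^\alpha]$ inside the $\Uu^k$ integral. Then I perform the microscopic change of variables $u_j = e^{2 i \pi x_j/N}$ together with $\zeta_{\ell,r} = 2 i \pi z_{\ell,r}/N$, so that $1+\zeta_{\ell,r} = e^{2 i \pi z_{\ell,r}/N}(1 + O(N^{-2}))$. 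Each Taylor extraction contributes a factor $(N/2i\pi)^r$, which collectively produces the $N^{2C(\hb)}(-1/4\pi^2)^{C(\hb)}$ prefactor of \eqref{EqPhi:Derivatives}. The Vandermonde factor is treated by the homogeneity trick of remark~\ref{Rk:MultivariateFourierWithHomogeneity} (integrating out $u_1$ via $v_j := u_j/u_1$), yielding $(2\pi)^{k(k-1)} N^{k(k-1)} \Delta(0,x_2,\dots,x_k)^2$ and the exponential $e^{-2 i \pi \sum_{j=2}^k x_j}$, while the $\prod y^{-N}$ prefactor collapses to $e^{2 i \pi \sum z'}$ in the microscopic limit. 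Finally, lemma~\ref{Lemma:RescaledGegenbauer} applied to the $2k^2$-point alphabet $(0+X) \tensorsum (Z + Z' + 0^{\plusInOne 2h_0})$, with $h_0 := k-|\hb|$, yields $h_{Nk}\crochet{U\Ae} \sim N^{2k^2 - 1} e^{i \pi k \sum_a a}\, h_{k,\infty}\crochet{\cdots}$, matching the overall order $N^{k^2 + 2C(\hb)}$ and the claimed limiting function $\Phi_{\De_\infty(k;\hb)}$.

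The limit is justified by dominated convergence: pointwise convergence is provided by lemma~\ref{Lemma:RescaledGegenbauer}, the uniform-in-$N$ bound comes from \eqref{Ineq:SpeedOfConvergenceHinfty}/\eqref{Ineq:SpeedOfConvergenceHinftyBis}, and integrability of the limit over $\Rr^{k-1}$ follows from the first criterion \eqref{Ineq:IntegrabilityDomination} with $(K, M, \kappa) = (k-1, 2k, 1)$, satisfied as soon as $k \geq 2$. I expect the main obstacle to be the interchange of the $\zeta$-Taylor coefficient extraction with the $N\to+\infty$ limit: since the extraction amounts to small contour integrals around $\zeta = 0$, one needs a Cauchy-style uniform bound on $h_{Nk}\crochet{U\Ae(\zeta,\zeta')}$ on a fixed complex polydisc around $\zeta=\zeta'=0$, uniformly in the $x_j$'s. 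This is a routine but careful analytic argument ensuring that differentiation in $\zeta$ to fixed orders $(h_r)$ commutes with the asymptotic expansion of $h_{Nk}$, and it is what ultimately guarantees that the bookkeeping of the $(-1/4\pi^2)^{C(\hb)}$ phase and of the exponential factors in $\Phi_{\De_\infty(k;\hb)}$ is correct.
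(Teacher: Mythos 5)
Your overall architecture is right and matches the paper: Taylor extraction converts the iterated-derivative moments into Taylor coefficients of an autocorrelation; the autocorrelation is a rectangular Schur function by \eqref{Eq:SchurJointMoments}; the reproducing kernel formula \eqref{FourierRepr:SchurRectangleWithHn} reduces things to a rescaling of $h_{Nk}$; and the homogeneity trick plus lemma~\ref{Lemma:RescaledGegenbauer} yield the microscopic limit with the correct order $N^{k^2+2C(\hb)}$. You also correctly locate the main difficulty in the interchange of the Taylor extraction with the limit.

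Two points, however, differ from the paper in ways that need flagging. First, the approximation $1+\zeta_{\ell,r}=e^{2i\pi z_{\ell,r}/N}(1+O(N^{-2}))$ is elementary for each fixed $\zeta$, but must be propagated through $h_{Nk}\crochet{U\Ae(\cdot)}$ and the expectation uniformly. The paper achieves this by explicitly passing to the dual integral over $\Ue_k$ (the Riesz-representation/duality step of \S\ref{Subsec:DualityWithRKHSandPlethysm}), choosing $A = N/i$ in \eqref{Eq:TaylorFormula}, and exploiting that $1/Z_{U_k}$ is holomorphic in the disc with an almost surely \emph{deterministic} bound $k(1-\rho)^{-1}$ on its logarithmic derivative — this is only available because the dual side involves $Z_{U_k}$ of fixed degree $k$, not $Z_{U_N}$ of degree $N$. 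Your version would need an analogous uniform Lipschitz control on $h_{Nk}$.

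Second, and more fundamentally: you propose a Cauchy-style uniform bound on a \emph{complex} polydisc in the $\zeta$'s to justify the interchange, but the paper explicitly warns that this route does not directly apply, because lemma~\ref{Lemma:RescaledGegenbauer} and the dominations \eqref{Ineq:SpeedOfConvergenceHinfty}/\eqref{Ineq:SpeedOfConvergenceHinftyBis} of Annex~\ref{Sec:ProbReprGegenbauer} are established for \emph{real} arguments (they are Fourier transforms of real random variables). Extending them to a complex polydisc is a genuine additional piece of work that your proposal does not supply. The paper instead treats $\crochet{Z Z'}$ as a derivative at $Z=Z'=0$ (via \eqref{Eq:TaylorFormula}) and dominates $\partial_z^{h_r}$ by the explicit formula and decay of $\sinc^{(r)}$, justifying differentiation under the integral sign in \eqref{Def:HKappaCInfty}. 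Given the real-valued rescaling machinery already built, that derivative route is the one that closes the argument; if you want to keep the contour version you must first show the rescaling limits and bounds persist in a uniform complex neighborhood of the real axis.
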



\begin{proof}
Recall the Taylor formula for a polynomial $ f $:
\begin{align}\label{Eq:TaylorFormula}
\partial^r f(1) = r! \crochet{z^r } f(1 + z) = A^r r! \crochet{z^r } f\prth{ 1 + \frac{z}{A} }\quad \forall A \neq 0
\end{align}

The main idea of this proof is to write this formula for $ A = N $ and to justify the replacement $ f(1 + z/N) \approx f(e^{z/N}) $ that then allows to use the previous machinery in the microscopic setting\footnote{See \cite[(42)]{PODfpsac} for a similar idea with a different continuation (binomial formula)}.

Set $ h_0 := k - \abs{\hb} $. We have 
\begin{align*}
\De_N(k ; \hb) & = \prod_{r = 1}^m \prod_{\ell = 1}^{h_r} \frac{\partial^r}{(\partial t_{\ell, r})^r }  \frac{\partial^r}{(\partial t'_{\ell, r})^r } \Esp{ \! \abs{Z_{U_N}(1) }^{2 h_0 }\! \prod_{r = 1}^m \prod_{\ell = 1}^{h_r} \! Z_{U_N}\prth{ t_{\ell, r} } \overline{ Z_{U_N}\prth{ t'_{\ell, r} } }  }\!\!\bigg\vert_{  T = T' = 1^{\plusInOne \abs{\hb} } } \\
              & =  \prod_{r = 1}^m (r!)^{2 h_r} \crochet{T_1^1 \cdots T_m^m }\crochet{(T'_1)^1 \cdots (T_m')^m }  \Esp{ \! \abs{Z_{U_N}(1) }^{2 h_0 }\! \prod_{r = 1}^m \prod_{\ell = 1}^{h_r}  Z_{U_N}( 1 + t_{\ell, r} )  \overline{ Z_{U_N}( 1 + t'_{\ell, r} ) }  } 
\end{align*}

Now, setting $ x_{\ell, r} := 1 + t_{\ell, r} $ and $ x'_{\ell, r} := 1 + t'_{\ell, r} $, we have by \eqref{Eq:InvolutionOmegaOnSchur} or \eqref{Eq:SchurJointMoments}
\begin{align*}
\Qe_N(X, X') & := \frac{ 1}{N! } \int_{\Uu^N } H\crochet{ - U (1^{\plusInOne h_0} +  X   ) - U\inv (1^{\plusInOne h_0} + X'  )  }   \abs{\Delta(U)}^2 \frac{d^*U}{ U} \\
             & = (-1)^{ N k } \prod_{x' \in X'} (x')^N \frac{ 1 }{N! } \int_{\Uu^N } U^{-k } H\crochet{ - U (1^{\plusInOne 2 h_0} +  X + X'  )  }   \abs{\Delta(U)}^2 \frac{d^*U}{ U}  \\
             & = (-1)^{ N k } \prod_{x' \in X'} (x')^N \,  s_{N^k}\crochet{ 1^{\plusInOne 2 h_0} +  X + X' } \\
             & = (-1)^{ N k } \prod_{x' \in X'} (x')^N \, \int_{\Ue_k } \det(U)^{-N} \prod_{a \in 1^{\plusInOne 2 h_0} +  X + X' } \det(I_k + a U)\inv dU \\
             & = \prod_{x' \in X'} (x')^N \, \int_{\Ue_k } \det(U)^{-N} \prod_{a \in 1^{\plusInOne 2 h_0} + X + X' } \det(I_k - a U)\inv dU
\end{align*}
where the last equality comes from the change of variable $ U' = -U $ and the invariance of the Haar measure by mutiplication by the unitary matrix $ -I_k $.

Using \eqref{Eq:TaylorFormula} with $ A = N/i $, one gets with $ T = iZ/N $ and $ T' = iZ'/N $
\begin{align*}
\De_N(k ; \hb) & = (-iN)^{2C(\hb)} \prod_{r = 1}^m (r!)^{2 h_r} \crochet{Z_1^1 \cdots Z_m^m  (Z'_1)^1 \cdots (Z_m')^m }  \prod_{z' \in Z'} \prth{ 1 + i\frac{z'}{N} }^N \\
               & \hspace{+3cm} \times  \Esp{ \! \det(U_k)^{-N} Z_{U_k}(1)^{-2 h_0 } \! \prod_{r = 1}^m \prod_{\ell = 1}^{h_r} Z_{U_k}\prth{ 1 + i \frac{z_{\ell, r} }{N} }\inv \overline{ Z_{U_k}\prth{ 1 + i \frac{z'_{\ell, r}}{N} }\inv } } 
\end{align*}

If a meromorphic function $f$ on $ \Cc $ has all its singularities on the unit circle, then for $ z \in \Dd_\rho := \ensemble{z \in \Cc : \abs{z} < \rho} $ with $ \rho < 1 $, $f$ is holomorphic and in particular
\begin{align*}
\abs{ f\prth{1 +  i\frac{z}{N} } - f\prth{e^{iz/N} } } \leq  \frac{ \rho^{2k} }{N^2} \sup_{z \in \Dd_\rho} \abs{  f'(z) }
\end{align*}

Since all the zeroes of $ Z_{U_k} $ are on the unit circle, $ 1/Z_{U_k} $ is holomorphic on $ \Dd_\rho $ for all $ \rho < 1 $. Moreover, $ \abs{  Z_{U_k}'(z)/Z_{U_k}(z) }  $ is almost surely bounded by a deterministic constant on $\Dd_\rho$ (one can take $ k (1 - \rho)\inv $). One thus has, almost surely with a deterministic $ O_{\rho, k}(\cdot) $ for all real $ z' \in (-\rho, \rho) $
\begin{align*}
Z_{U_k}\prth{ 1 + i\frac{z'_{\ell, r}}{N} }\inv = Z_{U_k}\prth{ e^{ i\frac{z'_{\ell, r} }{N} }   }\inv(1 + O_{\rho, k}(N^{-2}) )
\end{align*}

Replacing $ z $ by $ 2\pi z $, we get
\begin{align*}
\De_N(k ; \hb) & = \prth{ \frac{N}{2i \pi } }^{2C(\hb)} \prod_{r = 1}^m (r!)^{2 h_r} \crochet{Z_1^1 \cdots Z_m^m  (Z'_1)^1 \cdots (Z_m')^m } \prod_{z' \in Z'} \prth{ 1 + 2i \pi \frac{z'}{N} }^N \\
               & \hspace{+2cm} \times  \Esp{ \!\det(U_k)^{-N} Z_{U_k}(1)^{-2 h_0 } \! \prod_{r = 1}^m \prod_{\ell = 1}^{h_r} Z_{U_k}\prth{ e^{ 2i \pi  \frac{z_{\ell, r} }{N} } }\inv \overline{ Z_{U_k}\prth{ e^{ 2i \pi \frac{z'_{\ell, r} }{N} } }\inv } } \\
               & \hspace{+10cm} \times (1 + O_{\rho, k}(N^{ - 2}) )
\end{align*}

Now, one has for all $ 2 \pi z  \in (-\rho, \rho) $ with a uniform $ O(\cdot) $
\begin{align*}
\prth{ 1 + 2i \pi \frac{z }{N} }^N = e^{2i \pi z + O(\rho/N) }
\end{align*}
and, using lemma \ref{Lemma:RescaledSupersym}, for all $ 2\pi z_{\ell, r}, 2\pi z'_{\ell, r} \in (-\rho, \rho) $ and with $ U := (u_1, \dots, u_k) $
\begin{align*}
\Psi_k(Z, Z') & :=  \Esp{ \! \det(U_k)^{-N} Z_{U_k}(1)^{-2 h_0 } \! \prod_{r = 1}^m \prod_{\ell = 1}^{h_r} Z_{U_k}\prth{ e^{ 2i\pi \frac{z_{\ell, r} }{N} } }\inv \overline{ Z_{U_k}\prth{ e^{ 2i\pi \frac{z'_{\ell, r} }{N} } }\inv } } \\
              & =  \frac{1}{k!} \oint_{ \Uu^k } U^{-N} h_{kN}\crochet{U (1^{ \plusInOne 2 h_0 } + e^{2i\pi Z/N} + e^{2i\pi Z'/N}) } \ \abs{\Delta(U) }^2 \frac{d^*U}{U}  \qquad\mbox{with \eqref{FourierRepr:SchurRectangleWithHn}} \\
              & = \frac{1}{k!} \oint_{ \Uu^{k - 1} }\!\! V^{-N} h_{kN}\crochet{(1 + V) (1^{ \plusInOne 2 h_0 } + e^{2i\pi Z/N} + e^{2i\pi Z'/N}) } \ \abs{\Delta\crochet{1 + V} }^2 \frac{d^*V}{V}  
\end{align*}  
using remark \ref{Rk:MultivariateFourierWithHomogeneity}. Then, setting $ v_j  = e^{2 i \pi x_j/N} $ for $ j \in \intcrochet{2, k} $ and $ x_j \in \crochet{ - \frac{N}{2},  \frac{N}{2} } $, we get using lemma \ref{Lemma:RescaledGegenbauer}
\begin{align*}
\Psi_k(Z, Z') & :=  \frac{1}{k!} \int_{  \crochet{ - \frac{N}{2},  \frac{N}{2} }^{k - 1} } e^{ - 2 i \pi \sum_{j = 2}^k x_j} h_{kN}\crochet{ (1 + e^{2 i \pi X/N} ) (  e^{2i\pi Z/N} + e^{ 2i\pi Z'/N} + 1^{ \plusInOne 2   h_0   }) } \\
              & \hspace{+9cm} \times \abs{\Delta \crochet{1 + e^{2 i \pi X/N} } }^2 \frac{dX}{N^{k - 1}}  \\
              & \equivalent{N\to+\infty} N^{ 2k^2 - 1 - k(k - 1) - (k - 1) } \frac{ (2\pi)^{k(k - 1)} }{k!} \!\! \int_{  \Rr^{k - 1} } e^{ - 2 i \pi \sum_{j = 2}^k x_j} \\
              & \hspace{+6cm} \times h_{k, \infty}\crochet{ (0 + X) \! \tensorsum \! \prth{ Z + Z' + 0^{ \plusInOne 2  h_0 } } } \! \Delta(0, X)^2 dX
\end{align*}

Indeed, the number of variables of $ h_{kN} $ is $ k (2h_0 + \abs{\hb} + \abs{\hb}) = 2k^2 $. To pass to the limit, we have used dominated convergence given by inequality \eqref{Ineq:SpeedOfConvergenceHinftyBis}, the integrability of the limiting function coming from the first criteria \eqref{Ineq:IntegrabilityDomination} (with $ \kappa = 1 $). The number of variables of $ h_{k, \infty}\crochet{ (0 + X) \tensorsum \prth{  Z + Z' + 0^{ \plusInOne 2   h_0   } } } $ being $ 2k^2 $, this function will be integrable against $ \Delta(0, X)^2 \norm{X}_p^p $ if and only if $ 2k^2 - k(k - 1) - p > k - 1 $, which amounts to $ k^2 > p - 1 $. This is satisfied for all $ k \geq 2 $ \footnote{Note that $ k = 1 $ gives the case $p = 1$ that corresponds to the simple integrability without the speed of convergence, but the condition $k \geq 2$ is needed for the integral on $ \Rr^{k - 1} $ to be defined}.

We thus have
\begin{align*}
\frac{\De_N(k ; \hb) }{ N^{k^2 + 2C(\hb)} } & \equivalent{N\to+\infty}  \prth{\frac{-1}{4\pi^2} }^{ C(\hb) } \frac{ (2\pi)^{k(k - 1)} }{k!} \prod_{r = 1}^m (r!)^{2 h_r} \crochet{Z_1^1 \cdots Z_m^m  (Z'_1)^1 \cdots (Z_m')^m } e^{ 2 i \pi \sum_{z' \in Z'}   z'  } \\
               & \hspace{+1cm}   \int_{  \Rr^{k - 1} }\!\! e^{ - 2 i \pi \sum_{j = 2}^k x_j + i \pi k \sum_{a \in X \tensorsum ( Z + Z') } a } h_{k, \infty}\!\crochet{ (0 + X) \! \tensorsum \hspace{-0.05cm} \prth{ Z + Z' + 0^{ \plusInOne 2 h_0 } \! }\! } \! \Delta(0, X)^2 dX
\end{align*}

This result already gives a limit, but we want to put it into the framework of \eqref{Def:FormPhi}. It thus remains to see if we can exchange $ \crochet{Z Z'} $ and $ \int_{\Rr^{k - 1} } $. Note that one cannot use the classical formula \eqref{Eq:FourierCoeff} for the Fourier coefficient as an integral over a complex domain since $ z_{i, j}, z'_{i, j} $ have been supposed real to allow to use lemma \ref{Lemma:RescaledGegenbauer} ; one should instead understand $ \crochet{Z Z'} $ with the Taylor formula \eqref{Eq:TaylorFormula} and look if the interchange of the integral and the derivative is allowed. For this, we use a domination of the integrand. A simple induction shows that
\begin{align*}
\frac{1}{r!} \sinc^{(r)}(x) := \frac{1}{r!}\prth{\frac{d}{dx} }^r \sinc (x) =  (-1)^r \frac{\sin(x) }{x^r} + \sum_{j = 1}^{r - 1} \frac{ (-x)^{-j } }{ (r - 1 - j)! } \sin\prth{x + (j + \Unens{r \in 2\Nn}) \frac{\pi}{2} }
\end{align*}
moreover, this function is continuous in $0$ using the Taylor series of $ \sinc $, namely $ \sinc(x) = \sum_{k \geq 0} (-x^2)^k /(2k + 1)! $. Due to the decay of this derivative (hence its integrability), one can differentiate under the integral sign in \eqref{Def:HKappaCInfty}
\begin{align*}
h_{k, \infty, h_r, \ell} & := \frac{\partial^{h_r} }{\partial z_{\ell, r}^{h_r} } h_{k, \infty}\crochet{ (0 + X) \tensorsum \prth{ Z + Z'  + 0^{ \plusInOne 2   h_0 } } }\Big\vert_{ z_{\ell, r} = 0 }  \\
                & = k^{k^2} \int_\Rr e^{ i (k - 2) \pi \theta} \hspace{-1.4cm} \prod_{ a \in (0 + X) \tensorsum \, \prth{ Z - z_{\ell, r} + Z' + 0^{ \plusInOne 2 h_0 } } } \hspace{-1.4cm} \sinc(\pi(\theta + a)) \frac{\partial^{h_r} }{\partial z_{\ell, r}^{h_r} } \prod_{ b \in (0 + X) } \sinc(\pi(\theta + b ) + z_{\ell, r}  )\big\vert_{z_{\ell, r} = 0 } d\theta
\end{align*}

Differentiating the product and using the previous formula for $ \sinc^{(r)} $, we see that the function thus obtained is integrable in $ \theta $ and moreover in $ X $ against $ \Delta(0, X)^2 $ on $ \Rr^{k - 1} $. We can moreover reproduce the reasonning for the other derivatives, which gives the exchange. 

In the same vein, we can deal with the case of $ \crochet{Z'} $ with $ e^{ \sum_{z' \in Z'}  i z'  } h_{k, \infty} $. In the end, we get the desired result. 
\end{proof}


\begin{remark}\label{Rk:Alternative:DerivativeOther}
One could have used a reproducing kernel formula in the vein the the truncation operator $ \Pb_{\! T} $ of kernel given by \eqref{Def:PT}. Indeed, the Cauchy (reproducing) formula writes on the unit circle if $ f(z) = \sum_{k \geq 0} f_k z^k $
\begin{align*}
f(z) = \oint_{r \Uu} \frac{f(t)}{1 - zt\inv} \frac{d^*t}{t} = \Pb_{\!\infty} f(z), \qquad \forall r < \mathrm{Rad}(f)
\end{align*}
and any linear operator (truncation, derivatives, etc.) is thus a kernel operator by acting on the kernel of $ \Pb_{\!\infty} $. Note also that if $ f $ is a polynomial of degree $N$, one can replace $ \Pb_{\!\infty} $ by $ \Pb_{\! N + 1} $ (or $ \Pb_{\! N + 1 + r} $ for all $ r \geq 0 $). For instance, one has
\begin{align*}
\partial^r Z_{U_N}(1) & = \oint_{\Uu} Z_{U_N}(t) \prth{ \frac{\partial}{\partial z} }^{\!\! r} \!\! \frac{1}{1 - zt\inv}\bigg\vert_{z = 1} \frac{d^*t}{t} \\
                & = \oint_{\Uu} Z_{U_N}(t) \prth{ \frac{\partial}{\partial z} }^{\!\! r} \!\! \frac{1 - (z t\inv)^{N + 1} }{1 - zt\inv}\bigg\vert_{z = 1} \frac{d^*t}{t} = \oint_{\Uu} Z_{U_N}(t) \prth{ \sum_{\ell = r}^{N} \ell^{\downarrow r} t^{-\ell}  } \frac{d^*t}{t} \\
                & =: \oint_{\Uu} K_{r, N}(1, t\inv) Z_{U_N}(t) \frac{d^*t}{t} \\
                & = \int_{ \crochet{-\frac{N}{2}, \frac{N}{2} } } K_{r, N}\prth{ 1, e^{-2 i \pi \theta/N} } Z_{U_N}\prth{e^{2 i \pi \theta/N} } \frac{d\theta}{N}
\end{align*}

This gives an alternative way to perform the computations as 
\begin{align*}
K_{r, N}\prth{ 1, e^{-2 i \pi \theta/N} } & = \sum_{\ell = r}^{N} \ell^{\downarrow r} e^{-2 i \pi \theta \ell/N } \\
                & = \prth{ \sum_{\ell = r}^{N} \ell^{\downarrow r} } \Esp{ e^{ - 2i \pi D_{N, r}/N } }, \, \Prob{ D_{N, r} = \ell } := \frac{ \ell^{\downarrow r} }{\sum_{j = r}^{N} j^{\downarrow r}} \Unens{r \leq \ell \leq N} \quad\mbox{(see \eqref{Def:DnR})} \\
                & \equivalent{N\to +\infty} \ N^{r + 1} \int_0^1 s^r e^{-2 i \pi s \theta } ds \\
                & = N^{r + 1} (r + 1) \Esp{ e^{- 2i\pi \theta \betab_{r + 1 } } }
\end{align*}

Note that $ C_{N, r} := \sum_{\ell = r}^{N} \ell^{\downarrow r} \sim N^{r + 1} $, hence that the last computation proves the convergence in distribution $ D_{N, r}/ N \to \betab_{r + 1 } $. We will see the appearance of the random variables $ \betab_{r + 1} $ and $ D_{N, r} $ in the next \S. 
\end{remark}


\begin{remark}\label{Rk:Derivative:RKHS}
Note that the Cauchy reproducing formula gives a ``true'' representation of the evaluation $ \mathrm{Ev}_z : f \mapsto f(z) $ in $ L^2(r\Uu) $ that can be differentiated as in remark~\ref{Rk:Alternative:DerivativeOther}. Such an integral representation does not exist in $ L^2(\Rr) $ (for the Lebesgue measure), the right analogue on the real line being the distributional pairing $ \bracket{f, \delta_z} $ with a Dirac distribution (stated differently~: $ L^2(\Rr) $ is not an RKHS). The way to overcome this difficulty is usually to consider an $ L^2 $ approximation $ \delta_z^{(\varepsilon)} $ and to pass to the limit $ \varepsilon \to 0 $ after proving a uniform integrability. The advantage of this last method lies in the choice of the approximation of unity $ \delta_z^{(\varepsilon)} $ that can allow exact computations with a clever choice. This is the point of view adopted by Winn \cite{Winn} that uses the probability measure of a Cauchy random variable with parameter $ \varepsilon $ to fit with the Cauchy ensemble described in \S~\ref{Subsec:Theory:DualityHermitian}. The method described in remark~\ref{Rk:Alternative:DerivativeOther} seems to use instead the natural RKHS structure on the circle, but it could be replaced by any other RKHS such as $ L^2(\Dd) $ (unit disc) with its kernel (in this case, the Bergman kernel). Here again, a clever choice is to be made.
\end{remark}


\begin{conjecture}
In application of the Keating-Snaith philosophy, one can refine conjecture \eqref{Eq:MomentsConjecture} and \cite[conj. 6.1]{HughesThesis} into the following conjecture about the behaviour of the random vector of iterated derivatives $ \prth{ \zeta^{(j)}\prth{\frac{1}{2} + i T U} }_{j \in \intcrochet{0, m} } $ 
\begin{align*}
\Esp{ \abs{ \zeta\prth{\frac{1}{2} + i T U} }^{2(k - \abs{\hb} ) } \prod_{j = 1}^m \abs{\zeta^{(j)}\prth{\frac{1}{2} + i T U} }^{2 h_j} } \equivalent{T\to+\infty} a_k \beta_k(\hb) e^{ (k^2 + 2 C(\hb)) \log\log T }
\end{align*}
where $ a_k $ is given by \eqref{Def:MatrixFactor} and $ \beta_k(\hb) $ is given by \eqref{EqPhi:Derivatives}.
\end{conjecture}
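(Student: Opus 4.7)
The conjecture is a number-theoretic statement about $\zeta$ that lies well beyond the reach of current techniques (even the case $m=0$ is only proven for $k\leq 2$), so the proposal is necessarily heuristic and follows the Keating--Snaith philosophy: one decomposes the moment into a product of an \emph{arithmetic} factor, built from the primes, and a \emph{random matrix} factor, built from the zeros, and identifies each factor in an appropriate limit. The heuristic starting point is the (conjectural) hybrid Euler--Hadamard product of Gonek--Hughes--Keating
\begin{align*}
\zeta(s)=\zeta_{\Pe,X}(s)\,\zeta_{\mathrm{zeros},X}(s)\,(1+o(1)),\qquad \zeta_{\Pe,X}(s):=\prod_{p\leq X}\bigl(1-p^{-s}\bigr)^{-1},
\end{align*}
valid in a suitable range for $X=X(T)$, under which one expects the two factors to decorrelate at the level of $2k$-th moments on the critical line. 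Differentiating $j$ times gives, by the Leibniz rule, a polynomial combination of factors $\zeta_{\Pe,X}^{(a)}\,\zeta_{\mathrm{zeros},X}^{(j-a)}$; the leading contribution will come from pairings that saturate the power of $\log\log T$.

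On the random matrix side, the unfolding $N\sim\log T$ (so that the mean density of zeros of $\zeta$ in a window of length $1$ matches that of $CUE_N$) leads, after differentiating $j$ times, to an extra factor of $N^{j}$ per derivative inside the $2h_j$-th absolute moment, so that
\begin{align*}
\Esp{\abs{Z_{U_N}(1)}^{2(k-\abs{\hb})}\!\prod_{r=1}^m\!\abs{\partial^r Z_{U_N}(1)}^{2h_r}}\ \sim\ \De_\infty(k;\hb)\,N^{k^2+2C(\hb)}
\end{align*}
by Theorem~\ref{Theorem:Derivatives}, which I take as a black box. Substituting $N=\log T$ produces the predicted power $(\log\log T)^{k^2+2C(\hb)}$ and identifies $\beta_k(\hb)=\De_\infty(k;\hb)$ given by the formula \eqref{EqPhi:Derivatives}.

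On the arithmetic side, one models $\zeta_{\Pe}(\tfrac12+iTU)$ by its Bohr--Jessen limit $\prod_{p\in\Pe}(1-p^{-1/2}Z_p)^{-1}$ with $(Z_p)_{p\in\Pe}$ i.i.d. uniform on $\Uu$, and replaces each $\zeta^{(j)}(\tfrac12+iTU)$ by the formally differentiated random Euler product
\begin{align*}
\zeta_{\Pe}^{(j)}\prth{\tfrac12+iTU}\ \longleftrightarrow\ (-1)^j\sum_{n\geq 1}\frac{(\log n)^j}{n^{1/2}}\prod_{p\in\Pe}Z_p^{v_p(n)}.
\end{align*}
Since the logarithmic derivatives are multiplicative in the $Z_p$'s, the independence across primes factorises the expectation, and each local factor can be expanded in powers of $p^{-1/2}$. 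A standard regularisation by $e^{\gamma_\Pe k^2}\prod_p e^{-k^2/p}$ with $\gamma_\Pe$ as in \eqref{Def:ArithmeticFactor} removes the divergence of the leading $\log\log T$ contribution and leaves a convergent Euler product
\begin{align*}
a_k=e^{\gamma_\Pe k^2}\prod_{p\in\Pe}e^{-k^2/p}\,\Esp{\abs{1-p^{-1/2}Z_p}^{-2k}}=A_k,
\end{align*}
so the arithmetic factor collapses to $A_k$, exactly as in the original Keating--Snaith conjecture; the insertion of derivatives does not alter it because each $(\log n)^j$ contributes $(\log T)^{j}$-type deterministic factors already absorbed into the overall $(\log\log T)^{2C(\hb)}$ power via the rescaling $N=\log T$.

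The hard part, which is precisely what keeps this a conjecture, is to make the decoupling of the arithmetic and random-matrix contributions rigorous to all orders in $k$ and $\hb$: one must show that the cross terms arising from the hybrid product expansion contribute only lower-order powers of $\log\log T$, and that the counting of short divisor sums on the zero side really does match the derivative-decorated CUE average from Theorem~\ref{Theorem:Derivatives}. Absent such a decoupling theorem, the most one can do unconditionally is verify the conjecture for small $k,\abs{\hb}$ by direct application of the approximate functional equation and autocorrelation asymptotics, in the spirit of Ingham for $k=2$; this, together with the random-matrix limit proven above, is the substance of the conjectural claim.
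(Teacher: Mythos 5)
This statement is a conjecture, and the paper offers no proof of it: it is obtained exactly as you describe, by the Keating--Snaith substitution $N \leftrightarrow \log T$ applied to the proven CUE limit of Theorem~\ref{Theorem:Derivatives} (so $\beta_k(\hb) = \De_\infty(k;\hb)$ from \eqref{EqPhi:Derivatives}), multiplied by the arithmetic factor, which is precisely your heuristic (your hybrid Euler--Hadamard splitting is extra motivation consistent with the paper's intent). One notational slip only: $e^{(k^2 + 2C(\hb))\log\log T} = (\log T)^{k^2 + 2C(\hb)}$, i.e.\ a power of $\log T$, not of $\log\log T$ as written in your random-matrix paragraph; this does not affect the substance of your argument.
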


Note the conjectured limiting covariance structure of $ \prth{ \frac{ \log \abs{ \zeta^{(j)}( \frac{1}{2} + i T U ) }^2 }{\sqrt{\log\log(T) / 2 }} }_{j \in \intcrochet{0, m} } =: (R_j)_j $: if one can extend analytically in $ \hb $ the limiting factor, one would have
\begin{align*}
\Esp{ e^{ \sum_{j = 0}^m h_j R_j } } \equivalent{T\to+\infty} \exp\prth{ \frac{1}{2} \sum_{j = 0}^m (j + \delta_{j, 0})^2 h_j^2 + \sum_{0 \leq i < j \leq m} (i + \delta_{0, i}) j h_i h_j + \sum_{j = 0}^m j h_j }
\end{align*}
namely $ \Esp{ (R_j - j)(R_i - i) } \equivalent{T\to+\infty} (i + \delta_{0, i}) (j + \delta_{0, j}) $. Equivalently, the random variables $ X_j := (R_j - j)/(j + \delta_{0, j} ) $ converge in law to the same Gaussian. In the case of the $ CUE $, such a behaviour is reminiscent of the limiting covariance structure of the vector $ \prth{ \frac{ \log \abs{ Z_{U_N}(e^{i x_j/N}) }^2 }{\sqrt{\log(N)/ 2 }} }_{j \in \intcrochet{1, m} } $ for $ (x_j)_j \in \Rr^m $ (see \cite[cor. 1.8]{ChhaibiNajnudelNikeghbali} ; see also \cite[thm. 1.4]{BourgadeMesoscopic} for a related result). 


\begin{remark}
With the expression \eqref{Eq:HcWithResidues}, one can write with $ \Ce := Z + Z' + 0^{ \plusInOne 2 h_0 } $
\begin{align*}
\widehat{h}_{c, \infty}(T) & = \crochet{Z_1^1 \cdots Z_r^r  (Z'_1)^1 \cdots (Z_r')^r } e^{ \sum_{z' \in Z'}  i z'  }   e^{ i \pi k \sum_{a \in X  \tensorsum  \Ce } a } h_{k, \infty}\crochet{ (0 + X) \tensorsum \Ce   }  \\
              & =  (- 2 i \pi)^{ k^2 } \crochet{Z_1^1 \cdots Z_r^r  (Z'_1)^1 \cdots (Z_r')^r } e^{ \sum_{z' \in Z'}  i z'  } \crochet{ s^{ k^2 } } e^{ 2 i \pi c s\inv } H\crochet{s ((0+X)\tensorsum \Ce ) } \\
              & = (- 2 i \pi)^{ k^2 } \crochet{ s^{ k^2 } } e^{ 2 i \pi c s\inv } \crochet{Z_1^1 \cdots Z_r^r (Z'_1)^1 \cdots (Z_r')^r } e^{ \sum_{z' \in Z'} i z' }  H\crochet{s ( (0+X)\tensorsum \Ce )}
\end{align*}

This expression can lead to a signed combinatorial sum using $ H\crochet{a} = \frac{1}{1 - a} = \int_{\Rr_+} e^{at} e^{-t }dt $ for $ a \in \Dd_1\!\setminus\!\ensemble{1} $.
\end{remark}

\medskip

\subsubsection{Proof with randomisation}

The previous proof uses a derivative in place of the usual argument with integration. The goal here is to use an alternative probabilistic representation of $ \partial^r Z_{U_N} $ to avoid this point, i.e. the randomisation paradigm of \S~\ref{Subsec:RandomisationParadigm}. To avoid running out of variety, we will work in the microscopic setting, namely in points of the form $ e^{s/N} $ for $ s \in \Rr $.

We now define for $ k \geq \abs{\hb} $
\begin{align}\label{Def:DerivativeFunctionalMicro}
\De_N(k ; \hb ; \sbb) := \Esp{ \abs{Z_{U_N}(e^{s_0/N})}^{2(k - \abs{\hb} )} \prod_{r = 1}^m \abs{\partial^r Z_{U_N}(e^{s_r/N})}^{2h_r} }
\end{align}

\begin{theorem}[Iterated derivatives of the characteristic polynomials in the microscopic setting]\label{Theorem:DerivativesMicro}
Suppose $ k \geq C(\hb) $ and $ k \geq 2 $. Then, the following convergence holds 
\begin{align}\label{Eq:ConvergenceDerivativesMicro}
\frac{ \De_N(k ; \hb ; \sbb) }{ N^{k^2 + 2 C(\hb) } } \tendvers{N}{+\infty } \De_\infty(k ; \hb ; s) 
\end{align}
with
\begin{align}\label{EqPhi:Random:DerivativesMicro}
\begin{aligned}
\De_\infty(k ; \hb ; s)  & = \frac{(2\pi)^{ k(k - 1) }  }{k! } \int_{ \Rr^{k - 1} } \Phi^{(Rand)}_{ \De_\infty(k ; \hb ; s) }(0, \xb) \Delta\prth{ 0, \xb}^2  d \xb \\
\Phi^{(Rand)}_{ \De_\infty(k ; \hb ; s) }(0, x_2, \dots, x_k) & := \prod_{r = 1}^m \frac{1}{(r + 1)^{2 h_r}}   e^{s_0 (k - \abs{\hb})}   f_{m, \hb}(k) \\ 
                  & \qquad \times  \Ee\Bigg( \! e^{ \sum_{r = 0}^m s_r S_r(\infty) } \int_{\Rr^{k - 1} }  \!\! e^{- 2 i \pi \sum_{j = 2}^k x_j}  \!\! \prod_{d \in \Db_\infty} \!\! \widetilde{h}_{ d, \infty }\prth{0, \xb} \\
                  & \hspace{+2.5cm}  \prod_{\widetilde{d} \in \widetilde{\Db}_\infty} \!\! \widetilde{h}_{ \widetilde{d}, \infty }\prth{0, \xb}  \Delta(0, \xb)^2 d\xb \Bigg\vert  \sum_{r = 0}^m  S_r(\infty) = k  \Bigg)  
\end{aligned}
\end{align}
where $ \Db_\infty := (D_{\infty, r}^{(j_r)})_{0 \leq r \leq m, 1 \leq j_r \leq  h_r} $ and $ \widetilde{\Db}_\infty :=  (\widetilde{D}_{\infty, r}^{(j_r)})_{0 \leq r \leq m, 1 \leq j_r \leq  h_r} $ are two independent copies of independent random variables of law $ D_{\infty, r}^{(j_r)} \eqlaw U^{1/(r + 1)} $ with $ U \sim \Us([0, 1]) $ (i.e. $ D_{\infty, r}^{(j)} $ are beta-distributed), $ S_r(\infty) := \sum_{j = 1}^{h_r}( D_{\infty, r}^{(j ) } +  \widetilde{D}_{\infty, r}^{(j ) } ) $ and $f_{m, \hb}(k) $ is the Lebesgue density of the random variable $ Z :\eqlaw \sum_{r = 0}^m  S_r(\infty) $ (recall also that $ h_0 := k - \abs{\hb} $).
\end{theorem}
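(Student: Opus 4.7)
The plan is to transport the randomisation paradigm of \S~\ref{Subsec:RandomisationParadigm} from $Z_{U_N}$ to its iterated derivatives, along the lines sketched at the end of remark~\ref{Rk:Alternative:DerivativeOther}. Differentiating \eqref{RandomEq:Charpol} $r$ times (equivalently, summing the Taylor expansion of $Z_{U_N}$) yields
\[
\partial^r Z_{U_N}(e^{s_r/N}) \;=\; e^{-s_r r/N}\, C_{N, r}\, \Esp{ \sc_{D_{N, r}}(U_N)\, e^{s_r D_{N, r}/N} \,\big|\, U_N },
\]
with $C_{N, r} := \sum_{\ell = r}^N \ell^{\downarrow r} \sim N^{r + 1}/(r + 1)$ and $D_{N, r}$ the index of remark~\ref{Rk:Alternative:DerivativeOther}, for which $D_{N, r}/N \to \betab_{r+1, 1} \eqlaw U^{1/(r + 1)}$ (with $D_{N, 0}$ uniform on $\intcrochet{0, N}$, recovering the setting of \S~\ref{Subsec:RandomisationParadigm}). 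Introducing $2 h_r$ independent copies $D_{N, r}^{(j)}, \widetilde{D}_{N, r}^{(j)}$ (for $j \in \intcrochet{1, h_r}$ and $r \in \intcrochet{0, m}$) linearises every $\abs{\partial^r Z_{U_N}(e^{s_r/N})}^{2 h_r}$ and recasts $\De_N(k; \hb; \sbb)$ as
\[
\prod_{r \geq 0} C_{N, r}^{2 h_r}\; e^{-2\sum_r s_r r h_r/N}\; \Ee\!\left( e^{\sum_{r, j} s_r (D_{N, r}^{(j)} + \widetilde{D}_{N, r}^{(j)})/N}\, \Esp{ \prod_{r, j} \sc_{D_{N, r}^{(j)}}(U_N)\, \overline{\sc_{\widetilde{D}_{N, r}^{(j)}}(U_N)} \,\Big|\, \Db, \widetilde{\Db} } \right),
\]
in the spirit of \eqref{Eq:RandomisationForTruncatedMoments}.

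The inner conditional expectation is the random-index version of the integrand of theorem~\ref{Theorem:MidCoeffs}: by \eqref{Eq:SchurJointMoments} and \eqref{FourierRepr:SchurRectangleWithHn}, for any $\ab, \bb \in \intcrochet{0, N}^k$,
\[
\Esp{ \prod_{j = 1}^k \sc_{a_j}(U_N)\, \overline{\sc_{b_j}(U_N)} } \;=\; \frac{1}{k!} \oint_{\Uu^k} U^{-N} \prod_{j = 1}^k h_{a_j}\crochet{U}\, h_{N - b_j}\crochet{U}\, \abs{\Delta(U)}^2 \frac{d^* U}{U}.
\]
Applying the homogeneity trick of remark~\ref{Rk:MultivariateFourierWithHomogeneity} to $u_1$ produces the hyperplane concentration indicator on the $\Db$'s and reduces the integration to $\Uu^{k - 1}$; rescaling $v_j = e^{2 i \pi x_j/N}$ and applying lemma~\ref{Lemma:RescaledGegenbauer} to each factor $N^{-(k - 1)} h_{\lfloor c N \rfloor}\crochet{1 + e^{2 i \pi X/N}} \to \widetilde{h}_{c, \infty}(0, \xb)$ with $c = D_{N, r}^{(j)}/N$ (respectively $c = 1 - \widetilde{D}_{N, r}^{(j)}/N$, using $N - \widetilde{D}_{N, r}^{(j)} \sim N\, (1 - \betab_{r + 1, 1})$) delivers the $\xb$-integrand of \eqref{EqPhi:Random:DerivativesMicro}. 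The residual discrete hyperplane constraint on $(\Db/N, \widetilde{\Db}/N)$ is removed by the same local CLT as in \eqref{Eq:PhilosophyCLT} and the Bates--Irwin--Hall step of theorem~\ref{Theorem:Autocorrelations:WithRandomisation}: the normalised sum $N^{-1} \sum_{r, j}(D_{N, r}^{(j)} + \widetilde{D}_{N, r}^{(j)}) \to Z := \sum_r S_r(\infty)$ has a continuous Lebesgue density $f_{m, \hb}$, so replacing the Kronecker indicator by $N^{-1} f_{m, \hb}(k)$ times the conditional law on $\{Z = k\}$ produces precisely \eqref{EqPhi:Random:DerivativesMicro}. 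Bookkeeping the prefactors $\prod_r C_{N, r}^{2 h_r} \sim \prod_r (r + 1)^{-2 h_r}\, N^{2 \abs{\hb} + 2 C(\hb)}$ together with the Schur-integral and local-CLT scalings yields the announced exponent $N^{k^2 + 2 C(\hb)}$; the exponentials assemble into $e^{s_0(k - \abs{\hb})} e^{\sum_r s_r S_r(\infty)}$ after folding $\widetilde{\Db}$ via the duality just used.

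The main obstacle lies in carrying out the dominated convergence and local CLT simultaneously in the two superposed layers of randomness. On the $\xb$-side, inequality~\eqref{Ineq:SpeedOfConvergenceHinftyBis} is uniform in the random parameters $D_{N, r}^{(j)}/N,\, 1 - \widetilde{D}_{N, r}^{(j)}/N \in [0, 1]$, and the $L^1$-integrability of the pointwise limit against $\Delta(0, \xb)^2$ on $\Rr^{k - 1}$ follows from lemma~\ref{Lemma:IntegrabilityDomination} (second criterion with $M + M' = 2 k$, $\kappa = \kappa' = 1$) exactly as in theorem~\ref{Theorem:MidCoeffs}. Integrating first in $\xb$ produces a bounded continuous function of $(\Db/N, \widetilde{\Db}/N)$, and the beta-mixed local CLT required to remove the Kronecker hyperplane condition is the direct analogue of the Bates--Irwin--Hall argument of theorem~\ref{Theorem:Autocorrelations:WithRandomisation}, the only additional technicality being the handling of mixed shape parameters $r + 1$ in place of a single common parameter; absolute continuity of the limiting sum $Z$ (hence the existence and continuity of $f_{m, \hb}$) is immediate since each $\betab_{r+1, 1}$ is absolutely continuous. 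Once this local CLT is in place, the identification of $\Phi^{(Rand)}_{\De_\infty(k; \hb; s)}$ with \eqref{EqPhi:Random:DerivativesMicro} is immediate by comparison with \eqref{EqPhi:Random:Autocorrels}.
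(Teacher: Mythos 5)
Your proposal follows the same randomisation pipeline as the paper: differentiate the randomisation formula, introduce $2h_r$ i.i.d.\ random indices, express the inner $U_N$-average as a Fourier coefficient of the rectangular Schur function, integrate out $u_1$, rescale with the Gegenbauer kernel, and invoke a local CLT for the residual hyperplane constraint. At that level it is the paper's own argument. However, there is a concrete bookkeeping inconsistency that prevents the proposal from reaching \eqref{EqPhi:Random:DerivativesMicro} as stated.

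You correctly write the inner average as
\[
\Esp{\prod_{j = 1}^k \sc_{a_j}(U_N)\,\overline{\sc_{b_j}(U_N)}} = \frac{1}{k!}\oint_{\Uu^k} U^{-N}\prod_{j = 1}^k h_{a_j}\crochet{U}\, h_{N - b_j}\crochet{U}\,\abs{\Delta(U)}^2\,\frac{d^* U}{U},
\]
with $a_j = D_{N, r}^{(j)}$, $b_j = \widetilde{D}_{N, r}^{(j)}$, and you rescale each factor with $c = D_{N,r}^{(j)}/N$, respectively $c = 1 - \widetilde{D}_{N,r}^{(j)}/N$. But integrating out $u_1$ by homogeneity forces the total degree of $\prod_j h_{a_j}\crochet{U} h_{N - b_j}\crochet{U}$ to equal $Nk$; that degree is $\sum_j a_j + \sum_j(N - b_j) = Nk + \sum_j D_{N,r}^{(j)} - \sum_j\widetilde{D}_{N,r}^{(j)}$, so the hyperplane constraint is $\sum D = \sum\widetilde{D}$, \emph{not} $\sum(D + \widetilde{D}) = kN$. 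Your later sentence --- that the normalised sum $N^{-1}\sum(D + \widetilde{D})$ converges to $Z = \sum_r S_r(\infty)$ and one conditions on $\{Z = k\}$ --- contradicts the integrand you displayed. Equivalently, if you relabel $\widetilde{E}_{N,r}^{(j)} := N - \widetilde{D}_{N,r}^{(j)}$ so that the constraint reads $\sum(D + \widetilde{E}) = kN$, then $\widetilde{E}_{N,r}^{(j)}/N \to 1 - U^{1/(r+1)}$, and the limiting law for the second family is $1 - U^{1/(r+1)}$, whereas \eqref{EqPhi:Random:DerivativesMicro} asserts $\widetilde{D}_{\infty,r}^{(j)} \eqlaw U^{1/(r+1)}$. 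The two agree for $r = 0$ (since $1 - U \eqlaw U$) but differ for $r \geq 1$, because $1 - U^{1/(r+1)} \not\eqlaw U^{1/(r+1)}$. So your derivation, carried out consistently, only matches the stated limiting functional in the $\hb = \zerob$ case.

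To close the gap you must choose one of two consistent routes and then compare term-by-term with \eqref{EqPhi:Random:DerivativesMicro}: either the inner kernel factor should be $h_{b_j}\crochet{U}$ (rescaling to $\widetilde{h}_{\widetilde{D}_{\infty,r}, \infty}$ with $\widetilde{D}_{\infty,r} \eqlaw U^{1/(r+1)}$, constraint $\sum(D + \widetilde{D}) = kN$), or it is $h_{N - b_j}\crochet{U}$ as you wrote (constraint $\sum D = \sum\widetilde{D}$, limit $\widetilde{h}_{1 - \widetilde{D}_{\infty,r}, \infty}$). Mixing the integrand from one route with the constraint and the limit law from the other, as the proposal does, is not a proof; the Fourier-coefficient identity, the hyperplane indicator, and the beta law of $\widetilde{\Db}_\infty$ must all be read off from the same computation.
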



\begin{proof}
Similarly to \eqref{RandomEq:Charpol} and \eqref{RandomEq:TruncatedCharpol}, one can write for the derivative
\begin{align}\label{RandomEq:Derivative}
\partial^r Z_{U_N}(e^{s/N}) = \sum_{\ell = r  }^{N } \ell^{\downarrow r} \sc_\ell(U_N) e^{s\ell/N} =: C_{N, r} \Esp{ \sc_{D_{N, r} }(U_N) e^{s D_{N, r} /N} \big\vert U_N }
\end{align}
where $ \ell^{\downarrow 0} = 1 $ and 
\begin{align}\label{Def:DnR}
\Prob{D_{N, r} = \ell} = \frac{\ell^{\downarrow r }  }{C_{N, r} } \Unens{r  \leq \ell \leq N}, \qquad C_{N, r} := \sum_{\ell = r  }^{N } \ell^{\downarrow r}
\end{align}

Note also that for $ x \in \Rr $
\begin{align*}
\overline{Z_{U_N}(x) } & = \det(I_N - x U_N\inv) = (-x)^N \det(U_N) \det(I_N - x\inv U_N) \\
                 & = (-1)^N \det(U_N) \sum_{\ell = 0}^N \sc_{N - \ell}(U_N) x^\ell
\end{align*}
hence that
\begin{align*}
\overline{\partial_r Z_{U_N}(x) } & = (-1)^N \det(U_N) \sum_{\ell = r + 1}^N \ell^{\downarrow r} \sc_{N - \ell}(U_N) x^{\ell } \\
                 & = (-1)^N \det(U_N) C_{N, r} \Esp{  \sc_{N - D_{N, r}}(U_N) x^{D_{N, r} } \big\vert U_N }
\end{align*}

As a result, introducing two independent triangular i.i.d. sequences $ (D_{N, r}^{(j_r)})_{0 \leq r \leq m, 1 \leq j_r \leq  h_r} $ and $ (\widetilde{D}_{N, r}^{(j_r)})_{0 \leq r \leq m, 1 \leq j_r \leq  h_r} $, one gets
\begin{align*}
\De_N(k ; \hb ; \sbb) & = (-1)^{N C(\hb)} \Ee \Bigg( \!\! \abs{Z_{U_N}(e^{s_0/N})}^{2(k - \abs{\hb})}   \prod_{r = 1}^m \prod_{j_r = 1}^{h_r} \sc_{ D_{N, r}^{(j_r)} }(U_N) e^{ s_r D_{N, r}^{(j_r)}/N }  \\
                  & \hspace{+4cm} \times \det(U_N)^{ \abs{\hb} } \prod_{r = 1}^m \prod_{j_r = 1}^{h_r} \sc_{ N - \widetilde{D}_{N, r}^{(j_r )} }(U_N) e^{ s_r \widetilde{D}_{N, r}^{(j_r )}/N } \!\Bigg)
\end{align*}

Recall that $ \abs{Z_{U_N}(e^{s_0 / N})}^{2(k - \abs{\hb} )} = (-e^{s_0 / N})^{N(k - \abs{\hb} )} \det(U_N)^{ k -\abs{\hb} }  Z_{U_N}(e^{s_0/N})^{2(k - \abs{\hb}) }  $ and that $ Z_{U_N}(e^{s_0/N}) = \Esp{ e^{s_0V_N/N} \sc_{V_N}(U_N) \vert U_N } = \Esp{ e^{s_0 (N - V_N)/N} \sc_{N - V_N}(U_N) \vert U_N } $ since $ V_N \eqlaw N - V_N := D_{N, 0} \sim \Us(\intcrochet{0, N}) $. Define $ h_0 := k - \abs{\hb} $ and 
\begin{align*}
\Xb & := \prth{ x_{j_r, r} }_{0 \leq r \leq m, 1 \leq j_r \leq h_r }\\
\Yb & := \prth{ y_{j_r, r} }_{0 \leq r \leq m, 1 \leq j_r \leq h_r } \\
\Xb^{\Db} \Yb^{\widetilde{\Db} } & := \prod_{r = 0}^m \prod_{j_r = 1}^{h_r} x_{j_r, r}^{D_{N, r}^{(j_r) } }  y_{j_r, r}^{\widetilde{D}_{N, r}^{(j_r) } }  \\
S_r(N) & := \frac{1}{N} \sum_{j_r = 1}^{h_r} ( D_{N, r}^{(j_r )} + \widetilde{D}_{N, r}^{(j_r )} ) \\
\widetilde{\De}_N(k ; \hb ; \sbb) & :=   \De_N(k ; \hb ; \sbb) \times \prod_{r = 0}^m C_{N, r}^{-2 h_r}
\end{align*}

Then, conditioning on $ (\Db, \widetilde{\Db}) $ (i.e. integrating only on $ U_N $), one gets
\begin{align*}
\widetilde{\De}_N(k ; \hb ; \sbb) & = (-1)^{Nk} e^{s_0 h_0} \Esp{ \! e^{ \sum_{r = 0}^m s_r \sum_{j_r = 1}^{h_r} ( D_{N, r}^{(j_r )} + \widetilde{D}_{N, r}^{(j_r )} )/N  } \! \det(U_N)^{-k}   \crochet{ \Xb^{\Db} \Yb^{ -(N -\widetilde{\Db}) } }  \!\!\!\!\! \prod_{ t \in \Xb + \Yb\inv } \!\!\!\!\!\!  Z_{U_N}(t) \!\! } \\
                  & = e^{s_0 h_0} \Esp{ \! e^{ \sum_{r = 0}^m s_r S_r(N)  }   \crochet{ \Xb^{\Db} \Yb^{ \widetilde{\Db} } }  s_{N^k}\crochet{\Xb + \Yb} } \qquad\mbox{ by \eqref{Eq:SchurJointMoments}} \\
                  & = \frac{e^{s_0 h_0}}{k!} \Esp{ \! e^{ \sum_{r = 0}^m s_r S_r(N)  }   \crochet{ \Xb^{\Db} \Yb^{ \widetilde{\Db} } } \oint_{\Uu^k} U^{-N} H\crochet{U(\Xb + \Yb)} \abs{\Delta(U)}^2 \frac{d^*U}{U} } \mbox{ by \eqref{FourierRepr:SchurRectangle}} \\
                  & = \frac{e^{s_0 h_0}}{k!} \Esp{ \! e^{ \sum_{r = 0}^m s_r S_r(N) } \oint_{\Uu^k } U^{-N} \prod_{d \in \Db} h_{ d }\crochet{U}  \prod_{\widetilde{d} \in \widetilde{\Db}} h_{ \widetilde{d} }\crochet{U}  \abs{\Delta(U)}^2 \frac{d^*U}{U} }  \\
                  & = \frac{e^{s_0 h_0}}{k!} \Ee\Bigg( \! e^{ \sum_{r = 0}^m s_r S_r(N) }  \Unens{ \sum_{r = 0}^m  N S_r(N)  = kN } \\
                  & \hspace{+4cm} \oint_{\Uu^{k - 1} } V^{-N} \prod_{d \in \Db} h_{ d }\crochet{1 + V}  \prod_{\widetilde{d} \in \widetilde{\Db}} h_{ \widetilde{d} }\crochet{1 + V}  \abs{\Delta\crochet{1 + V} }^2 \frac{d^*V}{V} \Bigg)
\end{align*}
using the trick of remark \ref{Rk:MultivariateFourierWithHomogeneity} and integrating out $ u_1 $.

To conclude, we proceed as in \S~\ref{Subsec:RandomisationParadigm}/remark~\ref{Rk:TruncatedMomentsWithRandomisation}, i.e. we use a local limit theorem for $ \sum_{r = 1}^m N S_r(N) $ where the limit is justified in lemma~\ref{Lemma:RescaledGegenbauer}, and the (easily proven) convergence in law $ D_{N, r}/N \to \betab_{r + 1} \eqlaw U^{1/(r + 1)} $, where $ U $ is uniform on $ [0, 1] $, i.e. $ \betab_{r + 1} $ is Beta distributed of parameter $(r + 2, 1)$ (which means $ \Prob{ \betab_{r + 1} \in dx } = (r + 1) x^r dx $). 

Using two i.i.d. sequences $ \Db_\infty := (D_{\infty, r}^{(j_r)})_{0 \leq r \leq m, 1 \leq j_r \leq  h_r} $ and $ \widetilde{\Db}_\infty :=  (\widetilde{D}_{\infty, r}^{(j_r)})_{0 \leq r \leq m, 1 \leq j_r \leq  h_r} $ for such limits, using $ S_r(\infty) $ for the limit in law of $ S_r(N) $, and setting $ v_j = e^{2 i\pi x_j/N} $ for $ x_j \in [-N/2, N/2] $, one gets
\begin{align*}
\widetilde{\De}_N(k ; \hb ; \sbb) & = \frac{e^{s_0 h_0}}{k!} \Ee\Bigg( \! e^{ \sum_{r = 0}^m s_r S_r(N) }  \Unens{ \sum_{r = 0}^m  N S_r(N)  = kN } \int_{\crochet{-\frac{N}{2}, \frac{N}{2}}^{k - 1} } e^{- 2 i \pi \sum_{j = 2}^k x_j}  \\
                  & \hspace{+2.3cm}  \times \prod_{d \in \Db} h_{ d }\crochet{1 + e^{2 i \pi X/N}} \prod_{\widetilde{d} \in \widetilde{\Db}} h_{ \widetilde{d} }\crochet{1 + e^{2 i \pi X/N}}  \abs{\Delta\crochet{1 + e^{2 i \pi X/N}} }^2 \frac{dX}{N^{k - 1}} \Bigg) \\
                  & \equivalent{N \to +\infty } N^{- 1 + 2(k - 1) A -k(k - 1) - (k - 1) } \frac{(2\pi)^{k(k - 1)} }{k!} e^{s_0 h_0} \Ee\Bigg( \! e^{ \sum_{r = 0}^m s_r S_r(\infty) }    \delta_0\prth{ \sum_{r = 0}^m  S_r(\infty)  - k }  \\ 
                  & \hspace{+4cm}   \int_{\Rr^{k - 1} }  \!\! e^{- 2 i \pi \sum_{j = 2}^k x_j}  \!\! \prod_{d \in \Db_\infty} \!\! \widetilde{h}_{ d, \infty }\prth{0, \xb}  \!\!  \! \prod_{\widetilde{d} \in \widetilde{\Db}_\infty} \!\! \widetilde{h}_{ \widetilde{d}, \infty }\prth{0, \xb}  \Delta(0, \xb)^2 d\xb   \Bigg)  
\end{align*}

Here, $ A $ is the cardinal of the random set $ \Db_\infty $, i.e. $ A = \sum_{r = 0}^m \sum_{j = 1}^{h_r} 1 = \sum_{r = 0}^m h_r = \abs{\hb} + h_0 = k $. One moreover easily finds that $ C_{N, r}  \sim \frac{N^{r + 1}}{r + 1} $, hence
\begin{align*}
\prod_{r = 0}^m C_{N, r}^{ 2 h_r} \equivalent{N\to +\infty} N^{ 2 \sum_{r = 0}^m (r + 1) h_r } \prod_{r = 0}^m \frac{1}{(r + 1)^{2 h_r}}  = N^{ 2 C(\hb) + 2 (\abs{\hb} + h_0) } \prod_{r = 0}^m \frac{1}{(r + 1)^{2 h_r}}
\end{align*}
which implies that 
\begin{align*}
\De_N(k ; \hb ; \sbb)  & \equivalent{N \to +\infty } N^{k^2 + 2 C(\hb) } \prod_{r = 0}^m \frac{1}{(r + 1)^{2 h_r}} \frac{(2\pi)^{k(k - 1)} }{k!} e^{s_0 h_0}   f_{m, \hb}(k) \\ 
                  & \qquad \times  \Ee\Bigg( \! e^{ \sum_{r = 0}^m s_r S_r(\infty) } \int_{\Rr^{k - 1} }  \!\! e^{- 2 i \pi \sum_{j = 2}^k x_j}  \!\! \prod_{d \in \Db_\infty} \!\! \widetilde{h}_{ d, \infty }\prth{0, \xb} \\
                  & \hspace{+3cm}  \prod_{\widetilde{d} \in \widetilde{\Db}_\infty} \!\! \widetilde{h}_{ \widetilde{d}, \infty }\prth{0, \xb}  \Delta(0, \xb)^2 d\xb \Bigg\vert  \sum_{r = 0}^m  S_r(\infty) = k  \Bigg)  
\end{align*}
where $ f_{m, \hb} $ is the Lebesgue density of the random variable $ \sum_{r = 0}^m S_r(\infty) $.

To pass to the limit, we have used dominated convergence given by inequality \eqref{Ineq:DominationSupersymWithoutSpeed} with $ Y = \emptyset $. As in the proof of theorems~\ref{Theorem:Autocorrelations} and~\ref{Theorem:TruncatedCharpolHeapLindqvist}, to show that the limiting function is integrable we use the criteria \eqref{Ineq:IntegrabilitySupersymDominationBis} with $ (K, L, M) = (k - 1, 0, 2k) $ ; it is satisfied if $ (k - 1)(2k(k - 2) - k + 1) > 2k $, i.e. $ (k - 1)(2k^2 - 3k - 3) > 2 $, which is true for all $ k \geq 2 $. The integrability in $ d, \widetilde{d} $ is the consequence of lemma~\ref{Lemma:IntegrabilityDominationInC}.
\end{proof}


\medskip
\begin{remark}\label{Rk:Alternative:Derivative}
In the same vein as before, one can use an alternative ending to the proof using \eqref{FourierRepr:SchurRectangleWithHn} in place of \eqref{FourierRepr:SchurRectangle}~:
\begin{align*}
\widetilde{\De}_N(k ; \hb ; \sbb) & = e^{s_0 h_0} \Esp{ \! e^{ \sum_{r = 0}^m s_r S_r(N)  }   \crochet{ \Xb^{\Db} \Yb^{ \widetilde{\Db} } }  s_{N^k}\crochet{\Xb + \Yb} } \\
                  & = \frac{e^{s_0 h_0}}{k!} \Esp{ \! e^{ \sum_{r = 0}^m s_r S_r(N)  }   \crochet{ \Xb^{\Db} \Yb^{ \widetilde{\Db} } } \oint_{\Uu^k} U^{-N} h_{Nk}\crochet{U(\Xb + \Yb)} \abs{\Delta(U)}^2 \frac{d^*U}{U} } \\
                  & = \frac{e^{s_0 h_0}}{k!} \\
                  & \ \ \times \Esp{ \! e^{ \sum_{r = 0}^m s_r S_r(N)  }   \crochet{ \Tb^{\Db'} \Yb^{ \widetilde{\Db} } } \!\! \oint_{\Uu^{k - 1} }\!\!\!\! V^{-N} h_{Nk}\crochet{(1 \!+\! V)(1 \!+\! \Tb \!+\! \Yb)} \abs{\Delta\crochet{1 + V}}^2 \frac{d^*V}{V} } 
\end{align*}
using the trick of remark \ref{Rk:MultivariateFourierWithHomogeneity}. Here, we have used $ \Db' := \Db\!\setminus\!\{ D_{N, 0}^{(1)} \} $, and we have integrated directly on $ u_1 $, getting the indicator $ \Unens{ (2k - 1) D_{N, 0}^{(1)} = 0 } = \Unens{ D_{N, 0}^{(1)} = 0 } $. On this last set, the variable $ D_{N, 0}^{(1)} $ in $ S_0(N) $ vanishes and one gets $ S_0'(N) $ by removing it so. Taking the expectation shows a factorisation by $ \Prob{ D_{N, 0}^{(1)} = 0 } = \frac{1}{N + 1} $ since $ D_{N, 0}^{(1)} \sim \Us(\intcrochet{0, N}) $ by \eqref{Def:DnR}. Hence, this value remains in the product of constants $ C_{N, r} $, leaving intact the previous expression. 

The number of variables of $ h_{Nk} $ is $ 2k^2 $ hence a rescaling in $ N^{2k \abs{\hb} - 1 -k(k - 1) - (k - 1)} $ ; one finally finds $ N^{k^2 + 2 C(\hb) } $ after taking the product of constants $ C_{N, r} $ into account. 

In the end, we can replace $ \Phi^{(Rand)}_{\De_\infty(k ; \hb ; \sbb)}(0, \xb) $ in \eqref{EqPhi:Random:DerivativesMicro} by 
\begin{align}\label{EqPhi:Alternative:DerivativesMicro}
\begin{aligned}
\hspace{-0.5cm}  \Phi^{(Alt)}_{\De_\infty(k ; \hb ; \sbb)}\!(0, \xb) \! &  := e^{ - 2 i \pi \sum_{j = 2}^k x_j } \!\! \int_{\Rr^{2k - 1}} \hspace{-0.3cm}  \widetilde{h}_{  k , \infty } \crochet{ (0 + \xb) \tensorsum (0 + \tb + \yb)   }   \\
                  & \hspace{+2.9cm} \times \Esp{\! e^{- 2i \pi \, \prth{ \sum_{r = 0}^m s_r S'_r(\infty) + \sum_{r, j_r} (t_{j_r, r} \widehat{D}_{\infty, r}^{(j_r) } + y_{j_r, r} \widetilde{D}_{\infty, r}^{(j_r) } ) } } }   \!    d\tb \, d\yb           
\end{aligned}
\end{align}
where $ S'_r(\infty) $ is equal to $ S_r(\infty) $ for $ r \geq 1 $ (and has $ D_{N, 0}^{(1)} = 0 $ for $ r = 0 $).
\end{remark}


\begin{remark}
We can make a parallel with the work of Assiotis-Keating-Warren \cite{AssiotisKeatingWarren} concerning the appearance of the power $ k^2 $ and the power $ 2 C(\hb) $ in the order of $ \widetilde{\De}_N(k ; \hb ; \sbb) $~: the power $ k^2 $ comes from the usual rescaling of the functional with $ h_{[cN]} $ whereas the power $ 2 C(\hb) $ comes from the (renormalisation constant of the) randomisation. This is somewhat similar to the change of probability with $ \abs{Z_{U_N}(1)}^{2k} $ that gives the power $ k^2 $ using the limit \eqref{EqPhi:KS}. Here, instead of a change of probability, we have used a randomisation which is one of the (other) main fundamental probabilistic concepts.
\end{remark}

\medskip

\subsection{Sums of divisor functions in $ \Ff_q\crochet{X} $}\label{Subsec:KR3G}

\subsubsection{Motivations} 

A successful heuristic to tackle problems in analytic number theory is to find analogues over function fields, namely on $ \Ff_q\crochet{X} $ for $ q $ a power of a prime number, $ \Ff_q $ being the finite field with $q$ elements. The many successes of this philosophy culminates with the work of Grothendieck expressing the zeta function of varieties over finite fields as the characteristic polynomial of the Frobenius map acting on a particular space of cohomology, and the proof of the Riemann hypothesis for this zeta function by Deligne. Several other results have seen their analogues over function fields resolved, allowing to create a conjecture in the number field case by analogy ; for instance, a conjecture of Montgomery-Odlyzko concerning spacings of zeroes of $L$-functions had its analogue proven by Katz and Sarnak \cite{KatzSarnak} on $ \Ff_q\crochet{X} $, and so was the Chowla and twin primes conjecture \cite{SawinShusterman}.

In the recent paper \cite{KeatingR3G}, the authors consider the function field analogue of a classical problem in number theory\footnote{This problem is also related to the moments of $ \zeta $ as $ \zeta(s)^k = \sum_{n \geq 1} d_k(n) n^{-s} $, see \cite{BettinConrey} for further motivations.}, the centered moments of $ d_k(U_n + H) - d_k(U_n) $ where $ U_n $ is a random variable uniformly distributed in $ \intcrochet{n + 1, 2n} $, $ H \in \intcrochet{1, n} $ and $ d_k $ is the $k$-th divisor function defined by
\begin{align*}
d_k(N) := \sum_{\ell_1, \dots, \ell_k \geq 1} \Unens{ \ell_1 \cdots \ell_k = N }
\end{align*}

Set $ \Delta_k(U_n) := d_k(U_n) - \Esp{ d_k(U_N) } $ and $ \Delta_k(U_n, H) := \Delta_k(U_n + H) - \Delta_k(U_n) $. One is interested in computing the moments $ \Esp{ \Delta_k(U_n, H)^{2r} } $, and in particular the case $r = 1$ (variance) when $ H = n^\delta $ for $ \delta \in (0, 1 - k\inv) $. Based on a function field analogue, using an expansion over irreducible characters of $ \Ff_q\crochet{X} $ and Katz' equidistribution theorem \cite{KatzSarnak} that allows to express the limit of an average over irreducible characters as a functional of the characteristic polynomial of a Haar-distributed random unitary matrix, the authors of \cite{KeatingR3G} form the following conjecture:

\begin{conjecture}[Keating, Rodgers, Roditty-Gershon and Rudnick] When $ n \to +\infty $ and $ \delta \in (0, 1 - k\inv) $
\begin{align*}
\Esp{ \Delta_k(U_n, n^\delta)^2 } \sim a_k \Pe_k(\delta) n^\delta (\log n)^{k^2 - 1}
\end{align*}
with $ a_k $ the arithmetic factor given in \eqref{Def:ArithmeticFactor} and $ \Pe_k(\delta) = (1 - \delta)^{k^2 - 1} \Ie_{ (1 - \delta)\inv }(k) $.
\end{conjecture}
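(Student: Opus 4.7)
The plan is a two-step Keating--Snaith strategy: first establish the analogous variance asymptotic over the function field $\Ff_q[X]$, where everything becomes rigorous via Katz's equidistribution theorem, then transfer to $\Zz$ by the usual heuristic insertion of the arithmetic factor $a_k$ from \eqref{Def:ArithmeticFactor}.

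For the function-field step, I would fix $n$ and $h = \lfloor \delta n \rfloor$ and, following the treatment of \cite{KeatingR3G}, expand the variance of $d_k(F + G) - d_k(F)$ over monic $F \in \Ff_q[X]$ of degree $n$ and $G$ of degree $< h$ using orthogonality of Dirichlet characters modulo $T^{n - h}$. The principal character contributes the function-field arithmetic factor together with the $q^h$ scaling, while the non-principal characters produce a sum of $|L(1/2, \chi)|^{2k}$-type averages. Katz's equidistribution theorem then replaces, as $q \to +\infty$, each such $L$-function by the characteristic polynomial of a Haar-distributed unitary matrix of size $N := n - h - 1$, and the resulting functional is precisely $I_k(m, N)$ of \eqref{FuncZ:KR3Gfunc} with $m = \lfloor c N \rfloor$ and $c = (1 - \delta)^{-1} \in (1, k)$.

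Next I would compute $I_k(\lfloor cN \rfloor, N) \sim N^{k^2 - 1} \Ie_c(k)$ following the prototype of \S\ref{Subsec:KS}: starting from the identity $I_k(m, N) = \crochet{x^m} x^{kN} s_{N^k}(x 1^k, 1^k)$ via \eqref{Eq:SchurJointMoments}, apply the rectangular Schur formula \eqref{FourierRepr:SchurRectangleWithHn} to represent $s_{N^k}\crochet{1^{\plusInOne k} + x 1^{\plusInOne k}}$ as a $k$-fold contour integral of $h_{Nk}\crochet{U(1 + x) 1^{\plusInOne k}}$ against $\abs{\Delta(U)}^2$, bring the Fourier coefficient $\crochet{x^m}$ inside by Fubini, use the homogeneity trick of Remark \ref{Rk:MultivariateFourierWithHomogeneity} to eliminate one integration variable, and rescale $h_{Nk}$ in the microscopic regime via Lemma \ref{Lemma:RescaledGegenbauer}. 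Dominated convergence, controlled by \eqref{Ineq:SpeedOfConvergenceHinftyBis} and the integrability criterion \eqref{Ineq:IntegrabilityDomination}, produces the limit as an integral of the form \eqref{Def:FormPhi}, namely $\Ie_c(k)$. Substituting $N = (1 - \delta) n (1 + o(1))$ and $c = (1 - \delta)^{-1}$ yields the function-field variance $\sim (\text{arithmetic}) \cdot \Pe_k(\delta) \, q^h \, n^{k^2 - 1}$ with $\Pe_k(\delta) = (1 - \delta)^{k^2 - 1} \Ie_{(1-\delta)^{-1}}(k)$, matching the conjecture after the substitutions $n \leftrightarrow \log n$ and $q^h \leftrightarrow n^\delta$ appropriate to the integer setting.

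The hard part is the transfer to $\Zz$, and this is precisely what makes the statement a conjecture rather than a theorem. Rigorously justifying the replacement of the function-field arithmetic factor by $a_k$, and of $n$ by $\log n$, would require a Hardy--Littlewood-type conjecture for the shifted correlation sums $\sum_{m \leq x} d_k(m) d_k(m + h)$ uniform in short shifts $h \leq x^\delta$ with an error term saving at least a power of $\log x$ --- available for $k = 2$ (Ingham), but unknown for $k \geq 3$. For $k = 2$ the variance conjecture can therefore be made unconditional along these lines; for general $k$, the function-field half of the argument is rigorous once Theorem \ref{Thm:KR3G:withFormula} is established, while the passage to the integers remains conjectural.
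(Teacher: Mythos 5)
You correctly recognise that the statement is a conjecture, not a theorem: the paper itself does not prove it and only records it as the heuristic conclusion of \cite{KeatingR3G}, while the rigorous content the paper actually establishes is the random-matrix scaling $I_k(\pe{cN}, N) \sim N^{k^2-1}\,\Ie_c(k)$ of Theorems~\ref{Thm:KR3G}/\ref{Thm:KR3G:withFormula}. Your reconstruction of the derivation matches the paper's framing in all essentials: the function-field variance is reduced via Dirichlet-character orthogonality and Katz's equidistribution to $I_k(m,N)$ with $N = n-h-1 \approx (1-\delta)n$ and $c = (1-\delta)^{-1}$ (so $c \in (1,k)$ precisely when $\delta \in (0, 1-k^{-1})$), the substitution $N^{k^2-1} = ((1-\delta)n)^{k^2-1}$ produces the factor $\Pe_k(\delta) = (1-\delta)^{k^2-1}\Ie_{(1-\delta)^{-1}}(k)$, and the passage to $\Zz$ (replacing the degree $n$ by $\log n$, $q^h$ by $n^\delta$, and inserting $a_k$) is a Keating--Snaith-type heuristic that is unavoidably conjectural for $k \geq 3$. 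Your observation that this leaves only the transfer to the integers as the genuinely conjectural step is exactly why the statement is labelled a conjecture; there is no gap in your account of why it cannot at present be a proof.

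One small caveat: the identity you cite as the starting point, $I_k(m,N) = \crochet{x^m}x^{kN}s_{N^k}(x1^k, 1^k)$, is the form the paper attributes to the CFKRS-style analysis in \cite{KeatingR3G} (\S\ref{Subsec:Intro:ComparisonLiterature}); the paper's own proof of Theorem~\ref{Thm:KR3G:withFormula} instead starts from $I_k(m,N) = \crochet{x^m t^{kN-m}}\,s_{N^k}\crochet{(x+t)1^k}$ and uses \eqref{FourierRepr:SchurRectangle} together with the factorisation of the Fourier coefficient into a product $h^{(k)}_{\pe{k\rho N}}(U)\,h^{(k)}_{\pe{k\bar\rho N}}(U)$ before rescaling, rather than bringing the $\crochet{x^m}$ extraction inside a single $h_{Nk}$ (the latter is given as the alternative route in Remark~\ref{Rk:Alternative:KR3G}). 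Either way the scaling $N^{k^2-1}\Ie_c(k)$ comes out the same, so this is a difference of route, not of correctness.
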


The factor $ \Ie_c(k) $ is analoguous to $ \gamma_k $ previously defined. It is a random matrix factor coming from
\begin{align*}
I_k(m, N) := \Esp{ \abs{ \sum_{ \substack{ 1 \leq j_1, \dots, j_k \leq N \\ j_1 + \cdots + j_k = m } } \sc_{j_1}(U_N) \dots \sc_{j_k}(U_N) }^2 }
\end{align*}
or, equivalently, expressing $ \Unens{j_1 + \cdots + j_k = m} $ as a Fourier coefficient:
\begin{align}\label{Def:KR3Gfunctional}
I_k(m, N) := \int_{ \Ue_N } \abs{ \, \crochet{x^m} \det(I - x U)^k }^2 dU  
\end{align}

The following theorem is given in \cite[thm. 1.5]{KeatingR3G}:

\begin{theorem}[Keating, Rodgers, Roditty-Gershon and Rudnick]\label{Thm:KR3G}
We have for all $ c \in \crochet{0, k} $
\begin{align}\label{Eq:ConvergenceKR3G}
\frac{ I_k(\pe{ c N} , N)  }{ N^{k^2 - 1} } \tendvers{N}{+\infty } \Ie_c(k)  
\end{align}
where, using $ G(1 + k) := 1\cdot 2!\cdot 3!\cdots (k - 1)! $,
\begin{align*}
\Ie_c(k) := \frac{1}{k! G(1 + k)^2 } \int_{ [0, 1]^k } \delta_0\prth{ \sum_{j = 1}^k u_i - c}  \Delta(u_1, \dots, u_k)^2 du_1\dots du_k
\end{align*}
\end{theorem}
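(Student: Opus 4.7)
The plan is to follow the template used for Theorems~\ref{Thm:KSwithDuality}--\ref{Theorem:MidCoeffs}: first reduce $I_k(m,N)$ to a Fourier coefficient of a rectangular Schur function via the Bump--Gamburd identity \eqref{Eq:SchurJointMoments}, then apply the master formula \eqref{FourierRepr:SchurRectangleWithHn}, perform the usual microscopic rescaling, and finally repackage the limit in the simplex form required by the statement.

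Writing $|[x^m]Z_{U_N}(x)^k|^2 = [x^m y^m]Z_{U_N}(x)^k\overline{Z_{U_N}(y)^k}$ and using the functional equation \eqref{Eq:EqFuncCharpol} on $|y|=1$, which gives $\overline{Z_{U_N}(y)}^k = (-1)^{Nk}y^{-Nk}\det(U)^{-k}Z_{U_N}(y)^k$, together with the identity
$\int_{\Ue_N}\det(U)^{-k}\prod_{z\in X}\det(I-zU)\,dU = (-1)^{kN}s_{N^k}\crochet{X}$
extracted from the proof of \eqref{Eq:SchurJointMoments}, one obtains the starting Schur formula
\[
I_k(m,N) = [x^m y^{Nk-m}]\, s_{N^k}\crochet{(x+y)\cdot 1^{\plusInOne k}}.
\]
Applying \eqref{FourierRepr:SchurRectangleWithHn} to this alphabet, the coproduct $h_{Nk}\crochet{A+B}=\sum_j h_j\crochet{A}h_{Nk-j}\crochet{B}$ together with the homogeneity $h_j\crochet{xV\cdot 1^{\plusInOne k}}=x^j h_j^{(k)}(V)$, where $h_j^{(k)}(V):=h_j\crochet{V\cdot 1^{\plusInOne k}}$, selects $j=m$ in the Fourier extraction and gives
\[
I_k(m,N) = \frac{1}{k!}\oint_{\Uu^k}\! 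V^{-N}\, h_m^{(k)}(V)\, h_{Nk-m}^{(k)}(V)\,|\Delta(V)|^2\,\frac{d^*V}{V}.
\]

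The total $v_1$-power vanishes since $-kN+m+(Nk-m)=0$, so Remark~\ref{Rk:MultivariateFourierWithHomogeneity} applies: set $v_j/v_1=e^{2i\pi x_j/N}$ for $j\geq 2$, integrate out $v_1$, and rescale $x_j\in[-N/2,N/2]$. With $m=\pe{cN}$ and $Nk-m=\pe{\overline{c}N}$, $\overline{c}:=k-c$, Lemma~\ref{Lemma:RescaledGegenbauer} gives for both factors $h_{\pe{cN}}^{(k)}(1,e^{2i\pi x_2/N},\dots,e^{2i\pi x_k/N})\sim N^{k^2-1}\,h_{c,\infty}^{(k)}(0,x_2,\dots,x_k)$ up to an explicit exponential prefactor, while $|\Delta(1,e^{2i\pi x/N})|^2\sim (2\pi/N)^{k(k-1)}\Delta(0,x_2,\dots,x_k)^2$ and $\prod dx_j/N^{k-1}$. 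The powers of $N$ combine to $2(k^2-1)-k(k-1)-(k-1)=k^2-1$, as required. Dominated convergence via \eqref{Ineq:SpeedOfConvergenceHinftyBis}, together with the integrability criterion \eqref{Ineq:IntegrabilityDomination} applied with $\kappa=k$ and $k-1$ effective variables (easily verified for all $k\geq 2$), justifies passage to the limit and produces an expression of the form $\frac{(2\pi)^{k(k-1)}}{k!}\int_{\Rr^{k-1}}\Phi_c(0,\xb)\Delta(0,\xb)^2 d\xb$ with $\Phi_c$ built from $h_{c,\infty}^{(k)}\cdot h_{\overline{c},\infty}^{(k)}$ and the exponential factors.

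To convert to the simplex form of the statement, insert the Beta-Gamma probabilistic representation (Lemma~\ref{Lemma:BetaProbRepr}) for each $h_{c,\infty}^{(k)}$ as a conditional Fourier transform of Beta random variables; Fubini then makes the integral over $\Rr^{k-1}$ against $\Delta(0,\xb)^2$ collapse, by hyperplane concentration (local CLT, cf. \eqref{Eq:PhilosophyCLT}), to a Lebesgue density evaluated at $c$, yielding $\int_{[0,1]^k}\delta_0(\sum u_i-c)\Delta(u)^2\,du$. The combinatorial factor $1/G(1+k)^2$ emerges after collecting all normalisations ($(2\pi)^{k(k-1)}$, $k!$, and the $c^{k^2-1}/\Gamma(k^2)$ from the Beta representation). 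The main obstacle is precisely this final bookkeeping; an arguably cleaner route sidestepping it is the randomisation paradigm of \S~\ref{Subsec:RandomisationParadigm}: with $V_\ell,V'_\ell$ iid $\Us(\intcrochet{0,N})$, write
\[
I_k(m,N)=(N+1)^{2k}\,\Ee\!\left[\Unens{\sum V=m,\sum V'=m}\,\Ee\!\prth{\prod_{\ell=1}^k\sc_{V_\ell}(U)\overline{\sc_{V'_\ell}(U)}}_{\!U}\right],
\]
and apply the mid-secular analysis of Theorem~\ref{Theorem:MidCoeffs} jointly with the local CLT $N^{-1}\sum V_\ell\to\sum U_\ell$ (Bates/Irwin--Hall density \eqref{Eq:BatesIrwinHall}); the $\delta_0(\sum u_i-c)$ then appears directly as the conditional density at $c$ of $k$ iid uniforms on $[0,1]$.
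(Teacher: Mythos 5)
Your opening steps reproduce the paper's own proof of Theorem~\ref{Thm:KR3G:withFormula} almost verbatim: the reduction
\[
I_k(m,N)=\crochet{x^m t^{kN-m}}\,s_{N^k}\!\crochet{(x+t)1^{\plusInOne k}},
\]
the extraction $I_k(m,N)=\frac{1}{k!}\oint_{\Uu^k}U^{-N}h^{(k)}_m(U)h^{(k)}_{Nk-m}(U)|\Delta(U)|^2\frac{d^*U}{U}$, the elimination of $u_1$ by homogeneity, the microscopic change of variables, the application of Lemma~\ref{Lemma:RescaledGegenbauer}, and the exponent bookkeeping $2(k^2-1)-k(k-1)-(k-1)=k^2-1$ all match. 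The randomisation alternative you sketch in your last sentence also corresponds to the paper's \S~\ref{Subsubsec:RandomProof:KR3G}, with your constraint $\{\sum V=m,\sum V'=m\}$ equivalent to the paper's $\Se_{N,m}$ after using $f_{S_k}(c)=f_{S_k}(k-c)$.

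The genuine gap is in your final paragraph, where you claim to convert the limit you obtain into the $\Ie_c(k)$ form of the statement. The paper does not perform this conversion: it derives a \emph{different} expression $\Je_c(k)$ and identifies it with $\Ie_c(k)$ only implicitly, by uniqueness of the limit established by \cite{KeatingR3G}. Your sketch does not actually close this gap. First, Lemma~\ref{Lemma:BetaProbRepr} with $\kappa=k$ produces Beta$(k,1)$ variables ($\betab_{k,1}\eqlaw U^{1/k}$), not uniforms on $[0,1]$, so inserting it does not yield $\int_{[0,1]^k}\delta_0(\sum u_j-c)\Delta(\ub)^2\,d\ub$. Second, the factor $c^{k^2-1}/\Gamma(k^2)$ you propose to absorb into $1/G(1+k)^2$ is $c$-dependent and cannot disappear into a constant. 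Third, and decisively, the randomisation route does not "collapse" the $\xb$-integral: as the paper's $\Phi^{(Rand)}_{\Je_c}$ in \eqref{EqPhi:Random:KR3G} shows, the local CLT produces the density factor $f_{S_k}(c)f_{S_k}(k-c)$ \emph{together with} a residual conditional expectation of $\widetilde{h}_{\Vb,\infty}(0,\xb)$ that must still be integrated against $\Delta(0,\xb)^2$ over $\Rr^{k-1}$. The $\delta_0$ in $\Ie_c(k)$ is not simply the Bates density sitting in front of an already-evaluated integral; matching the two forms requires either a nontrivial identity between Hankel-type integrals of $h_{c,\infty}^{(k)}$ and the simplex volume in $\Ie_c(k)$, or the argument the paper implicitly uses (uniqueness of the limit). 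As written, your proof establishes the convergence and the new expression $\Je_c(k)$, but the step from $\Je_c(k)$ to $\Ie_c(k)$ is asserted rather than proved.
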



\begin{remark}\label{Rk:GorodetskyRodgers}
A generalisation of this result is proven in \cite{GorodetskyRodgers} with a determinantal method, hence also holds for $ k \notin \Nn $. The functional investigated in \cite{GorodetskyRodgers} writes for $ \alpha \in \Rr_+^* $ (and in particular $ \alpha = \frac{1}{2} $)
\begin{align*}
\widetilde{I}_\alpha (m , N) := \Esp{ \abs{ \, \crochet{x^m} Z_{U_N}(x)^\alpha Z_{U'_{N - 1} }(x)^\alpha }^2  } 
\end{align*}
where $ U_N \sim CUE_N $ is independent of $ U'_{N - 1} \sim CUE_{N - 1} $. Note also that the Toeplitz connection (i.e. the Andr\'eieff-Heine-Szeg\"o/Cauchy-Binet formula) allows to write such a functional with a Toeplitz determinant with symbol of the type $ (1 - t)^\alpha $ for $ \alpha = \frac{1}{2} $ and is present in particular cases of spin-spin correlations of the Ising model \cite[(31), (25), (94)]{DeiftItsKrasovsky}.
\end{remark}

\subsubsection{An alternative proof}

We now give another proof of theorem \ref{Thm:KR3G} with the previous machinery, leading to another expression of $ \Ie_c(k) $. 

\begin{theorem}[Keating, Rodgers, Roditty-Gershon and Rudnick rederived]\label{Thm:KR3G:withFormula}
The convergence \eqref{Eq:ConvergenceKR3G} holds, namely
\begin{align}\label{Eq:ConvergenceKR3Gbis}
\frac{ I_k(\pe{ c N} , N)  }{ N^{k^2 - 1} } \tendvers{N}{+\infty } \Je_c(k)  
\end{align}
with
\begin{align}\label{EqPhi:KR3G}
\begin{aligned}
\Je_c(k) & = \frac{(2\pi)^{ k(k - 1) }  }{k! } \int_{ \Rr^{k - 1} } \Phi_{\Je_c}(0, x_2, \dots, x_k) \Delta\prth{ 0, x_2, \dots, x_k }^2  dx_2 \dots dx_k \\
\Phi_{\Je_c}(0, x_2, \dots, x_k) & := e^{ - 2 i \pi \sum_{j = 2}^k x_j }  h^{(k)}_{  c , \infty }\prth{0, x_2, \dots, x_k } h^{(k)}_{ k - c, \infty  }\prth{0, x_2, \dots, x_k }             
\end{aligned}
\end{align}
\end{theorem}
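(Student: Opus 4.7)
My plan is to reduce $I_k(\pe{cN}, N)$ to a rectangular Schur function evaluation, then apply the $h$-kernel formula \eqref{FourierRepr:SchurRectangleWithHn} and the microscopic rescaling of annex~\ref{Sec:ProbReprGegenbauer}. The structure closely parallels the proof of theorem~\ref{Theorem:MidCoeffs} in \S~\ref{Subsec:Midcoeff}.

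First, I would express $I_k(m, N)$ as a Fourier coefficient of a rectangular Schur function. Writing
$$|[x^m] Z_{U_N}(x)^k|^2 = \oint_{\Uu^2} Z_{U_N}(x)^k \overline{Z_{U_N}(y)^k} x^{-m} y^m \frac{d^*x\,d^*y}{xy}$$
and taking expectation, by \eqref{Eq:SchurJointMoments} with $k$ copies of $x$ and $k$ copies of $y$,
$$\Esp{Z_{U_N}(x)^k \overline{Z_{U_N}(y)^k}} = y^{-kN} s_{N^k}\crochet{x \cdot 1^{\plusInOne k} + y \cdot 1^{\plusInOne k}},$$
so that $I_k(m, N) = [x^m y^{kN - m}] s_{N^k}\crochet{x \cdot 1^{\plusInOne k} + y \cdot 1^{\plusInOne k}}$. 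Applying \eqref{FourierRepr:SchurRectangleWithHn} and expanding
$$h_{Nk}\crochet{U \cdot 1^{\plusInOne k} (x + y)} = \sum_{j = 0}^{Nk} x^j y^{Nk - j}\, h_j^{(k)}(U) h_{Nk - j}^{(k)}(U), \qquad h_j^{(k)}(U) := h_j\crochet{U \cdot 1^{\plusInOne k}},$$
and interchanging the Fourier coefficient with the integral by Fubini, one obtains the kernel representation
$$I_k(m, N) = \frac{1}{k!} \oint_{\Uu^k} U^{-N} h_m^{(k)}(U)\, h_{Nk - m}^{(k)}(U)\, |\Delta(U)|^2 \frac{d^*U}{U}.$$

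Next I would apply the homogeneity trick of remark~\ref{Rk:MultivariateFourierWithHomogeneity}: since $h_m^{(k)}$ is homogeneous of degree $m$ and $h_{Nk - m}^{(k)}$ of degree $Nk - m$, the power of $u_1$ in the integrand is $-kN + m + (Nk - m) = 0$, so $u_1$ integrates out and we reduce to an integral over $V := (v_2, \ldots, v_k) \in \Uu^{k - 1}$. Setting $v_j = e^{2i\pi x_j/N}$ with $x_j \in \crochet{-N/2, N/2}$ and $m = \pe{cN}$ gives a Riemann sum of the desired form. Passing to the limit uses lemma~\ref{Lemma:RescaledGegenbauer} with the dominated convergence estimate \eqref{Ineq:SpeedOfConvergenceHinftyBis}: each factor $h_{\pe{cN}}^{(k)}(1, e^{2i\pi X/N})$ has scaling $N^{k^2 - 1}$ (the function having $k \cdot k = k^2$ effective variables) with microscopic limit $h_{c, \infty}^{(k)}(0, X)$ up to an explicit phase, and similarly for $h_{Nk - \pe{cN}}^{(k)}$ with limit $h_{k - c, \infty}^{(k)}(0, X)$; the Vandermonde contributes $(2\pi/N)^{k(k - 1)} \Delta(0, X)^2$. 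The cumulative power of $N$ is $(k^2 - 1) + (k^2 - 1) - k(k - 1) - (k - 1) = k^2 - 1$, matching the announced scaling. Integrability of the limit on $\Rr^{k - 1}$ is verified by the second criterion \eqref{Ineq:IntegrabilityDomination} with $(K, M, M', \kappa, \kappa') = (k - 1, 1, 1, k, k)$, which is amply satisfied for $k \geq 2$.

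The main subtlety, as in the proof of theorem~\ref{Theorem:MidCoeffs}, is the careful bookkeeping of phases picked up by $h_{\pe{cN}}^{(k)}$ and $h_{Nk - \pe{cN}}^{(k)}$ under the microscopic change of variables: the relation $c + (k - c) = k$ (analogous to $\rho + \overline{\rho} = 1$ in \S~\ref{Subsec:Midcoeff}) is exactly what makes the two phase factors combine with the $V^{-N}$-term to leave the single $e^{-2i\pi \sum_{j = 2}^k x_j}$ factor appearing in $\Phi_{\Je_c}$. Once these phases are tracked correctly, \eqref{EqPhi:KR3G} falls out, and matching with Keating-Rodgers-Roditty-Gershon-Rudnick's $\Ie_c(k)$ is automatic from uniqueness of the limit in \eqref{Eq:ConvergenceKR3G}.
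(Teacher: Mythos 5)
Your proof is correct and follows essentially the same route as the paper's: both reduce $I_k(\pe{cN},N)$ to a Fourier coefficient of $s_{N^k}\crochet{(x+y)1^{\plusInOne k}}$, then to the kernel integral $\tfrac{1}{k!}\oint_{\Uu^k} U^{-N} h_m^{(k)}(U)\, h_{Nk-m}^{(k)}(U) |\Delta(U)|^2 \tfrac{d^*U}{U}$, integrate out $u_1$ via remark~\ref{Rk:MultivariateFourierWithHomogeneity}, and apply lemma~\ref{Lemma:RescaledGegenbauer} with the domination \eqref{Ineq:SpeedOfConvergenceHinftyBis} and the integrability criterion $(K,M,M',\kappa,\kappa')=(k-1,1,1,k,k)$. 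Your only deviation, starting from \eqref{FourierRepr:SchurRectangleWithHn} and expanding $h_{Nk}$ by the Cauchy convolution rather than the paper's use of \eqref{FourierRepr:SchurRectangle} with direct extraction from $H\crochet{\cdot}$, lands on the same intermediate formula and is immaterial.
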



\begin{proof} 
Expressing the sum as a Fourier coefficient, we have
\begin{align*}
I_k(m, N) & := \int_{ \Ue_N } \abs{ \, \crochet{x^m} \det(I - x U)^k }^2 dU \\
            & = \crochet{x^m y^m } \int_{ \Ue_N }  \det(I - x U)^k \det(I - y U\inv)^k  dU \\
            & = \crochet{x^m t^{kN - m} } \int_{ \Ue_N } \det(U)^{-k} \det(I - x U)^k \det(I - tU)^k  dU \times (-1)^{kN} \\
            & = \crochet{x^m t^{kN - m} } \int_{ \Ue_k } \det(U)^{-N} \det(I - x U)^{-k} \det(I - tU )^{-k}  dU  \times (-1)^{kN} \\
            & = \crochet{x^m t^{kN - m} } s_{N^k}\crochet{ (x + t) 1^k } \qquad\mbox{by \eqref{Eq:SchurJointMoments}.}
\end{align*}

Note that under this form, it is clear that $ I_k(m, N) = I_k(kN - m, N) $ as remarked in \cite[lemma 4.1]{KeatingR3G}. Note also that one could set $ x t\inv = y $ or $ x\inv t = \widetilde{y} $ to obtain $ I_k(m, N) = \crochet{y^{m - kN} } s_{N^k}\crochet{ (y + 1) 1^k } = \crochet{\widetilde{y}^{m } } s_{N^k}\crochet{ (\widetilde{y} + 1) 1^k } $ ; this last form does not exhibit the previous symmetry, though.

One then has with \eqref{FourierRepr:SchurRectangle} and $ U := (u_1, \dots, u_k) $
\begin{align*}
I_k(m, N) & = \crochet{x^m t^{kN - m} } \frac{1}{k!} \int_{ \Uu^k } U^{-N} H\crochet{(x + t) U}^k \abs{\Delta(U) }^2 \frac{d^*U}{U} \\
             & = \crochet{x^m t^{kN - m} } \frac{1}{k!} \int_{ \Uu^k } U^{-N} H\crochet{(x + t) 1^{\plusInOne k} U}   \abs{\Delta(U) }^2 \frac{d^*U}{U} \\
             & = \frac{1}{k!} \int_{ \Uu^k } U^{-N} h_m\crochet{ 1^{\plusInOne k} U} h_{kN - m}\crochet{ 1^{\plusInOne k} U}  \abs{\Delta(U) }^2 \frac{d^*U}{U} 
\end{align*}

Setting $ c = k \rho $ with $ \rho \in (0, 1) $ so that $ c \in (0, k) $ and $ k N - \pe{ c N} = \pe{k \overline{\rho} N} $, one gets
\begin{align*}
I_k(\, \crochet{k \rho N}, N) & = \frac{1}{k!} \int_{ \Uu^k } U^{-N} h^{(k)}_{ \pe{k \rho N} }(U) h^{(k)}_{\pe{k \overline{\rho} N } }(U)   \abs{\Delta(U) }^2 \frac{d^*U}{U} \\
              & = \frac{1}{k!} \int_{ \Uu^{k - 1} } V^{-N} h^{(k)}_{ \pe{k \rho N} }\crochet{1 + V }  h^{(k)}_{\pe{k \overline{\rho} N } }\crochet{1 + V} \abs{\Delta\crochet{ 1 +  V } }^2 \frac{d^*V}{V}
\end{align*}
using the trick of remark~\ref{Rk:MultivariateFourierWithHomogeneity} and integrating out $ u_1 $. 

Now, set $ v_j = e^{2 i \pi x_j/N } $ for all $ j \in \intcrochet{2, k} $, $ x_j \in [-N/2, N/2] $. This yields
\begin{align*}
I_k(\, \crochet{k \rho N}, N)  & = \frac{1}{k!} \int_{ \crochet{ - \frac{N}{2}, \frac{N}{2} }^{k - 1} } e^{ - 2 i \pi \sum_{j = 2}^k x_j } h^{(k)}_{ \pe{k \rho N} }\crochet{1 + e^{2 i \pi \xb/N} }  h^{(k)}_{\pe{k \overline{\rho} N } }\crochet{1 + e^{2 i \pi \xb/N} } \\
              & \hspace{+9cm}  \abs{\Delta\crochet{1 + e^{2 i \pi \xb/N} } }^2 \frac{d\xb }{N^{k - 1} }   \\
              & \equivalent{N\to +\infty } N^{2( k^2 - 1 ) - k(k-1) - (k - 1)}  \frac{ (2\pi)^{ k(k - 1) }  }{k!} \!\! \int_{ \Rr^{k - 1} }  e^{ - 2 i \pi \sum_{j = 2}^k x_j } \widetilde{h}^{(k)}_{  k \rho, \infty }\prth{0, \xb } \widetilde{h}^{(k)}_{ k \overline{\rho}, \infty  }\prth{0, \xb }  \\
              & \hspace{+11.8cm}        \Delta\prth{ 0, \xb }^2  d \xb \\
              & =    N^{ k^2 - 1  }  \Je_c(k)
\end{align*}

To pass to the limit, we have used the domination given by inequality \eqref{Ineq:SpeedOfConvergenceHinftyBis}. The limiting function is integrable using the second criteria \eqref{Ineq:IntegrabilityDomination} with $ (K, M, M', \kappa, \kappa') = (k - 1, 1, 1, k, k) $ which is clearly fullfilled for all $ p \in \ensemble{0, 1, 2} $ and $ k \geq 2 $.
\end{proof}


\begin{remark}
There are other expressions for $ \Ie_c(k) $ (hence $ \Je_c(k) $) in the literature. For instance, \cite{KeatingR3G} also gives a combinatorial expression and \cite{BasorGeRubinstein} gives an expression as the (inverse) Fourier transform of a Hankel determinant of size $ k \times k $ satisfying a Painlev\'e V equation. The Hankel connection is just an artefact of the appearance of $ \Delta(U)^2 $ in the expression of $ \Ie_c(k) $ (see \S~\ref{Subsec:Ultimate:Hankel}) added to the Fourier representation of the Dirac mass as the inverse Fourier transform of $ 1 $. The fact that such a Hankel determinant satisfies a Painlev\'e V equation is deep and related with integrable systems \cite{BasorGeRubinstein, ForresterWittePainleve3p5}.
\end{remark}


\begin{remark}\label{Rk:Alternative:KR3G}
As in remarks~\ref{Rk:Alternative:MidSecularCoefficient}, \ref{Rk:Alternative:Truncation}, \ref{Rk:Alternative:Derivative}, or \ref{Rk:Alternative:KS}, one can use \eqref{FourierRepr:SchurRectangleWithHn} instead of \eqref{FourierRepr:SchurRectangle}~:
\begin{align*}
I_k(\, \pe{k \rho N}, N) & = \crochet{x^{\pe{k \rho N}} t^{\pe{k \overline{\rho} N}} }    \frac{1}{k!} \oint_{ \Uu^k } U^{-N} h_{Nk}\crochet{ U  ( x + t ) 1^{\plusInOne k} }  \abs{\Delta(U) }^2 \frac{d^*U}{U} \\ 
             & =   \crochet{ y^{\pe{k \rho N}} } \frac{1}{k!} \oint_{ \Uu^k } U^{-N} h_{Nk}\crochet{ U  ( 1 + y ) 1^{\plusInOne k} }  \abs{\Delta(U) }^2 \frac{d^*U}{U} \\
             & =  \crochet{ y^{\pe{k \rho N}} } \frac{1}{k!} \oint_{ \Uu^{k - 1} } V^{-N} h_{Nk}\crochet{ (1 + V)  ( 1 + y ) 1^{\plusInOne k} }  \abs{\Delta\crochet{1 + V} }^2 \frac{d^*V}{V}
\end{align*}
where we have used the trick of remark~\ref{Rk:MultivariateFourierWithHomogeneity} and the ``de-symmetrisation'' $ y = x t\inv $, using the scaling of $ h_{Nk} $. Since we have exactly the same functional as in remarks~\ref{Rk:Alternative:MidSecularCoefficient}, \ref{Rk:Alternative:Truncation} and \ref{Rk:Alternative:Derivative} (with less variables), we conclude in the same way that one can replace $ \Phi_{\Je_c} $ in \eqref{EqPhi:KR3G} by
\begin{align}\label{EqPhi:Alternative:KR3G}
\Phi^{(Alt)}_{\Je_c}(0, x_2, \dots, x_k) := e^{ - 2 i \pi \sum_{j = 2}^k x_j } \int_\Rr e^{- 2i \pi c y} \, \widetilde{h}_{  k , \infty } \crochet{ (0 + y) \tensorsum (0 + \xb) \tensorsum  0^k  }  dy           
\end{align}
\end{remark}

\medskip
\subsubsection{Proof by randomisation}\label{Subsubsec:RandomProof:KR3G}

Using the randomisation paradigm of \S~\ref{Subsec:RandomisationParadigm} and in particular \eqref{RandomEq:Charpol}, one can transform \eqref{Def:KR3Gfunctional} into
\begin{align*}
I_k(m, N) & := \Esp{ \abs{ \, \crochet{x^m} Z_{U_N}(x)^k }^2 }  \\
              & = \Esp{ \, \crochet{x^m y^m} \det(I - x U_N)^k \det(I - y U_N\inv)^k  } \\
              & = N^{2k} \crochet{x^m y^m} (-1)^{Nk} y^{ Nk } \, \Esp{ \det(U_N)^{-k} x^{ \sum_{j = 1}^{k} V_N^{(j)} } y^{ -\sum_{j = k + 1}^{2k} V_N^{(j)} } \prod_{j = 1}^{2k} \sc_{V_N^{(j)} }(U_N) } \\
              & = (-1)^{Nk} N^{2k} \, \Esp{ \!  \Unens{ \sum_{j = 1}^k V_N^{(j)} = m , \ \sum_{j = k + 1}^{2k} V_N^{(j)} = Nk - m } \det(U_N)^{-k} \prod_{j = 1}^{2k} \sc_{V_N^{(j)} }(U_N) }
\end{align*}
where $ (V_N^{(j)})_{j \geq 1} $ is a sequence of i.i.d. uniform random variables in $ \intcrochet{0, N} $. Defining the event $ \Se_{N, m} := \ensemble{  \sum_{j = 1}^k V_N^{(j)} = m , \  \sum_{j = k + 1}^{2k} V_N^{(j)} = Nk - m } $, defining the random partition $ \Vb_{\! N} \vdash Nk $ of length $ 2k $ by the ordering of the vector $ (V_N^{(j)})_{1 \leq j \leq 2k} $ and pursuing as in the case of the autocorrelations in \S~\ref{Subsec:RandomisationParadigm}, one gets with $ \Xb := \ensemble{x_1, \dots, x_{2k}} $~:
\begin{align*}
I_k(m, N) & = N^{2k} \, \Esp{ \Un_{\Se_{N, m}}     \crochet{ \Xb^{ \Vb_{\! N} } } s_{N^k}\crochet{ \Xb } } \quad\mbox{by \eqref{Eq:SchurJointMoments}} \\
                 & = \frac{N^{2k}}{ k!} \,    \Esp{ \Un_{\Se_{N, m}}  \crochet{ \Xb^{ \Vb_{\! N} } } \!\! \oint_{\Uu^k}  U^{-N} H\crochet{U \Xb } \abs{\Delta(U)}^2 \frac{d^*U}{U} } \mbox{ by \eqref{FourierRepr:SchurRectangle}}  \\
                 & = \frac{N^{2k}}{ k!} \, \Esp{ \Un_{\Se_{N, m}} \oint_{\Uu^k}  U^{-N}  h_{  \Vb_{\! N} }(U) \abs{\Delta(U)}^2 \frac{d^*U}{U} } \mbox{ by \eqref{FourierRepr:hLambda}} \\
                 & = \frac{N^{2k}}{ k!} \, \Ee \Bigg( \Un_{\Se_{N, m}} \Unens{ \sum_{j = 1}^{2k} V_N^{(j)} = kN }    \oint_{\Uu^{k - 1}}  W^{-N} h_{  \Vb_{\! N} }\crochet{1 + W} \abs{\Delta\crochet{1 + W}}^2 \frac{d^*W}{W} \Bigg) 
\end{align*}
using the trick of remark \ref{Rk:MultivariateFourierWithHomogeneity} and integrating out $ u_1 $.

We remark that the event $ \ensemble{ \sum_{j = 1}^{2k} V_N^{(j)} = kN } $ is contained in $ \Se_{N, m} $ hence is superfluous. As a result, one gets 
\begin{align*}
I_k(m, N) = \frac{N^{2k}}{ k!} \  \Prob{\Se_{N, m}} \, \Esp{   \oint_{\Uu^{k - 1}}  W^{-N} h_{ \Vb_{\! N} }\crochet{1 + W} \abs{\Delta\crochet{1 + W}}^2 \frac{d^*W}{W} \Bigg\vert \Se_{N, m} \! }
\end{align*}

For $ m = \pe{c N} $, using the local CLT approach of \S~\ref{Subsec:RandomisationParadigm}, the uniform coupling \eqref{Eq:CouplingUniform} and setting $ w_j := e^{2 i \pi x_j/N} $, one gets with $ \Vb_{\! N} = \pe{N \Vb} $ (where $ \Vb $ is the vector of ordered uniform random variables from the coupling) and  $ \Se^{(\infty)}_{k, k - c} := \ensemble{ \sum_{j = 1}^k V^{(j)} = k, \sum_{j = 1}^k V^{(j + k)} = k - c } $~: 
\begin{align*}
I_k(\pe{c N}, N) & = \frac{N^{2k}}{ k!} \  \Prob{\Se_{N, \pe{c N}}} \, \\
                 & \qquad \Esp{   \int_{\crochet{-\frac{N}{2}, \frac{N}{2} }^{k - 1}}  e^{-\sum_{j = 2}^k x_j } h_{ [N \Vb]  }\crochet{1 + e^{2 i \pi \xb/N}} \abs{\Delta\crochet{1 + e^{2 i \pi \xb/N}}}^2 \frac{d\xb}{N^{k - 1}} \Bigg\vert \Se_{N, \pe{c N}} \! } \\
                 & \equivalent{N \to +\infty} \frac{N^{2k}}{ k!} \ \frac{f_{S_k}(c) f_{S_k}(k - c) }{N^2} \times  (2\pi)^{k(k - 1) }   N^{ 2k(k - 1) -k(k - 1) -(k - 1) } \\
                 & \hspace{+3cm}   \int_{\Rr^{k - 1}}  e^{-2 i \pi \sum_{j = 2}^k x_j } \, \Esp{  \widetilde{h}_{ \Vb\!,\, \infty  }(0, \xb)  \Big\vert \Se^{(\infty)}_{k, k - c} \! } \Delta(0, \xb)^2 d\xb \\
                 & = N^{k^2 - 1} \frac{(2\pi)^{k(k - 1) }}{k!} f_{S_k}(c) f_{S_k}(k - c) \int_{\Rr^{k - 1}}  e^{-2 i \pi \sum_{j = 2}^k x_j } \, \Esp{  \widetilde{h}_{ \Vb\!,\, \infty  }(0, \xb)  \Big\vert \Se^{(\infty)}_{k, k - c} \! } \\
                 & \hspace{+12cm}\Delta(0, \xb)^2 d\xb
\end{align*}
where we have applied dominated convergence as in theorem~\ref{Theorem:Autocorrelations} with the additional integrability in the uniform random variables coming from lemma~\ref{Lemma:IntegrabilityDominationInC}, with $ \widetilde{h}_{c, \infty} $ defined in \eqref{Def:hTildeInfty} and with $ f_{S_k}(x) $ the Lebesgue density of $ S_k := \sum_{j = 1}^k V^{(j)} $ computed in \eqref{Eq:BatesIrwinHall}. The local CLT is indeed applied twice as, by independence and equality in law
\begin{align*}
\Prob{\Se_{N, \pe{c N}}} = \Prob{ [N S_k] = N k  } \Prob{ [N S_k] = N k - [c N] }
\end{align*}

We have thus proven the following~:
\begin{theorem}[Keating, Rodgers, Roditty-Gershon and Rudnick rederived with randomisation]\label{Theorem:KR3G:WithRandomisation}
The convergence \eqref{Eq:ConvergenceKR3G}/\eqref{Eq:ConvergenceKR3Gbis} holds, with 
\begin{align}\label{EqPhi:Random:KR3G}
\begin{aligned}
\Je_c(k) & = \frac{(2\pi)^{ k(k - 1) } }{k! } \int_{ \Rr^{k - 1} } \Phi^{(Rand)}_{\Je_c}(0, \xb) \Delta\prth{ 0, \xb }^2  d\xb \\
\Phi^{(Rand)}_{\Je_c}(0, \xb) & :=   f_{S_k}(c) f_{S_k}(k - c) \,  e^{-2 i \pi \sum_{j = 2}^k x_k } \, \Esp{  \widetilde{h}_{ \Vb\!,\, \infty  }(0, \xb)  \Big\vert \Se^{(\infty)}_{k, k - c} \! }  
\end{aligned}
\end{align}
\end{theorem}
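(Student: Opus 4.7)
The plan is to mirror the randomisation approach used in \S\ref{Subsec:RandomisationParadigm} for the autocorrelations, adapted to the fact that the functional $I_k(m,N)$ is a Fourier coefficient of the $k$-th power of $Z_{U_N}$. First, I would write
\begin{align*}
I_k(m,N) = \crochet{x^m y^m} \Esp{ \det(I - xU_N)^k \det(I - yU_N\inv)^k }
\end{align*}
and apply the randomisation identity \eqref{RandomEq:Charpol}, i.e.\ $Z_{U_N}(x) = N\,\Esp{ \sc_{V_N}(U_N) x^{V_N} \vert U_N }$ with $V_N \sim \Us(\intcrochet{0,N})$, to each of the $2k$ copies of the characteristic polynomial, introducing an i.i.d.\ family $(V_N^{(j)})_{1\le j\le 2k}$. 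Taking the Fourier coefficient $\crochet{x^m y^m}$ replaces the monomial powers by the indicator of the event $\Se_{N,m} := \{\sum_{j=1}^k V_N^{(j)} = m,\ \sum_{j=k+1}^{2k} V_N^{(j)} = Nk-m\}$ (after using the functional equation \eqref{Eq:EqFuncCharpol} to turn the conjugate into a $\det(U_N)^{-k}$ times forward characteristic polynomials).

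Next, conditionally on the $V_N^{(j)}$, the expectation over $U_N$ of the product $\det(U_N)^{-k}\prod_j \sc_{V_N^{(j)}}(U_N)$ is, by \eqref{Eq:SchurJointMoments}, the Fourier coefficient $\crochet{\Xb^{\Vb_{\!N}}} s_{N^k}\crochet{\Xb}$ with $\Xb = \{x_1,\dots,x_{2k}\}$. I would then apply the RKHS/Fourier formula \eqref{FourierRepr:SchurRectangle} for $s_{N^k}$ on $\Uu^k$ and commute the Fourier extraction with the contour integral (Fubini), noting that $\crochet{\Xb^{\Vb_{\!N}}} H\crochet{\Xb U} = \prod_j h_{V_N^{(j)}}\crochet{U}$ by \eqref{FourierRepr:hLambda}. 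On the event $\Se_{N,m}$ one automatically has $\sum_j V_N^{(j)} = kN$, so the homogeneity trick of remark~\ref{Rk:MultivariateFourierWithHomogeneity} applies and allows integrating out $u_1$, reducing to an integral over $\Uu^{k-1}$ with integrand $W^{-N} h_{\Vb_{\!N}}\crochet{1+W}\abs{\Delta\crochet{1+W}}^2$.

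Then I would rescale $w_j = e^{2i\pi x_j/N}$ for $j\in\intcrochet{2,k}$ and pass to the limit using the coupling $V_N \eqlaw \pe{NV}$ with $V \sim \Us([0,1])$. The local CLT of \eqref{Eq:PhilosophyCLT}, applied twice by independence, yields
\begin{align*}
\Prob{\Se_{N,\pe{cN}}} \sim \frac{1}{N^2}\, f_{S_k}(c)\, f_{S_k}(k-c),
\end{align*}
where $f_{S_k}$ is the Bates/Irwin--Hall density \eqref{Eq:BatesIrwinHall}, and simultaneously converts the conditional expectation of $\prod_j h_{\pe{NV^{(j)}}}\crochet{1+e^{2i\pi\xb/N}}$ to the conditional expectation of $\prod_j \widetilde h_{V^{(j)},\infty}(0,\xb)$ given $\Se^{(\infty)}_{k,k-c}$, via lemma~\ref{Lemma:RescaledGegenbauer} and the definition \eqref{Def:hTildeInfty}. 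Tracking the powers of $N$ gives exactly $N^{2k}\cdot N^{-2}\cdot N^{2k(k-1)-k(k-1)-(k-1)} = N^{k^2-1}$, yielding the claimed scaling and the expression \eqref{EqPhi:Random:KR3G}.

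The main obstacle is the justification of dominated convergence under the conditioning: one needs a uniform bound on $h_{[NV^{(j)}]}\crochet{1+e^{2i\pi\xb/N}}$ that is integrable against $\Delta(0,\xb)^2$ on $\Rr^{k-1}$ jointly with integrability in the $V^{(j)}$'s. This is exactly the role of the inequalities \eqref{Ineq:SpeedOfConvergenceHinftyBis} combined with lemma~\ref{Lemma:IntegrabilityDominationInC} that were developed precisely for this purpose; one checks the second integrability criterion \eqref{Ineq:IntegrabilityDomination} with parameters $(K,M,M',\kappa,\kappa')=(k-1,k,k,1,1)$, which holds for all $k\ge 2$. The local CLT itself needs only absolute continuity of the limiting law $S_k$, which is immediate from its polynomial density. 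A secondary, purely bookkeeping point is that the indicator $\Se_{N,m}$ contains the single-sum constraint $\sum_j V_N^{(j)} = kN$ automatically, so the reduction from $k$ to $k-1$ integrals via homogeneity produces no extra factor and the two local-CLT densities appear as a clean product.
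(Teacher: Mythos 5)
Your proposal follows the paper's proof essentially step-for-step: randomising each copy of the characteristic polynomial via \eqref{RandomEq:Charpol} after the functional equation; extracting the Fourier coefficient $\crochet{x^m y^m}$ to produce $\Un_{\Se_{N,m}}$; passing through \eqref{Eq:SchurJointMoments} and the RKHS formula \eqref{FourierRepr:SchurRectangle}; observing that $\Se_{N,m}$ contains the constraint $\sum_j V_N^{(j)} = kN$, which is exactly what allows the homogeneity reduction to $\Uu^{k-1}$ with no extra indicator; factoring out $\Prob{\Se_{N,m}}$ and the conditional expectation; applying the local CLT twice by independence to get $\tfrac{1}{N^2}f_{S_k}(c)f_{S_k}(k-c)$; and rescaling the conditional integrand via Lemma~\ref{Lemma:RescaledGegenbauer}. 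Your power-of-$N$ bookkeeping $N^{2k}\cdot N^{-2}\cdot N^{2k(k-1)-k(k-1)-(k-1)} = N^{k^2-1}$ matches the paper exactly.

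The only point I would push back on is the integrability justification. You quote the second criterion of Lemma~\ref{Lemma:IntegrabilityDomination} with parameters $(K,M,M',\kappa,\kappa')=(k-1,k,k,1,1)$, but that lemma is formulated for functions $h_{c,\infty}^{(\kappa)}(x_1,\dots,x_K)$ in $K$ arguments matching the dimension of the integration domain, whereas here the factors $\widetilde h_{V^{(j)},\infty}(0,\xb)$ carry $k = K+1$ arguments (the prepended $0$), each contributing $k$ sinc factors rather than $k-1$ per auxiliary integration variable. Taking this extra factor into account, the correct power-counting gives the condition $(k-1)^2 > p$, which is what actually holds for $k\ge 2$ at $p=0$ (the plain $O(h_{c,\infty})$ domination of \eqref{Ineq:DominationSupersymWithoutSpeed}, which is all one needs for DCT). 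As literally stated, the inequality $K(\kappa M+\kappa'M'-K)>p+M+M'$ with your parameters reads $(k-1)(k+1)>p+2k$ and fails already at $k=2$. The paper itself does not give explicit parameters here — it just refers back to the argument of Theorem~\ref{Theorem:Autocorrelations} together with Lemma~\ref{Lemma:IntegrabilityDominationInC} for joint integrability in the $V^{(j)}$'s — so you would be safer doing the same and working out the power-counting directly with the $0$ variable accounted for, rather than committing to parameters that do not quite fit the lemma's hypotheses.
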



\medskip
\subsection{The moments of moments}\label{Subsec:MoMo}

\subsubsection{Motivations}

Techniques of \textit{detropicalisation} of a maximum\footnote{That is, replacing a max which is an $ L^\infty $ norm by an $ L^\beta $ norm for $\beta$ big.} have proven interesting to tackle questions of extreme value fluctuations for some particular correlated random variables. The canonical example of such an approach is the Sherrington-Kirckpatrick model of spin glasses and its descendents (Random Energy Model, $p$-spins model, etc. ; see e.g. \cite{PanchenkoBook}) but it was also successfully applied in other fields such as random matrix theory when Fyodorov and Keating \cite{FyodorovKeating} studied the maximum of $ \abs{Z_{U_N}} $ on $ \Uu $ as an attempt to model $ \max_{s \in [0, 1]} \abs{\zeta(\frac{1}{2} + i sT U) } $ with $ U $ uniform on $ [0, 1] $ (see also \cite{FyodorovHiaryKeating}).

The resulting conjecture for this particular problem writes (with two independent Gumbel-distributed random variables)
\begin{align}\label{Conj:FyodorovKeatingCUE}
\max_{z \in \Uu} \log\abs{ Z_{U_N}(z) } - \log(N) + \frac{3}{4} \log\log(N) \cvlaw{N}{+\infty} \Gumbel(1) + \Gumbel'(1)
\end{align}
and showed interesting connections with log-correlated Gaussian fields (see \cite{MadauleGaussien} for a general structure theorem satisfied by these fields). The first part of the conjecture, i.e. the law of large number for $ \max_{z \in \Uu}\log\abs{ Z_{U_N}(z) } $ when $ N\to+\infty $ was addressed in \cite{ArguinBeliusBourgade, ChhaibiMadauleNajnudel, PaquetteZeitouni}, the case of the fluctuations being still opened. See also \cite{ArguinBeliusBourgadeRadziwillSoundararajan, ArguinBourgadeRadziwill1, HarperZetaFK, NajnudelExtremeZeta} for advances on the case of $ \zeta $.

A key tool to arrive at conjecture \eqref{Conj:FyodorovKeatingCUE} relied on the \textit{moments of moments} defined in \eqref{FuncZ:MoMo}, whose expression we recall:
\begin{align}\label{FuncZ:MoMoBis}
\Mom(N\vert k, \beta) := \Esp{ \prth{ \oint_\Uu \abs{ Z_{U_N}(z) }^{2\beta} \frac{d^*z}{z} }^{\!\! k}\, } 
\end{align}

The study of $ \Mom(N\vert k, \beta) $ for $ k, \beta \in \Nn^* $ (hence $ k\beta^2 > 1 $) in the $ CUE $ case was performed in \cite{AssiotisKeating, BaileyKeating} with three different methods (the Schur formula \eqref{Eq:SchurCFKRS}, the Young tableau combinatorial method and the polytope method, but for the Gelfand-Tsetlin polytope in place of the Birkhoff one) while the $ COE $ and $ CSE $ cases were studied in \cite{AssiotisBaileyKeating}. As recalled in \S~\ref{Subsubsec:MoMo}, this is the difficult regime in terms of Toeplitz determinant analysis. The result of \cite{AssiotisKeating, BaileyKeating} writes:


\begin{theorem}[Bailey-Keating, Assiotis-Keating]\label{Thm:AssiotisBaileyKeating}
Let $ k, \beta \in \Nn^* $. Then, when $ N \to +\infty $,
\begin{align*}
\Mom(N\vert k, \beta) \sim c_+(k, \beta) N^{ (k\beta)^2 + 1 - k }
\end{align*}
\end{theorem}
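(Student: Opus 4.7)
The plan is to follow the same scheme as the other applications in \S~\ref{Sec:Applications}: express the functional as a Schur function of a rectangular diagram, apply the main formula \eqref{FourierRepr:SchurRectangleWithHn}, reduce dimensions via the homogeneity trick of remark~\ref{Rk:MultivariateFourierWithHomogeneity}, and conclude with a microscopic rescaling combined with lemma~\ref{Lemma:RescaledGegenbauer}. Concretely, by Fubini and \eqref{Eq:SchurJointMoments} with the plethystic alphabets $X = Y = Z \cdot 1^{\plusInOne \beta}$ where $Z := \ensemble{z_1, \ldots, z_k}$ and each $z_j$ carries multiplicity $\beta$ on each side,
\[
\Mom(N\vert k,\beta) = \oint_{\Uu^k} Z^{-N\beta} \, s_{N^{k\beta}}\crochet{Z \cdot 1^{\plusInOne 2\beta}} \frac{d^*Z}{Z}.
\]
Substituting \eqref{FourierRepr:SchurRectangleWithHn} at width $k\beta$ with auxiliary alphabet $U := \ensemble{u_1, \ldots, u_{k\beta}}$, then factoring out $z_1$ from the $Z$-block and $u_1$ from the $U$-block simultaneously (the powers cancel exactly by plethystic homogeneity: $z_1^{-kN\beta} \cdot z_1^{Nk\beta} = 1$ and $u_1^{-k\beta N} \cdot u_1^{Nk\beta} = 1$, so $z_1, u_1$ integrate freely), the problem reduces to
\[
\Mom(N\vert k,\beta) = \frac{1}{(k\beta)!} \oint_{\Uu^{k-1} \times \Uu^{k\beta - 1}} W^{-N\beta} V^{-N} h_{Nk\beta}\crochet{(1+W)(1+V) \cdot 1^{\plusInOne 2\beta}} \abs{\Delta(1+V)}^2 \frac{d^*W}{W} \frac{d^*V}{V},
\]
with $W := \ensemble{w_2, \ldots, w_k}$ and $V := \ensemble{v_2, \ldots, v_{k\beta}}$. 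As a sanity check, for $k = 1$ the $W$-block disappears and this formula collapses to the starting point of the proof of theorem~\ref{Thm:KSwithDuality}, consistently with the identity $\Mom(N\vert 1,\beta) = \Esp{\abs{Z_{U_N}(1)}^{2\beta}}$ coming from the translation invariance of the $CUE$.

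The microscopic change of variables $w_j = e^{2 i \pi x_j/N}$, $v_j = e^{2 i \pi t_j/N}$ then delivers the limit via lemma~\ref{Lemma:RescaledGegenbauer}. With $c = k\beta$, multiplicity $\kappa = 2\beta$ and $K = k \cdot k\beta = k^2\beta$ classical parameters in the tensor alphabet $(1+W)(1+V)$, the $h$-factor produces $N^{K\kappa - 1} = N^{2 k^2\beta^2 - 1}$ times $h^{(2\beta)}_{k\beta, \infty}\crochet{(0+\xb)(0+\tb)}$; the Vandermonde contributes $(2\pi/N)^{k\beta(k\beta-1)} \Delta(0, \tb)^2$; and the Jacobians contribute $N^{2 - k - k\beta}$. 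Summing the exponents,
\[
(2k^2\beta^2 - 1) - k\beta(k\beta - 1) + (2 - k - k\beta) = k^2\beta^2 + 1 - k,
\]
which is exactly the announced exponent. The limiting constant is of the form
\[
c_+(k,\beta) = \frac{(2\pi)^{k\beta(k\beta - 1)}}{(k\beta)!} \int_{\Rr^{k-1} \times \Rr^{k\beta - 1}} \Phi_{\Mom}(\xb, \tb) \, \Delta(0, \tb)^2 \, d\xb \, d\tb,
\]
fitting the framework \eqref{Def:FormPhi} adapted to a two-block structure (the Vandermonde weight appearing only on the $\tb$-block), where $\Phi_{\Mom}$ is an explicit Fourier-twisted specialisation of $h^{(2\beta)}_{k\beta, \infty}$ at the tensor alphabet $(0+\xb)(0+\tb)$, providing the promised new expression for $c_+(k, \beta)$.

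The main obstacle will be the justification of dominated convergence. The pointwise bound \eqref{Ineq:SpeedOfConvergenceHinftyBis} controls the pre-limit integrand in terms of $h^{(2\beta)}_{k\beta, \infty}$, so what remains is the absolute integrability of the limit against $\Delta(0, \tb)^2 \, d\xb \, d\tb$ on $\Rr^{k+k\beta - 2}$. Compared to the other applications in \S~\ref{Sec:Applications}, the novelty is that the $\xb$-block carries no Vandermonde weight, so decay at infinity in $\xb$ must come entirely from the oscillatory structure of $h^{(2\beta)}_{k\beta, \infty}$; this is not directly covered by the criteria of annex~\ref{Sec:ProbReprGegenbauer} and will require an adaptation. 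Since $(0+\xb)(0+\tb)$ is a genuine tensor alphabet, each $x_j$ is paired with every $t_\ell$ in a product of $k\beta$ independent $\sinc$-type factors, each of multiplicity $2\beta$, providing enough Fourier decay for $x$-integrability throughout the regime $(k\beta)^2 > 1$ covered by the theorem; integrability in $\tb$ then follows from a direct adaptation of criteria \eqref{Ineq:IntegrabilityDomination} with weight $\Delta(0, \tb)^2$. An alternative derivation via the randomisation paradigm of \S~\ref{Subsec:RandomisationParadigm}, obtained by expanding $\abs{Z_{U_N}(z)}^{2\beta}$ through its secular coefficients as in \eqref{RandomEq:Charpol} and applying the local-CLT mechanism, would yield an equivalent expression for $c_+(k, \beta)$ in the spirit of \eqref{EqPhi:Random:Autocorrels}, realising $\Phi_{\Mom}$ as a joint Fourier transform of uniform random variables conditioned on their sums.
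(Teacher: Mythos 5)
Your plan is correct and arrives at the right exponent, but it is not the route the paper's main proof of Theorem~\ref{Theorem:MoMo} actually takes: what you have reconstructed is essentially the ``alternative ending'' spelled out in Remark~\ref{Rk:Alternative:MoMo}. The paper's main proof departs from yours at the second step. Instead of applying \eqref{FourierRepr:SchurRectangleWithHn} directly and then factoring $z_1$ and $u_1$ out of a joint integral, it first applies the Cauchy-kernel form \eqref{FourierRepr:SchurRectangle} and takes the $\crochet{X^{N\beta}}$-Fourier coefficient \emph{inside} the $U$-integral before any homogeneity reduction. Because $X$ enters $H\crochet{U X 1^{\plusInOne 2\beta}}$ tensorially, this Fourier extraction factorizes into a product, giving $h_{N\beta}\crochet{U 1^{\plusInOne 2\beta}}^k$. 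Only then is $u_1$ factored out. The outcome is a single-block integral over $\Rr^{k\beta-1}$ whose integrand is the $k$-th power $h_{\beta,\infty}^{(2\beta)}(0,\xb)^k$ and which sits squarely inside the standard integrability criterion \eqref{Ineq:IntegrabilityDomination} with $(K,M,\kappa)=(k\beta,k,2\beta)$. Your route ends instead with a two-block integral over $\Rr^{k-1}\times\Rr^{k\beta-1}$, with the Vandermonde weight only on the $\tb$-block, and as you rightly flag this needs an adapted integrability argument because the $\xb$-block decay must come entirely from the oscillation of $h_{k\beta,\infty}^{(2\beta)}$; the paper acknowledges exactly this point in Remark~\ref{Rk:Alternative:MoMo} (``a slight adaptation of the first criteria~\eqref{Ineq:IntegrabilityDomination} with variables $W$ that are not integrated against a squared Vandermonde determinant'') but prefers to keep the cleaner single-block argument as the theorem proof, noting the two expressions carry comparable computational complexity once the internal integrals defining $h_{c,\infty}^{(\kappa)}$ are counted. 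Your $k=1$ sanity check, exponent accounting, and recognition that the alternative expression fits a two-block version of \eqref{Def:FormPhi} are all correct. One notational slip: after the microscopic change of variables the exponent alphabets combine via the tensor-sum $\oplus$, not the tensor product; the limiting argument of $h_{k\beta,\infty}^{(2\beta)}$ should read $(0+\xb)\oplus(0+\tb)$, since the product of exponentials $e^{2i\pi\xb/N}\cdot e^{2i\pi\tb/N}$ corresponds to the sum of exponents $\xb\oplus\tb$.
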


See also \cite{Fahs} for a result without an expression of $ c_+(k, \beta) $. The expression of $ c_+(k, \beta) $ given in \cite[lem. 3.6]{BaileyKeating} is of the type $ \sum_{0 \leq \ell_1, \dots, \ell_{k - 1} \leq 2\beta } c_{k, \beta ; \ellb} ( (k - 1)\beta - \abs{\ellb} )^{ b_{k, \beta ; \ellb} - {k \choose 2} } P_{k, \beta}(\ellb) $ where $ \ellb := (\ell_1, \dots, \ell_{k - 1}) $ and $ \abs{\ellb} := \sum_{j = 1}^{k - 1} \ell_j $ ; here, $ b_{k, \beta ; \ellb} $ and $ c_{k, \beta ; \ellb} $ are constants, and $ P_{k, \beta}(\ellb) $ is of the type \eqref{Eq:SchurCFKRS} after rescaling (the sum coming from manipulations of the type described in \S~\ref{Subsec:Intro:ComparisonLiterature}). 

\medskip
\subsubsection{An alternative proof}

We now give another proof of theorem~\ref{Thm:AssiotisBaileyKeating} with the previous machinery, giving thus a new expression of $ c_+(k, \beta) $.

\begin{theorem}[Bailey-Keating \& Assiotis-Keating rederived]\label{Theorem:MoMo}
One has
\begin{align}\label{Eq:ConvergenceMoMo}
\frac{\Mom(N\vert k, \beta)}{ N^{ (k\beta)^2 + 1 - k } } \tendvers{n}{+\infty} \MoMf_+(k, \beta) 
\end{align}
with 
\begin{align}\label{EqPhi:MoMo}
\begin{aligned}
\MoMf_+(k, \beta) & = \frac{(2\pi)^{ k\beta(k\beta - 1) }  }{(k\beta)! } \int_{ \Rr^{{k\beta} - 1} } \!\!\! \Phi_{\MoMf_+(k, \beta)}(0, \xb) \Delta\prth{ 0, \xb }^2  dx_2 \dots dx_{k\beta} \\
\hspace{-0.3cm} \Phi_{\MoMf_+(k, \beta)\!}(0, x_2, \dots, x_{k\beta}) & :=  e^{ 2 i \pi (k\beta^2 - 1) \sum_{\ell = 2}^{k\beta  }  x_\ell } \,\,  h_{\beta, \infty}^{(2\beta)}( 0, \xb )^k             
\end{aligned}
\end{align}
\end{theorem}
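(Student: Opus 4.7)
The plan is to follow the template of the previous theorems in this section, based on the main formula \eqref{FourierRepr:SchurRectangleWithHn}. First I would observe that for $z \in \Uu$, $\abs{Z_{U_N}(z)}^{2\beta} = Z_{U_N}(z)^\beta\, \overline{Z_{U_N}(z)}^\beta$, so expanding the $k$-th power of $\Eee_\beta(Z_{U_N})$, applying Fubini and invoking the Bump--Gamburd identity \eqref{Eq:SchurJointMoments} with $k\beta$ variables on each side yields, with $Z := \ensemble{z_1, \dots, z_k}$,
\begin{align*}
\Mom(N\vert k,\beta) \;=\; \oint_{\Uu^k} \prod_{j=1}^k z_j^{-N\beta - 1}\, s_{N^{k\beta}}\!\crochet{1^{\plusInOne 2\beta} Z}\, \prod_{j=1}^k d^*z_j \;=\; \prod_{j=1}^k \crochet{z_j^{N\beta}}\, s_{N^{k\beta}}\!\crochet{1^{\plusInOne 2\beta} Z}.
\end{align*}

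Next I would apply \eqref{FourierRepr:SchurRectangleWithHn} to write the Schur function as an integral over $\Uu^{k\beta}$ of $U^{-N} h_{Nk\beta}\!\crochet{1^{\plusInOne 2\beta} Z U}\, \abs{\Delta(U)}^2$, and use Fubini to pull the $Z$-Fourier coefficient inside the $U$-integral. The decisive simplification is the factorisation $h_{Nk\beta}\!\crochet{1^{\plusInOne 2\beta} Z U} = \crochet{t^{Nk\beta}} \prod_{j=1}^k H\!\crochet{t z_j \cdot 1^{\plusInOne 2\beta} U}$: since $\crochet{z_j^{N\beta}} H\!\crochet{t z_j \cdot 1^{\plusInOne 2\beta} U} = t^{N\beta}\, h_{N\beta}^{(2\beta)}(U)$, the $k$ Fourier extractions decouple and yield the clean base formula
\begin{align*}
\Mom(N\vert k,\beta) \;=\; \frac{1}{(k\beta)!} \oint_{\Uu^{k\beta}} U^{-N}\, h_{N\beta}^{(2\beta)}(U)^k\, \abs{\Delta(U)}^2\, \frac{d^*U}{U},
\end{align*}
which is the MoM analogue of the rectangular-$h$ integrals used in every preceding application.

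From here the asymptotic analysis mirrors Theorem \ref{Thm:KSwithDuality}. Using remark \ref{Rk:MultivariateFourierWithHomogeneity} (the integrand is homogeneous of total $U$-degree zero) I would integrate out $u_1$ by setting $u_j = u_1 v_j$ for $j \geq 2$, then set $v_j = e^{2i\pi x_j/N}$ with $x_j \in \crochet{-N/2,\, N/2}$, and apply lemma \ref{Lemma:RescaledGegenbauer} to $h_{N\beta}^{(2\beta)}$ on $k\beta$ variables. Counting powers of $N$: the $k$-th power of the rescaled $h_{N\beta}^{(2\beta)}$ contributes $N^{k(2k\beta^2 - 1)}$, the Vandermonde contributes $N^{-k\beta(k\beta - 1)}(2\pi)^{k\beta(k\beta - 1)}$, and the measure contributes $N^{-(k\beta - 1)}$, summing to the exponent $(k\beta)^2 + 1 - k$ of \eqref{Eq:ConvergenceMoMo}. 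The phase $e^{2i\pi(k\beta^2 - 1)\sum_\ell x_\ell}$ in \eqref{EqPhi:MoMo} then arises by combining the $e^{-2i\pi\sum x_j}$ coming from $V^{-N}$ with the $k$-fold phase $e^{2i\pi\beta^2\sum x_j}$ produced by each rescaled factor $h_{N\beta}^{(2\beta)}$, following the same phase rule as in Theorem \ref{Thm:KSwithDuality} with $\beta^2$ in place of $k^2$.

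The only nontrivial step is justifying dominated convergence, that is, integrability of $h_{\beta,\infty}^{(2\beta)}(0, \xb)^k$ against $\Delta(0, \xb)^2$ on $\Rr^{k\beta - 1}$. This would be handled by the first integrability criterion \eqref{Ineq:IntegrabilityDomination} of the annex applied with $(K, M, \kappa) = (k\beta - 1, k, 2\beta)$, analogous to the choices $(k-1, 1, 2k)$ and $(k, 2k, 1)$ used respectively in the Keating--Snaith and Birkhoff cases; the required inequality reduces to $k\beta(k\beta - 1) \geq 2$, which holds for all $k, \beta \geq 1$ with $k\beta \geq 2$. The pointwise bound \eqref{Ineq:SpeedOfConvergenceHinftyBis} then secures the swap of limit and integral. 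The boundary case $k = \beta = 1$ is trivial since $\Mom(N\vert 1, 1) = N + 1$ directly, giving $\MoMf_+(1, 1) = 1$. A parallel proof via the randomisation paradigm of \S\ref{Subsec:RandomisationParadigm} is also available, rewriting $\Eee_\beta(Z_{U_N})^k$ as a multilinear statistic in the secular coefficients and applying a local CLT to the associated truncated uniform random variables.
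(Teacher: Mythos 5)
Your proposal follows the paper's proof essentially verbatim: both reduce $\Mom(N\vert k,\beta)$ to $\crochet{X^{N\beta}}s_{N^{k\beta}}\crochet{X\, 1^{\plusInOne 2\beta}}$ via \eqref{Eq:SchurJointMoments}, pass to an integral over $\Uu^{k\beta}$, decouple the $k$ Fourier extractions into $h_{N\beta}^{(2\beta)}(U)^k$, integrate out $u_1$ by homogeneity, rescale $v_j = e^{2i\pi x_j/N}$, and apply lemma \ref{Lemma:RescaledGegenbauer}; your power-of-$N$ count and your phase bookkeeping $e^{-2i\pi\sum x_j}\cdot\big(e^{2i\pi\beta^2\sum x_j}\big)^k = e^{2i\pi(k\beta^2-1)\sum x_j}$ are both correct, and you rightly identify the randomisation-paradigm alternative, which is the paper's Theorem \ref{Theorem:MoMo:WithRandomisation}. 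Two small remarks. First, you route through \eqref{FourierRepr:SchurRectangleWithHn} with $h_{Nk\beta}$ and an auxiliary variable $t$ before extracting $\crochet{Z^{N\beta}}$, whereas the paper uses \eqref{FourierRepr:SchurRectangle} and extracts directly from $H\crochet{UX\,1^{\plusInOne 2\beta}}=\prod_j H\crochet{Ux_j\,1^{\plusInOne 2\beta}}$; since the residual $Z$-degree is automatically $Nk\beta$ the two are equivalent and the $t$ is superfluous. Second, in the integrability check your choice $(K,M,\kappa)=(k\beta-1,k,2\beta)$ in \eqref{Ineq:IntegrabilityDomination} produces the condition $(k\beta-1)(k\beta+1)=(k\beta)^2-1>p+M$, not the $k\beta(k\beta-1)\geq 2$ you state, and it fails for $k=2,\beta=1,p=2$; the paper instead takes $(K,M,\kappa)=(k\beta,k,2\beta)$, recording the condition as $(k\beta)^2-k+1\geq p+1$, which holds under the standing hypothesis $k\geq 2$. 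This is a parametrisation slip rather than a gap: the integrability of $h_{\beta,\infty}^{(2\beta)}(0,\xb)^k\,\Delta(0,\xb)^2\,\norm{\xb}_p^p$ on $\Rr^{k\beta-1}$ does hold for $k\geq 2$, $\beta\geq 1$, $p\in\ensemble{0,1,2}$, and the rest of your argument stands.
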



\begin{remark}\label{Rk:KSandMoMo}
Note that for $ k = 1 $, one recovers $ \Phi_{\widetilde{L}_1(\beta)} $ given in \eqref{EqPhi:KS} since $ \Mom(N\vert 1, \beta) = \Esp{ \oint_\Uu \abs{Z_{U_N}(z) }^{2\beta} \frac{d^*z}{z} } = \oint_\Uu \Esp{  \abs{Z_{U_N}(z) }^{2\beta}  } \frac{d^*z}{z} = \Esp{ \abs{Z_{U_N}(1) }^{2\beta} } $ by invariance of the Haar measure. 
\end{remark}


\begin{proof}
One has
\begin{align*}
\Mom(N\vert k, \beta) & := \int_{ \Ue_N } \prth{ \vphantom{a^{a^{a^a}}} \! \crochet{x^0} \det(I - x U)^\beta \det(I - x\inv U\inv)^\beta }^k dU \\
            & = (-1)^{Nk\beta} \int_{ \Ue_N } \prth{ \vphantom{a^{a^{a^a}}} \! \det(U)^{-\beta} \crochet{x^0} x^{-N\beta} \det(I - x U)^{2\beta} }^k dU \\
            & = (-1)^{Nk\beta} \int_{ \Ue_N } \det(U)^{-k\beta}   \crochet{X^{N\beta}} H\crochet{ - U X 1^{\plusInOne 2\beta} } dU, \qquad X := \ensemble{x_1, \dots, x_k} \\
            & = (-1)^{Nk\beta} \crochet{X^{N\beta}} \int_{ \Ue_N } \det(U)^{-k\beta}   H\crochet{ - U X 1^{\plusInOne 2\beta} } dU \\
            & = \crochet{X^{N\beta}} s_{N^{k\beta} } \crochet{ X 1^{\plusInOne 2\beta} } \mbox{ by \eqref{Eq:SchurJointMoments}.}
\end{align*}

This last formula was the starting point of the computations in \cite{AssiotisBaileyKeating, AssiotisKeating, BaileyKeating}\footnote{Note also the slight (critical) typo in \cite[Prop. 2.1]{BaileyKeating} where $ \beta $ has to be replaced by $ 2\beta $, corrected in the proof of \cite[Prop. 2.6]{AssiotisKeating}.}. We now give an alternate end to this result using \eqref{FourierRepr:SchurRectangle} with $ U := \ensemble{u_1, \dots, u_{k\beta} } $~:
\begin{align*}
\Mom(N\vert k, \beta) & := \crochet{X^{N\beta}} \frac{1}{(k\beta)!} \oint_{\Uu^{k\beta} } U^{-N} H\crochet{U X 1^{\plusInOne 2\beta} } \abs{\Delta(U)}^2 \frac{d^*U}{U} \\
               & = \frac{1}{(k\beta)!} \oint_{\Uu^{k\beta} } U^{-N} h_{N\beta}\crochet{U 1^{\plusInOne 2\beta} }^k \abs{\Delta(U)}^2 \frac{d^*U}{U} \\
               & = \frac{1}{(k\beta)!} \oint_{\Uu^{k\beta} } U^{-N} u_1^{Nk\beta^2} h_{N\beta}\crochet{ ( 1 + V) 1^{\plusInOne 2\beta} }^k \abs{\Delta(U)}^2 \frac{d^*U}{U}
\end{align*}
where $ V := \ensemble{ u_2/u_1, \dots, u_{k\beta}/u_1 } $. Since $ \abs{\Delta(U)}^2 = \abs{\Delta\crochet{1 + V}}^2 $, one gets after integrating out $u_1$ as in remark \ref{Rk:MultivariateFourierWithHomogeneity} and setting $ v_j := e^{2 i \pi x_j/N} $ and $ x_j \in \crochet{-\frac{N}{2}, \frac{N}{2}} $ 
\begin{align*}
\Mom(N\vert k, \beta) & := \frac{1}{(k\beta)!} \oint_{\Uu^{k\beta - 1} } V^{-N}  h_{N\beta}\crochet{ ( 1 + V) 1^{\plusInOne 2\beta} }^k \abs{\Delta\crochet{1 + V}}^2 \frac{d^*V}{V}  \\
               & = \frac{1}{(k\beta)!} \int_{ \crochet{-\frac{N}{2}, \frac{N}{2}}^{k\beta - 1} } e^{ -2 i \pi \sum_{\ell = 2}^{k\beta }  x_\ell } \, h_{N\beta}^{(2\beta)}\crochet{ 1 + e^{2 i \pi \xb/N}  }^k \abs{\Delta\crochet{1 + e^{2 i \pi \xb/N} }}^2 \frac{d\xb}{N^{k\beta - 1}} \\
               & \!\!\!\! \equivalent{N\to +\infty} \frac{1}{(k\beta)!} \int_{ \Rr^{k\beta - 1} } \!\! e^{ -2 i \pi \sum_{\ell = 2}^{k\beta  }  x_\ell }  \prth{ N^{2\beta( k\beta ) - 1} \widetilde{h}_{\beta, \infty}^{(2\beta)}\prth{ 0, \xb} }^k \! N^{-k\beta(k\beta - 1) } \\
               & \hspace{+9cm} \times \abs{\Delta\prth{0, 2 i \pi \xb } }^2 \! \frac{d\xb}{N^{k\beta - 1}} \\
               & = N^{ k(2\beta(k\beta) - 1) - k\beta(k\beta - 1)  -k\beta + 1 } \, \frac{ (2\pi)^{k\beta(k\beta - 1) } }{ (k\beta)! }\\
               & \hspace{+5cm}\times \int_{ \Rr^{k\beta - 1} } \!\! e^{ -2 i \pi \sum_{\ell = 2}^{k\beta }  x_\ell } \, \widetilde{h}_{\beta, \infty}^{(2\beta)}( 0, \xb )^k \Delta(0, \xb)^2 d\xb \\
               & = N^{k^2\beta^2 - k + 1 }  \, \frac{ (2\pi)^{k\beta(k\beta - 1) } }{ (k\beta)! } \!\! \int_{ \Rr^{k\beta - 1} } \!\! e^{ 2 i \pi (k\beta^2 - 1) \sum_{\ell = 2}^{k\beta }  x_\ell } \, h_{\beta, \infty}^{(2\beta)}( 0, \xb )^k \Delta(0, \xb)^2 d\xb
\end{align*}

Here, we have used dominated convergence under the form of inequality \eqref{Ineq:SpeedOfConvergenceHinftyBis} with an integrable limiting function due to the first criteria \eqref{Ineq:IntegrabilityDomination} with $ (K, M, \kappa) = (k\beta, k, 2\beta) $ which is clearly fullfilled for all $ p \in \ensemble{0, 1, 2} $ and $ k \geq 2 $ with $ \beta \geq 1 $. Remark that this condition writes in terms of the exponent, i.e. $ (k\beta)^2 - k + 1 \geq p + 1 $.
\end{proof}


\begin{remark}\label{Rk:Alternative:MoMo}
One can also use directly the expression \eqref{Eq:SchurWithHn}/\eqref{FourierRepr:SchurRectangleWithHn} to get
\begin{align*}
\Mom(N\vert k, \beta) & := \crochet{X^{N\beta}} s_{N^{k\beta} } \crochet{ X 1^{\plusInOne 2\beta} }, \qquad X := \ensemble{x_1, \dots, x_k} \\
               & =  \crochet{X^{N\beta}} \frac{1}{(k\beta)! } \oint_{\Uu^{k\beta - 1} } V^{-N} h_{Nk\beta}\crochet{(1 + V) X 1^{\plusInOne 2 \beta} } \abs{\Delta\crochet{1 + V}}^2 \frac{d^*V}{V} \\
               & = \frac{1}{(k\beta)! } \oint_{\Uu^{k - 1} } \!\! W^{-N\beta} \! \oint_{\Uu^{k\beta - 1} } \!\! V^{-N} h_{Nk\beta}\crochet{(1 + V) (1 + W) 1^{\plusInOne 2 \beta} } \abs{\Delta\crochet{1 + V}}^2 \frac{d^*V}{V}\frac{d^*W}{W}
\end{align*}

A slight adaptation of the first criteria \eqref{Ineq:IntegrabilityDomination} with variables $ W $ that are not integrated against a squared Vandermonde determinant allows to use dominated convergence to get (with the conventions of theorem~\ref{Theorem:Autocorrelations})
\begin{align*}
\Mom(N\vert k, \beta) & \equivalent{N \to +\infty} \int_{\Rr^{k(\beta + 1) - 2} } e^{ -2 i \pi  \left( \beta \sum_{j = 2}^k \theta_j + \sum_{\ell = 2}^{k\beta} \varphi_\ell \right) } \,  N^{ k\cdot k\beta \cdot 2\beta -1 } \, \widetilde{h}_{k\beta, \infty}^{(2\beta)}\crochet{ (0 + \thetab)\oplus(0 + \varphib) } \\
               &   \hspace{+6.5cm} \times N^{- k\beta(k\beta - 1) } \abs{\Delta\crochet{ 0 + 2 i \pi \varphib } }^2 \, \frac{d\thetab}{ N^{ k - 1 } } \, \frac{d\varphib}{ N^{k\beta - 1} }              \\
               & = N^{ (2k^2 \beta^2  - 1) - k\beta(k\beta - 1)  - k + 1 - k\beta + 1 } \, \frac{ (2\pi)^{k\beta(k\beta - 1) } }{ (k\beta)! } \!\! \int_{\Rr^{k(\beta + 1) - 2} } e^{ -2 i \pi  \left( \beta \sum_{j = 2}^k \theta_j + \sum_{\ell = 2}^{k\beta} \varphi_\ell \right) } \\
               & \hspace{ +7cm} \times \widetilde{h}_{k\beta, \infty}^{(2\beta)}\crochet{ (0 + \thetab)\oplus(0 + \varphib) } \Delta(0, \thetab)^2 d\thetab d\varphib \\
               & = N^{k^2\beta^2 - k + 1 }  \, \frac{ (2\pi)^{k\beta(k\beta - 1) } }{ (k\beta)! } \!\! \int_{\Rr^{k(\beta + 1) - 2} } e^{ -2 i \pi  \left( \beta \sum_{j = 2}^k \theta_j + \sum_{\ell = 2}^{k\beta} \varphi_\ell \right) } \\
               & \hspace{+6.5cm} \times \widetilde{h}_{k\beta, \infty}^{(2\beta)}\crochet{ (0 + \thetab)\oplus(0 + \varphib) } \Delta(0, \thetab)^2 d\thetab d\varphib
\end{align*}

We thus also have, with Fubini and setting $ \xb := \ensemble{x_2, \dots, x_{k\beta }} $
\begin{align}\label{EqPhi:Alternative:MoMo}
\!\!\!\Phi^{(Alt)}_{\MoMf_+(k, \beta)\!}(0, \xb) = e^{ -2 i \pi  ( k\beta^2 - 1 + \beta) \sum_{\ell = 2}^{k\beta} x_j } \!\! \int_{\Rr^{k - 1} } \!\!\! e^{ -2 i \pi  \sum_{\ell = 2}^{k\beta} \varphi_\ell }  \, \widetilde{h}_{k\beta, \infty}^{(2\beta)}\crochet{ (0 + \xb)\oplus(0 + \varphib) }   d\varphib
\end{align}

The expression \eqref{EqPhi:MoMo} seems more ergonomic than this last one as it involves less integrals~; nevertheless, one also has to take into account the amount of integrals used to define $ h_{c, \infty}^{(\kappa)} $, and the power $k$ in \eqref{EqPhi:MoMo} shows then an equal computational complexity.
\end{remark}

\medskip
\subsubsection{Proof by randomisation}

We now use the randomisation paradigm of \S~\ref{Subsec:RandomisationParadigm} to give an alternative expression to \eqref{EqPhi:MoMo}/\eqref{EqPhi:Alternative:MoMo}. 
\begin{theorem}[Bailey-Keating \& Assiotis-Keating rederived with randomisation]\label{Theorem:MoMo:WithRandomisation}
The convergence \eqref{Eq:ConvergenceMoMo} holds with ($ \xb := \ensemble{x_2, \dots, x_{k\beta }} $)
\begin{align}\label{EqPhi:Random:MoMo}
\begin{aligned}
\MoMf_+(k, \beta) & = \frac{(2\pi)^{ k\beta(k\beta - 1) }  }{(k\beta)! } \int_{ \Rr^{{k\beta} - 1} } \!\!\! \Phi^{(Rand)}_{\MoMf_+(k, \beta)}(0, \xb) \Delta\prth{ 0, \xb }^2  d\xb \\
\hspace{-0.3cm} \Phi^{(Rand)}_{\MoMf_+(k, \beta)\!}(0, \xb) & := \frac{1}{ (2(2\beta)!)^k }   \,  e^{-2 i \pi \sum_{j = 2}^{k\beta} x_j } \, \Esp{  \widetilde{h}_{ \Vb\!,\, \infty  }(0, \xb)  \Big\vert \Me_{k, \beta}^{(\infty)}  }  
\end{aligned}
\end{align}
where $ \Vb := (V^{(j, \ell) })_{1 \leq j \leq 2k, 1 \leq \ell \leq \beta} $ is a sequence of i.i.d. uniform random variables in $ [0, 1] $ and $ \Me_{k, \beta}^{(\infty)} := \bigcup_{j = 1}^k \ensemble{ \sum_{\ell = 1}^\beta ( V^{(j, \ell) } + 1 - V^{(j+ k, \ell)} ) = 0 } $.
\end{theorem}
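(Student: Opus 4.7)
The plan is to reproduce for $\Mom(N\vert k,\beta)$ the randomisation strategy developed in \S\ref{Subsec:RandomisationParadigm} for the autocorrelations and reused in remark \ref{Rk:TruncatedMomentsWithRandomisation} and \S\ref{Subsubsec:RandomProof:KR3G}. Starting from the identity \eqref{RandomEq:Charpol}, write $Z_{U_N}(z)^\beta \overline{Z_{U_N}(z)}^\beta$ as an iterated expectation with respect to $2\beta$ independent uniforms $V^{(\ell)}, W^{(\ell)} \sim \Us(\intcrochet{0,N})$. The contour integration $\oint_\Uu \cdot\frac{d^*z}{z}$ then selects the Kronecker diagonal through $\oint z^{p-q}\frac{d^*z}{z} = \Unens{p=q}$, and taking the $k$-th power with $k$ independent blocks (reindexed as $(V^{(j,\ell)})_{1\le j\le 2k,\,1\le \ell\le\beta}$, with $j\in\intcrochet{1,k}$ for the $Z$-factors and $j\in\intcrochet{k+1,2k}$ for the $\overline{Z}$-factors) gives, after the overall sign $(-1)^{\sum V+\sum W}$ cancels by parity on the diagonal,
\begin{align*}
\Mom(N\vert k,\beta) = (N+1)^{2k\beta}\,\Esp{\Un_{\Me^{(N)}_{k,\beta}}\prod_{j=1}^k\prod_{\ell=1}^\beta\sc_{V^{(j,\ell)}}(U_N)\,\overline{\sc_{V^{(j+k,\ell)}}(U_N)}},
\end{align*}
where $\Me^{(N)}_{k,\beta}$ is the intersection over $j\in\intcrochet{1,k}$ of the matching-sum events $\{\sum_\ell V^{(j,\ell)} = \sum_\ell V^{(j+k,\ell)}\}$ produced by the $k$ independent contour integrals.

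The $U_N$-expectation at fixed $\Vb$ is then handled by the Schur machinery. Using $\sc_m = (-1)^m\crochet{x^m}Z_U$, the identity $\overline{\sc_m(U)} = \det(U)^{-1}\sc_{N-m}(U)$ coming from \eqref{Eq:EqFuncCharpol}, and the Bump-Gamburd computation $\Esp{\det(U_N)^{-k\beta}\prod_{a\in A}Z_{U_N}(a)} = (-1)^{k\beta N}s_{N^{k\beta}}\crochet{A}$ appearing in the proof of \eqref{Eq:SchurJointMoments} (here with $\vert A\vert = 2k\beta$), one reduces the expectation to $\crochet{\Xb^{\Vb_+}\Yb^{N-\Vb_-}}s_{N^{k\beta}}\crochet{\Xb+\Yb}$ up to an overall sign that cancels against the analogous sign produced by $s_{k\beta^N}\crochet{-A} = (-1)^{k\beta N}s_{N^{k\beta}}\crochet{A}$. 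Inserting the central formula \eqref{FourierRepr:SchurRectangle} and commuting the Fourier extraction with the $\Uu^{k\beta}$-integral by Fubini — exploiting the tensorial factorisation $H\crochet{U(\Xb+\Yb)} = \prod_{j,\ell}H\crochet{Ux_{j,\ell}}\prod_{j,\ell}H\crochet{Uy_{j,\ell}}$ together with the representation \eqref{FourierRepr:hLambda} of $h_m$ as a Fourier coefficient — produces
\begin{align*}
\Mom(N\vert k,\beta) = \frac{(N+1)^{2k\beta}}{(k\beta)!}\,\Esp{\Un_{\Me^{(N)}_{k,\beta}}\!\oint_{\Uu^{k\beta}}U^{-N}\!\!\prod_{j,\ell}h_{V^{(j,\ell)}}(U)\,h_{N-V^{(j+k,\ell)}}(U)\,\vert\Delta(U)\vert^2\frac{d^*U}{U}}.
\end{align*}

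On $\Me^{(N)}_{k,\beta}$ the total homogeneous degree of the $h$-factors equals $k\beta N$, matching $U^{-N}$, so the integrand is invariant under global rescaling of $U$. Apply the trick of remark \ref{Rk:MultivariateFourierWithHomogeneity} to integrate out $u_1$, leaving a $(k\beta-1)$-fold integral over $\Uu^{k\beta-1}$ in the ratios $w_i = u_i/u_1$. Setting $w_i = e^{2i\pi x_i/N}$ with $x_i\in[-\tfrac{N}{2},\tfrac{N}{2}]$, lemmas \ref{Lemma:RescaledGegenbauer} and \ref{Lemma:BetaProbRepr} give $h_{V^{(j,\ell)}}\crochet{1+\Wb} \sim N^{k\beta-1}\widetilde{h}_{V^{(j,\ell)}/N,\,\infty}(0,\xb)$ (and analogously for $h_{N-V^{(j+k,\ell)}}$ with $1-V^{(j+k,\ell)}/N$), while the coupling \eqref{Eq:CouplingUniform} realises the $V^{(j,\ell)}/N$ as i.i.d. $\Us([0,1])$ variables in the limit. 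Simultaneously, the local CLT of \eqref{Eq:PhilosophyCLT} converts the Kronecker indicator $\Un_{\Me^{(N)}_{k,\beta}}$ into a Dirac density concentrating on the affine hyperplane $\Me^{(\infty)}_{k,\beta}$, the prefactor $\tfrac{1}{(2(2\beta)!)^k}$ arising by applying the Bates/Irwin-Hall density formula \eqref{Eq:BatesIrwinHall} at the relevant centers for each of the $k$ independent constraints.

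The main obstacle is the joint control of this discrete-to-continuous conditioning on $\Me^{(N)}_{k,\beta}$ and of the dominated convergence for the $(k\beta-1)$-fold integral uniformly in $\Vb$. Domination is provided by inequality \eqref{Ineq:SpeedOfConvergenceHinftyBis}, and integrability of the limiting integrand follows from the first criterion of \eqref{Ineq:IntegrabilityDomination} by an exponent count analogous to the one carried out for theorem \ref{Theorem:MoMo} — crucially, the extra integrability in the uniform variables provided by lemma \ref{Lemma:IntegrabilityDominationInC} is what allows the conditional expectation $\Esp{\widetilde{h}_{\Vb,\infty}(0,\xb)\,\vert\,\Me^{(\infty)}_{k,\beta}}$ to be defined and bounded. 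The $N$-bookkeeping — $(N+1)^{2k\beta}$ from the randomisation, $N^{-k}$ from the $k$ independent local CLTs, $N^{(k\beta-1)\cdot 2k\beta}$ from the $2k\beta$ rescalings of $h$, $N^{-k\beta(k\beta-1)}$ from $\vert\Delta\vert^2$, and $N^{-(k\beta-1)}$ from the Jacobian — combines into the scaling $N^{k^2\beta^2+1-k}$ of \eqref{Eq:ConvergenceMoMo}, and the identification of \eqref{EqPhi:Random:MoMo} with \eqref{EqPhi:MoMo} and \eqref{EqPhi:Alternative:MoMo} is then automatic by uniqueness of the limit.
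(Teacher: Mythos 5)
Your proposal follows the same route as the paper's proof: randomise the secular coefficients, convert the $k$ contour integrals into constraint events, reduce the Haar average to $s_{N^{k\beta}}$ via Bump--Gamburd, plug in \eqref{FourierRepr:SchurRectangle}, integrate out $u_1$, and conclude with the local CLT, the Gegenbauer rescaling (lemmas~\ref{Lemma:RescaledGegenbauer}, \ref{Lemma:BetaProbRepr}) and the domination lemmas \eqref{Ineq:SpeedOfConvergenceHinftyBis}, \eqref{Ineq:IntegrabilityDomination}, \ref{Lemma:IntegrabilityDominationInC}. Where you differ --- and this is actually a correction --- is in how the constraint is set up. By keeping $\overline{Z_{U_N}(z)}^{\beta}$ intact so that $\oint_\Uu z^{p-q}\frac{d^*z}{z}=\Unens{p=q}$ yields $\ensemble{\forall j\le k,\ \sum_\ell V^{(j,\ell)}=\sum_\ell V^{(j+k,\ell)}}$ directly, and applying the functional equation only afterwards at the coefficient level via $\overline{\sc_m}=\det(U)^{-1}\sc_{N-m}$, you obtain a genuine codimension-$k$ constraint. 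The paper instead rewrites $\overline{Z_{U_N}(x)}^{\beta}$ as $x^{N\beta}Z_{U_N}(x\inv)^{\beta}$ before randomising and then extracts $\crochet{x^0}$; since both $Z_{U_N}(x)^{\beta}$ and $x^{N\beta}Z_{U_N}(x\inv)^{\beta}$ are polynomials with only nonnegative powers, the coefficient of $x^0$ of their product forces $\sum_\ell V^{(j,\ell)}=0$ and $\sum_\ell V^{(j+k,\ell)}=\beta N$, so the randomisation collapses to a single point --- an inconsistency with the local CLT that the proof then invokes. The same slip appears in the stated event $\Me_{k,\beta}^{(\infty)}$, where as displayed $\sum_\ell(V^{(j,\ell)}+1-V^{(j+k,\ell)})=0$ forces $V^{(j,\ell)}=0$, $V^{(j+k,\ell)}=1$, a corner of $[0,1]^{2k\beta}$ rather than an affine slice. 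Your $\ensemble{\sum_\ell V^{(j,\ell)}=\sum_\ell V^{(j+k,\ell)}}$ is the correct event; under the law-preserving change of variable $V^{(j+k,\ell)}\mapsto 1-V^{(j+k,\ell)}$ it becomes $\ensemble{\sum_\ell(V^{(j,\ell)}+V^{(j+k,\ell)})=\beta}$, i.e.\ the local CLT is targeting the Irwin--Hall density at its centre $\beta$ (not at $0$), which should also be reflected in the normalising constant.
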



\begin{proof}
Using \eqref{RandomEq:Charpol} and $ X := \ensemble{x_1, \dots, x_j} $, one has
\begin{align*}
\Mom(N\vert k, \beta) & := \Esp{ \prth{ \, \crochet{x^0} \abs{Z_{U_N}(x)}^{2\beta} }^{\! k} } \\
               & = (-1)^{Nk\beta} \, \Esp{ \det(U_N)^{-k\beta} \prth{ \, \crochet{x^0} Z_{U_N}(x)^{ \beta} x^{N\beta} Z_{U_N}(x\inv)^{ \beta} }^{\! k} } \mbox{ with \eqref{Eq:EqFuncCharpol}} \\
              & = (-1)^{Nk\beta} \, \Esp{ \det(U_N)^{-k\beta} \crochet{ X^0 } \prod_{j = 1}^k  Z_{U_N}(x_j )^{ \beta}  x_j^{ N\beta} Z_{U_N}(x_j\inv)^{ \beta}  } \\
              & = N^{2k\beta } (-1)^{Nk\beta} \, \Esp{ \det(U_N)^{-k\beta} \crochet{X^0 } \prod_{j = 1}^k \prod_{\ell = 1}^\beta x_j^{ V_N^{(j, \ell)} } x_j^{ N - V_N^{ (j + k, \ell)} }  \sc_{V_N^{(j, \ell)} }(U_N) \sc_{V_N^{(j + k, \ell)} }(U_N) } \\
              & = N^{2k\beta } (-1)^{Nk\beta} \, \Esp{ \!  \Unens{ \forall j \leq k, \, \sum_{\ell = 1}^\beta (V_N^{(j, \ell)} + N - V_N^{(j + k, \ell)} ) = 0 } \det(U_N)^{-k\beta} \prod_{j = 1}^{2k}\prod_{\ell = 1}^\beta \sc_{V_N^{(j, \ell)} }(U_N) }
\end{align*}
where $ (V_N^{(j, \ell)})_{j \geq 1, \ell\geq 1} $ is a sequence of i.i.d. uniform random variables in $ \intcrochet{0, N} $. Define the event 
\begin{align*}
\Me_{k, \beta}^{(N)} := \ensemble{  \forall j \in \intcrochet{1, k}, \, \sum_{\ell = 1}^\beta (V_N^{(j, \ell)} + N - V_N^{(j + k, \ell)} ) = 0 }
\end{align*}

Defining moreover the random partition $ \Vb_{\! N} \vdash Nk\beta $ of length $ 2k\beta $ by the ordering of the vector $ (V_N^{(j, \ell)})_{1 \leq j \leq 2k, 1 \leq \ell \leq \beta} $ and pursuing as in \S~\ref{Subsec:RandomisationParadigm} or \S~\ref{Subsubsec:RandomProof:KR3G}, one gets with $ \Xb := \ensemble{x_1, \dots, x_{2k\beta}} $~:
\begin{align*}
\Mom(N\vert k, \beta) & = N^{2k\beta} \, \Esp{ \Un_{\Me_{k, \beta}^{(N)} }     \crochet{ \Xb^{ \Vb_{\! N} } } s_{N^{k\beta}}\crochet{ \Xb } } \quad\mbox{by \eqref{Eq:SchurJointMoments}} \\
                 & = \frac{N^{2k\beta}}{ (k\beta)! } \,    \Esp{ \Un_{\Me_{k, \beta}^{(N)} } \crochet{ \Xb^{ \Vb_{\! N} } } \!\! \oint_{\Uu^{k\beta}}  U^{-N} H\crochet{U \Xb } \abs{\Delta(U)}^2 \frac{d^*U}{U} } \mbox{ by \eqref{FourierRepr:SchurRectangle}}  \\
                 & = \frac{N^{2k\beta}}{ (k\beta)!} \,    \Esp{ \Un_{\Me_{k, \beta}^{(N)} } \oint_{\Uu^{k\beta}}  U^{-N}  h_{  \Vb_{\! N} }(U) \abs{\Delta(U)}^2 \frac{d^*U}{U} } \mbox{ by \eqref{FourierRepr:hLambda}} \\
                 & = \frac{N^{2k\beta}}{ (k\beta)!} \, \Ee \Bigg( \! \Un_{\Me_{k, \beta}^{(N)} }  \Unens{ \sum_{j = 1}^{2k}\!\sum_{\ell = 1}^\beta \!\! V_N^{(j, \ell)} = N k\beta } \!    \oint_{\Uu^{k\beta - 1}} \!\!\!\! W^{-N} h_{  \Vb_{\! N} }\crochet{1 + W} \! \abs{\Delta\crochet{1 + W}}^2 \frac{d^*W}{W} \! \Bigg) 
\end{align*}
using the trick of remark \ref{Rk:MultivariateFourierWithHomogeneity} and integrating out $ u_1 $. 

As for\footnote{As remarked in \cite[(16)]{BaileyKeating}, $ \Mom(N\vert 2, \beta) $ is in fact equal to $ I_{2\beta}(\beta N, N) $, hence the similarity.} $ \Se_{N, m} $ in \S~\ref{Subsubsec:RandomProof:KR3G}, the additional event $ \{ \Vb_{\! N} \vdash k\beta N \} = \{ \sum_{j = 1}^{2k}\sum_{\ell = 1}^\beta V_N^{(j, \ell)} = k\beta N  \} $ is superfluous since it is contained in $ \Me_{k, \beta}^{(N)} $. We thus have
\begin{align*}
\Mom(N\vert k, \beta) = \frac{N^{2k\beta}}{ (k\beta)!} \, \Prob{ \Me_{k, \beta}^{(N)} }  \Esp{  \oint_{\Uu^{k\beta - 1}}  W^{-N} h_{  \Vb_{\! N} }\crochet{1 + W} \abs{\Delta\crochet{1 + W}}^2 \frac{d^*W}{W} \bigg\vert \Me_{k, \beta}^{(N)} } 
\end{align*}

Remark moreover that by independence $\Prob{ \Me_{k, \beta}^{(N)} } = \Prob{ S_{2\beta, N} = 0 }^k $ with $ S_{2\beta, N} := \sum_{\ell = 1}^{2\beta} V_N^{(1, \ell) } \eqlaw \sum_{\ell = 1}^{\beta} (V_N^{(1, \ell) } + N - V_N^{(k + 1, \ell) }) $.

Using the local CLT approach of \S~\ref{Subsec:RandomisationParadigm}, the uniform coupling \eqref{Eq:CouplingUniform}, setting $ \Vb_{\! N} = \pe{N \Vb} $ (where $ \Vb $ is the vector of ordered uniform random variables $ V^{(j, \ell)} $ from the coupling) and $  \Me_{k, \beta}^{(\infty)} := \ensemble{ \forall j \in \intcrochet{1, k}, \, \sum_{\ell = 1}^\beta (V^{(j, \ell)} + 1 - V^{(j + k, \ell)} ) = 0 } $ and finally setting $ w_j := e^{2 i \pi x_j/N} $, one gets
\begin{align*}
\Mom(N\vert k, \beta) & = \frac{N^{2k\beta}}{ (k\beta)!} \  \Prob{ S_{2\beta, N} = 0 }^k \\
                 & \qquad \Esp{   \int_{\crochet{-\frac{N}{2}, \frac{N}{2} }^{k\beta - 1}} \! e^{-\sum_{j = 2}^{k\beta} x_j } h_{ [N \Vb]  }\crochet{1 + e^{2 i \pi \xb/N}} \! \abs{\Delta\crochet{1 + e^{2 i \pi \xb/N}}}^2 \! \frac{d\xb}{N^{k\beta - 1}} \Bigg\vert \Me_{k, \beta}^{(N)} \! } \\
                 & \equivalent{N \to +\infty} \frac{N^{2k\beta}}{ (k\beta)!} \ \frac{ f_{S_{2\beta}}(0)^k }{N^{k }}  \times  (2\pi)^{k\beta(k\beta - 1) }   N^{ 2k\beta(k\beta - 1) -k\beta(k\beta - 1) -(k\beta - 1) } \\
                 & \hspace{+3cm}   \int_{\Rr^{k - 1}}  e^{-2 i \pi \sum_{j = 2}^{k\beta} x_j } \, \Esp{  \widetilde{h}_{ \Vb\!,\, \infty  }(0, \xb)  \Big\vert \Me_{k, \beta}^{(\infty)} } \Delta(0, \xb)^2 d\xb \\
                 & = N^{(k\beta)^2 + 1 - k} \frac{(2\pi)^{k\beta(k\beta - 1) }}{(k\beta)! 2^k ((2\beta)!)^k }  \int_{\Rr^{k\beta - 1}}  e^{-2 i \pi \sum_{j = 2}^{k\beta} x_j } \, \Esp{  \widetilde{h}_{ \Vb\!,\, \infty  }(0, \xb)  \Big\vert \Me_{k, \beta}^{(\infty)}  } \\
                 & \hspace{+11.5cm}\Delta(0, \xb)^2 d\xb
\end{align*}
where we have applied dominated convergence as in theorem~\ref{Theorem:Autocorrelations} with the additional integrability in the uniform random variables coming from lemma~\ref{Lemma:IntegrabilityDominationInC}, with $ \widetilde{h}_{c, \infty} $ defined in \eqref{Def:hTildeInfty} and with $ f_{S_{2\beta}}(x) $ the Lebesgue density of $ S_{2\beta} := \sum_{j = 1}^{2\beta} V^{(j)} $ computed in \eqref{Eq:BatesIrwinHall} (we have also used the fact that $ 1 - V \eqlaw V $ if $V$ is uniform in $ [0, 1] $). Last, we have used the value $ f_{S_{2\beta}}(0) = \frac{1}{2 (2\beta) !} $.
\end{proof}


\begin{remark}
Note the difference of expressions between
\begin{align*}
\Mom(N\vert k, \beta) = N^{2k\beta} \, \Ee ( \Un_{\Me_{k, \beta}^{(N)} }  \crochet{ \Xb^{ \Vb_{\! N} } } s_{N^{k\beta}}\crochet{ \Xb } ), \qquad \Xb := \ensemble{ x_1, \dots, x_{2k\beta} }
\end{align*}
and
\begin{align*}
\Mom(N\vert k, \beta) = [X^{N\beta}] s_{N^{k\beta} } \crochet{  X 1^{\plusInOne 2\beta} } , \qquad X := \ensemble{x_1, \dots, x_k}
\end{align*}
obtained at the beginning of the proof of theorem~\ref{Theorem:MoMo}~: one has transferred the complexity of the additional alphabet $ \Ae = 1^{\plusInOne 2\beta} $ in $ s_{N^\kappa}\crochet{X \Ae} $ to the Fourier coefficient, obtaining thus a ``classical'' alphabet for the Schur function, but with more variables.
\end{remark}

$ $

\section{Ultimate remarks}\label{Sec:Ultimate}

\subsection{The Hankel form of the limit}\label{Subsec:Ultimate:Hankel}

\subsubsection{Motivations}

There has recently been a certain activity \cite{BaileyBettinBlowerConreyProkhorovRubinsteinSnaith, BasorBleherBuckinghamGravaIts2Keating} to express the autocorrelations \eqref{Eq:SchurJointMoments} or the joint autocorrelations with their first order derivatives \eqref{Def:DerivativeFunctional} (case $ m = 1 $, $ \hb = h \in \Nn^* $) in terms of Painlev\'e functions. The precise connection with integrable systems always comes from a Hankel/Toeplitz\footnote{The Hankel determinant given in \cite[(3-23)]{BasorBleherBuckinghamGravaIts2Keating} is size-dependent and can be transformed into a Toeplitz determinant by a change of indices. More generally, $ T_k(f) := \det(f_{i - j})_{i, j \leq k} = \det( f_{k - (i + j) } )_{i, j \leq k}  $ which is a Hankel determinant.} determinant representation, with Laguerre polynomials \cite[(4-3), (4-4), (3-23)]{BasorBleherBuckinghamGravaIts2Keating} or Bessel functions \cite[(34)]{BaileyBettinBlowerConreyProkhorovRubinsteinSnaith}. Other apparitions of a Toeplitz/Hankel determinant include \cite{AssiotisKeating, BasorGeRubinstein}. In particular, \cite[\S~4.3, last sentence]{AssiotisKeating} asks the question of the presence of a Hankel determinant in $ \MoMf_+(3, \beta) $.

The goal of this \S~ is to show that these Toeplitz-Hankel forms follow naturally from the theory presented in \S~\ref{Sec:MainFormulas}.

\subsubsection{An example~: the Keating-Snaith theorem}

A Hankel representation of $ \widetilde{L}_1(k) $ can be derived from \eqref{EqPhi:KS}. Define
\begin{align*}
f_{k, y}(x) := e^{  2 i \pi (k^2 - 1) x  } x^2 \sinc(\pi k (x + y) )^{2k}
\end{align*}

Using the integral representation \eqref{Def:HKappaCInfty} of $ h^{(2k)}_{  k , \infty } $, one has
\begin{align*}
\widetilde{L}_1(k) & = \frac{ (2\pi )^{ k(k - 1) } k^{2 k^2} }{k!} \int_{\Rr^k }   e^{ i \pi k (2k^2 - 2) y } \sinc(\pi k y)^{2k}   \prod_{j = 2}^k \sinc(\pi k (x_j + y) )^{2k} \\
              & \hspace{+6cm} e^{ 2 i \pi (k^2 - 1) \sum_{j = 2}^k x_j } \prod_{j = 2}^k x_j^2 \, \Delta\prth{ x_2, \dots, x_k }^2  dy \, dx_2 \dots dx_k \\
              & = \frac{ (2\pi )^{ k(k - 1) } k^{2 k^2} }{k!}  \int_\Rr  e^{ 2 i \pi k (k^2 - 1) y } \sinc(\pi k y)^{2k} H_{k - 1}(f_{k, y}) dy
\end{align*}
where the Hankel determinant (with size-dependent symbol) $ H_{k - 1}(f_{k, y}) $ is defined by
\begin{align*}
H_{k - 1}(f_{k, y}) := \det\prth{ \int_\Rr x^{i + j - 2} f_{k, y}(x) dx }_{1 \leq i, j \leq k - 1}
\end{align*}

For all $ r \in \intcrochet{2, 2k - 2} $, we have
\begin{align*}
\int_\Rr x^{r - 2} f_{k, y}(x) dx & = \int_\Rr x^r e^{  2 i \pi (k^2 - 1) x  } \sinc(\pi k (x + y) )^{2k}  dx \\
               & = \frac{e^{ - 2 i \pi (k^2 - 1) y} }{ k^{r + 1} } \int_\Rr (t-ky)^r e^{  2 i \pi t(k^2 - 1)/k  } \sinc(\pi t )^{2k}  dt 
\end{align*}
giving thus
\begin{align*}
H_{k - 1}(f_{k, y}) & = \det\prth{ \frac{e^{ - 2 i \pi (k^2 - 1) y} }{ k^{\ell + j + 1} } \int_\Rr (t-ky)^{\ell + j} e^{ 2 i \pi t(k^2 - 1)/k  } \sinc(\pi t )^{2k}  dt  }_{1 \leq \ell, j \leq k - 1} \\
               & =: e^{  -2 i \pi (k - 1)(k^2 - 1) y} k^{-k^2} \det\prth{   \int_\Rr P_{\ell + j}(t) e^{  2 i \pi t(k^2 - 1)/k  } \sinc(\pi t )^{2k}  dt  }_{1 \leq \ell, j \leq k - 1}
\end{align*}
with $ P_j(t) := (t - ky)^j $. This last identity is in fact valid for any monic polynomial $ P_r $ with $ \deg(P_r) = r $ by (classicaly) taking linear combinations of the lines or the colums of the determinant. The determinant is thus independent of $ y $ and setting
\begin{align*}
g_k(x) := e^{  2 i \pi x(k^2 - 1)/k  } \sinc(\pi x )^{2k} x^2
\end{align*}
one gets 
\begin{align*}
\widetilde{L}_1(k) = c_k  \   H_{k - 1}(g_k)
\end{align*}
with $ c_k := \frac{ (2\pi )^{ k(k - 1) } k^{ k^2} }{k!} \int_\Rr  e^{  2 i \pi (k^2 - 1) y} \sinc(\pi k y)^{2k} dy = \frac{ (2\pi )^{ k(k - 1) } k^{ k^2} }{k \cdot k!} \int_\Rr  e^{  2 i \pi x (k^2 - 1)/k } \sinc(\pi x)^{2k} dx $. Note that $ c_k \in \Rr $ since
\begin{align*}
\int_\Rr e^{  2 i \pi x a  } \sinc(\pi x )^{2k}  dx & = \Esp{ \int_\Rr e^{  2 i \pi x a + 2 i \pi x W_{2k} }   dx }  ,  \quad W_{2k} := \sum_{j = 1}^{2k} V_j \eqlaw - W_{2k} \\
               & = \Esp{ \delta_0\prth{ a - W_{2k}  } }  = f_{W_{2k}}(a) 
\end{align*}
where $ (V_j)_{j\geq 1} $ is a sequence of i.i.d. random variable with $ V_j \sim \Us\prth{\, \crochet{-1, 1} } $ as in remark~\ref{Rk:SubBirkhoffWithUniforms}. Note also that although the symbol is complex, one has in fact a real determinant:
\begin{align*}
\int_\Rr \!  x^{m} g_k(x) dx & = \int_\Rr x^{m + 2} e^{  2 i \pi x(k^2 - 1)/k  } \sinc(\pi x )^{2k}  dx \\
               & = \prth{ \frac{1}{ 2i \pi } \frac{d}{dy} }^{m + 2} \int_\Rr e^{  2 i \pi x y } \sinc(\pi x )^{2k}  dx \bigg\vert_{ y = \frac{k^2 - 1}{k} } \\
               & = \prth{ \frac{1}{ 2i \pi } \frac{d}{dy} }^{m + 2} \Esp{ \int_\Rr e^{  2 i \pi x(k^2 - 1)/k + 2 i \pi x W_{2k} }   dx }\bigg\vert_{ y = \frac{k^2 - 1}{k} } ,  \quad W_{2k} := \sum_{j = 1}^{2k} V_j \\
               & = \prth{ \frac{1}{ 2i \pi } \frac{d}{dy} }^{m + 2} \Esp{ \delta_0\prth{ y - W_{2_k}  } }\bigg\vert_{ y = \frac{k^2 - 1}{k} } \\
               & = \prth{ \frac{1}{ 2i \pi } \frac{d}{dy} }^{m + 2} f_{W_{2k}}(y)\big\vert_{ y = \frac{k^2 - 1}{k} } =: \prth{ \frac{1}{ 2i \pi }   }^{m + 2} f_{W_{2k}}^{(m + 2)}\prth{  \tfrac{k^2 - 1}{k} }
\end{align*}

The term $ (2i \pi)^m \equiv (2i \pi)^{ \ell + j} $ can moreover be factored out of the Hankel determinant (in this form, it becomes a Wronskian, and more precisely, a Turanian) ; note that the total term is $ \prod_{\ell, j = 1}^{k - 1} (2i \pi)^{ \ell + j} = (2 i \pi)^{k(k - 1)} \in \Rr $. This gives another form of the matrix factor.

\subsubsection{A general theory}  

Formulas \eqref{FourierRepr:SchurRectangle} and \eqref{FourierRepr:SchurRectangleWithHn} can be naturally turned into a Toeplitz determinant with the well-known ``Toeplitz connection'' \cite[\textit{fact five}]{DiaconisRMTSurvey}, i.e. the Andr\'eieff-Heine-Szeg\"o/continuous Cauchy-Binet formula that writes
\begin{align*}
\frac{1}{N!} \oint_{\Uu^N} \abs{\Delta(u_1, \dots, u_N)}^2 \prod_{j = 1}^N f(z_j) \frac{d^* z_j}{z_j} = \det\prth{ \oint_\Uu z^{i - j} f(z) \frac{d^*z}{z} }_{1 \leq i, j \leq N} =: \det \Tt_N(f)
\end{align*}

As a result, \eqref{FourierRepr:SchurRectangle} writes\footnote{This is also a particular case of the Jacobi-Trudi formula \eqref{Eq:JacobiTrudiSchur}. This last formula can be proven using the continuous or the usual (discrete) Cauchy-Binet formula starting from \eqref{AlternantRepr:Schur}.}
\begin{align*}
s_{N^k}\crochet{\Ae} = \det\prth{ \oint_\Uu z^{i - j - N} H\crochet{z \Ae} \frac{d^*z}{z} }_{1 \leq i, j \leq k} =: \det \Tt_k(\psi_{N, \Ae}), \qquad \psi_{N, \Ae} (z) := z^{ - N} H\crochet{z \Ae}
\end{align*}
and \eqref{FourierRepr:SchurRectangleWithHn} writes
\begin{align*}
s_{N^k}\crochet{\Ae} = \oint_\Uu t^{-kN} \det\prth{ \oint_\Uu z^{i - j - N} H\crochet{z t \Ae} \frac{d^*z}{z} }_{1 \leq i, j \leq k} \frac{d^*t}{t} = \oint_\Uu t^{-kN} \det \Tt_k(\psi_{N, t \Ae}) \frac{d^*t}{t}
\end{align*}

Note that the scaling property of $ h_N $ and the trick of remark~\ref{Rk:MultivariateFourierWithHomogeneity} imply that
\begin{align*}
s_{N^k}\crochet{\Ae} & = \frac{1}{k!} \oint_{\Uu^{k - 1} } V^{-N} h_{Nk}\crochet{ (1 + V) \Ae } \abs{\Delta\crochet{1 + V} }^2 \frac{d^*V}{V} \\
                     & = \frac{1}{k!} \oint_{\Uu^{k - 1} } V^{-N} \crochet{t^{Nk} } H\crochet{ t(1 + V) \Ae } \abs{\Delta(V) }^2 \prod_{j = 1}^k \abs{1 - v_j}^2 \frac{d^*v_j}{v_j} \\ 
                     & =  \oint_\Uu t^{-kN} H\crochet{t\Ae} \det \Tt_{k - 1}(z \mapsto \abs{1 - z}^2 \psi_{N, t \Ae}(z)) \frac{d^*t}{t}
\end{align*}
which uses a Toeplitz determinant of size $ k - 1 $ this time.

The rescaling of these determinants with $ h_{k, \infty} $ also writes as a Hankel determinant in a natural way~: note indeed that for i.i.d. random variables $ \Zb(\Ae) := (Z_j(\Ae))_{1 \leq j \leq k} $, one has 
\begin{align*}
D_k & := \int_{\Rr^{k - 1} } \Esp{ e^{ -2 i \pi \sum_{j = 2}^k x_j Z_j(\Ae) } } \prod_{j = 2}^k x_j^2 \, \Delta(\xb)^2 d\xb \\
                & = \sum_{\sigma, \tau \in \Sg_{k - 1}} \varepsilon(\sigma\tau) \int_{\Rr^{k - 1} }  \Esp{ e^{ -2 i \pi \sum_{j = 2}^k x_j Z_j(\Ae) } } \prod_{j = 2}^k x_j^{\sigma(j) + \tau(j)} d\xb \\
                & = \Esp{ \sum_{\sigma, \tau \in \Sg_{k - 1}} \varepsilon(\sigma\tau) \prod_{j = 2}^k \int_\Rr  e^{ -2 i \pi  x Z_j(\Ae) }   x^{\sigma(j) + \tau(j)} dx } \\
                & = \Esp{ \sum_{\sigma, \tau \in \Sg_{k - 1}} \varepsilon(\sigma\tau) \prod_{j = 2}^k \delta_0^{(\sigma(j) + \tau(j))}  (   Z_j(\Ae) ) }, \qquad \delta_0^{(m)} := \prth{ \frac{d}{dx} }^m \delta_0 \\
                & = \sum_{\sigma, \tau \in \Sg_{k - 1}} \varepsilon(\sigma\tau) \prod_{j = 2}^k \prth{ \frac{\partial}{\partial x_j} }^{\sigma(j) + \tau(j) } f_{ \Zb(\Ae) }(\xb)\Big\vert_{\xb = 0} \\
                & =: \Delta(0, \nabla_{\!\xb})^2 f_{ \Zb(\Ae) }(\xb)\Big\vert_{\xb = 0}, \qquad \Delta(0, \nabla_{\!\xb}) := \Delta\prth{0,  \frac{\partial}{\partial x_2}, \dots, \frac{\partial}{\partial x_k}}
\end{align*}
where we have used the conventions of \cite[(2.13)]{ConreyRubinsteinSnaith} for the differential operator. Note also the alternative ending to the previous computation (setting $ \alpha := \sigma\inv \tau $ and using the fact that $ \varepsilon(\sigma\inv\tau) = \varepsilon(\sigma)\inv\varepsilon(\tau) = \varepsilon(\sigma)\varepsilon(\tau) $ since $ \varepsilon $ is a group morphism with values in $ \ensemble{\pm 1} $)
\begin{align*}
D_k & = \sum_{\sigma, \tau \in \Sg_{k - 1}} \varepsilon(\sigma\inv\tau) \prod_{j = 2}^k \prth{ \frac{\partial}{\partial x_j} }^{\sigma(j) + \tau(j) } f_{ \Zb(\Ae) }(\xb)\Big\vert_{\xb = 0} \\
                & = (k - 1)! \sum_{\alpha \in \Sg_{k - 1} } \varepsilon(\alpha) \prod_{j = 2}^k \prth{ \frac{\partial}{\partial x_j} }^{\alpha(j) + j } f_{ \Zb(\Ae) }(\xb)\Big\vert_{\xb = 0} \\
                & = (k - 1)! \det\prth{ f_{Z_1(\Ae)}^{(i + j)}(0) }_{1 \leq i, j \leq k - 1}
\end{align*}
that allows to write the result as a Hankel determinant/Wronskian. Alternatively, one could have used \cite[(3.9)]{ConreyRubinsteinSnaith}. The operator $ \Delta(\nabla_\xb) $ and/or its square is very natural in this context due to the previous manipulations. It seems to have appeared first in \cite[(39) p. 442]{GelfandNaimarkUnimodular} and was used a lot in the context of the SoV method (see \S~\ref{Subsubsec:Ultimate:OtherApproachesSchur:SoV}). 

In the case of a conditioning of an i.i.d sequence by a sum (which is the case for $ h_{c, \infty}^{(\kappa)} $), one gets 
\begin{align*}
\widetilde{D}_k & := \int_{\Rr^{k - 1} } \Esp{ e^{ -2 i \pi \sum_{j = 2}^k x_j Z_j(\Ae) } \Bigg\vert  \sum_{j = 1}^k Z_j(\Ae) = c } \prod_{j = 2}^k x_j^2 \, \Delta(\xb)^2 d\xb \\
                & = \Esp{ \int_{\Rr^{k - 1} } e^{ -2 i \pi \sum_{j = 2}^k x_j Z_j(\Ae) } \prod_{j = 2}^k x_j^2 \, \Delta(\xb)^2 d\xb \Bigg\vert  \sum_{j = 1}^k Z_j(\Ae) = c} \\
                & =  \Delta(0, \nabla_{\! \xb})^2 f_{ \Zb(\Ae) \vert \sum_j Z_j(\Ae) = c}(\xb)\Big\vert_{\xb = 0}  
\end{align*}
and using the Fourier representation of $ \delta_0 $ in the same way as \cite[(2.1)]{BasorGeRubinstein}, one gets with $ \varphi_c := f_{\sum_j Z_j(\Ae) }(c) $ (Lebesgue-density in $c$)
\begin{align*}
\widetilde{D}_k & := \Delta(0, \nabla_{\! \xb})^2 \int_\Rr \Esp{ e^{-2 i \pi t (c - \sum_{j = 1}^k Z_j(\Ae) ) } \delta_0(\Zb(\Ae) - \xb)  } \frac{dt}{\varphi_c}  \Big\vert_{\xb = 0} \\
                & =  \int_\Rr e^{-2 i \pi c t } \Delta(0, \nabla_{\! \xb})^2 \Esp{ e^{ 2 i \pi t \sum_{j = 1}^k x_j  } \delta_0(\Zb(\Ae) - \xb)  } \frac{dt}{\varphi_c}  \Big\vert_{\xb = 0} \\
                & = \int_\Rr e^{-2 i \pi c t } \Delta(0, \nabla_{\! \xb})^2 \widetilde{f}_{ Z_1(\Ae), t}^{\otimes k - 1}(\xb) \Big\vert_{\xb = 0}  \frac{dt}{\varphi_c}, \qquad \widetilde{f}_{ Z_1(\Ae), t}(x) :=   e^{ 2 i \pi t x } f_{ Z_1(\Ae) }(x) 
\end{align*}
namely, defining 
\begin{align*}
\boldsymbol{e}_t(x) & := e^{2 i \pi tx} \\
\widetilde{\Hh}_k(f) & := \prth{ \prth{\tfrac{d}{dx}}^{\! i + j} f(x)\big\vert_{x = 0} }_{\! 1 \leq i, j \leq k} 
\end{align*}
one gets
\begin{align}\label{Eq:HankelGeneral}
\widetilde{D}_k  = \frac{(k - 1)!}{\varphi_c} \int_\Rr e^{-2 i \pi c t } \det\prth{ \widetilde{\Hh}_{k - 1}(\boldsymbol{e}_t f_{ Z_1(\Ae)}) } dt
\end{align}
which is similar to \cite[(2.4)]{BasorGeRubinstein} in the form (nevertheless, the determinants do not have the same size, and the derivatives are not $ f^{(i + j - 2)} $ but $ f^{(i + j)} $ in this last formula, due to the term $ \Delta(0, x)^2 $). Note that the second line of computation uses the formula $ f(x) \delta_c(dx) = f(c) \delta_c(dx) $ \cite[Ex. 2 (V, 3 ; 2) p. 121]{SchwartzDistributions} (with $ x_1 = 0 $). Note also that $ \gamma_k(c) $ defined in \cite[(2.5)]{BasorGeRubinstein} uses $ c' := c - k/2 $ and $ Z_1(\Ae) \eqlaw \Us([-1/2, 1/2]) $ whereas \cite[(2.4)]{BasorGeRubinstein} uses $c$ and $ Z_1(\Ae) \eqlaw \Us([0, 1]) $~; this corresponds to $ \kappa = 1 $ for $ \Ae = 1^{\kappa} $. The link with piecewise continuous polynomials is explained in remark~\ref{Rk:SubBirkhoffWithUniforms}.

\subsubsection{Application~: the ``moments of moments''}

We leave the task to the interested reader to write all the previous functionals in terms of Hankel/Toeplitz determinants to focus now on the last sentence of \cite{AssiotisKeating} where Assiotis and Keating ask the question of finding a Hankel determinant in $ \MoMf_+(3, \beta) $. 

To find such a determinant for $ k \geq 2 $, we can use equivalently \eqref{EqPhi:MoMo} or \eqref{EqPhi:Random:MoMo}. The involved random variables will not be the same~: on the one hand, \eqref{EqPhi:MoMo} gives with \eqref{Eq:Beta_ProbRepr} 
\begin{align*}
\Phi_{\MoMf_+(k, \beta)\!}(0, \xb) & :=  e^{ 2 i \pi A \sum_{j = 2}^{k\beta }  x_j } \,\,  h_{\beta, \infty}^{(2\beta)}( 0, \xb )^k, \qquad  A := k\beta^2 - 1  \\
          & =   \frac{\beta^{2k^2\beta - k} }{ ((2k\beta - 1)!)^k  } \, e^{ 2 i \pi A \sum_{j = 2}^{k\beta }  x_j } \\
          &  \hspace{+3cm} \times \Esp{ e^{ 2 i \pi \beta \sum_{j = 2}^{k\beta} x_j \sum_{r = 1}^k ( \bbbeta_{2\beta, 1}^{(j, r)} - \beta)  } \Bigg\vert \sum_{j = 1}^{k\beta} \bbbeta_{2\beta, 1}^{(j, r)}  = 1, \, \forall r \in \intcrochet{1, k} } \\
          & =   \frac{\beta^{2k^2\beta - k} }{ ((2k\beta - 1)!)^k  } \, e^{ -2 i \pi \sum_{j = 2}^{k\beta }  x_j } \\
          &  \hspace{+3cm} \times \Esp{ e^{ 2 i \pi \beta \sum_{j = 2}^{k\beta} x_j \sum_{r = 1}^k \bbbeta_{2\beta, 1}^{(j, r)}    } \Bigg\vert \sum_{j = 1}^{k\beta} \bbbeta_{2\beta, 1}^{(j, r)}  = 1, \, \forall r \in \intcrochet{1, k} }
\end{align*}
where $ \xb := \ensemble{x_2, \dots, x_{k\beta}} $ and $ (\bbbeta_{2\beta, 1}^{(j, r)})_{1 \leq j \leq k\beta, 1 \leq r \leq k} $ is a sequence of i.i.d. Beta-distributed random variables (note the typographical difference between $ \beta $ and $ \bbbeta $).

On the other hand, \eqref{EqPhi:Random:MoMo} gives with \eqref{Def:hTildeInfty}
\begin{align*}
\Phi^{(Rand)}_{\MoMf_+(k, \beta)\!}(0, \xb) & := \frac{1}{ (2(2\beta)!)^k }   \,  e^{-2 i \pi \sum_{j = 2}^{k\beta} x_j } \, \Esp{  \widetilde{h}_{ \Vb\!,\, \infty  }(0, \xb)  \Big\vert \Me_{k, \beta}^{(\infty)}  }  \\
               & =  \frac{1}{ (2(2\beta)!)^k }   \,  e^{-2 i \pi \sum_{j = 2}^{k\beta} x_j } \,\\
               & \hspace{+1cm} \times \Esp{ \prod_{j = 1}^{2k} \prod_{\ell = 1}^\beta \frac{(V^{(j, \ell)})^{k - 1} }{(k -1)! } \times e^{ 2 i \pi  \sum_{r = 2}^{k\beta} x_r \sum_{j = 1}^{2k} \sum_{\ell = 1}^\beta V^{(j, \ell) }  ( \bbbeta_{1, 1}^{(r, j, \ell)} - \beta ) }  \Bigg\vert \Me_{k, \beta}^{(\infty)}  }
\end{align*}

A power bias of a uniform random variable is a Beta random variable, since for all bounded $f$
\begin{align*}
\frac{\Esp{U^{k - 1} f(U) }}{\Esp{U^{k - 1} }} & = k \Esp{U^{k - 1} f(U) } = k \int_0^1 f(u) u^{k - 1} du = \int_0^1 f(v^{1/k}) dv = \Esp{ f( U^{1/k} ) } \\
              & =: \Esp{ f(\bbbeta_{k, 1}) }
\end{align*}

Since $ \bbbeta_{1, 1} \eqlaw U \sim \Us([0, 1]) $, one gets by biasing (also inside the conditioning event) 
\begin{align*}
\Phi^{(Rand)}_{\MoMf_+(k, \beta)\!}(0, \xb) &  =  \frac{ e^{-2 i \pi \sum_{j = 2}^{k\beta} x_j }  }{ (2(2\beta)!)^k (k!)^{2k\beta} }   \,   \, \Esp{  e^{ 2 i \pi  \sum_{j = 2}^{k\beta} x_j \sum_{m = 1}^{2k} \sum_{\ell = 1}^\beta \bbbeta_{k, 1}^{(m, \ell) }  ( U^{(j, m, \ell)} - \beta ) }  \Big\vert \Me_{k, \beta}^{(k, \infty)}  }
\end{align*}
and the conditioning event is then
\begin{align*}
\Me_{k, \beta}^{(k, \infty)} := \ensemble{ \forall j \in \intcrochet{1, k}, \, \sum_{\ell = 1}^\beta (\bbbeta_{k, 1}^{(j, \ell)} + 1 - \bbbeta_{k, 1}^{(j + k, \ell)} ) = 0 }
\end{align*}

In both cases, one has the joint Fourier transform of particular random variables
\begin{align*}
Z_j(\Ae) \in \ensemble{ \beta \sum_{r = 1}^k  \bbbeta_{2\beta, 1}^{(j, r)} , \ \ -1 + \sum_{m = 1}^{2k} \sum_{\ell = 1}^\beta \bbbeta_{k, 1}^{(m, \ell) }  ( U^{(j, m, \ell)} - \beta ) }, \qquad  \forall j \in \intcrochet{ 1, k\beta }
\end{align*}
with a conditioning event of the ``linear type'' ($k$ sums equal to $ c \in \ensemble{0, 1} $) but possibly with different random variables $ Y_j(\Ae) $. It enters into the previous framework with a slight modification (due to the discrepancy between $ Z_j(\Ae) $ and $ Y_j(\Ae) $). 

The case of $ \Phi_{\MoMf_+(k, \beta) } $ gives with $ Z_j := -1 + \beta \sum_{r = 1}^k  \bbbeta_{2\beta, 1}^{(j, r)} $ and $ Y_r := \beta \,\sum_{j = 1}^{k\beta} \bbbeta_{2\beta, 1}^{(j, r)} $ 
\begin{align*}
\MoMf_+(k, \beta) & = c_k \int_{\Rr^{k - 1} }  \Esp{ e^{ 2 i \pi \sum_{ j = 2 }^{k\beta} x_j Z_j }  \Bigg\vert Y_r = \beta, \  \forall r \in \intcrochet{1, k} } \Delta(0, \xb)^2 d\xb \\
              & = c'_k \int_{\Rr^{k - 1}\times \Rr^k } \Esp{  e^{ 2 i \pi \sum_{r = 1}^k t_r (Y_r - \beta ) }   \times   e^{ 2 i \pi \sum_{ j = 2 }^{k\beta} x_j Z_j }     }  \Delta(0, \xb)^2 d\xb\, d\tb \\
              & = c'_k \int_{\Rr^{k - 1} \times \Rr^k } e^{ -2 i \pi \beta \sum_{r = 1}^k \! t_r } \Esp{  e^{ 2 i \pi \sum_{r = 1}^k \beta t_r   \sum_{j = 1}^{k\beta}  \bbbeta_{2\beta, 1}^{(j, r)}    }   \times   e^{ 2 i \pi \sum_{ j = 2 }^{k\beta} x_j (-1 + \beta \sum_{r = 1}^k  \bbbeta_{2\beta, 1}^{(j, r)} )  }     } \\
              & \hspace{+7cm} \Delta(0, \xb)^2 d\xb\, d\tb\\
              & = c'_k \int_{\Rr^{k - 1}\times \Rr^k } e^{ -2 i \pi (\beta \sum_{r = 1}^k \! t_r + \sum_{j = 1}^{k\beta}  x_j )} \Esp{  e^{ 2 i \pi \beta \sum_{j = 1}^{k\beta} \sum_{r = 1}^k ( t_r + x_j)   \bbbeta_{2\beta, 1}^{(j, r)} } }\!\!\Big\vert_{x_1 = 0}      \Delta(0, \xb)^2 d\xb\, d\tb\\
              & =: c'_k \int_{\Rr^{k - 1}\times \Rr^k } e^{ -2 i \pi (\beta \sum_{r = 1}^k \! t_r + \sum_{j = 1}^{k\beta}  x_j )}  \prod_{j, r} f_{2\beta, 1}(x_j + t_r) \Big\vert_{x_1 = 0}      \Delta(0, \xb)^2 d\xb\, d\tb 
\end{align*}
namely, applying the previous \S, 
\begin{align}\label{Eq:HankelMoMo}
\MoMf_+(k, \beta) & =   c'_k \int_{ \Rr^k }  \det(\widetilde{\Hh}_{k - 1}(\psi_\tb)) \prod_{r = 1}^k e^{ -2 i \pi \beta   t_r } f_{2\beta, 1}( t_r) \, dt_r
\end{align}
where $ c'_k $ is a constant that comprises the Lebesgue density $ f_{Y_1}(\beta)^k $, where 
\begin{align*}
\psi_\tb(x) := \boldsymbol{e}_{\beta}(x) \prod_{r = 1}^k f_{2\beta, 1}(x + t_r) 
\end{align*}
and $ f_{2\beta, 1}(x) := \Esp{ \delta_0( \beta \bbbeta_{2\beta, 1} - x ) } = \beta\inv f_{\bbbeta_{2\beta, 1}}(\beta\inv x) =  2 (2\beta)^{1 - 2\beta} x^{2\beta - 1} \Unens{0 \leq x \leq 1} $.

We leave to the interested reader the task of expressing the second probabilistic representation in a similar way.

\newpage
\subsection{Expansions}\label{Subsec:Ultimate:Expansions}

\subsubsection{Motivations}

Dehaye's binomial method \cite{PODfpsac} is a transfert of technology from the world of (asymptotic) representation theory \cite{OkounkovOlshanski1, OkounkovOlshanski2, OkounkovOlshanskiJack, OkounkovOlshanskiBC} to the study of $ Z_{U_N} $. The original binomial expansion can be found in \cite[I-3, ex. 10 p. 47]{MacDo} and was originally due to Lascoux \cite{LascouxChern}. It is an application of the Cauchy-Binet formula starting from the binomial expansion in \eqref{AlternantRepr:Schur}~:
\begin{align*}
s_\lambda(1 + x_1, \dots, 1 + x_N) & = \frac{\det\prth{ (1 + x_i)^{\lambda_j + N - j} }_{\!  1 \leq i, j \leq N} }{\Delta(1 + x_1, \dots, 1 + x_N) } =  \frac{\det\prth{ (1 + x_i)^{\lambda_j + N - j} }_{\!  1 \leq i, j \leq N} }{\Delta( x_1, \dots, x_N) } \\
                 & = \frac{\det\prth{ \sum_{\ell = 0}^{\lambda_j + N - j} {\lambda_j + N - j \choose \ell} x_i^\ell }_{\!\! 1 \leq i, j \leq N} }{\Delta(  x_1, \dots,  x_N) } \qquad \mbox{(Newton's binomial formula)} \\
                 & = \frac{1}{\Delta(  x_1, \dots,  x_N) }\sum_{0 \leq \ell_N < \cdots < \ell_1 \leq N + \lambda_1 }\!\!\! \det\prth{   {\lambda_j + N - j \choose \ell_r}  }_{\!\! 1 \leq r, j \leq N} \!\!\! \det\prth{  x_i^{\ell_m} }_{\!\! 1 \leq i, m \leq N}
\end{align*}

Here, we have used the Cauchy-Binet formula that writes for matrices $ A \sim N \times \infty $ and $ B \sim \infty \times N $
\begin{align*}
\det(AB)_{N\times N} := \det\prth{ \sum_{\ell \geq 0 } A_{i, \ell} B_{\ell, j} }_{\!\!\! 1 \leq i, j \leq N} = \sum_{0 \leq \ell_1 < \cdots < \ell_N  } \det(A_{i, \ell_j})_{i, j} \det(B_{\ell_i, j})_{i, j} 
\end{align*}
and the fact that $ \Delta(1 + T) = \Delta(T) $ using its product form.

Classically writing $ \ell_j := \mu_j + N - j $ so that $ \mu_1 \geq \mu_2 \geq \cdots \geq \mu_N \geq 0 $ yields
\begin{align*}
s_\lambda(1 + x_1, \dots, 1 + x_N) & = \sum_{0 \leq \mu_N < \cdots < \mu_1 \leq N + \lambda_1} \det\prth{   {\lambda_j + N - j \choose \mu_r + N - r}  }_{\!\! 1 \leq r, j \leq N} \frac{ \det\prth{  x_i^{\mu_m + N - m} }_{\!\! 1 \leq i, m \leq N} }{\Delta(  x_1, \dots,  x_N) } \\
                   & =: \sum_{\mu \subset (N + \lambda_1)^N } \det\prth{   {\lambda_j + N - j \choose \mu_r + N - r}  }_{\!\! 1 \leq r, j \leq N} s_\mu(  x_1, \dots,  x_N)  
\end{align*}

This proof can easily be modified to any symmetric functions that writes under the form
\begin{align}\label{Def:GeneralFunctionBinomialExpansion}
S_{f_1, \dots, f_N}(x_1, \dots, x_N) := \frac{ \det(f_i(x_j))_{1 \leq i, j \leq N} }{ \Delta(x_1, \dots, x_N) }, \qquad   f_i(x) := \sum_{k \geq 0} \widehat{f}_i(k) x^k
\end{align}

As remarked in \cite[(1.11)]{OkounkovOlshanski2} the Cauchy-Binet formula gives then
\begin{align*}
\frac{ \det(f_i(x_j))_{1 \leq i, j \leq N} }{ \Delta(x_1, \dots, x_N) } = \sum_{ \ell(\mu) \leq N} \det\prth{  \widehat{ f}_i(\mu_j + N - j)  }_{\!\! 1 \leq i, j \leq N} s_\mu(x_1, \dots,  x_N)  
\end{align*}
the previous case using $ f_i(x) := (1 + x)^{\lambda_i + N - i} $.

As noted in \cite[Comments to (5.7), (3)]{OkounkovOlshanski1}, the novelty in the generalised binomial expansion for Schur functions consists in writing the coefficients $ d_{\lambda, \mu}(N) := \det\prth{   {\lambda_j + N - j \choose \mu_r + N - r}  }_{\!\! 1 \leq r, j \leq N} $ as \textit{shifted Schur functions}. This is the formula used in \cite[(23)]{PODfpsac}. An elegant manipulation performed by Dehaye \cite[Lem. 3]{PODfpsac} allows to express these values in terms of \textit{content polynomials} \cite[I-1 ex. 11]{MacDo}. Dehaye's final result \cite[(10)]{PODfpsac} writes
\begin{align}\label{Eq:POD:derivativeExpansion}
\frac{ \Esp{ \abs{Z_{U_N}(1) }^{2k} \prth{ i \frac{Z_{U_N}'(1) }{Z_{U_N}(1) } }^{\! \!r} \, } }{ \Esp{ \abs{Z_{U_N}(1) }^{2k} }  } = \frac{1}{r!} \sum_{\lambda \vdash r} d_\lambda^2 \prod_{\square \in \lambda } \frac{ (k + c(\square) )(-N + c(\square) ) }{2k + c(\square) }
\end{align}

Here, we have used the expression $ d_\lambda := \frac{r!}{H_\lambda} $ where $ H_\lambda $ is the hook length product \cite[(18)]{PODfpsac} and $ d_\lambda $ is the dimension of the irreducible module $ V^\lambda $ of the symmetric group $ \Sg_r $ (denoted by $ f_\lambda $ in \cite[(19)]{PODfpsac}). 

\medskip

The expression \eqref{Eq:POD:derivativeExpansion} is particularly interesting to obtain an asymptotic expansion in powers of $ N $, as one can write
\begin{align}\label{Eq:ExpansionDerivative:Plancherel}
\begin{aligned}
\frac{ \Esp{ \abs{Z_{U_N}(1) }^{2k} \prth{ i \frac{Z_{U_N}'(1) }{Z_{U_N}(1) } }^{\! \!r} \, } }{ N^r \, \Esp{ \abs{Z_{U_N}(1) }^{2k} }  } & = \frac{(-1)^r }{r!} \sum_{\lambda \vdash r} d_\lambda^2 \prod_{\square \in \lambda } \frac{ k + c(\square)  }{2k + c(\square) }  \times  \prod_{\square \in \lambda }(1 - N\inv c(\square) ) \\
                 & = \sum_{\ell \geq 0} N^{-\ell} \, \frac{(-1)^r }{r!} \sum_{\lambda \vdash r} d_\lambda^2 \, h_\ell\crochet{ - \Ae^\lambda} \prod_{\square \in \lambda } \frac{ k + c(\square)  }{2k + c(\square) }   
\end{aligned}
\end{align}
where $ \Ae^\lambda $ is the \textit{content alphabet}
\begin{align}\label{Def:ContentAlphabet}
\Ae^\lambda := \ensemble{ c(\square), \square \in \lambda} 
\end{align}

Expanding $ \Esp{ \abs{Z_{U_N}(1) }^{2k} } $ using its explicit expression as a product of Gamma factor and multiplying the expansions gives then a complete expansion. 

\medskip

Other types of expansions are possible (although the final expressions must then be equal). For instance, \cite[(4-3), (4-4) \& conj. 1]{BasorBleherBuckinghamGravaIts2Keating} uses a conjectural conformal block expansion coming from the Painlev\'e theory and Riemann-Hilbert problems.

\medskip
\begin{remark}\label{Rk:ComparisonPODvsB3GI2K}
The rescaling of \eqref{Eq:POD:derivativeExpansion} using the convergence \eqref{Eq:KSmomentsCharpol} and $ M_k $ defined in \eqref{Def:MatrixFactor} gives
\begin{align}\label{Eq:POD:limitDerivatives}
\frac{ \Esp{ \abs{Z_{U_N}(1) }^{2k} \prth{ i \frac{Z_{U_N}'(1) }{Z_{U_N}(1) } }^{\! \!r} \, } }{  N^{ k^2 + r }  } \tendvers{ N}{+\infty }  M_k \times   \frac{(-1)^r}{r!} \sum_{\lambda \vdash r} d_\lambda^2 \prod_{\square \in \lambda } \frac{ k + c(\square)   }{ 2k + c(\square) }
\end{align}

In \cite[Conj. 2, (6-32)]{BasorBleherBuckinghamGravaIts2Keating}, the following expression is conjectured to be equivalent to \eqref{Eq:POD:limitDerivatives}~:
\begin{align}\label{Eq:LimitB3GI2K}
\hspace{-0.4cm} \frac{ \Esp{ \abs{Z_{U_N}(1) }^{2k} \prth{ i \frac{Z_{U_N}'(1) }{Z_{U_N}(1) } }^{\! \! 2h} \, } }{  N^{ k^2 + 2h }  } \tendvers{ N}{+\infty }  M_k \times   \frac{(-1)^h}{(2h)!} \sum_{\lambda \vdash 2h} d_\lambda^2 \prod_{\square \in \lambda } \frac{ k + c(\square)   }{ (2k + b(\square) )^2 } (2k + c(\square) )
\end{align}
where $ b(\square) := \lambda'_j - j + 1 - i $ if $ \square = (i, j) \in \lambda $. Note that the rational functions in $k$ do not match term-wise since the content alphabet $ \Ae^\lambda $ defined in \eqref{Def:ContentAlphabet} does not have the same range of values as $ \Be^\lambda := \ensemble{ b(\square), \square \in \lambda } $, hence, it is not immediately obvious that they coincide exactly (nevertheless, in the particular case of integer $k$, cancellations due to numerator simplifications may occur).
\end{remark}

\begin{remark}\label{Rk:DerivativesWithJucysMurphy}
Using the orthogonality of the characters $ (\chi^\lambda)_{\lambda \vdash r} $ of $ \Sg_r $, one gets
\begin{align*}
\crochet{\mathrm{id}_r} \sigma = \Unens{\sigma = \mathrm{id}_r} = \frac{1}{r!} \sum_{\lambda \vdash r} \chi^\lambda(\mathrm{id}_r) \chi^\lambda(\sigma) = \frac{1}{r!} \sum_{\lambda \vdash r} d_\lambda^2 \, \widehat{\chi}^\lambda(\sigma)  = \Ee_{\Pe\ell(r)}(\widehat{\chi}^{\lambdab}(\sigma))
\end{align*}
where $ d_\lambda = \chi^\lambda(\mathrm{id}_r) $, $ \widehat{\chi}^\lambda := \chi^\lambda/d_\lambda $ and $ \Pp_{\Pe\ell(r)}(\lambda) := \frac{d_\lambda^2}{r!} $ is the Plancherel measure on $ \Yy_r $ (see e.g. \cite{JohanssonHouches}). Using moreover the \textit{Jucys-Murphy elements} $ \Je := (J_k)_{k \geq 1} $ defined by $ J_1 := 0 $, $ J_k := \sum_{\ell = 1}^{k - 1} (\ell, k) \in \Cc\crochet{\Sg_r} $ and the fact that $ \widehat{\chi}^\lambda(f(\Je)) = f(\Ae^\lambda) $ for all symmetric function $f$ (see e.g. \cite{BianeCharactersCumulants}), one gets with $ f := g^{\otimes r} $ and $ g(x) := \frac{ (k + x) (-N + x) }{2k + x} $
\begin{align}\label{Eq:ExpansionDerivative:JM}
\begin{aligned}
\frac{ \Esp{ \abs{Z_{U_N}(1)}^{2k}  \prth{ i \frac{Z_{U_N}'(1)}{Z_{U_N}(1) } }^r } }{ \Esp{ \abs{Z_{U_N}(1)}^{2k} } } & = \crochet{\mathrm{id}_r} \prod_{\ell \in \intcrochet{1, r} } \frac{ (k + J_\ell)(-N + J_\ell) }{2k + J_\ell} \\
               & = (-N)^r \sum_{m \geq 0} (-N)^{-m} \crochet{\mathrm{id}_r} e_m(J_1, \dots, J_r) \prod_{\ell \in \intcrochet{1, r} } \frac{ k + J_\ell }{2k + J_\ell}
\end{aligned}
\end{align}

This equality, equivalent to \eqref{Eq:ExpansionDerivative:Plancherel}, can lead to other interesting combinatorial expressions, using e.g. the recursive method of \cite{FerayJM} or \cite[\S~4 \&~9]{BianeCharactersCumulants} (combinatorics of Jucys-Murphy elements).
\end{remark}

\medskip
\subsubsection{Expansions with \eqref{Eq:SchurWithHn}}

One can perform an expansion with \eqref{Eq:SchurWithHn} if, instead of just considering the first order of the limit, one continues the expansion of each quantity \textit{ad libitum}. For instance, one has
\begin{align*}
s_{N^k}\crochet{\Ae} & = \frac{1}{k!} \oint_{\Uu^{k - 1}} V^{-N} h_{Nk}\crochet{(1 + V) \Ae} \abs{\Delta\crochet{1 + V} }^2 \frac{d^*V}{V} \\
               & = \frac{1}{k!} \int_{ \crochet{-\frac{N}{2}, \frac{N}{2}}^{k - 1} }  e^{-2 i \pi \sum_{j = 2}^k x_j } h_{Nk}\crochet{(1 + e^{2 i \pi X/N}) \Ae} \abs{\Delta\crochet{1 + e^{2 i \pi X/N}} }^2 \frac{dX}{N^{k - 1} } \\
               & = \frac{\Esp{ S_k\crochet{\Ae}^{kN} } }{k! \, (kN)! \, N^{k - 1} } \int_{ \crochet{-\frac{N}{2}, \frac{N}{2}}^{k - 1} }    e^{-2 i \pi \sum_{j = 2}^k x_j }  \Esp{ \prth{ 1 + \frac{ \xi_N(X, \Ae)}{N}  }^{\!\! kN} } \abs{\Delta\crochet{1 + e^{2 i \pi X/N}} }^2 dX  
\end{align*}
where, by analogy with \eqref{Def:XiNandXiInfty}, one defines 
\begin{align*}
h_m\crochet{X\Ae} & = \frac{1}{m!} \Esp{ \prth{ \sum_{j = 1}^k x_j Z_j\crochet{\Ae} }^{\!\! m} }, \qquad \Esp{ e^{x Z\,\crochet{\Ae} } } := H\crochet{x \Ae} \\
\xi_N(X, \Ae) & := \sum_{j = 1}^k N (e^{2 i \pi x_j/N} - 1) \frac{Z_j\crochet{\Ae}}{S_k\crochet{\Ae}}, \qquad S_k\crochet{\Ae} := \sum_{j = 1}^k Z_j\crochet{\Ae} \\
\xi_\infty(X, \Ae) & := 2 i \pi \sum_{j = 1}^k   x_j \frac{Z_j\crochet{\Ae}}{S_k\crochet{\Ae}}
\end{align*}
with $ (Z_j\crochet{\Ae})_j $ independent of $ S_k\crochet{\Ae} $.

We now reason formally and explain how to justify the steps at the end. One can push the expansion of $ \log(1 + \xi_N(X, \Ae)/N ) = \sum_{\ell = 1}^L (-1)^{\ell + 1} ( \xi_N(X, \Ae)/N)^\ell / \ell + O(N^{-L - 1} ) $ with a uniform deterministic $ O(N^{-L - 1} )  $ inside the exponential to get
\begin{align*}
h_m\crochet{(1 + e^{2 i \pi X/N} ) \Ae} & = \Esp{ e^{ m \log( 1 + \xi_N(X, \Ae)/N ) } } = \Esp{ e^{ m \sum_{\ell = 1}^L (-1)^{\ell + 1} ( \xi_N(X, \Ae)/N)^\ell / \ell + O(m N^{-L - 1} ) } } \\
               & = e^{ O(m N^{-L - 1} ) } \sum_{ \ell(\lambda) \leq L } \frac{(-1)^{\abs{\lambda} - \ell(\lambda) } }{z_\lambda } \frac{m^{\ell(\lambda) }}{ N^{\abs{\lambda} } } \Esp{ \xi_N(X, \Ae)^{\abs{\lambda} } } \\
               & \approx \sum_{ \ell(\lambda) \leq L } \frac{(-1)^{\abs{\lambda} - \ell(\lambda) } }{z_\lambda } \frac{m^{\ell(\lambda) }}{ N^{\abs{\lambda} } } \Esp{ \xi_\infty (X, \Ae)^{\abs{\lambda} } } \prth{ 1 + O(m N^{-L - 1} ) }
\end{align*}

Note that for $ L = 1 $, one recovers \eqref{Eq:Beta_ProbRepr} with an expansion (in the case where $ \Ae = 1^\kappa $). One supposes of course that $ m = \pe{\rho N} $, which gives a general expansion in $N$. In this particular case, one can get the following factorisation~:
\begin{align*}
h_{ \pe{\rho N} }\crochet{(1 + e^{2 i \pi X/N} ) \Ae} & =  \Esp{ e^{  \, \rho \sum_{\ell = 0}^L (-1)^\ell ( \xi_N(X, \Ae) )^{\ell + 1} N^{-\ell } / (\ell + 1) + O( N^{ -L - 1} ) } } \\
               & = e^{ O( N^{-L - 1} ) } \sum_{ \ell(\lambda) \leq L } \frac{(-1)^{\abs{\lambda} - \ell(\lambda) } }{z_\lambda } \frac{ 1 }{ N^{ \abs{\lambda} - \ell(\lambda)} } \Esp{ e^{\xi_N(X, \Ae) } \xi_N(X, \Ae)^{\abs{\lambda} } } \\
               & \approx \sum_{ \ell(\lambda) \leq L } \frac{(-1)^{\abs{\lambda} - \ell(\lambda) } }{z_\lambda } \frac{ 1 }{ N^{\abs{\lambda} - \ell(\lambda) } } \Esp{ e^{\xi_\infty (X, \Ae)} \xi_\infty (X, \Ae)^{\abs{\lambda} } } \prth{ 1 + O(  N^{-L } ) } \\
               & =: \sum_{L \geq s \geq \ell \geq 0} a_{s, \ell} \frac{ (-1)^{s - \ell} }{ N^{ s - \ell } } \Esp{ e^{\xi_\infty (X, \Ae)} \xi_\infty (X, \Ae)^s }   + O(  N^{-L } )  
\end{align*}
with explicit combinatorial coefficients $ a_{s, \ell} := \sum_{\ell(\lambda) = \ell, \abs{\lambda} = s} \frac{ 1 }{z_\lambda } $. 

One also needs to expand the Vandermonde part. Remark that
\begin{align*}
\frac{e^{ix} - 1}{ix} & = \Esp{ e^{i x U} }, \qquad U \sim \Us([0, 1]) \\
\abs{ \frac{e^{ix} - 1}{ix} }^2 & = \Esp{ e^{i x W} }, \qquad W :\eqlaw U - U', \quad U, U' \sim \Us([0, 1]) \mbox{, independent.}
\end{align*}

This implies with independent 
$ (W_j)_{2 \leq j \leq k} $ and $ (W_{j, \ell})_{2 \leq j < \ell \leq k} $ distributed as $W \eqlaw U - U'$  
\begin{align*}
\abs{ \frac{\Delta\crochet{1 + e^{2 i \pi X/N}}}{ \Delta(0, 2 i \pi X/N) } }^2 & = \prod_{j =  2}^k \abs{ \frac{ e^{2 i \pi x_j/N } - 1 }{2 i \pi x_j/N } }^2 \prod_{2 \leq \ell < j \leq k } \abs{ \frac{e^{2 i \pi (x_j - x_\ell)/N } - 1 }{ 2 i \pi (x_j - x_\ell)/N }}^2 \\
                	& = \prod_{j =  2}^k  \Esp{ e^{2 i \pi x_j W_j/N} } \prod_{2 \leq \ell < j \leq k } \Esp{ e^{2 i \pi (x_j - x_\ell) W_{j, \ell} /N } }  \\
                	& = \Esp{ e^{2 i \pi \sum_{j = 2}^k x_j \We_j / N } }, \qquad \We_j :\eqlaw W_j + \sum_{\ell \in \intcrochet{2, k}\setminus\ensemble{j} } (W_{j, \ell} - W_{\ell, j})
\end{align*}

Define
\begin{align*}
\chi(X) := \sum_{j = 2}^k x_j \We_j
\end{align*}

We then have 
\begin{align*}
\abs{ \frac{\Delta\crochet{1 + e^{2 i \pi X/N}}}{ \Delta(0, 2 i \pi X/N) } }^2  & =   \sum_{r = 0}^M \frac{(2 i \pi)^r}{r! } N^{-r} \Esp{ \chi(X)^r  } + O\prth{ N^{-M - 1} \sum_{j = 2}^k \abs{x_j}^{M + 1} } 
\end{align*}
%
%
%

Multiplying both contributions, using the scaling of the Vandermonde determinant and integrating out gives an expansion of the form~:
\begin{align*}
s_{N^k}\crochet{\Ae} & = \frac{\Esp{ S_k\crochet{\Ae}^{kN} } }{k! \, (kN)! \, N^{k - 1} } \int_{ \Rr^{k - 1} }    e^{-2 i \pi \sum_{j = 2}^k x_j }  \Esp{ \prth{ 1 + \frac{ \xi_N(X, \Ae)}{N}  }^{\!\! kN} } \abs{\Delta\crochet{1 + e^{2 i \pi X/N}} }^2 dX  \\
                 & \approx \frac{\Esp{ S_k\crochet{\Ae}^{kN} } }{k! \, (kN)! \, } N^{-k^2 - 2k - 1} \int_{ \Rr^{k - 1} }    e^{-2 i \pi \sum_{j = 2}^k x_j } \Esp{ e^{\xi_\infty (X, \Ae) } } \Delta(0, 2\pi X)^2 \\
                 & \hspace{+4cm} \times \Bigg( \sum_{ r, s, \ell = 0}^M  a_{s, \ell} (-1)^{s - \ell} \frac{\Esp{ e^{\xi_\infty (X, \Ae)} \xi_\infty (X, \Ae)^s \chi(X)^r  } }{\Esp{ e^{\xi_\infty (X, \Ae)}  }  } \frac{1}{N^{r + s - \ell } } \\
                 & \hspace{+8cm}+ O\prth{ \frac{\sum_j \abs{x_j}^{M + 1} }{N^{ M + L + 1 } } } \Bigg) dX
\end{align*}

Note that if $ \xi_\infty(X, \Ae) $ was real, one would have an interpretation of the probabilitic part of the coefficients as a change of probability. This expansion is thus half-probabilistic, half-combinatorial (due to the coefficients $ a_{s, \ell} $). 

To turn these formal computations into a rigorous expansion, one needs an integrable domination akin to Lemmas~\ref{Lemma:DominationSupersym} and~\ref{Lemma:IntegrabilityDomination}. Since the $ \Esp{ \chi(X)^r } $ are polynomials, this only amounts to dominate $ \Esp{ e^{\xi_\infty (X, \Ae)} \xi_\infty (X, \Ae)^s } = \prth{\frac{d}{dt}}^s \Esp{ e^{t \xi_\infty (X, \Ae)} } $ against a polynomial function of the type $ \Esp{\chi(X)^r} \Delta(X)^2 $ whose degree is explicit. This can be done by an exchange of integration and $ \frac{d}{dt} $ in a slight modification of~\eqref{Eq:GegenbauerMultivariateWithTKL}. Details are left to the reader. 

\medskip
\subsubsection{Expansion of \eqref{Def:JointMomentsDerivative_0_1}/\eqref{Eq:POD:derivativeExpansion}}

In view of \eqref{Eq:ExpansionDerivative:Plancherel} and \eqref{Eq:ExpansionDerivative:JM}, one can apply the previous (formal) methodology to \eqref{Def:JointMomentsDerivative_0_1} to try to find a similar expansion. 

We will use the methodology of remark~\ref{Rk:Alternative:DerivativeOther} with $ K_{1, N}(u, z) := \frac{d}{dx}\prth{\frac{1 - x^{N + 1} }{1 - x} }\big\vert_{x = u z\inv} = C_{N, 1} \Esp{ x^{D_{N, 1} } }\big\vert_{x = uz\inv} $ where $ D_{N, r}, C_{N, r} $ are given in \eqref{Def:DnR}. We get
\begin{align*}
\De_N(k, h) & := \Esp{ \abs{Z_{U_N}(1)}^{2(k - h)} \abs{Z'_{U_N}(1)}^{2h} } \\
               & = \Esp{   \abs{Z_{U_N}(1)}^{2(k - h)} \abs{ \oint_\Uu K_{N, 1}(u, z) Z_{U_N}(u) \frac{d^*u}{u} }^{2h} } \\
               & = \oint_{ \Uu^{2h} } \prod_{j = 1}^h K_{1, N}(u_j, 1) \overline{K_{1, N}(v_j, 1)} \\
               & \hspace{+4cm} \times \Esp{ Z_{U_N}(1)^{ k - h } \overline{Z_{U_N}(1) }^{ k - h }  \prod_{j = 1}^h Z_{U_N}(u_j) \overline{Z_{U_N}(v_j) } } \frac{d^*\ub}{\ub} \frac{d^*\vb}{\vb}  \\
               & = \oint_{ \Uu^{2h} } \prod_{j = 1}^h K_{1, N}(u_j, 1) \overline{K_{1, N}(v_j, 1)} \prod_{j = 1}^h v_j^{-N} \, s_{N^k}\crochet{ 1^{2(k - h)} + U + V\inv } \frac{d^*\ub}{\ub} \frac{d^*\vb}{\vb} \\
               & = \int_{ \crochet{-\frac{N}{2}, \frac{N}{2} }^{2h} } e^{-2 i \pi \sum_{j = 1}^h \alpha_j} \prod_{j = 1}^h K_{1, N}(e^{2 i \pi \theta_j/N}, 1) \overline{K_{1, N}(e^{2 i \pi \alpha_j/N}, 1)} \\
               & \hspace{+6cm} \times s_{N^k}\crochet{ 1^{2(k - h)} + e^{2 i \pi \theta/N} + e^{-2 i \pi \alpha/N} } \frac{d \thetab}{N^h} \frac{d \alphab}{N^h} \\
               & =  \prth{ \frac{C_{N, 1}}{N} \! }^{\!\! 2h} \!\! \int_{ \crochet{-\frac{N}{2}, \frac{N}{2} }^{2h} } e^{-2 i \pi \sum_{j = 1}^h \alpha_j} \Esp{ e^{ 2 i \pi \sum_{j = 1}^h (\theta_j D_{N, 1}^{(j)}/N - \alpha_j \widetilde{D}_{N, 1}^{(j)}/N) } } \\
               & \hspace{+6cm} \times s_{N^k}\crochet{ 1^{2(k - h)} + e^{2 i \pi \theta/N} + e^{-2 i \pi \alpha/N} }  d \thetab \, d \alphab  
\end{align*}

The expansion of $ \Esp{ e^{i \theta D_{N, 1}/N } } $ can be done in a probabilistic way using an Edgeworth expansion, or with a Riemann sum expansion which amounts to use the Euler-MacLaurin formula (equivalent to the Edgeworth expansion in this setting). This last formula writes for $ f \in \Ce^{K + 1}([a, b]) $ with $ a, b \in \Nn $ (see e.g. \cite[I.0, thm. 4]{Tenenbaum})
\begin{align*}
\sum_{a + 1 \leq \ell \leq b} f(\ell) & = \int_a^b f(t) dt + \sum_{r = 0}^K  \frac{(-1)^{r + 1} B_{r + 1} }{(r + 1)!} \prth{ f^{(r)}(b) - f^{(r)}(a) }  \\
               & \hspace{+6cm} + \frac{(-1)^K}{(K + 1) !} \int_a^b B_{K + 1}(t) f^{(K + 1)}(t) dt
\end{align*}
where $ B_r $ is the $r$-th Bernoulli number and $ B_K(t) $ is the Bernoulli polynomial.

Taking $ a = 0 $, $ b = N $ and $ f(x) := \frac{1}{N} g(x/N) $, one gets 
\begin{align*}
\frac{1}{N} \sum_{ 1 \leq \ell \leq N} g\prth{ \frac{\ell}{N} } & = \int_0^1 g(u) du + \sum_{r = 0}^K  \frac{(-1)^{r + 1} B_{r + 1} }{(r + 1)!} \frac{1}{N^{r + 1} } \prth{ g^{(r)}(1) - g^{(r)}(0) } \\
                   & \hspace{+5cm} + \frac{(-1)^K}{(K + 1) !} \frac{1}{N^{K + 2} } \int_0^1 B_{K + 1}(\pf{N u}) g^{(K + 1)}(u) du
\end{align*}
where $ \pf{x} := x - \pe{x} $ is the fractional part of $x$, the Bernoulli polynomials in this formula being the periodised version (see e.g. \cite[I.0]{Tenenbaum}).

Remarking that $ C_{N, 1} = \sum_{j = 1}^N j = \frac{N(N + 1)}{2} $, hence that $ \frac{C_{N, 1}}{N} = \frac{N + 1}{2} $, one gets
\begin{align*}
\frac{ C_{N, 1} }{N} \, \Esp{ e^{i \theta D_{N, 1}/N } } =  N \times \frac{1}{N} \sum_{\ell = 1}^N \frac{\ell}{N} e^{i \theta \ell/N} =  N \times \frac{1}{N}\sum_{\ell = 1}^N g_\theta(\ell/N), \qquad g_\theta(x) := x e^{2i\pi \theta x}
\end{align*}

An easy induction shows that $ \prth{ \frac{d}{dx} }^r (x e^{\lambda x}) = (\lambda x + r) \lambda^{r - 1} e^{\lambda x} $, hence,  
\begin{align*}
\tfrac{ C_{N, 1} }{ N} \Esp{ e^{2i\pi \theta D_{N, 1}/N } } & = \int_0^1 x e^{i \theta x} dx + \sum_{r = 0}^K  \frac{(-1)^{r + 1} B_{r + 1} }{(r + 1)!} \frac{(2 i \pi \theta)^{r - 1} }{N^{r + 1} } \prth{ (2 i \pi \theta + r) e^{2 i \pi \theta} - r } \\
                   & \hspace{+2.5cm} + \frac{(-1)^K}{(K + 1) !} \frac{ (2 i \pi \theta)^K }{N^{K + 2} } \int_0^1 B_{K + 1}(\pf{N u}) (2 i \pi \theta u + K + 1) e^{i \theta u}  du
\end{align*}

One has moreover the Lehmer estimate $ \sup_{x \in [0, 1]}\abs{B_K(x)} \leq 2 \frac{K!}{(2\pi)^K}(1 + \zeta(K)\Unens{ K \equiv 2 \! \mod 4 }) $ which shows that the integral in the RHS of the last equality is bounded by $ C'_K \abs{\theta} $ for a certain constant $ C'_K $. In the end, one has
\begin{align*}
\tfrac{ 2 C_{N, 1} }{ N } \Esp{ e^{2i\pi \theta D_{N, 1}/N } } & = N \, \Esp{ e^{2 i \pi \theta D_{\infty, 1} } } + 2 \sum_{r = 0}^K  \frac{(-1)^{r + 1} B_{r + 1} }{(r + 1)!} \frac{(2 i \pi \theta)^{r - 1} }{N^r} \prth{ (2 i \pi \theta + r) e^{2 i \pi \theta} - r } \\
           & \hspace{+10cm} + O\prth{  \frac{ \abs{ \theta}^{K + 1} }{N^{K + 1} } } 
\end{align*}

One is then left to expand $  s_{N^k}\crochet{ 1^{2(k - h)} + e^{2 i \pi \theta/N} + e^{-2 i \pi \alpha/N} } $. This can be done in a way similar to the previous \S, as the representation with random variables $ \xi_N(X, \Ae) $ is also valid in the case of a ``plethystically additive'' specialisation of the type $ s_{N^k}\crochet{\Ae + X} $. In the end, one has two types of integrals whose exchange is to justify~: the ones coming from $ (\alphab, \thetab) $ in the previous formula, and the ones coming from $X$ in the formula of the previous \S. Due to the presence of Bernoulli numbers in the expansion of the Riemann sum, the coefficients of this expansion are very combinatorial in nature. We leave to the interested reader the task of writing exactly these coefficients. 

\medskip
\subsection{Other approaches to $ s_\lambda\crochet{\Ae} $}\label{Subsec:Ultimate:OtherApproachesSchur}

As noticed in remark~\ref{Rk:OtherApproachesSchur}, one can also look at the literature on the rescaling of Schur functions outside of the $ CUE $ literature. Dehaye's use of the binomial method \cite{PODfpsac} or the use of conformal block expansion by Basor \textit{et al.} \cite{BasorBleherBuckinghamGravaIts2Keating} are examples of a successful transfer of technology in this setting. 

One can separate this literature into two sub-categories~: on the one hand, the study of spherical integrals of the Harish-Chandra-Itzykson-Zuber (HCIZ) type, and on the other hand problems in relation with asymptotic representation theory. The frontier between these two topics is not clear, and several methods lie at the intersection of the two fields.

\subsubsection{Spherical integrals}

The HCIZ integral is defined by \cite{HarishChandraInt, ItzyksonZuber}
\begin{align*}
HCIZ_N(A, B) := \int_{\Ue_N} e^{\trace(U A U\inv B) } dU, \qquad A, B \in \He_N
\end{align*}

The link with the Schur function is provided with the following explicit formula \cite[(3.4)]{ItzyksonZuber} where one has supposed that $ A $ and $ B $ are diagonal (what we can always suppose by invariance)~: 
\begin{align*}
HCIZ_N(A, B) = \prth{ \prod_{j = 1}^{N - 1} j! } \, \frac{\det\prth{ e^{a_i b_j } }_{1 \leq i, j \leq N } }{\Delta(A) \Delta(B) }
\end{align*}

The way to derive this formula is in itself interesting and could eventually be modified to lead to an asymptotic analysis. To the knowledge of the author, five methods are available~: Dyson Brownian motion \cite{ItzyksonZuber, JohanssonHCIZ}, character expansion \cite{ItzyksonZuber, ZinnJustinZuber}, Duistermaat-Heckman stationary phase method \cite{DuistermaatHeckman}, coordinate Bethe ansatz \cite{HarishChandraInt} and path decomposition of integrable processes \cite{KatoriTanemura} (see also \cite{McSwiggenHCIZ, TaoBlogHCIZ} for an exposition). 

The (Weyl dimension) formula $ \frac{\Delta(\lambda + \delta_N ) }{ \prod_{j = 1}^{N - 1}  j! } = \frac{\Delta(\lambda + \delta_N ) }{ \Delta( \delta_N )  } = s_\lambda\crochet{1^N} $ (see \cite[I-1, ex. 1, (4) p. 11 \& I-3 ex. 4 p. 45]{MacDo} or \cite[(4.6)]{KuznetsovSklyaninFactorisation}) yields the more explicit link with the Schur function
\begin{align}\label{Eq:SchurWithHCIZ}
HCIZ_N(\lambda + \delta_N , X) = \frac{ \Delta(e^X) }{\Delta(X) } \frac{s_\lambda\crochet{e^X}  }{ s_\lambda\crochet{1^N}}
\end{align}

These last years saw several approaches to tackle the asymptotics of $ HCIZ_N $ in several regimes of $ \lambda \equiv \lambda(N) $ and $ X \equiv X(N) $~:
\begin{itemize}

\item The method used in \cite{BunBouchaudMajumdarPotters, GuionnetMaidaSchur, GuionnetZeitouniSchur, Matytsin, ZinnJustinZuber} lies in the framework of large deviations or equivalently, variational problems on the space of discrete measures that converge to limiting variational problems on a ``continuous'' space. 

\item Note in particular the integrable systems reformulation of \cite{Matytsin} (turned rigorous in \cite{GuionnetZeitouniSchur}) given in \cite{MenonHCIZ} starting from the (quantum) Calogero-Moser system (see also \cite[\S~4.1]{McSwiggenHCIZnew}).

\item The authors of \cite{CollinsHCIZ, CollinsGuionnetMaurelSegala} use free probability (with Weingarten calculus) and free stochastic calculus in the vein of \cite{BianeFreeBM}~; see the survey \cite{GuionnetSurveyPGDinRMT} for a summary and additional references.

\item Novak \cite{NovakHCIZandBGW, NovakHCIZsampler} uses a string expansion governed by the Plancherel measure on partitions defined in remark~\ref{Rk:DerivativesWithJucysMurphy}. 

\item Topological recursion \cite{BorotEynardOrantin, EynardOrantin2009} or Dyson-Schwinger (loop) equations is another method that emerged in a general context these last years and that was applied to the study of $ HCIZ_N $ in \cite{GuionnetNovak}. As noted by Guionnet\footnote{For instance in a conference in Columbia held from August 28 to September 1, 2017, \href{https://www.math.columbia.edu/department/probability/seminar/guionnet.html}{``Dyson-Schwinger equations, topological expansions, and random matrices''}.}, ``the idea is similar to Stein's method in that the observables are approximate solutions of equations that can be solved asymptotically''.

\item \&c.

\end{itemize}

The use of the local CLT approach seems to be new in this context. This is mainly due to the fact that these previous limits are all performed at another scale, in the large deviations regime~: supposing that the empirical measures $ \mu_Z^{(N)} := \frac{1}{N} \sum_{k = 1}^N \delta_{z_k} $ for $ Z \in \ensemble{A, B} $ converges $ \star $-weakly to a limiting measure $ \mu_Z^{(\infty)} = \rho_Z^{(\infty)}\bullet \mathrm{Leb} $ (with some additional hypotheses), the limit is of the form
\begin{align*}
HCIZ_N(\sqrt{N} A, \sqrt{N} B) = \exp\prth{ N^2 \Fe(\rho_A^{(\infty)}, \rho_B^{(\infty)}) + O(N) } 
\end{align*}
where $ \Fe $ uses the solution of a Burgers equation.

From the expository point of view, recasting all the $ CUE $ results of the introduction in terms of probability theory has the advantage to position them clearly in this global picture (in terms of probabilistic regime).

\begin{remark}
One can thus write $ s_{N^k}\crochet{X} $ using the HCIZ integral for a ``classical'' alphabet. This yields the formula
\begin{align*}
s_{N^k}\crochet{e^X} = s_{N^k}\crochet{1^N} \frac{\Delta(e^X)}{\Delta(X)} \int_{\Ue_N} e^{\trace(U (N^k + \delta_N\!)\, U\inv X) } dU = \int_{\Ue_N} \det(U)^{-k} \prod_{x \in e^X} \det(I_N - x U) dU
\end{align*}
where $ N^k + \delta_N $ is understood in the vectorial sense (and then as a diagonal matrix). A direct proof of such an identity is still missing, but the existence of such a link allows some of the previous techniques such as topological expansion to be used (in the different regime of local CLT, though). We do not pursue here.
\end{remark}

\subsubsection{Asymptotic representation theory}

Asymptotic representation theory is the study of classical groups when their rank tends to infinity. To quote Vershik \cite{VershikTAR}~:
\begin{quote}
Investigation of classical groups of high ranks leads to two kinds of problems. Questions of the first kind deal with asymptotical properties of groups, their representations, characters and other attributes as group rank grows to infinity. Another kind of questions (in the spirit of infinite dimensional analysis) deal with properties of infinite dimensional analogues of classical groups.
\end{quote}

The study of characters of such groups is performed with a proper normalisation that is of interest in the scope of this article as the characters of $ \Ue_N $ (and also $ \Sg_N $ up to the Frobenius map defined in \cite[I-7 (7.2)]{MacDo}) are the Schur functions. In this last case, one is typically interested in the behaviour of 
\begin{align*}
S_{\lambda}\crochet{X^{(k)} + 1^{N - k} } := \frac{ s_{\lambda}\crochet{X^{(k)} + 1^{N - k} }}{ s_{\lambda}\crochet{  1^N } }, \qquad X^{(k)} := \ensemble{x_1, \dots, x_k}
\end{align*}

Note that this rescaled Schur function is exactly the RHS of \eqref{Eq:SchurWithHCIZ}, up to an explicit multiplicative factor. Its study is thus equivalent to the study of the HCIZ integral when one of the matrices has a finite rank $k$.

This topic being very huge, we only list a few methods that were developed in the last 80 years (starting in the 40ies with work of H. Weyl and J. Von Neumann)~: 
\begin{itemize}

\item The case where $ X^{(k)} $ is fixed and $ \abs{\lambda(N)} \sim \pe{ \gamma N } $ is studied in \cite{OkounkovOlshanskiJack, OkounkovOlshanskiBC} in the more general setting of Jack symmetric functions, a particular case of which degenerates to Schur functions and was studied in \cite{KerovVershikTARunitary}. One needs to suppose that $ \lambda_j(N) \sim \pe{N \alpha_j} $ and $ \lambda'_j(N) \sim \pe{N \beta_j} $ with finite $ \alpha_j, \beta_j $ to be in the setting of \textit{Vershik-Kerov}\footnote{The alphabetical order is the one from the Russian alphabet.} sequences. Note that this is the case of $ N^k $ (with $ \beta_j = 0 $) but the completion of $ s_\lambda $ by $ 1^{N - k} $ transforms radically the problem. Supposing that one only has ``positive partitions'', their result writes $ S_{\lambda}\crochet{X^{(k)} + 1^{N - k} }  \sim \prod_{j = 1}^k \phi_{\alpha, \beta, \gamma}(z_j - 1) $ with $ \phi_{\alpha, \beta, \gamma}(x) := H\crochet{ x ( \gamma \Eee + \Ae + \widehat{\omega} \Be) } $ where $ \Ae := \ensemble{ \alpha_j, j \geq 1} $, $ \Be := \ensemble{ \beta_j, j \geq 1} $ and $ \Eee $ is defined in example~\ref{Ex:HfunctorSpecialise}. The method is the binomial expansion. Note also the elementary method of Boyer \cite{Boyer83, Boyer92} to solve this problem (using nevertheless the Voiculescu factorisation theorem to reduce the problem to $ k = 1 $).

\medskip
\item The case of $ s_{\lambda}\crochet{X^{(k)} + \frac{1 - q^{N - k} }{ 1 - q } } / s_{\lambda}\crochet{  \frac{1 - q^{N  } }{ 1 - q } } $ (where $ \frac{1 - t}{1 - q} := \frac{1}{1 - q} - t \frac{1}{1 - q} $ and $ \frac{1}{1 - q} $ is defined in example~\ref{Ex:HfunctorSpecialise}) is treated in \cite{GorinQ} in relation with a quantisation of $ \Ue(\infty) $. The result is a variant of the previous case.

\medskip
\item The case where $ X^{(k)} = e^{Y^{(k)}/\sqrt{N}} $ with $ \abs{\lambda} = O(N^2) $ and $ Y^{(k)} := \ensemble{y_1, \dots, y_k} $ is a fixed alphabet is treated in \cite[thm. 5.1]{GorinPanova} with the additional hypothesis that the step function $ x \mapsto \lambda_{\pe{Nx}}/N = N\inv \sum_{\ell = 1}^N \lambda_\ell \Unens{\ell/N \leq x < (\ell + 1)/N} $ is close to a continuous decreasing function $f$ with error $ o(\sqrt{N}) $. The result is of the type $ S_{\lambda(N)}\crochet{e^{Y/\sqrt{N} } + 1^{N - k} } = \exp\prth{ \sqrt{N} \Eee(f) \sum_j y_j + \frac{1}{2} \Ce(f, f) \sum_j y_j^2 + o(1)} $ which is similar to the HCIZ limit obtained in \cite{GuionnetMaidaSchur} in the finite rank case. It is characteristic of a Gaussian CLT writing $ S_{\lambda(N)}\crochet{e^{Y/\sqrt{N} } + 1^{N - k} } = \Esp{ e^{\sum_{j = 1}^k y_j Z_{j, N} } } $ for given random variables $ Z_{j, N} $~; using the formula $ s_\lambda\crochet{X} = \sum_{T \in SST(\lambda)} X^T $ where $ X^T := \prod_{j \geq 1} x_j^{c_j(T)} $ and $ c_j(T) $ is the number of times $ j $ appears in the semi-standard tableau $T$ of shape $ \lambda $, $ s_\lambda\crochet{e^X + 1^{N - k} }/s_\lambda\crochet{1^N} $ becomes the joint Laplace transform of $ ( c_j(\Tb_{\lambda(N) } ) )_{1 \leq j \leq k} $ for a random tableau $ \Tb_{\lambda(N) } \in SST(\lambda(N)) $. Using one of the numerous bijections between tableaux and other combinatorial objects, such random variables are equivalent to observables of random uniform lozenge tilings or the six-vertex model. The employed method is a variant of the SoV method that we introduce in the next \S.

\medskip
\item  \&c.

\end{itemize}

Here again, the regime is not the one of a local CLT but the one of a classical CLT, with independence at the limit as the limiting function splits (this approximate character factorisation is a general phenomenon, see e.g. \cite{BianeApproximateFactorisation, VoiculescuApproximateFactorisation}).

\medskip
\subsubsection{The SoV method}\label{Subsubsec:Ultimate:OtherApproachesSchur:SoV}

The method used in \cite{GorinPanova} is a variation of the \textit{Separation of Variables} (SoV in short). This method was developed in \cite{KuznetsovSklyaninFactorisation} using the $Q$-operator formalism of \cite{KuznetsovMangazeevSklyanin} and the earlier work \cite{KuznetsovSklyaninBacklund}. We refer to the introduction of \cite{deGierPonsaing} for further historical information. 

One motivation\footnote{This is more a consequence of the result. The true motivation was to mimic in the $q$-deformed setting of quantum Hamiltonians the classical separation of variables of classical Hamiltonian dynamics such as Liouville integrable systems. The commutation between separation and quantisation does not occur in the case of the Schur and monomial symmetric functions, but happens e.g. for a family of symmetric functions related with the elementary symmetric functions~; see \cite{KuznetsovSklyaninFactorisation}.} of the problem was to compute the asymptotics of the normalised characters $ \widehat{\chi}^{\lambda, N, k}( X^{(k)} ) := \chi^\lambda\crochet{X^{(k)} + 1^{N - k} } / \chi^\lambda\crochet{ 1^N } $ such as described in the previous \S~for various types of characters $ \chi^\lambda $~; the case of $ s_\lambda $ introduced in this paper concerns the unitary group. Since character factorisation is a general phenomenon when $ N \to +\infty $ for such renormalised characters, it is tempting to highlight an approximate factorisation for $ N $ fixed, and see how factorisation occurs starting from it. We refer to \cite[\S~2]{deGierPonsaing} or \cite{KuznetsovSklyaninFactorisation} for a precise description of the $Q$-operator method (it can be seen as a clever change of basis for the Hamiltonians that act diagonally on the considered family of symmetric functions). The resulting formula is of the type
\begin{align*}
\widehat{\chi}^{\lambda, N, k}  = \Se_{k, N}\inv q_{\lambda, N}^{\otimes k}, \qquad q_{\lambda, N} := \widehat{\chi}^{\lambda, N, 1}
\end{align*}
where $ \Se_{k, N} $ is the key ``separating'' operator to construct, that writes as the product $ \Se_{k, N} = \Ae_{1, N} \cdots \Ae_{k, N} $ (see \cite[(2.10), (2.11)]{KuznetsovSklyaninFactorisation} for examples of $ \Ae_{k, n} $). Since the product is finite, one sees that the approximate factorisation will be proven if e.g. one proves that $ \Ae_{j, N} \to \mathrm{Id} $ when $ N \to +\infty $ in some operator topology and if $ q_{\lambda, N} \to q_{\lambda, \infty} $ in some functional space. 

The operator $ \Se_{k, N}\inv $ can be found without the previous factorisation, simply starting from the alternant form of the Schur function \eqref{AlternantRepr:Schur} and using the manipulations of \cite[\S~5-VI p. 446]{GelfandNaimarkUnimodular} that end up writing it as 
\begin{align*}
s_\lambda\crochet{x_1, \dots, x_N} & = \Mg_{\!\Delta}\inv \Delta(\Tb) \, \Mg_{\!\Delta} f_\lambda (x_1, \dots, x_N), \qquad \Mg_{g} f := g f, \quad T_j := x_j \frac{\partial}{\partial x_j} = \frac{ \partial}{\partial \log(x_j)} \\
                & = \Mg_{\!\Delta}\inv \Delta(\Tb)   g_\lambda^{\otimes N}(x_1, \dots, x_N)
\end{align*}

The key manipulation is the $N$-linearity $ \Delta(\Tb) g_1\otimes \cdots \otimes g_N (\xb) = \det_{N\times N} \prth{ T_i^{N - j} } g_1\otimes \cdots \otimes g_N (\xb) = \det_{N\times N} \prth{ T^{N - j}g_i(x_i) } $ such as highlighted in \cite[proof of Prop. 6.1]{deGierPonsaing} in the symplectic case. As a result, it can be generalised to any symmetric function of the form $ \Mg_{\!\Delta}\inv \Delta(\Tb) g_1 \otimes \cdots \otimes g_N  $ such as noticed in \cite[\S~3.1]{GorinPanova}. Note that these are exactly the functions defined in \eqref{Def:GeneralFunctionBinomialExpansion}. The binomial expansion and the SoV method are thus equivalent for the purpose of aymptotic representation theory.

The other key manipulation is then to explicit the function $ g_\lambda $ and write it in a clever way to perform an asymptotic analysis. As noticed in the previously cited papers, after the proper rescaling to set $ N - k $ variables to $1$, one ends up with $ g_\lambda(x) = S_\lambda\crochet{x + 1^{N - 1} } $. Determinantal manipulations often use the minor expansion used in remark~\ref{Rk:BirkhoffWithH}, itself equivalent to the partial fraction expansion of $ t \mapsto H\crochet{tX} $ or $ t \mapsto H\crochet{t(X - Y) } $ given in \eqref{Eq:PartialFracExpH}, but to perform an asymptotic analysis, it is better suited to use an integral form coming from a residue sum~; in our setting, this means going from \eqref{Eq:SupersymHnWithResidues} to \eqref{Def:SupersymHomogeneousComplete}. Such a manipulation is for instance done in \cite[(3), (4)]{HoughJiang} starting from the residue sum of \cite[Prop. 3.1]{Rosenthal} or in \cite[Prop. 3.3]{GorinPanova}. Note also that writing a sum of residues as an integral is precisely what is performed in \cite{ConreyFarmerKeatingRubinsteinSnaith} to obtain the CFKRS formula (and is constantly done in this whole theory)~; it amounts to an RKHS manipulation such as noticed in \S~\ref{Subsec:Theory:CFKRSwithRKHS} (the residue formula being itself an instance of its RKHS theory described in remark~\ref{Rk:Derivative:RKHS}).

\medskip 

The SoV method could be an interesting alternative to the theory presented in this paper. Philosophically speaking, the Fourier form of the Schur function \eqref{FourierRepr:Schur} is equivalent to the Jacobi-Trudi formula \eqref{Eq:JacobiTrudiSchur} whereas the SoV form is equivalent to the ratio of alternants \eqref{AlternantRepr:Schur}. As a result, it is restricted to classical alphabets $ \ensemble{x_1, \dots, x_k} $, but for some alphabets such as $ X 1^{\kappa} $, the method could be adapted with a careful limiting procedure (for supersymmetric alphabets nevertheless, the right equivalent of \eqref{AlternantRepr:Schur} is given by the Moens-Van Der Jeugt formula \cite[(1.17)]{MoensVanDerJeugt} and a proper analysis will be more involved). Another direction of investigation concerns the factorisation operators $ \Ae_{k, N} $ given in \cite[(4.29)]{KuznetsovSklyaninFactorisation}~; they use a disintegration on a Gelfand-Tsetlin ensemble akin to the polytopial manipulations of \cite{AssiotisKeating, KeatingR3G} and it is likely that their rescaling in the local CLT regime will give a polytope-like limit. 

Nevertheless, this is more on the other direction of investigation that an interesting connection is to be found, i.e. by applying the formalism of this article and in particular \eqref{Eq:SchurWithRKHS} to the problems previously exposed. The article \cite{GorinPanova} is concerned with the asymptotics of $ s_{\lambda_N}[X^{(k)} + 1^{N - k}] $ when $ X^{(k)} $ is an alphabet $ \ensemble{X_{1, N}, \dots, X_{k, N}} $ with fixed length $k$ and $ \lambda_N $ has length $N$ and grows in a certain way \cite[thm. 5.1]{GorinPanova}. The SoV representation of the Schur function is then rescaled with the steepest descent method. In a dual way, the case we consider is that of $ \lambda_N = N^k $ and of an ``abstract'' alphabet $ X $ (for instance a supersymmetric alphabet, see \S~\ref{Subsec:FuncSym}) with a local CLT type of rescaling (originally an alternative to the steepest descent analysis). For the particular alphabets considered in \cite{GorinPanova}, the two problems can be seen equivalent with an application of the following $ (\lambda$-$\mu) $-\textit{duality formula} obtained from \eqref{Eq:SchurWithHCIZ} with $ A := \sqrt{\log(q)} (\lambda + \delta_N) $ and $ B := \sqrt{\log(q)} (\mu + \delta_N) $ (this is suggested in \cite[Rk after Thm. 3.6]{GorinPanova})
\begin{align*}
\frac{s_\lambda(q^{\mu + \delta})}{s_\lambda(q^\delta) } =  \frac{s_\mu(q^{\lambda + \delta}) }{ s_\mu(q^\delta) }, \qquad \delta \equiv \delta_N := (N - 1, N - 2, \dots, 1, 0)
\end{align*}

The regime of $ \lambda_N $ and the alphabets considered in \cite{GorinPanova} are nevertheless different, but some cases such as \cite[prop. 5.8]{GorinPanova} are concerned with convergence in distribution towards a multivariate Gaussian (case $ X = e^{Y/\sqrt{N}} $ previously exposed). In this last regime, and after using the $ (\lambda$-$\mu) $-duality formula for the Schur function, the conjunction of \eqref{Eq:SchurWithHn}, lemmas \ref{Lemma:GammaProbRepr} and \ref{Lemma:BetaProbRepr} and the local CLT approach can be investigated. Note that \cite[Rk after Thm. 3.6]{GorinPanova} advocates to use \eqref{Eq:TchebytchevMultivariate} but without any probabilistic representation. We do not pursue here.

\subsection{Summary of the encountered functionals and limits}\label{Subsec:FinalTable}
We sumarise here the different functionals that we obtained throughout the article. We set $ a_k := \frac{(2\pi)^{k(k - 1)}}{k!} $ for typographical reasons. Note that the limit $ a_k \int_{ \Rr^{k } }  \Phi_{\!\Ze\Te_\rho^{(k)}}( \xb) \Delta(0, \xb)^2 d\xb $ is not a typo~: one has an integral over $ \Rr^k $ with $ \Delta(0, \xb) $ (but $ \xb = (x_2, \dots, x_{k + 1}) $ in this case).

\begin{center} 
\hspace{-1cm}
  \begin{tabular}{ | c |  c | c | c |}
    \hline
    Functional $\raisebox{-1ex}{$ \white{.} $} $ $\raisebox{+2ex}{$ \white{.} $} $ & Order & Limit & $\! \Phi \! $ /  $\! \Phi^{(Alt)} \!$/$ \Phi^{(Rand)} \!$   \\ \hline
    $ \Esp{ \abs{ Z_{U_N}(1) }^{2k} }     \vphantom{\bigg(} $ & $ N^{k^2} $ & $ \!\!a_k\! \int_{ \Rr^{k - 1} } \! \Phi_{\widetilde{L}_1}\!(0, \xb) \Delta(0, \xb)^2 d\xb $       & -/\eqref{EqPhi:KS}/-   \\ \hline  
  $ \Esp{ \abs{\sc_{\pe{\rho N}}(U_N) }^{2k} } \vphantom{\bigg(} $ & $ N^{(k - 1)^2} $   & $\!\!a_k \!\int_{ \Rr^{k - 1} } \! \Phi_{\!\!\Se\Ce_\rho^{(k)}} \!\!\; (0, \xb) \Delta(0, \xb)^2 d\xb $ & \eqref{EqPhi:MidCoeff}/\eqref{EqPhi:Alternative:MidSecularCoeff}/- \\ \hline
  $ \!\raisebox{-1ex}{$\Ee\Bigg($} \raisebox{-1ex}{$\Bigg\vert$} \hspace{-0.7cm} \underset{ \substack{  \hspace{+0.7cm} 1 \leq j_1, \dots, j_k \leq N \\ \hspace{+0.7cm} j_1 + \cdots + j_k = \pe{cN} }}{ \sum } \hspace{-1cm}  \sc_{j_1}\!(U_N\!) . . .  \sc_{j_k}\!(U_N\!) \vphantom{\bigg(} \raisebox{-1ex}{$\Bigg\vert $} \raisebox{1ex}{$^2 $} \raisebox{2.3ex}{$  $} \raisebox{-1ex}{$ \Bigg)$} \raisebox{4ex}{$ \white{.} $} \raisebox{-3.5ex}{$ \white{.} $} \!\!\! $ & $ N^{ k^2 - 1 } $   & $\!\!a_k\!\int_{ \Rr^{k - 1} } \! \Phi_{\Je_c}\!\!\;(0, \xb) \Delta(0, \xb)^2 d\xb $ & \!\eqref{EqPhi:KR3G}/\eqref{EqPhi:Alternative:KR3G}/\eqref{EqPhi:Random:KR3G} \!\!\!    \\ \hline
  $ \hspace{-0.65cm} \Esp{ \! \underset{ \, j = 1}{ \overset{k}{\prod} }\! Z_{U_N}\!\prth{ \!e^{2 i \pi \frac{x_j}{N}} \! }   \!  \!\! \underset{ \, \ell = 1}{ \overset{m}{\prod} }\! \overline{Z_{U_N}\!\prth{ \!e^{-2 i \pi \frac{y_\ell}{N}} \! } } \! }     \vphantom{\bigg(} \raisebox{4.5ex}{$ \white{_.} $} \raisebox{-4ex}{$ \white{.} $} \hspace{-1cm} $ & $ N^{k m} $ & $ \!a_m\!\int_{ \Rr^{m - 1 } } \!\! \Phi_{\!\Ae(X, Y)\, }\!(0, \tb) \Delta( 0, \tb)^2 d\tb $ & -/\eqref{EqPhi:Autocorrels}/\eqref{EqPhi:Random:Autocorrels},\eqref{EqPhi:Random:AutocorrelsAsFourier}   \\ \hline  
  $ \hspace{-0.7cm} \Esp{ \!\hspace{-0.05cm}|\hspace{-0.05cm}Z_{U_N}(1)\hspace{-0.05cm}|^{2(k - |\hb| )} \!\! \underset{ \, r = 1}{ \overset{m}{\prod} }\!\! \abs{\partial^r\! Z_{U_N}(1)}^{2h_r} \!\! }     \raisebox{4ex}{$ \white{.} $} \raisebox{-3ex}{$ \white{.} $} \hspace{-1cm} $ & $ \!\! N^{k^2 \!+ 2 C(\hb)} \!\! $ & $\!\!a_k\!\int_{ \Rr^{k } } \! \Phi_{\De_\infty(k ; \hb)}\!( \xb) \Delta( \xb)^2 d\xb $ & \!\!\eqref{EqPhi:Derivatives}/\eqref{EqPhi:Alternative:DerivativesMicro}/\eqref{EqPhi:Random:DerivativesMicro}\!\!  \\ \hline  
  $ \Esp{ \abs{ Z_{U_N, \pe{\rho N}}(1) }^{2k} }     \vphantom{\bigg(} \!\!\! $ & $ N^{k^2} $ & $\!\!a_k\!\int_{ \Rr^{k } } \! \Phi_{\!\Ze\Te_\rho^{(k)}}( \xb) \Delta(0, \xb)^2 d\xb $ & \eqref{EqPhi:TruncatedCharpolIn1}/\eqref{EqPhi:TruncatedCharpolIn1Bis}/\eqref{EqPhi:Random:TruncatedCharpolIn1}   \\ \hline  
  $ \Esp{  \abs{ \lambda^{-\rho N} Z_{U_N, \pe{\rho N}}(\lambda) }^{2k} }, \lambda > 1     \vphantom{\bigg(} \!\!\! $ & $ N^{(k - 1)^2} $ & $\!\! {}_2 F_1 \prth{ \begin{smallmatrix} k, k \\ 1 \end{smallmatrix} \big\vert \lambda^{-2} } \Se\Ce_{\rho}^{(k)} $ & \eqref{EqPhi:TruncatedCharpolMicro}/-/\eqref{Eq:LimTruncatedCharpolHeapLindqvist}  \\ \hline  
  $ L(\ell, \Se_k)     \vphantom{\bigg(}   $ & $ \ell^{k^2} $ & $\!\!a_k\!\int_{ \Rr^{k } } \! \Phi_{\Se_k}\!( \xb) \Delta( \xb)^2 d\xb $ & -/\eqref{EqPhi:VolumeSubBirkoffPolytopeRMT}/-  \\ \hline  
  $ L(\ell, \Be_k)     \vphantom{\bigg(}   $ & $ \ell^{(k - 1)^2} $ & $\!\!a_k\!\int_{ \Rr^{k - 1 } } \! \Phi_{\Be_k}\!(0, \xb) \Delta(0, \xb)^2 d\xb $ & -/\eqref{EqPhi:VolumeBirkoffPolytopeRMT}/-  \\ \hline  
  $ L(\ell, \Te_{\lambda, \mu} )     \vphantom{\bigg(}   $ & $ \!\!\!\! \ell^{(\hspace{-0.02cm} m \hspace{-0.02cm}-\hspace{-0.02cm} 1 \hspace{-0.02cm} )\hspace{-0.02cm}(\hspace{-0.02cm}n \hspace{-0.02cm}-\hspace{-0.02cm} 1\hspace{-0.02cm}) }    \!\!\!\! $ & $\!\!a_k\!\int_{ \Rr^{k - 1 } } \! \Phi_{\Te_{\lambda, \mu} }\!(0, \xb) \Delta(0, \xb)^2 d\xb $ & -/\eqref{EqPhi:CovolumeTransportationPolytope}/-  \\ \hline  
  $ \Esp{ \! \det(U_N)^{-k}  \frac{\prod_{j = 1}^\ell Z_{U_N}\prth{e^{ 2 i \pi x_j/N} }}{ \prod_{r = 1}^m  Z_{U_N}\prth{ e^{ 2 i \pi y_r/N} } }  }      \raisebox{4ex}{$ \white{.} $} \raisebox{-3ex}{$ \white{.} $} $ & $ \!\!\! N^{ k ( \ell \hspace{-0.02cm}-\hspace{-0.02cm} m \hspace{-0.02cm} - \hspace{-0.02cm}k) }    \!\!\!  $ & $\!\!a_k\!\int_{ \Rr^{k - 1 } } \! \Phi_{\Ree(X, Y) } (0, \tb) \Delta(0, \tb)^2 d\tb $ & -/\eqref{EqPhi:Ratios}/- \\ \hline 
  $ \Esp{ \prth{ \oint_\Uu \abs{ Z_{U_N}(z) }^{2\beta} \frac{d^*z}{z} }^{\!\! k}\, }    \raisebox{4ex}{$ \white{.} $} \raisebox{-4ex}{$ \white{.} $} $ & $   N^{ (k\beta)^2 \hspace{-0.02cm} + \hspace{-0.02cm} 1 \hspace{-0.02cm} -\hspace{-0.02cm} k }      $ & $\!\!a_{k\beta}\!\int_{ \Rr^{k\beta - 1 } } \! \! \Phi_{\MoMf_+(k, \beta) } (0, \xb)   \raisebox{-3ex}{$ \hspace{-1.3cm} \Delta(0, \xb)^2 d\xb $}  $ & \eqref{EqPhi:MoMo}/\eqref{EqPhi:Alternative:MoMo}/\eqref{EqPhi:Random:MoMo} \\ \hline   
  \end{tabular}
\end{center}
%
%
%
%
%

$ $

\subsection{Partial conclusion and future work}\label{Subsec:Ultimate:Conclusion}

Several problems have been already raised throughout the previous chapters, and we list here some extra questions that are left for future work. 

The first possible generalisation of this work concerns of course a change of group, from the unitary to the orthogonal or the symplectic group, as Schur functions and Haar measures are defined in a very similar way, but a better suited generalisation lies in the framework of $ \beta $-ensembles. In addition, two groups of importance should not be forgotten~: the symmetric group $ \Sg_n $ and $ GL_n(\Ff_q) $ for a prime power $q$. A study of the autocorrelations of the characteristic polynomial of a random (uniform) permutation was performed by Dehaye and Zeindler \cite{PODZeindler} and Schur-Weyl duality allows to express such a functional on $ \Ue_n $, by means of a supersymmetric Schur function (with an alphabet composed by means of the points in which the autocorrelations are taken). Since $ \Sg_n $ occurs as a limit $ q \to 1 $ (the conjectural \textit{field with one element}) or $ q \to +\infty $ (after rescaling) for several functionals of $ GL_n(\Ff_q) $, it also seems of interest to look at the equivalent problems on this last field. As described in the introduction, problems over finite fields are often amenable to exact computations (as opposed to number fields), and these analogies can become fruitful when characterising a general phenomenon. Of course, the question of extending these reproducing kernel methods to number theory should not be forgotten, although the general CFKRS philosophy seems to imply that the ``non standard manipulations'' described in \S~\ref{Subsec:Theory:CFKRSwithRKHS} are necessary before applying a reproducing kernel methodology~; an alternative to these manipulations can nevertheless be explored. 

\medskip

Another point of interest that this article highlighted is the importance of the mid-secular coefficients. Being able to find the exact limit in law of such a functional would allow, with the help of the randomisation paradigm, to access the limiting behaviour of several of the studied functionals, if nevertheless the commutation between rescaling and randomisation is ``well-behaved''.

\medskip

Last, a transfer of technology is always possible from several neighbouring fields, and the SoV method described in \S~\ref{Subsubsec:Ultimate:OtherApproachesSchur:SoV} should prove useful in giving new expressions of the limits here computed.

\appendix

$ $

$ $

\section{Probabilistic representations}\label{Sec:ProbReprGegenbauer}

\subsection{A symmetric function extension of Gegenbauer polynomials} 

Remark that
\begin{align*}
h_n(x, y) = \crochet{t^n }H\crochet{t(x + y)} = \crochet{t^n } \frac{1}{(1 - tx)(1 - ty) } = \sum_{k = 0}^n x^k y^{n - k} = \frac{x^{n + 1} - y^{n + 1} }{x - y} = y^n h_n(x/y, 1)
\end{align*}

In particular, if $ z \in \Uu $, i.e. $ z = e^{2i\pi \theta} $, then, 
\begin{align*}
h_n(z, 1) = \sum_{k = 0}^n z^k = \sum_{k = 0}^n e^{2i\pi k \theta} = \frac{1 - e^{2i\pi (n + 1) \theta} }{ 1 - e^{2i\pi \theta} } = e^{2i\pi \frac{n}{2} \theta } \frac{\sin\prth{ 2 \pi\frac{n + 1}{2} \theta }}{\sin\prth{2 \pi\frac{ \theta }{2} } } = e^{i \pi n \theta } U_n( \cos(\pi \theta ) )
\end{align*}
where $ U_n $ is the Tchebychev polynomial of second kind. Thus, 
\begin{align*}
U_n( \cos(\pi \theta ) ) = h_n\prth{e^{i \pi \theta}, e^{-i\pi\theta } }
\end{align*}
and $ h_n(e^{i \theta_1},\dots, e^{i \theta_k}) $ is a multivariate generalisation of the Tchebychev polynomial. 

The same can be said about $ h_n\crochet{1^{\plusInOne \kappa} X } $. The classical Gegenbauer (or ultraspherical) polynomials $ G_n^{(\kappa)}(x) $ are defined for all $ \kappa > - \frac{1}{2} $ by their generating series
\begin{align*}
\sum_{n \geq 0} t^n G_n^{(\kappa)}(x) = \frac{ 1}{ ( 1 - 2xt + t^2)^\kappa }
\end{align*}

The Tchebychev polynomials of the second kind are given for $ \kappa = 1 $. For $ x = \cos(\theta) $, one has $ 1 - 2xt + t^2 = \abs{1 - te^{i \theta} }^2 $ and
\begin{align*}
G_n^{(\kappa)}(\cos(\theta) ) & = \crochet{t^n } \frac{1}{ ( 1 - t e^{i \theta} )^\kappa ( 1 - t e^{-i \theta} )^\kappa }  =  \crochet{t^n } H\crochet{t ( e^{i \theta} + e^{ - i \theta} ) }^\kappa = \crochet{t^n } H\crochet{t 1^{\plusInOne \kappa} ( e^{i \theta} + e^{ - i \theta} ) } \\
              & = h_n\crochet{ 1^{\plusInOne \kappa} ( e^{i \theta} + e^{ - i \theta} ) } 
\end{align*}

One can thus define a multivariate extension of Gegenbauer polynomials setting
\begin{align*}
h_n^{(\kappa)}(x_1, \dots, x_k) := h_n\crochet{ 1^{\plusInOne \kappa} (x_1 + \dots + x_k) } = \sum_{\lambda \vdash n} \frac{\kappa^{\ell(\lambda) } }{z_\lambda} p_\lambda(x_1, \dots, x_k) 
\end{align*}
where the second equality comes from \eqref{Eq:PlethisticHnSchur} and the fact that $ p_\lambda\crochet{1^{\plusInOne \kappa } } = \kappa^{\ell(\lambda) } $. Note that the associated multivariate generalisation of Tchebychev polynomials is then $ h_n(x_1, \dots, x_k) $.

\subsection{An integral representation of $ h_n\crochet{\Ae} $}

We first introduce an important simple lemma that will be of constant use throughout this article:

\begin{lemma}[Crucial trivial lemma]\label{Lemma:ShallowCrucialLemma} For all $ n, r \geq 1 $ and for any alphabets $ Z := \ensemble{z_1, z_2, \dots } $ and $ U := \ensemble{u_1, u_2, \dots } $, we have
\begin{align}\label{Eq:t_n_trick}
\crochet{t^n} H\crochet{t U}  = \crochet{t^n} H\crochet{tU - t^{n + r} Z }
\end{align}
\end{lemma}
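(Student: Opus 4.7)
The plan is to reduce the identity to the trivial observation that multiplying a formal power series in $t$ by another of the form $1 + O(t^{n+1})$ does not affect its $[t^n]$-coefficient. First I would unpack the plethystic difference using the rule $H\crochet{\Ae - \Be} = H\crochet{\Ae}/H\crochet{\Be}$ recalled in \S\ref{Subsec:FuncSym}, which gives
$$H\crochet{tU - t^{n+r}Z} \;=\; H\crochet{tU} \cdot H\crochet{-t^{n+r}Z}.$$

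Next I would compute $H\crochet{-t^{n+r}Z}$ explicitly. Since $H\crochet{-\Ce} = 1/H\crochet{\Ce}$ and $H\crochet{\Ce} = \prod_i (1-c_i)^{-1}$ on any alphabet $\Ce = \{c_i\}$, we obtain
$$H\crochet{-t^{n+r}Z} \;=\; \prod_i (1 - t^{n+r} z_i) \;=\; 1 + \sum_{k\geq 1} (-1)^k e_k\crochet{Z}\, t^{k(n+r)},$$
which is a formal power series in $t$ whose only non-constant terms have $t$-degree a positive multiple of $n+r$. In particular $H\crochet{-t^{n+r}Z} = 1 + t^{n+r} R(t)$ for some formal series $R(t)$ with coefficients polynomial in the $z_j$'s.

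Plugging back yields
$$H\crochet{tU - t^{n+r}Z} \;=\; H\crochet{tU} \;+\; t^{n+r} \, H\crochet{tU}\, R(t).$$
Since $r \geq 1$ forces $n+r \geq n+1 > n$, the second summand contributes nothing to the $[t^n]$-coefficient, and extracting $[t^n]$ on both sides gives the claim.

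There is no genuine obstacle; the name \emph{crucial trivial lemma} seems apt. The only point to keep in mind is the consistency of the $\lambda$-ring formalism (that $H\crochet{\Ae-\Be} = H\crochet{\Ae}/H\crochet{\Be}$ and that $H\crochet{-t^{n+r}Z}$ expands as a product as above), both of which are recorded in Section~\ref{Subsec:FuncSym}. The content of the lemma is really that $H\crochet{\Ae - \Be}$ allows one to ``truncate away'' any part of a generating series whose degree in the auxiliary variable $t$ exceeds $n$ without altering the $n$-th Fourier coefficient, a principle that underlies many of the manipulations of the paper.
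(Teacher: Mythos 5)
Your proof is correct and follows essentially the same route as the paper: factor $H\crochet{tU-t^{n+r}Z}=H\crochet{tU}H\crochet{-t^{n+r}Z}$, expand $H\crochet{-t^{n+r}Z}=\sum_{k\ge 0}(-1)^k e_k\crochet{Z}\,t^{k(n+r)}$, and note that all non-constant terms have $t$-degree at least $n+r>n$, so extracting $[t^n]$ leaves only $h_n\crochet{U}$. The only cosmetic difference is that you package the non-constant part as $t^{n+r}R(t)$ while the paper displays the full double sum over $(k_1,k_2)$; the underlying observation is identical.
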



\begin{proof}
The proof is straightforward as 
\begin{align*}
H\crochet{tU - t^{n + r} Z } = H\crochet{tU }H\crochet{ - t^{n + r} Z } = \sum_{k_1, k_2 \geq 0} t^{k_1} h_{k_1}\crochet{U} (-t)^{k_2(n + r)} e_{k_2}\crochet{Z}
\end{align*}
hence, taking the $n$-th Fourier coefficient only gives $ h_n\crochet{U} $.
\end{proof}

We have now

\begin{lemma}[Integral representation of $ h_n^{(\kappa)}(u_1, \dots, u_k) $] We have for all $ \kappa > 0 $
\begin{align}\label{Eq:GegenbauerMultivariateWithTKL}
h_n^{(\kappa)}(e^{2 i \pi \theta_1}, \dots, e^{2 i \pi \theta_k})  = e^{   i \pi n \kappa \sum_j \theta_j  }  \int_0^1  e^{   i \pi n \alpha (k \kappa - 2)  }  \prod_{j = 1}^k \prth{ \frac{\sin\prth{   \pi (\alpha + \theta_j)  (n + 1)   } }{ \sin\prth{  \pi (\alpha + \theta_j) } } }^\kappa   d\alpha
\end{align}

\end{lemma}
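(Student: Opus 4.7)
The starting point is the Fourier representation \eqref{FourierRepr:hn} together with the definition $h_n^{(\kappa)}[\mathcal{A}] = h_n[1^{\plusInOne \kappa}\mathcal{A}]$ and the evaluation $H[t\cdot 1^{\plusInOne\kappa} X] = \prod_{j=1}^k (1-tx_j)^{-\kappa}$ from example~\ref{Ex:HfunctorSpecialise}. This gives immediately
\begin{equation*}
h_n^{(\kappa)}(x_1,\dots,x_k) = \crochet{t^n} \prod_{j=1}^k \frac{1}{(1-tx_j)^\kappa}.
\end{equation*}
On the unit circle this integrand has singularities at $t=x_j\inv$, which prevents a direct evaluation of $\crochet{t^n}$ as an integral over $\Uu$. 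The plan is to remove these singularities using the ``crucial trivial lemma'' \ref{Lemma:ShallowCrucialLemma}.

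The second step applies lemma~\ref{Lemma:ShallowCrucialLemma} in the form $\crochet{t^n} H[tU] = \crochet{t^n} H[tU - t^{n+1}Z]$ (or rather its iterate / plethystic $\kappa$-th power) to obtain
\begin{equation*}
h_n^{(\kappa)}(x_1,\dots,x_k) = \crochet{t^n} \prod_{j=1}^k \left(\frac{1 - (tx_j)^{n+1}}{1 - tx_j}\right)^{\!\kappa},
\end{equation*}
since multiplying by $\prod_j (1-(tx_j)^{n+1})^\kappa = 1 + O(t^{n+1})$ does not alter the $\crochet{t^n}$-coefficient. For $\kappa\in\Nn^*$ this is clear by expanding; for real $\kappa>0$ one may either argue by the plethystic definition of $H\crochet{\cdot}^\kappa$ through \eqref{Def:Hfunctor} or by analytic continuation in $\kappa$. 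Crucially, the resulting integrand is now a polynomial in $t$, so $\crochet{t^n}$ may be written as a genuine integral over $\Uu$.

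Thirdly, parametrise $t = e^{2i\pi\alpha}$ for $\alpha \in [0,1]$ and $x_j = e^{2i\pi\theta_j}$, so that $\crochet{t^n}f(t) = \int_0^1 e^{-2i\pi n\alpha} f(e^{2i\pi\alpha})\,d\alpha$. Using the elementary identity
\begin{equation*}
\frac{1-e^{2i\pi (n+1) y}}{1-e^{2i\pi y}} = e^{i\pi n y}\,\frac{\sin(\pi(n+1)y)}{\sin(\pi y)}
\end{equation*}
with $y = \alpha + \theta_j$, each factor becomes $e^{i\pi n\kappa(\alpha+\theta_j)}\bigl(\sin(\pi(n+1)(\alpha+\theta_j))/\sin(\pi(\alpha+\theta_j))\bigr)^\kappa$. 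Pulling the factors $e^{i\pi n\kappa\theta_j}$ out of the integral, combining $\prod_j e^{i\pi n\kappa\alpha} = e^{i\pi n\kappa k\alpha}$ with the prefactor $e^{-2i\pi n\alpha}$ into $e^{i\pi n\alpha(k\kappa - 2)}$ yields exactly \eqref{Eq:GegenbauerMultivariateWithTKL}.

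The only subtle point is the legitimacy of step two for non-integer $\kappa$. I would handle this by observing that both sides of the proposed identity depend analytically on $\kappa$ (the right-hand side is a well-defined integral for all $\kappa > 0$ because $\sin^\kappa$ on $\Rr$ is understood via the principal branch near its zeros, but more safely one simply restricts to $\kappa \in \Nn^*$, which is the only case actually used in this article, and remarks that the general case follows from the plethystic definition $h_n[1^{\plusInOne\kappa}X] = \sum_{\lambda\vdash n} \kappa^{\ell(\lambda)} p_\lambda(X)/z_\lambda$ coupled with the same Fourier argument).
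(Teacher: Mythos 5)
Your proof is correct and follows essentially the same route as the paper: apply Lemma~\ref{Lemma:ShallowCrucialLemma} with $Z = 1^{\plusInOne\kappa}U^{n+1}$ to replace the rational integrand by the polynomial $\prod_j\bigl((1-(tu_j)^{n+1})/(1-tu_j)\bigr)^\kappa$, then set $t=e^{2i\pi\alpha}$ and convert each factor via the identity $(1-e^{2i\pi(n+1)y})/(1-e^{2i\pi y})=e^{i\pi ny}\sin(\pi(n+1)y)/\sin(\pi y)$. Your aside on the subtlety for non-integer $\kappa$ is a reasonable extra precaution, but the paper simply uses the plethystic definition directly and does not dwell on it.
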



\begin{proof}
Using lemma \ref{Lemma:ShallowCrucialLemma} with $r = 1$ and $ Z := 1^{\plusInOne \kappa} U^{n + 1} := 1^{\plusInOne \kappa} (u_1^{n + 1}+ \dots + u_k^{n  + 1}) $, one gets
\begin{align*}
h_n^{(\kappa)}(u_1, \dots, u_k)  = \crochet{t^n} H\crochet{t 1^{\plusInOne \kappa} U}  = \crochet{t^n} H\crochet{t 1^{\plusInOne \kappa} U - t^{n + 1} U^{n + 1}1^{\plusInOne \kappa} }  =  \crochet{t^n} \prod_{j = 1}^k \prth{  \frac{ 1 - t^{n + 1} u_j^{n + 1} }{  1 - t u_j } }^\kappa 
\end{align*}

This implies, on the unit circle 
\begin{align*}
h_n^{(\kappa)}(e^{2 i \pi \theta_1}, \dots, e^{2 i \pi \theta_k})  & =  \int_0^1  e^{ -2 i \pi \alpha    n}  \prod_{j = 1}^k \prth{ \frac{1 -  e^{ 2 i \pi (\alpha + \theta_j)  (n + 1)   } }{ 1 - e^{ 2 i \pi (\alpha + \theta_j) } } }^\kappa  d\alpha \\
              & = \int_0^1  e^{ -2 i \pi \alpha    n}  \prod_{j = 1}^k \prth{  e^{i \pi n (\alpha + \theta_j) } \frac{\sin\prth{   \pi (\alpha + \theta_j)  (n + 1)   } }{ \sin{  \pi (\alpha + \theta_j) } }  }^\kappa  d\alpha \\
              & = \int_0^1  e^{   i \pi ( n \alpha (k \kappa - 2) + n \kappa \sum_j \theta_j) }  \prod_{j = 1}^k \prth{ \frac{\sin\prth{   \pi (\alpha + \theta_j)  (n + 1)   } }{ \sin\prth{  \pi (\alpha + \theta_j) } } }^\kappa   d\alpha
\end{align*}
which is the desired expression.
\end{proof}

\begin{remark}
A particular case of interest is given by
\begin{align}\label{Eq:TchebytchevMultivariate}
h_n(e^{2 i \pi \theta_1}, \dots, e^{2 i \pi \theta_k})  = e^{   i \pi n \sum_{j = 1}^k \theta_j  } \int_0^1  e^{   i \pi  n \alpha (k - 2)  }  \prod_{j = 1}^k  U_n( \cos[ \pi (\alpha + \theta_j) ] ) d\alpha 
\end{align}
with $ U_n(\cos(\theta) ) = \frac{\sin(n\theta)}{\sin(\theta) } $ the Tchebychev polynomial of second kind.
\end{remark}


The advantage of the integral representation \eqref{Eq:GegenbauerMultivariateWithTKL} lies in the following lemma:

\begin{lemma}[Rescaled Gegenbauer functions in the microscopic regime]\label{Lemma:RescaledGegenbauer}
Let $ k \geq 2 $ and $ \kappa \geq 1 $ be integers. We have, locally uniformly in $ (x_1,\dots,x_k) \in \Rr^k $ and for all $ c \in \Rr_+^* $
\begin{align*}
\frac{1}{ N^{\kappa k - 1} } h^{(\kappa)}_{ \pe{ c N} }\prth{ e^{2 i \pi \frac{x_1}{N} }, \dots, e^{2 i \pi \frac{x_k}{N} } }   \tendvers{N}{+\infty} e^{   i \pi c \kappa \sum_{j = 1}^k x_j  } \, h^{(\kappa)}_{c, \infty}(x_1, \dots, x_k)
\end{align*}
with $ h^{(\kappa)}_{c, \infty} $ defined by  
\begin{align}\label{Def:HKappaCInfty}
h^{(\kappa)}_{c, \infty}( x_1, \dots, x_k) := c^{ k\kappa} \int_\Rr   e^{   i \pi c (k\kappa - 2) y } \prod_{j = 1}^k  \sinc\prth{   \pi c (y + x_j) }^\kappa dy
\end{align}
\end{lemma}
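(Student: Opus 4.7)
The plan is to start from the integral representation \eqref{Eq:GegenbauerMultivariateWithTKL}, specialise $n := \pe{cN}$ and $\theta_j := x_j/N$, and perform the microscopic change of variable $\alpha = y/N$. Factoring out the prefactor $e^{i\pi n\kappa \sum_j \theta_j}$, which trivially tends to $e^{i\pi c \kappa \sum_j x_j}$, reduces the problem to analysing
$$
\frac{1}{N^{\kappa k}} \int_0^N e^{i\pi n (k\kappa - 2)y/N} \prod_{j=1}^k \prth{\frac{\sin\prth{\pi(y + x_j)(n+1)/N}}{\sin\prth{\pi(y + x_j)/N}}}^{\!\kappa}  dy.
$$
A direct parity check shows that the original integrand in $\alpha$ is $1$-periodic (the phase $e^{i\pi n(k\kappa - 2)}$ acquired from $\alpha \mapsto \alpha + 1$ is exactly cancelled by the $k\kappa$-fold sign change $(-1)^{nk\kappa}$ of the sine ratio), so its rescaled version is $N$-periodic in $y$, and one may equivalently integrate on $[-N/2, N/2]$.

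The pointwise limit of the rescaled integrand is then obtained by the elementary asymptotics $\sin(\pi t) \sim \pi t$ as $t \to 0$: each sine-ratio divided by $N$ tends to $c\, \sinc(\pi c(y + x_j))$, while the phase tends to $e^{i\pi c(k\kappa - 2) y}$, producing exactly $c^{k\kappa}$ times the integrand of \eqref{Def:HKappaCInfty}. The core of the proof is the application of dominated convergence. Fix a compact set $K \subset \Rr^k$ and take $N$ large enough that $|x_j| \leq M < N/4$ uniformly for $x \in K$. Using $|\sin(\pi u)| \leq 1$ in the numerator and the elementary lower bound $|\sin(\pi u)| \geq 2|u|$ valid for $|u| \leq 1/2$ in the denominator (together with the trivial bound $n+1$ near the removable singularity), one obtains on $[-N/2, N/2]$ the $N$-independent estimate
$$
\frac{1}{N}\abs{\frac{\sin(\pi(y + x_j)(n+1)/N)}{\sin(\pi(y + x_j)/N)}} \leq \min\prth{c + \tfrac{1}{N},\ \tfrac{1}{2|y + x_j|}},
$$
so the rescaled integrand is dominated on $\Rr$ by $\prod_{j=1}^{k} \min(c+1, (2|y+x_j|)^{-1})^\kappa$, uniformly for $x \in K$. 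This dominating function decays like $|y|^{-k\kappa}$ at infinity, hence is Lebesgue-integrable since $k\kappa \geq 2$ under the standing assumption $k \geq 2$ and $\kappa \geq 1$.

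The main (mild) obstacle will be the handling of the region $|y + x_j| \in (N/2, N/2 + M]$ where the linear lower bound $|\sin(\pi u)| \geq 2|u|$ no longer applies directly; on this region, however, $\pi(y+x_j)/N$ is bounded away from integer multiples of $\pi$ uniformly in $N$ large, so the sine ratio is bounded by a constant and the contribution is negligible (and can even be absorbed into the same dominating function by choosing the constant $c + 1$ slightly larger). The local uniformity of the stated convergence on compact subsets of $\Rr^k$ then follows at once from the uniformity in $x$ of both the pointwise limit and the dominating bound.
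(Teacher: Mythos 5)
Your proof follows exactly the same route as the paper's own proof: start from the integral representation \eqref{Eq:GegenbauerMultivariateWithTKL}, rescale $\alpha = y/N$, and pass to the limit by dominated convergence. The paper only sketches the dominating function (invoking continuity of $\sinc$ near zero and its decay at infinity since $k\kappa\geq 2$), so your explicit majorant $\prod_{j=1}^k \min(c+1,(2|y+x_j|)^{-1})^\kappa$ and the $1$-periodicity check justifying the shift of the integration interval are correct added detail rather than a genuinely different approach.
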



\begin{proof}
Using \eqref{Eq:GegenbauerMultivariateWithTKL}, we get 
\begin{align*}
h^{(\kappa)}_{ \pe{ c N} }\prth{  e^{2 i \pi \frac{x_1}{N} }, \dots, e^{2 i \pi \frac{x_k}{N} } }   & = e^{   i \pi  \kappa \frac{\pe{ c N}}{N} \sum_j x_j  }  \int_{-\frac{1}{2} }^{\frac{1}{2} }   e^{   i \pi \pe{ c N} \alpha (k \kappa - 2)  }  \prod_{j = 1}^k \prth{ \frac{\sin\prth{   \pi (\alpha + \frac{x_j}{N} )  \pe{ c N+ 1}  } }{ \sin\prth{  \pi (\alpha + \frac{x_j}{N}) } } }^\kappa   d\alpha \\
            & =  e^{   i \pi  \kappa \frac{\pe{ c N}}{N} \sum_j x_j  }  \int_{-\frac{N}{2} }^{\frac{N}{2} }   e^{   i \pi \frac{\pe{ c N}}{N} y (k \kappa - 2)  }  \prod_{j = 1}^k \prth{ \frac{\sin\prth{   \pi   \frac{y + x_j}{N}   \pe{ c N+ 1}  } }{ \sin\prth{  \pi  \frac{y + x_j}{N}  } } }^\kappa   \frac{dy}{N} \\
            &  \hspace{-0.25cm} \equivalent{N\to +\infty } \ N^{k\kappa - 1} e^{   i \pi c \kappa \sum_{j = 1}^k x_j  } \!\! \int_{ \Rr }  e^{   i \pi c (k\kappa - 2) y } \prod_{j = 0}^k \prth{  \frac{\sin\prth{   \pi c (y + x_j) }  }{  \pi  (y + x_j)  } }^\kappa dy \\
           & =:  N^{ k\kappa - 1} e^{   i \pi c \kappa \sum_{j = 1}^k x_j  } \, h^{(\kappa)}_{c, \infty}( x_1, \dots, x_k)
\end{align*}

Here, we have applied dominated convergence to pass to the limit. The required estimate needed to apply it comes from the continuity of sinc in a neighbourhood of $0$ and its decay at infinity (since $ \kappa \geq 1 $ and $ k \geq 2 $).
\end{proof}


\begin{remark}
One can change variable $ y' := -y $ in \eqref{Def:HKappaCInfty} in which case, one gets a product of ``sine kernel'' $ \sinc(\pi c(x_i - y') ) $. This would be interesting to see if such a limit also holds for other matrix models using a symmetric function approach such as initiated in \cite{JonnadulaKeatingMezzadri}, as the sine kernel is a universal kernel occuring in the bulk of several matrix models. A positive answer to this question would mean that the reproducing kernel method employed in this article is a manifestation of the determinantal nature of the eigenvalues and that the limiting form \eqref{Def:FormPhi} goes far beyond the scope of the $ CUE $. We will treat the case of orthogonal and symplectic ensembles in a subsequent publication.
\end{remark}

\medskip
\subsection{Probabilistic representations of $ h^{(\kappa)}_{c, \infty} $}\label{Subsec:ProbaHn}

\subsubsection{Beta-Gamma representations}

We show here that the functions $ h^{(\kappa)}_{c, \infty} $ are, up to a constant, the Fourier transforms of a particular random variable. This representation allows furthermore to get the speed of convergence in lemma \ref{Lemma:RescaledGegenbauer}.

\begin{lemma}[Gamma representation of $ h^{(\kappa)}_n $]\label{Lemma:GammaProbRepr}
Let $ \kappa > 0 $ and $ ( \gammab_\kappa^{(j)} )_{1 \leq j \leq k} $ be a sequence of i.i.d. random variables $ \Gammab(\kappa) $-distributed, i.e. $ \Prob{ \gammab_\kappa \in dt } = e^{ - t} t_+^{\kappa - 1} \frac{dt}{\Gamma(\kappa) } $. Then, for all $ x_1, \dots, x_k \in \Cc^k $
\begin{align}\label{Eq:Gamma_ProbRepr}
h_n^{(\kappa)}(x_1, \dots, x_k) = \frac{1}{n!} \Esp{ \prth{ \sum_{j = 1}^k x_j \gammab_\kappa^{(j)} }^{\! \! n} }
\end{align}
\end{lemma}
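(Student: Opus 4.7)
The plan is to compute the moment generating function of the right-hand side in two ways and match Taylor coefficients. The key ingredient is the classical identity
\[
\Esp{ e^{s \gammab_\kappa} } = \int_0^{\infty} e^{st} \frac{t^{\kappa - 1} e^{-t}}{\Gamma(\kappa)} \, dt = (1 - s)^{-\kappa}, \qquad \Rep{s} < 1,
\]
which follows from the definition of the Gamma function after a change of variable $u = (1-s)t$. Since the $\gammab_\kappa^{(j)}$ are independent, tensoring this identity gives, for $t$ in a small neighbourhood of $0$ in $\Cc$,
\[
\Esp{ e^{t \sum_{j = 1}^k x_j \gammab_\kappa^{(j)} } } = \prod_{j = 1}^k (1 - t x_j)^{-\kappa} = H\crochet{ t \cdot 1^{\plusInOne \kappa} X },
\]
where the last equality is the definition/specialisation of $H$ from Section~\ref{Subsec:FuncSym} (cf.\ example~\ref{Ex:HfunctorSpecialise}~(4)).

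Next, I would expand both sides of this identity as formal power series in $t$. The right-hand side gives by definition
\[
H\crochet{ t \cdot 1^{\plusInOne \kappa} X } = \sum_{n \geq 0} t^n\, h_n\crochet{1^{\plusInOne \kappa} X} = \sum_{n \geq 0} t^n\, h_n^{(\kappa)}(x_1, \dots, x_k),
\]
using the definition $h_n^{(\kappa)}(X) := h_n\crochet{1^{\plusInOne \kappa}(x_1 + \cdots + x_k)}$ introduced just above the lemma. The left-hand side, expanded as a power series under the expectation (justified by dominated convergence for $|t|$ small enough, the random variable $\sum_j x_j \gammab_\kappa^{(j)}$ having moments of all orders), gives
\[
\Esp{ e^{t \sum_{j = 1}^k x_j \gammab_\kappa^{(j)} } } = \sum_{n \geq 0} \frac{t^n}{n!} \, \Esp{ \prth{ \sum_{j = 1}^k x_j \gammab_\kappa^{(j)} }^{\! n} }.
\]
Identifying the coefficient of $t^n$ on both sides yields \eqref{Eq:Gamma_ProbRepr} for real $x_j$ with $|x_j|$ small. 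Since both sides are polynomials in $(x_1, \dots, x_k) \in \Cc^k$, the identity extends to all $x_j \in \Cc$ by analytic (in fact polynomial) continuation.

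There is no real obstacle here: the only technical point is to justify the interchange of sum and expectation, which is handled by dominated convergence using the tail bound $\Esp{ e^{ |t| \sum_j |x_j| \gammab_\kappa^{(j)} } } < \infty$ for $|t| \max_j |x_j| < 1$, combined with analyticity to remove the smallness restriction afterwards.
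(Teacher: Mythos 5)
Your proof is correct and takes essentially the same route as the paper's: compute the moment generating function of the weighted sum of Gamma variables, recognize it as $H\crochet{t\, 1^{\plusInOne\kappa} X}$, and identify the $t^n$ coefficient. The paper phrases the last step as ``taking the $n$-th Fourier coefficient inside the expectation,'' which is exactly your termwise identification; you simply spell out the dominated-convergence justification and the polynomial-continuation step that the paper leaves implicit.
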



\begin{proof}
We have $ \Esp{ e^{-t \gammab_\kappa} } = (1 + t)^{-\kappa} $ for all $ t \in \Rr_+ $. Let $t \in \Rr_+ $ be such $ \abs{t x_i} < 1 $. Then, 
\begin{align*}
H\crochet{t 1^{\plusInOne \kappa} X} = \prod_{j = 1}^k \frac{1}{(1 - t x_j)^\kappa} = \prod_{j = 1}^k \Esp{ e^{t x_j \gammab_\kappa} } = \Esp{ e^{t \sum_{j = 1}^k y_j \gammab_\kappa^{(j)} } }
\end{align*}
hence, taking the $n$-th Fourier coefficient inside the expectation gives the result.
\end{proof}

This last representation yields the following:

\begin{lemma}[Beta representation of $ h^{(\kappa)}_{c, \infty} $]\label{Lemma:BetaProbRepr} Let $ (\betab_{\kappa, 1}^{(j)})_{1 \leq j \leq k} $ be a sequence of i.i.d. $ \Bd( \kappa, 1) $-distributed random variables, i.e. $ \Prob{\betab_{\kappa, 1} \in dt } = \kappa t^{\kappa - 1} \Unens{0 \leq t \leq 1} dt  $ (or equivalently, $ \betab_{\kappa, 1} \eqlaw U^{1/\kappa} $ where $U$ is uniform on $ \crochet{0, 1} $). Then,
\begin{align}\label{Eq:Beta_ProbRepr}
h^{(\kappa)}_{c, \infty}( x_1, \dots, x_k) = \frac{c^{k\kappa - 1} }{ \Gamma\prth{ k\kappa  } } \, \Esp{ e^{ 2 i \pi c \sum_{j = 1}^k x_j ( \betab_{\kappa, 1}^{(j)} - \frac{\kappa}{2})  } \Bigg\vert \sum_{j = 1}^k \betab_{\kappa, 1}^{(j)}  = 1 }
\end{align}

Moreover, setting $ \norm{\xb}_p := \prth{ \sum_{j = 1}^k \abs{x_j}^p }^{\! 1/p} $ for all $ p > 0 $,  one has 
\begin{align}\label{Ineq:SpeedOfConvergenceHinfty}
\hspace{-0.4cm} N^{1 - k\kappa } h_{ \pe{c N} }^{(\kappa)}\!\prth{ \hspace{-0.05cm} e^{2 i \pi \frac{x_1}{N} }\!, \dots, e^{2 i \pi\frac{x_k}{N}} \! } = h^{(\kappa)}_{c, \infty} ( x_1, \dots, x_k) \prth{\! 1 +  O\prth{ \!\frac{c\inv\! + \! (c + 1)\!\norm{\xb}_2^2 + \norm{\xb}_1 }{N} \! } \! }
\end{align}
where the implicit constant in $ O $ is independent of $ c, x_1, \dots, x_k $.
\end{lemma}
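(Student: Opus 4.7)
My plan is to derive both the identity \eqref{Eq:Beta_ProbRepr} and the quantitative estimate \eqref{Ineq:SpeedOfConvergenceHinfty} simultaneously by transferring the prelimit formula \eqref{Eq:Gamma_ProbRepr} through the beta--gamma algebra and then performing a careful local analysis around the basepoint $1 \in \Cc$. The underlying idea is that the Gamma variables $\gammab_\kappa^{(j)}$ split independently as $\gammab_\kappa^{(j)} = S D_j$ with $S := \sum_j \gammab_\kappa^{(j)} \sim \Gammab(k\kappa)$ and $(D_1, \dots, D_k)$ a Dirichlet vector of parameters $(\kappa, \dots, \kappa)$. The Dirichlet density on the simplex is $\Gamma(k\kappa)/\Gamma(\kappa)^k \prod_j t_j^{\kappa - 1}$, which is exactly the conditional density of i.i.d.\ $\Bd(\kappa,1)$ variables given that their sum equals $1$; these two probabilistic descriptions therefore coincide and I may freely identify the Dirichlet vector with the conditioned beta vector of the statement.

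Substituting $\gammab_\kappa^{(j)} = SD_j$ into \eqref{Eq:Gamma_ProbRepr} and using the independence of $S$ and $(D_j)_j$ gives the key prelimit identity
\begin{align*}
h_n^{(\kappa)}(y_1,\dots,y_k) = \frac{\Esp{S^n}}{n!}\,\Esp{\prth{\sum_{j=1}^k y_j D_j}^{\!\! n}} = \frac{\Gamma(k\kappa + n)}{n!\,\Gamma(k\kappa)}\,\Esp{\prth{\sum_{j=1}^k y_j D_j}^{\!\!n}}.
\end{align*}
I would then specialise to $n = \pe{cN}$ and $y_j = e^{2 i\pi x_j/N}$. Since $\sum_j D_j = 1$ and $D_j \in [0,1]$ pointwise on the simplex, a second-order Taylor expansion combined with $\pe{cN} = cN + O(1)$ and the logarithm expansion $\log(1+w) = w - w^2/2 + O(|w|^3)$ produces, uniformly in the Dirichlet sample,
\begin{align*}
\prth{\sum_{j=1}^k e^{2 i\pi x_j/N} D_j}^{\!\!\pe{cN}} = e^{2 i\pi c\sum_j x_j D_j}\prth{1 + O\prth{\frac{(c+1)\norm{\xb}_2^2 + \norm{\xb}_1}{N}}}.
\end{align*}
Stirling's formula yields in parallel $\Gamma(k\kappa + \pe{cN})/\pe{cN}! = (cN)^{k\kappa - 1}/\Gamma(k\kappa)\cdot(1 + O((1 + c^{-1})/N))$. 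Taking expectations and collecting the estimates then gives the speed bound \eqref{Ineq:SpeedOfConvergenceHinfty}; comparing the leading term with lemma~\ref{Lemma:RescaledGegenbauer} and absorbing the phase $e^{-i\pi c\kappa \sum_j x_j} = e^{-2 i\pi c \sum_j x_j\cdot \kappa/2}$ inside the expectation produces \eqref{Eq:Beta_ProbRepr} after identifying the Dirichlet law with the conditional beta law.

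The main obstacle will be keeping the error terms genuinely uniform in the two parameters $c > 0$ and $\xb \in \Cc^k$ simultaneously. The $c^{-1}$ term comes from Stirling as $c \to 0^+$ (i.e.\ $\pe{cN}$ becomes small), the $(c+1)\norm{\xb}_2^2$ term reflects the fact that the quadratic correction in the logarithm expansion acquires a factor $\pe{cN} \approx cN$ when exponentiated, and the $\norm{\xb}_1$ term reflects the discrepancy between $\pe{cN}$ and $cN$ in the leading linear contribution. The cubic remainder in $\log(1+w)$, once multiplied by $\pe{cN}$, must also be shown to fit inside $O(\norm{\xb}_1^3/N^2)$ which is absorbed by the $\norm{\xb}_1/N$ estimate at the cost of an extra factor $\norm{\xb}_1^2/N$; this is the one bookkeeping step that requires genuine care, but it is mechanical once the expansion is in place. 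Exchange of expectation and Taylor expansion is immediate since $(D_j)$ takes values in the compact simplex, so bounded convergence applies throughout.
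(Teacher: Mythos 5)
Your approach reproduces the paper's proof almost step for step: start from the Gamma representation \eqref{Eq:Gamma_ProbRepr}, split $\gammab_\kappa^{(j)} = S D_j$ via the beta--gamma algebra, identify the Dirichlet vector with the conditioned $\Bd(\kappa,1)$ vector, use $\Esp{S^n} = \Gamma(k\kappa + n)/\Gamma(k\kappa)$ and Stirling, expand the power $(\sum_j e^{2i\pi x_j/N} D_j)^{\pe{cN}}$ via the logarithm, and identify the limit through lemma~\ref{Lemma:RescaledGegenbauer}. The paper reaches the Dirichlet vector through the Carmona--Petit--Yor identity \eqref{Eq:BetaGammaEqlaw}; your observation that the Dirichlet density is directly the conditional density of i.i.d.\ $\Bd(\kappa,1)$ given unit sum is an equivalent, slightly more self-contained route to the same fact.

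The one place your plan deviates creates a spurious complication rather than a gain. You propose to expand $\log(1+w) = w - w^2/2 + O(|w|^3)$ to second order, whereas the paper uses only the first-order expansion $\log(1+w) = w + O(|w|^2)$. After multiplying by $\pe{cN}$, the quadratic remainder $\pe{cN}\,O(|w|^2)$ is already $O((c+1)\norm{\xb}_2^2/N)$ (using Jensen with the probability weights $D_j$, since $\sum_j D_j = 1$, to get $|w|^2 \leq (2\pi)^2 \sum_j |x_j|^2 D_j / N^2$), which is exactly the error term in \eqref{Ineq:SpeedOfConvergenceHinfty}. With first-order expansion there is no cubic remainder and hence no absorption step. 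The step you single out as ``requiring genuine care'' --- absorbing $O(\norm{\xb}_1^3/N^2)$ into $O(\norm{\xb}_1/N)$ ``at the cost of an extra factor $\norm{\xb}_1^2/N$'' --- does not in fact produce a uniform-in-$\xb$ bound (it fails for $\norm{\xb}_1 \gtrsim \sqrt{N}$), but the fix is simply not to take the second-order expansion in the first place. With that simplification your argument matches the paper's.
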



\begin{proof}
Let $ S_k := \sum_{j = 1}^k  \gammab_\kappa^{(j)} $. A classical result on the beta-gamma algebra (see \cite[(3.b)]{CarmonaPetitYor}) gives the independence of $ S_k $ and $ \prth{ \gammab_\kappa^{(j)} / S_k }_{1 \leq j \leq k} $ and the fact that
\begin{align}\label{Eq:BetaGammaEqlaw}
\prth{ \gammab_\kappa^{(j)} / S_k }_{1 \leq j \leq k} \eqlaw \prth{ (\betab_{1, \kappa}^{(j)})_{1 \leq j \leq k} \bigg\vert \sum_{j = 1}^k \betab_{1, \kappa}^{(j)} = 1 }
\end{align}

We can thus write \eqref{Eq:Gamma_ProbRepr} as  
\begin{align*}
h_{ \pe{c N} }^{(\kappa) }\prth{ e^{2 i \pi x_1/N }, \dots, e^{2 i \pi x_k/N} }  & = \frac{1}{\pe{ c N}!} \Esp{ \prth{ S_k +  \sum_{j = 1}^k (e^{2 i \pi x_j/N } - 1) \gammab_\kappa^{(j)} }^{\!\!\pe{ c N}} } \\
                 & = \frac{\Esp{ S_k^{\pe{c N}} } }{\pe{c N}!} \Esp{ \prth{ 1 +  \sum_{j = 1}^k (e^{2 i \pi x_j/N } - 1) \frac{\gammab_\kappa^{(j)}  }{S_k} }^{\!\!\pe{c N}} }
\end{align*}
by independence. Now, recall that $ S_k \sim \operatorname{Gamma}(k\kappa ) $, namely $ S_k \sim t_+^{k\kappa - 1} e^{-t} \frac{dt}{\Gamma(k\kappa)} $ and that $ \Esp{S_k^m} = \frac{\Gamma(k \kappa + m ) }{ \Gamma(k \kappa ) } $. We thus get
\begin{align*}
h_{ \pe{c N} }^{(\kappa) }\prth{  e^{2 i \pi x_1/N }, \dots, e^{2 i \pi x_k/N} }  = \frac{ \Gamma( \pe{c N} + k\kappa ) }{ \pe{c N}! \Gamma(k \kappa ) } \Esp{ \prth{ 1 + \frac{1}{N} \sum_{j = 0}^k N(e^{2 i \pi x_j/N } - 1) \frac{ \gammab_\kappa^{(j)} }{S_k} }^{\!\!\pe{c N}} }
\end{align*}

Define
\begin{align}\label{Def:XiNandXiInfty}
\begin{aligned}
\xi_N(X) & := \sum_{j = 0}^k N(e^{2 i \pi x_j/N } - 1) \frac{ \gammab_\kappa^{(j)} }{S_k} \\
\xi_\infty(X) & := 2 i \pi \sum_{j = 0}^k  x_j \frac{ \gammab_\kappa^{(j)} }{S_k}
\end{aligned}
\end{align}
so that
\begin{align*}
h_{ \pe{c N} }^{(\kappa) }\prth{  e^{2 i \pi x_1/N }, \dots, e^{2 i \pi x_k/N} }  = \frac{ \Gamma( \pe{c N} + k\kappa ) }{ \pe{c N}! \Gamma(k \kappa ) } \, \Esp{ \prth{ 1 + \frac{\xi_N(X) }{N}}^{\pe{c N}} }
\end{align*}

Since $ N(e^{2 i \pi x /N } - 1) = 2i \pi x \, \Esp{ e^{2i\pi x U/N} } $ with $ U $ uniform on $ \crochet{0, 1} $, we have $ \abs{ N(e^{2 i \pi x /N } - 1) } \leq 2\pi \abs{x} $ and since $ \abs{ \log(1 + x) - x } \leq \abs{x}^2/2 $ for all $ x \in \Cc $ such that $ \abs{x} < 1 $, we get, almost surely and for $ N \geq 2\pi \sum_{i = 1}^k \abs{x_i} $ big enough 
\begin{align}\label{Ineq:BoundXi}
\begin{aligned}
\abs{ \xi_N(X) } & \leq 2\pi \sum_{j = 1}^k \abs{x_j } \\
\abs{\log \prth{ 1 + \frac{\xi_N(X) }{N} } - \frac{\xi_N(X)}{N} } & \leq \frac{4\pi^2 }{N^2}  \sum_{j = 1}^k \abs{x_j }^2
\end{aligned}
\end{align}
where we have used the branch of the logarithm with a cut on $ \Rr_- $. This implies
\begin{align*}
\Esp{ \prth{ 1 + \frac{ \xi_N(X) }{N} }^{\!\!\pe{c N} } } & =  \Esp{ e^{ \pe{c N} \log\,\prth{ 1 + \frac{\xi_N(X) }{N} } } } = \Esp{ e^{ \pe{c N}  \frac{\xi_N(X) }{N} + O\,\prth{ \frac{\pe{c N}}{N^2} \sum_{j = 1}^k \abs{x_j }^2 } }  } \\
              & = \Esp{ e^{ \pe{c N}  \frac{\xi_N(X) }{N} } } e^{  O\,\prth{ \frac{ c }{N} \sum_{j = 1}^k \abs{x_j }^2 } }  
\end{align*}
since the constant in the $ O $ is deterministic by the second inequality in \eqref{Ineq:BoundXi}. Note also that this constant is independent of $ c, x_1, \dots, x_k $.

Now, we have $ \pe{c N} = c N - \pf{cN} $ with $ \pf{cN} \in [0, 1) $ and $ \xi_N(X) = \xi_\infty(X) - O\prth{ \frac{1}{N} \sum_{j = 1}^k \abs{x_j}^2 } $ by the inequality $ \abs{ N(e^{2 i \pi x /N } - 1) - 2 i \pi x } \leq 4\pi^2 \abs{x}^2/ N $ and the fact that $  \gammab_\kappa^{(j)} / S_k < 1 $ almost surely. Thus, with deterministic uniform $ O $, we get
\begin{align*}
\Esp{ e^{ \pe{c N}  \frac{\xi_N(X) }{N} } } = \Esp{ e^{ c \, \xi_\infty(X) + O\,\prth{\frac{ 1 }{N} \sum_{j = 1}^k \abs{x_j}  } + O\,\prth{ \frac{1}{N} \sum_{j = 1}^k \abs{x_j}^2 }   } }
\end{align*}

Multiplying all the contributions yields
\begin{align*}
\Esp{ \prth{ 1 + \frac{ \xi_N(X) }{N} }^{\!\!\pe{c N} } }  & =  \Esp{ e^{ c \, \xi_\infty(X) } } e^{ O\,\prth{ \frac{ c }{N} \sum_{j = 1}^k \abs{x_j }^2 } + O\,\prth{\frac{1}{N}\sum_{j = 1}^k \abs{x_j}  } + O\,\prth{ \frac{1}{N} \sum_{j = 1}^k \abs{x_j}^2 }   }  \\
              & =  \Esp{ e^{ c \, \xi_\infty(X) } } \prth{ 1 + O \prth{ \frac{ (c + 1) \norm{\xb}_2^2 + \norm{\xb}_1 }{N}  } }
\end{align*}

Last, when $ N \to +\infty $, the Stirling approximation gives
\begin{align*}
\frac{ \Gamma( \pe{c N} + k\kappa ) }{ \pe{c N}! \Gamma(k \kappa ) } = \frac{ \Gamma ( \pe{c N} + k\kappa )  }{ \Gamma( \pe{ c N} + 1) \Gamma(k \kappa ) } = \frac{ (cN)^{k\kappa - 1} }{ \Gamma(k \kappa ) } \prth{1 + \frac{(k\kappa)^2}{2 c N} + O_\kappa\prth{\frac{1}{c^2 N^2} } }
\end{align*}
implying
\begin{align*}
N^{1 - k\kappa} h_{ \pe{c N} }^{(\kappa) }\prth{ e^{2 i \pi x_1/N }, \dots, e^{2 i \pi x_k/N} }  & = \frac{ c^{k\kappa - 1} }{ \Gamma(k \kappa ) }  \Esp{ e^{ c \, \xi_\infty(X) } }\prth{ 1 + O\prth{ \frac{c\inv + (c + 1) \norm{\xb}_2^2 + \norm{\xb}_1 }{N} } \! }
\end{align*}

By lemma \ref{Lemma:RescaledGegenbauer}, we thus have
\begin{align*}
e^{ i \pi c \kappa \sum_{j = 1}^k x_j }  h_{ c, \infty }^{(\kappa) }\prth{  x_1, \dots, x_k }  = \frac{ c^{k\kappa - 1} }{ \Gamma(k \kappa ) }  \Esp{ e^{ c \, \xi_\infty(X) } } 
\end{align*}

Last, \eqref{Eq:BetaGammaEqlaw} yields
\begin{align*}
\Esp{ e^{ c \, \xi_\infty(X) } } = \Esp{ e^{ 2 i \pi c \sum_{j = 1}^k x_j \betab_{\kappa, 1}^{(j)}  } \bigg\vert \sum_{j = 1}^k \betab_{\kappa, 1}^{(j)}  = 1 }
\end{align*}
concluding the proof.
\end{proof}


\subsubsection{A remark on the uniform representation}
The case $ \kappa = 1 $ is of particular interest. We have with $ (\ee_j)_{1 \leq j \leq k} $ a sequence of i.i.d. $ \Exp(1) $-distributed random variables
\begin{align}\label{Eq:ProbabilisticRepresentation_h_n}
h_n\prth{y_1, \dots, y_k} = \frac{1}{n!} \Esp{ \prth{  \sum_{j = 1}^k y_j \ee_j }^{\!\! n} } 
\end{align}
and, with $ V_j = 2 U_j - 1 $ i.i.d random variable uniformly distributed on $ \crochet{-1, 1} $
\begin{align}\label{Eq:ProbabilisticRepresentation_h_rho}
h_{\rho, \infty}( x_1, \dots, x_k ) = \frac{\rho^{k - 1} }{ (k - 1) ! } \Esp{ e^{  i \pi \rho \sum_{j = 1}^k x_j V_j } \bigg\vert \sum_{j = 1}^k V_j = 2 - k }
\end{align}

This implies an additional probabilistic representation:
\begin{align*}
h_{\rho, \infty}(x_1, \dots, x_k) & = \rho^{k - 1} \int_{\crochet{0, 1}^{k - 1} } e^{ i \pi \rho \sum_{j = 1}^k x_j ( t_j - t_{j - 1} ) } \Unens{ 0 = t_0 < t_1 < \dots < t_{k - 1} < t_k = 1 } dt_1\dots dt_{k -1} \\
                & = \frac{\rho^{k - 1} }{ (k - 1) ! } \Esp{ e^{  i \pi \rho \sum_{j = 1}^k x_j (U_j - U_{j -1}) } \bigg\vert 0 = U_0 < U_1 < \dots < U_{k - 1} < U_k = 1  }
\end{align*}

It can be proven starting from \eqref{Eq:ProbabilisticRepresentation_h_rho} and using the equality in law 
\begin{align*}
\prth{   ( V_j )_{1 \leq j \leq k} \bigg\vert \sum_{j = 1}^k V_j = 2 - k } \eqlaw \prth{ (U_j - U_{j -1})_{1 \leq j \leq k}   \bigg\vert 0 = U_0 < U_1 < \dots < U_{k - 1} < U_k = 1  }
\end{align*}


\medskip
\subsubsection{Negative Binomial representation}

We now give an alternative probabilistic representation of $ h_{[cn]} \! \hspace{-0.45cm} \vphantom{a}^{(\kappa)} $ that shows that the form of the beta representation of $ h_{c, \infty}^{(\kappa)} $ is already present for fixed $n$. We focus on $ c = 1 $, leaving details of the general case to the reader.


\begin{lemma}[Negative Binomial representation of $ h_n^{(\kappa)} $]\label{Lemma:NegBinRepresentation_h_n}
Recall that $ a^{\uparrow n} = a(a + 1)\dots (a + n - 1) $. Let $ (G_\ell(r, \kappa))_{1 \leq \ell \leq k} $ be a sequence of i.i.d. random variables with Negative Binomial distribution of parameter $ r < \min\!\ensemble{1, \max_{1 \leq \ell \leq k} \abs{ x_\ell }\inv } $ and $ \kappa \in \Rr_+^* $. Then, 
\begin{align}\label{Eq:NegBinRepresentation_h_n}
h_n^{(\kappa)}(x_1, \dots, x_k) = \frac{(k\kappa)^{\uparrow n}}{n!} \, \Esp{ \prod_{\ell = 1}^k x_\ell^{ G_\ell(r, \kappa) } \Bigg\vert \sum_{\ell = 1}^k G_\ell(r, \kappa) = n }
\end{align}
\end{lemma}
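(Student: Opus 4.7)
The plan is to extract both sides from the same bivariate generating function. First I would recall that a random variable $G(r,\kappa)$ with Negative Binomial distribution of parameters $(r,\kappa)$ (with $r \in (0,1)$ and $\kappa>0$) has probability mass function $\Prob{G = m} = \frac{\kappa^{\uparrow m}}{m!} r^m (1-r)^\kappa$ for $m \geq 0$, so its generating function is
\begin{align*}
\Esp{x^{G(r,\kappa)}} = (1-r)^\kappa \sum_{m \geq 0} \frac{\kappa^{\uparrow m}}{m!}(rx)^m = (1-r)^\kappa (1-rx)^{-\kappa} = (1-r)^\kappa H\crochet{rx \cdot 1^{\plusInOne \kappa}},
\end{align*}
valid for $|rx|<1$, which explains the restriction $r < \max_\ell |x_\ell|^{-1}$ imposed in the statement. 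In particular, by independence and by the stability of the Negative Binomial under convolution in the second parameter, $\sum_{\ell=1}^k G_\ell(r,\kappa)$ is itself $\NegBin(r,k\kappa)$-distributed, and therefore
\begin{align*}
\Prob{\textstyle\sum_{\ell=1}^k G_\ell(r,\kappa) = n} = \frac{(k\kappa)^{\uparrow n}}{n!}\, r^n (1-r)^{k\kappa}.
\end{align*}

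Next I would compute the joint generating function of $(G_1,\dots,G_k)$ weighted by the indicator $\Unens{\sum_\ell G_\ell = n}$, via
\begin{align*}
\Esp{\prod_{\ell=1}^k x_\ell^{G_\ell(r,\kappa)} \Unens{\sum_{\ell=1}^k G_\ell(r,\kappa) = n}} = \crochet{t^n}\Esp{\prod_{\ell=1}^k (t x_\ell)^{G_\ell(r,\kappa)}} = (1-r)^{k\kappa}\crochet{t^n} H\crochet{r t \cdot 1^{\plusInOne\kappa}X}.
\end{align*}
By the homogeneity of $h_n^{(\kappa)}$, $\crochet{t^n} H\crochet{r t \cdot 1^{\plusInOne\kappa}X} = r^n h_n^{(\kappa)}(x_1,\dots,x_k)$, so the right-hand side equals $(1-r)^{k\kappa} r^n h_n^{(\kappa)}(x_1,\dots,x_k)$. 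Dividing by the marginal probability of the conditioning event computed above, every $r$-dependent factor cancels and one obtains
\begin{align*}
\Esp{\prod_{\ell=1}^k x_\ell^{G_\ell(r,\kappa)} \bigg\vert \sum_{\ell=1}^k G_\ell(r,\kappa) = n} = \frac{n!}{(k\kappa)^{\uparrow n}}\, h_n^{(\kappa)}(x_1,\dots,x_k),
\end{align*}
which is exactly \eqref{Eq:NegBinRepresentation_h_n}. The $r$-independence of the conditional expectation is built in: conditioning on the sum projects the joint law onto the Dirichlet/negative-hypergeometric simplex where the parameter $r$ drops out, which is the discrete analogue of the beta--gamma identity \eqref{Eq:BetaGammaEqlaw} used in lemma \ref{Lemma:BetaProbRepr}.

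There is essentially no hard obstacle: the only point requiring a bit of care is the domain of validity of the generating function manipulations, which is why one must restrict $r$ so that $|r x_\ell|<1$ for all $\ell$; since both sides of \eqref{Eq:NegBinRepresentation_h_n} are polynomials in $(x_1,\dots,x_k)$, once equality is established on this open set it extends to all of $\Cc^k$ by analytic continuation (or directly by the fact that the conditional expectation depends polynomially on $\xb$). This same argument shows how, letting $r \to 0$ appropriately rescaled, one recovers the Gamma representation \eqref{Eq:Gamma_ProbRepr} as a Poisson-type limit of the Negative Binomial, providing a conceptual link with lemma \ref{Lemma:GammaProbRepr}.
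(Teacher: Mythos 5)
Your proof is correct and takes essentially the same route as the paper's: both extract the weighted expectation $\Esp{\prod_\ell x_\ell^{G_\ell}\Unens{\sum_\ell G_\ell = n}}$ as the $n$-th Fourier coefficient of $H\crochet{rt\,1^{\plusInOne\kappa}X}$ (you via $\crochet{t^n}$, the paper via the contour integral $\oint_\Uu z^{-n}\cdots\frac{d^*z}{z}$, which are the same thing per \eqref{Eq:FourierCoeff}), and then divide by $\Prob{\sum_\ell G_\ell(r,\kappa)=n}$, computed from the closure of NegBin under convolution. The extra remark about recovering \eqref{Eq:Gamma_ProbRepr} as a Poisson-type limit $r\to 0$ is a nice conceptual addition not in the paper, but the core argument is identical.
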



\begin{proof}
Recall that a Negative Binomial random variable $ G(r, \kappa) \sim \NegBin(r, \kappa) $ is given for $ 0 < r < 1 $ and $ \kappa \in \Rr_+^* $ by the Fourier-Laplace transform
\begin{align*}
\Esp{ z^{G(r, \kappa)} } = \prth{\frac{1 - r}{1 - rz} }^\kappa, \qquad z \in (0, 1) \mbox{ or } z\in \Uu
\end{align*}

For $ \kappa \in \Nn^* $ (which is the only case we consider in this article), $ G(r, \kappa) $ is a sum of $ \kappa $ i.i.d. Geometric random variables of parameter $r$ (i.e. $ G(r, 1) $).

Then, using \eqref{Eq:FourierCoeff} for all $ r < \min\!\ensemble{1, \max_{1 \leq \ell \leq k} \abs{ x_\ell }\inv } $, one gets
\begin{align*}
h_n^{(\kappa)}(x_1, \dots, x_k) & = \crochet{t^n} H\crochet{t X 1^{\plusInOne \kappa } } = r^{-n} \oint_{\Uu} z^{-n} H\crochet{r z X}^\kappa \frac{d^* z}{z} = r^{-n} \oint_{\Uu} z^{-n} \prod_{\ell = 1}^k \frac{1}{(1 - r z x_\ell)^\kappa} \frac{d^* z}{z} \\
                & = r^{-n}  (1 - r)^{-k\kappa} \oint_{\Uu} z^{-n} \prod_{\ell = 1}^k \prth{ \frac{1 - r}{1 - r z x_\ell} }^\kappa \frac{d^* z}{z} \\
                & =  r^{-n}  (1 - r)^{-k \kappa} \oint_{\Uu} z^{-n}  \Esp{ \prod_{\ell = 1}^k (z x_\ell)^{G_\ell(r, \kappa)} } \frac{d^* z}{z} \\
                & = r^{-n}  (1 - r)^{-k\kappa} \, \Esp{ \prod_{\ell = 1}^k x_\ell^{G_\ell(r, \kappa) } \Unens{ \sum_{\ell = 1}^k \! G_\ell(r, \kappa) \, = \, n } } \qquad \mbox{(Fubini)} \\
                & = r^{-n}  (1 - r)^{-k\kappa} \, \Prob{ \sum_{\ell = 1}^k G_\ell(r, \kappa) = n } \, \Esp{ \prod_{\ell = 1}^k x_\ell^{G_\ell(r, \kappa) }  \Bigg\vert \sum_{\ell = 1}^k G_\ell(r, \kappa) = n  }
\end{align*}

One concludes with
\begin{align*}
\Prob{ \sum_{\ell = 1}^k G_\ell(r, \kappa) = n \! } & = \Esp{ \crochet{t^n} t^{\sum_{\ell = 1}^k G_\ell(r, \kappa)} } \\
                    & = \crochet{t^n} \prth{ \frac{1 - r}{1 -rt} }^{k\kappa}  = \Prob{ G(r, k\kappa) = n  } \\
                    & = (1 - r)^{k \kappa} r^n h_n\crochet{ 1^{\plusInOne  k \kappa} } = (1 - r)^{k\kappa} r^n \frac{(k\kappa)^{\uparrow n}}{n!}
\end{align*}
since $ (1 - t)^{-k} = \sum_{n \geq 0} h_n\crochet{1^k} t^n = \sum_{n \geq 0} k^{\uparrow n} \frac{t^n}{n!} $.
\end{proof}


\begin{remark}\label{Rk:DegreeFreedomReprHn}
One may think that there is an additional degree of freedom in \eqref{Eq:NegBinRepresentation_h_n}, but this is not the case due to the scaling of $ h_n $. This degree of freedom was in fact present in the lemmas~\ref{Lemma:GammaProbRepr} and \ref{Lemma:BetaProbRepr} by considering $ h_n(\lambda X) = \lambda^n h_n(X) $ for a good choice of $ \lambda $ but was ``artificially'' suppressed by choosing the radius of convergence equal to $1$ in \eqref{Eq:FourierCoeff}. This choice of radius is equivalent to the scaling of $ h_n $ since one can always dilate and then take the Fourier coefficient on a circle of radius $1$. The relevant dilation in all the treated problems is found by considering $ h_n\crochet{(1 + X)1^{\plusInOne \kappa} } $ in the previous computations (leading to the ``hyperplane concentration'' phenomenon).
\end{remark}


\begin{remark}
One can pass to the limit in \eqref{Eq:NegBinRepresentation_h_n} using the following convergence in law, easily seen with the Fourier-Laplace transform of $ \NegBin(r, \kappa) $
\begin{align*}
\frac{1}{n} G\prth{ e^{-\nu/n} , \kappa } \cvlaw{n}{+\infty} \nu\inv \gammab_{\kappa, 1}
\end{align*}

The conditionning is a non trivial one at the limit since one has an event of probability 0 (it nevertheless works in the same way since the event is monotonous in $N$ hence converges monotonically). Setting $ r := e^{-\nu/n} $, one thus gets at the limit
\begin{align*}
h_n^{(\kappa)}\prth{ e^{\frac{x_1}{n} }, \dots, e^{ \frac{x_k}{n}} } & \equivalent{n\to + \infty} \, \frac{ n^{k\kappa - 1} }{\Gamma(k \kappa) } \, \Esp{ e^{ \sum_{\ell = 1}^k \frac{x_\ell}{n} G_\ell(e^{-\nu/n},\, \kappa) } \Bigg\vert \sum_{\ell = 1}^k n\inv G_\ell\prth{ e^{-\nu/n}, \kappa } = 1 } \\
                  & \equivalent{n\to + \infty} \, \frac{ n^{k\kappa - 1} }{\Gamma(k \kappa) } \, \Esp{ e^{ \sum_{\ell = 1}^k x_\ell \nu\inv \betab_{\kappa, 1}^{(\ell)} } \Bigg\vert \sum_{\ell = 1}^k \nu\inv \betab_{\kappa, 1}^{(\ell)}  = 1 }
\end{align*}

This is the most general form of $ h_{1, \infty}^{(\kappa)} $ with any type of scaling of the form $ \lambda^{1/n} $. 
\end{remark}

\medskip
\subsubsection{A remark on the Poisson representation}

A probabilistic theory of $ h_N\crochet{\Ae} $ emerged these last years with the study of the \textit{Generalised Ewens measure} (see e.g. \cite{BetzSchaeferZeindler, BetzUeltschi, BetzUeltschi2, BetzUeltschiVelenik, BogachevZeindler, CiprianiZeindler, HughesNajnudelNikeghbaliZeindler, NikeghbaliZeindler, RoblesZeindler} and references cited\footnote{This measure already appeared in the Russian literature under the name \textit{Kolchin model}, see e.g. \cite{KerovEwensPitman, Kolchin1971, KolchinSevastyanovChistyakov}.}). The link comes from 
%
%
%
%
%
%
the first equality in \eqref{Eq:PlethisticHnSchur} that defines the following probability measure on the set $ \Yy_n $ of partitions of $ n $, or equivalently on the symmetric group $ \Sg_n $ for positive specialisations of $ (p_k)_k $~:
\begin{align}\label{Def:EwensGen}
\Pp_{\! \Ae, \Yy_n}(\lambda) := \frac{1}{h_n\crochet{\Ae} } \frac{p_\lambda\crochet{\Ae} }{z_\lambda} \Unens{\lambda \vdash n} \quad \Longleftrightarrow \quad \Pp_{\! \Ae, \Sg_n}(\sigma) := \frac{1}{h_n\crochet{\Ae} } \prod_{k \geq 1} p_k\crochet{\Ae}^{c_k(\sigma)} \Unens{\sigma \in \Sg_n}
\end{align}

Here, $ c_k(\sigma) $ designates the number of $k$-cycles of $ \sigma \in \Sg_n $, i.e. the number of cycles of size $k$. Note that for $ \Ae = 1^\kappa $, one gets the classical Ewens measure \cite{ArratiaBarbourTavare, Ewens72} and for $ \kappa = 1 $, one gets the uniform measure on $ \Sg_n $ (which is not uniform on $ \Yy_n $).

As a result, one has
\begin{align*}
h_n\crochet{X \Ae} = h_n\crochet{\Ae} \frac{h_n\crochet{X\Ae}}{h_n\crochet{\Ae} } = h_n\crochet{\Ae} \Espr{\! \Ae, \Sg_n}{ \prod_{k \geq 1} p_k\crochet{X}^{c_k(\sigmab_n )}  }
\end{align*}
where $ \sigmab_n : \sigma \in \Sg_n \mapsto \sigma $ is the canonical evaluation of the probability space $ (\Sg_n, \Pp_{\! \Ae, \Sg_n}) $. In particular, as $ h_n\crochet{1^\kappa} = \frac{\kappa^{\uparrow n }}{ n! } $, one gets
\begin{align*}
h_n^{(\kappa)}\prth{ e^{\frac{x_1}{n} }, \dots, e^{ \frac{x_k}{n}} } =  \frac{\kappa^{\uparrow n }}{ n! } \, \Espr{  1^\kappa \! , \, \Sg_n}{ \prod_{k \geq 1} p_k\crochet{e^{X/n}}^{c_k(\sigmab_n )}  }, \qquad X := \ensemble{x_1, \dots, x_k}
\end{align*}

Now, a classical result on the cycle structure of a random Ewens-distributed permutation $ \sigmab_n^{(\kappa)} \sim \Pp_{1^\kappa, \Sg_n} $ ensures that \cite[ch. 2.3, (2.5) \& (2.7)]{ArratiaBarbourTavare}
\begin{align}\label{EqLaw:CyclesEwensNandT}
\prth{ c_k(\sigmab_n^{(\kappa)}) }_{1 \leq k \leq n}  \eqlaw \prth{ \prth{ \Pb\prth{ \tfrac{\kappa}{\ell} } }_{1 \leq \ell \leq n} \Bigg\vert \sum_{\ell \geq 1} \ell \Pb\prth{ \tfrac{\kappa}{\ell} } = n } 
\end{align}
where the Poisson random variables $ \prth{ \Pb\prth{ \tfrac{\kappa}{\ell} } }_{1 \leq \ell \leq n} $ in the RHS are independent.

As a result, one has
\begin{align}\label{Eq:PoissonRepresentation_h_n}
h_n^{(\kappa)}\crochet{e^{X/n}} =  \frac{\kappa^{\uparrow n }}{ n! } \, \Esp{ \prod_{\ell \geq 1} p_\ell\crochet{e^{X/n}}^{\Pb(\kappa / \ell )} \Bigg\vert \sum_{\ell = 1}^n \ell \Pb(\kappa/\ell) = n  } 
\end{align}
which is very similar to \eqref{Eq:NegBinRepresentation_h_n} in the form. Note nevertheless that the normalisation $ h_n\crochet{1^\kappa} $ in \eqref{Eq:PoissonRepresentation_h_n} is different from $ h_n\crochet{1^{k \kappa} } $ in \eqref{Eq:NegBinRepresentation_h_n}. The limiting functional, although existing by lemma~\ref{Lemma:NegBinRepresentation_h_n} (after renormalisation) seems complicated to find with this Poisson representation. The random variable $ \sum_{\ell = 1}^n \log(p_\ell\crochet{e^{X/n} } ) c_\ell(\sigmab_n^{(\kappa)}) $ is a well-studied \textit{linear statistic of the cycles} and several general theorems are available in the literature for its study \cite{BaksajevaManstavicius, Manstavicius}, but the coefficients $ \log(p_\ell\crochet{e^{X/n} } ) $ display a behaviour of the form $ \log(p_\ell\crochet{e^{X/n} } ) \sim \frac{\ell p_1(X)}{n} $ which does not seem to appear in the literature, as far as the author is aware of. Since the limit in lemma~\ref{Lemma:BetaProbRepr} uses beta random variables in \eqref{Eq:Beta_ProbRepr} that are used in one possible definition of the Poisson-Dirichlet point process, it is likely that the limit of the linear statistic will use this universality class that describes the behaviour of the largest cycles (rescaled by $N$).

\medskip

\subsection{Integrability of $ h_{c, \infty}^{(\kappa) } $}

The domination \eqref{Ineq:SpeedOfConvergenceHinfty} writes for a certain constant $ C > 0 $
\begin{align}\label{Ineq:SpeedOfConvergenceHinftyBis}
\abs{ N^{1 - k\kappa  } h_{ \pe{c N} }\prth{ e^{2 i \pi \frac{x_1}{N} }\!, \dots, e^{2 i \pi\frac{x_k}{N}} }  -  \widetilde{h}^{(\kappa)}_{c, \infty} ( x_1, \dots, x_k) } \leq  \frac{C}{N} \abs{  h^{(\kappa)}_{c, \infty} ( x_1, \dots, x_k) } \varphi_c(\xb)
\end{align}
with $ \varphi_c(\xb) := c\inv\! + \! (c + 1)\norm{\xb}_2^2 + \norm{\xb}_1 $ and 
\begin{align}\label{Def:hTildeInfty}
\widetilde{h}^{(\kappa)}_{c, \infty} ( x_1, \dots, x_k) := e^{i \pi c \kappa \sum_j x_j } h^{(\kappa)}_{c, \infty} ( x_1, \dots, x_k) 
\end{align}

Hence, to apply dominated convergence, it is important to study the integrability of $ \xb \mapsto \norm{\xb}_p^p \abs{  h^{(\kappa)}_{c, \infty} ( \xb ) } $ for $ p \in \ensemble{0, 1, 2} $ (with the convention that $ \norm{\xb}_0^0 := 1$). We will moreover have to study powers of $ h^{(\kappa)}_{c, \infty} $. 

\begin{lemma}[Integrability of the domination]\label{Lemma:IntegrabilityDomination}
Let $ M, M', \kappa, \kappa' \geq 1 $, $ K \geq 2 $ and $ p \in \ensemble{0, 1, 2} $. Define moreover
\begin{align*}
A & := \int_{\Rr^K} \abs{  h^{(\kappa)}_{c, \infty} ( x_1, \dots, x_K) }^M \norm{\xb}_p^p \Delta(x_1, \dots, x_K)^2 \, dx_1\dots dx_K  \\
B & := \int_{\Rr^K} \abs{  h^{(\kappa)}_{c, \infty} ( x_1, \dots, x_K) }^M \abs{  h^{(\kappa')}_{c', \infty} ( x_1, \dots, x_K) }^{M'} \norm{\xb}_p^p \Delta(x_1, \dots, x_K)^2 \, dx_1\dots dx_K 
\end{align*}

Then, we have
\begin{align}\label{Ineq:IntegrabilityDomination}
\begin{aligned}
A & < \infty \qquad \Longleftrightarrow \qquad K( \kappa M - K) > p + M \\
B & < \infty \qquad \Longleftrightarrow \qquad K (\kappa M + \kappa' M' - K) > p + M + M' 
\end{aligned}
\end{align}

\end{lemma}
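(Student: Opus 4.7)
My plan is to establish a sharp pointwise decay estimate for $|h^{(\kappa)}_{c,\infty}(\xb)|$ at infinity and then extract the integrability thresholds by an explicit power-counting argument; the case $B$ will follow from the same recipe with a bookkeeping modification. First I would exploit the integral representation~\eqref{Def:HKappaCInfty}, equivalently the probabilistic representation~\eqref{Eq:Beta_ProbRepr}, which identifies $h^{(\kappa)}_{c,\infty}$ (up to an overall phase and an explicit constant) with the Fourier transform of the conditional density of a vector of $K$ i.i.d.\ $\betab_{\kappa,1}$-variables given their sum equal to $1$. This density is supported on the simplex $\{\beta_j\ge 0,\ \sum_j\beta_j=1\}$, vanishes to order $\kappa-1$ at each facet, and is smooth in the interior, so a stationary-phase analysis at the $K$ vertices $\mathbf{e}_j$ of the simplex (which are the only critical points of $\betab\mapsto\xb\cdot\betab$ for generic $\xb$) yields, in the regime $|x_i-x_j|\gg 1$, a vertex expansion
\begin{align*}
h^{(\kappa)}_{c,\infty}(\xb)=\sum_{j=1}^K\frac{a_j\,e^{2i\pi c x_j}}{\prod_{i\ne j}(x_i-x_j)^\kappa}+\text{lower-order corrections},
\end{align*}
with explicit nonzero constants $a_j$. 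The same decay is reachable more elementarily from the sinc product by decomposing $\Rr$ into the bump zones $|y+x_j|\le\tfrac12\min_{i\ne j}|x_i-x_j|$, yielding the uniform crude bound
\begin{align*}
|h^{(\kappa)}_{c,\infty}(\xb)|\le C\sum_{j=1}^K\prod_{i\ne j}(1+c|x_i-x_j|)^{-\kappa}.
\end{align*}

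Next I would carry out the power counting for $A$. By $\Sg_K$-symmetry it suffices to work in the sector dominated by the vertex $j=1$; there, raising the estimate to the $M$-th power and combining with $|\Delta(\xb)|^2$ gives, after absorbing the factors $(x_i-x_1)^2$ from $\Delta^2$ into the corresponding $(x_i-x_1)^{-M\kappa}$ factors of $|h|^M$, the sectorial bound
\begin{align*}
|h^{(\kappa)}_{c,\infty}(\xb)|^M|\Delta(\xb)|^2\norm{\xb}_p^p\le C\,\norm{\xb}_p^p\prod_{i=2}^K|x_i-x_1|^{2-M\kappa}\prod_{2\le i<\ell\le K}(x_i-x_\ell)^2.
\end{align*}
Passing to $u_i=x_i-x_1$ for $i\ge 2$ and $s=x_1$, then rescaling $\ub=\lambda\vb$ on the unit sphere of $\Rr^{K-1}$, the $\ub$-integrand becomes homogeneous of an explicit degree: the inner Vandermonde contributes $(K-1)(K-2)$, the transformed $|h|^M$ factor contributes $(K-1)(2-M\kappa)$, and the transverse part of $\norm{\xb}_p^p$ together with the spherical Jacobian add the remaining terms. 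A direct check that this exponent combined with the $s$-integration (controlled by the residual factor $|x_1|^{-M\kappa}$ retained from the vertex-$j=1$ bound and the polynomial growth of $\norm{\xb}_p^p$ in the translation direction $s$) is strictly negative delivers the advertised threshold $K(\kappa M-K)>p+M$. Integrability on any compact set is automatic from the uniform boundedness of $h^{(\kappa)}_{c,\infty}$, so the crude bound is needed only at infinity.

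The case $B$ requires no new idea: the sum-of-vertices bound for $h^{(\kappa')}_{c',\infty}$ has the same structure and the same critical vertices, and multiplying the two estimates replaces $M\kappa$ by $M\kappa+M'\kappa'$ and $M$ by $M+M'$ throughout the power-counting argument, yielding the threshold $K(\kappa M+\kappa'M'-K)>p+M+M'$. Finally, the reverse implications rest on a matching pointwise lower bound: for a set of positive measure of directions $\vb$ on the unit sphere, the $K$ oscillating vertex contributions do not cancel, so $|h^{(\kappa)}_{c,\infty}(\lambda\vb)|$ is genuinely of order $\lambda^{-K\kappa}$, and restricting $A$ or $B$ to a conical neighborhood of such $\vb$ reproduces the scaling and forces divergence when the inequality is reversed. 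The technical heart of the proof is precisely this non-cancellation in a generic direction: it hinges on the fact that for almost every $\vb$ the $K$ phases $e^{2i\pi c\lambda v_j}$ are incommensurate as functions of $\lambda$, so the vertex sum cannot vanish identically, and it is the main point where care is needed to turn the crude upper bound into a sharp one.
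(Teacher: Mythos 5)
Your route is genuinely different from the paper's. The paper starts from the crude bound $|h^{(\kappa)}_{c,\infty}(\xb)|\le c^{K\kappa}\int_\Rr\prod_j|\sinc(\pi c(y+x_j))|^\kappa\,dy$ (triangle inequality on \eqref{Def:HKappaCInfty}), raises it to the $M$-th power by expanding the product of $M$ copies of the integral into an $M$-fold integral over auxiliary variables $y_1,\dots,y_M$, applies Fubini, and power-counts the resulting fully explicit integrand on $\Rr^{K+M}$ at infinity; it never needs a pointwise decay rate for $h^{(\kappa)}_{c,\infty}$. You instead establish a pointwise vertex estimate and power-count in $\Rr^{K}$ after a change of variables. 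Your route makes the geometry of the decay explicit, which is attractive and would indeed be needed for a genuine two-sided (``iff'') statement, but it requires the vertex estimate to actually be proved (including the logarithmic corrections at $\kappa=1$ and the treatment of the cross terms once the sum over vertices is raised to the $M$-th power), and it commits you to a delicate radial/angular computation that you have only sketched.

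There is a concrete gap that prevents the argument as displayed from closing. The modulus of $h^{(\kappa)}_{c,\infty}$ is invariant under $\xb\mapsto\xb+t\Un_K$ (in \eqref{Def:HKappaCInfty}, the substitution $y\mapsto y-t$ only pulls out a unimodular phase), your vertex bound $\sum_j\prod_{i\ne j}(1+c|x_i-x_j|)^{-\kappa}$ depends only on the differences $x_i-x_j$, and so does $|\Delta(\xb)|^2$. Hence your sectorial bound $\norm{\xb}_p^p\prod_{i=2}^K|x_i-x_1|^{2-M\kappa}\prod_{2\le i<\ell\le K}(x_i-x_\ell)^2$ is $\norm{\xb}_p^p$ times a translation-invariant function, and its integral over $\Rr^K$ diverges in the diagonal direction. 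You acknowledge this by invoking a ``residual factor $|x_1|^{-M\kappa}$ retained from the vertex-$j=1$ bound'', but no such factor appears in the vertex expansion of $h^{(\kappa)}_{c,\infty}(x_1,\dots,x_K)$: every vertex factor is a difference. The $|x_1|^{-\kappa}$ you need only appears if $h$ carries one additional argument pinned to $0$, i.e.\ $h^{(\kappa)}_{c,\infty}(0,x_1,\dots,x_K)$, which is exactly what happens in every application of this lemma in the paper (e.g.\ $h^{(2k)}_{k,\infty}(0,x_2,\dots,x_k)$ integrated over $\Rr^{k-1}$); the $0$ breaks the translation symmetry and supplies the decay in the diagonal direction. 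Without stating this convention and carrying it consistently through your displayed estimates (and redoing the exponent count accordingly, since the vertex product then has $K$ factors instead of $K-1$), the argument does not establish that $A<\infty$. The same slip appears in your lower-bound claim $|h^{(\kappa)}_{c,\infty}(\lambda\vb)|\sim\lambda^{-K\kappa}$: with $K$ arguments the vertex product has only $K-1$ factors, giving $\lambda^{-(K-1)\kappa}$, and the stated exponent again presupposes the extra zero.
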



\begin{proof}
Using \eqref{Def:HKappaCInfty}, we have
\begin{align*}
\abs{  h^{(\kappa)}_{c, \infty}( x_1, \dots, x_k) }  & = c^{ k\kappa} \abs{ \int_\Rr   e^{   i \pi c (k\kappa - 2) y } \prod_{j = 1}^k  \sinc\prth{   \pi c (y + x_j) }^\kappa dy } \\
                  & \leq c^{ k\kappa} \int_\Rr \prod_{j = 1}^k \abs{ \sinc\prth{ \pi c (y + x_j) } }^\kappa dy  = \int_\Rr \prod_{j = 1}^k \abs{ \frac{\sin \prth{ \pi c (y + x_j) }}{ \pi (y + x_j)   } }^\kappa dy
\end{align*}
thus 
\begin{align*}
A  \leq \int_{ \Rr^{K + M} } \prod_{j = 1}^K \prod_{r = 1}^M  \abs{ \sinc\prth{ \pi c (y_r + x_j) } }^\kappa \Delta(\xb)^2  \norm{\xb}_p^p d\xb d\yb
\end{align*}

The function $ (\xb, \yb) \mapsto \prod_{j = 1}^K \prod_{r = 1}^M  \abs{ \sinc\prth{ \pi c (y_r + x_j) } }^\kappa \Delta(\xb)^2  \norm{\xb}_p^p $ is continuous on any compact of $ \Rr^{K + M} $, hence is integrable on such a compact. The only possible problem is in a neighbourhood of infinity, and due to the decay of $ \sinc $, this will be integrable if and only if $ \kappa K M - K(K - 1) - p > K + M $ which amounts to $ K( \kappa M - K) > p + M $.

In the same vein, 
\begin{align*}
B \leq \int_{ \Rr^{K + M + M'} } \prod_{j = 1}^K \prod_{r = 1}^M  \abs{ \sinc\prth{ \pi c (y_r + x_j) } }^\kappa \prod_{r' = 1}^{M'}  \abs{ \sinc\prth{ \pi c (y'_{r'} + x_j) } }^{\kappa'} \Delta(\xb)^2  \norm{\xb}_p^p d\xb d\yb d\yb'
\end{align*}
and the underlying function will be integrable if and only if $ K (\kappa M + \kappa' M') - K(K - 1) - p > K + M + M' $ which amounts to $ K (\kappa M + \kappa' M' - K) > p + M + M' $.
\end{proof}

\subsection{Integrability in $c$}

The randomisation paradigm of section~\ref{Subsec:RandomisationParadigm} requires to study the dependency of $ h_{c, \infty}^{(\kappa) } $ in $c$.

\begin{lemma}[Domination in $ c $]\label{Lemma:IntegrabilityDominationInC}
The domination \eqref{Ineq:SpeedOfConvergenceHinfty}/\eqref{Ineq:SpeedOfConvergenceHinftyBis} is integrable in $ c \in [0, 1] $ (jointly with $ \xb \in \Rr^M \!$) if $ k \kappa \geq 2 $. The same holds with powers or products of $ h^{(\kappa)}_{c, \infty} $.
\end{lemma}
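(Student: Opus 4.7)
The key ingredient is the probabilistic representation \eqref{Eq:Beta_ProbRepr} of Lemma \ref{Lemma:BetaProbRepr}, which, since the integrand of the conditional expectation has modulus at most $1$, yields the uniform pointwise bound
\begin{align*}
\abs{h^{(\kappa)}_{c,\infty}(\xb)} \leq \frac{c^{k\kappa-1}}{\Gamma(k\kappa)}, \qquad \forall\, \xb \in \Rr^k, \ c > 0.
\end{align*}
The plan is to exploit this bound together with the scaling identity $h^{(\kappa)}_{c,\infty}(\xb) = c^{k\kappa-1} h^{(\kappa)}_{1,\infty}(c\xb)$ (obtained by the substitution $y \mapsto y/c$ in \eqref{Def:HKappaCInfty}) to track the joint $(c,\xb)$-behaviour of the dominating function.

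The first step is to split $\varphi_c(\xb) = c^{-1} + (c+1)\norm{\xb}_2^2 + \norm{\xb}_1$ according to whether the coefficient in $c$ is singular at $0$. Only the term $c^{-1}$ blows up near $c=0$; the coefficients $(c+1)$ and $1$ of the $\xb$-dependent pieces are bounded by $2$ on $[0,1]$. For the singular piece, the uniform bound immediately gives
\begin{align*}
c^{-1}\abs{h^{(\kappa)}_{c,\infty}(\xb)} \leq \frac{c^{k\kappa-2}}{\Gamma(k\kappa)},
\end{align*}
which is bounded on $[0,1]$ (a fortiori integrable) exactly when $k\kappa \geq 2$. This is where the hypothesis enters.

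For the two regular pieces of $\varphi_c$, since the $c$-coefficient is bounded, joint integrability reduces to the $\xb$-integrability of $\abs{h^{(\kappa)}_{c,\infty}(\xb)} \norm{\xb}_p^p$ (with $p\in\{1,2\}$) against whatever $\xb$-measure appears in the application (typically $\Delta(\xb)^2 d\xb$ from the Weyl formula, or just Lebesgue in the randomisation paradigm of \S~\ref{Subsec:RandomisationParadigm}). This is precisely the content of Lemma \ref{Lemma:IntegrabilityDomination} for fixed $c$; by the scaling relation, the $c$-dependence of such an $\xb$-integral is a finite power of $c$, hence trivially integrable on $[0,1]$ under the same condition $k\kappa \geq 2$ (which is weaker than the $\xb$-integrability conditions of Lemma \ref{Lemma:IntegrabilityDomination}, automatically assumed in every application).

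For the case of a product $\prod_j h^{(\kappa_j)}_{c_j,\infty}$ with $\cb = (c_1,\dots,c_n) \in [0,1]^n$, the analysis repeats factor by factor: each factor admits the uniform bound $c_j^{k\kappa_j-1}/\Gamma(k\kappa_j)$, yielding $\prod_j c_j^{k\kappa_j-1}$ which is integrable over $[0,1]^n$ as long as each $k\kappa_j \geq 1$; the $\varphi_{c_j}$ correction only appears for a single distinguished index $j$ (coming from the application of \eqref{Ineq:SpeedOfConvergenceHinftyBis} to that factor), and for this index the stronger condition $k\kappa_j \geq 2$ is what is required. The main (minor) obstacle is to match the sinc-based $\xb$-decay from \eqref{Def:HKappaCInfty} against the polynomial factors $\norm{\xb}_p^p$ with bounds uniform in $c$; this is dealt with by noting that the sinc-integral form gives $\abs{h^{(\kappa)}_{c,\infty}(\xb)} \leq \int_\Rr \prod_j \abs{\sinc(\pi c (y+x_j))}^\kappa dy$, to which the same dominated-convergence arguments as in the proof of Lemma \ref{Lemma:IntegrabilityDomination} apply with explicit $c$-tracking.
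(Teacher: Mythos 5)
Your proof rests on the same key bound as the paper's: $\abs{h^{(\kappa)}_{c,\infty}(\xb)} \leq c^{k\kappa-1}/\Gamma(k\kappa)$ from the triangle inequality in \eqref{Eq:Beta_ProbRepr}, which combined with the $c^{-1}$ part of $\varphi_c$ gives $c^{k\kappa-2}$, integrable on $[0,1]$ precisely when $k\kappa\geq 2$. The scaling identity $h^{(\kappa)}_{c,\infty}(\xb)=c^{k\kappa-1}h^{(\kappa)}_{1,\infty}(c\xb)$ and the explicit split of $\varphi_c$ into singular and regular pieces are welcome refinements of the joint-integrability bookkeeping, but the essential approach is the same as the paper's.
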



\begin{proof}
Since we carefully kept the dependency in $c$ in all previous estimates, we only need to check that the RHS in \eqref{Ineq:SpeedOfConvergenceHinfty}/\eqref{Ineq:SpeedOfConvergenceHinftyBis} is in $ L^1([0, 1]) $ in $c$. The theorem of joint integrability of functions $ (\xb, c) \mapsto f(\xb, c) $ in case of an $ L^1 $ domination in both variables will then allow to conclude (and since we already have the integrability in $ \xb $, we only need the one in $c$).

The triangle inequality in the probabilistic representation \eqref{Eq:Beta_ProbRepr} gives
\begin{align*}
\abs{  h^{(\kappa)}_{c, \infty} ( x_1, \dots, x_k) } = \abs{ \frac{c^{k\kappa - 1} }{ \Gamma\prth{ k\kappa  } } \Esp{ e^{ 2 i \pi c \sum_{j = 1}^k x_j ( \betab_{\kappa, 1}^{(j)} - \frac{\kappa}{2})  } \bigg\vert \sum_{j = 1}^k \betab_{\kappa, 1}^{(j)}  = 1 } } \leq  \frac{c^{k\kappa - 1} }{ \Gamma\prth{ k\kappa  } }
\end{align*}
and multiplying this last function by $ c\inv + c + 1 $ gives an integrable function if $ k\kappa \geq 2 $. Moreover, taking powers of $ h^{(\kappa)}_{c, \infty} $ or considering a product of $ h^{(\kappa)}_{c \alpha_j, \infty} $ for some $ \alpha_j \in (0, 1] $ does not change anything to the integrability (and in fact weakens the condition of integrability since a power $ M $ gives the condition $ (k\kappa - 1)M - 1 \geq 0 $ equivalent to $ k \kappa \geq 1 + M\inv $, weaker than $ k \kappa \geq 2 $).
\end{proof}

\subsection{The supersymmetric case} 

The supersymmetric specialisation of the homogeneous complete symmetric function $ h_N $ is defined by \cite[I-3 ex. 23 p. 58]{MacDo}
\begin{align}\label{Def:SupersymHomogeneousComplete}
h_N\crochet{X - Y} := \crochet{t^N} H\crochet{t (X - Y) }
\end{align}

Since $ X - X = \emptyset $, we can suppose that $ x_i \neq y_j $ for all $ i, j $.

\begin{lemma}[Rescaled supersymmetric Chebychev polynomials in the microscopic regime]\label{Lemma:RescaledSupersym} 
Let $ k, m \geq 2 $ and $ X, Y $ be alphabets such that $ x_i \neq y_j $ for all $ i, j $. Suppose moreover that $ k - m \geq 2 $. Then, we have, locally uniformly in $ X + Y \in \Rr^{k + m} $ and for all $ c \in \Rr_+^* $
\begin{align*}
\frac{ 1}{ N^{ k - m - 1 } } h_{ \pe{c N} }\crochet{ e^{ 2 i \pi X/N } - e^{ 2 i \pi Y/N } } \tendvers{N}{+\infty} e^{i \pi c \sum_j x_j} h_{c, \infty}\crochet{X - Y}
\end{align*}
with 
\begin{align}\label{Def:SupersymHinfty}
h_{c, \infty}\crochet{X - Y} :=  ( 2 i \pi)^m c^k  \int_\Rr e^{ i \pi c (k - 2) \theta }  \prod_{j = 1}^k  \sinc\prth{ \pi c (x_j + \theta) } \prod_{\ell = 1}^m (\theta + y_j) d\theta
\end{align}
\end{lemma}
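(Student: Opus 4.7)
I would follow the same template as the proof of Lemma~\ref{Lemma:RescaledGegenbauer}: regularise $H\crochet{t(X-Y)}$ into an entire function so that the Fourier coefficient $[t^{\pe{cN}}]$ becomes an honest integral over the unit circle, then rescale the angular variable and apply dominated convergence. The only novelty here is that the supersymmetric specialisation $\Ae = X-Y$ carries both numerator factors $(1-ty_\ell)$ and denominator factors $(1-tx_j)^{-1}$, so the regularisation must kill only the latter. Concretely, applying Lemma~\ref{Lemma:ShallowCrucialLemma} with $Z := X^{N+1} = \ensemble{x_j^{N+1}}_{1 \leq j \leq k}$ and $r=1$ gives
\begin{align*}
h_N\crochet{X-Y} = [t^N]\, H\crochet{t(X-Y) - t^{N+1} X^{N+1}} = [t^N] \prod_{\ell=1}^m(1-ty_\ell) \prod_{j=1}^k \sum_{s=0}^N (tx_j)^s,
\end{align*}
the RHS being a polynomial in $t$ (the ostensible poles at $t = 1/x_j$ are killed by the factors $1-(tx_j)^{N+1}$). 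This step is the supersymmetric analogue of the regularisation used for $h_n^{(\kappa)}$ in the proof of Lemma~\ref{Lemma:RescaledGegenbauer}.

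Next, I would specialise $x_j \leftarrow e^{2i\pi x_j/N}$, $y_\ell \leftarrow e^{2i\pi y_\ell/N}$ and $N \leftarrow \pe{cN}$, extract the Fourier coefficient on the unit circle via $t = e^{2i\pi\alpha}$ with $\alpha \in [-1/2,1/2]$, and convert every factor into sines via $1-e^{i\phi} = -2ie^{i\phi/2}\sin(\phi/2)$ and the Dirichlet kernel $\sum_{s=0}^{\pe{cN}}e^{2i\pi s\beta} = e^{i\pi\pe{cN}\beta} \sin(\pi(\pe{cN}+1)\beta)/\sin(\pi\beta)$. Rescaling $\alpha = \theta/N$ puts the integration on $\theta \in [-N/2,N/2]$, and the pointwise asymptotics $\sin(\pi(\theta+y_\ell)/N) \sim \pi(\theta+y_\ell)/N$, $\sin(\pi(\pe{cN}+1)(\theta+x_j)/N) \to \sin(\pi c(\theta+x_j))$, and the phase exponentials $e^{i\pi(m\alpha + \pe{cN}\alpha(k-2))}$ etc.\ all converge in an obvious way. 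Combining the $N$-powers produces precisely the announced prefactor $N^{k-m-1}$, while the pointwise limit of the integrand reads
\begin{align*}
(-2i\pi)^m c^k\, e^{i\pi c\sum_j x_j}\, e^{i\pi c(k-2)\theta} \prod_{\ell=1}^m(\theta+y_\ell) \prod_{j=1}^k \sinc(\pi c(\theta+x_j)),
\end{align*}
which matches $e^{i\pi c\sum_j x_j} h_{c,\infty}\crochet{X-Y}$ from \eqref{Def:SupersymHinfty} (up to an overall $(-1)^m$ absorbed in the definition). As a sanity check one can independently recover the same limit from the partial-fraction expansion $h_N\crochet{X-Y} = \sum_j x_j^N \prod_\ell(1-y_\ell/x_j)/\prod_{j'\neq j}(1-x_{j'}/x_j)$ by direct Taylor expansion and then close the contour of the integral form in the upper half-plane, picking up residues at $\theta = x_j$.

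The main obstacle, as in the $h_{c,\infty}^{(\kappa)}$ case of annex~\ref{Sec:ProbReprGegenbauer}, is producing a uniform integrable majorant to justify passing the limit inside the integral. The polynomial factor $\prod_\ell \sin(\pi(\theta+y_\ell)/N)$ is harmlessly bounded by $(\pi/N)^m\prod_\ell(|\theta|+|y_\ell|)$, but the Dirichlet kernels are more delicate. The relevant two-sided bound is
\begin{align*}
\left|\frac{\sin(\pi(\pe{cN}+1)(\theta+x_j)/N)}{\sin(\pi(\theta+x_j)/N)}\right| \leq C \min\!\Bigl(N,\ \tfrac{1}{|\theta+x_j|/N}\Bigr) \leq C\, N \cdot \frac{1}{\max(1, |\theta+x_j|)},
\end{align*}
valid for $|\theta| \leq N/2$. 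Taking the product over $j$ and combining with the polynomial factor yields an $N$-independent majorant $\prod_\ell(1+|\theta+y_\ell|) \prod_j (1+|\theta+x_j|)^{-1}$, which decays like $|\theta|^{m-k}$ at infinity and is therefore in $L^1(\Rr)$ precisely because of the hypothesis $k - m \geq 2$; the additional hypothesis $k, m \geq 2$ ensures the scaling is non-degenerate. Dominated convergence then delivers the stated limit, locally uniformly in $(X,Y)$ because all pointwise estimates above are locally uniform in the parameters $x_j, y_\ell$.
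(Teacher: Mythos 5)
Your proof follows the paper's own route exactly: regularise via Lemma~\ref{Lemma:ShallowCrucialLemma} with $Z = X^{N+1}$, turn the Fourier coefficient into an angular integral on $[-N/2,N/2]$, identify the pointwise limit of the Dirichlet-kernel and $(1-e^{i\phi})$ factors, and close with dominated convergence under the hypothesis $k-m\geq 2$. Your treatment of the majorant is actually more explicit than the paper's (which only remarks that integrability follows from ``the decay of the functions''), and you correctly flag the $(-1)^m$ sign that the paper silently folds into the normalisation in~\eqref{Def:SupersymHinfty}.
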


\begin{remark}
If $ k - m = 1 $, the convergence still holds weakly, and the limiting integral is semi-convergent. We will see another convergence in lemma \ref{Lemma:RescaledSupersym2}.
\end{remark}

\begin{proof}
Using \ref{Lemma:ShallowCrucialLemma}, we have
\begin{align*}
h_N\crochet{X - Y} := \crochet{t^N} H\crochet{t (X - Y) - t^{N + 1} X^{N + 1} }
\end{align*}
hence
\begin{align*}
h_{ \pe{c N} } \crochet{ e^{ 2 i \pi X/N } - e^{ 2 i \pi Y/N } }  & =  \crochet{t^{ \pe{c N} } } H\crochet{ t e^{ 2 i \pi X/N } - t^{\pe{c N} + 1} e^{ 2 i \pi X(\pe{c N} + 1) /N }  - t Y } \\
            & = \int_{  - \frac{N}{2} }^{ \frac{N}{2} } e^{ - 2 i \pi \frac{\pe{cN} }{N} \theta } \prod_{j = 1}^k \frac{ e^{ i \pi \frac{\pe{c N} + 1}{N} (x_j + \theta) } \sin\prth{ \pi \frac{\pe{c N} + 1}{N} (x_j + \theta) } }{ e^{ i \pi \frac{x_j + \theta}{N}  } \sin\prth{ \pi \frac{x_j + \theta }{N} } } \\
            & \hspace{+7cm } \prod_{\ell = 1}^m \prth{ 1 - e^{ 2 i \pi \frac{y_\ell + \theta}{N} } } \frac{d\theta}{N} \\
            & \equivalent{N\to +\infty} N^{k - m - 1} e^{i \pi c \sum_j x_j} \int_\Rr e^{ i \pi c (k - 2) \theta }  \prod_{j = 1}^k  c \sinc\prth{ \pi c (x_j + \theta) } \\
            & \hspace{+8cm } \prod_{\ell = 1}^m 2 i \pi (\theta + y_\ell) d\theta
\end{align*}
where we have used dominated convergence, due to the continuity and the decay of the functions (the integrability comes from the fact that $ k - m \geq 2 $).
\end{proof}

One can give another expression of $ h_{c, \infty}\crochet{X - Y} $. Suppose first that all $ x_i, y_j $ are distinct. Using the partial fraction decomposition of $ t \mapsto H\crochet{t (X - Y) } $, namely
\begin{align}\label{Eq:PartialFracExpH}
H\crochet{t (X - Y) } =  \sum_{j = 1}^k \frac{1}{1 - t x_j} \frac{ \prod_{r = 1}^m (1 - y_r x_j\inv)  }{ \prod_{1 \leq i \leq k, i \neq j }  (1 - x_i x_j\inv) }
\end{align}
we get
\begin{align}\label{Eq:SupersymHnWithResidues}
h_N\crochet{X - Y} = \sum_{j = 1}^k  x_j^N \frac{ \prod_{r = 1}^m (1 - y_r x_j\inv) }{ \prod_{1 \leq i \leq k, i \neq j } (1 - x_i x_j\inv) }
\end{align}

This implies the following:

\begin{lemma}[Rescaled supersymmetric Chebychev polynomials in the microscopic regime]\label{Lemma:RescaledSupersym2} 
Let $ k, m \geq 2 $ such that $ k - m \geq 0 $ and $ X, Y $ be alphabets such that $ X + Y $ is composed with distinct elements. We have, locally uniformly in $ X + Y \in \Rr^{k + m} $ and for all $ c \in \Rr_+^* $
\begin{align*}
\frac{ 1}{ N^{ k - m - 1 } } h_{ \crochet{c N} }\crochet{ e^{ 2 i \pi X/N } - e^{ 2 i \pi Y/N } } \tendvers{N}{+\infty} (2 i \pi)^{ k - m - 1 } \sum_{j = 1}^k e^{2 i \pi c x_j} \frac{ \prod_{r = 1}^m ( y_r - x_j ) }{ \prod_{1 \leq i \leq k, i \neq j } ( x_i - x_j ) }
\end{align*}
namely 
\begin{align*}
h_{c, \infty}\crochet{X - Y} = (2 i \pi)^{ k - m - 1 } e^{- i \pi c \sum_j x_j} \sum_{j = 1}^k e^{2 i \pi c x_j} \frac{ \prod_{r = 1}^m ( y_r - x_j ) }{ \prod_{1 \leq i \leq k, i \neq j } ( x_i - x_j ) }
\end{align*}
\end{lemma}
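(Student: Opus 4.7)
The plan is to specialise the residue/partial fraction formula \eqref{Eq:SupersymHnWithResidues} to the alphabet $(e^{2i\pi X/N}, e^{2i\pi Y/N})$ and then to pass to the limit term by term. Because the formula \eqref{Eq:SupersymHnWithResidues} is a \emph{finite} sum of $k$ terms (as opposed to the integral representation \eqref{Def:SupersymHinfty}), no domination/integrability argument in an auxiliary variable is needed, which is precisely why the relaxed condition $k-m\ge 0$ suffices here, in contrast to the $k-m\geq 2$ required in Lemma~\ref{Lemma:RescaledSupersym}.

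Concretely, I would substitute $x_j \mapsto e^{2i\pi x_j/N}$, $y_r \mapsto e^{2i\pi y_r/N}$ and $N \mapsto \pe{cN}$ in \eqref{Eq:SupersymHnWithResidues} to get
\begin{align*}
h_{\pe{cN}}\crochet{e^{2i\pi X/N} - e^{2i\pi Y/N}}
 = \sum_{j=1}^k e^{2i\pi \pe{cN} x_j/N}\,
   \frac{\prod_{r=1}^m \!\!\left(1-e^{2i\pi(y_r-x_j)/N}\right)}{\prod_{i\neq j}\!\left(1-e^{2i\pi(x_i-x_j)/N}\right)}.
\end{align*}
Then for each fixed index $j$ I would apply the elementary expansion
$1 - e^{2i\pi t/N} = -2i\pi t/N + O(t^2/N^2)$
to each of the $m$ numerator factors and each of the $k-1$ denominator factors, keeping track of the overall power of $N$: the numerator contributes $(-2i\pi/N)^m\prod_r(y_r-x_j)(1+o(1))$ and the denominator $(-2i\pi/N)^{k-1}\prod_{i\neq j}(x_i-x_j)(1+o(1))$, so the ratio is equal to $N^{k-m-1}$ times the explicit constant $\sum_j e^{2i\pi c x_j}\prod(y_r-x_j)/\prod(x_i-x_j)$, up to the appropriate power of $2i\pi$, after using $e^{2i\pi \pe{cN} x_j/N} \to e^{2i\pi c x_j}$.

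The main point requiring care is the local uniformity: the hypothesis that $X+Y$ consists of pairwise distinct elements means that on any compact in $\{X+Y\in\Rr^{k+m}: X+Y \text{ distinct}\}$ one has a uniform lower bound $\min_{i\neq j}|x_i-x_j|\wedge\min_{r,j}|y_r-x_j|>\varepsilon$. This lets me make the $O(1/N)$ remainders in the expansion of each denominator factor \emph{uniform}, so that the denominators $(1-e^{2i\pi(x_i-x_j)/N})$ are bounded below by $c\, N^{-1}$, and thus dividing is harmless. With this uniformity the term-by-term limit is automatically locally uniform, and summing the $k$ explicit terms gives the announced expression.

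There is no real obstacle in this approach; the only subtle points are bookkeeping the sign/constant $(\pm 2i\pi)^{k-m-1}$ coming from the denominator-to-numerator exponent mismatch, and checking the separation hypothesis that guarantees uniformity. The identification of the limit with $h_{c,\infty}\crochet{X-Y}$ defined by \eqref{Def:SupersymHinfty} is then a consequence of the uniqueness of the limit obtained via the two representations (integral vs.\ residue sum), and in fact can be checked directly by evaluating the integral in \eqref{Def:SupersymHinfty} via the residue theorem applied to $\prod_\ell(\theta+y_\ell)\prod_j\sinc(\pi c(x_j+\theta))$ written as a contour integral, which recovers the sum over poles $\theta = -x_j$.
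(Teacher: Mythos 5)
Your proposal is correct and takes essentially the same approach as the paper: the paper's own proof is literally a one-liner — ``This is a straightforward rescaling of \eqref{Eq:SupersymHnWithResidues}. Details are left to the reader.'' — and your plan is precisely the term-by-term rescaling of that finite residue sum, with the pairwise-distinctness hypothesis supplying the uniform lower bound on denominators that you correctly identify as the point enabling the relaxation to $k-m\ge 0$. The only thing I would flag is to carry out the sign bookkeeping fully: each factor $1-e^{2i\pi t/N}\sim -2i\pi t/N$, so the $j$-th term's prefactor is $(-2i\pi)^{m-k+1}N^{k-m-1}$, and you should verify this matches the sign convention $(2i\pi)^{k-m-1}$ stated in the lemma rather than leaving it at ``up to the appropriate power of $2i\pi$''.
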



\begin{proof}
This is a straightforward rescaling of \eqref{Eq:SupersymHnWithResidues}. Details are left to the reader.
\end{proof}

\begin{remark}
We see that with the constraint of all elements of $ X + Y $ different, we do not need to suppose $ k - m \geq 2 $. The drawback is that one cannot see easily if the limiting function in lemma \ref{Lemma:RescaledSupersym2} is integrable in $X$. Nevertheless, one has also 
\begin{align*}
e^{ i \pi c \sum_j x_j} h_{c, \infty}\crochet{X - Y} & = (- 2 i \pi)^{ k - m - 1 } \sum_{j = 1}^k e^{2 i \pi c x_j} x_j^{m - k + 1} \frac{ \prod_{r = 1}^m ( 1 - y_r  x_j\inv ) }{ \prod_{1 \leq i \leq k, i \neq j } ( 1 - x_i x_j\inv ) } \\
              & = (- 2 i \pi)^{ k - m - 1 } \sum_{ \ell \geq 0 } \frac{ (2 i \pi c)^\ell }{\ell ! } h_{ \ell + m - k + 1 } \crochet{X - Y} \\
              & = (- 2 i \pi)^{ k - m - 1 } \oint_{ r \Uu } e^{ 2 i \pi c z\inv } z^{ m - k + 1 } H\crochet{z (X - Y) } \frac{d^*z}{z}, \quad r < \min_j\ensemble{ \abs{x_j}\inv }
\end{align*}

Note that this last expression is defined for possibly equal $ x_i $'s (but with $ x_i \neq y_j $ for all $i, j$). We thus have
\begin{align}\label{Eq:HcWithResidues}
h_{c, \infty}\crochet{X - Y} = (- 2 i \pi)^{ k - m - 1 } e^{ - i \pi c \sum_j x_j} \crochet{ z^{ k - m - 1 }  } e^{ 2 i \pi c z\inv } H\crochet{z (X - Y) }
\end{align}
and this last expression is clearly seen integrable in $ X $ if $ k - m \geq 2 $. 
\end{remark}

In the supersymmetric case, we set
\begin{align*}
\widetilde{h}_{c, \infty}\crochet{X - Y}  := e^{i \pi c \sum_j x_j} h_{c, \infty}\crochet{X - Y}
\end{align*}

The equivalent of the domination \eqref{Ineq:SpeedOfConvergenceHinftyBis} or \eqref{Ineq:SpeedOfConvergenceHinfty} is given by:

\begin{lemma}[Domination and speed of convergence, supersymmetric case]\label{Lemma:DominationSupersym}
Let $ X, Y $ be alphabets satisfying the hypotheses of lemma \ref{Lemma:RescaledSupersym} and recall that $ \norm{X + Y}_1 := \sum_{j = 1}^k \abs{x_j} + \sum_{\ell = 1}^m \abs{y_\ell} $. Then, 
\begin{align*}
N^{-k + m + 1} h_{\pe{c N} }\crochet{e^{2 i \pi \frac{X}{N} } - e^{2 i \pi \frac{Y}{N} } } = O\prth{ h_{c, \infty}\crochet{X - Y} }
\end{align*}
with a constant in the $ O(\cdot) $ independent of $ X, Y $, i.e. there exists $ C > 0 $ independent of $ X, Y $ such that
\begin{align*}
N^{-k + m + 1} \abs{ h_{\pe{c N} }\crochet{e^{2 i \pi \frac{X}{N} } - e^{2 i \pi \frac{Y}{N} } } } \leq C \abs{ h_{c, \infty}\crochet{X - Y} }
\end{align*}

Suppose moreover that $ k - m \geq 3 $. Then, 
\begin{align*}
N^{-k + m + 1} h_{\pe{c N} }\crochet{e^{2 i \pi \frac{X}{N} } - e^{2 i \pi \frac{Y}{N} } } - \widetilde{h}_{c, \infty}\crochet{X - Y} = O\prth{ \frac{ h_{c, \infty}\crochet{X - Y - \norm{X + Y}_1} }{N} }
\end{align*}
with a constant in the $ O(\cdot) $ independent of $ X, Y $, i.e. there exists $ C > 0 $ independent of $ X, Y $ such that
\begin{align*}
\abs{ N^{-k + m + 1} h_{\pe{c N} }\crochet{e^{2 i \pi \frac{X}{N} } - e^{2 i \pi \frac{Y}{N} }} - \widetilde{h}_{c, \infty}\crochet{X - Y} } \leq \frac{C}{N} \abs{ h_{c, \infty}\crochet{X - Y - \norm{X + Y}_1} }   
\end{align*}
\end{lemma}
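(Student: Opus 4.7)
The approach is to work directly from the integral representation of $h_{\pe{cN}}\crochet{e^{2i\pi X/N} - e^{2i\pi Y/N}}$ already derived in the proof of Lemma~\ref{Lemma:RescaledSupersym}, and to compare the integrand with that of $h_{c,\infty}\crochet{X-Y}$ given in \eqref{Def:SupersymHinfty}. No additional machinery is needed; everything reduces to elementary sine/exponential inequalities on the integration domain $|\theta|\leq N/2$, controlled uniformly in $(X,Y)$.

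For the uniform bound, the plan is to combine the two elementary estimates $|\sin(\pi u/N)|\geq 2|u|/N$ (valid for $|u|\leq N/2$) and $\sin(\pi c v)=\pi c v\cdot\sinc(\pi c v)$, yielding
$$\left|\frac{\sin\!\left(\pi\tfrac{\pe{cN}+1}{N}(x_j+\theta)\right)}{\sin(\pi(x_j+\theta)/N)}\right|\;\leq\; \tfrac{\pi}{2}(c+1)\,N\,|\sinc(\pi c(x_j+\theta))|$$
uniformly in $N$, $x_j$, $\theta$. Paired with the trivial bound $|1-e^{2i\pi v/N}|\leq 2\pi|v|/N$, the modulus of the integrand is at most $C N^{k-m}\prod_j|\sinc(\pi c(x_j+\theta))|\prod_\ell|y_\ell+\theta|$. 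Dividing by $N$ and comparing with \eqref{Def:SupersymHinfty} gives the first inequality with a constant independent of $(X,Y)$; the condition $k-m\geq 2$ in Lemma~\ref{Lemma:RescaledSupersym} ensures that the bounding integral converges.

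For the speed of convergence, the plan is to write the integrand as the limiting integrand (that of $\widetilde{h}_{c,\infty}$) plus a remainder, and Taylor-expand each of the three building blocks in powers of $1/N$. Setting $\varepsilon_N:=(\pe{cN}+1)/N-c=O(1/N)$, one uses
$$\sinc(\pi(c+\varepsilon_N)u)=\sinc(\pi c u)+O(\varepsilon_N|u|),\qquad 1-e^{2i\pi v/N}=-\tfrac{2i\pi v}{N}\bigl(1+O(|v|/N)\bigr),$$
and the analogous first-order expansion for the phase factors. Collecting terms, the pointwise difference between $\Phi_N(\theta)/N^{k-m-1}$ and the limiting integrand is $O(1/N)$ times a finite sum of terms, each equal to $\prod_j|\sinc(\pi c(x_j+\theta))|\prod_\ell|y_\ell+\theta|$ multiplied by exactly one extra linear factor of the form $|x_j+\theta|$, $|y_\ell+\theta|$, or $|\theta|$.

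The main obstacle, and the step that dictates the exact form of the second inequality, is the absorption of each such extra linear factor into a shifted representation of $h_{c,\infty}$. The idea is that adjoining the single scalar $\|X+Y\|_1$ to the subtracted alphabet produces, in the integrand of $h_{c,\infty}\crochet{X-Y-\|X+Y\|_1}$, an extra factor $(\theta+\|X+Y\|_1)$; by the triangle inequality $|x_j+\theta|,|y_\ell+\theta|,|\theta|\leq |\theta+\|X+Y\|_1|+2\|X+Y\|_1$, and after a routine split of the $\theta$-integral into $\{|\theta|\leq 1\}$ and $\{|\theta|\geq 1\}$ to dispose of the additive constant, each of the unwanted linear factors is dominated by $|\theta+\|X+Y\|_1|$. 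The hypothesis $k-m\geq 3$ enters precisely at this point: the shifted integrand contains $m+1$ subtracted elements, so one needs $k-(m+1)\geq 2$ for the integral defining $h_{c,\infty}\crochet{X-Y-\|X+Y\|_1}$ to be absolutely convergent, exactly as in Lemma~\ref{Lemma:RescaledSupersym}.
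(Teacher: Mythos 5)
Your route is genuinely different from the paper's. You work directly with the sine-ratio integral representation from the proof of Lemma~\ref{Lemma:RescaledSupersym} and try to force a pointwise bound on the integrand, whereas the paper first rewrites the Dirichlet-kernel factor $\sum_{\ell=0}^{N}t^\ell$ probabilistically as $(N+1)\,\Esp\big[t^{U_N}\big]$ for $U_N$ uniform on $\{0,\dots,N\}$, couples $U_N/N$ to a continuous uniform $U$ via $U_N=\pe{NU}$, and performs the domination at the level of characteristic functions, absorbing the $O(1/N)$ coupling error into the phase rather than taking absolute values inside the $\theta$-integral.

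The gap is that your pivotal pointwise inequality
\begin{align*}
\left|\frac{\sin\left(\pi\tfrac{\pe{cN}+1}{N}(x_j+\theta)\right)}{\sin\left(\pi(x_j+\theta)/N\right)}\right|
\;\leq\;
\frac{\pi}{2}(c+1)\,N\,\left|\sinc\left(\pi c(x_j+\theta)\right)\right|
\end{align*}
is false. The numerator's argument differs from $\pi c(x_j+\theta)$ by a shift of order $(x_j+\theta)/N$, which over the full range $|\theta|\leq N/2$ is of order $1$; the two sines therefore do not share zero sets, and near the zeros of $\sin(\pi c(x_j+\theta))$ the left-hand side is not controlled. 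Concretely, take $c=1$ and $x_j+\theta=2$: the right-hand side vanishes since $\sinc(2\pi)=0$, while the left-hand side equals $\left|\sin\!\left(2\pi+\tfrac{2\pi}{N}\right)/\sin\!\left(\tfrac{2\pi}{N}\right)\right|=1$, so no uniform constant exists. What your two elementary estimates actually give is the envelope $\min\!\big(\pe{cN}+1,\,N/(2|x_j+\theta|)\big)$, i.e.\ sinc-like decay at the $N$-dependent scale $(\pe{cN}+1)/N$, not at $c$; you would still need to compare the resulting integral with $h_{c,\infty}\crochet{X-Y}$, which you do not do. The organization of your second part (Taylor expansion of the phase, absorption of the extra linear factor into the shifted alphabet $X-Y-\|X+Y\|_1$, with $k-m\geq 3$ giving convergence of the shifted integral) is sensible and mirrors the spirit of the paper's argument, but it inherits the same defect because it rests on the same incorrect uniform bound.
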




\begin{proof}
Let $  U_N $ be a uniform random variable in $ \intcrochet{0, N} $. We have
\begin{align*}
H\crochet{t X - t^{N + 1} X^{N + 1} } = \prod_{j = 1}^k \sum_{\ell = 0}^N (tx_j)^\ell =  \prod_{j = 1}^k N\Esp{ (tx_j)^{U_N} } = N^k \Esp{ \prod_{j = 1}^k (t x_j)^{U_N^{(j)} } }
\end{align*}
where $ (U_N^{(j)})_{1 \leq j \leq k} $ is a sequence of i.i.d. random variables uniformly distributed in $ \intcrochet{0, N} $. 

Consider now a sequence $ (U^{(\ell + k)})_{1 \leq \ell \leq m} $ of i.i.d. random variables uniformly distributed in $ \crochet{0, 1} $ and note that for $ U $ such a random variable
\begin{align*}
1 - e^{i a } = -i a \int_0^1 e^{i u } du = - ia \Esp{e^{i a U} }
\end{align*}

Then, one has
\begin{align*}
h_{\pe{cN} }\crochet{ e^{2 i \pi \frac{X}{N} } - e^{2 i \pi \frac{Y}{N} } }  & = \int_{-\frac{1}{2} }^{ \frac{1}{2} } e^{ - 2 i \pi \alpha \pe{cN} }   N^k \Esp{ e^{ 2 i \pi \sum_{j = 1}^k \prth{ \alpha + \frac{x_j}{N} } U_N^{(j)}  }   }  \\
                   & \hspace{+4.5cm } \times \prod_{\ell = 1}^m (-2 i \pi) \prth{ \alpha + \frac{y_\ell}{N} } \Esp{ e^{ 2 i \pi \, \prth{ \alpha + \frac{y_\ell}{N} } U^{(k + \ell)} } } d\alpha \\
                   & = N^{k - m - 1} (-2 i \pi )^m  \int_{-\frac{N}{2} }^{ \frac{N}{2} } e^{ - 2 i \pi \theta \frac{\pe{cN}}{N } } \Esp{ e^{ 2 i \pi \sum_{j = 1}^k \frac{\theta + x_j}{N}  U_N^{(j)}  }   } \prod_{\ell = 1}^m  \prth{ \theta + y_\ell } \\
                   & \hspace{+7.8cm } \times  \Esp{ e^{ 2 i \pi \sum_{\ell = 1}^m \frac{\theta + y_\ell}{N} U^{(k + \ell)} } } d\theta 
\end{align*}

Recall that $ \frac{\pe{cN}}{N} = c - \frac{\pf{cN} }{N} = c - O(N\inv) $ since $ \pf{x}\in [0, 1) $ for all $ x \in \Rr $. Moreover, we have the classical coupling between $ U_N $ and $ U := \lim_{N\to+\infty} \frac{U_N}{N} $ (limit in law), namely
\begin{align}\label{Eq:CouplingUniform}
U_N \eqlaw \pe{ N U }, \qquad U \sim \Us([0, 1]) 
\end{align}

Thus, defining $ U_N^{(j)} $ by $ \pe{N U^{(j)} } $ for a sequence of i.i.d. uniform random variables in $ \crochet{0, 1} $ and using the fact that the fractional part is uniformly bounded by 1 (a deterministic uniform constant), we get
\begin{align*}
F_N(\theta ; X, Y, c) & := e^{ - 2 i \pi \theta \frac{\pe{cN}}{N } } \Esp{ e^{ 2 i \pi \sum_{j = 1}^k (\theta + x_j) \frac{U_N^{(j)}}{N}   + 2 i \pi \sum_{\ell = 1}^m \frac{\theta + y_\ell}{N} U^{(k + \ell)} } }  \\
                   & = \Esp{ e^{ 2 i \pi \, \prth{  - c \theta +  \sum_{j = 1}^k (\theta + x_j) U^{(j)} +  \frac{1}{N} O\,\prth{ 1 + \sum_{\ell = 1}^m (\theta + y_\ell) +  \sum_{j = 1}^k (\theta + x_j)   } } } }
\end{align*}

Let us prove the first statement: for $ \theta \in \crochet{ - \frac{N}{2}, \frac{N}{2} } $, one has $ \abs{\frac{\theta}{N}} \leq \frac{1}{2} $ i.e. $ \frac{\theta}{N} = O(1) $, thus
\begin{align*}
F_N(\theta ; X, Y, c) = O\prth{  \Esp{ e^{ 2 i \pi \, \prth{  - c \theta +  \sum_{j = 1}^k (\theta + x_j) U^{(j)}  } } } } e^{ \frac{1}{N} O\,\prth{ 1 + \sum_{\ell = 1}^m  y_\ell  +  \sum_{j = 1}^k  x_j  } }
\end{align*}
and, by integrating on $ \crochet{ - \frac{N}{2}, \frac{N}{2}} $
\begin{align*}
h_{\pe{cN} }\crochet{ e^{2 i \pi \frac{X}{N} } - e^{2 i \pi \frac{Y}{N} } }   =    O\prth{ \int_{-\frac{N}{2} }^{ \frac{N}{2} } \Esp{ e^{ 2 i \pi \, \prth{  - c \theta +  \sum_{j = 1}^k (\theta + x_j) U^{(j)}  } } } \prod_{\ell = 1}^m (\theta + y_\ell) d\theta }  
\end{align*}

Last, using the fact that $ \Un_{ \crochet{ - \frac{N}{2}, \frac{N}{2}} } = \Un_{\Rr_+} + O(N^{-r}) $ for all $ r \geq 1 $, we get
\begin{align}\label{Ineq:DominationSupersymWithoutSpeed}
\frac{1}{N^{k - m + 1}} h_{\pe{cN}}\crochet{ e^{2 i \pi \frac{X}{N} } - e^{2 i \pi \frac{Y}{N} } } = O\prth{ h_{c, \infty }\crochet{ X - Y }  }  
\end{align}
which is the first inequality. This domination allows to use dominated convergence if the limiting function is proven integrable. 

Let us now prove the second statement. We have
\begin{align*}
F_N(\theta ; X, Y, c)  & = \Esp{ e^{ 2 i \pi \, \prth{  - c \theta +  \sum_{j = 1}^k (\theta + x_j) U^{(j)}  } } } e^{ \frac{1}{N} O\,\prth{ 1 + \sum_{\ell = 1}^m (\theta + y_\ell) +  \sum_{j = 1}^k (\theta + x_j)   }} \\
                & = \Esp{ e^{ 2 i \pi \, \prth{  - c \theta +  \sum_{j = 1}^k (\theta + x_j) U^{(j)}  } } } \prth{1 + \frac{1}{N} O \prth{ \theta + \sum_{\ell = 1}^m \abs{y_\ell} + \sum_{j = 1}^k \abs{x_j} }}
\end{align*}

Integrating on $ \Rr $ against $ \Un_{ \crochet{ - \frac{N}{2}, \frac{N}{2}} } = \Un_{\Rr_+} + O(N^{-r}) $ for all $ r \geq 1 $, one thus gets
\begin{align}\label{Ineq:DominationSupersymWithSpeed}
\frac{1}{N^{k - m + 1}} h_{\pe{cN}}\crochet{ e^{2 i \pi \frac{X}{N} } - e^{2 i \pi \frac{Y}{N} } } - \widetilde{h}_{c, \infty }\crochet{ X - Y }= O\prth{ \frac{h_{c, \infty }\crochet{ X - Y - \norm{X + Y}_1 } }{N} }  
\end{align}
where the constant in the $ O(\cdot) $ is uniform in $ X, Y $, i.e. there exists $ C > 0 $ s.t. 
\begin{align}\label{Ineq:DominationSupersymWithSpeedBis}
\abs{ \frac{1}{N^{k - m + 1}} h_{\pe{cN}}\crochet{ e^{2 i \pi \frac{X}{N} } - e^{2 i \pi \frac{Y}{N} } } - \widetilde{h}_{c, \infty }\crochet{ X - Y } } \leq \frac{C }{N} \abs{ h_{c, \infty }\crochet{ X - Y - \norm{X + Y}_1 }  }  
\end{align}
which concludes the proof.
\end{proof}


\begin{remark}
A probabilistic representation in the same vein as \eqref{Eq:NegBinRepresentation_h_n} can be given for $ h_N\crochet{X - Y} $: Let $ 0 < r < \min\!\ensemble{1, \max_{1\leq \ell \leq k} \abs{x_\ell}\inv} $, let $ (B_j(r))_{j \geq 1} $ a sequence of i.i.d. $ \ensemble{0, 1} $-Bernoulli random variables of parameter $ \frac{r}{1 + r} $ and $ (G_\ell(r))_{\ell \geq 1} $ a sequence of i.i.d. Geometric random variables of parameter $r$ (i.e. negative binomial random variables with $ \kappa = 1 $), both sequences being independent. Then,
\begin{align*}
h_N\crochet{X - Y} & = \crochet{t^N}H\crochet{t(X - Y)} = r^{-N} \oint_{\Uu} z^{-N} \prod_{j = 1}^m (1 - r z y_j) \prod_{\ell = 1}^k \frac{1}{1 - r z  x_\ell } \frac{d^*z}{z} \\
               & = r^{-N} \frac{ (1 + r)^m}{(1-r)^k } \oint_{\Uu} z^{-N} \prod_{j = 1}^m \frac{1 - r z y_j}{1 + r } \prod_{\ell = 1}^k \frac{1 - r}{1 - r z  x_\ell } \frac{d^*z}{z} \\
               & = r^{-N} \frac{ (1 + r)^m}{(1-r)^k } \oint_{\Uu} z^{-N} \, \Esp{ \prod_{j = 1}^m (-z y_j)^{B_j(r) } \prod_{\ell = 1}^k (z x_\ell)^{G_\ell(r)} } \frac{d^*z}{z} \\
               & = r^{-N} \frac{ (1 + r)^m}{(1-r)^k } \, \Esp{ \prod_{j = 1}^m (- y_j)^{B_j(r) } \prod_{\ell = 1}^k  x_\ell^{G_\ell(r)} \Unens{ \sum_{j = 1}^m B_j(r) + \sum_{\ell = 1}^k G_\ell(r) = N } } \\
               & = A_{m, k}(N) \, \Esp{ \prod_{j = 1}^m (- y_j)^{B_j(r) } \prod_{\ell = 1}^k  x_\ell^{G_\ell(r)} \Bigg\vert \sum_{j = 1}^m B_j(r) + \sum_{\ell = 1}^k G_\ell(r) = N  }
\end{align*}
with $ A_{m, k}(N) := \crochet{t^N} \frac{(1 + t)^m }{ (1 - t)^k } = h_N\crochet{ 1^{\plusInOne k} - \varepsilon 1^{\plusInOne m} } $.

Nevertheless, this expression does not seem suitable for asymptotic analysis.

\end{remark}

$ $

We conclude this part with the integrability of $ U \mapsto h_{c, \infty} \crochet{ U \tensorsum (X - Y) } $ with $ U := \ensemble{ u_1, \dots, u_K} $, $ X := \ensemble{ x_1, \dots, x_R} $ and $ Y := \ensemble{ y_1, \dots, y_L } $ as this will be the relevant function at stake in theorems \ref{Theorem:Ratios} and \ref{Theorem:Derivatives}.

\begin{lemma}[Integrability of the domination, supersymmetric case]\label{Lemma:IntegrabilitySupersym}
Let $ M \geq 1 $, $ R, K \geq 1 $ and $ L \geq 0 $ with $ K(R - L) \geq 2 $. Define
\begin{align*}
A' & := \int_{\Rr^K} \abs{  h_{c, \infty} \crochet{ U\tensorsum (X - Y)} }^M \Delta(u_1, \dots, u_K)^2 \, du_1\dots du_K  \\
B' & := \int_{\Rr^K} \abs{  h_{c, \infty} \crochet{ U\tensorsum (X - Y - \norm{X +Y}_1)} }^M \Delta(u_1, \dots, u_K)^2 \, du_1\dots du_K 
\end{align*}

Then, 
\begin{align}\label{Ineq:IntegrabilitySupersymDomination}
\begin{aligned}
& A' < \infty \qquad \Longleftrightarrow \qquad K( M (R - L) - K ) >  M  \\  
& B' < \infty \qquad \Longleftrightarrow \qquad K( M (R - L - 1) - K ) >  M
\end{aligned}
\end{align}
\end{lemma}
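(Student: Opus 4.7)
The plan is to prove both integrability statements by the same scheme: bound $|h_{c,\infty}\crochet{U \tensorsum (X-Y)}|$ pointwise via the integral representation~\eqref{Def:SupersymHinfty}, raise the bound to the $M$-th power by Fubini, and then determine the critical condition by a homogeneous scaling analysis on $\Rr^{K+M}$. The ``iff'' will come from matching an upper bound with a lower bound that has the same homogeneous behavior at infinity.

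First I will apply the triangle inequality in~\eqref{Def:SupersymHinfty} with the positive alphabet $U\tensorsum X$ of cardinality $KR$ and the negative alphabet $U\tensorsum Y$ of cardinality $KL$, to get
\[
\abs{h_{c,\infty}\crochet{U\tensorsum(X-Y)}} \leq (2\pi)^{KL} c^{KR}\!\int_\Rr \prod_{i,j} \abs{\sinc(\pi c(u_i+x_j+\theta))}\prod_{i,\ell}(1+\abs{u_i+y_\ell+\theta})\, d\theta.
\]
Raising to the $M$-th power, Fubini produces $M$ independent copies of the $\theta$-variable, so that
\[
A' \leq C\!\int_{\Rr^{K+M}}\!\prod_{r=1}^M\!\prod_{i,j}\abs{\sinc(\pi c(u_i+x_j+\theta_r))}\prod_{r=1}^M\!\prod_{i,\ell}(1+\abs{u_i+y_\ell+\theta_r})\,\Delta(U)^2\,dU\,d\Theta.
\]
Counting factors, the integrand involves $KRM$ sinc factors and $KLM$ polynomial factors, times the squared Vandermonde of degree $K(K-1)$.

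The core step is a homogeneous scaling. Parametrize $(U,\Theta)=t(V,\Sigma)$ with $(V,\Sigma)$ on the unit sphere of $\Rr^{K+M}$ avoiding the codimension-one ``resonance'' set $\{v_i+\sigma_r=0\}$; there $\Delta(tV)^2=t^{K(K-1)}\Delta(V)^2$, each sinc contributes $t^{-1}$ via $|\sinc(\pi c t(v_i+\sigma_r+O(1/t)))| \lesssim t^{-1}|v_i+\sigma_r|^{-1}$, and each polynomial factor contributes $t$. With the polar Jacobian $t^{K+M-1}$, the radial integrand scales as
\[
t^{\,K(K-1)-KM(R-L)+K+M-1},
\]
whose integrability at $+\infty$ is exactly $K(M(R-L)-K)>M$. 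The same analysis with $Y$ replaced by $Y-\norm{X+Y}_1$ shifts $L$ to $L+1$, yielding the criterion for $B'$. Conversely, choosing a compactly-supported test direction away from the resonance set realizes this scaling as a genuine lower bound, giving the converse implication.

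The main obstacle will be the neighborhood of the ``resonance set'' $\{v_i+\sigma_r=0\}$ in the unit sphere, where the sinc no longer produces $t^{-1}$ decay; equivalently, the integrand possesses a translation invariance $(U,\Theta)\mapsto(U+s\mathbf 1,\Theta-s\mathbf 1)$ that must be broken by the application (in practice by pinning one of the $u_i$'s, as done in the usages of this lemma through Remark~\ref{Rk:IntegrabilityDominationSupersym}, which contributes the missing factor $1/t$ in the radial count). I will handle it by a partition of unity separating a conical neighborhood $\Ne_\varepsilon$ of the resonance set from its complement: outside $\Ne_\varepsilon$ the scaling above gives convergence under the stated condition; inside $\Ne_\varepsilon$ one changes variables by setting $\eta_r^{(i)}=u_i+\sigma_r\cdot t$ on each sheet, which turns the sincs there into bounded factors but simultaneously reduces the effective scaling dimension by one, so that the contribution is controlled by the non-resonant count. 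Once both pieces are shown to satisfy the same inequality, the full sufficiency and necessity follow as in~\eqref{Ineq:IntegrabilityDomination}.
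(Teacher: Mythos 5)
Your proposal mirrors the paper's own proof in structure (triangle inequality applied to the integral representation \eqref{Def:SupersymHinfty}, Fubini to obtain an $\Rr^{K+M}$ integral, then a degree count at infinity), and you explicitly flag something the paper's argument silently glosses over: the direction $(\Un_K,-\Un_M)$ in $\Rr^{K+M}$ along which the arguments $t_m+u_j+x_r$ do not escape to infinity. However, your proposed fix — a partition of unity isolating a cone around this ``resonance set'' together with a change of variables claimed to lower the effective dimension — cannot close the gap, because the obstruction is not a thin bad set but an exact one-parameter symmetry. Substituting $\theta\mapsto\theta-s$ in \eqref{Def:SupersymHinfty} gives $\abs{h_{c,\infty}\crochet{\Ae+s}}=\abs{h_{c,\infty}\crochet{\Ae}}$ for every real $s$, and $\Delta(U)$ is translation-invariant, so the integrand of $A'$ itself is constant along each line $U\mapsto U+s\Un_K$ (and the dominating integrand on $\Rr^{K+M}$ is constant along $(U,T)\mapsto(U+s\Un_K,T-s\Un_M)$). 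A nonnegative, not-a.e.-zero function that is constant along a family of parallel lines in $\Rr^K$ has infinite integral; so $A'=+\infty$ regardless of $(K,R,L,M)$, the ``if'' half of \eqref{Ineq:IntegrabilitySupersymDomination} is false as stated, and no local argument near the resonance direction can repair it. The paper's own proof, which bounds $A'$ by an $\Rr^{K+M}$ integral and then asserts convergence ``due to the decay of $\sinc$,'' is subject to exactly the same objection — so you are not worse off than the paper, but the lemma does need restating, not a better partition of unity.

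Your own remark that the invariance ``must be broken by the application, in practice by pinning one of the $u_i$'s'' identifies the correct cure. In every actual use of the lemma the argument of $h_{c,\infty}$ carries a fixed extra element $0$, e.g.\ $h_{c,\infty}\crochet{(0+T)\tensorsum(X-Y)}$ in Theorem~\ref{Theorem:Ratios} or $h^{(\kappa)}_{c,\infty}(0,x_2,\dots,x_k)$ in Theorem~\ref{Thm:KSwithDualityBis}; the additional factor $\sinc(\pi c\theta)$ this puts into \eqref{Def:SupersymHinfty} is pinned at $\theta=0$, destroys the translation symmetry, and supplies the missing $1/t$ decay along the would-be resonance direction. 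The honest lemma should be stated for $(0+U)\tensorsum(X-Y)$ rather than $U\tensorsum(X-Y)$; the $\Delta(0,U)^2$ modification in Remark~\ref{Rk:IntegrabilityDominationSupersym} by itself does not help, as it only adds polynomial growth. With that correction your scaling count goes through and becomes a genuine proof, with the pinned $0$ providing precisely the factor you flagged but never actually incorporated into the estimate.
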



\begin{proof}
Define $ [n] := \intcrochet{1, n} $ for all $ n \in \Nn^* $. Using \eqref{Def:SupersymHinfty}, we have
\begin{align*}
\abs{  h_{c, \infty}\crochet{U \tensorsum (X - Y)} }  & = c^{K R} (2\pi)^{K L} \abs{ \int_\Rr   e^{   i \pi c (k  - 2) t } \!\!\!\prod_{j \in [K], r \in [R]} \!\!\!  \sinc\prth{   \pi c (t + u_j + x_r) } \!\!\!\prod_{j \in [K], \ell \in [L]}\!\!\! (t + u_j + y_\ell) dt } \\
                  & \leq c^{K R} (2\pi)^{K L} \int_\Rr   \prod_{j \in [K], r \in [R]}  \abs{ \sinc\prth{   \pi c (t + u_j + x_r) } } \prod_{j \in [K], \ell \in [L]} \abs{ t + u_j + y_\ell  } dt
\end{align*}
thus, setting $ A'' := \frac{A'}{ c^{K R M} (2\pi)^{K L M}  } $, one gets
\begin{align*}
A''  \leq \int_{ \Rr^{K + M} } \prod_{j \in [K], r \in [R], m \in [M] }  \abs{ \sinc\prth{   \pi c (t_m + u_j + x_r) } } \prod_{j \in [K], \ell \in [L], m \in [M]} \abs{ t_m + u_j + y_\ell  } \Delta(U)^2 dU dT
\end{align*}

The function 
\begin{align*}
(U, T) \mapsto \prod_{j \in [K], r \in [R], m \in [M] }  \abs{ \sinc\prth{   \pi c (t_m + u_j + x_r) } } \prod_{j \in [K], \ell \in [L], m \in [M]} \abs{ t_m + u_j + y_\ell  } \Delta(U)^2 
\end{align*}
is continuous on any compact of $ \Rr^{K + M} $, hence is integrable on such a compact. The only possible problem is in a neighbourhood of infinity, and due to the decay of $ \sinc $, this will be integrable if and only if $  K R M - K L M - K(K - 1) > K + M $ which amounts to $ K( R M - L M - K ) >  M $.

The second assertion amounts to use the first with the alphabet $ Y' := Y + \norm{X + Y}_1 $ which has exactly $ L + 1 $ elements. Hence the result.
\end{proof}

\begin{remark}\label{Rk:IntegrabilityDominationSupersym}
In the case of a Vandermonde determinant $ \Delta\crochet{0 + U} $, i.e.
\begin{align*}
A''  & := \int_{\Rr^K} \abs{  h_{c, \infty} \crochet{ U\tensorsum (X - Y)} }^M \Delta(u_1, \dots, u_K, 0)^2 \, du_1\dots du_K  \\
             & =  \int_{\Rr^K} \abs{  h_{c, \infty} \crochet{ U\tensorsum (X - Y)} }^M \prod_{j = 1}^k u_j^2 \, \Delta(u_1, \dots, u_K )^2 \, du_1\dots du_K
\end{align*}
one has to add $ 2 $ to $ L $ as there is an additional polynomial of degree $ 2K $. The criteria \eqref{Ineq:IntegrabilitySupersymDomination} thus becomes
\begin{align}\label{Ineq:IntegrabilitySupersymDominationBis}
A'' < \infty \qquad \Longleftrightarrow \qquad K( M (R - L - 2) - K ) >  M
\end{align}
\end{remark}

$ $

$ $

\section{Truncated $ \zeta $ function in the microscopic scaling}\label{Sec:NumberTheory}

\subsection{Reminders}

Let $ (Z_p)_{p \in \Pe} $ be a sequence of i.i.d random variables uniformly distributed on the unit circle $ \Uu $. Recall that for $ X \geq 1 $ and $ \sigma \in \Rr_+ $, we have defined
\begin{align*}
\Ze_{X, \sigma} := \sum_{k = 1}^X \frac{1}{k^\sigma} \prod_{p \in \Pe } Z_p^{v_p(k)}
\end{align*}

The moments of this random variable have been successively investigated by Conrey-Gamburd for the case $ \sigma = \frac{1}{2} $ (see \cite{ConreyGamburd}), by  Harper-Nikeghbali-Radziwi\l\l~ for the case $ \sigma = 0 $ (see \cite{HarperNikeghbaliRadziwill}) and by Heap-Lindqvist for the general case (see \cite{HeapLindqvist}). The case $ \sigma > 1/2 $ gives a convergence in $ L^2 $ for $ \Ze_{X, \sigma} $, a particular case of the Bohr-Jessen theorem (see \cite{BohrJessen}) in the case of an additional truncation ; it is interesting to remark that in this case,
\begin{align*}
\Ze_{X, \sigma} \cvlaw{X}{+\infty }  \Ze_{\infty, \sigma} = \sum_{n \geq 1} \frac{1}{k^\sigma} \prod_{p \in \Pe } Z_p^{v_p(k)} = \prod_{p \in \Pe} \frac{1}{1 - p^{-\sigma} Z_p } 
\end{align*}
and that 
\begin{align*}
\Esp{ \abs{ \Ze_{\infty, \sigma} }^{2k} } & =  \Esp{ \abs{ \prod_{p \in \Pe} \frac{1}{1 - p^{-\sigma} Z_p } }^{2k} } = \prod_{p \in \Pe} \Esp{ \abs{ 1 - p^{-\sigma} Z_p  }^{-2k} } \\
                 & = e^{ \zeta_\Pe(\sigma) k^2 } \prod_{p \in \Pe} e^{ - \frac{k^2}{p^{2\sigma}} } \Esp{ \abs{ 1 - p^{-\sigma} Z_p  }^{-2k} }, \qquad \zeta_\Pe(\sigma) := \sum_{p \in \Pe} p^{-2\sigma}.
\end{align*}

The arithmetic factor $ a_k $ defined in \eqref{Def:ArithmeticFactor} is thus given by the previous product for $ \sigma = 1/2 $.

\subsection{A phase transition}

In view of \eqref{Eq:BehaviourRandomMultiplicativeFunction}, a natural question concerns the phase transition between the log-normal regime $ \sigma = \frac{1}{2} $ and the non-Gaussian regime $ \sigma < 1/2 $. Define the random variable 
\begin{align*}
\Ze^*_X(\lambda) := \Ze_{X, \frac{1}{2} - \frac{\lambda}{\log(X)} } := \sum_{k = 1}^X \frac{1}{k^{ \frac{1}{2} - \frac{\lambda}{\log(X)} } } \prod_{p \in \Pe } Z_p^{v_p(k)} 
\end{align*}

\begin{theorem}[Moments of $ \Ze^*_X(\lambda) $]\label{Theorem:PhaseTransition}
For all $ \lambda \geq 0 $, we have when $ X \to +\infty $
\begin{align*}
\Esp{ \abs{\Ze_X^*(\lambda) }^{2k} } \sim a_k \gamma_k \lambda^{k^2} e^{2k \lambda } (\log X)^{ k^2 }
\end{align*}
\end{theorem}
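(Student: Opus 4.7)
The strategy, in line with the Keating--Snaith philosophy that runs throughout this paper, is to separate the arithmetic contribution of small primes from the ``random matrix'' contribution of large primes, and then identify the latter with a microscopic limit of the truncated characteristic polynomial. Fix an auxiliary threshold $Y = Y(X) \to +\infty$ with $\log Y = o(\log X)$, and use the multiplicativity of $n \mapsto \prod_{p} Z_p^{v_p(n)}$ to split $\Ze_X^*(\lambda)$ according to the $Y$-smooth and $Y$-rough parts of the summation indices. Since $p^{-\sigma_X} \to p^{-1/2}$ uniformly for $p \leq Y$, the small-prime factor converges to a convergent Euler product whose $2k$-th moment tends to the arithmetic factor $a_k = A_k$ given by \eqref{Def:ArithmeticFactor}; the large-prime factor will carry both the power of $\log X$ and the dependence on $\lambda$.

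The second step is to rewrite the large-prime moment in the framework of \S\ref{Subsec:TruncatedCharpol}. Expanding gives a sum over $2k$-tuples $(n_1, \dots, n_k, m_1, \dots, m_k) \in \intcrochet{1, X}^{2k}$ of $Y$-rough integers satisfying $n_1 \cdots n_k = m_1 \cdots m_k$, weighted by $(n_1 \cdots n_k)^{-1} \, e^{2\lambda \log(n_1 \cdots n_k)/\log X}$ (using the constraint to symmetrise the exponent). Setting $N := \log X$ and passing to logarithmic variables $u_i := \log n_i / N$, $v_j := \log m_j / N$, the sum becomes a Riemann sum over the $(2k-1)$-dimensional section of the hypercube $[0,1]^{2k}$ cut out by $\sum u_i = \sum v_j$, with a tilted weight $\exp(2\lambda \sum u_i)$. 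This is precisely the exponentially tilted Conrey--Gamburd polytope integral of \S\ref{Subsec:Birkhoff}, which via the Beck--Pixton formula \eqref{Eq:BeckFormulaSubBirkhoff} and the reproducing-kernel identity \eqref{Eq:SchurWithHn} is isomorphic to $\Esp{\, |Z_{U_N, \pe{N/k}}(e^{\lambda/N})|^{2k}}$ in the microscopic scaling. Theorem~\ref{Theorem:TruncatedCharpolMicro} at $\rho = 1/k$ then yields a leading term $N^{k^2} \Ze\Te_{1/k, \lambda}^{(k)}$, and the explicit form \eqref{EqPhi:TruncatedCharpolMicro} of $\Phi_{\Ze\Te_{1/k, \lambda}^{(k)}}$ contains the factor $e^{2k\lambda}$ directly; the remaining $\lambda^{k^2}$ coefficient is extracted by a stationary-phase analysis of the $k$-fold integral, identifying the dominant contribution of the difference $\widetilde h_{1, \infty}(0, \xb - \lambda) - \widetilde h_{(k-1)/k, \infty}(0, \xb - \lambda)$ in the transitional regime and evaluating the resulting Gaussian-type integral against $\Delta(0, \xb)^2$.

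The hard part will be the uniformity of all these estimates through the transition $\sigma_X \to 1/2^-$ at rate $\lambda/\log X$. On the arithmetic side, the Keating--Snaith factorisation is only approximate and the error coming from the interaction between $Y$-smooth and $Y$-rough contributions must be shown to be $o((\log X)^{k^2})$ uniformly in $\lambda$ on compact subsets of $\Rr_+^*$; on the matrix side, one needs the quantitative speed of convergence in Theorem~\ref{Theorem:TruncatedCharpolMicro} to be uniform in the microscopic parameter $s = \lambda$, which follows from the explicit error bound \eqref{Ineq:SpeedOfConvergenceHinfty} of Lemma~\ref{Lemma:BetaProbRepr} once the threshold $Y(X)$ is tuned appropriately (for instance $\log Y \sim (\log X)^{1/2}$). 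The proof is expected to degenerate at $\lambda = 0$, recovering the $A_k \gamma_k (\log X)^{k^2}$ Conrey--Gamburd regime through a non-commutation of the limits $\lambda \to 0^+$ and $X \to +\infty$, which is precisely the phase-transition phenomenon predicted by Question~\ref{Question:NTphase}.
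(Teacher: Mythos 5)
The paper's own proof stays entirely on the arithmetic side: it applies the Perron inversion formula \eqref{Eq:IntegralRepresentationIndicator} to write $\Esp{ \abs{\Ze_{X,\sigma}}^{2k} }$ as a $2k$-fold Mellin contour integral, factors out the convergent Euler product $B_{k,\sigma}(\sbb)$ whose limit is $a_k$ after the scaling $s_j \mapsto s_j/\log X$, and then identifies the remaining rational contour integral with the $\lambda$-dilated sub-Birkhoff polytope via the elementary residue trick $\frac{1}{x} = \int_{\Rr_+} e^{-tx}\,dt$. No CUE machinery is invoked at any stage. Your proposal — a $Y$-smooth/$Y$-rough decomposition followed by transfer to $Z_{U_N,\pe{N/k}}(e^{\lambda/N})$ and an appeal to Theorem~\ref{Theorem:TruncatedCharpolMicro} — is thus a genuinely different route, but it contains a gap that I do not believe can be repaired.

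The essential problem is your last step. You want $\Ze\Te_{1/k,\lambda}^{(k)}$ (the CUE microscopic limit at $\rho = 1/k$ and $s = \lambda$) to reproduce, up to explicit constants, the factor $\lambda^{k^2} e^{2k\lambda}$. But $\Ze\Te_{\rho,s}^{(k)}$ is continuous in $s$, and by comparing Theorems~\ref{Theorem:TruncatedCharpolIn1} and~\ref{Theorem:TruncatedCharpolMicro} one has $\Ze\Te_{1/k,0}^{(k)} = \Ze\Te_{1/k}^{(k)} = \gamma_k / k^{k^2} \neq 0$ by Remark~\ref{Rk:ScZt}. So $\Ze\Te_{1/k,\lambda}^{(k)}$ cannot equal a constant times $\lambda^{k^2} e^{2k\lambda}$, since the right-hand side vanishes at $\lambda = 0$ and the left-hand side does not. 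The ``stationary-phase analysis'' you invoke to extract $\lambda^{k^2}$ from the integral defining $\Ze\Te_{1/k,\lambda}^{(k)}$ cannot help here: $\lambda$ is a fixed parameter in the theorem you are trying to prove, not a large parameter, so there is no stationary-phase regime — at best you would obtain a $\lambda \to +\infty$ asymptotic for $\Ze\Te_{1/k,\lambda}^{(k)}$, which does not establish an evaluation at fixed $\lambda$. The non-commutation of limits that you observe at $\lambda = 0^+$ is thus not a ``degeneracy of the proof'' but a symptom that the CUE transfer does not literally reproduce the claimed constant for any finite $\lambda$.

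There is a second, quieter gap. The $Y$-smooth/$Y$-rough decomposition of a random multiplicative function is not a clean tensor product: the joint moment $\Esp{\abs{\Ze_X^*(\lambda)}^{2k}}$ does not factor as (smooth moment)$\times$(rough moment) plus a manifestly small error, because the constraint $n_1 \cdots n_k = m_1 \cdots m_k$ couples the two ranges. In Conrey–Gamburd this coupling is handled via a Dirichlet series factorisation which, in your notation, precisely amounts to inserting a Mellin contour in each variable — at which point you would simply have rediscovered the paper's argument. Finally, the identification ``the Riemann sum becomes the exponentially tilted polytope integral, which \emph{is} the CUE moment'' is the Keating–Snaith dictionary, which is a heuristic: the paper proves the arithmetic statement directly with Perron inversion precisely because the two sides are merely \emph{analogous} and the rational function that survives after the Euler-product extraction must be computed on the arithmetic side, not imported from the matrix side.
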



\begin{proof}
Using the Perron inversion formula for $ y \notin \Nn $ as in \cite{ConreyGamburd, HeapLindqvist}
\begin{align}\label{Eq:IntegralRepresentationIndicator}
\Unens{y > 1 } = \int_{b + i \Rr} y^s \frac{d^*s}{s}, \qquad b > 0
\end{align}
we get
\begin{align*}
\Ze_{X, \sigma} & : = \sum_{1 \leq m \leq X} \frac{1}{m^\sigma} \prod_{ p \in \Pe } Z_p^{v_p(m)}  = \int_{b + i \Rr} \sum_{m \geq 1} \prth{ \frac{X}{m } }^s \frac{1}{m^\sigma} \prod_{ p \in \Pe } Z_p^{v_p(m)} \frac{d^*s}{s}   \\
               & = \int_{b + i \Rr} \prod_{ p \in \Pe } \frac{1}{1 - \frac{Z_p}{p^{\sigma + s} } } X^s \frac{d^*s}{s} 
\end{align*}

This implies for $ b_j > 0 $
\begin{align*}
\Esp{ \abs{\Ze_{X, \sigma} }^{2k} } =  \int_{ \prod_{j = 1}^{2k} (b_j + i \Rr) }  \prod_{ p \in \Pe } \Esp{ \prod_{j = 1}^k \frac{1}{1 - \frac{Z_p}{p^{\sigma + s_j} } } \frac{1}{1 - \frac{\overline{Z_p}}{p^{\sigma + \overline{s_{j + k} } } } } }  \, X^{\sum_{j = 1}^k ( s_j + \overline{s_{j + k} } ) } \frac{d^*\sbb}{\sbb} 
\end{align*}
with $ \sbb := (s_1, \dots, s_{2k}) $. Now, changing $ s_j $ with $ \overline{s_j} $ does not change the integral, hence, we get
\begin{align*}
\boxed{ \Esp{ \abs{\Ze_{X, \sigma} }^{2k} } =  \int_{ \prod_{j = 1}^{2k} (b_j + i \Rr) }  \prod_{ p \in \Pe } \Esp{ \prod_{j = 1}^k \frac{1}{1 - \frac{Z_p}{p^{\sigma + s_j} } } \frac{1}{1 - \frac{\overline{Z_p}}{p^{\sigma + s_{j + k}  } } } }  \, X^{\sum_{j = 1}^{2k}  s_j  } \frac{d^*\sbb}{\sbb} }
\end{align*}

We now adapt the arguments of \cite{ConreyGamburd, HeapLindqvist}. Define
\begin{align*}
A_{k, \sigma}(\sbb) & :=  \prod_{ p \in \Pe } \Esp{ \prod_{j = 1}^k \frac{1}{1 - \frac{Z_p}{p^{\sigma + s_j} } } \frac{1}{1 - \frac{\overline{Z_p}}{p^{\sigma + s_{j + k}  } } }   } \prod_{j, \ell = 1}^k \prth{1 - \frac{1}{p^{ 2\sigma + s_j + s_{\ell + k} } } } \\
B_{k, \sigma}(\sbb) & :=  A_{k, \sigma}(\sbb)\prod_{j, \ell = 1}^k (2\sigma - 1 + s_j + s_{\ell + k} ) \prod_{ p \in \Pe }   \frac{1}{1 - \frac{1}{p^{2\sigma + s_j + s_{\ell + k} } } }
\end{align*}
so that 
\begin{align*}
\Esp{ \abs{\Ze_{X, \sigma} }^{2k} } =  \int_{ \prod_{j = 1}^{2k} (b_j + i \Rr) } B_{k, \sigma}(\sbb)  \prod_{j, \ell = 1}^k \frac{1}{2\sigma - 1  + s_j + s_{\ell + k}}  \, X^{\sum_{j = 1}^{2k}  s_j  } \frac{d^*\sbb}{\sbb}
\end{align*}

Using the Euler product of the Zeta function, classical arguments show that $ A_{k, \sigma} $ is an absolutely convergent product if $ \Re(s_i + s_j) > \sigma $ and that it is holomorphic in a neighbourhood of $ \sbb = - \sigma \Un_k $.

We now change variable, setting $ s'_j = \log(X) s_j $, $ b'_j = \log(X) b_j $ and, using $ 2 \sigma_X - 1 = - \frac{2\lambda }{\log(X) } $, we get 
\begin{align*}
\Esp{ \abs{\Ze_{X, \sigma_X} }^{2k} }  & =  \int_{ \prod_{j = 1}^{2k} (b_j + i \Rr) } B_{k, \sigma_X }(\sbb)  \prod_{j, \ell = 1}^k \frac{1}{2\sigma_X - 1  + s_j + s_{\ell + k}}  \, e^{ \log(X) \sum_{j = 1}^{2k}  s_j  } \frac{d^*\sbb}{\sbb} \\
               & = \int_{ \prod_{j = 1}^{2k} (b'_j + i \Rr) } B_{k, \sigma_X }\prth{ \frac{ \sbb' }{\log(X) } } \prod_{j, \ell = 1}^k \frac{ \log(X) }{ -2 \lambda  + s'_j + s'_{\ell + k}}  \, e^{  \sum_{j = 1}^{2k}  s'_j  } \frac{d^*\sbb'}{\sbb'}
\end{align*}

Using holomorphicity, we can translate the contours up to $ c_j $ (independent of $X$). Truncating the integrals up the height $ T = \sqrt{ \log(X) } $ and taking a Taylor approximation of $ B_{k, \sigma_X }\prth{ \frac{ \sbb }{\log(X) } } $ around $ (0, \dots, 0) $ ($k$ times), we get, remarking that $ \prod_{p \in \Pe} (1 - p^{-s})\inv = \zeta(s) $
\begin{align*}
B_{k, \sigma_X }\prth{ \frac{ \sbb }{\log(X) } } & = A_{k, \sigma_X}\prth{ \frac{ \sbb }{\log(X) } } \prod_{j, \ell = 1}^k \prth{ 2\sigma_X - 1 + \frac{ s_j + s_{\ell + k} }{\log(X) } }   \zeta\prth{2\sigma_X + \frac{s_j + s_{\ell + k}}{\log(X) } } \\
               & = A_{k, \sigma_X}\prth{ \frac{ \sbb }{\log(X) } } \prod_{j, \ell = 1}^k   \frac{   s_j + s_{\ell + k} - 2 \lambda }{ \log(X) }    \zeta\prth{1 +  \frac{   s_j + s_{\ell + k} - 2 \lambda }{ \log(X) }     }\\
               & \equivalent{X \to +\infty } A_{k, \sigma_X}\prth{ \frac{ \sbb }{\log(X) } }
\end{align*}
as $ \alpha \zeta(1 + \alpha) \to 1 $ when $ \alpha \to 0 $. 

Last, using dominated convergence and continuity
\begin{align*}
A_{k, \sigma_X}\prth{ \frac{ \sbb }{\log(X) } } & = \prod_{ p \in \Pe } \Esp{ \prod_{j = 1}^k \frac{1}{1 - \frac{Z_p}{p^{\sigma_X + s_j / \log(X) } } } \frac{1}{1 - \frac{\overline{Z_p}}{p^{\sigma_X + s_{j + k} / \log(X)  } } }   } \prod_{j, \ell = 1}^k \prth{1 - \frac{1}{p^{ 2\sigma_X +  \frac{s_j + s_{\ell + k}}{ \log(X) }   } } } \\
                & \equivalent{X \to +\infty } \prod_{ p \in \Pe } \Esp{ \prod_{j = 1}^k \frac{1}{1 - \frac{Z_p}{p^{1/2 } } } \frac{1}{1 - \frac{\overline{Z_p}}{p^{ 1/2  } } }   } \prod_{j, \ell = 1}^k \prth{1 - \frac{1}{ p  } } \\
                &  =  \prod_{ p \in \Pe } e^{  k^2 \log(1 - 1/p) } \Esp{  \abs{1 - \frac{Z_p}{ \sqrt{p} } }^{- 2 k } } \\
                & = a_k
\end{align*}

This implies that
\begin{align*}
\Esp{ \abs{\Ze_{X, \sigma_X} }^{2k} }  & \equivalent{X \to +\infty } a_k \log(X)^{k^2} \int_{ \prod_{j = 1}^{2k} (c_j + i \Rr) }  \prod_{j, \ell = 1}^k \frac{ 1 }{ -2 \lambda  + s_j + s_{\ell + k}}  \, e^{  \sum_{j = 1}^{2k}  s_j  } \frac{d^*\sbb}{\sbb} \\
                & =: a_k \gamma_k(\lambda) e^{2k \lambda }  \log(X)^{k^2}
\end{align*}
with 
\begin{align*}
\gamma_k(\lambda) := e^{- 2k \lambda } \int_{ \prod_{j = 1}^{2k} (c_j + i \Rr) }  \prod_{j, \ell = 1}^k \frac{ 1 }{ -2 \lambda  + s_j + s_{\ell + k}}  \, e^{  \sum_{j = 1}^{2k}  s_j  } \frac{d^*\sbb}{\sbb}  
\end{align*}

It remains to show that $ \gamma_k(\lambda) = \vol( \lambda \Be_k ) = \lambda^{k^2} \gamma_k $. Setting $ s_j = \lambda (s_j' + 1) $ and $ d_j := \lambda c_j + 1 > 0 $, we have 
\begin{align*}
\gamma_k(\lambda) :=  \int_{ \prod_{j = 1}^{2k} (d_j + i \Rr) }  \prod_{j, \ell = 1}^k \frac{ 1 }{ s'_j + s'_{\ell + k}}  \, e^{  \sum_{j = 1}^{2k} \lambda s'_j  } \frac{d^*\sbb}{\sbb}  
\end{align*}

Setting $ x_j = s_j $ if $ j \leq k $ and $ y_j = s_j $ for $ j \geq k + 1 $, and using the formula $ \frac{1}{x} = \int_{\Rr_+} e^{-tx } dt $ valid if $ \Re(x) > 0 $, one has, using the Fubini theorem
\begin{align*}
\gamma_k(\lambda) & :=  \int_{ \Rr_+^{k^2 } }  \int_{ \prod_{j = 1}^{2k} (d_j + i \Rr) }  \exp\prth{ -\sum_{i, j = 1}^k t_{i, j} (x_i + y_j) + \lambda \sum_{i = 1}^k (x_i + y_j)  }  \frac{d^*X}{X}  \frac{d^*Y}{Y} d\Tb \\
                  & =  \int_{ \Rr_+^{k^2 } } \int_{ \prod_{j = 1}^{2k} (d_j + i \Rr) }   \exp\prth{ \sum_{i = 1}^k x_i \prth{ -\sum_{j = 1}^k t_{i, j} + \lambda } + \sum_{j = 1}^k  y_j \prth{ -\sum_{i = 1}^k t_{i, j} + \lambda }  }  \frac{d^*X}{X}  \frac{d^*Y}{Y} d\Tb \\
                  & =  \int_{ \Rr_+^{k^2 } }  \prod_{i = 1}^k \Unens{ -\sum_{j = 1}^k t_{i, j} + \lambda > 0 }  \prod_{j = 1}^k \Unens{ -\sum_{i = 1}^k t_{i, j} + \lambda > 0 }  d\Tb \quad \mbox{using \eqref{Eq:IntegralRepresentationIndicator} with $ y = e^{x} $}  \\
                  & = \vol(\lambda \Be_k ) = \lambda^{k^2} \vol(\Be_k)
\end{align*}
by definition of $ \Be_k $ and of the $ \lambda $-dilation. 
\end{proof}


\begin{remark}
The trick $ \frac{1}{x} = \int_{\Rr_+} e^{-tx } dt $ allows to bypass the combinatorial analysis of \cite[(52), (54)]{ConreyGamburd}. The advantage of this trick is to make the equations defining $ \Be_k $ appear immediately. Note that it works for any polytope, in particular the Gelfand-Tsetlin one given in \cite{AssiotisBaileyKeating, AssiotisKeating, BaileyKeating}.
\end{remark}

Remarking that we can adapt the whole proof to the case $ \lambda \equiv \lambda_X = o(\log(X)) $, we get

\begin{corollary}[Phase transition with a drifted log-normal random variable]\label{Lemma:PhaseTransition}
We have, when $ X \to +\infty $ and for all $ \lambda_X > 0 $, $ \lambda_X \to +\infty $ and $ \lambda_X = o(\log(X)) $
\begin{align*}
\Esp{ \abs{\Ze_X^*(\lambda_X) }^{2k} } \sim a_k \gamma_k  e^{k^2 \log(\lambda_X \log(X)) + 2k \lambda_X } 
\end{align*}
\end{corollary}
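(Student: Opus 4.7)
The plan is to revisit the proof of Theorem~\ref{Theorem:PhaseTransition} and verify that its steps carry through uniformly when the parameter $\lambda$ is replaced by a sequence $\lambda_X \to +\infty$ with $\lambda_X = o(\log X)$. The asymptotic target rewrites as
\begin{align*}
a_k \gamma_k \, e^{k^2 \log(\lambda_X \log X) + 2k\lambda_X} = a_k \gamma_k \lambda_X^{k^2} (\log X)^{k^2} e^{2k\lambda_X},
\end{align*}
which is formally the expression of the previous theorem with $\lambda$ replaced by $\lambda_X$. The whole point is therefore uniformity of the estimates in the parameter.

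First I would reproduce the Perron-inversion identity to write
\begin{align*}
\Esp{ \abs{\Ze_{X, \sigma_X}}^{2k} } = \int_{\prod_j (b_j + i\Rr)} B_{k, \sigma_X}(\sbb) \prod_{j, \ell = 1}^k \frac{1}{2\sigma_X - 1 + s_j + s_{\ell + k}} \, X^{\sum_j s_j} \frac{d^*\sbb}{\sbb},
\end{align*}
using $2\sigma_X - 1 = -2\lambda_X/\log(X)$, and then rescale $s'_j = \log(X) s_j$ exactly as before. The next natural substitution is $s'_j = \lambda_X(s''_j + 1)$, which is well-defined because $\lambda_X > 0$ and $\lambda_X \to +\infty$; it produces the prefactor $\lambda_X^{k^2}$ from the $k^2$ singular denominators, the factor $e^{2k\lambda_X}$ from $e^{\sum s'_j}$, and transforms the integral into the same polytopial integral representation of $\vol(\Be_k) = \gamma_k \cdot k^{k-1}$ that was computed at the end of the previous proof (via $1/x = \int_{\Rr_+} e^{-tx}dt$).

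The main obstacle is to verify that the replacement
\begin{align*}
B_{k, \sigma_X}\!\prth{ \tfrac{\sbb'}{\log X} } \tendvers{X}{+\infty} a_k
\end{align*}
remains valid at the relevant scale. After the two substitutions, $B_{k,\sigma_X}$ is evaluated at points of size $\lambda_X/\log(X)$, which tends to $0$ by hypothesis, so the holomorphicity of $A_{k,\sigma_X}$ in a neighbourhood of the origin and the limit $\alpha \zeta(1+\alpha) \to 1$ as $\alpha \to 0$ still apply, now with $\alpha$ of size $\lambda_X/\log(X) = o(1)$. The delicate point is the contour truncation at height $T$: one needs to choose $T = T(X)$ large enough that the truncation error is $o(\lambda_X^{k^2} (\log X)^{k^2} e^{2k\lambda_X})$, yet small enough that the Taylor approximation of $B_{k,\sigma_X}(\sbb'/\log X)$ is uniform on the truncated box. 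Using polynomial bounds on $\zeta(1+it)$ in vertical strips, together with the explicit product form of $A_{k,\sigma_X}$, the choice $T = \sqrt{\log X}$ used before still works provided $\lambda_X = o(\log X)$, since the rescaled contour $s''_j \in \pm i \, T/\lambda_X$ still tends to infinity under this hypothesis.

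Combining these estimates yields
\begin{align*}
\Esp{\abs{\Ze_X^*(\lambda_X)}^{2k}} \sim a_k \, \lambda_X^{k^2} (\log X)^{k^2} e^{2k\lambda_X} \int_{\prod_j (d_j + i\Rr)} \prod_{j,\ell=1}^k \frac{1}{s''_j + s''_{\ell+k}} e^{\sum_j s''_j} \frac{d^*\sbb''}{\sbb''},
\end{align*}
and the remaining integral equals $\vol(\Be_k) = \gamma_k$ (in our normalisation) by the polytopial identification performed at the end of the proof of Theorem~\ref{Theorem:PhaseTransition}. Rewriting $\lambda_X^{k^2}(\log X)^{k^2} = e^{k^2 \log(\lambda_X \log X)}$ gives the stated equivalent.
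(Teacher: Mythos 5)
Your overall strategy is exactly the paper's own (the paper dismisses the corollary with the one-line remark that the proof of Theorem~\ref{Theorem:PhaseTransition} adapts), and you correctly identify the three things that must be rechecked: the contour translation, the convergence $B_{k,\sigma_X}(\sbb'/\log X)\to a_k$ at the relevant scale, and the polytopial identification after the substitution $s'_j = \lambda_X(s''_j + 1)$. The convergence of $B$ at points of size $\lambda_X/\log X = o(1)$ is fine, as you say, and the polytopial identity is an exact one once the contours can be taken in the right region.

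However, the assertion that the truncation height $T = \sqrt{\log X}$ ``still works provided $\lambda_X = o(\log X)$, since the rescaled contour $s''_j \in \pm iT/\lambda_X$ still tends to infinity under this hypothesis'' is false: with, e.g., $\lambda_X = (\log X)^{3/4}$ one has $\lambda_X = o(\log X)$ yet $T/\lambda_X = (\log X)^{-1/4}\to 0$. The hypothesis $\lambda_X = o(\log X)$ gives $\sqrt{\log X} \gg \sqrt{\lambda_X}$, not $\sqrt{\log X}\gg\lambda_X$. The fix is to let the truncation height depend on $\lambda_X$; the geometric mean $T = \sqrt{\lambda_X \log X}$ is the natural choice, since then $T/\log X = \sqrt{\lambda_X/\log X}\to 0$ (so the Taylor expansion of $B$ on the truncated box remains uniform) and $T/\lambda_X = \sqrt{\log X/\lambda_X}\to\infty$ (so the rescaled contour escapes to infinity), both exactly under $\lambda_X = o(\log X)$. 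A related point you should also make explicit: the intermediate translation of contours to abscissae $c_j$ cannot leave $c_j$ independent of $X$ once $\lambda_X\to\infty$, because after the substitution $s'_j = \lambda_X(s''_j + 1)$ one needs $\Re s''_j = c_j/\lambda_X - 1 > 0$, forcing $c_j$ to scale like $\lambda_X$ (e.g.\ $c_j = 2\lambda_X$, so $\Re s''_j = 1$ is fixed). With these two adjustments the argument goes through.
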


$ $

\section*{Acknowledgements}

The author thanks Adam Harper, Jon Keating, Olivier H\'enard, Sho Matsumoto, Pierre-Lo\"ic M\'eliot, Joseph Najnudel, Ashkan Nikeghbali, Elliot Paquette and Oleg Zaboronsky for interesting discussions, references or remarks concerning previous versions of this work. A particular thanks is given to Matthias Beck for explanations on polytopes, to Valentin F\'eray and Paul-Olivier Dehaye for explanations concerning the plethystic formalism, and to Persi Diaconis for several remarks and useful criticism. A very particular thanks is given to Nick Simm for numerics and crucial help in understanding the hyperplane concentration phenomenon.


\bibliographystyle{amsplain}


\end{document}